\documentclass[11pt]{article}
\usepackage[T1]{fontenc}
\usepackage[utf8]{inputenc}

\usepackage{graphicx}
\usepackage{amsmath}
\usepackage{amssymb}
\usepackage{amsfonts}
\usepackage{amsrefs}
\usepackage{amsthm}
\usepackage{mathtools}
\usepackage{multirow}
\usepackage{enumitem}
\usepackage{tikz-cd}
\usepackage{soul}
\usepackage{abstract}
\usepackage{mathrsfs}
\usepackage{bbm}
\usepackage{float}
\usepackage{geometry}
\usepackage{xcolor}
\usepackage{adjustbox}
\usepackage{tabularray}
\usepackage{subfig}
\usepackage{setspace}
\usepackage{titlesec}
\usepackage{dsfont}
\usepackage{stmaryrd}
\usepackage{breqn}
\usepackage{authblk}
\usepackage[labelfont=bf, labelsep=none]{caption}
\usetikzlibrary{braids}
\usepackage{calc}

\geometry{lmargin=0.8in,rmargin=0.8in,tmargin=0.8in,bmargin=0.8in}

\colorlet{underlinecolour}{white!10!gray}
\setulcolor{underlinecolour}

\newdimen\mywidth
\setbox0=\hbox{<text to measure>}
\mywidth=\wd0

\newcounter{dummy}
\makeatletter
\newcommand\myitem[1][]{\item[#1]\refstepcounter{dummy}\def\@currentlabel{#1}}
\makeatother


\setlength{\parindent}{0pt}

\titleformat{\section}
{\bfseries}
{\thesection}
{1em}
{}

\titleformat*{\subsection}{\bfseries}

\newcommand{\kk}{\mathbbm{k}}
\newcommand{\Imin}{I_{\mathrm{min}}}
\newcommand{\Ii}{I_{i}}
\newcommand{\g}{\mathfrak{g}}
\newcommand{\h}{\mathfrak{h}}
\newcommand{\gaff}{\mathfrak{\hat{g}}}
\newcommand{\haff}{\mathfrak{\hat{h}}}
\newcommand{\glone}{\mathfrak{gl}_{1}}
\newcommand{\Qov}{\mathring{Q}^{\vee}}
\newcommand{\Pov}{\mathring{P}^{\vee}}
\newcommand{\Pbar}{\overline{P}}
\newcommand{\Hom}{\mathrm{Hom}}
\newcommand{\ivector}{\underline{i}}
\newcommand{\kvector}{\underline{k}}
\newcommand{\ch}{\mathrm{ch}}
\newcommand{\ev}{\mathrm{ev}}
\newcommand{\res}{\mathrm{res}}

\newcommand{\Uq}{U_{q}(\mathfrak{g})}
\newcommand{\Uqs}{U_{q}(\mathfrak{s})}
\newcommand{\Uaff}{U_{q}(\mathfrak{\hat{g}})}
\newcommand{\Udash}{U'_{q}(\mathfrak{\hat{g}})}
\newcommand{\Uaffsltwo}{U_{q}(\widehat{\mathfrak{sl}}_{2})}
\newcommand{\Utor}{U_{q}(\mathfrak{g}_{\mathrm{tor}})}
\newcommand{\Uh}{\mathcal{U}_{h}}
\newcommand{\Uv}{\mathcal{U}_{v}}
\newcommand{\Udiag}{\mathcal{U}_{d}}
\newcommand{\UtorA}{U_{q}(\mathfrak{sl}_{n+1,\mathrm{tor}})}
\newcommand{\Utorsltwo}{U_{q}(\mathfrak{sl}_{2,\mathrm{tor}})}
\newcommand{\Utorglone}{U_{q_{1},q_{2},q_{3}}(\ddot{\mathfrak{gl}}_{1})}
\newcommand{\Uqaffs}{\widehat{\Uqs}}
\newcommand{\Uqaffg}{\widehat{\Uq}}

\newcommand{\UdashA}{U'_{q}(A_{1}^{(1)})}
\newcommand{\UdashAA}{U'_{q}(A_{2}^{(1)})}
\newcommand{\UdashC}{U'_{q}(C_{2}^{(1)})}
\newcommand{\UdashG}{U'_{q}(G_{2}^{(1)})}

\newcommand{\Rh}{\mathcal{R}_{h}}
\newcommand{\Rv}{\mathcal{R}_{v}}
\newcommand{\Rdiag}{\mathcal{R}_{d}}

\newcommand{\Dpsi}{\Delta^{\psi}_{u}}
\newcommand{\Dbarplus}{\overline{\Delta}_{+}}

\newcommand{\degv}{\deg_{v}}
\newcommand{\degj}{\deg_{j}}
\newcommand{\degZ}{\deg_{\Zbb}}

\newcommand{\Tb}{\textbf{T}}
\newcommand{\Xb}{\textbf{X}}

\newcommand{\B}{\mathcal{B}}
\newcommand{\Bd}{\mathcal{\dot{B}}}
\newcommand{\Bdd}{\mathcal{\ddot{B}}}
\newcommand{\Bh}{\mathcal{B}_{h}}
\newcommand{\Bv}{\mathcal{B}_{v}}
\newcommand{\Bdiag}{\mathcal{B}_{d}}
\newcommand{\Bb}{\mathbf{B}}

\newcommand{\gh}{\gamma_{h}}
\newcommand{\gv}{\gamma_{v}}

\newcommand{\xip}{x_{i}^{+}}
\newcommand{\xim}{x_{i}^{-}}
\newcommand{\xipm}{x_{i}^{\pm}}
\newcommand{\xjp}{x_{j}^{+}}
\newcommand{\xjm}{x_{j}^{-}}
\newcommand{\xjpm}{x_{j}^{\pm}}
\newcommand{\xp}{x^{+}}
\newcommand{\xm}{x^{-}}
\newcommand{\xpm}{x^{\pm}}

\newcommand{\xbpm}{\mathbf{x}^{\pm}}
\newcommand{\xbp}{\mathbf{x}^{+}}
\newcommand{\xbm}{\mathbf{x}^{-}}
\newcommand{\xb}{\mathbf{x}}
\newcommand{\kb}{\mathbf{k}}
\newcommand{\hb}{\mathbf{h}}
\newcommand{\Cb}{\mathbf{C}}

\newcommand{\e}{\mathfrak{e}}
\newcommand{\te}{\mathfrak{t}}
\newcommand{\s}{\mathfrak{s}}

\newcommand{\sfrak}{\mathfrak{s}}
\newcommand{\sfrakv}{\mathfrak{s}^{v}}
\newcommand{\sfrakj}{\mathfrak{s}^{(j)}}
\newcommand{\sfrakZ}{\mathfrak{s}^{\Zbb}}

\newcommand{\Scal}{\mathcal{S}}
\newcommand{\Pcal}{\mathcal{P}}
\newcommand{\T}{\mathcal{T}}
\newcommand{\U}{\mathcal{U}}
\newcommand{\Hcal}{\mathcal{H}}

\newcommand{\Wcal}{\mathcal{W}}
\newcommand{\Acal}{\mathcal{A}}
\newcommand{\X}{\mathcal{X}}
\newcommand{\Ycal}{\mathcal{Y}}
\newcommand{\Zcal}{\mathcal{Z}}
\newcommand{\Rcal}{\mathcal{R}}
\newcommand{\Tcal}{\mathcal{T}}
\newcommand{\Ecal}{\mathcal{E}}
\newcommand{\Mcal}{\mathcal{M}}

\newcommand{\Ocal}{\mathcal{O}}
\newcommand{\Oint}{\mathcal{O}_{\mathrm{int}}}
\newcommand{\Oaff}{\widehat{\mathcal{O}}_{\mathrm{int}}}

\newcommand{\Zbb}{\mathbb{Z}}
\newcommand{\Nbb}{\mathbb{N}}
\newcommand{\Cbb}{\mathbb{C}}

\newcommand{\Qbb}{\mathbb{Q}}

\newcommand{\Xun}{X_{n}^{(1)}}
\newcommand{\Th}[1]{\Theta_{0{#1}}}
\newcommand{\Xtrip}{\overset{\ldots}{X}_{n}}

\newcommand{\one}{^{(1)}}
\newcommand{\two}{^{(2)}}
\newcommand{\three}{^{(3)}}
\newcommand{\onetwo}{^{(1,2)}}
\newcommand{\onethree}{^{(1,3)}}
\newcommand{\twothree}{^{(2,3)}}
\newcommand{\alphapower}{^{(\alpha)}}
\newcommand{\betapower}{^{(\beta)}}
\newcommand{\alphabetapower}{^{(\alpha,\beta)}}
\newcommand{\betaalphapower}{^{(\beta,\alpha)}}

\newcommand{\SL}{SL_{2}(\mathbb{Z})}
\newcommand{\GL}{GL_{2}(\mathbb{Z})}

\newcommand{\Gam}[1]{\Gamma_{1}({#1})}
\newcommand{\TGam}[1]{\Tilde{\Gamma}_{1}({#1})}

\usepackage{hyperref}
\hypersetup{
    pdftitle={Duncan Laurie - Tensor products, q-characters and R-matrices for quantum toroidal algebras},
    pdfauthor={Duncan Laurie},
    hidelinks,
    pdfhighlight={/N},
    colorlinks=true,
    linkcolor=violet!75!black,
    citecolor=violet!75!black,
    urlcolor=black,
    }
\urlstyle{}

\newcommand\blfootnote[1]{%
  \begingroup
  \renewcommand\thefootnote{}\footnote{#1}%
  \addtocounter{footnote}{-1}%
  \endgroup
}

\newenvironment{talign*}
 {\let\displaystyle\textstyle\csname align*\endcsname}
 {\endalign}

\usepackage{stackrel}
\usepackage{amssymb}
\usepackage{amsmath}
\newcommand{\leftrarrows}{\mathrel{\raise.75ex\hbox{\oalign{%
  $\scriptstyle\leftarrow$\cr
  \vrule width0pt height.5ex$\hfil\scriptstyle\relbar$\cr}}}}
\newcommand{\lrightarrows}{\mathrel{\raise.75ex\hbox{\oalign{%
  $\scriptstyle\relbar$\hfil\cr
  $\scriptstyle\vrule width0pt height.5ex\smash\rightarrow$\cr}}}}
\newcommand{\Rrelbar}{\mathrel{\raise.75ex\hbox{\oalign{%
  $\scriptstyle\relbar$\cr
  \vrule width0pt height.5ex$\scriptstyle\relbar$}}}}

\makeatletter
\def\leftrightarrowsfill@{\arrowfill@\leftrarrows\Rrelbar\lrightarrows}
\newcommand{\xleftrightarrows}[2][]{\ext@arrow 3399\leftrightarrowsfill@{#1}{#2}}
\makeatother

\newtheoremstyle{thmstyleone}
  {12pt}
  {12pt}
  {\itshape}
  {0pt}
  {\bfseries}
  {}
  {.5em}
  {\thmname{#1}\hspace{.3em}\thmnumber{#2.}}

\newtheoremstyle{thmstyletwo}
  {12pt}
  {12pt}
  {\normalfont}
  {0pt}
  {\itshape}
  {}
  {.5em}
  {\thmname{#1}\hspace{.3em}\thmnumber{#2.}}

\newtheoremstyle{thmstylethree}
  {12pt}
  {12pt}
  {\normalfont}
  {0pt}
  {\bfseries}
  {}
  {.5em}
  {\thmname{#1}\hspace{.3em}\thmnumber{#2.}}

\newtheoremstyle{thmstylefour}
  {12pt}
  {12pt}
  {\itshape}
  {0pt}
  {\bfseries}
  {}
  {.5em}
  {\thmname{#1.}}

\theoremstyle{thmstyleone}
\newtheorem{thm}{Theorem}[section]
\newtheorem{prop}[thm]{Proposition}
\newtheorem{lem}[thm]{Lemma}
\newtheorem{sublem}[thm]{Sublemma}
\newtheorem{cor}[thm]{Corollary}

\theoremstyle{thmstyletwo}

\theoremstyle{thmstylethree}
\newtheorem{defn}[thm]{Definition}
\newtheorem{eg}[thm]{Example}
\newtheorem{rmk}[thm]{Remark}

\theoremstyle{thmstylefour}
\newtheorem{Q}{Question}
\newtheorem{notation}{Notation}
\newtheorem{introthm}{Theorem}
\newtheorem{introcor}{Corollary}

\numberwithin{equation}{section}

\begin{document}

\title{Tensor products, $q$-characters and $R$-matrices \\ for quantum toroidal algebras}
\author{Duncan Laurie}
\affil{\normalsize{\textit{School of Mathematics and Maxwell Institute for Mathematical Sciences,}} \\ \normalsize{\textit{University of Edinburgh, Peter Guthrie Tait Road, Edinburgh, EH9 3FD.}}}
\date{}

\maketitle\blfootnote{E-mail: \url{duncan.laurie@ed.ac.uk}}\blfootnote{ORCID: 0009-0006-9331-4835.}\blfootnote{2020 \emph{Mathematics subject classification}: 17B37, 17B67, 20F36, 81R50.}\blfootnote{Key words and phrases: quantum toroidal algebra, tensor product, $q$-character, $R$-matrix, transfer matrix, $\ell$-highest weight representation, topological coproduct.}

\vspace{-30pt}

\begin{abstract}
    We introduce a new topological coproduct $\Dpsi$ for quantum toroidal algebras $\Utor$ in all untwisted types, leading to a well-defined tensor product on the category $\Oaff$ of integrable representations.
    This is defined by twisting the Drinfeld coproduct $\Delta_{u}$ with an anti-involution $\psi$ of $\Utor$ that swaps its horizontal and vertical quantum affine subalgebras.
    Other applications of $\psi$ include generalising the celebrated Miki automorphism from type $A$, and an action of the universal cover of $\SL$.
    \\
    
    Next, we investigate the ensuing tensor representations of $\Utor$, and prove quantum toroidal analogues for a series of influential results by Chari-Pressley on the affine level.
    In particular, there is a compatibility with Drinfeld polynomials, and the product of irreducibles is generically irreducible.
    We moreover show that the $q$-character of a tensor product is equal to the product of $q$-characters for its factors.
    Furthermore, we obtain $R$-matrices with spectral parameter which provide solutions to the (trigonometric, quantum) Yang-Baxter equation, and endow $\Oaff$ with a meromorphic braiding.
    These give rise to a commuting family of transfer matrices for each module.
\end{abstract}

\vspace{10pt}

\begingroup
\titleformat{\section}[block]{\normalsize\bfseries\filcenter}{}{}{}
\tableofcontents
\endgroup

\section{Introduction}

Quantum toroidal algebras $\Utor$ are the double affine objects within the quantum setting, formed by applying Drinfeld's \emph{quantum affinization} procedure to the affine quantum groups.
They therefore contain, and are generated by, horizontal and vertical quantum affine subalgebras $\Uh$ and $\Uv$.
Since their introduction by Ginzburg-Kapranov-Vasserot \cite{GKV95}, these algebras have become a highly active area of research.
Even in the simplest cases, quantum toroidal algebras have found remarkable connections and applications across mathematics and physics, providing a powerful algebraic framework that links representation theory, geometry, quantum integrable systems, and combinatorics.
\\

Nevertheless, quantum toroidal algebras remain rather mysterious, with far less understood than for their finite and affine type counterparts.
For example, they are not known to possess any coproduct or Hopf algebra structures, and their module categories were not previously equipped with either a tensor product or a braiding.
One of the major obstacles is a lack of (anti-)automorphisms that swap $\Uh$ and $\Uv$ -- we shall call these horizontal--vertical symmetries.
The only existing example was the celebrated Miki automorphism in type $A$, which has been instrumental for studying $\UtorA$, quantum toroidal $\glone$, and their connections.
In this paper we will address each of these difficulties.
\\

In particular, $\Utor$ has an important category $\Oaff$ of integrable representations \cites{Hernandez05,GTL16} which exists as the toroidal analogue of the finite dimensional modules for quantum affine algebras -- indeed, its irreducible objects are classified by Drinfeld polynomials.
It is closed under finite direct sums, and contains all integrable modules that are highest weight with respect to the \emph{loop triangular decomposition} for $\Utor$, but fails to be semisimple.
\\

The following natural and fundamental question then arises: does $\Oaff$ possess a tensor product and therefore a monoidal structure?
On the finite and affine levels, such constructions come automatically as quantum groups are Hopf algebras, and provide the basis for seemingly endless directions -- see Section \ref{section:R matrices} for further discussion.
But in the case of quantum toroidal algebras, we need to work harder.
\\

As mentioned above, $\Utor$ is not known to carry a coproduct, except in types $A_{1}^{(1)}$ and $A_{2}^{(1)}$ \cite{JZ22}.
The only existing alternative is a Drinfeld \emph{topological coproduct} $\Delta_{u}$ depending on a spectral parameter $u$, which maps to a completion of the tensor square \cites{Hernandez05,Damiani24}.
However, $\mathrm{im}(\Delta_{u})$ contains infinite sums whose actions on a tensor product of modules in $\Oaff$ may not converge even after specialising $u$.
In particular, while we can pick some $u$ such that $\Delta_{u}$ endows a \emph{fixed} tensor product with a $\Utor$-module structure, it is not possible to produce in this way a well-defined tensor product on the category as a whole -- see Section \ref{section:tensor products} for more details.
\\

Various attempts have been made to overcome these issues.
Notably, Hernandez \cites{Hernandez05,Hernandez07} constructed a \emph{fusion product} by enlarging the category to one in which $\Delta_{u}$ does define a tensor product, and then specializing back to $\Oaff$.
Furthermore, a series of papers by Miki \cites{Miki00,Miki01,Miki07} explore these directions in type $A$.
Addressing this problem in all untwisted types is one of our major goals in this work.
\\

In order to do this, we require horizontal--vertical symmetries for $\Utor$.
Until recent results by the author in the simply laced case \cite{Laurie24a}, it was not entirely clear whether such (anti-)automorphisms should exist outside type $A$.
Here, we further extend our constructions to all untwisted types, which are essential for approaching the representation theory in later sections.
\\

So how do we obtain these symmetries?
The philosophy is to first consider our action $\Bdd\curvearrowright\Utor$ of the extended double affine braid group from \cite{Laurie24a}.
Similar to quantum toroidal algebras, $\Bdd$ contains horizontal and vertical affine braid subgroups $\Bh$ and $\Bv$.
These preserve $\Uh$ and $\Uv$ respectively, with each restricted action coinciding with Lusztig's braid group action on the affine level \cite{Lusztig93}.
We then take an involution $\te$ of $\Bdd$ that swaps $\Bh$ and $\Bv$, and \emph{pass it across the action} to obtain an anti-involution $\psi = (b\cdot z \mapsto \te(b)\cdot z)$ with the desired properties.

\begin{introthm}
    There exists an anti-involution $\psi$ of $\Utor$ which exchanges $\Uh$ and $\Uv$ in all untwisted types.
\end{introthm}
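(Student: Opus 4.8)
The plan is to carry out the ``pass it across the action'' strategy sketched above. Write $\rho\colon\Bdd\to\mathrm{Aut}(\Utor)$ for the action of \cite{Laurie24a}, so that $b\in\Bdd$ acts by $\rho_{b}$. I would use three structural features of this action: (i) $\Bh$ and $\Bv$ preserve $\Uh$ and $\Uv$, and the restricted actions $\rho|_{\Bh}$ on $\Uh$ and $\rho|_{\Bv}$ on $\Uv$ coincide with Lusztig's braid group action on the corresponding quantum affine algebra; (ii) $\Utor$ is generated by the $\Bdd$-orbit of the horizontal Chevalley generators $e_{i},f_{i}$ (together with the Cartan part), these orbits already containing the vertical loop generators; and (iii) on top of $\Uh$ and $\Uv$, the algebra $\Utor$ is cut out by a finite list of ``mixed'' relations between horizontal and vertical generators.

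First I would pin down the involution $\te$ of $\Bdd$: using the presentation of the extended double affine braid group, this is the diagram-type symmetry interchanging the two affine directions, sending each standard generator of $\Bh$ to the corresponding generator of $\Bv$ and vice versa, with the appropriate bookkeeping on the finite-part and lattice/diagram-automorphism elements. One checks that this assignment respects the braid relations, so that it extends to a homomorphism $\Bdd\to\Bdd$; that $\te^{2}=\mathrm{id}$; and that $\te(\Bh)=\Bv$, $\te(\Bv)=\Bh$. Next, on the quantum affine algebra $\Uh$ I would take the standard anti-involution $\varphi_{h}$ fixing the Chevalley generators $e_{i},f_{i}$ and inverting the Cartan generators, together with its compatibility with Lusztig's symmetries in the form $\varphi_{h}\circ T_{i}=T_{i}^{-1}\circ\varphi_{h}$ (for the appropriate version of $T_{i}$); I would define $\varphi_{v}$ on $\Uv$ analogously and observe that $\te$ intertwines the two, carrying the $\rho|_{\Bh}$-equivariance of $\varphi_{h}$ to the $\rho|_{\Bv}$-equivariance of $\varphi_{v}$.

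Now define $\psi$ by $\psi(\rho_{b}(z))=\rho_{\te(b)}(\varphi_{h}(z))$ for $z$ a horizontal Chevalley (or Cartan) generator, extended by anti-multiplicativity; on the $e_{i},f_{i}$ this is just $\rho_{b}(z)\mapsto\rho_{\te(b)}(z)$, recovering the formula above. To prove this is well defined it suffices to fix a generators-and-relations presentation of $\Utor$ and check that each defining relation is sent, by the above assignment, to (a consequence of) the defining relations with products reversed. The relations internal to $\Uh$ (resp. $\Uv$) follow from the compatibility of $\varphi_{h}$ (resp. $\varphi_{v}$) with Lusztig's action and the fact that $\te$ restricts to a group isomorphism $\Bh\xrightarrow{\ \sim\ }\Bv$ intertwining the two Lusztig actions; this is exactly what makes $\psi$ carry $\Uh$ onto $\Uv$ as $\varphi_{v}$ (and $\Uv$ onto $\Uh$ as $\varphi_{h}$), whence, using $\te^{2}=\mathrm{id}$ and $\varphi_{h}^{2}=\varphi_{v}^{2}=\mathrm{id}$, also $\psi^{2}=\mathrm{id}$ and $\psi(\Uh)=\Uv$, $\psi(\Uv)=\Uh$. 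It remains to treat the mixed relations of (iii), showing they are stable under $\rho_{b}(z)\mapsto\rho_{\te(b)}(\varphi_{h}(z))$ together with order reversal.

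I expect this last point -- the invariance of the mixed horizontal--vertical relations -- to be the main obstacle. The internal relations become essentially formal once $\te$ and the $\varphi_{\bullet}$ are set up, but the cross-terms carry the genuine content of the toroidal structure, and the check is type-dependent: the non-simply-laced cases must be handled separately, and care is needed with the Cartan and current generators, whose images under $\psi$ are pinned down only implicitly. A secondary subtlety is that writing elements of $\Utor$ as combinations of $\rho_{b}(z)$'s is very far from a normal form, so one should not manipulate the symbols ``$b\cdot z$'' directly but rather phrase every verification inside a fixed presentation of $\Utor$ from the outset.
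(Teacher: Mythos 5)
Your overall strategy is exactly the one the paper follows: identify each generator of a finite presentation of $\Utor$ as $b(z)$ with $b\in\Bdd$ and $z$ in the finite quantum group, define $\psi$ by $b(z)\mapsto\te(b)(\sigma(z))$, and verify the relations, with the internal relations of $\Uh$ and $\Uv$ following formally from $\sigma\Tb_{i}\sigma=\Tb_{i}^{-1}$ and the mixed horizontal--vertical relations carrying the real content. However, you leave precisely that last step open, and it is a genuine gap rather than a routine check: without a mechanism for the mixed relations the argument does not close, and the mechanism is not obvious.

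What fills the gap in the paper is a third quantum affine subalgebra, the \emph{diagonal} subalgebra $\Udiag=\X_{0}^{-1}(\Uh)$, corresponding to the second affine node of the Ion--Sahi Coxeter-style presentation of $\Bdd$. The finite presentation is arranged so that $\xpm_{0,\pm 1}$ (not $\xpm_{0,\mp 1}$) are among the generators, which forces $\psi(\xpm_{0,\pm 1})=\xpm_{0,\pm 1}$ and hence that $\psi$ restricts to $\sigma$ on $\Udiag$; this disposes of all mixed relations involving only $\xpm_{0,\pm 1}$ and the finite-index generators. The remaining relations are reached by applying carefully chosen lattice elements $Y_{\beta}$ whose pairings with $\alpha_{0}$ and the adjacent $\alpha_{\ell}$ are controlled (Lemma \ref{lem:Y on zero generators}), dragging known relations in $\Uh$, $\Uv$ or $\Udiag$ to the unknown ones. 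Even this fails in type $G_{2}^{(1)}$, where the coweight lattice is too small relative to the marks $a_{i}$, and one must supplement the argument with explicit commutator computations involving $\hb_{1,\pm 1}$ and $\hb_{2,\pm 1}$ (Lemmas \ref{lem:hb2r on xbpm0m} and \ref{lem:hb1r on xbpm0m}). So your plan is the right skeleton, but the step you flag as ``the main obstacle'' requires these additional structural inputs --- the diagonal subalgebra, the choice of generators making $\psi$ fix $\xpm_{0,\pm 1}$, and type-specific arguments --- none of which are supplied or anticipated in the proposal.
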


In fact, $\te$ lifts the famous duality involution for double affine Hecke algebras used by Cherednik to realise the difference Fourier transform in his celebrated proof \cite{Cherednik95} of Macdonald's evaluation conjectures.
Our anti-involution $\psi$ may therefore be considered as the quantum analogue of this duality.
Beyond the simply laced case, our proof requires a finer understanding of the structure of $\Bdd$ coming from its Coxeter-style presentation due to Ion-Sahi \cite{IS20}.
\\

Direct consequences of the existence of $\psi$ include the following, which correspond to celebrated results for the quantum toroidal algebra $\Utorglone$ of type $\glone$ by Burban-Schiffmann \cites{BS12,Schiffmann12} and Miki \cite{Miki07}.

\begin{introcor}
    \begin{itemize}
        \item There is a congruence group action of the universal cover $\widetilde{\SL}$ on the quantum toroidal algebra $\Utor$.
        \item The action of
        $S = \begin{bsmallmatrix} 0 & 1 \\ -1 & 0 \end{bsmallmatrix}$
        provides a generalisation of the Miki automorphism.
    \end{itemize}
\end{introcor}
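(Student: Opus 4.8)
It suffices to produce a single homomorphism $\Theta\colon\widetilde{\SL}\to\mathrm{Aut}^{\pm}(\Utor)$, where $\mathrm{Aut}^{\pm}(\Utor)$ is the group of all algebra automorphisms and anti-automorphisms of $\Utor$, and then to read off both statements from it. Recall that the universal cover of $\SL$ is the three-strand braid group, $\widetilde{\SL}\cong B_{3}=\langle\sigma_{1},\sigma_{2}\mid\sigma_{1}\sigma_{2}\sigma_{1}=\sigma_{2}\sigma_{1}\sigma_{2}\rangle$, with centre $\langle\mathsf{c}\rangle\cong\Zbb$ generated by the full twist $\mathsf{c}=(\sigma_{1}\sigma_{2})^{3}=(\sigma_{1}\sigma_{2}\sigma_{1})^{2}$, and with $\SL\cong B_{3}/\langle\mathsf{c}^{2}\rangle$, $PSL_{2}(\Zbb)\cong B_{3}/\langle\mathsf{c}\rangle$. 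Write $\mathsf{S}:=\sigma_{1}\sigma_{2}\sigma_{1}$ for the distinguished lift of $S=\begin{bsmallmatrix}0&1\\-1&0\end{bsmallmatrix}$ and $\mathsf{T}:=\sigma_{1}$ for a lift of $\begin{bsmallmatrix}1&1\\0&1\end{bsmallmatrix}$; defining $\Theta$ then amounts to assigning to $\sigma_{1},\sigma_{2}$ two elements of $\mathrm{Aut}^{\pm}(\Utor)$ satisfying the braid relation.

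First I would observe that the involution $\te$ producing $\psi$ is not an isolated object: it is part of the classical action of $\widetilde{\SL}$ on double affine braid groups by (anti-)automorphisms -- due to Cherednik, with the Ion--Sahi presentation of $\Bdd$ needed in the non-simply-laced types -- whose two standard generators $\gamma_{1},\gamma_{2}\in\mathrm{Aut}^{\pm}(\Bdd)$ satisfy $\gamma_{1}\gamma_{2}\gamma_{1}=\gamma_{2}\gamma_{1}\gamma_{2}$, with $\gamma_{1}\gamma_{2}\gamma_{1}$ equal to $\te$ up to a grading twist and $\gamma_{1}$ the companion (``Gaussian'') operator realising $\mathsf{T}$. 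I would then transport this action across $\Bdd\curvearrowright\Utor$: for each $\rho$ in the image of $\widetilde{\SL}$ in $\mathrm{Aut}^{\pm}(\Bdd)$, put $\Psi_{\rho}(b\cdot z)=\rho(b)\cdot z$ and check -- exactly as in the construction of $\psi$ -- that $\Psi_{\rho}$ is a well-defined element of $\mathrm{Aut}^{\pm}(\Utor)$, i.e.\ that it respects the defining relations of $\Utor$. Since $\rho\mapsto\Psi_{\rho}$ is multiplicative, the braid relation in $\mathrm{Aut}^{\pm}(\Bdd)$ transports verbatim, so $\sigma_{i}\mapsto\Psi_{\gamma_{i}}$ defines the required $\Theta$, with $\psi=\Psi_{\te}$ recovered as $\Psi_{\gamma_{1}}\Psi_{\gamma_{2}}\Psi_{\gamma_{1}}$ up to that twist, and hence $\Theta(\mathsf{S})$ equal to $\psi$ up to a grading twist. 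If one prefers $\Theta(\mathsf{S})$ to exchange $\Uh$ and $\Uv$ as an honest automorphism, one composes additionally with the standard anti-involution of $\Utor$ fixing $\Uh$ and $\Uv$.

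To see that $\Theta$ deserves to be called an action of the universal cover, I would compute $\Theta(\mathsf{c})$ and identify it with the principal-grading automorphism of $\Utor$ (conjugation by the degree element in a completion of $\Utor$); this is nontrivial, so $\Theta$ does not factor through $PSL_{2}(\Zbb)$, and with the grading twist built into $\mathsf{S}$ it need not factor through $\SL$ either, making it genuinely an action of the universal cover. The word ``congruence'' is accounted for by the identity $\widetilde{\SL}=\TGam{1}$, together with the observation that restricting $\Theta$ to the graded subalgebras of $\Utor$ attached to the sublattices $r\Zbb$ of the grading lattice -- equivalently, specialising the grading parameter at an $r$-th root of unity -- yields the analogous actions of the congruence covers $\TGam{r}$. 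For the second bullet, I would specialise to type $A_{n}^{(1)}$, where $\Utor=\UtorA$ carries Miki's automorphism $m$; since both $m$ and $\Theta(\mathsf{S})$ exchange $\Uh$ and $\Uv$, a generator-by-generator comparison of $\Theta(\mathsf{S})$ -- after the normalising twist above -- with the published closed formulas for $m$ on the Drinfeld generators shows that the two agree up to a diagram automorphism of $A_{n}^{(1)}$ and possibly $q\leftrightarrow q^{-1}$. Thus in type $A$ the operator $\Theta(\mathsf{S})$ recovers Miki's automorphism, and in every other untwisted type it is the promised generalisation.

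The main obstacle is the well-definedness of the $\mathsf{T}$-operator $\varpi:=\Psi_{\gamma_{1}}$. Unlike $\te$, the Gaussian operator $\gamma_{1}$ genuinely lives in a completion of $\Bdd$, so its transport to $\Utor$ involves the principal-grading operator, and one must verify with care that the resulting map preserves all of the Drinfeld and quantum Serre relations of $\Utor$ without convergence pathologies; this is precisely where the \emph{extended} (rather than minimal) double affine braid group, and the finer structure of $\Bdd$ afforded by the Ion--Sahi presentation, are really needed. A close second difficulty is the type-$A$ identification with $m$: matching a map defined abstractly through the braid group action against Miki's explicit formulas forces one to pin down the correct lift $\mathsf{S}$, the correct diagram automorphism, and the correct power of $q$, none of which is visible from the construction of $\Theta$ alone.
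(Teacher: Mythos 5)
Your high-level instinct --- that the $\widetilde{\SL}$-action should be compatible with the known action of $\widetilde{\SL}$ on $\Bdd$ and that $S$ should act through $\psi$ composed with a standard anti-involution --- matches the paper, but the mechanism you propose for \emph{defining} the action has a genuine gap. The formula $\Psi_{\rho}(b\cdot z)=\rho(b)\cdot z$ is either vacuous or unproven: taking $b=1$ forces $\Psi_{\rho}=\mathrm{id}$ unless you restrict $z$ to a subalgebra fixed by $\Psi_{\rho}$ (here $\Uq\cong\Uh\cap\Uv$), and once you do restrict, the well-definedness of the resulting map is exactly the content of a Theorem-\ref{thm:psi}-level verification --- which the paper carries out \emph{once}, for the single involution $\te$, over many pages of relation-checking. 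Repeating that verification for every element in the image of $\widetilde{\SL}\to\mathrm{Aut}(\Bdd)$ is not "exactly as in the construction of $\psi$"; it is an open-ended family of such constructions. The paper sidesteps this entirely: the two unipotent generators of $\widetilde{\SL}$ are sent to $\X_{0}^{-1}$ and $\psi\X_{0}\psi$ (Theorem \ref{thm:congruence group actions on Utor}), both of which are manifestly well-defined automorphisms of $\Utor$ because $\X_{0}$ already is one and $\psi$ is an anti-involution; the defining relations of $\widetilde{\SL}$ are then checked using the compatibility $\psi\circ b=\te(b)\circ\psi$ of Proposition \ref{prop:compatibilities}. In particular your "main obstacle" --- a Gaussian operator living in a completion of $\Bdd$, with attendant convergence issues --- is a phantom: no completion of $\Utor$ or of $\Bdd$ enters the construction at any point.

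Two further points are off. First, the word "congruence" does not refer to sublattices of a grading lattice or to specialising a parameter at roots of unity; it refers to the groups $\Gam{r}\leq\SL$ attached to affine type $X_{n}^{(r)}$ in the Ion--Sahi framework, and since the paper works in untwisted types one simply has $\Gam{1}=\SL$ and $\TGam{1}=\widetilde{\SL}$ --- there is nothing further to "account for". Second, the identification with Miki's automorphism is not a delicate generator-by-generator matching up to an unknown diagram automorphism and power of $q$: the paper defines $\Phi=\eta\psi$ (Corollary \ref{Phi corollary}), notes that $\eta$ restricts to $\sigma$ on $\Uh$ and $\eta'$ on $\Uv$ (it preserves these subalgebras but does not fix them pointwise, contrary to your description), and the conditions $\Phi v=h$, $\Phi h=v\eta'\sigma$ pin $\Phi$ down uniquely, so that in type $A_{n}^{(1)}$ it \emph{is} Miki's automorphism by the uniqueness clause rather than by formula-matching. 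The correct kernel element of $\widetilde{\SL}\twoheadrightarrow\SL$ acts via conjugation by $T_{w_{0}}^{2}$, not via a "principal-grading" operator.
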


This is compatible with an existing action
$\widetilde{\SL}\curvearrowright\Bdd$
\cites{Cherednik05,IS20}, and can therefore be used to further enlarge our braid group action from \cite{Laurie24a}.
Moreover, the $\Utorglone$ analogues of these results already play a fundamental role in studying its representation theory, as well as the various applications to geometry and physics.
Our work should therefore lay the foundation for extensions of these directions.
\\

For example, quantum toroidal algebras admit `horizontal' and `vertical' representations.
Known instances of the former are written in terms of vertex operators and $q$-deformed free bosons \cites{Jing98(2),Saito98}, while in type $A$, sets of generalised Young diagrams (coloured partitions) often give concrete descriptions of the latter \cites{FJMM13,JM24}.
Twisting by our $\widetilde{\SL}$-action and horizontal--vertical symmetries allows us to pass between and relate these classes of modules together.
\\

Equipped with our anti-involution $\psi$, we are now able to successfully construct a tensor product on $\Oaff$.
The key is to conjugate $\Delta_{u}$ by $\psi$ in order to produce a new topological coproduct $\Dpsi$ for $\Utor$.
While its image still contains infinite sums, all but finitely many summands act by zero on any product of modules, and our convergence issues fall away.
In particular, $\Dpsi$ leads to a well-defined tensor product for $\Oaff$, endowing the category with a monoidal structure, and its Grothendieck group with the structure of a ring.

\begin{introthm}
    The topological coproduct
    $\Dpsi = (\psi\otimes\psi)\circ\Delta_{u}\circ\psi$
    for $\Utor$ gives rise to a well-defined tensor product on the module category $\Oaff$.
\end{introthm}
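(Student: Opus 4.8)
The plan is to carry the formal properties of the Drinfeld topological coproduct $\Delta_{u}$ across the anti-involution $\psi$, and then to establish the genuinely new feature that conjugation by $\psi$ buys us: on objects of $\Oaff$ the infinite sums in the image of $\Dpsi$ collapse to finite ones. First I would check that $\Dpsi$ is again a topological coproduct, that is, an algebra homomorphism from $\Utor$ into a suitable completion of $\Utor\otimes\Utor$ which is coassociative and counital in the spectral sense inherited from $\Delta_{u}$. As $\psi$ is an anti-automorphism of $\Utor$, the map $\psi\otimes\psi$ is an anti-automorphism of $\Utor\otimes\Utor$ --- one has $(\psi\otimes\psi)(ab)=(\psi\otimes\psi)(b)\,(\psi\otimes\psi)(a)$ on simple tensors --- so $\Dpsi=(\psi\otimes\psi)\circ\Delta_{u}\circ\psi$ is a composite of two anti-homomorphisms, hence a homomorphism; the only point to verify is that $\psi$ and $\psi\otimes\psi$ extend continuously to the completions involved, which holds because $\psi$ preserves the grading data that define them. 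Coassociativity $(\Dpsi\otimes\mathrm{id})\circ\Dpsi\cong(\mathrm{id}\otimes\Dpsi)\circ\Dpsi$ and the counit axioms then follow from the corresponding statements for $\Delta_{u}$ together with $\psi^{2}=\mathrm{id}$ and $\varepsilon\circ\psi=\varepsilon$, after the appropriate relabelling of spectral parameters.

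The heart of the proof is the well-definedness of the $\Utor$-action that $\Dpsi$ produces on $V\otimes W$ for $V,W\in\Oaff$. I would begin from the explicit formulas for $\Delta_{u}$ on the Drinfeld generating currents, where the only infinite sums come from the mixed terms in $\Delta_{u}(\xpm(z))$, in which a Cartan-type current in one tensor factor multiplies $\xpm(zu)$ in the other; extracting a fixed mode turns these into sums over a loop degree running to $+\infty$ in one factor. This is exactly why $\Delta_{u}$ itself does not act on the honest tensor product $V\otimes W$ --- already on an evaluation module that loop degree is unbounded. Applying $\psi\otimes\psi$ replaces each such loop mode by a root vector of the horizontal subalgebra $\Uh$ whose weight grows with the summation index, because $\psi$ exchanges the loop grading underlying $\Delta_{u}$ with the weight grading of $\Uh$. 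The decisive observation is then that an object of $\Oaff$ is integrable, and its weights for the Cartan of $\Uh$ --- equivalently, for the Cartan of the loop triangular decomposition --- are bounded above; hence on any fixed weight vector all but finitely many of these growing-weight operators act by zero. Thus the image sums of $\Dpsi$ terminate on $V\otimes W$, each $\Dpsi(x)$ defines an honest operator there, and compatibility of these operators with the defining relations of $\Utor$ is inherited from the fact that $\Dpsi$ satisfies those relations inside the completion. I expect this step --- isolating exactly which finiteness property of $\Oaff$ is activated by the conjugation, and checking termination uniformly enough to get a genuine module rather than a merely formal one --- to be the main obstacle.

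It remains to see that $V\otimes W$ again lies in $\Oaff$, so that $\otimes$ is an internal operation, and to assemble the categorical structure. Finite-dimensionality of the weight spaces of $V\otimes W$ follows from that for $V$ and $W$ together with the boundedness above just used; integrability with respect to each distinguished rank-one subalgebra follows from a compatibility of $\Dpsi$ with those subalgebras, reducing to the $\Uaffsltwo$ case just as on the affine level. Finally I would verify functoriality --- morphisms in $\Oaff$ tensor to morphisms, immediate from the naturality of $\Dpsi$ --- that the trivial representation is a unit object, and that the associativity constraint furnished by coassociativity of $\Delta_{u}$ is coherent; together these upgrade $(\Oaff,\otimes)$ to a monoidal category and endow its Grothendieck group with a ring structure.
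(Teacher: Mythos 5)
Your proposal is correct and follows essentially the same route as the paper: the paper likewise tracks the $(Q\oplus\Zbb\delta')$-grading through $\psi$, observes that the infinite summation index of $\Dpsi$ lands in the horizontal $\delta$-component of the weight, and concludes from the cone condition on weights of objects in $\Oaff$ that all but finitely many summands annihilate any fixed vector, with finite-dimensionality of weight spaces and the cone condition for $V\one\otimes V\two$ then following from $\Dpsi(k_i^{\pm 1})=k_i^{\pm 1}\otimes k_i^{\pm 1}$. The formal properties (homomorphism as a composite of anti-homomorphisms, coassociativity, counit) are handled exactly as you indicate, by transport along $\psi$ from Damiani's theorem.
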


One may roughly think of this solution as follows.
Quantum toroidal algebras possess a $\Zbb^{2}$--grading such that
$\Uh\subset\Zbb\times \lbrace 0 \rbrace$
and
$\Uv\subset\lbrace 0 \rbrace \times \Zbb$,
where we label the lattice directions as horizontal and vertical accordingly.
Both $\Delta_{u}$ and modules in $\Oaff$ can then be considered \emph{vertically infinite} with respect to this grading.
Indeed, the tensor factors in summands of $\Delta_{u}(z)$ generally have unbounded vertical degree.
But it is not true that elements of $V\in\Oaff$ are annihilated by the $(m,n)$ graded piece of $\Utor$ for $\lvert n\rvert \gg 0$.
Hence every summand of $\Delta_{u}(z)$ might have non-zero action on some $v\in V\one\otimes V\two$, leading to the aforementioned convergence issues.
However, $\psi$ descends to $\Zbb^{2}$ as reflection in the line $x = y$ and so $\Dpsi$ is instead \emph{horizontally infinite}, giving some intuition for why all of our problems then disappear.
\\

Explicit expressions for $\psi(z)$ are usually very complicated, and so in order to better understand the monoidal structure on $\Oaff$ we proceed to prove a series of results involving $\Dpsi$ and our tensor product.
These include various toroidal analogues of influential works by Chari-Pressley \cite{CP94} for finite dimensional representations of quantum affine algebras.
For example, there is a compatibility with $\ell$-highest weight vectors and Drinfeld polynomials.

\begin{introthm}
    Suppose that $V\alphapower\in\Oaff$ contains an $\ell$-highest weight vector $v\alphapower$ with Drinfeld polynomials $\Pcal\alphapower(z)$ for $\alpha = 1,\dots,n$.
    Then $v\one\otimes\dots\otimes v^{(n)} \in \bigotimes_{\alpha=1}^{n} V\alphapower$ is $\ell$-highest weight with Drinfeld polynomials $\prod_{\alpha=1}^{n} \Pcal\alphapower(z)$.
\end{introthm}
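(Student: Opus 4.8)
The plan is to reduce to two tensor factors and then run the Chari--Pressley-type $\ell$-highest weight computation for the new coproduct, using the defining relation $\Dpsi=(\psi\otimes\psi)\circ\Delta_{u}\circ\psi$ to transfer the required information from the (simple) formulas for the Drinfeld coproduct $\Delta_{u}$ on the currents of the loop realisation.

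First I would record that $\Dpsi$ is a genuine algebra homomorphism --- the composite of the two anti-involutions $\psi$ and $(\psi\otimes\psi)$ with the homomorphism $\Delta_{u}$ --- and that it inherits coassociativity from $\Delta_{u}$. Together with the theorem that $\Dpsi$ defines a well-defined tensor product on $\Oaff$, this means $\bigotimes_{\alpha=1}^{n}V\alphapower$ can be built by iterated pairwise tensoring, so an induction on $n$ reduces the statement to the case $n=2$: for $V\one,V\two\in\Oaff$ with $\ell$-highest weight vectors $v\one,v\two$ and Drinfeld polynomials $\Pcal\one(z),\Pcal\two(z)$, the vector $v\one\otimes v\two$ should be $\ell$-highest weight in $V\one\otimes V\two$ with Drinfeld polynomials $\Pcal\one(z)\Pcal\two(z)$. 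The rescalings of the spectral parameter $u$ produced by the iterated coproduct are absorbed into the normalisation; they do not affect the statement on Drinfeld polynomials.

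For $n=2$ there are two things to check: that every positive loop generator $x_{i,r}^{+}$ annihilates $v\one\otimes v\two$, and that $v\one\otimes v\two$ is a joint eigenvector for the commuting loop-Cartan subalgebra whose eigenvalue series is the product of those of $v\one$ and $v\two$. Under the standard dictionary between $\ell$-weights and Drinfeld polynomials on $\Oaff$, that product of series is exactly the one attached to $\Pcal\one(z)\Pcal\two(z)$, and since the factors are polynomials the resulting $\ell$-weight is again of integrable type, giving membership in $\Oaff$. Both points come down to computing $\Dpsi$ on the currents $x_{i}^{+}(z)$ and on the Cartan currents $\phi_{i}^{\pm}(z)$ to the first non-trivial order. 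The structural input I would isolate as a lemma is that $\Dpsi(x_{i}^{+}(z))$ has an expansion in which every summand carries a positive loop generator in one of the two tensor slots, with the complementary factor lying in the loop-Cartan and negative loop subalgebra, while $\Dpsi(\phi_{i}^{\pm}(z))=\phi_{i}^{\pm}(z)\otimes\phi_{i}^{\pm}(z')$ plus summands of that same shape. Granting this, each summand of $\Dpsi(x_{i,r}^{+})$ kills $v\one\otimes v\two$ because $x_{i,r}^{+}v\alphapower=0$ and the remaining factor acts by a scalar on the $\ell$-highest weight vector; and in $\Dpsi(\phi_{i}^{\pm}(z))$ all the correction terms kill $v\one\otimes v\two$ for the same reason, leaving $\phi_{i}^{\pm}(z)v\one\otimes\phi_{i}^{\pm}(z')v\two$, whose eigenvalue is the required product. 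At each stage the earlier well-definedness theorem ensures that only finitely many summands act non-trivially on $v\one\otimes v\two$, so these are honest identities and not merely formal ones.

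The main obstacle is the isolated lemma: establishing this triangular shape of $\Dpsi$ on the loop currents, given that $\psi(z)$ is in general very complicated. I would not try to write $\psi$ on the Drinfeld generators explicitly. Instead I would start from the closed form of $\Delta_{u}$ on currents and track only the triangular leading part of $(\psi\otimes\psi)\circ\Delta_{u}\circ\psi$ applied to $x_{i}^{+}(z)$, using that $\psi$ exchanges the horizontal and vertical data compatibly with the loop triangular decomposition, together with weight and $\Zbb^{2}$-grading bookkeeping (recalling that $\psi$ descends to reflection in $x=y$) to rule out summands of ``lowest weight $\otimes$ scalar'' type that would break the argument. A possibly cleaner route is to note that $\psi$ sends the $\ell$-highest weight vector of a module in $\Oaff$ to the $\ell$-lowest weight vector of its $\psi$-twisted restricted dual, transport the whole assertion through $\psi$ so that it becomes the analogous --- essentially known --- statement for $\Delta_{u}$ acting on $\ell$-lowest weight vectors (which does not meet the convergence pathology of $\Delta_{u}$, since one only ever evaluates on those distinguished vectors), and then transport back.
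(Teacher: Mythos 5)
Your skeleton---reduce to $n=2$ by coassociativity, then show that every $\xp_{i,m}$ annihilates $v\one\otimes v\two$ and that the $\phi^{\pm}_{i,\pm s}$ act by the convolution of the two $\ell$-weights---is exactly the paper's. But the ``isolated lemma'' you defer to is essentially the whole theorem, and neither of your routes to it holds up. The first route (grading bookkeeping on $(\psi\otimes\psi)\circ\Delta_{u}\circ\psi$ applied to the currents) founders on the fact that $\psi$ does not interact simply with the loop triangular decomposition: $\psi(\Utor^{+})$ contains elements such as $x^{-}_{i,k}C^{k}k_{i}^{-1}$ (Lemma \ref{lem:images of Utor pm}), and $\psi(\xp_{0,0}) = v(\xp_{0})$ is an iterated $q$-bracket of \emph{negative} Drinfeld generators via Jing's isomorphism. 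The $\Zbb^{2}$-grading alone cannot determine which of the many summands of $\Delta_{1}$ applied to such a bracket survive, nor with what Cartan cofactors; the paper needs those cofactors exactly (Proposition \ref{prop:action of xpm00}: $\xp_{0,0}$ acts by $\xp_{0,0}\otimes k_{0}^{-2}+k_{0}^{-1}\otimes\xp_{0,0}$) because they feed into $h_{0,\pm 1}$ and then $\phi^{+}_{0,s}$. Moreover, your picture of $\Dpsi(\phi^{\pm}_{i}(z))$ as ``diagonal term plus corrections that die on $v\one\otimes v\two$'' is not how the convolution arises at the node $0$: in Proposition \ref{prop:action of phi+0m} the cross-term $(q_{0}^{-4}-1)(k_{0}\xp_{0,1}\otimes k_{0}^{-1}\xm_{0,0})$ appearing in the action of $h_{0,1}$ does \emph{not} vanish inside the iterated commutators defining $\phi^{+}_{0,s}$---it contributes precisely the off-diagonal part $\sum_{\ell=1}^{s-1}\phi^{+}_{0,s-\ell}\otimes\phi^{+}_{0,\ell}$ of the convolution. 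The correction terms must be computed, not discarded.

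Your alternative route---transporting through $\psi$ to a statement about $\Delta_{u}$ on $\ell$-lowest weight vectors of $\psi$-twisted duals---is not available either: $\psi$ exchanges the central elements $C$ and $k_{\delta}^{-1}$, so the $\psi$-twist of a module in $\Oaff$ leaves $\Oaff$ entirely (it acquires nonzero horizontal level), and there is no established $\ell$-lowest-weight theory or $\Delta_{u}$-compatibility result for such modules to quote; the convergence pathologies of $\Delta_{u}$ are precisely what forced the introduction of $\Dpsi$ in the first place. What the paper actually does is split the index set: for each $i\in\Imin$ it shows (Proposition \ref{prop:actions of Udash coincide}) that the restriction of the $\Dpsi$-action to the vertical-type affine subalgebra $\U(\Ii)\cong\Udash$ on the subspace $V\one(\Ii)\otimes V\two(\Ii)$ coincides with the ordinary quantum affine tensor product via $\Dbarplus$---here a grading argument of the kind you propose does suffice to kill the tails---and then imports the Chari--Pressley result wholesale; the remaining node $0$ is handled by the explicit computations described above. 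Neither ingredient appears in your proposal, so as written the argument has a genuine gap at its central step.
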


It suffices to consider the $n=2$ case, where our proofs require a detailed analysis of the action of different generators of $\Utor$ on $v\one\otimes v\two$.
In particular, we can relate our toroidal tensor product to the affine one -- see Sections \ref{subsection:action of Uv} and \ref{subsection:action of U0} for more details.
\\

For any $a\in\Cbb^{\times}$, representations $V$ of $\Utor$ can be twisted by the algebra automorphism that scales the $(m,n)$ graded piece by $a^{n\hslash}$, where $\hslash$ is the Coxeter number of $\gaff$ -- denote the resulting module by $V_{a}$.
It turns out that a tensor product of irreducibles objects in $\Oaff$ is generically irreducible with respect to the spectral parameter $a$.
Moreover the category is in some sense generated from a set of \emph{fundamental modules} $V(\lambda_{i},a) = V(\lambda_{i},1)_{a^{1/\hslash}}$, where $i$ runs over the vertices of the affine Dynkin diagram and $a\in\Cbb^{\times}$.

\begin{introthm}
    \begin{itemize}
        \item Suppose that $V\alphapower\in\Oaff$ is irreducible for $\alpha = 1,\dots,n$.
        Then
        $V\one_{a_{1}} \otimes\dots\otimes V^{(n)}_{a_{n}}$
        is irreducible for all but countably many
        $(a_{1},\dots,a_{n}) \in (\Cbb^{\times})^{n}$.
        \item In this case,
        $V\one_{a_{1}} \otimes\dots\otimes V^{(n)}_{a_{n}}$
        is isomorphic to
        $V^{(\sigma(1))}_{a_{\sigma(1)}} \otimes\dots\otimes V^{(\sigma(n))}_{a_{\sigma(n)}}$
        for any permutation $\sigma\in S_{n}$.
        \item Every irreducible representation in $\Oaff$ is isomorphic to a subquotient of some tensor product
        $V(\lambda_{i_{1}},a_{1}) \otimes\dots\otimes V(\lambda_{i_{n}},a_{n})$
        of fundamental modules.
    \end{itemize}
\end{introthm}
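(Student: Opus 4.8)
The plan is to obtain all three statements from the preceding theorem on tensor products of $\ell$-highest weight vectors, together with the classification of irreducibles in $\Oaff$ by Drinfeld polynomials. First I would reduce the first bullet to the case $n = 2$ by induction on $n$: if $V' := V\two_{a_{2}} \otimes \dots \otimes V^{(n)}_{a_{n}}$ is already known to be an irreducible object of $\Oaff$ for $(a_{2},\dots,a_{n})$ away from a countable union of hypersurfaces, it remains to handle $V\one_{a_{1}} \otimes V'$, which is an instance of the two-factor case applied to arbitrary irreducibles of $\Oaff$. So fix irreducibles $V := V\one_{a_{1}}$ and $W := V\two_{a_{2}}$ of $\Oaff$, with $\ell$-highest weight vectors $v, w$ and Drinfeld polynomials $\Pcal\one(z), \Pcal\two(z)$. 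By the previous theorem $v \otimes w$ is $\ell$-highest weight in $V \otimes W$ with Drinfeld polynomials $\Pcal\one(z)\Pcal\two(z)$, so the submodule $M := \Utor\cdot(v \otimes w)$ has a unique irreducible quotient, namely $L := L(\Pcal\one\Pcal\two)$, which is also the unique irreducible object of $\Oaff$ carrying these Drinfeld polynomials. Two things then remain: \textbf{(a)} \emph{cyclicity}, that $M = V \otimes W$ for $a_{1}/a_{2}$ outside a countable set; and \textbf{(b)} that, when (a) holds, $V \otimes W$ is irreducible.

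Granting (a), I would deduce (b) by a head--socle argument. The anti-involution $\psi$ interchanges submodules and quotients, and by the very definition $\Dpsi = (\psi\otimes\psi)\circ\Delta_{u}\circ\psi$ it reverses tensor factors up to a twist; concretely, the appropriate $\psi$-twisted dual of $V \otimes W$ is isomorphic to $W^{\vee} \otimes V^{\vee}$ for suitable irreducibles $V^{\vee}, W^{\vee}$ of $\Oaff$. Applying (a) to the pair $(W^{\vee}, V^{\vee})$ shows that, outside a further countable set, $W^{\vee} \otimes V^{\vee}$ is generated by the tensor of its $\ell$-highest weight vectors and hence has simple head; dualizing back, $V \otimes W$ has simple socle. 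Together with (a), which makes the head of $V \otimes W$ simple, both the head and the socle of $V \otimes W$ are isomorphic to $L$, and in particular each has $\ell$-highest weight $\mathrm{wt}(v \otimes w)$. But the corresponding $\ell$-weight space of $V \otimes W$ is one-dimensional, spanned by $v \otimes w$, and $v \otimes w$ is not in the kernel of the surjection $V \otimes W \twoheadrightarrow L$. Were $V \otimes W$ reducible, that kernel would be a non-zero submodule, hence would contain the socle $\cong L$ and therefore a non-zero vector of $\ell$-weight $\mathrm{wt}(v \otimes w)$, necessarily proportional to $v \otimes w$ --- a contradiction. Thus $V \otimes W = L$ is irreducible.

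Step (a), the generic cyclicity, is the main obstacle. I would show by induction on weight that every vector of $V \otimes W$ lies in $M$, the inductive step using the explicit form of $\Dpsi$ and the loop triangular decomposition of $\Utor$. Applying a vertical lowering generator to a vector already known to lie in $M$ produces, through $\Dpsi$, a linear combination of terms $\xim u \otimes u'$, $u \otimes \xim u'$ (for suitable $u \in V$, $u' \in W$) and $\psi$-twisted correction terms, whose coefficients relative to one another are rational functions of $a_{1}/a_{2}$ with only finitely many zeros and poles; away from these one solves to place the lower-weight vectors in $M$, and iterating exhausts $V \otimes W$. The delicate point is controlling the $\psi$-twisted summands of $\Dpsi$, absent from the Drinfeld coproduct $\Delta_{u}$: here I would lean on the comparison between the toroidal and affine tensor products and on the computations carried out in Sections \ref{subsection:action of Uv} and \ref{subsection:action of U0}. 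Each step contributes only finitely many bad values of $a_{1}/a_{2}$, so the union over the countably many weights is still countable, and after tracking spectral parameters through the induction on $n$ the full exceptional set is contained in a countable union of hypersurfaces in $(\Cbb^{\times})^{n}$.

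It then remains to read off the last two bullets. For the second, note that for $(a_{1},\dots,a_{n})$ outside the union of the exceptional sets of \emph{all} $n!$ reorderings --- still a countable union of hypersurfaces --- every $V^{(\sigma(1))}_{a_{\sigma(1)}} \otimes \dots \otimes V^{(\sigma(n))}_{a_{\sigma(n)}}$ is irreducible, and by the previous theorem each contains an $\ell$-highest weight vector with Drinfeld polynomials $\prod_{\alpha} \Pcal\alphapower(z)$, a product that does not depend on $\sigma$; hence all of these modules are isomorphic to $L(\prod_{\alpha}\Pcal\alphapower)$. For the third, take an irreducible $L(\Pcal) \in \Oaff$ with Drinfeld polynomials $\Pcal = (\Pcal_{i}(z))_{i}$ and factor each $\Pcal_{i}(z)$ into linear factors over the algebraically closed field $\kk$. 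Each such linear factor at the $i$-th vertex is exactly the single non-trivial Drinfeld polynomial of a fundamental module $V(\lambda_{i}, a)$, so the tensor product $T$ of all of these fundamental modules lies in $\Oaff$ and, by the previous theorem, contains an $\ell$-highest weight vector whose Drinfeld polynomials are the full product $\prod_{i} \Pcal_{i}(z) = \Pcal$. The submodule of $T$ that it generates has $L(\Pcal)$ as its unique irreducible quotient, exhibiting $L(\Pcal)$ as a subquotient of $T$, as required.
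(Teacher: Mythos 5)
Your treatment of the second and third bullets matches the paper (the third is exactly Corollary \ref{cor:subquotient of tensor product}; for the second the paper's Corollary \ref{cor:irreducible switch factors} gets the swap directly from a weight-space dimension count, without excluding further parameters, but your version suffices). The problem is in the first bullet, and specifically in your step (b). You deduce simplicity of the socle of $V\otimes W$ by passing to a ``$\psi$-twisted dual'' and claiming $(V\otimes W)^{\vee}\cong W^{\vee}\otimes V^{\vee}$ with $V^{\vee},W^{\vee}$ irreducible objects of $\Oaff$. Nothing of the sort is available here: $\Utor$ is not a Hopf algebra, no antipode or dual functor on $\Oaff$ is constructed in the paper, the modules are infinite-dimensional (so one would need restricted duals, whose weights are bounded below rather than above), and the compatibility of any candidate duality with the topological coproduct $\Dpsi$ --- in particular the factor-reversal property you invoke --- is precisely the kind of structural statement that would itself require proof. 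This is a genuine gap. The paper avoids duality entirely: it isolates two conditions on each weight space $\mu\lneq\lambda\one+\lambda\two$, namely cyclicity (\ref{eqn:first irreducibility condition}) \emph{and} the absence of vectors killed by all $\xp_{i,k}$ (\ref{eqn:second irreducibility condition}), and proves both directly, each failing for only finitely many $b$ per weight.

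Your step (a) also needs more than you give it. Applying one lowering operator at a time to $v\one\otimes v\two$ and ``solving'' for the individual terms $\xim u\otimes u'$ and $u\otimes\xim u'$ from the resulting identities cannot work from a single relation; what the paper actually does is renormalize the generators (setting $\Tilde{x}^{-}_{i,k}=b^{1-h(k+\delta_{i0})}\xm_{i,k}$ for $k\le-\delta_{i0}$ and $\Tilde{x}^{-}_{i,k}=\xm_{i,k}C^{k}k_{i}^{-1}$ otherwise, using Lemma \ref{lem:images of Utor pm}) so that at $b=0$ each acts on exactly one tensor factor, then invokes Lemma \ref{lem:annihilating and spanning weight spaces} (weight spaces of an irreducible are spanned by monomials in the $\xm_{i,k}$ with indices in a half-line, and no nonzero vector is killed by all $\xp_{i,k}$ with indices in a half-line) to verify both conditions in the limit, and finally transfers back to generic $b$ via the linear-algebra Lemma \ref{lem:kernels and images generically} over $\Cbb[b]$. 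The same degeneration handles condition (\ref{eqn:second irreducibility condition}), which is what replaces your socle argument. So the skeleton (cyclicity plus no lower singular vectors, countably many bad parameters per weight) is right, but the two mechanisms you propose to fill it --- term-by-term solving and a duality functor --- are respectively insufficient and unavailable.
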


The theory of $q$-characters provides a powerful combinatorial tool for studying the category $\Oaff$ \cites{FR99,Hernandez05,Hernandez07}.
In the particular case of quantum affine algebras, this is moreover connected to the cluster algebra structure on various Grothendieck rings of finite dimensional modules.
However, a fundamental property missing for other quantum affinizations was a compatibility between the $q$-character morphism
$\chi_{q} : K(\Oaff) \rightarrow \Ycal$
and a multiplication on $K(\Oaff)$ extended from some tensor product.
In Section \ref{section:q-characters} we are able to prove such a result for quantum toroidal algebras, as well as establish a relationship between our tensor product and Hernandez' fusion product on the level of Grothendieck rings.

\begin{introthm}
    \begin{itemize}
        \item Our tensor product on $\Oaff$ is compatible with the $q$-character morphism, in particular
        $\chi_{q}(V\one \otimes V\two) = \chi_{q}(V\one) \cdot \chi_{q}(V\two)$
        for all representations $V\one,V\two\in\Oaff$.
        \item The $q$-character morphism
        $\chi_{q} : K(\Oaff) \rightarrow \Ycal$
        is a ring homomorphism.
        \item Our tensor product $\otimes$ and Hernandez' fusion product $\ast_{f}$ give rise to the same product on $K(\Oaff)$.
    \end{itemize}
\end{introthm}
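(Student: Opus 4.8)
The plan is to deduce all three statements from the first one, and to prove the latter by a Frenkel--Reshetikhin--style analysis \cite{FR99} of the Cartan part of $\Dpsi$. Recall that $\chi_{q}(V) = \sum_{\omega}\dim(V_{\omega})\,m_{\omega}$, the sum running over the $\ell$-weights $\omega$ of $V$, where $V_{\omega}$ denotes the joint generalised eigenspace of the commutative Cartan--Drinfeld subalgebra $\Utor^{0}$ and $m_{\omega}\in\Ycal$ the corresponding monomial; since the product on $\Ycal$ satisfies $m_{\omega_{1}}m_{\omega_{2}} = m_{\omega_{1}\omega_{2}}$, the first bullet is equivalent to the assertion that the multiset of $\ell$-weights of $V\one\otimes V\two$ is obtained by pairwise multiplication of those of $V\one$ and $V\two$, with multiplicities. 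By coassociativity of $\Dpsi$ it suffices to treat $n=2$. The second bullet is then formal: $\chi_{q}$ is additive on short exact sequences because taking $\ell$-weight spaces is exact, and it is multiplicative by the first bullet, so with respect to the ring structure on $K(\Oaff)$ coming from the monoidal structure constructed above (namely $[V\one]\cdot[V\two] = [V\one\otimes V\two]$) and the natural ring structure on $\Ycal$, the morphism $\chi_{q}$ is a ring homomorphism.

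The crucial input is the behaviour of $\Dpsi$ on $\Utor^{0}$. Using the loop triangular decomposition $\Utor = \Utor^{-}\,\Utor^{0}\,\Utor^{+}$, I would show that each Cartan current $\psi_{i}^{\pm}(z)$ satisfies, in the relevant completion of $\Utor\otimes\Utor$,
\[ \Dpsi\big(\psi_{i}^{\pm}(z)\big) \;=\; \psi_{i}^{\pm}(z)\otimes\psi_{i}^{\pm}(z) \;+\; (\text{terms lying in } \Utor^{-}\Utor^{0}\otimes\Utor^{0}\Utor^{+}), \]
up to the harmless spectral shift by $u$ already present in $\Delta_{u}$, which can be absorbed into the twists $V_{a}$. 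On the one hand, grouplikeness of the Cartan currents for the Drinfeld coproduct $\Delta_{u}$ is known \cites{Hernandez05,Damiani24}; on the other hand, the conjugation by $\psi$ must be controlled, and this is where the detailed computations from Sections~\ref{subsection:action of Uv} and~\ref{subsection:action of U0} on the action of $\Uv$ and $\U_{0}$ on tensor products -- now carried out at an arbitrary weight vector rather than only at an $\ell$-highest weight vector as in the proof of compatibility with Drinfeld polynomials -- enter decisively, together with the fact that $\psi$ is a graded anti-involution descending to reflection in $x=y$ on the $\Zbb^{2}$-grading. Verifying that this (generally very complicated) conjugation does not disturb the grouplike leading term, and that the completion in which the above identity lives is compatible with the convergence property of $\Dpsi$, is the main obstacle.

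Granting the displayed identity, the first bullet follows in the standard way. Fix joint generalised eigenbases $\{v^{(1)}_{\alpha}\}$ and $\{v^{(2)}_{\beta}\}$ of $V\one$ and $V\two$ for $\Utor^{0}$, with $\ell$-weights $\omega^{(1)}_{\alpha}$ and $\omega^{(2)}_{\beta}$ respectively. On a fixed $\Utor^{0}$-weight space $W_{\mu}$ of $V\one\otimes V\two$, order the vectors $v^{(1)}_{\alpha}\otimes v^{(2)}_{\beta}$ by decreasing weight of the first tensor factor; the correction terms in $\Utor^{-}\Utor^{0}\otimes\Utor^{0}\Utor^{+}$ then act strictly upper-triangularly (they strictly lower the first-factor weight), while the grouplike term preserves each summand $V^{(1)}_{\nu}\otimes V^{(2)}_{\xi}$, acting there with generalised eigenvalue corresponding to $\omega^{(1)}_{\alpha}\omega^{(2)}_{\beta}$ -- and only finitely many summands of $\Dpsi(\psi_{i}^{\pm}(z))$ act non-trivially here, by the convergence property of $\Dpsi$. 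Reading off the diagonal shows that the multiset of $\ell$-weights of $V\one\otimes V\two$ is $\{\,\omega^{(1)}_{\alpha}\omega^{(2)}_{\beta}\,\}_{\alpha,\beta}$, which is precisely the first bullet.

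For the third bullet I would use that $\chi_{q}\colon K(\Oaff)\to\Ycal$ is injective -- valid for this category since the $q$-character determines the composition factors of a module -- so that it suffices to check $\chi_{q}(V\one\ast_{f}V\two) = \chi_{q}(V\one)\cdot\chi_{q}(V\two)$. This is obtained by running the same triangular argument for the Drinfeld coproduct $\Delta_{u}$ itself on the enlarged category in which Hernandez' fusion product is defined; there no conjugation by $\psi$ is needed and the Cartan currents are genuinely grouplike up to the $u$-shift, so $\chi_{q}$ is multiplicative, and specialising back to $\Oaff$ preserves the $\ell$-weight space decomposition and hence the $q$-character. Combining with the first bullet gives $\chi_{q}(V\one\otimes V\two) = \chi_{q}(V\one\ast_{f}V\two)$ for all $V\one, V\two\in\Oaff$, and injectivity of $\chi_{q}$ then yields $[V\one\otimes V\two] = [V\one\ast_{f}V\two]$ in $K(\Oaff)$; thus $\otimes$ and $\ast_{f}$ induce the same product on the Grothendieck ring.
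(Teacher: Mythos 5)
Your architecture coincides with the paper's: reduce the first bullet to a statement about the generalised eigenvalues of the Cartan currents on $V\one\otimes V\two$, prove that by a triangularity argument, then obtain the second bullet formally and the third from the injectivity of $\chi_{q}$ together with Hernandez' identity $\chi_{q}(V\one\ast_{f}V\two)=\chi_{q}(V\one)\cdot\chi_{q}(V\two)$. The last two steps are correct and are exactly what the paper does.

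The gap is the central identity, which you assert, flag as ``the main obstacle'', and do not prove --- and which is false in the form you state it. Since $\psi$ exchanges the horizontal and vertical directions, $\psi(\phi^{\pm}_{i,\pm s})$ lies in $\U_{\pm s\delta,0}$ and is a polynomial in the $x^{\pm}_{j,0}$ (and, for $i=0$, in $x^{\pm}_{0,\pm1}$ and $h_{0,\pm1}$); after applying $\Delta_{1}$ and $\psi\otimes\psi$, the correction terms of $\Dpsi(\phi^{\pm}_{i,\pm s})$ land in graded pieces $\U_{\mu_{1}+k_{1}\delta,n_{1}\delta'}\otimes\U_{\mu_{2}+k_{2}\delta,n_{2}\delta'}$ with $\mu_{1},\mu_{2}$ arbitrary elements of $\mathring{Q}$ summing to zero. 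They are not contained in $\Utor^{-}\Utor^{0}\otimes\Utor^{0}\Utor^{+}$, and they do not uniformly lower (in fact the nontrivial ones shift the first factor \emph{up} by $\ell\delta$ modulo $\mathring{Q}$) any single functional of the first-factor weight. The paper has to filter each weight space by the pair $\big(\langle\lambda\two,\Lambda_{0}^{\vee}\rangle,(\lambda\two,\theta)\big)$; for $i\in I_{0}$ the terms with a genuine $\delta$-shift are then strictly triangular and the diagonal reduces to the known quantum affine statement for $\Dbarplus$ via $v\otimes v$, but for $i=0$ several classes of correction terms preserve both coordinates and contribute to the diagonal blocks. Identifying that diagonal as exactly $\sum_{r}\phi^{\pm}_{0,\pm r}\otimes\phi^{\pm}_{0,\pm(s-r)}$ is the substance of Section~\ref{subsection:q-characters proof}: it requires the explicit case analysis \ref{case ll}, \ref{case rr}, \ref{case rl}, including a cancellation in which the \ref{case rl} summands produce a term $(k_{0}-k_{0}^{-1})\otimes\phi^{+}_{0,s}$ that must recombine with the contributions $k_{0}^{-1}\otimes\phi^{+}_{0,s}$ and $\phi^{+}_{0,s}\otimes k_{0}$ of the other two classes to yield the convolution. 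Until that computation (or an equivalent control of the conjugated Cartan currents) is carried out, the first bullet --- and with it the other two --- is not established.
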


An essential feature of module categories for finite and affine quantum groups is the presence of (meromorphic) braidings.
Namely, there exist $R$-matrix intertwiners that exchange tensor factors in a product of modules, and satisfy the Yang-Baxter equation.
These are a fundamental ingredient in the various applications to low-dimensional topology, quantum integrable systems, cluster algebras, Schur-Weyl dualities, and so on.
On the toroidal level, we obtain $R$-matrices for all direct sums $V\alphapower$ of tensor products of irreducible objects in $\Oaff$.
For example, if each $V\alphapower$ is irreducible we have the following.

\begin{introthm}
    There exist unique
    $\Hom_{\Cbb}(V\alphapower \otimes V\betapower,V\betapower \otimes V\alphapower)$-valued rational functions $\Rcal\alphabetapower(x)$ such that
    \begin{itemize}
        \item $\Rcal\alphabetapower(b/a)$
        is a $\Utor$-module homomorphism
        $V\alphapower_{a} \otimes V\betapower_{b} \rightarrow
        V\betapower_{b} \otimes V\alphapower_{a}$
        sending
        $v\alphapower \otimes v\betapower \mapsto
        v\betapower \otimes v\alphapower$
        whenever $\Rcal\alphabetapower(x)$ does not have a pole at $b/a$,
        \item $\Rcal\alphabetapower(b/a)$ is moreover an isomorphism if
        $V\alphapower_{a} \otimes V\betapower_{b}$
        is irreducible,
        \item the (trigonometric, quantum) Yang-Baxter equation is satisfied.
    \end{itemize}
\end{introthm}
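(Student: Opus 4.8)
The plan is to construct $\Rcal\alphabetapower(x)$ following the classical Drinfeld--Jimbo strategy for quantum affine algebras (as in Kashiwara or Etingof--Kazhdan, and its affine incarnation in Chari--Pressley), adapted to the toroidal topological coproduct $\Dpsi$. First I would fix the parameters and consider the two tensor products $V\alphapower_{a} \otimes V\betapower_{b}$ and $V\betapower_{b} \otimes V\alphapower_{a}$, both lying in $\Oaff$ by Theorem 3 (compatibility of $\Dpsi$ with the tensor structure). By Theorem 4, when the $V\alphapower$ are irreducible the vector $v\alphapower \otimes v\betapower$ is $\ell$-highest weight in $V\alphapower_{a} \otimes V\betapower_{b}$ with Drinfeld polynomials $\Pcal\alphapower(z)\Pcal\betapower(z)$, and likewise $v\betapower \otimes v\alphapower$ is $\ell$-highest weight in $V\betapower_{b} \otimes V\alphapower_{a}$ with the \emph{same} Drinfeld polynomials. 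Hence the irreducible $\ell$-highest weight module $L$ with those Drinfeld polynomials embeds (as the submodule generated by the highest weight vector) in both tensor products, at least generically. The idea is then: for generic $b/a$ the tensor product is irreducible (Theorem 5), so the two modules are isomorphic via a unique-up-to-scalar map; normalise it by $v\alphapower\otimes v\betapower \mapsto v\betapower\otimes v\alphapower$. This defines $\Rcal\alphabetapower$ on a Zariski-dense set of spectral parameters; rationality in $x = b/a$ then follows from the standard argument that the intertwiner is cut out by linear equations with coefficients polynomial in the spectral parameter (Cramer's rule), so it extends to a $\Hom_{\Cbb}(V\alphapower\otimes V\betapower, V\betapower\otimes V\alphapower)$-valued rational function.

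The key steps, in order, are: (i) show that $\Hom_{\Utor}(V\alphapower_{a}\otimes V\betapower_{b},\, V\betapower_{b}\otimes V\alphapower_{a})$ is one-dimensional for generic $b/a$ --- this is where Theorem 5 (generic irreducibility) and Theorem 4 ($\ell$-highest weight data) combine, since an irreducible $\ell$-highest weight module has no nontrivial self-endomorphisms and both sides share the highest weight line; (ii) construct the intertwiner explicitly on the $\ell$-highest weight vector and extend it by the $\Utor$-action, using the $\ell$-highest weight generation to see it is determined, and then track its matrix entries as rational functions of $x$ by the linear-system argument; (iii) establish that $\Rcal\alphabetapower(b/a)$ is a genuine homomorphism wherever it has no pole --- this follows because the homomorphism condition is a polynomial identity in $x$ that holds on a dense set, hence identically, after clearing denominators; (iv) prove it is an isomorphism precisely when $V\alphapower_{a}\otimes V\betapower_{b}$ is irreducible, since then a nonzero homomorphism between modules of the same (finite-multiplicity) character must be bijective; (v) derive the Yang--Baxter equation on $V\alphapower_{a}\otimes V\betapower_{b}\otimes V^{(\gamma)}_{c}$ by noting both sides are $\Utor$-homomorphisms to the reversed triple product sending the triple $\ell$-highest weight vector to its reversal, so they agree generically by one-dimensionality of the relevant $\Hom$-space, and then agree as rational functions identically; coassociativity of $\Dpsi$ (inherited from $\Delta_{u}$) is what makes all three ways of bracketing the triple product genuinely isomorphic.

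The main obstacle I expect is step (i)--(ii): controlling the $\Hom$-space and the rationality in the \emph{toroidal} setting, where $\Dpsi$ has horizontally infinite image. One must check that the linear equations defining an intertwiner on a given weight space involve only finitely many summands of $\Dpsi$ acting nontrivially --- precisely the finiteness guaranteed by Theorem 3 --- so that the matrix whose minors control rationality is of finite size with polynomial entries in $x$. A secondary subtlety is that $\Oaff$ is not semisimple, so one cannot simply decompose the tensor product; instead one argues via the $\ell$-highest weight filtration and the classification by Drinfeld polynomials, using that the cyclic submodule generated by $v\alphapower\otimes v\betapower$ surjects onto the irreducible $L$ and, for generic $x$, equals the whole tensor product. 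Finally, for the last bullet about the normalised $R$-matrix giving a meromorphic braiding on all of $\Oaff$ (stated in the introduction more broadly than this theorem), one extends from irreducibles to finite direct sums of tensor products of irreducibles by functoriality and additivity of $\Dpsi$; but for the theorem as stated only the irreducible case is needed, and the argument above suffices.
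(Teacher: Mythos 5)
Your proposal is correct and follows essentially the same route as the paper: generic irreducibility (Theorem \ref{thm:irreducible except countable}) plus the matching of Drinfeld polynomials (Theorem \ref{thm:l-highest weight vector in tensor products} and Corollary \ref{cor:irreducible switch factors}) give a unique normalised intertwiner at generic spectral parameter via Schur's lemma, rationality is extracted from a finite linear system with coefficients in $\Cbb[b^{\pm 1}]$ (the paper does this by expressing a fixed weight-space basis in terms of monomials $\xm_{i_{1},k_{1}}\cdots\xm_{i_{s},k_{s}}$ applied to the $\ell$-highest weight vector and transporting the coefficients), and both the intertwining property and the Yang--Baxter equation are then propagated from a Zariski-dense set of parameters by equality of rational functions. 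Your observation that the finiteness supplied by Theorem \ref{thm:integrability of tensor products} is what keeps the relevant linear systems finite is exactly the point on which the paper's argument rests as well.
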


Just like for our tensor product, we can moreover relate these $\Rcal\alphabetapower(x)$ to the intertwiners coming from \cite{CP94}.
Indeed, they may be seen as \emph{glued together} from infinitely many quantum affine $R$-matrices.
\\

When considering the connections with quantum physics, as well as studying the module categories themselves, an important role is played by \emph{transfer matrices}.
These are certain commuting linear operators on representations, and are used to establish the integrability of the corresponding quantum systems via Bethe ansatz techniques.
Using our $R$-matrices we initiate such directions on the toroidal level.

\begin{introthm}
    For each $V\alphapower$ and $V\betapower$ there exists an associated transfer matrix
    $\Tcal\alphabetapower(x) \in \mathrm{End}_{\Cbb}(V\alphapower)(x)$
    such that all
    $[\Tcal\onetwo(b/a),\Tcal\onethree(c/a)] = 0$.
\end{introthm}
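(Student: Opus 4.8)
The plan is to construct $\Tcal\alphabetapower(x)$ as a (partial) trace of the $R$-matrix $\Rcal\alphabetapower(x)$ from the previous theorem, composed with a suitable grading-shift operator, and then deduce the commutation relation from the Yang--Baxter equation. Concretely, recall that the category $\Oaff$ carries a $\Zbb^{2}$--grading, and that twisting $V\betapower$ by the spectral parameter amounts to rescaling the vertical grading; in particular $V\betapower$ admits a weight-space decomposition under the Cartan-type (loop) generators. I would first fix a linear operator $\varphi\in\mathrm{End}_{\Cbb}(V\betapower)$ built from these Cartan generators (a "twist'' on the auxiliary space $V\betapower$, analogous to the grading operator $q^{-2\rho^{\vee}}$ in the quantum affine setting) that is natural enough to interact well with $R$-matrices. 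Then set
\[
\Tcal\alphabetapower(x) \;=\; \mathrm{tr}_{V\betapower}\bigl( (\mathrm{id}_{V\alphapower}\otimes\varphi)\circ \Rcal\betaalphapower(x)\bigr) \;\in\; \mathrm{End}_{\Cbb}(V\alphapower)(x),
\]
where the trace over the auxiliary factor $V\betapower$ is taken weight-space by weight-space. Here one must check that this partial trace converges as a formal object: since $V\betapower\in\Oaff$ has finite-dimensional weight spaces but is generally infinite-dimensional, I would argue that the action of $\Rcal\betaalphapower(x)$, being glued from quantum affine $R$-matrices as remarked after the $R$-matrix theorem, is "block upper-triangular'' with respect to the vertical grading and that $\varphi$ supplies enough decay (a geometric series in the spectral variable) to make the weighted trace a well-defined element of $\mathrm{End}_{\Cbb}(V\alphapower)(x)$.

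The second and central step is the commutation $[\Tcal\onetwo(b/a),\Tcal\onethree(c/a)] = 0$. This I would derive in the standard "railroad'' fashion: consider the triple tensor product $V\one_{a}\otimes V\two_{b}\otimes V\three_{c}$, write down the Yang--Baxter equation
\[
\Rcal\onetwo_{12}\,\Rcal\onethree_{13}\,\Rcal\twothree_{23} \;=\; \Rcal\twothree_{23}\,\Rcal\onethree_{13}\,\Rcal\onetwo_{12}
\]
on the appropriate spectral arguments, insert the twist operators $\varphi$ on the two auxiliary factors (factors $2$ and $3$), and take the partial trace over those two factors. The key algebraic input is that $\varphi\otimes\varphi$ commutes with $\Rcal\twothree(x)$ (equivalently, the twist is compatible with the $R$-matrix on the auxiliary spaces) — this follows because $\varphi$ is built from Cartan-type generators and $\Rcal\twothree(x)$ is a $\Utor$-intertwiner, hence commutes with the loop Cartan action up to the grading shift that $\varphi$ precisely encodes. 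Given this, cyclicity of the trace over $V\two\otimes V\three$ moves the $\Rcal\twothree$ factor from one side of the product to the other, yielding $\Tcal\onetwo(b/a)\,\Tcal\onethree(c/a) = \Tcal\onethree(c/a)\,\Tcal\onetwo(b/a)$ as operators on $V\one$.

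I expect the main obstacle to be the convergence and well-definedness of the partial trace in step one, together with the verification that cyclicity of the trace is legitimate in this infinite-dimensional, graded-completed setting. In the finite type or finite-dimensional affine setting these are automatic, but here one must genuinely use the structure of $\Oaff$: finite-dimensionality of weight spaces, the lower-bound on weights coming from $\ell$-highest weight generation (Theorems above), and the explicit "glued'' form of $\Rcal\alphabetapower(x)$ to control the tails. A secondary technical point is choosing $\varphi$ so that both (i) the trace converges and (ii) $\varphi\otimes\varphi$ genuinely commutes with $\Rcal\twothree(x)$ on the nose rather than up to a scalar; I anticipate that the correct choice is dictated by matching the grading shift built into the definition of $V_{a}$, i.e. $\varphi$ acts on the vertical-degree-$n$ part of $V\betapower$ by a fixed scalar to the $n$-th power. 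Once $\varphi$ is pinned down, the Yang--Baxter computation is formal, so the real work is entirely in the analytic/combinatorial bookkeeping of the trace.
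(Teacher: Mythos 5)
Your construction is genuinely different from the paper's, and it has an unresolved gap at its core. The paper does not define $\Tcal\alphabetapower(x)$ as a twisted partial trace. Instead it extracts the transfer matrix directly as the ``highest-to-highest'' component of the $R$-matrix: $\Tcal\alphabetapower(b/a)(u)$ is defined by
$\Rcal\alphabetapower(b/a)(u\otimes v\betapower) = v\betapower\otimes\Tcal\alphabetapower(b/a)(u)$
modulo $\sum_{\mu\lneq\lambda\betapower} V\betapower_{\mu}\otimes V\alphapower$, where $v\betapower$ is the $\ell$-highest weight vector of the auxiliary factor. This is manifestly a well-defined element of $\mathrm{End}_{\Cbb}(V\alphapower)(x)$ because it is read off from finitely many matrix coefficients of the rational $R$-matrix. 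The commutation relation then follows by applying the Yang--Baxter equation to $u\otimes v\two\otimes v\three$ and tracking leading terms modulo the lower weight spaces of $V\two$ and $V\three$, using only that the $R$-matrices preserve weights. No trace, no cyclicity, no convergence.

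The gap in your approach is precisely the point you flag but do not close: the auxiliary space $V\betapower$ is infinite-dimensional (objects of $\Oaff$ for $\Utor$ have finite-dimensional weight spaces but infinitely many of them), so $\mathrm{tr}_{V\betapower}\bigl((\mathrm{id}\otimes\varphi)\circ\Rcal\betaalphapower(x)\bigr)$ is an infinite sum of finite-dimensional diagonal blocks, and nothing in your argument shows this sum converges, let alone that it defines a \emph{rational} function of $x$ valued in $\mathrm{End}_{\Cbb}(V\alphapower)$ as the theorem asserts. Inserting a grading twist $\varphi$ turns the sum into a formal series in an auxiliary variable rather than a convergent one, which changes the target of the map; and the cyclicity of the trace over $V\two\otimes V\three$ needed for the railroad argument requires a trace-class hypothesis that is likewise unverified. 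Your claim that $\varphi\otimes\varphi$ intertwines $\Rcal\twothree(x)$ is fine if $\varphi$ is group-like for $\Dpsi$, but that is the easy part. As written, the proposal does not establish existence of the transfer matrix as an object of $\mathrm{End}_{\Cbb}(V\alphapower)(x)$, which is the substance of the statement; the paper's highest-weight-coefficient definition is the device that makes the whole theorem essentially formal.
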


Let us briefly remark that each of our results carries over to quantum toroidal $\glone$ in an appropriate way.
This algebra is related to $\Utor$, but slightly separate and more symmetric.
Note, however, that the analogues for this particular case can be derived from existing works \cites{Miki07,FJMM15}.
We nevertheless reference $\Utorglone$ at various points in order to frame it within our more general setting.

\subsection{Future directions}

Our work in this paper opens up a range of different avenues for investigation going forwards.
For example, the author plans to explore toroidal versions of the generalised Schur-Weyl dualities, monoidal categorification of cluster algebras, and applications to the theory of $q$-characters already established for quantum affine algebras.
\\

Furthermore, quantum toroidal algebras are connected to geometry via Nakajima's morphism \cites{Nakajima01,Nakajima02} to the equivariant $K$-theory of (Steinberg-style fiber products of) quiver varieties on the affine Dynkin diagrams.
Here, representations in $\Oaff$ can be realized by taking the $K$-theory of certain fibers.
Moreover, these quiver varieties realize Quot schemes and resolutions of Hilbert schemes for Kleinian singularities \cites{CGGS21a,CGGS21b}.
\\

Relevant parts of \cite{VV02}*{Lem. 8.1} and its proof interpret $\Delta_{u}$ on the geometric side, using specialisation to torus fixed points.
However, it is not at all clear how to see our horizontal--vertical symmetries $\psi$, topological coproduct $\Dpsi$, or resulting tensor product within this setting.
This is an interesting problem deserving further investigation.
\\

In another direction, Fock space representations for $\UtorA$ are constructed combinatorially in \cite{FJMM13} as a semi-infinite limit of exterior powers of vector representations, written in terms of a basis of coloured partitions.
In turn, Macmahon modules are then obtained by taking semi-infinite wedges inside a tensor product of Fock modules, with a basis of $3D$ coloured partitions.
\\

It is natural to ask whether such directions might exist in more generality.
Indeed, Young wall models for Fock space representations of quantum affine algebras have now been realised in all affine types \cites{Premat04,KK08,FHKS24,HJKL24,Laurie25}.
Moreover, the author \cite{Laurie24b} has defined vector representations of $\Utor$ in types $A_{n}^{(1)}$, $D_{n}^{(1)}$, $E_{6}^{(1)}$ and $E_{7}^{(1)}$, with the actions given explicitly with respect to Young column bases.
\\

However, poles in the coproduct parameter provide an obstacle to deriving exterior power and Fock space representations using $\Delta_{u}$ in the same way as \cite{FJMM13}.
Furthermore, to the author's knowledge, vector representations for quantum toroidal algebras are not yet known in other types.
Nevertheless, since our topological coproduct $\Dpsi$ leads to a well-defined tensor product on $\Oaff$ and thus all Fock modules, one might hope to obtain Macmahon representations via a semi-infinite limit construction.
\\

After writing this paper, the author became aware of work by Guay-Nakajima-Wendlandt \cite{GNW18} for the affine Yangian $Y_{h}(\gaff)$, where they define a tensor product on the analogue $\Ocal$ of our category $\Oaff$.
This lifts to a coproduct on some completion of $Y_{h}(\gaff)$, which may alternatively be viewed as a topological coproduct for the affine Yangian.
They moreover conjecture that quantum toroidal algebras should possess similar structures -- our results confirm this expectation.
\\

It would be interesting to understand in a precise way how the work of \cite{GNW18} relates to ours.
Indeed, Gautam and Toledano Laredo \cite{GTL16} proved that the representation theory of quantum toroidal algebras is equivalent in some sense to that of affine Yangians.
In particular, they constructed an equivalence between $\Oaff$ and a certain subcategory of $\Ocal$.
One might hope to upgrade this to an equivalence of \emph{monoidal categories}, similar to the results of \cite{GTL17} for quantum affine algebras and Yangians.
\\

Furthermore, $\Ocal$ has been equipped with a meromorphic braiding by $R$-matrices in \cite{AGW23}.
The question therefore arises as to whether we can further upgrade the equivalence from \cite{GTL16} to one of \emph{meromorphic braided monoidal categories}.
It is worth noting that the construction of the topological coproduct in \cite{GNW18} is rather different to our definition of $\Dpsi$.
Appel-Gautam-Wendlandt \cite{AGW23} relate it to the Drinfeld coproduct by twisting with the negative part of the Gaussian decomposition for the $R$-matrix -- perhaps we can relate this to conjugation by $\psi$ in the quantum toroidal setting.
Once again, the author hopes to explore these directions in future work.
\\

Let us briefly remark that \cite{GNW18} -- and thus \cite{AGW23} -- does not cover $Y_{h}(\gaff)$ of types $A_{1}^{(1)}$ and $A_{2}^{(2)}$, with the latter instead treated in \cite{Ueda20}.
Their results and ours together indicate that the various constructions should exist for all quantum toroidal algebras and affine Yangians (both untwisted and twisted), and maybe even the quantum affinizations and Yangians associated to any symmetrizable Kac-Moody Lie algebra.

\subsection{Structure of the paper}

This paper is organised as follows.
In Section \ref{section:Preliminaries}, after setting up our basic notations, we recall the fundamental definitions regarding quantum groups.
We then introduce their quantum affinizations, and collect all of the necessary preliminaries such as topological coproducts, $\ell$-highest weight representation theory, and $q$-character morphisms.
Moreover we use the results of \cite{Miki01} to extend the finite presentation and braid group action from the author's previous work \cite{Laurie24a} to an even broader class of affinizations.
Section \ref{section:Quantum toroidal algebras} focuses on the structure of quantum toroidal algebras in particular, including our action of the extended double affine braid groups.
We also outline their Coxeter-style presentation due to Ion-Sahi \cite{IS20}, which plays an important role in our proofs later on.
\\

In Section \ref{section:horizontal-vertical symmetries} we obtain horizontal--vertical symmetries of quantum toroidal algebras in all untwisted types.
We describe our anti-involution $\psi$, and discuss a range of immediate consequences such as a modular action of the universal cover of $\SL$ and generalisations of the Miki automorphism.
Section \ref{section:tensor products} introduces the topological coproduct $\Dpsi$, establishes a monoidal structure on $\Oaff$, and proves a series of results for our tensor product.
Sections \ref{subsection:action of Uv} and \ref{subsection:action of U0} in particular explore the action of $\Utor$ on a tensor product of modules in detail.
\\

The main goal of Section \ref{section:q-characters} is to establish a compatibility between our tensor product on $\Oaff$ and the $q$-character morphism.
Our proof requires a precise understanding of how weight spaces for tensor representations decompose into $\ell$-weight spaces, which we address in Section \ref{subsection:q-characters proof}.
As a consequence, we are able to relate our tensor product to Hernandez' fusion product on the level of Grothendieck rings.
We conclude in Section \ref{section:R matrices} by obtaining $R$-matrices which satisfy the Yang-Baxter equation, as well as their associated commuting transfer matrices.

\subsection{Acknowledgements}

I would like to thank David Hernandez, Andrea Appel and Sachin Gautam for their interest in this work, and Arun Soor for his help with Lemma \ref{lem:kernels and images generically}.
This research was financially supported by the European Research Council (ERC) under the European Union's Horizon 2020 research and innovation programme [grant number 948885], and the Engineering and Physical Sciences Research Council (EPSRC) [grant number EP/T517811/1].

\section{Preliminaries} \label{section:Preliminaries}

\subsection{Basic notations} \label{subsection:basic notations}

Consider a Kac-Moody Lie algebra $\s$ with generalized Cartan matrix $A = (a_{ij})_{i,j\in I}$ and finite index set $I$.
We shall assume that $A$ is symmetrizable, which is to say that there exists a diagonal matrix $D = \mathrm{diag}(d_{i} ~|~ i\in I)$ with relatively prime entries in $\Zbb_{>0}$ such that the product $DA$ is symmetric.
Its Cartan subalgebra $\h$ contains simple coroots $\alpha^{\vee}_{i}$ and fundamental coweights $\Lambda^{\vee}_{i}$ for each $i\in I$, as well as $\mathrm{corank}(A)$ scaling elements.
The coweight lattice $P^{\vee}$ is the $\Zbb$-span of the simple coroots and scaling elements, and moreover contains the coroot lattice
$Q^{\vee} = \bigoplus_{i\in I}\Zbb\alpha^{\vee}_{i}$.
\\

With the natural pairing $\langle ~,~ \rangle$ between $\h$ and its dual space $\h^{*}$ we define the weight lattice
$P = \lbrace \lambda \in \h^{*} ~|~ \langle\lambda,P^{\vee}\rangle \subset \Zbb \rbrace$,
simple roots $\alpha_{i}$ and fundamental weights $\Lambda_{i}$ for each $i\in I$.
In particular, these must satisfy $\langle\alpha_{j},\alpha^{\vee}_{i}\rangle = a_{ij}$
and
$\langle\Lambda_{j},\alpha^{\vee}_{i}\rangle = \delta_{ij}$
for all $i,j \in I$.
We denote the root lattice
$\bigoplus_{i\in I}\Zbb\alpha_{i}$
by $Q$, and let
$P^{+} = \lbrace \lambda \in P ~|~ \mathrm{all~} \lambda(\alpha^{\vee}_{i}) \geq 0 \rbrace$
be the set of dominant integral weights.
The standard non-degenerate symmetric bilinear form $(~,~)$ on $\h^{*}$ satisfies
$(\alpha_{i},\alpha_{j}) = d_{i}a_{ij}$
for all $i,j\in I$, and induces an isomorphism
$\nu : \h \rightarrow \h^{*}$ which maps each $\alpha_{i}^{\vee}\mapsto d_{i}^{-1}\alpha_{i}$.
Throughout this paper we may occasionally identify the elements of $\h$ with their images under $\nu$ without mention.
\\

Let $D(A)$ be the Dynkin diagram associated to our generalized Cartan matrix $A$, with vertex set $I$ and $a_{ij}a_{ji}$ edges between any distinct $i,j\in I$ that point to $j$ whenever $a_{ij} \geq a_{ji}$.
The corresponding braid group $\B$ is defined as the group generated by $\lbrace T_{i} ~|~ i\in I\rbrace$ subject to the braid relations $T_{i}T_{j}T_{i}\ldots = T_{j}T_{i}T_{j}\ldots$ with $a_{ij}a_{ji} + 2$ factors on each side whenever $a_{ij}a_{ji} \leq 3$.
The Weyl group $W = \langle s_{i} ~|~ i\in I \rangle$ is the quotient obtained by specifying that each generator is self-inverse, and acts on $P^{\vee}$ via
$s_{i}(x) = x - \langle\alpha_{i},x\rangle\alpha_{i}^{\vee}$
for each $i\in I$.
Note that both $\B$ and $W$ are constructed independently of the orientation of arrows in $D(A)$, but that the action on $P^{\vee}$ is not.
\\

Throughout this paper, every algebra associated to a Cartan datum shall be considered with respect to the field $\kk = \mathbb{Q}(q)$ for an indeterminate $q$.
Setting $q_{i} = q^{d_{i}}$ for all $i\in I$, the $q_{i}$-integers, $q_{i}$-factorials and $q_{i}$-binomial coefficients are defined as
\begin{align*}
    [s]_{i} = \frac{q_{i}^{s}-q_{i}^{-s}}{q_{i}-q_{i}^{-1}},
    \qquad
    [s]_{i}! = \prod_{\ell=1}^{s} [\ell]_{i},
    \qquad
    \begin{bmatrix}{s}\\ {r}\end{bmatrix}_{i} = \frac{[s]_{i}!}{[s-r]_{i}!\,[r]_{i}!}
\end{align*}
respectively for all non-negative integers $s\geq r$.
When our generalized Cartan matrix is symmetric, since all $d_{i} = 1$ we may drop the $i$ subscripts above for simplicity.
\\

For certain elements $\xipm$ and $\xpm_{i,m}$ of the quantum algebras introduced in later sections, we introduce the divided powers $(\xipm)^{(s)} = (\xipm)^{s}/[s]_{i}!$ and $(\xpm_{i,m})^{(s)} = (\xpm_{i,m})^{s}/[s]_{i}!$ for each non-negative integer $s$.
Following Jing \cite{Jing98} we shall also define their twisted commutators inductively via
$[b_{1},b_{2}]_{u} = [b_{1},b_{2}]'_{u} = b_{1}b_{2} - u b_{2}b_{1}$
and
\begin{align*}
    [b_{1},\dots,b_{s}]_{u_{1}\cdots u_{s-1}}
    &= [b_{1},[b_{2},\dots,b_{s}]_{u_{1}\cdots u_{s-2}}]_{u_{s-1}},
    \\
    [b_{1},\dots,b_{s}]'_{u_{1}\cdots u_{s-1}}
    &= [[b_{1},\dots,b_{s-1}]'_{u_{1}\cdots u_{s-2}},b_{s}]_{u_{s-1}},
\end{align*}
noting that if $f$ is an anti-homomorphism then
$f([b_{1},\dots,b_{s}]_{u_{1}\cdots u_{s-1}})
= [f(b_{s}),\dots,f(b_{1})]'_{u_{s-1}\cdots u_{1}}$.
\\

Let us now restrict our focus to the affine case, where our conventions mostly follow \cite{Kac90}.
We shall consider an indecomposable affine Kac-Moody algebra $\gaff$ with Cartan matrix $A = (a_{ij})_{i,j\in I}$ and index set $I = \lbrace 0,\dots,n\rbrace$.
Since $\mathrm{corank}(A) = 1$ its Cartan subalgebra $\haff$ has a basis consisting of the simple coroots $\alpha^{\vee}_{0},\dots,\alpha^{\vee}_{n}$ together with a unique scaling element $d$ (alternatively, this can be replaced by $\Lambda^{\vee}_{0}$).
Furthermore, the centre of $\gaff$ is spanned by a canonical non-divisible element
$c \in \bigoplus_{i\in I}\Zbb_{>0}\alpha^{\vee}_{i}$.
\\

On the other hand, the dual space $\haff^{*}$ possesses a basis
$\lbrace\Lambda_{0},\alpha_{0},\dots,\alpha_{n}\rbrace$ and the root lattice $Q$ contains a unique standard non-divisible imaginary root $\delta$.
Since the natural pairing between $\haff$ and $\haff^{*}$ is given by
$\langle\Lambda_{i},\alpha^{\vee}_{j}\rangle = \delta_{ij}$,
$\langle\Lambda_{i},d\rangle = \langle\delta,\alpha^{\vee}_{j}\rangle = 0$
and $\langle\delta,d\rangle = 1$, the bilinear form $(~,~)$ is determined by
\begin{align*}
    (\alpha_{i},\alpha_{j}) = d_{i}a_{ij}, \qquad
    (\alpha_{i},\Lambda_{0}) = d_{0}\delta_{i0}, \qquad
    (\Lambda_{0},\Lambda_{0}) = 0,
\end{align*}
for all $i,j\in I$ and in particular satisfies $(\delta,\alpha_{i}) = 0$.
The corresponding isomorphism $\nu : \haff \rightarrow \haff^{*}$ sends $\Lambda_{0}^{\vee}\mapsto d_{0}^{-1}\Lambda_{0}$.
Moreover, we can now express explicitly
\begin{itemize}
    \item the affine weight lattice $P = \bigoplus_{i\in I}\Zbb\Lambda_{i}\oplus\Zbb\delta$,
    \item the affine coweight lattice $P^{\vee} = \bigoplus_{i\in I}\Zbb \alpha^{\vee}_{i}\oplus\Zbb d$,
    \item the set of dominant affine integral weights $P^{+} = \bigoplus_{i\in I}\Nbb\Lambda_{i}\oplus\Zbb\delta$.
\end{itemize}
Removing the null root $\delta$ produces the classical weight lattice $\Pbar = \bigoplus_{i\in I}\Zbb\Lambda_{i}$ which can be viewed as both a sublattice and a quotient of $P$, as well as its subset of dominant classical weights $\Pbar^{+} = \bigoplus_{i\in I}\Nbb\Lambda_{i}$.
Note that the action of the affine Weyl group $W$ on $P$ descends to an action on $\Pbar$.
\\

Each node $i\in I$ of the affine Dynkin diagram $D(A)$ has a numerical label $a_{i}$, and a dual label $a_{i}^{\vee}$ coming from the diagram with the same vertex numbering and all arrows reversed.
The affine Dynkin diagrams, together with their $a_{i}$ and $a_{i}^{\vee}$ labels, can be found for example in the author's thesis \cite{Laurie24b}*{App. A} -- there our choice of vertex numbering matches Bourbaki \cite{Bourbaki68}*{Plates I--IX} in all untwisted types, and the twisted types are obtained by reversing arrows.
The affine Cartan matrix of type $X_{n}^{(r)}$ is then symmetrized by a positive integer multiple of
$\mathrm{diag}(a_{0}^{\vee}/a_{0},\dots,a_{n}^{\vee}/a_{n})$.
Furthermore, the null root $\delta$ equals $\sum_{i\in I} a_{i}\alpha_{i}$ with $a_{0} = 1$ outside type $A_{2n}^{(2)}$, and the central element $c$ is $\sum_{i\in I} a_{i}^{\vee}\alpha^{\vee}_{i}$ with $a^{\vee}_{0} = 1$.
The level of an affine or classical weight $\lambda$ is given by the pairing $\langle\lambda,c\rangle$ and is invariant under the Weyl group action.
\\

A vertex $i\in I$ is minuscule if it is sent to $0$ by some automorphism of the affine Dynkin diagram, and we denote the set of minuscule nodes by $\Imin \subset \lbrace i\in I~|~a_{i} = a_{0}\rbrace$.
An automorphism is inner if it fixes the $0$ vertex, and thus restricts to an automorphism of the finite Dynkin diagram.
The outer automorphism group $\Omega$ is then the quotient of the entire automorphism group by the subgroup of inner automorphisms, and therefore has elements indexed by $\Imin$.
In particular, for each $i\in\Imin$ we let $\pi_{i}$ be the corresponding element of $\Omega$, which is uniquely determined by the condition $\pi_{i}(0) = i$.
\\

In all affine types except $A_{2n}^{(1)}$ we can fix a sign function $o:I\rightarrow\lbrace\pm 1\rbrace$ satisfying $o(i) = -o(j)$ whenever $a_{ij}<0$.
We shall write $o_{i,j}$ as shorthand for $o(i)/o(j)$.
However, in type $A_{2n}^{(1)}$ this is not possible since the affine Dynkin diagram contains an odd length cycle.
For our purposes, there are two approximations to a sign function to consider in this case: $o(i) = (-1)^{i}$ and $-o(i) = (-1)^{i+1}$.
Furthermore, we define $o_{i,j} = (-1)^{\overline{j-i}}$ for all $i,j\in I$, where $\overline{j-i}$ is the anti-clockwise distance $i\rightarrow j$ in the affine Dynkin diagram.
\\

Contained in each affine Lie algebra $\gaff$ is a corresponding finite dimensional simple Lie algebra $\g$ with Cartan matrix $(a_{ij})_{i,j\in I_{0}}$ where $I_{0} = \lbrace 1,\dots,n\rbrace$.
(More generally, we shall let $\Ii = I\setminus\lbrace i\rbrace$ for every $i\in I$.)
It has simple roots $\alpha_{i}$, simple coroots $\alpha_{i}^{\vee}$, fundamental weights $\omega_{i}$, and fundamental coweights $\omega_{i}^{\vee}$ for each $i\in I_{0}$ and we denote its root, coroot, weight and coweight lattices by $\mathring{Q}$, $\Qov$, $\mathring{P}$ and $\Pov$.
By mapping each $\omega_{i}^{\vee} \mapsto a_{0}\Lambda_{i}^{\vee} - a_{i}\Lambda_{0}^{\vee}$ we can embed $\Pov$ inside $P^{\vee}$ at level $0$, so that $\langle\delta,\omega_{i}^{\vee}\rangle = 0$ for all $i\in I_{0}$.
The image is invariant under the action of the finite Weyl group $\mathring{W} = \langle s_{i}~|~i\in I_{0}\rangle$.
Similarly, we can view $\mathring{P}$ inside the affine weight lattice $P$ by sending each
$\omega_{i} \mapsto a_{0}^{\vee}\Lambda_{i} - a_{i}^{\vee}\Lambda_{0}$.
In order to simplify our notation in later sections we shall moreover define $\omega_{0}^{\vee} = 0$ and $\omega_{0} = 0$.
\\

As explained in the general case above, the affine braid group $\B$ has a Coxeter presentation with generators $T_{0},\dots,T_{n}$ satisfying the braid relations for all distinct $i,j\in I$.
Since this construction is independent of the orientation of arrows, note that any affine braid group is isomorphic to one of untwisted type.
We remark that in types $A_{1}^{(1)}$ and $A_{2}^{(2)}$ this is simply the free group generated by $T_{0}$ and $T_{1}$ since $a_{01}a_{10} = 4$.
\\

However, for affine braid groups in particular there exists a second realization due to Bernstein as follows.
In all untwisted and $A_{2n}^{(2)}$ types, let $M = \Qov$ and $A_{i}^{\vee} = \alpha_{i}$ for each $i\in I$.
Conversely, in the remaining twisted types we define $M = \mathring{Q}$ and all $A_{i}^{\vee} = \alpha_{i}^{\vee}$.
Then in each case, the Bernstein presentation of $\B$ is generated by the finite braid group $\B_{0} = \langle T_{i}~|~i\in I_{0}\rangle$ and the lattice $\lbrace X_{\beta}~|~\beta \in M \rbrace$, with
\begin{itemize}
    \item $T_{i}X_{\beta} = X_{\beta}T_{i}$ if $(\beta,A_{i}^{\vee}) = 0, \hfill \refstepcounter{equation}(\theequation)\label{first extended affine Bernstein}$
    \item $T_{i}^{-1}X_{\beta}T_{i}^{-1} = X_{s_{i}(\beta)}$ if $(\beta,A_{i}^{\vee}) = 1. \hfill \refstepcounter{equation}(\theequation)\label{second extended affine Bernstein}$
\end{itemize}

When $M = \Qov$ the correspondence between the two presentations is given by $T_{0} = X_{\theta^{\vee}} \Theta^{-1}$ where $\Theta = T_{s_{\theta}}$ for $\theta$ the highest root $\sum_{i\in I_{0}} a_{i}\alpha_{i}$ of $\g$, and $\theta^{\vee} = \nu^{-1}(a_{0}^{-1}\theta)$.
Otherwise, $\theta$ is the short dominant root in $M = \mathring{Q}$ and we instead have $T_{0} = X_{\theta} \Theta^{-1}$.
See \cite{IS20}*{Ch. 3} for more details, noting that the Bernstein presentation there is obtained from ours by applying the automorphism of $\B$ which inverts $T_{1},\dots,T_{n}$ and fixes each $X_{\beta}$.
\\

The \emph{extended} affine braid group $\Bd$ may on the one hand be formed as the semidirect product $\Omega \ltimes \B$ with $\pi T_{i} \pi^{-1} = T_{\pi(i)}$ for all $i\in I$ and $\pi\in\Omega$.
However we can also obtain a Bernstein presentation for $\Bd$ by replacing $M$ in the above with a larger lattice $N$, defined to be $\Pov$ in all untwisted and $A_{2n}^{(2)}$ types and $\mathring{P}$ otherwise.
\\

When $N = \Pov$ set $\beta_{\theta} = \theta^{\vee}$ and $\beta_{i} = \omega_{i}^{\vee}$ for each $i\in I$, and when $N = \mathring{P}$ set $\beta_{\theta} = \theta$ and each $\beta_{i} = \omega_{i}$.
Let $v_{i} = w_{0}w_{0i}$ where $w_{0}$ is the longest element\footnote{For a nice explanation of how to find a reduced expression for any $w_{0}$ (and thus $w_{0i}$) by $2$-colouring the Dynkin diagram, see Allen Knutson's answer at
\url{https://mathoverflow.net/questions/54926/longest-element-of-weyl-groups}
(last accessed 31$^{\mathrm{st}}$ Jan 2025).
Alternatively, \cite{BKOP14}*{Table 1} contains such an expression in each finite type.}
of $\mathring{W}$ and $w_{0i}$ is the longest element of the isotropy subgroup $\langle s_{j}~|~j\not= i\rangle$ of $\beta_{i}$.
The correspondence between the Coxeter and Bernstein presentations of $\Bd$ is then given by $T_{0} = X_{\beta_{\theta}} \Theta^{-1}$ and $\pi_{i} = X_{\beta_{i}} T_{v_{i}}^{-1}$ for each $i\in \Imin$.

\begin{rmk} \label{rmk:alternative Bernstein presentation}
    There is an automorphism of $\Bd$ which inverts $T_{0},\dots,T_{n}$ and fixes each element of $\Omega$.
    Letting $Y_{\beta}$ be the image of $X_{\beta}$ for all $\beta \in N$, we obtain an \textit{alternative Bernstein presentation} for $\Bd$ matching that of \cite{IS20}*{Prop. 9.1}.
    In particular, for each $i\in I_{0}$ and $\beta \in N$ we have the relations
    \begin{itemize}
        \item $T_{i}Y_{\beta} = Y_{\beta}T_{i}$ if $(\beta,A_{i}^{\vee}) = 0, \hfill \refstepcounter{equation}(\theequation)\label{first alternative extended affine Bernstein}$
        \item $T_{i} Y_{\beta}T_{i} = Y_{s_{i}(\beta)}$ if $(\beta,A_{i}^{\vee}) = 1. \hfill \refstepcounter{equation}(\theequation)\label{second alternative extended affine Bernstein}$
    \end{itemize}
    It immediately follows that the Coxeter presentation relates to this alternative Bernstein realization via $T_{0} = \Theta^{-1}Y_{-\beta_{\theta}}$ and $\pi_{i} = Y_{\beta_{i}} T_{v_{i}^{-1}}$ for each $i\in \Imin$.
\end{rmk}

\subsection{Drinfeld-Jimbo quantum groups} \label{subsection:Drinfeld-Jimbo quantum groups}

For an arbitrary symmetrizable Kac-Moody algebra $\s$ with generalized Cartan matrix $(a_{ij})_{i,j\in I}$, the corresponding quantum group is given in terms of certain Chevalley-style generators as follows.

\begin{defn} \label{defn:quantum group}
The quantum group $\Uqs$ is the unital associative $\kk$-algebra generated by elements $q^{h}$ for each $h\in P^{\vee}$ and $x_{i}^{\pm}$ for all $i\in I$, subject to the following relations:
\begin{itemize}
    \item $\displaystyle q^{0} = 1$,
    \item $\displaystyle q^{h}q^{h'} = q^{h+h'}$,
    \item $\displaystyle q^{h} \xipm q^{-h} = q^{\pm\langle\alpha_{i},h\rangle} \xipm$,
    \item $\displaystyle [\xip,\xjm] = \frac{\delta_{ij}}{q_{i}-q_{i}^{-1}} (k_{i} - k_{i}^{-1})$,
    \item $\displaystyle \sum_{s=0}^{1-a_{ij}} (-1)^{s} (\xipm)^{(s)} \xjpm (\xipm)^{(1-a_{ij}-s)} = 0$ whenever $i\not= j$,
\end{itemize}
where $k_{i} = q^{d_{i}\alpha^{\vee}_{i}}$ for each $i\in I$.
\end{defn}

This is called the Drinfeld-Jimbo realization for $\Uqs$, and makes clear a natural $\kk$-algebra anti-involution
$\sigma = (q^{h} \mapsto q^{-h},~\xipm \mapsto \xipm)$
and $\Qbb$-algebra involution
$\omega = (q \mapsto q^{-1},~q^{h} \mapsto q^{h},~\xipm \mapsto x^{\mp}_{i})$.

\begin{eg}
    Associated to any affine Kac-Moody algebra $\gaff$ there exists a quantum affine algebra $\Uaff$ provided by the above definition.
    We define $\Udash$ to be the subalgebra generated by all $\xipm$ and $k_{i}^{\pm 1}$, which can alternatively be obtained by replacing the affine coweight lattice $P^{\vee}$ with the classical coweight lattice $\Pbar^{\vee} = \bigoplus_{i\in I}\Zbb \alpha^{\vee}_{i}$.
\end{eg}

\begin{defn}
    A triangular decomposition of an algebra $A$ consists of three subalgebras $A^{-}$, $A^{0}$ and $A^{+}$ such that multiplication
    $a_{-} \otimes a_{0} \otimes a_{+} \mapsto a_{-}a_{0}a_{+}$
    provides an isomorphism of vector spaces
    $A^{-} \otimes A^{0} \otimes A^{+} \cong A$.
\end{defn}

It is clear that for any Drinfeld-Jimbo quantum group there exists a natural triangular decomposition
$\Uqs \cong U^{-} \otimes U^{0} \otimes U^{+}$
into negative, zero and positive subalgebras
$\langle \xim ~|~ i\in I \rangle$,
$\langle q^{h} ~|~ h\in P^{\vee} \rangle$ and
$\langle \xip ~|~ i\in I \rangle$ respectively.

\subsubsection{Coproducts} \label{subsubsection:coproducts}

The quantum group $\Uqs$ possesses various Hopf algebra structures.
Throughout this paper we shall use the one with coproduct $\Delta$ given by
\begin{align*}
    \Delta(q^{h}) = q^{h}\otimes q^{h}, \qquad
    \Delta(\xip) = \xip\otimes 1 + k_{i}^{-1}\otimes \xip, \qquad
    \Delta(\xim) = \xim\otimes k_{i} + 1\otimes \xim,
\end{align*}
counit $\varepsilon$ satisfying $\varepsilon(q^{h}) = 1$ and $\varepsilon(\xipm) = 0$, and antipode $S$ with
\begin{align*}
    S(q^{h}) = q^{-h}, \qquad
    S(\xip) = -\xip k_{i}, \qquad
    S(\xim) = -k_{i}^{-1}\xim.
\end{align*}

Our choice is the same as for example \cite{Hernandez09} and is denoted by $\Dbarplus$ in \cite{KMPY96}, where the following alternative commonly-used coproducts are also presented:
\begin{alignat*}{3}
    &\Delta_{+}(q^{h}) = q^{h}\otimes q^{h}, \qquad
    &&\Delta_{+}(\xip) = \xip\otimes 1 + k_{i}\otimes \xip, \qquad
    &&\Delta_{+}(\xim) = \xim\otimes k_{i}^{-1} + 1\otimes \xim,
    \\
    &\Delta_{-}(q^{h}) = q^{h}\otimes q^{h}, \qquad
    &&\Delta_{-}(\xip) = \xip\otimes k_{i}^{-1} + 1\otimes \xip, \qquad
    &&\Delta_{-}(\xim) = \xim\otimes 1 + k_{i}\otimes \xim,
    \\
    &\overline{\Delta}_{-}(q^{h}) = q^{h}\otimes q^{h}, \qquad
    &&\overline{\Delta}_{-}(\xip) = \xip\otimes k_{i} + 1\otimes \xip, \qquad
    &&\overline{\Delta}_{-}(\xim) = \xim\otimes 1 + k_{i}^{-1}\otimes \xim.
\end{alignat*}
These are obtained by conjugating $\Delta = \Dbarplus$ with $\sigma$, $\omega\sigma$ and $\omega$ respectively.

\subsubsection{Highest weight theory} \label{subsubsection:highest weight theory}

Here we introduce some of the basic definitions regarding modules for quantum groups.

\begin{defn}
    \begin{itemize}
        \item A representation $V$ of $\Uqs$ is a weight module if it decomposes as a direct sum
        $\bigoplus_{\lambda\in P} V_{\lambda}$
        of its weight spaces
        $V_{\lambda} = \lbrace u\in V ~\vert~ q^{h}\cdot u = q^{\langle \lambda,h\rangle}u~\mathrm{for~all}~h\in P^{\vee}\rbrace$.
        \item It is moreover a highest weight module with highest weight $\lambda\in P$ if there exists some non-zero $v_{\lambda} \in V_{\lambda}$ such that
        $V = \Uqs\cdot v_{\lambda}$
        and all $\xip\cdot v_{\lambda} = 0$.
    \end{itemize}
\end{defn}

\begin{eg}
    \begin{itemize}
        \item The Verma module $M(\lambda)$ is the quotient of $\Uqs$ by the left ideal generated by
        $\lbrace q^{h} - q^{\langle \lambda,h\rangle}1 ~|~ h\in P^{\vee}\rbrace$
        and $U^{+} = \langle \xip ~|~ i\in I\rangle$.
        It has the universal property that every highest weight module with highest weight $\lambda$ is the image of $M(\lambda)$ under the unique homomorphism that sends $1 \mapsto v_{\lambda}$.
        \item $M(\lambda)$ possesses a unique maximal submodule, hence the corresponding quotient $V(\lambda)$ is the unique irreducible highest weight module of highest weight $\lambda$ up to isomorphism.
    \end{itemize}
\end{eg}

A weight module is integrable if all $\xipm$ act locally nilpotently, that is for each $v\in V$ we have $(\xipm)^{k}\cdot v = 0$ for some $k\geq 0$.
An element $v\in V$ is extremal if there exists a set of vectors $\lbrace v_{w}\rbrace_{w\in W}$ such that
\begin{itemize}
    \item $v_{e} = v$,
    \item if $\langle w\lambda,\alpha^{\vee}_{i}\rangle\geq 0$ then $\xip\cdot v_{w}=0$ and $(\xim)^{(\langle w\lambda,\alpha^{\vee}_{i}\rangle)}\cdot v_{w}=v_{s_{i}w}$,
    \item if $\langle w\lambda,\alpha^{\vee}_{i}\rangle\leq 0$ then $\xim\cdot v_{w}=0$ and $(\xip)^{(-\langle w\lambda,\alpha^{\vee}_{i}\rangle)}\cdot v_{w}=v_{s_{i}w}$.
\end{itemize}
Such a set must be unique, with each $v_{w}$ spanning $V_{w\lambda}$.
In this case, we say that $V$ is an extremal weight module \cite{Kashiwara94}.
For each $\lambda\in P$ define $V^{\mathrm{ext}}(\lambda)$ to be the representation of $\Uqs$ generated by a non-zero vector $v_{\lambda}$, subject only to the condition that it is an extremal vector of weight $\lambda$.
In particular, if $\lambda$ is dominant then $V^{\mathrm{ext}}(\lambda)$ is isomorphic to the irreducible highest weight module $V(\lambda)$.
\\

Let $\Oint$ be the category of integrable representations $V$ of $\Uqs$ with finite dimensional weight spaces, for which there exist $\mu_{1},\dots,\mu_{r} \in P$ such that
\begin{align*}
    \lbrace \lambda \in P ~|~ V_{\lambda} \not= 0 \rbrace
    \subset
    \bigcup_{j=1}^{r} (\mu_{j} - Q^{+})
\end{align*}
where $Q^{+} = \bigoplus_{i\in I} \Nbb \alpha_{i}$ is the positive root lattice.
Then $\Oint$ is closed under finite direct sums and tensor products, and moreover we have the following structural result from \cite{HK02}*{Ch. 3}.

\begin{thm} \label{thm:category O semisimple}
    The category $\Oint$ is semisimple, and the indecomposable objects are precisely the irreducible highest weight modules $V(\lambda)$ with $\lambda \in P^{+}$.
\end{thm}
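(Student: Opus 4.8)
The plan is to prove semisimplicity by establishing complete reducibility of the relevant category, and then identify the indecomposables, following the standard pattern for integrable highest weight modules over Kac--Moody quantum groups. First I would reduce to the case of a single object $V \in \Oint$ and show that it decomposes as a direct sum of irreducible highest weight modules $V(\lambda)$ with $\lambda \in P^{+}$. The key structural input is that any nonzero $V \in \Oint$ contains a highest weight vector: because of the condition $\{\lambda \mid V_{\lambda} \neq 0\} \subset \bigcup_{j}(\mu_{j} - Q^{+})$, the set of weights has maximal elements with respect to the dominance order, and any nonzero vector in such a maximal weight space is killed by all $\xip$. Integrability then forces such a maximal weight $\lambda$ to be dominant: applying the $\Uaffsltwo$-style (here $U_{q}(\mathfrak{sl}_{2})$) representation theory for each $i \in I$ to the weight vector $v_{\lambda}$, local nilpotency of $\xim$ on $v_{\lambda}$ combined with $\xip \cdot v_{\lambda} = 0$ gives $\langle \lambda, \alpha_{i}^{\vee}\rangle \in \Nbb$ for all $i$, i.e. $\lambda \in P^{+}$.

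Next I would show that the submodule $\Uqs \cdot v_{\lambda}$ generated by a dominant highest weight vector is isomorphic to the irreducible module $V(\lambda)$, and that it is a direct summand of $V$. Irreducibility of $\Uqs \cdot v_{\lambda}$ follows because, by the universal property, it is a quotient of the Verma module $M(\lambda)$, and for $\lambda \in P^{+}$ the maximal proper submodule of $M(\lambda)$ is generated by the vectors $(\xim)^{\langle\lambda,\alpha_{i}^{\vee}\rangle + 1}\cdot v_{\lambda}$; since $V$ is integrable these vectors already vanish in $V$, so $\Uqs\cdot v_{\lambda} \cong V(\lambda)$. For the splitting, I would invoke the standard argument that $V(\lambda)$ admits a non-degenerate contravariant (Shapovalov) form, or alternatively use a Zorn's lemma / maximality argument: consider a maximal family of dominant highest weight vectors spanning a direct sum of irreducibles inside $V$, and show that if this sum were proper, the quotient would again lie in $\Oint$ (the category is closed under subquotients, with weight-space and support conditions inherited) and hence contain another dominant highest weight vector, which lifts back to $V$ — contradicting maximality. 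This yields $V \cong \bigoplus V(\lambda_{k})$.

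Finally, the identification of indecomposables: each $V(\lambda)$ with $\lambda \in P^{+}$ is irreducible hence indecomposable and lies in $\Oint$ (it is integrable with one-dimensional highest weight space and support in $\lambda - Q^{+}$), and conversely any indecomposable object, being a direct sum of such $V(\lambda)$'s by the above, must be a single $V(\lambda)$. I expect the main obstacle to be the splitting step — proving that a highest weight submodule is actually a direct summand rather than merely a submodule. Depending on the precise tools the paper wishes to cite, this is handled either by the non-degeneracy of the contravariant form on $V(\lambda)$ (which requires the integrability to rule out the radical) or by the transfinite maximality argument; in the reference \cite{HK02}*{Ch.\ 3} this is Theorem for the complete reducibility of $\Oint$, so in the write-up I would simply cite that result and restrict attention to verifying that our $\Oint$ satisfies its hypotheses. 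The remaining points (stability of $\Oint$ under subquotients and the support/finite-dimensionality bookkeeping) are routine.
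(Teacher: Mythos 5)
The paper gives no proof of this statement at all — it is quoted directly from \cite{HK02}*{Ch.\ 3} — and your outline is precisely the standard argument found there (maximal weights exist by the cone condition, integrability forces dominance via rank-one $\mathfrak{sl}_2$ theory, the generated submodule is $V(\lambda)$, and the splitting is the delicate step handled by the contravariant form or a maximality argument). Since you correctly identify the hard step and ultimately propose to cite the same reference the paper cites, your proposal matches the paper's approach.
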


Therefore, in many situations, in order to understand the entire category $\Oint$ it is enough to consider those $V(\lambda)$ for which $\lambda$ is a dominant integral weight.

\subsubsection{Braid group action} \label{subsubsection:braid group action}

We briefly recall the action of the braid group $\B$ on the quantum group $\Uqs$ due to Lusztig \cite{Lusztig93}.
For every $i\in I$ there exists an automorphism $\Tb_{i}$ of $\Uqs$ defined by $\Tb_{i}(q^{h}) = q^{s_{i}(h)}$ for each $h\in P^{\vee}$ and
\begin{align*}
    &\Tb_{i}(\xip) = -\xim k_{i}, \qquad \Tb_{i}(\xjp) = \sum_{s=0}^{-a_{ij}} (-1)^{s} q_{i}^{-s} (\xip)^{(-a_{ij}-s)} \xjp (\xip)^{(s)} \mathrm{~~if~} i\not= j, \\
    & \Tb_{i}(\xim) = -k_{i}^{-1}\xip, \qquad \Tb_{i}(\xjm) = \sum_{s=0}^{-a_{ij}} (-1)^{s} q_{i}^{s} (\xim)^{(s)} \xjm (\xim)^{(-a_{ij}-s)} \mathrm{~~if~} i\not= j.
\end{align*}
Its inverse $\Tb_{i}^{-1}$ is given by $\Tb^{-1}_{i}(q^{h}) = q^{s_{i}(h)}$ and
\begin{align*}
    &\Tb_{i}^{-1}(\xip) = -k_{i}^{-1}\xim, \qquad \Tb_{i}^{-1}(\xjp) = \sum_{s=0}^{-a_{ij}} (-1)^{s} q_{i}^{-s} (\xip)^{(s)} \xjp (\xip)^{(-a_{ij}-s)} \mathrm{~~if~} i\not= j, \\
    &\Tb_{i}^{-1}(\xim) = -\xip k_{i}, \qquad \Tb_{i}^{-1}(\xjm) = \sum_{s=0}^{-a_{ij}} (-1)^{s} q_{i}^{s} (\xim)^{(-a_{ij}-s)} \xjm (\xim)^{(s)} \mathrm{~~if~} i\not= j.
\end{align*}
In particular, we note that
$\Tb_{i}(k_{j}) = \Tb^{-1}_{i}(k_{j}) = k_{j} k_{i}^{-a_{ij}}$ for all $j\in I$.
A quick check verifies that each $\Tb_{i}^{-1} = \sigma \Tb_{i} \sigma$, where $\sigma$ is the anti-involution of $\Uqs$ introduced earlier.

\begin{thm} \label{thm:braid group action}
    The braid group $\B$ acts on the quantum group $\Uqs$ via $T_{i} \mapsto \Tb_{i}$ for each $i\in I$.
\end{thm}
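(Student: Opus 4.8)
The plan is to separate the statement into two tasks: showing that each $\Tb_i$ is a well-defined automorphism of $\Uqs$, and showing that these automorphisms satisfy the braid relations presenting $\B$.

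For well-definedness I would check that the proposed images of the generators under $\Tb_i$ satisfy every relation of Definition \ref{defn:quantum group}. The relations involving only the torus hold because $s_i$ preserves $P^{\vee}$; the relations $q^h x_j^{\pm} q^{-h} = q^{\pm\langle\alpha_j,h\rangle}x_j^{\pm}$ and $[x_i^{+},x_j^{-}] = \delta_{ij}(k_i-k_i^{-1})/(q_i-q_i^{-1})$ pulled back through $\Tb_i$ are short computations using $\Tb_i(k_j) = k_jk_i^{-a_{ij}}$ and $s_i(\alpha_j) = \alpha_j - a_{ij}\alpha_i$; the one substantive point is compatibility with the quantum Serre relations, which involves only the finitely many nodes adjacent to $i$ and so reduces to a verification in a subalgebra of rank at most three, built from $q_i$-binomial identities. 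Once $\Tb_i$ is known to be an algebra endomorphism, the displayed formula for $\Tb_i^{-1}$ can be checked on generators to be a two-sided inverse; more economically, one verifies $\Tb_i^{-1} = \sigma\Tb_i\sigma$ on generators and concludes invertibility from the fact that $\sigma\Tb_i\sigma$ is again an endomorphism, being a composition of two anti-homomorphisms and a homomorphism. Hence $\Tb_i \in \mathrm{Aut}(\Uqs)$.

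For the braid relations, when $a_{ij}a_{ji}\geq 4$ there is nothing to prove, since no relation between $T_i$ and $T_j$ is imposed there; so only the rank-two pairs $A_1\times A_1$, $A_2$, $B_2$, $G_2$ (with $2,3,4,6$ braid factors) require attention. The route I would actually take is to transfer everything to integrable modules: each $V\in\Oint$ carries invertible operators $T_i\in\mathrm{GL}(V)$, given on weight vectors by the familiar triple-sum formula, and one checks the intertwining relation $(\Tb_i u)\cdot(T_i v) = T_i(u\cdot v)$ for $u\in\Uqs$ and $v\in V$ (for which it suffices to treat $u$ among the generators). Granting the braid relations for the $T_i$ acting on modules, iterating this intertwiner shows that $\Tb_i\Tb_j\Tb_i\cdots(u)$ and $\Tb_j\Tb_i\Tb_j\cdots(u)$ act identically on $T_iT_jT_i\cdots(V)=V$; since $\Uqs$ acts faithfully on the sum $\bigoplus_{\lambda\in P^{+}}V(\lambda)$ of the irreducibles furnished by Theorem \ref{thm:category O semisimple}, the braid relation then holds in $\Uqs$ itself. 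The braid relations for the $T_i$ on $V$ are in turn proved by restricting to the rank-two subalgebra generated by $x_i^{\pm},x_j^{\pm}$ and the relevant part of the torus, under which every vector generates a finite-dimensional integrable submodule; the identity thus reduces to finite-dimensional rank-two modules, where it is checked on extremal vectors using $\mathfrak{sl}_2$-representation theory.

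The main obstacle in either approach is the rank-two base case, concretely the $G_2$ braid relation with six factors. The $A_1\times A_1$ case is immediate (two commuting $\mathfrak{sl}_2$'s), $A_2$ is short, and $B_2$ is manageable, but $G_2$ is a genuinely long computation however it is organised; passing to modules and working with concrete $G_2$-string weight vectors makes it somewhat more tractable than manipulating noncommutative polynomials in the $x_i^{\pm}$ inside the algebra, but it remains the heart of the argument.
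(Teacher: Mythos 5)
This theorem is recalled from Lusztig and the paper supplies no proof of its own, simply deferring to \cite{Lusztig93}. Your outline --- well-definedness of each $\Tb_{i}$ by checking the defining relations (with compatibility with the quantum Serre relations as the one substantive point), then the braid relations via the intertwining operators on integrable modules, reduction to the rank-two cases, and faithfulness of $\Uqs$ on $\bigoplus_{\lambda\in P^{+}}V(\lambda)$ --- is precisely the strategy of that cited proof, so it is correct and essentially the same approach as the source the paper relies on.
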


Throughout this paper we shall use without comment that $\Tb_{i} \Tb_{j} (\xipm) = \xjpm$ and $\Tb_{i}^{-1} \Tb_{j}^{-1} (\xipm) = \xjpm$ whenever $a_{ij} = a_{ji} = -1$.
The short technical proof of this result can be found in \cite{Lusztig93}*{Ch. 37}.
\\

Every automorphism $\pi$ of the associated Dynkin diagram $D(A)$ gives rise to an automorphism $S_{\pi}$ of $\Uqs$ which permutes the generators accordingly:
\begin{align*}
    S_{\pi}(\xjpm) = \xpm_{\pi(j)}, \qquad S_{\pi}(q^{h}) = q^{\pi(h)},
\end{align*}
where $\pi(h)$ is given by the natural action on $P^{\vee}$, extended trivially from the permutation of the simple coroots.
We note in particular that each $S_{\pi}(k_{i}^{\pm 1}) = k_{\pi(i)}^{\pm 1}$.

\begin{cor} \label{cor:Bd action}
    The extended affine braid group $\Bd$ acts on the quantum affine algebras $\Uaff$ and $\Udash$ via $T_{i} \mapsto \Tb_{i}$ and $\pi \mapsto S_{\pi}$ for all $i\in I$ and $\pi\in\Omega$.
\end{cor}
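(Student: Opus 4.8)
The plan is to verify that the assignment $T_i \mapsto \Tb_i$, $\pi \mapsto S_\pi$ respects all the defining relations of $\Bd = \Omega \ltimes \B$, using Theorem \ref{thm:braid group action} as the starting point. Since the extended affine braid group is generated by the $T_i$ for $i \in I$ together with $\Omega$, subject to the braid relations among the $T_i$, the relations $\pi T_i \pi^{-1} = T_{\pi(i)}$ for $\pi \in \Omega$, and the group relations within $\Omega$, there are three families of relations to check. First, the braid relations among the $\Tb_i$ hold already by Theorem \ref{thm:braid group action} applied to the affine Dynkin diagram $D(A)$, so nothing new is needed there. Second, the $S_\pi$ satisfy $S_\pi S_{\pi'} = S_{\pi\pi'}$ since composing two Dynkin diagram automorphisms permutes the generators $\xipm$ and the coweight lattice by the composite permutation; in particular each $S_\pi$ is invertible with inverse $S_{\pi^{-1}}$, so $\Omega$ does act.

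The only substantive point is the mixed relation $S_\pi \Tb_i S_\pi^{-1} = \Tb_{\pi(i)}$ for every $i \in I$ and $\pi \in \Omega$. First I would check this on the torus part: $S_\pi \Tb_i S_\pi^{-1}(q^h) = S_\pi \Tb_i(q^{\pi^{-1}(h)}) = S_\pi(q^{s_i \pi^{-1}(h)}) = q^{\pi s_i \pi^{-1}(h)}$, and since $\pi$ normalises $W$ with $\pi s_i \pi^{-1} = s_{\pi(i)}$ on $P^\vee$ (as $\pi$ permutes the simple coroots), this equals $q^{s_{\pi(i)}(h)} = \Tb_{\pi(i)}(q^h)$. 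Then I would check it on the Chevalley generators $\xjpm$. Applying $S_\pi^{-1}$ sends $\xjpm \mapsto x^{\pm}_{\pi^{-1}(j)}$; applying $\Tb_i$ then produces the explicit sum in Lusztig's formula indexed by the Cartan entries $a_{i,\pi^{-1}(j)}$ in terms of the generators $x^{\pm}_i, x^{\pm}_{\pi^{-1}(j)}$ and $k_i$; finally applying $S_\pi$ relabels every index by $\pi$, yielding the same sum with $i \rightsquigarrow \pi(i)$, $\pi^{-1}(j) \rightsquigarrow j$, and using the automorphism property $a_{i,\pi^{-1}(j)} = a_{\pi(i),j}$ together with $S_\pi(k_i) = k_{\pi(i)}$ and $d_i = d_{\pi(i)}$ (Dynkin automorphisms preserve the symmetrising data). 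This is exactly $\Tb_{\pi(i)}(\xjpm)$, completing the check.

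I expect the bookkeeping in the mixed relation to be the only place requiring care, and it is genuinely routine: everything reduces to the single observation that a Dynkin diagram automorphism $\pi$ intertwines the permutation action on generators with the permutation action on $a_{ij}$, $d_i$, $k_i$ and the $W$-action on $P^\vee$. No convergence or completion issues arise since all of $\Tb_i$, $\Tb_i^{-1}$ and $S_\pi$ are honest algebra automorphisms of $\Uaff$ (and they preserve the subalgebra $\Udash$, being defined on its generators $\xipm, k_i^{\pm 1}$ by the same formulas), so the action descends to $\Udash$ verbatim. Thus $\Bd$ acts on both $\Uaff$ and $\Udash$ as claimed.
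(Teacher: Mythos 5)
Your proposal is correct and follows the same route the paper intends: the corollary is presented as an immediate consequence of Theorem \ref{thm:braid group action} together with the definition of $S_{\pi}$, and your verification of the semidirect-product relations — in particular the mixed relation $S_{\pi}\Tb_{i}S_{\pi}^{-1} = \Tb_{\pi(i)}$ via equivariance of Lusztig's formulas under diagram automorphisms (preserving $a_{ij}$, $d_{i}$, $k_{i}$ and the $W$-action on $P^{\vee}$) — is exactly the routine check the paper leaves implicit. The observation that everything restricts to $\Udash$ because the formulas only involve $\xipm$ and $k_{i}^{\pm 1}$ is likewise as intended.
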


\subsection{Quantum affinizations} \label{subsection:quantum affinizations}

Any Drinfeld-Jimbo quantum group can be affinized within the quantum setting as follows.

\begin{defn} \label{defn:quantum affinization}
    The quantum affinization of $\Uqs$ is the unital associative $\kk$-algebra $\Uqaffs$ with generators $\xpm_{i,m}$, $h_{i,r}$, $q^{h}$, $C^{\pm 1}$ ($i\in I$, $m\in\Zbb$, $r\in\Zbb^{*}$, $h\in P^{\vee}$) and relations
\begin{itemize}
    \item $C^{\pm 1}$ central, $\hfill \refstepcounter{equation}(\theequation)\label{eqn:quantum affinization relations 1}$
    \item $\displaystyle C^{\pm 1}C^{\mp 1} = q^{0} = 1,
    \hfill \refstepcounter{equation}(\theequation)\label{eqn:quantum affinization relations 2}$
    \item $\displaystyle q^{h}q^{h'} = q^{h+h'},
    \hfill \refstepcounter{equation}(\theequation)\label{eqn:quantum affinization relations 3}$
    \item $\displaystyle [q^{h},h_{i,r}] = 0,
    \hfill \refstepcounter{equation}(\theequation)\label{eqn:quantum affinization relations 4}$
    \item $\displaystyle [h_{i,r},h_{j,s}] = \delta_{r+s,0} \frac{[ra_{ij}]_{i}}{r} \frac{C^{r}-C^{-r}}{q_{j}-q_{j}^{-1}},
    \hfill \refstepcounter{equation}(\theequation)\label{eqn:quantum affinization relations 5}$
    \item $\displaystyle q^{h} \xpm_{i,m} q^{-h} = q^{\pm \langle \alpha_{i},h\rangle} \xpm_{i,m},
    \hfill \refstepcounter{equation}(\theequation)\label{eqn:quantum affinization relations 6}$
    \item $\displaystyle [h_{i,r},\xpm_{j,m}] = \pm \frac{[ra_{ij}]_{i}}{r} C^{\frac{r \mp \lvert r\rvert}{2}} \xpm_{j,r+m},
    \hfill \refstepcounter{equation}(\theequation)\label{eqn:quantum affinization relations 7}$
    \item $\displaystyle [\xp_{i,m},\xm_{j,l}] = \frac{\delta_{ij}}{q_{i}-q_{i}^{-1}} (C^{-l}\phi^{+}_{i,m+l} - C^{-m}\phi^{-}_{i,m+l}),
    \hfill \refstepcounter{equation}(\theequation)\label{eqn:quantum affinization relations 8}$
    \item $\displaystyle [\xpm_{i,m+1},\xpm_{j,l}]_{q_{i}^{\pm a_{ij}}} + [\xpm_{j,l+1},\xpm_{i,m}]_{q_{i}^{\pm a_{ij}}} = 0,
    \hfill \refstepcounter{equation}(\theequation)\label{eqn:quantum affinization relations 9}$
\end{itemize}
and whenever $i\not= j$, for any integers $m$ and $m_{1},\dots,m_{a'}$ where $a' = 1 - a_{ij}$,
\begin{itemize}
    \item $\displaystyle
    \sum_{\pi\in S_{a'}}
    \sum_{s=0}^{a'} (-1)^{s}
    {\begin{bmatrix}a'\\s\end{bmatrix}}_{i}
    \xpm_{i,m_{\pi(1)}}\dots\xpm_{i,m_{\pi(s)}}
    \xpm_{j,m}
    \xpm_{i,m_{\pi(s+1)}}\dots\xpm_{i,m_{\pi(a')}}
    = 0.
    \hfill \refstepcounter{equation}(\theequation)\label{eqn:quantum affinization relations 10}$
\end{itemize}
Here each $k_{i} = q^{d_{i}\alpha^{\vee}_{i}}$ and the $\phi^{\pm}_{i,\pm s}$ are given by the formula
$$ \sum_{s\geq 0} \phi^{\pm}_{i,\pm s} z^{\pm s} =
k_{i}^{\pm 1} \exp{\left( \pm (q_{i}-q_{i}^{-1})\sum_{s'>0}h_{i,\pm s'} z^{\pm s'} \right)}
$$
when $s\geq 0$, and are zero otherwise.
\end{defn}

One may alternatively view $\Uqaffs$ as a deformation quantization of the one-dimensional central extension of the loop Lie algebra $\s[t,t^{-1}]$ of smooth maps $S^{1} \rightarrow \s$.
In particular, when $\s = \g$ is finite type, this is the loop-style realization of the corresponding untwisted affine Kac-Moody algebra $\gaff$ without derivation.
Loosely speaking, the $\xp_{i,m}$, $\xm_{i,m}$, $h_{i,r}$, $q^{h}$ generators above correspond to the elements $e_{i}t^{m}$, $f_{i}t^{m}$, $h_{i}t^{r}$, $h$ respectively inside $\s[t,t^{-1}]$, and $C^{\pm 1}$ is identified with the central extension.

\begin{rmk}
    \begin{itemize}
        \item Relations (\ref{eqn:quantum affinization relations 10}) are called the affine $q$-Serre relations.
        \item The definition of $\Uqaffs$ varies slightly between sources.
        We use the one found for example in \cites{Damiani12,Damiani24,Miki99} since it is more precise regarding the isomorphism between the two presentations of the quantum affine algebra (see Section \ref{subsubsection:quantum affine algebras}).
        The definition found in other works such as \cites{Beck94,Jing98,Hernandez09} can then be obtained by adjoining $C^{\pm 1/2}$ and scaling each $\xpm_{i,m}$ generator by $C^{m/2}$.
    \end{itemize}
\end{rmk}

It is clear that any quantum affinization $\Uqaffs$ possesses the following natural automorphisms and anti-automorphisms.
\begin{itemize}
    \item Every automorphism $\pi$ of the underlying Dynkin diagram gives rise to an automorphism $\Scal_{\pi}$ of $\Uqaffs$ defined by
    \begin{align*}
        &\Scal_{\pi}(\xpm_{i,m}) = o_{i,\pi(i)}^{m}\xpm_{\pi(i),m}, \qquad
        \Scal_{\pi}(h_{i,r}) = o_{i,\pi(i)}^{r}h_{\pi(i),r}, \qquad
        \Scal_{\pi}(q^{h}) = q^{\pi(h)}, \qquad
        \Scal_{\pi}(C) = C.
    \end{align*}
    \item For each $i\in I$ there is an automorphism $\X_{i}$ given by
    \begin{align*}
        &\X_{i}(\xpm_{j,m}) = \upsilon(j)^{\delta_{ij}} \xpm_{j,m\mp\delta_{ij}}, \qquad \X_{i}(h_{j,r}) = h_{j,r}, \qquad
        \X_{i}(q^{h}) = C^{-\langle \Lambda_{i},h \rangle} q^{h}, \qquad \X_{i}(C) = C,
    \end{align*}
    where $\upsilon$ is any $\lbrace \pm 1 \rbrace$-valued function on $I$, for example a sign function.
    \item There is also an anti-involution $\eta$ with
    \begin{align*}
        &\eta(\xpm_{i,m}) = \xpm_{i,-m}, \qquad \eta(h_{i,r}) = -C^{r} h_{i,-r}, \qquad \eta(q^{h}) = q^{-h}, \qquad \eta(C) = C.
    \end{align*}
    \item There exists a $\Qbb$-algebra involution $\Wcal$ sending $q \mapsto q^{-1}$ such that
    \begin{align*}
        &\Wcal(\xpm_{i,m}) = C^{m} x^{\mp}_{i,m}, \qquad
        \Wcal(h_{i,r}) = -h_{i,r}, \qquad
        \Wcal(q^{h}) = q^{h}, \qquad
        \Wcal(C) = C^{-1}.
    \end{align*}
\end{itemize}

\begin{rmk} \label{rmk:affinizations of morphisms}
    We can roughly think of $\Scal_{\pi}$, $\eta$ and $\Wcal$ as `affinizations' of the corresponding (anti-)automorphisms $S_{\pi}$, $\sigma$ and $\omega$ from Section \ref{subsection:Drinfeld-Jimbo quantum groups}.
    Indeed, the former restrict to the latter on
    $\langle q^{h},\, \xpm_{i,0} ~|~ h\in P^{\vee},\, i\in I \rangle$.
\end{rmk}

For each subset $J = \lbrace j_{1},\dots,j_{p}\rbrace$ of $I$, let $\U(J) = \U(j_{1},\dots,j_{p})$ be the subalgebra of $\Uqaffs$ generated by
$\lbrace \xpm_{i,m},\, h_{i,r},\, k_{i}^{\pm 1},\, C^{\pm 1}~|~ i = j_{1},\dots,j_{p}, \, m\in\Zbb, \, r\in\Zbb^{*} \rbrace$.
Theorem 2 and Corollary 3 of \cite{Hernandez05} imply that this is in fact a copy of the quantum affinization associated to the full Dynkin subdiagram on $J$.
For later use, we record that the isomorphism
$h_{i} : \UdashA \xrightarrow{\sim} \U(i)$
is given \cite{Beck94} by
\begin{align} \label{eqn:hi isomorphism}
    & q \mapsto q_{i}, \qquad k_{1} \mapsto k_{i}, \qquad k_{0} \mapsto C k_{i}^{-1}, \qquad \xpm_{1} \mapsto \xpm_{i,0}, \\
    & \xp_{0} \mapsto - o(i) C k_{i}^{-1} \xm_{i,1}, \qquad
    \xm_{0} \mapsto - o(i) \xp_{i,-1} k_{i} C^{-1}.
\end{align}

Throughout this section, we shall freely use the Drinfeld new realization of the quantum affine algebra $\Uaff$ as the quantum affinization $\Uqaffg$ of corresponding the finite quantum group.
However, we postpone any further explanation of this result until Section \ref{subsubsection:quantum affine algebras}.

\subsubsection{Gradings and scaling automorphisms} \label{subsubsection:Gradings and scaling automorphisms}

Any quantum affinization $\Uqaffs$ possesses a fine grading $\deg$ taking values in $Q\oplus\Zbb\delta'$, given by
\begin{align*}
    \deg(\xpm_{i,m}) = (\pm \alpha_{i},m\delta'), \qquad
    \deg(h_{i,r}) = (0,r\delta'), \qquad
    \deg(C^{\pm 1}) = \deg(q^{h}) = (0,0).
\end{align*}
We shall write the resulting decomposition into graded pieces as
\begin{align} \label{eqn:decomposition of Uqaffs}
    \Uqaffs
    = \bigoplus_{\substack{\mu\in\mathring{Q} \\ k,\ell\in\Zbb}} \U_{\mu+k\delta,\ell\delta'}.
\end{align}
Projecting $\deg$ to $\mathring{Q}\oplus\Zbb\delta'$ and then taking the height defines a $\Zbb$--grading
\begin{align*}
    \degv(\xpm_{i,m}) = \pm \mathds{1}_{i\in I_{0}} + \hslash m, \qquad
    \degv(h_{i,r}) = \hslash r, \qquad
    \degv(C^{\pm 1}) = \degv(q^{h}) = 0,
\end{align*}
where $\hslash = \sum_{i\in I} a_{i}$ is the Coxeter number of $\gaff$.
The grading $\degv$ can be thought of as \emph{not seeing the horizontal $\delta$ direction}.
(Conversely, taking the height within $Q$ produces a $\Zbb$--grading $\deg_{h}$ which does not see the vertical $\delta'$ direction.)
By instead projecting $\deg$ to $\Zbb\alpha_{j}$ or $\Zbb\delta'$ we obtain coarse $\Zbb$--gradings
\begin{gather*}
    \degj(\xpm_{i,m}) = \pm \delta_{ij}, \qquad
    \degj(C^{\pm 1}) = \degj(q^{h}) = \degj(h_{i,r}) = 0,
    \\
    \degZ(\xpm_{i,m}) = m, \qquad
    \degZ(h_{i,r}) = r, \qquad
    \degZ(C^{\pm 1}) = \degZ(q^{h}) = 0,
\end{gather*}
for each $j\in I$.
To every $\Zbb$--grading we can associate scaling automorphisms
\begin{align*}
    \sfrakv_{a} : z \mapsto a^{\degv(z)} z, \qquad
    \sfrakj_{a} : z \mapsto a^{\degj(z)} z, \qquad
    \sfrakZ_{a} : z \mapsto a^{\degZ(z)} z,
\end{align*}
for any $a\in\Cbb^{\times}$, where $z$ is a homogeneous element of $\Uqaffs$.
Note that
$\degv = \hslash\degZ + \sum_{j\in I_{0}} \degj$
and thus
$\sfrakv_{a} = (\sfrakZ_{a})^{\hslash} \prod_{j\in I_{0}} \sfrakj_{a}$.

\begin{rmk} \label{rmk:degree operators for quantum affinizations}
    One may enlarge $\Uqaffs$ by adding generators $D^{\pm 1}$ such that conjugation by $D$ acts as some scaling automorphism.
    Various references include $D^{\pm 1}$ corresponding to $\sfrakZ_{q}$ in their definition of $\Uqaffs$, in which case $\Scal_{\pi}$, $\X_{i}$, $\eta$ and $\Wcal$ extend by mapping $D$ to $D$, $D q^{\Lambda_{0}^{\vee}}$, $D$ and $D^{-1}$ respectively.
\end{rmk}

\subsubsection{Topological coproducts} \label{subsubsection:topological coproducts}

Unlike quantum groups, quantum affinizations are not known to possess Hopf algebra or even coproduct structures.
Nevertheless, Drinfeld did define in an unpublished note -- see also \cites{DF93,DI97} -- a \textit{topological} coproduct for $U_{q}(\widehat{\mathfrak{sl}}_{n+1})$ with respect to the Drinfeld new presentation, taking values in a completion of its tensor square.
This was later generalised by Hernandez \cite{Hernandez05} to a topological coproduct for general quantum affinizations, depending on a spectral parameter.
However, compatibility with the affine $q$-Serre relations (\ref{eqn:quantum affinization relations 10}) was known only in finite \cites{Enriquez00,Grosse07} and simply laced \cite{DI97} types.
\\

Recent work of Damiani \cite{Damiani24} addresses this issue, proving that there exists a topological coproduct $\Delta_{u}$ of $\Uqaffs$ in the general case.
Her method relies upon careful consideration of the specific completion
$\Uqaffs \widehat{\otimes} \Uqaffs$
into which $\Delta_{u}$ maps.
For simplicity, we shall not dwell on these (important) subtleties here and instead refer the interested reader to \cite{Damiani24}.
For example, there \S3 defines the completions considered,
\S7 proves the coassociativity and counit properties,
and
Remark 7.7 discusses differences with \cite{Hernandez05}.

\begin{thm} \label{thm:Damiani topological coproduct}
    \cites{Damiani24}
    There is a unique algebra morphism
    $\Delta_{u} : \Uqaffs \rightarrow \Uqaffs \widehat{\otimes} \Uqaffs$
    sending
    \begin{align*}
        C^{\pm 1} &\mapsto C^{\pm 1} \otimes C^{\pm 1}, \\
        q^{h} &\mapsto q^{h} \otimes q^{h}, \\
        C^{s}\phi^{+}_{i,r} &\mapsto \sum_{k+\ell=r}
        (C^{s+\ell} \phi^{+}_{i,k} \otimes C^{s}\phi^{+}_{i,\ell})
        u^{-\ell}, \\
        C^{s}\phi^{-}_{i,r} &\mapsto \sum_{k+\ell=r}
        (C^{s}\phi^{-}_{i,k} \otimes C^{s+k} \phi^{-}_{i,\ell})
        u^{-\ell}, \\
        \xp_{i,m} &\mapsto \xp_{i,m} \otimes 1
        + \sum_{\ell\geq 0} (C^{m-\ell} \phi^{+}_{i,\ell} \otimes \xp_{i,m-\ell})
        u^{\ell-m}, \\
        \xm_{i,m} &\mapsto (1 \otimes \xm_{i,m}) v^{-m}
        + \sum_{\ell\leq 0} (\xm_{i,m-\ell} \otimes C^{m-\ell} \phi^{-}_{i,\ell})
        u^{-\ell},
    \end{align*}
    for all $h\in P^{\vee}$, $i\in I$ and $r,s,m\in\Zbb$.
    This map is injective, and satisfies the coassociativity property
    \begin{align*}
        (\Delta_{u} \widehat{\otimes} \mathrm{id}) \circ \Delta_{u}
        =
        (\mathrm{id} \widehat{\otimes} \Delta_{u}) \circ \Delta_{u}
        :
        \Uqaffs \rightarrow
        \Uqaffs^{\widehat{\otimes} 3}.
    \end{align*}
    Moreover $\Delta_{u}$ possesses a counit
    $\varepsilon : \Uqaffs \rightarrow \mathbb{Q}(q)$ given by
    \begin{align*}
        \varepsilon(C^{\pm 1})
        = \varepsilon(q^{h})
        = \varepsilon(\phi^{\pm}_{i,r})
        = 1,
        \qquad
        \varepsilon(\xpm_{i,m})
        = 0,
    \end{align*}
    such that
    $(\varepsilon \widehat{\otimes} \mathrm{id}) \circ \Delta_{u}
    =
    (\mathrm{id} \widehat{\otimes} \varepsilon) \circ \Delta_{u}
    =
    \mathrm{id}$.
\end{thm}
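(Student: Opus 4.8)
\textbf{Sketch of proof} (following \cite{Damiani24}). The plan is to define $\Delta_{u}$ on the Drinfeld generators of $\Uqaffs$ by the displayed formulas, check that this assignment is compatible with every defining relation, and then extract uniqueness, the counit and coassociativity properties, and injectivity. The first step is to pin down the target: one fixes a completion $\Uqaffs\widehat{\otimes}\Uqaffs$ with respect to the $\degZ$--grading of Section \ref{subsubsection:Gradings and scaling automorphisms}, chosen fine enough that (i) each displayed image is a genuine element --- for instance the summand indexed by $\ell$ in $\Delta_{u}(\xp_{i,m})$ has right tensor factor $\xp_{i,m-\ell}$ of $\degZ$--degree $m-\ell\to-\infty$, so the sum lies in the completion --- and (ii) the completion is closed under the products needed to extend $\Delta_{u}$ multiplicatively from the free algebra on the generators (\S3 of \cite{Damiani24}). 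Granting that $\Delta_{u}$ is well defined, uniqueness is immediate since $\Uqaffs$ is generated by the $C^{\pm1}$, $q^{h}$ and $\xpm_{i,m}$, and an algebra morphism is determined on a generating set.

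The technical core is to verify that the assignment descends modulo the relations (\ref{eqn:quantum affinization relations 1})--(\ref{eqn:quantum affinization relations 10}). The relations (\ref{eqn:quantum affinization relations 1})--(\ref{eqn:quantum affinization relations 9}), which do not involve the $q$--Serre expression, are checked by a direct --- if lengthy --- computation with the generating series $\sum_{s}\phi^{\pm}_{i,\pm s}z^{\pm s}$, exploiting that their images are multiplicative up to the spectral shift by $u$ and comparing coefficients of $z$ and $u$. The genuinely hard relation is the affine $q$--Serre relation (\ref{eqn:quantum affinization relations 10}): one must show that $\Delta_{u}$ sends its left-hand side to $0$ in $\Uqaffs\widehat{\otimes}\Uqaffs$. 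This compatibility was previously established only in finite and simply laced types, and it is the main obstacle of the whole argument. The strategy of \cite{Damiani24} is to expand the image of the Serre expression and reorganise the resulting infinite sums so that they factor through tensor products of lower Serre-type expressions, together with the Drinfeld-type commutator identities among the $\xpm$ and $\phi^{\pm}$; each such building block vanishes by an induction on the symmetric-group sum, and the chosen completion is fine enough to license the rearrangements. I expect this step to consume the bulk of the work, especially outside the simply laced case where the Serre sum has more terms.

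For the counit, one first checks that $\varepsilon(C^{\pm1})=\varepsilon(q^{h})=\varepsilon(\phi^{\pm}_{i,r})=1$ and $\varepsilon(\xpm_{i,m})=0$ respects the relations, which is routine since each becomes a polynomial identity over $\kk$. One then verifies $(\varepsilon\widehat{\otimes}\mathrm{id})\circ\Delta_{u}=\mathrm{id}=(\mathrm{id}\widehat{\otimes}\varepsilon)\circ\Delta_{u}$ by evaluation on generators: the statement is clear on $C^{\pm1}$ and $q^{h}$, and on $\xpm_{i,m}$ it holds because $\varepsilon$ kills the $\xpm_{i,m-\ell}$ tensor factors and returns the leading term, using that $\varepsilon$ is continuous for the completion in the appropriate sense (\S7 of \cite{Damiani24}). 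Injectivity is then formal: the identity $(\mathrm{id}\widehat{\otimes}\varepsilon)\circ\Delta_{u}=\mathrm{id}$ supplies a one-sided inverse to $\Delta_{u}$. Alternatively, one can argue directly: if $\Delta_{u}(z)=0$ with $z$ homogeneous for the fine grading $\deg$, then isolating in $\Delta_{u}(z)$ the summands whose first tensor factor has extremal $\degZ$--degree forces $z=0$.

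Finally, coassociativity $(\Delta_{u}\widehat{\otimes}\mathrm{id})\circ\Delta_{u}=(\mathrm{id}\widehat{\otimes}\Delta_{u})\circ\Delta_{u}$ is checked on generators, since both composites are algebra morphisms $\Uqaffs\to\Uqaffs^{\widehat{\otimes}3}$ once one identifies a suitable triple completion on which $\Delta_{u}\widehat{\otimes}\mathrm{id}$ and $\mathrm{id}\widehat{\otimes}\Delta_{u}$ are defined and continuous on $\mathrm{im}(\Delta_{u})$. On $C^{\pm1}$ and $q^{h}$ both sides agree trivially, and on $\phi^{\pm}_{i,r}$ and $\xpm_{i,m}$ it is a bookkeeping computation: each composite yields the same iterated sum over the ways of splitting the relevant degrees and the exponent of $u$ across the three factors, which one matches term by term. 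In sum, the only serious obstruction in the proof is the affine $q$--Serre relation of the second step, together with the careful construction of the completions underlying every step --- which is precisely what necessitates the dedicated treatment of \cite{Damiani24}.
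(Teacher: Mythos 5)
This theorem is quoted from Damiani's work and the paper supplies no proof of its own: it explicitly defers the construction of the completion $\Uqaffs\widehat{\otimes}\Uqaffs$ and the verification of coassociativity and the counit to \cite{Damiani24} (its \S 3 and \S 7 respectively). Your sketch is consistent with that reference's strategy as the paper summarises it — in particular you correctly single out compatibility with the affine $q$-Serre relations (\ref{eqn:quantum affinization relations 10}) as the genuine obstacle, which is precisely why the general case required Damiani's dedicated treatment — so there is nothing to compare against here beyond noting that your account, like the paper's, leaves that hard step to the cited source.
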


It is worth noting that the power of $u$ in each of the expressions above records \emph{minus} the degree $\deg_{\Zbb}$ of the second factor.
Crucially, when working with $\Delta_{u}$ it is therefore often enough to consider only $u = 1$ since
$\Delta_{u} = (\mathrm{id} \otimes \sfrakZ_{u^{-r}}) \circ \Delta_{1}$.
Furthermore, we have that
\begin{align*}
    \Delta_{u} : h_{i,r} \mapsto
    \begin{cases}
        h_{i,r} \otimes 1 + (C^{r} \otimes h_{i,r}) u^{-r}
        & \mathrm{if~} r>0, \\
        h_{i,r} \otimes C^{r} + (1 \otimes h_{i,r}) u^{-r}
        & \mathrm{if~} r<0,
    \end{cases}
\end{align*}
and hence (after specialising $u$) $\Delta_{u}$ sends $\Uqaffs^{0}$ into the usual non-completed tensor square
$\Uqaffs^{0} \otimes \Uqaffs^{0}$
since its generators are mapped to finite sums of elementary tensors.

\begin{rmk} \label{rmk:affinizations of coproducts}
    \begin{itemize}
        \item One can roughly think of $\Delta_{u}$ as an `affinization' of the coproduct $\Delta_{+}$ for $\Uqs$.
        Namely, $\Delta_{u}$ sends elements of
        $\langle q^{h},\, \xpm_{i,0} ~|~ h\in P^{\vee},\, i\in I \rangle$
        to their images under $\Delta_{+}$ plus series of terms which vanish as $u \rightarrow 0$.
        \item Using Remark \ref{rmk:affinizations of morphisms} we then see that conjugating $\Delta_{u}$ by $\Wcal$, $\eta$ and $\Wcal\eta$ produces such affinizations for $\Delta_{-}$, $\Delta = \overline{\Delta}_{+}$ and $\overline{\Delta}_{-}$ respectively.
        For example, Hernandez' topological coproduct in \cites{Hernandez05,Hernandez07} corresponds to $\Delta = \overline{\Delta}_{+}$ in this way.
    \end{itemize}
\end{rmk}

\begin{rmk} \label{rmk:coproduct not always tensor product}
    Although $\Delta_{u}$ does not give a well-defined morphism to $\Uqaffs \otimes \Uqaffs$, it can still be used to define tensor products of $\Uqaffs$-modules in certain specific cases by specialising $u$ to particular elements of $\Cbb^{\times}$.
    See for example the construction of Fock space and Macmahon representations for $\UtorA$ by Feigin-Jimbo-Miwa-Mukhin \cite{FJMM13}.
\end{rmk}

\subsubsection{\texorpdfstring{$\ell$}{l}-highest weight theory} \label{subsubsection:l-highest weight theory}

It is known \cite{Hernandez05} that for any quantum affinization there exists a so-called \emph{loop} triangular decomposition
$\Uqaffs \cong \Uqaffs^{-} \otimes \Uqaffs^{0} \otimes \Uqaffs^{+}$
into the subalgebras
\begin{align*}
    \langle \xm_{i,m} ~|~ i\in I,\, m\in\Zbb \rangle, \qquad
    \langle q^{h},\, h_{i,r},\, C^{\pm 1} ~|~ h\in P^{\vee},\, i\in I,\, r\in\Zbb^{*} \rangle, \qquad
    \langle \xp_{i,m} ~|~ i\in I,\, m\in\Zbb \rangle,
\end{align*}
respectively.
This allows us to define the notion of $\ell$-highest weight modules for quantum affinizations, analogously to the constructions of Section \ref{subsubsection:highest weight theory} for quantum groups.

\begin{defn} \label{defn:l-weights}
    \begin{itemize}
        \item An $\ell$-weight is a triple $(\lambda,\Psi,c)$ where $c\in\Cbb^{\times}$, $\lambda \in \h^{*}$ and
        $\Psi = (\Psi^{\pm}_{i,\pm s})_{i\in I,\, s \geq 0}$
        with all $\Psi^{\pm}_{i,\pm s} \in \Cbb$, satisfying the condition $\Psi^{\pm}_{i,0} = q_{i}^{\pm\langle\lambda,\alpha_{i}^{\vee}\rangle}$ for each $i\in I$.
        \item The set of $\ell$-weights is denoted by $P_{\ell}$.
    \end{itemize}
\end{defn}

\begin{defn} \label{defn:l-highest weight modules}
    \begin{itemize}
        \item A vector $v$ inside a $\Uqaffs$-module $V$ has $\ell$-weight $(\lambda,\Psi,c) \in P_{\ell}$ if
        \begin{align*}
            q^{h}\cdot v = q^{\langle \lambda,h \rangle}v, \qquad
            \phi^{\pm}_{i,\pm s}\cdot v = \Psi^{\pm}_{i,\pm s}v, \qquad
            C^{\pm 1}\cdot v = c^{\pm 1}v,
        \end{align*}
        for all $h\in P^{\vee}$, $i\in I$ and $s\in\Zbb_{\geq 0}$.
        \item Moreover, $v$ is $\ell$-highest weight if $\xp_{i,m}\cdot v = 0$ for all $i\in I$ and $m\in\Zbb$.
        \item If $V = \Uqaffs\cdot v$ for some $\ell$-highest weight vector $v$ of $\ell$-weight $(\lambda,\Psi,c) \in P_{\ell}$, then we call it an $\ell$-highest weight module of $\ell$-highest weight $(\lambda,\Psi,c)$.
    \end{itemize}
\end{defn}

The required compatibility between $\lambda$ and $\Psi$ is due to the fact that $k_{i}^{\pm 1} = \phi^{\pm}_{i,0}$.
Similarly to Section \ref{subsubsection:highest weight theory}, for each $(\lambda,\Psi,c) \in P_{\ell}$ we can define the associated Verma module $M(\lambda,\Psi,c)$ of $\ell$-highest weight $(\lambda,\Psi,c)$ as the quotient of $\Uqaffs$ by the left ideal generated by
\begin{align*}
    \lbrace \xp_{i,m},\,
    q^{h} - q^{\langle\lambda,h\rangle},\,
    \phi^{\pm}_{i,\pm s} - \Psi^{\pm}_{i,\pm s},\,
    C^{\pm 1} - c^{\pm 1}
    ~|~
    i\in I,\, m\in\Zbb,\, h\in P^{\vee},\, s\in\Zbb_{\geq 0} \rbrace.
\end{align*}
Again, this satisfies the universal property that $M(\lambda,\Psi,c)$ surjects onto any $\Uqaffs$-module of $\ell$-highest weight $(\lambda,\Psi,c)$, with $1$ sent to the $\ell$-highest weight vector $v$ in Definition \ref{defn:l-highest weight modules}.
Moreover, $M(\lambda,\Psi,c)$ contains a unique maximal submodule and the corresponding quotient $V(\lambda,\Psi,c)$ is the unique irreducible module of $\ell$-highest weight $(\lambda,\Psi,c)$ up to isomorphism.
\\

There also exists a notion of integrability for representations of quantum affinizations.

\begin{defn}
    \begin{itemize}
        \item A representation of $\Uqaffs$ is integrable if it is integrable as a $\Uqs$-module via restriction to
        $\langle q^{h},\, \xpm_{i,0} ~|~ h\in P^{\vee},\, i\in I \rangle$,
        with finite dimensional weight spaces.
        \item The category $\Oaff$ consists of representations of $\Uqaffs$ whose restrictions to
        $\langle q^{h},\, \xpm_{i,0} ~|~ h\in P^{\vee},\, i\in I \rangle$
        lie in $\Oint$.
    \end{itemize}
\end{defn}

In particular, $\Oaff$ contains all integrable $\ell$-highest weight representations, and the irreducible such modules are precisely the irreducible objects of $\Oaff$.
However, it is important to note that while $\Oaff$ is closed under taking submodules, quotients and finite direct sums, it is not a semisimple category (even when $\s$ is finite type).
\\

For any $\Uqaffs$-module $V$ we can define the weight spaces $V_{\lambda}$ exactly as in Section \ref{subsubsection:highest weight theory}, so that $\xpm_{i,m}\cdot V_{\lambda} \subset V_{\lambda\pm\alpha_{i}}$ and more generally
\begin{equation} \label{eqn:action on weight spaces}
    \U_{\beta + k\delta, \ell\delta'} \cdot V_{\lambda} \subset V_{\lambda + \beta + k\delta}
\end{equation}
for all $\beta\in\mathring{Q}$, $\lambda\in P$ and $k,\ell\in\Zbb$.
It follows that whenever $V$ is integrable, for any $v\in V$ there exists some $k\geq 0$ such that all $(\xpm_{i,m})^{k}\cdot v = 0$.
Furthermore, if $V$ is of $\ell$-highest weight $(\lambda,\Psi,c)$ then it must be diagonalisable as a representation of $\Uqaffs^{0}$ and moreover $V = \bigoplus_{\mu\leq\lambda} V_{\mu}$.

\begin{defn}
    Given a $\Uqaffs$-module $V\in\Oaff$ whose weights are contained in some
    $\bigcup_{\ell=1}^{N} (\lambda_{\ell} - Q^{+})$, for each $J\subset I$ we can define a subspace
    $V(J) = \bigoplus_{\ell=1}^{N} \bigoplus_{\mu \in Q(J)^{+}} V_{\lambda_{\ell} - \mu}$
    where
    $Q(J)^{+} = \bigoplus_{j\in J} \Nbb\alpha_{j}$.
\end{defn}

\begin{notation}
    When $J = \lbrace j\rbrace$ is a singleton we may write $\U(j)$, $V(j)$ and $Q(j)^{+}$ as shorthand for $\U(J)$, $V(J)$ and $Q(J)^{+}$.
\end{notation}

It is clear from (\ref{eqn:action on weight spaces}) that $V(J)$ becomes a $\U(J)$-module via restriction.

\begin{defn}
    A $\Uqaffs$-module $V$ is type $1$ if
    $C$ acts by the identity and it admits a weight space decomposition
    $V = \bigoplus_{\lambda\in P} V_{\lambda}$,
    so in particular the eigenvalues of each $k_{i}$ lie in $q^{\Zbb}$.
\end{defn}

\begin{prop}
    Any integrable $\ell$-highest weight $\Uqaffs$-module is the twist of a type $1$ representation.
\end{prop}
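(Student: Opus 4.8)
The plan is to reduce to the type $1$ case by twisting with a suitable algebra automorphism, exactly as one does for quantum affine algebras. Let $V$ be an integrable $\ell$-highest weight $\Uqaffs$-module of $\ell$-highest weight $(\lambda,\Psi,c)$. First I would observe that on such a module the central element $C$ acts by the scalar $c\in\Cbb^{\times}$, and that integrability forces strong constraints on $c$: since $V$ is integrable as a $\Uqs$-module, restricting to each copy $\U(i)\cong\UdashA$ via the isomorphism $h_{i}$ of \eqref{eqn:hi isomorphism} and using that integrable $\UdashA$-modules have $k_{0}k_{1}=Ck_{i}^{-1}\cdot k_{i}=C$ acting by a power of $q_{i}$, one concludes $c\in q^{\Zbb}$. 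Hence, writing $c=q^{N}$ for some integer $N$, there is nothing lost in assuming $c$ is an integer power of $q$; the genuinely non-type-$1$ features to remove are then (a) $C$ acting by $q^{N}$ rather than $1$, and (b) the eigenvalues of the $k_i$ and of $q^h$ lying in a coset of $q^{\Zbb}$ rather than in $q^{\Zbb}$ itself.

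Next I would produce the twisting automorphism. For (a), recall from Section~\ref{subsubsection:Gradings and scaling automorphisms} the $\Zbb$-grading $\deg_{\Zbb}$ with $\deg_{\Zbb}(\xpm_{i,m})=m$, $\deg_{\Zbb}(h_{i,r})=r$, $\deg_{\Zbb}(C^{\pm1})=\deg_{\Zbb}(q^h)=0$, and consider a scaling automorphism of the form $\sfrakZ_{a}$; but since $C$ has $\deg_{\Zbb}=0$ this alone cannot rescale the action of $C$. Instead, the correct move is to use the automorphism $\X_{i}$ (or a product thereof, one for each $i$ in a suitable subset) from the list after Remark~\ref{rmk:affinizations of morphisms}, which sends $q^{h}\mapsto C^{-\langle\Lambda_i,h\rangle}q^{h}$ and fixes $C$ — composing these absorbs the troublesome $C$-dependence in the $q^h$-eigenvalues — together with rescaling the $\xpm_{i,m}$ by $c^{m/2}$-type factors to trivialise the $C^{(r\mp|r|)/2}$ appearing in \eqref{eqn:quantum affinization relations 7} and the $C^{-l},C^{-m}$ in \eqref{eqn:quantum affinization relations 8}. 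More precisely, adjoining $C^{\pm1/2}$ (as in the Remark after Definition~\ref{defn:quantum affinization}) and passing to the rescaled generators $C^{m/2}\xpm_{i,m}$ puts the defining relations into the form in which $C$ may be set to $1$; since $C$ is central and acts by the fixed scalar $c=q^N$ on the irreducible pieces, this substitution is realised concretely as an algebra automorphism of the relevant subalgebra acting on $V$. For (b), one further composes with a character-type twist $q^{h}\mapsto q^{\langle\xi,h\rangle}q^{h}$ by an element $\xi\in\h^{*}$ chosen so that all weight-space eigenvalues are pushed into $q^{\Zbb}$; the integrality hypothesis guarantees the weights lie in a single coset of the root lattice, so one such $\xi$ works uniformly.

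Finally I would check that the twisted module $V'$ obtained this way genuinely lies in the type $1$ class: $C$ acts by $1$ by construction, the weight-space decomposition $V'=\bigoplus_{\lambda}V'_{\lambda}$ is inherited from that of $V$ (the twists are diagonal on weight spaces), and each $k_i=\phi^{+}_{i,0}$ now has eigenvalues in $q^{\Zbb}$. I expect the main obstacle to be purely bookkeeping: verifying that the composite automorphism — scaling the $\xpm_{i,m}$ by the appropriate powers of $c^{1/2}$, the $\X_i$ twist, and the character twist — simultaneously normalises \emph{all} of relations \eqref{eqn:quantum affinization relations 1}–\eqref{eqn:quantum affinization relations 10} and not just the Cartan-type ones, in particular that the affine $q$-Serre relations \eqref{eqn:quantum affinization relations 10} are preserved (they are, since they are homogeneous of $\deg_{\Zbb}$-degree $\sum m_i + m$ and $C$-free). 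This is routine but slightly delicate because one must keep track of the half-integer powers of $c$; none of it requires new ideas beyond those already used to relate the two normalisations of $\Uqaffs$ in the Remark following Definition~\ref{defn:quantum affinization}.
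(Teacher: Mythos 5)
There is a genuine gap, and it sits at the very first step. You assert that integrability only forces $c\in q^{\Zbb}$ and then set out to ``remove'' a central action $C=q^{N}$ by an automorphism. The correct input — and the whole content of the paper's proof — is the Chari--Pressley result (\cite{CP91}*{\S 3.2}) that on the finite dimensional $\UdashA$-module $\U(i)\cdot v$ one has $k_{i}\cdot v=\varepsilon_{i}q^{m_{i}}v$ and $C\cdot v=\varepsilon v$ with $\varepsilon_{i},\varepsilon\in\lbrace\pm 1\rbrace$: finite dimensional modules are level zero up to sign, so the only obstruction to being type $1$ is a collection of \emph{signs}, which the paper removes with one explicit sign-twisting automorphism. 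Without this input your argument cannot be repaired, because the twist you propose does not exist: $\X_{i}$ fixes $C$; rescaling the generators to $C^{m/2}\xpm_{i,m}$ after adjoining $C^{\pm 1/2}$ is a change of presentation of the algebra and does not alter the scalar by which the central element acts on a given module; and no algebra automorphism of $\Uqaffs$ can send $C\mapsto\lambda C$ with $\lambda\neq\pm 1$, since relation (\ref{eqn:quantum affinization relations 5}) would require $\lambda^{r}C^{r}-\lambda^{-r}C^{-r}$ to be proportional to $C^{r}-C^{-r}$ for every $r$, forcing $\lambda^{2r}=1$. So if $C$ genuinely could act by $q^{N}$ with $N\neq 0$, the proposition would fail along your route; it holds only because that case is excluded by integrability, which is precisely the step you skipped.

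Your point (b) suffers from the same imprecision: the eigenvalues of $k_{i}$ do not merely lie in some coset of $q^{\Zbb}$, they lie in $\pm q^{\Zbb}$, again by the $\UdashA$ restriction. Once you have the sharp statement, no character twist by a general $\xi\in\h^{*}$ is needed — the sign automorphism
$\xp_{i,m}\mapsto\varepsilon_{i}\xp_{i,m}$, $\xm_{i,m}\mapsto\varepsilon^{m}\xm_{i,m}$, $h_{i,r}\mapsto\varepsilon^{(r-\lvert r\rvert)/2}h_{i,r}$, $q^{h}\mapsto q^{h}\prod_{i}\varepsilon_{i}^{\langle\Lambda_{i},h\rangle/d_{i}}$, $C\mapsto\varepsilon C$
does everything at once, and checking it preserves relations (\ref{eqn:quantum affinization relations 1})--(\ref{eqn:quantum affinization relations 10}) is the only bookkeeping required.
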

\begin{proof}
    It is clear that $\U(i)\cdot v$ is a finite dimensional $U_{q}(\widehat{\mathfrak{sl}}_{2})$-module for each $i\in I$, whereby \cite{CP91}*{§3.2} implies that
    $k_{i}\cdot v = \varepsilon_{i} q^{m_{i}} v$ and
    $C\cdot v = \varepsilon v$
    for some $m_{i}\in\Zbb_{\geq 0}$ and
    $\varepsilon_{i},\varepsilon\in\lbrace\pm 1\rbrace$.
    Twisting by the automorphism
    \begin{align*}
        \xp_{i,m} \mapsto \varepsilon_{i} \xp_{i,m}, \qquad
        \xm_{i,m} \mapsto \varepsilon^{m} \xm_{i,m}, \qquad
        h_{i,r} \mapsto \varepsilon^{(r - \lvert r\rvert)/2} h_{i,r}, \qquad
        \displaystyle q^{h} \mapsto q^{h} \prod_{i\in I} \varepsilon_{i}^{\langle\Lambda_{i},h\rangle/d_{i}}, \qquad
        C \mapsto \varepsilon C,
    \end{align*}
    then produces a type $1$ representation of $\Uqaffs$.
\end{proof}

\begin{notation}
    For the purposes of this paper we may therefore assume from now on that all such modules are type $1$, and write each element of $P_{\ell}$ as a pair $(\lambda,\Psi)$.
\end{notation}

\begin{defn}
    The set of $\ell$-dominant weights $P_{\ell}^{+}$ is the collection of $(\lambda,\Psi) \in P_{\ell}$ for which there exist (Drinfeld) polynomials $P_{i}(z) \in \Cbb[z]$ with all $P_{i}(0) = 1$ and
    \begin{align*}
        \sum_{s\geq 0} \Psi^{\pm}_{i,\pm s} z^{\pm s}
        =
        q_{i}^{\deg(P_{i})} \frac{P_{i}(zq_{i}^{-1})}{P_{i}(zq_{i})}
    \end{align*}
    in $\Cbb\llbracket z\rrbracket$ or $\Cbb\llbracket z^{-1}\rrbracket$ respectively.
\end{defn}

In this case, it follows that every $\langle \lambda,\alpha^{\vee}_{i} \rangle = \deg(P_{i}) \geq 0$ and so $\lambda$ must be dominant.

\begin{thm} \label{thm:l-highest weight classification}
    \cite{Hernandez05}
    An irreducible $\ell$-highest weight representation $V(\lambda,\Psi)$ is integrable if and only if $(\lambda,\Psi) \in P_{\ell}^{+}$.
\end{thm}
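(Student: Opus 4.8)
The plan is to reduce everything to the rank-one situation. For each $i\in I$ the subalgebra $\U(i)$ is isomorphic to $\UdashA \cong \Uaffsltwo$ via the map $h_i$ of (\ref{eqn:hi isomorphism}), and the Drinfeld-polynomial description of $\ell$-dominant weights is exactly the restriction to each node of the Chari-Pressley classification of finite-dimensional $\Uaffsltwo$-modules \cite{CP91}*{\S 3.2}. So the argument follows Chari-Pressley and Hernandez \cite{Hernandez05}: the ``only if'' direction is an immediate consequence of restriction to the $\U(i)$, while the ``if'' direction requires a rank-one computation to start integrability off and then a propagation argument to spread it across the whole module.

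For the ``only if'' direction, suppose $V = V(\lambda,\Psi)$ is integrable with $\ell$-highest weight vector $v$, and fix $i\in I$. Then $M_i := \U(i)\cdot v$ is an $\ell$-highest weight $\UdashA$-module which, by restriction, is integrable with finite-dimensional weight spaces. Restricting further to $\langle q^{d_i\alpha_i^\vee}, x^\pm_{i,0}\rangle \cong U_{q}(\mathfrak{sl}_2)$, local nilpotency of $x^\pm_{i,0}$ confines the classical weights of $M_i$ to the finite string below $v$, and since every $x^-_{i,m_1}\cdots x^-_{i,m_r}\cdot v$ lies in the single weight space $V_{\lambda - r\alpha_i}$, finiteness of weight spaces forces $M_i$ to be finite-dimensional. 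By \cite{CP91}*{\S 3.2} the $\ell$-weight of $v$ (the $i$-th component of $\Psi$) is then encoded by a Drinfeld polynomial $P_i(z)$ with $P_i(0) = 1$. Doing this for every $i\in I$ gives $(\lambda,\Psi)\in P_\ell^+$, with $\langle\lambda,\alpha_i^\vee\rangle = \deg P_i \geq 0$.

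For the ``if'' direction, assume $(\lambda,\Psi)\in P_\ell^+$ with Drinfeld polynomials $P_i(z)$ of degree $k_i = \langle\lambda,\alpha_i^\vee\rangle$. First I would run the rank-one computation inside $\U(i)\cong\UdashA$: using (\ref{eqn:quantum affinization relations 8}) to express $x^+_{i,m}x^-_{i,l}\cdot v$ in terms of the $\phi^\pm_{i,\bullet}$-eigenvalues of $v$, and feeding in the explicit series $q_i^{\deg P_i}P_i(zq_i^{-1})/P_i(zq_i)$, one obtains $(x^-_{i,0})^{k_i+1}\cdot v = 0$ — the quantum-affinization form of the Chari-Pressley singular-vector argument. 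Together with $x^+_{i,0}\cdot v = 0$, both $x^\pm_{i,0}$ act nilpotently on $v$ for each $i$. Next, set $V^{\mathrm{int}} = \{w\in V : (x^\pm_{i,0})^N\cdot w = 0 \text{ for all } i\in I \text{ and } N\gg 0\}$ and show it is a $\Uqaffs$-submodule, using (\ref{eqn:quantum affinization relations 7}), (\ref{eqn:quantum affinization relations 8}), (\ref{eqn:quantum affinization relations 9}) and the affine $q$-Serre relations (\ref{eqn:quantum affinization relations 10}) to carry out the usual $\mathrm{ad}$-nilpotency argument (as in Lusztig's treatment of integrable modules) establishing stability under every generator. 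Since $v\in V^{\mathrm{int}}$ and $V$ is irreducible, $V^{\mathrm{int}} = V$, so all $x^\pm_{i,0}$ act locally nilpotently; finite-dimensionality of the weight spaces then follows by bounding, in a fixed $V_\mu$, the span of the monomials $x^-_{j_1,m_1}\cdots x^-_{j_r,m_r}\cdot v$ — the lengths $r$ are bounded since the classical weights of $V$ lie in the finite $\mathring{W}$-saturation of $\lambda$, and relations (\ref{eqn:quantum affinization relations 9}) confine the indices $m_\bullet$ to a bounded range modulo lower terms. Hence $V$ is integrable.

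The hard part will be the propagation step in the ``if'' direction: the loop generators $x^\pm_{j,m}$ interact with the $x^\pm_{i,0}$ only through the intricate relations (\ref{eqn:quantum affinization relations 7})--(\ref{eqn:quantum affinization relations 10}) — and the compatibility of the affine $q$-Serre relations with the rest of the algebra is itself delicate, cf. \cite{Damiani24} — so controlling $\mathrm{ad}(x^\pm_{i,0})$-nilpotency after applying an arbitrary generator takes real work, as does the entangled task of bounding weight spaces without a general PBW theorem for $\Uqaffs$. The rank-one computation and the ``only if'' reduction, by contrast, are essentially formal once the isomorphisms $h_i$ and the Chari-Pressley classification are in hand.
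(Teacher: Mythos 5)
The paper does not prove this statement: it is imported verbatim from \cite{Hernandez05}, with the finite-type, type $A$ and simply laced cases attributed to Chari--Pressley, Miki and Nakajima. So there is no in-paper argument to compare yours against, and I will assess the proposal on its own terms. Your ``only if'' direction is correct and standard — it is the same reduction to $\U(i)\cong\UdashA$ that the paper itself runs in Proposition \ref{prop:l-weights in QPl+} and in the proposition on type-$1$ twists. In the ``if'' direction, the rank-one computation giving $(\xm_{i,0})^{k_i+1}\cdot v=0$ is also fine (the vector is $\U(i)$-singular by the Chari--Pressley calculation, and $[\xp_{j,m},\xm_{i,0}]=0$ for $j\neq i$ upgrades this to $\Uqaffs$-singularity, so irreducibility kills it), and the ``$V^{\mathrm{int}}$ is a submodule'' propagation is the right idea, though to make $\mathrm{ad}(\xpm_{i,0})$-local-nilpotency work on elements like $h_{j,r}$ and $\xpm_{i,r}$ you really need to invoke the identifications $\U(i)\cong\UdashA$, $\U(i,j)\cong$ known quantum affinizations, not just the listed relations.

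The genuine gap is your mechanism for finite-dimensionality of the weight spaces, which is half of the definition of integrability here. You claim that relations (\ref{eqn:quantum affinization relations 9}) ``confine the indices $m_\bullet$ to a bounded range modulo lower terms''. They cannot: relation (\ref{eqn:quantum affinization relations 9}) is homogeneous in the total loop degree (it only redistributes degree between the two factors), and more decisively, \emph{all} the defining relations already hold in the Verma module $M(\lambda,\Psi)$, whose weight space $M(\lambda,\Psi)_{\lambda-\alpha_i}=\bigoplus_{m\in\Zbb}\Cbb\,\xm_{i,m}\cdot v$ is infinite-dimensional because the grading by $P$ does not see the loop direction. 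So no purely algebraic manipulation of monomials can give the bound; it must come from irreducibility combined with the specific shape of $\Psi$. Concretely, a vector $\sum_m c_m\,\xm_{i,m}\cdot v$ vanishes in $V(\lambda,\Psi)$ if and only if $\sum_m c_m\bigl(\Psi^{+}_{i,s+m}-\Psi^{-}_{i,s+m}\bigr)=0$ for all $s$ (pair against $\xp_{i,s}$ via relation (\ref{eqn:quantum affinization relations 8}); the $j\neq i$ generators contribute nothing by weight reasons), and this Hankel-type pairing has finite rank precisely because $\sum_{s}\Psi^{\pm}_{i,\pm s}z^{\pm s}$ are the two expansions of a single rational function — i.e.\ precisely the Drinfeld-polynomial condition. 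Propagating this rationality to deeper weight spaces is the genuinely hard content of the theorem; it is the ``currents act by rational functions'' statement the paper quotes at the start of Section \ref{section:tensor products} from \cite{Hernandez07}*{Prop.~3.8} and \cite{GTL16}*{Prop.~3.6(ii)} (Hernandez's own route, and Chari--Pressley's in the affine case, instead realises $V(\lambda,\Psi)$ inside a product of fundamental modules). As written, your proof establishes local nilpotency but not finite-dimensional weight spaces, so it does not yet prove integrability in the sense used in the paper.
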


For finite types this is originally due to Chari-Pressley \cites{CP94,CP95}, where in fact these modules are precisely the irreducible finite dimensional representations of the quantum affine algebra.
In type $A_{n}^{(1)}$ the result was first proved by Miki \cite{Miki00} using their automorphism of $\UtorA$ from \cite{Miki99}.
Nakajima \cite{Nakajima01} later addressed all simply laced types, via geometric methods involving the equivariant K-theory of quiver varieties on the underlying Dynkin diagram.

\begin{notation}
    The irreducible, integrable $\ell$-highest weight module $V(\lambda,\Psi)$ corresponding to the Drinfeld polynomials $\Pcal(z) = (P_{i}(z))_{i\in I}$ may alternatively be denoted by $V(\Pcal(z))$.
\end{notation}

\begin{lem} \label{lem:twisting by scaling automorphisms}
    Twisting $V(\Pcal(z))$ by the scaling automorphisms $\sfrakv_{a}$ and $\sfrakZ_{a}$ from Section \ref{subsubsection:Gradings and scaling automorphisms} produces (up to isomorphism) those with polynomials $V(\Pcal(a^{\hslash}z))$ and $V(\Pcal(az))$.
\end{lem}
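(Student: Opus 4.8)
The plan is to track how the scaling automorphisms affect the eigenvalues of the Cartan-like generators $\phi^{\pm}_{i,\pm s}$ on the $\ell$-highest weight vector, and then translate the resulting change in the series $\sum_{s\geq 0}\Psi^{\pm}_{i,\pm s}z^{\pm s}$ into a substitution in the Drinfeld polynomials. Write $V = V(\Pcal(z)) = V(\lambda,\Psi)$ with $\ell$-highest weight vector $v$, so that $\phi^{\pm}_{i,\pm s}\cdot v = \Psi^{\pm}_{i,\pm s}\,v$ and $q^{h}\cdot v = q^{\langle\lambda,h\rangle}v$, with
\begin{align*}
    \sum_{s\geq 0}\Psi^{\pm}_{i,\pm s}z^{\pm s} = q_{i}^{\deg(P_{i})}\frac{P_{i}(zq_{i}^{-1})}{P_{i}(zq_{i})}.
\end{align*}
For an algebra automorphism $\phi$ of $\Uqaffs$, the twisted module ${}^{\phi}V$ has the same underlying space, with action $z\cdot_{\phi}w = \phi(z)\cdot w$; since the twisting automorphisms here fix $q^{h}$ and send each graded piece to a scalar multiple of itself, $v$ remains $\ell$-highest weight in ${}^{\phi}V$ of the same classical weight $\lambda$, and one just reads off the new $\Psi$.

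First I would compute $\deg_{\Zbb}$ and $\deg_{v}$ on the generators $\phi^{\pm}_{i,\pm s}$. From Definition \ref{defn:quantum affinization} the $\phi^{\pm}_{i,\pm s}$ are built from $k_{i}^{\pm 1}$ and the $h_{i,\pm s'}$ with $s'>0$; since $\deg_{\Zbb}(h_{i,r}) = r$ and $\deg_{\Zbb}(k_{i}^{\pm 1}) = 0$, the coefficient of $z^{\pm s}$ in the generating series, namely $\phi^{\pm}_{i,\pm s}$, has $\deg_{\Zbb} = \pm s$; similarly $\deg_{v}(\phi^{\pm}_{i,\pm s}) = \hslash(\pm s)$ because $\deg_{v}(h_{i,r}) = \hslash r$ and $\deg_{v}(k_{i}^{\pm 1}) = 0$. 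Hence $\sfrakZ_{a}(\phi^{\pm}_{i,\pm s}) = a^{\pm s}\phi^{\pm}_{i,\pm s}$ and $\sfrakv_{a}(\phi^{\pm}_{i,\pm s}) = a^{\pm\hslash s}\phi^{\pm}_{i,\pm s}$. Consequently, in ${}^{\sfrakZ_{a}}V$ the vector $v$ has $\phi^{\pm}_{i,\pm s}$-eigenvalue $a^{\pm s}\Psi^{\pm}_{i,\pm s}$, so its generating series is obtained from the original by the substitution $z\mapsto az$; likewise ${}^{\sfrakv_{a}}V$ corresponds to $z\mapsto a^{\hslash}z$. Finally I would note that the substitution $z\mapsto cz$ in $\sum_{s\geq 0}\Psi^{\pm}_{i,\pm s}z^{\pm s} = q_{i}^{\deg(P_{i})}P_{i}(zq_{i}^{-1})/P_{i}(zq_{i})$ is exactly effected, with $P_{i}(0)=1$ preserved, by replacing $P_{i}(z)$ with $P_{i}(cz)$; thus ${}^{\sfrakZ_{a}}V \cong V(\Pcal(az))$ and ${}^{\sfrakv_{a}}V \cong V(\Pcal(a^{\hslash}z))$ as these are irreducible $\ell$-highest weight modules determined up to isomorphism by their $\ell$-highest weight, using Theorem \ref{thm:l-highest weight classification} (and the corresponding uniqueness of $V(\lambda,\Psi,c)$).

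The only genuine subtlety, and the step I expect to require the most care, is checking that the twisted module ${}^{\phi}V$ is again of the stated form — i.e.\ that twisting by $\sfrakZ_{a}$ or $\sfrakv_{a}$ preserves the class of irreducible integrable $\ell$-highest weight modules and does not alter the classical weight $\lambda$ or the central character. This follows because both $\sfrakZ_{a}$ and $\sfrakv_{a}$ fix every $q^{h}$ and fix $C^{\pm 1}$ (as $\deg_{\Zbb}(q^{h}) = \deg_{\Zbb}(C^{\pm 1}) = 0$, and similarly for $\deg_{v}$), so $v$ is still an $\ell$-highest weight vector (the condition $\xp_{i,m}\cdot_{\phi}v = \phi(\xp_{i,m})\cdot v$ is a nonzero scalar times $\xp_{i,m}\cdot v = 0$), of the same weight $\lambda$ and same $C$-eigenvalue; irreducibility and integrability are preserved since $\phi$ is an automorphism. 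Once this bookkeeping is in place the identification of Drinfeld polynomials is immediate from the degree computation above.
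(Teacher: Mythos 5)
Your proposal is correct and follows essentially the same route as the paper's (much terser) proof: observe that $\sfrakv_{a}$ and $\sfrakZ_{a}$ preserve the positive part and scale each $\phi^{\pm}_{i,r}$ by $a^{\hslash r}$ and $a^{r}$ respectively, then read off the substitution $z\mapsto a^{\hslash}z$ or $z\mapsto az$ in the Drinfeld polynomials. The extra bookkeeping you include (preservation of $\lambda$, the central character, integrability and irreducibility) is implicit in the paper's one-line argument.
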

\begin{proof}
    Both $\sfrakv_{a}$ and $\sfrakZ_{a}$ preserve $\Utor^{+}$, and moreover scale any $\phi_{i,r}^{\pm}$ by $a^{\hslash r}$ and $a^{r}$ respectively.
\end{proof}

\begin{rmk}
    One can define and obtain analogous results for $\ell$-lowest weight modules simply by twisting every representation with $\Wcal$.
\end{rmk}

\subsubsection{\texorpdfstring{$q$}{q}-characters} \label{subsubsection:q-characters}

Here we recall the $q$-character morphism
$\chi_{q} : K(\Oaff) \rightarrow \Ycal$,
as introduced by Hernandez \cite{Hernandez05} -- see Section \ref{section:q-characters} for historical discussion and motivations.
Consider a representation $V\in\Oaff$ with weight space decomposition
$V = \bigoplus_{\lambda\in P} V_{\lambda}$.
Since $C^{\pm 1}$ act trivially, the actions of all $h_{i,r}$ commute and we can further decompose as a direct sum
$\bigoplus_{(\lambda,\Psi)\in P_{\ell}} V_{\lambda,\Psi}$
of $\ell$-weight spaces, where
\begin{align*}
    V_{\lambda,\Psi} = \lbrace v\in V_{\lambda} ~|~ \exists N\in\Nbb \text{ such that all } (\phi^{\pm}_{i,\pm s} - \Phi^{\pm}_{i,\pm s})^{N} \cdot v = 0\rbrace.
\end{align*}
Note that the finite-dimensionality of each $V_{\lambda,\Psi}$ follows from that of $V_{\lambda}$.
Let $\Ecal_{\ell}$ be the ring of maps $c : P_{\ell} \rightarrow \Zbb$ for which $c(\lambda,\Psi) = 0$ if $\lambda$ lies outside some finite union of cones $\bigcup_{j=1}^{r} (\mu_{j} - Q^{+})$.

\begin{defn}
    The formal character of $V\in\Oaff$ is
    $\ch_{q}(V) = \sum_{(\lambda,\Psi)\in P_{\ell}} \dim(V_{\lambda,\Psi}) e_{\lambda,\Psi}$
    where each
    $e_{\lambda,\Psi} : P_{\ell} \rightarrow \Zbb$
    is the indicator function of $(\lambda,\Psi)$.
\end{defn}

\begin{prop} \label{prop:formal characters}
    \cite{Hernandez05}
    For any representation $V\in\Oaff$ there exist $N_{\lambda,\Psi} \in \Nbb$ such that
    $\ch_{q}(V) = \sum_{(\lambda,\Psi)\in P_{\ell}^{+}} N_{\lambda,\Psi} \, \ch_{q}(V(\lambda,\Psi))$.
\end{prop}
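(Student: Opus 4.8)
For any $V\in\Oaff$ there exist $N_{\lambda,\Psi}\in\Nbb$ such that $\ch_q(V)=\sum_{(\lambda,\Psi)\in P_\ell^+} N_{\lambda,\Psi}\,\ch_q(V(\lambda,\Psi))$.

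**The plan.** The natural approach is induction on the weight support of $V$, stripping off $\ell$-highest weight submodules one at a time. First I would observe that since $V\in\Oaff$, its weights lie in a finite union of cones $\bigcup_{j=1}^r(\mu_j-Q^+)$, with all weight (hence $\ell$-weight) spaces finite-dimensional. The key structural input is that $V$ contains an $\ell$-highest weight vector: take a weight $\lambda$ maximal among those with $V_\lambda\neq 0$ (maximal meaning no $\lambda+\alpha_i$, $i\in I$, nor any $\lambda+\beta+k\delta$ with $\beta\in\Qov^+$ is a weight — possible because the support sits in finitely many downward cones), then $\xp_{i,m}\cdot V_\lambda\subset V_{\lambda+\alpha_i}=0$ for all $i,m$, so every vector in $V_\lambda$ is $\ell$-highest weight. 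Since $\Uqaffs^0$ acts on the finite-dimensional space $V_\lambda$ by commuting operators $\phi^{\pm}_{i,\pm s}$, it has a common eigenvector $v$, giving an $\ell$-highest weight vector with some $\ell$-weight $(\lambda,\Psi)$; integrability of $V$ forces $(\lambda,\Psi)\in P_\ell^+$ by Theorem \ref{thm:l-highest weight classification}. Thus $W:=\Uqaffs\cdot v$ is an integrable $\ell$-highest weight module, so by the universal property it is a quotient of $M(\lambda,\Psi)$, and in particular it surjects onto — or rather admits — a composition with $V(\lambda,\Psi)$ on top.

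**Reducing dimensions.** The cleanest way to set up the induction is not on $V$ itself but on a suitable measure of size. I would consider the formal character $\ch_q(V)$ as an element of $\Ecal_\ell$ and induct on $\sum_{\lambda}\dim V_\lambda$ restricted to any fixed finite truncation of the support, handled cone by cone; equivalently, fix the cones $\mu_j-Q^+$ and induct on the total dimension of weight spaces in a large-enough finite "window" together with the usual argument that outside the window there is nothing. More concretely: pick a maximal weight $\lambda$ and $\ell$-highest weight vector $v$ of $\ell$-weight $(\lambda,\Psi)\in P_\ell^+$ as above. The submodule $W=\Uqaffs\cdot v$ has an irreducible quotient isomorphic to $V(\lambda,\Psi)$ (its unique irreducible quotient, since $W$ is $\ell$-highest weight). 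Then $\ch_q(W)\geq \ch_q(V(\lambda,\Psi))$ coefficientwise — indeed a surjection $W\twoheadrightarrow V(\lambda,\Psi)$ of weight-and-$\ell$-weight-graded modules gives $\dim W_{\mu,\Phi}\geq \dim V(\lambda,\Psi)_{\mu,\Phi}$ for all $(\mu,\Phi)$. So $\ch_q(V)-\ch_q(V(\lambda,\Psi))\geq \ch_q(V)-\ch_q(W)\geq 0$ coefficientwise, and this difference is again the formal character of an object of $\Oaff$? — not literally, since $V/W$ need not have $\ch_q(V/W)=\ch_q(V)-\ch_q(W)$ unless we are careful; but $\ch_q(V)-\ch_q(V(\lambda,\Psi))$ is an element of $\Ecal_\ell$ with nonnegative coefficients supported in the same cones, and of strictly smaller total dimension in the window containing $\lambda$. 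One then needs a statement of the form: any nonnegative element $c\in\Ecal_\ell$ that arises as $\ch_q(V')$ for some $V'\in\Oaff$ with smaller size is an $\Nbb$-combination of $\ch_q(V(\lambda',\Psi'))$ — so it is more efficient to phrase the induction on $V$ directly, replacing $V$ by $V/W$ and checking $\ch_q(V)=\ch_q(W)+\ch_q(V/W)$ (true because the weight-$\ell$-weight grading is exact on the short exact sequence $0\to W\to V\to V/W\to 0$, each graded piece being finite-dimensional), then applying induction to both $W$ and $V/W$, and for $W$ noting $\ch_q(W)=\ch_q(V(\lambda,\Psi))+\ch_q(W')$ where $W'$ is the kernel of $W\twoheadrightarrow V(\lambda,\Psi)$, again an object of $\Oaff$ of strictly smaller size.

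**The main obstacle.** The delicate point is ensuring the induction actually terminates: $\Oaff$-modules are infinite-dimensional, so "size" must be an ordinal-type invariant that genuinely decreases. The right invariant is: since the support of $\ch_q(V)$ lies in $\bigcup_{j=1}^r(\mu_j-Q^+)$ and each weight space is finite-dimensional, the function $\mu\mapsto\dim V_\mu$ on $Q^+$-translates is finitely supported "layer by layer," and passing to $W$ or $V/W$ strictly decreases $\dim V_\lambda$ at the chosen maximal weight $\lambda$ without increasing it at any weight $\geq\lambda$ in the dominance-type order induced by the cones; a standard well-ordering argument on $\bigoplus_{\text{window}}\dim$ then closes it. I expect the genuinely fiddly part to be bookkeeping the maximality of $\lambda$ correctly — using that $\Qov^+$ and the imaginary direction $\Zbb\delta$ both act within the cones, so $\lambda$ can be chosen with $V_{\lambda+\beta+k\delta}=0$ for all $\beta\in\Qov^+\setminus\{0\}$, $k\in\Zbb$ of the appropriate sign — and then invoking finite-dimensionality of $V_\lambda$ to extract the common $\phi$-eigenvector. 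Everything else (exactness of the grading, the inequality of characters under surjection, integrability passing to submodules and quotients, and the classification Theorem \ref{thm:l-highest weight classification} to land in $P_\ell^+$) is routine given the results already recalled.
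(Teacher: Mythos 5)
The paper does not actually prove this proposition --- it is quoted from \cite{Hernandez05} --- so there is no in-text proof to compare against; your argument is, however, essentially the standard one used there (and in the category $\Ocal$ setting of Kac, Ch.~9): extract a maximal weight, take a common $\phi$-eigenvector to get an $\ell$-highest weight vector with $\ell$-weight in $P_{\ell}^{+}$ (integrability of the irreducible quotient plus Theorem \ref{thm:l-highest weight classification}), use additivity of $\ch_{q}$ on short exact sequences, and iterate. All of those ingredients are correctly identified and correctly justified.

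The one place where your write-up falls short of a proof is exactly the point you flag as ``the main obstacle'': the stripping process does not terminate globally, since the decomposition is a genuinely infinite sum, so ``induction on size'' cannot be the right framework and ``a standard well-ordering argument \dots closes it'' is not yet an argument. What is needed is \emph{local stabilization}: fix $(\mu,\Phi)$ and observe that $S(\mu)=\lbrace \nu \geq \mu ~|~ V_{\nu}\neq 0\rbrace$ is finite (it sits inside $\bigcup_{j}(\mu_{j}-Q^{+})\cap(\mu+Q^{+})$), so $d(\mu)=\sum_{\nu\in S(\mu)}\dim V_{\nu}<\infty$; each stripping step performed at a maximal weight $\lambda\geq\mu$ decreases the total of $d(\mu)$ over all pieces by exactly one, and such a maximal weight exists in some piece whenever some piece still has a weight $\geq\mu$. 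Hence after at most $d(\mu)$ steps no remaining piece has weights $\geq\mu$, and every irreducible stripped thereafter has highest weight $\lambda\not\geq\mu$ and so contributes $0$ to the coefficient of $e_{\mu,\Phi}$; the coefficient therefore stabilizes, which is what gives meaning to the infinite sum. One should also either fix a stripping schedule that prioritizes, for each $\mu$ in an enumeration of the support, the maximal weights dominating $\mu$, or separately check that the resulting multiplicities are independent of the order (the statement only asserts existence of the $N_{\lambda,\Psi}$, so fixing a schedule suffices). With that paragraph added, your proof is complete and agrees with the cited one.
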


In particular, the formal character of any module in $\Oaff$ is a sum of formal characters of irreducibles.

\begin{defn}
    Let $QP_{\ell}^{+}$ be the set of $\ell$-weights $(\lambda,\Psi)\in P_{\ell}$ for which there exist
    \begin{itemize}
        \item polynomials $Q_{i}(z),R_{i}(z)\in\Cbb[z]$ for each $i\in I$ such that $Q_{i}(0) = R_{i}(0) = 1$ and
        \begin{align*}
        \sum_{s\geq 0} \Psi^{\pm}_{i,\pm s} z^{\pm s}
        =
        q_{i}^{\deg(Q_{i})-\deg(R_{i})} \frac{Q_{i}(zq_{i}^{-1}) R_{i}(zq_{i})}{Q_{i}(zq_{i}) R_{i}(zq_{i}^{-1})},
        \end{align*}
    \item $\mu\in P^{+}$ and $\alpha\in Q^{+}$ such that $\lambda = \mu - \alpha$.
    \end{itemize}
\end{defn}

Taking $P_{i}(z) = Q_{i}(z)$ and $R_{i}(z) = 1$ for all $i\in I$, it is clear that $P_{\ell}^{+} \subset QP_{\ell}^{+}$.
The following serves as an extension of Theorem \ref{thm:l-highest weight classification} to all $\ell$-weights of modules in category $\Oaff$.

\begin{prop} \label{prop:l-weights in QPl+}
    For any representation $V\in\Oaff$, if $\dim(V_{\lambda,\Psi}) > 0$ then $(\lambda,\Psi) \in QP_{\ell}^{+}$.
\end{prop}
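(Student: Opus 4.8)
The plan is to reduce the claim to a statement about the restriction of $V$ to each rank-one subalgebra $\U(i)$, where we already have strong control via the theory of finite-dimensional $U_q(\widehat{\mathfrak{sl}}_2)$-modules. First I would fix a nonzero vector $v\in V_{\lambda,\Psi}$ and, using Proposition \ref{prop:formal characters}, reduce to the case where $V = V(\mu,\Phi)$ is an irreducible integrable $\ell$-highest weight module; indeed $\ch_q(V)$ is a finite nonnegative combination of $\ch_q$ of such irreducibles, so if $\dim(V_{\lambda,\Psi})>0$ then the same $\ell$-weight appears with positive dimension in some integrable irreducible $V(\mu,\Phi)$ with $(\mu,\Phi)\in P_\ell^+$. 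Writing $\mathcal{P}(z)=(P_i(z))_i$ for the Drinfeld polynomials of $V(\mu,\Phi)$, the condition $\lambda = \mu - \alpha$ with $\alpha\in Q^+$ is immediate from $V=\bigoplus_{\nu\le\mu}V_\nu$ and the fact that $\mu\in P^+$; so the real content is exhibiting the rational-function form of $\sum_{s\ge 0}\Psi^\pm_{i,\pm s}z^{\pm s}$.

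For this, the key step is to pass to the subspace $V(i)=\Uqaffs^{0}\text{-span closure of }\U(i)\cdot v'$ for a suitable vector, or more precisely to observe that the $\ell$-weight of $v$ at the index $i$ is detected inside a finite-dimensional $\U(i)$-submodule. Concretely: since $V$ is integrable, $\U(i)\cdot v$ is a finite-dimensional module for $\U(i)\cong\UdashA$ via the isomorphism $h_i$ of (\ref{eqn:hi isomorphism}). The commuting operators $\phi^\pm_{i,\pm s}$ preserve this finite-dimensional space and act there with generalised eigenvalue given by the $i$-component of $\Psi$. By the classification of finite-dimensional $U_q(\widehat{\mathfrak{sl}}_2)$-modules (Chari–Pressley, as used already in the proof that integrable $\ell$-highest weight modules are type $1$), every such generalised eigenvalue of the Cartan-loop generators on a finite-dimensional module is of the form $q_i^{\deg Q_i-\deg R_i}\,\dfrac{Q_i(zq_i^{-1})R_i(zq_i)}{Q_i(zq_i)R_i(zq_i^{-1})}$ for polynomials $Q_i,R_i$ with constant term $1$ — here $Q_i$ collects the "right end-points" and $R_i$ the "left end-points" of the $\ell$-weight string (equivalently, the roots and poles of the rational function, which need not all point the same way since $v$ need not be $\ell$-highest weight within $\U(i)\cdot v$). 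Running this over all $i\in I$ yields exactly the defining data of $QP_\ell^+$.

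I expect the main obstacle to be the bookkeeping at the rank-one level: one must be careful that the $\U(i)$-module generated by $v$ need not be $\ell$-highest weight, so the eigenvalue of $\phi^\pm_{i,\pm s}$ is a ratio of two polynomials rather than the single-polynomial expression of $P_\ell^+$, and one should verify that the normalisation $Q_i(0)=R_i(0)=1$ can indeed be arranged and that the power of $q_i$ out front comes out as $\deg Q_i-\deg R_i$ (this is forced by matching the $s=0$ term $\Psi^\pm_{i,0}=q_i^{\pm\langle\lambda,\alpha_i^\vee\rangle}$ against the known $\mathfrak{sl}_2$-weight). A secondary point is to make sure the reduction to irreducibles via Proposition \ref{prop:formal characters} genuinely transfers the $\ell$-weight and not merely the weight; but since $\ch_q$ records $\dim V_{\lambda,\Psi}$ with the full $\ell$-weight label, this is automatic. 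Everything else — that the finitely many $\mu_j$-cones condition of $\Oaff$ gives $\lambda\in\mu-Q^+$, and that the operators in question commute so that generalised eigenspaces make sense — is already established in the excerpt.
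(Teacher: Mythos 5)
Your proposal is correct and follows essentially the same route as the paper: reduce to the rank-one subalgebras $\U(i)$, use integrability to see $\U(i)\cdot v$ as a finite-dimensional $\UdashA$-module, invoke the Chari--Pressley/Frenkel--Reshetikhin description of $\ell$-weights of such modules (the paper makes this explicit via evaluation representations and the coproduct identity for the $\phi^{\pm}_{i,\pm s}$), and deduce the condition $\lambda=\mu-\alpha$ from Theorem \ref{thm:l-highest weight classification} together with Proposition \ref{prop:formal characters}. The only cosmetic difference is that you perform the reduction to irreducibles at the outset rather than only for the weight condition.
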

\begin{proof}
    This clearly reduces to the case $\s = \mathfrak{sl}_{2}$, with $U(i)\cdot v$ a finite dimensional $\Uaffsltwo$-module for any $v\in V$.
    Every such representation is isomorphic to a tensor product of so-called evaluation representations \cite{CP91}, which are pulled back along the morphism
    $\ev_{a} : \Uaffsltwo \rightarrow U_{q}(\mathfrak{sl}_{2})$
    due to Jimbo \cite{Jimbo86}, given by
    \begin{align*}
        \xpm_{1} \mapsto \xpm, \qquad
        \xpm_{0} \mapsto a^{\pm 1} q^{\mp 1} \xpm, \qquad
        k_{1}^{\pm 1} \mapsto k^{\pm 1}, \qquad
        k_{0}^{\pm 1} \mapsto k^{\mp 1},
    \end{align*}
    or alternatively
    $\xp_{1,m} \mapsto a^{m} q^{-m} k^{m} \xp$
    and
    $\xm_{1,m} \mapsto a^{m} q^{-m} \xm k^{m}$.
    Individual evaluation representations are checked directly via explicit computations, whereby simple identities such as
    \begin{align*}
        \Delta(\phi^{\pm}_{i,\pm s})
        = \sum_{r=0}^{s} \phi^{\pm}_{i,\pm r} \otimes \phi^{\pm}_{i,\pm (s-r)}
        \mod
        \widehat{U_{q}(\mathfrak{sl}_{2})}_{\mp} \otimes \widehat{U_{q}(\mathfrak{sl}_{2})}_{\pm}
    \end{align*}
    for the coproduct $\Delta$ on Drinfeld new generators complete the proof -- see \cite{FR99} for more details.
    The existence of $\mu \in P^{+}$ and $\alpha \in Q^{+}$ follows from Theorem \ref{thm:l-highest weight classification} and Proposition \ref{prop:formal characters}.
\end{proof}

Consider the group $\Mcal$ of monomials in commuting variables $k_{\varrho}$ and $Y_{i,a}^{\pm 1}$
($\varrho\in\h$, $i\in I$, $a\in\Cbb^{\times}$)
with
$k_{\varrho_{1}}k_{\varrho_{2}} = k_{\varrho_{1} + \varrho_{2}}$
and $k_{0} = 1$.
We define $\Ycal$ to be the subring of $\Mcal^{\Zbb}$ consisting of elements
\begin{align*}
    \sum_{\alpha\in\Lambda} n\alphapower k_{\varrho\alphapower} \prod_{\substack{i\in I \\ a\in\Cbb^{\times}}} Y_{i,a}^{u\alphapower_{i,a}}
\end{align*}
such that, assuming all $n\alphapower\not= 0$ without loss of generality,
\begin{itemize}
    \item $\lbrace (i,a) ~|~ u\alphapower_{i,a} \not= 0 \rbrace$ is finite for each $\alpha\in\Lambda$,
    \item $\langle \nu(\varrho\alphapower), \alpha_{i}^{\vee} \rangle = \sum_{a\in\Cbb^{\times}} u\alphapower_{i,a}$
    for all $i\in I$ and $\alpha\in\Lambda$,
    \item $\lbrace \nu(\varrho\alphapower) ~|~ \alpha\in\Lambda \rbrace$
    is contained in a finite union of cones
    $\bigcup_{j=1}^{r} (\mu_{j} - Q^{+})$
    with $\mu_{j} \in \h^{*}$.
\end{itemize}

Note that the second condition resembles the compatibility relations for an $\ell$-weight $(\lambda,\Psi)\in P_{\ell}$, and moreover $\Ycal$ is naturally equipped with an $\h$--grading.

\begin{eg}
    \begin{itemize}
        \item It is clear that $k_{\nu(\Lambda_{i})} Y_{i,a} \in \Ycal$ for all $i\in I$ and $a\in\Cbb^{\times}$.
        \item Every $(\lambda,\Psi)\in QP_{\ell}^{+}$ has an associated monomial
        \begin{align*}
            Y_{\lambda,\Psi} = k_{\nu(\lambda)} \prod_{\substack{i\in I \\ a\in\Cbb^{\times}}} Y_{i,a}^{\beta_{i,a} - \gamma_{i,a}} \in \Ycal
        \end{align*}
        where each
        $Q_{i}(z) = \prod_{a\in\Cbb^{\times}} (1-az)^{\beta_{i,a}}$
        and
        $R_{i}(z) = \prod_{a\in\Cbb^{\times}} (1-az)^{\gamma_{i,a}}$.
    \end{itemize}
\end{eg}

\begin{defn}
    The $q$-character of $V\in\Oaff$ is defined to be
    $\chi_{q}(V) = \sum_{(\lambda,\Psi)\in QP_{\ell}^{+}} \dim(V_{\lambda,\Psi}) Y_{\lambda,\Psi} \in \Ycal$.
\end{defn}

Let us briefly explain how these formal and $q$-characters are \textit{finer morphisms} than the classical character map.
Define $\Ecal \subset (\h^{*})^{\Zbb}$ to be the set of functions $\h^{*} \rightarrow \Zbb$ that are supported on a finite union of cones $\bigcup_{j=1}^{r} (\mu_{j} - Q^{+})$ for some $\mu_{j} \in \h^{*}$, and $e_{\lambda}$ to be the indicator function of each $\lambda \in \h^{*}$.
Then $\Ecal$ is equipped with a natural ring structure by setting
$e_{\lambda} e_{\mu} = e_{\lambda + \mu}$
for all $\lambda,\mu\in\h^{*}$.

\begin{defn}
    The (classical) character map $\ch : \Oint \rightarrow \Ecal$ is given by
    $\ch(V) = \sum_{\lambda\in\h^{*}} \dim(V_{\lambda}) e_{\lambda}$.
\end{defn}

Crucially, this morphism is injective on simple objects: $\ch(V(\lambda)) = \ch(V(\mu))$ implies that $\lambda = \mu$.
However, if we define $\res : \Oaff \rightarrow \Oint$ as restriction to $\langle q^{h},\, \xpm_{i,0} ~|~ h\in P^{\vee},\, i\in I \rangle$, then $\ch\circ\res$ fails to distinguish the irreducible modules in $\Oaff$.
Nevertheless, there exist natural maps
\begin{itemize}
    \item $\beta : \Ecal_{\ell} \rightarrow \Ecal$ defined using the projection $P_{\ell} \rightarrow P$ to the first factor,
    \item $\gamma : \Ycal \rightarrow \Ecal$ extended linearly from
    $k_{\nu(\omega)} \prod Y_{i,a}^{u_{i,a}} \mapsto e(\omega)$,
\end{itemize}
for which we have the following commutative diagrams:
\[\begin{tikzcd}[ampersand replacement=\&]
	\Oaff \&\& {\Ecal_{\ell}} \&\&\& \Oaff \&\& \Ycal \\
	\\
	\Oint \&\& \Ecal \&\&\& \Oint \&\& \Ecal
	\arrow["{\ch_{q}}", from=1-1, to=1-3]
	\arrow["\res"', from=1-1, to=3-1]
	\arrow["\beta", from=1-3, to=3-3]
	\arrow["{\chi_{q}}", from=1-6, to=1-8]
	\arrow["\res"', from=1-6, to=3-6]
	\arrow["\gamma", from=1-8, to=3-8]
	\arrow["\ch", from=3-1, to=3-3]
	\arrow["\ch", from=3-6, to=3-8]
\end{tikzcd}\]

The different character maps $\chi_{q}$, $\ch_{q}$ and $\ch$ depend only on the isomorphism class of a representation, and therefore linearly extend to homomorphisms from the Grothendieck groups $K(\Oaff)$ and $K(\Oint)$.

\begin{prop}
    \cite{Hernandez05}
    The $q$-character morphism
    $\chi_{q} : K(\Oaff) \rightarrow \Ycal$
    is injective.
\end{prop}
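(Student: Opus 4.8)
The plan is to show that $\chi_{q}$ carries a distinguished $\Zbb$-basis of $K(\Oaff)$ to a \emph{triangular} system of elements of $\Ycal$, with respect to the partial order $\le$ on $P$ given by $\mu\le\lambda\iff\lambda-\mu\in Q^{+}$. Granting this, injectivity is immediate: $\chi_{q}$ is a group homomorphism (since $\ell$-weight multiplicities are additive on short exact sequences), and a homomorphism sending a basis to a linearly independent set is injective.

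First I would recall that the simple objects of $\Oaff$ are precisely the irreducible integrable $\ell$-highest weight modules $V(\lambda,\Psi)$ with $(\lambda,\Psi)\in P_{\ell}^{+}$ (Theorem~\ref{thm:l-highest weight classification}), and that their classes form a free $\Zbb$-basis of $K(\Oaff)$. This basis statement is the genuinely non-formal input, and I expect it to be the main obstacle: $\Oaff$ is not semisimple and an object need not have finite length, but since every $V\in\Oaff$ has finite-dimensional weight spaces and weights confined to a finite union of cones $\mu_{j}-Q^{+}$, one may strip off simple subquotients weight space by weight space so that $[V]$ is a \emph{finite} $\Zbb$-combination of the $[V(\lambda,\Psi)]$. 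This is the Grothendieck-group upgrade of Proposition~\ref{prop:formal characters}, carried out in \cite{Hernandez05}; once it is in hand the remainder is bookkeeping.

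Next I would pin down the leading term of $\chi_{q}(V(\lambda,\Psi))$. Since $V=V(\lambda,\Psi)$ is $\ell$-highest weight we have $V=\bigoplus_{\mu\le\lambda}V_{\mu}$ with $V_{\lambda}$ one-dimensional (spanned by the cyclic vector, as $\Uqaffs^{-}$ strictly lowers weights), and that vector has $\ell$-weight exactly $(\lambda,\Psi)$; hence $\dim V_{\lambda,\Psi}=1$ while every other $\ell$-weight occurring in $V$ has underlying weight $<\lambda$. By Proposition~\ref{prop:l-weights in QPl+} all occurring $\ell$-weights lie in $QP_{\ell}^{+}$, so
\begin{equation*}
\chi_{q}\big(V(\lambda,\Psi)\big)=Y_{\lambda,\Psi}+\sum_{\substack{(\mu,\Phi)\in QP_{\ell}^{+}\\\mu<\lambda}}\dim\big(V(\lambda,\Psi)\big)_{\mu,\Phi}\,Y_{\mu,\Phi}.
\end{equation*}
I would then note that $(\mu,\Phi)\mapsto Y_{\mu,\Phi}$ is injective on $QP_{\ell}^{+}$: the $\h$-grading of $Y_{\mu,\Phi}$ recovers $\nu(\mu)$ and hence $\mu$, while $\sum_{s}\Phi^{\pm}_{i,\pm s}z^{\pm s}$ is visibly $\prod_{a}\big((1-azq_{i}^{-1})/(1-azq_{i})\big)^{\beta_{i,a}-\gamma_{i,a}}$, which depends only on the exponents $\beta_{i,a}-\gamma_{i,a}$ recorded by $Y_{\mu,\Phi}$. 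So distinct $\ell$-weights give distinct, hence $\Zbb$-linearly independent, monomials in $\Ycal$.

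Finally I would run the triangularity argument. Given a hypothetical relation $\sum_{(\lambda,\Psi)\in S}c_{\lambda,\Psi}\,\chi_{q}(V(\lambda,\Psi))=0$ with $S\subset P_{\ell}^{+}$ finite and all $c_{\lambda,\Psi}\neq 0$, pick $(\lambda_{0},\Psi_{0})\in S$ with $\lambda_{0}$ maximal for $\le$ among $\{\lambda:(\lambda,\Psi)\in S\}$, and extract the coefficient of the monomial $Y_{\lambda_{0},\Psi_{0}}$. The summand $(\lambda_{0},\Psi_{0})$ contributes $c_{\lambda_{0},\Psi_{0}}$ via its leading term. Any other summand $(\lambda,\Psi)$ contributes only monomials $Y_{\mu,\Phi}$ with $\mu\le\lambda$; equality $Y_{\mu,\Phi}=Y_{\lambda_{0},\Psi_{0}}$ forces $\mu=\lambda_{0}$, hence $\lambda_{0}\le\lambda$, hence $\lambda=\lambda_{0}$ by maximality, hence $\Psi\neq\Psi_{0}$ — but then $Y_{\lambda_{0},\Psi_{0}}$ is neither the leading term $Y_{\lambda_{0},\Psi}$ nor among the strictly lower terms, so it does not occur. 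Thus $c_{\lambda_{0},\Psi_{0}}=0$, a contradiction. Hence $\{\chi_{q}(V(\lambda,\Psi))\}_{(\lambda,\Psi)\in P_{\ell}^{+}}$ is $\Zbb$-linearly independent, and $\chi_{q}$ is injective.
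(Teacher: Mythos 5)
The paper offers no proof of this proposition---it is quoted directly from \cite{Hernandez05}---and your argument is precisely the standard triangularity proof given there (and in \cite{FR99} for the affine case): each $\chi_{q}(V(\lambda,\Psi))$ equals $Y_{\lambda,\Psi}$ plus terms of strictly lower weight, the assignment $(\mu,\Phi)\mapsto Y_{\mu,\Phi}$ is injective on $QP_{\ell}^{+}$ because the $\h$-grading recovers $\mu$ and the exponents $\beta_{i,a}-\gamma_{i,a}$ recover $\Phi$, and extracting the coefficient of $Y_{\lambda_{0},\Psi_{0}}$ at a maximal weight kills any putative relation. The one genuinely non-formal input---that the simple classes freely generate $K(\Oaff)$, i.e.\ that $[V]$ is a \emph{finite} integral combination of simple classes despite the failure of semisimplicity and the absence of an obvious finite-length property---is correctly isolated by you and deferred to \cite{Hernandez05}, exactly as the paper itself does; everything else checks out.
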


For quantum affine algebras, when $\s$ is of finite type, this can be upgraded to a ring homomorphism.
Indeed, $K(\Oaff)$ possesses a natural multiplication in this setting, coming from the coproduct for $\Udash$ and ensuing tensor product on $\Oaff$.
Frenkel and Reshetikhin then proved \cite{FR99} that the $q$-character morphism is compatible with these structures.
Obtaining quantum toroidal analogues of the tensor product on $\Oaff$, ring structure for $K(\Oaff)$, and their compatibility with $q$-characters are some of the major results in this paper -- see Sections \ref{section:tensor products} and \ref{section:q-characters}.

\begin{rmk}
    It is clear that $\chi_{q}$ is $\Cbb^{\times}$-equivariant with respect to the spectral actions on $K(\Oaff)$ and $\Ycal$ defined by $\sfrakv_{a}$ and $Y_{i,b} \mapsto Y_{i,ab}$ respectively.
    In the case of quantum affine algebras, this sets the stage for working \emph{modulo general position} with certain monoidal subcategories of $\Oaff$ which also play a crucial role in the relation to cluster algebras \cite{HL10}.
\end{rmk}

\subsubsection{Finite presentation} \label{subsubsection:finite presentation}

While the original definition of $\Uqaffs$ involves infinitely many generators and relations, the author obtained in \cite{Laurie24a}*{Prop. 4.8} a surprising \emph{finite} presentation whenever $a_{ij}a_{ji}\leq 3$ for all distinct $i,j\in I$, ie. the Dynkin diagram has at most triple arrows.
The condition on arrows was required since our proof uses the Drinfeld-Jimbo realization for each $\U(i,j)$ subalgebra.
This presentation played a crucial role in defining the braid group action on $\Uqaffs$ (see Section \ref{subsubsection:affinized braid group action}) and other subsequent results in \cite{Laurie24a}.
\\

We remark that in the specific case of $\s = \widehat{\mathfrak{sl}}_{n+1}$ ($n \geq 2$), a finite presentation and braid group action for $\Uqaffs = \UtorA$ were first shown by Miki \cite{Miki99}.
Furthermore, in a subsequent work \cite{Miki01} they obtained such results for $\s = \widehat{\mathfrak{sl}}_{2}$ but with the finite presentation involving extra generators and relations.
\\

Combining and extending our work in \cite{Laurie24a} with that of \cite{Miki01}, here we are able to upgrade these results to hold for all quantum affinizations where $a_{ij}a_{ji}\leq 3$ or $a_{ij} = a_{ji} = -2$ for all distinct $i,j\in I$.
From now on we shall call this condition (D).

\begin{rmk}
    For our purposes in later sections, it is essential that the finite presentation includes $\xpm_{0,\pm 1}$ as generators -- rather than $\xpm_{0,\mp 1}$ as in \cites{Laurie24a,Miki99} -- since our proof of Theorem \ref{thm:psi} requires the key observation that $\psi(\xpm_{0,\pm 1}) = \xpm_{0,\pm 1}$.
\end{rmk}

For notational convenience, we assume that the coweight lattice $P^{\vee}$ is spanned by the fundamental coweights.
However, this result can be extended to include scaling elements simply by adjoining the corresponding $q^{\pm h}$ generators and imposing any relations in Definition \ref{defn:quantum affinization} which involve them.

\begin{thm} \label{thm:finite Utor presentation}
    Let $\s$ be a symmetrizable Kac-Moody Lie algebra with generalised Cartan matrix $(a_{ij})_{i,j\in I}$ satisfying condition (D).
    Then the quantum affinization $\widehat{U_{q}(\s)}$ has a finite presentation with generators
\begin{itemize}
    \item $C^{\pm 1}$, $k_{i}^{\pm 1}$, $\xpm_{i,0}$, $\xpm_{i,\pm 1}$ for all $i\in I,
    \hfill \refstepcounter{equation}(\theequation)\label{eqn:simplified generators 1}$
    \item $\xpm_{i,\mp 1}$ whenever some $a_{ij} = a_{ji} = -2,
    \hfill \refstepcounter{equation}(\theequation)\label{eqn:simplified generators 2}$
\end{itemize}
    and the following relations:
\begin{itemize}
    \item $C^{\pm 1}$ central,
    $\hfill \refstepcounter{equation}(\theequation)\label{eqn:simplified relations 1}$
    \item $\displaystyle C^{\pm 1}C^{\mp 1} = k_{i}^{\pm 1} k_{i}^{\mp 1} = 1,
    \hfill \refstepcounter{equation}(\theequation)\label{eqn:simplified relations 2}$
    \item $\displaystyle [k_{i},k_{j}] = 0,
    \hfill \refstepcounter{equation}(\theequation)\label{eqn:simplified relations 3}$
    \item $\displaystyle k_{i} \xpm_{j,m} k_{i}^{-1} = q_{i}^{\pm a_{ij}} \xpm_{j,m},
    \hfill \refstepcounter{equation}(\theequation)\label{eqn:simplified relations 4}$
    \item $\displaystyle [\xp_{i,m},\xm_{i,-m}] = \frac{C^{m} k_{i} - C^{-m} k_{i}^{-1}}{q_{i}-q_{i}^{-1}},
    \hfill \refstepcounter{equation}(\theequation)\label{eqn:simplified relations 5}$
    \item $\displaystyle [\xp_{i,\pm 1},\xm_{i,0}]
    = C [\xp_{i,0},\xm_{i,\pm 1}],
    \hfill \refstepcounter{equation}(\theequation)\label{eqn:simplified relations 6}$
    \item $\displaystyle [\xp_{i,m},\xm_{j,\ell}] = 0$ if $i\not= j,
    \hfill \refstepcounter{equation}(\theequation)\label{eqn:simplified relations 7}$
    \item $[\xpm_{i,m+1},\xpm_{j,\ell}]_{q_{i}^{\pm a_{ij}}} + [\xpm_{j,\ell+1},\xpm_{i,m}]_{q_{i}^{\pm a_{ij}}} = 0,
    \hfill \refstepcounter{equation}(\theequation)\label{eqn:simplified relations 8}$
\end{itemize}
whenever all generators involved are present; when $a_{ij}a_{ji} \leq 3$,
\begin{itemize}
    \item $\displaystyle
    \sum_{s=0}^{1 - a_{ij}} (-1)^{s}
    {\begin{bmatrix}1 - a_{ij}\\s\end{bmatrix}}_{i}
    y_{i}^{s} y_{j} y_{i}^{1 - a_{ij}-s} = 0,
    \hfill \refstepcounter{equation}(\theequation)\label{eqn:simplified relations 9}$
\end{itemize}
for $(y_{i},y_{j}) = (x^{\pm}_{i,0},x^{\pm}_{j,0}),(x^{\pm}_{i,\pm 1},x^{\pm}_{j,0}),(x^{\pm}_{i,0},x^{\pm}_{j,\pm 1})$; and when $a_{ij} = a_{ji} = -2$,
\begin{itemize}
    \item $\displaystyle [\xp_{i,2},\xp_{i,1}]_{q_{i}^{2}}
    = [\xm_{i,-1},\xm_{i,-2}]_{q_{i}^{-2}}
    = 0,
    \hfill \refstepcounter{equation}(\theequation)\label{eqn:simplified relations 10}$
    \item $[\xp_{i,1},\xp_{j,1}]_{q_{i}^{-2}} +
    [\xp_{j,2},\xp_{i,0}]_{q_{i}^{-2}}$
    is central,
    $\hfill \refstepcounter{equation}(\theequation)\label{eqn:simplified relations 11}$
    \item $[\xm_{i,-1},\xm_{j,-1}]_{q_{i}^{2}} + [\xm_{j,0},\xm_{i,-2}]_{q_{i}^{2}}$
    is central,
    $\hfill \refstepcounter{equation}(\theequation)\label{eqn:simplified relations 12}$
    \item $\displaystyle
    \sum_{s=0}^{1 - a_{ij}} (-1)^{s}
    {\begin{bmatrix}1 - a_{ij}\\s\end{bmatrix}}_{i}
    (\xpm_{i,0})^{s} \xpm_{j,0} (\xpm_{i,0})^{1 - a_{ij}-s} = 0,
    \hfill \refstepcounter{equation}(\theequation)\label{eqn:simplified relations 13}$
\end{itemize}
where we define
$\xpm_{i,\pm 2} = \pm [2]_{i}^{-1} [h_{i,\pm 1},\xpm_{i,\pm 1}]$,
$h_{i,1} = k_{i}^{-1} [\xp_{i,1},\xm_{i,0}]$
and
$h_{i,-1} = k_{i} [\xp_{i,0},\xm_{i,-1}]$.
\end{thm}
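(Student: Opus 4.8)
The plan is to realise the stated finite presentation as an algebra $A$ and to construct mutually inverse homomorphisms between $A$ and $\Uqaffs$. Let $A$ be the unital associative $\kk$-algebra presented by generators (\ref{eqn:simplified generators 1})--(\ref{eqn:simplified generators 2}) and relations (\ref{eqn:simplified relations 1})--(\ref{eqn:simplified relations 13}), with $h_{i,\pm 1}$ and $\xpm_{i,\pm 2}$ understood as the elements of $A$ given by the formulae in the statement. There is an obvious candidate homomorphism $\phi\colon A\to\Uqaffs$ sending each generator to the Drinfeld new element of the same name. First I would check that $\phi$ is well defined: relations (\ref{eqn:simplified relations 1})--(\ref{eqn:simplified relations 9}) and (\ref{eqn:simplified relations 13}) are immediate specialisations of the defining relations (\ref{eqn:quantum affinization relations 1})--(\ref{eqn:quantum affinization relations 10}) of $\Uqaffs$ (rewriting (\ref{eqn:simplified relations 5})--(\ref{eqn:simplified relations 6}) via $k_{i}^{\pm 1}=\phi^{\pm}_{i,0}$ and $h_{i,\pm 1}=\pm(q_{i}-q_{i}^{-1})^{-1}\phi^{\pm}_{i,\pm 1}k_{i}^{\mp 1}$), while the vanishing identities (\ref{eqn:simplified relations 10}) and the centrality claims (\ref{eqn:simplified relations 11})--(\ref{eqn:simplified relations 12}) are verified inside the subalgebra $\U(i,j)\subseteq\Uqaffs$, which by \cite{Hernandez05}*{Thm. 2 and Cor. 3} is a copy of the quantum toroidal algebra $\Utorsltwo$; there these are part of Miki's analysis \cite{Miki01}, and their commutation with the generators attached to any further node follows from the mixed relations (\ref{eqn:quantum affinization relations 7})--(\ref{eqn:quantum affinization relations 10}).

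Next I would prove $\phi$ surjective. Define inside $A$, in addition to the $h_{i,\pm 1}$ and $\xpm_{i,\pm 2}$, the elements $x^{\pm}_{i,m\pm 1}:=\pm[2]_{i}^{-1}[h_{i,\pm 1},x^{\pm}_{i,m}]$ and $h_{i,\pm r}$ (for $m\in\Zbb$, $r>0$) by iterated brackets of $h_{i,\pm 1}$ with $x^{\pm}_{i,0},x^{\pm}_{i,\pm 1}$. Via the isomorphism \eqref{eqn:hi isomorphism} the subalgebra $\U(i)\subseteq\Uqaffs$ is a copy of $\Uaffsltwo$, which in its Drinfeld--Jimbo realisation is generated by finitely many elements lying in the span of our degree $0,\pm 1$ generators; hence $\U(i)\subseteq\mathrm{im}(\phi)$ for each $i\in I$. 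Since $\Uqaffs$ is generated by $\bigcup_{i\in I}\U(i)$ together with its Cartan part $\langle q^{h}\mid h\in P^{\vee}\rangle$ (covered by the $k_{i}^{\pm 1}$ under the convention in the statement, and otherwise by adjoining the relevant $q^{\pm h}$ as remarked there), and the affine $q$-Serre relations (\ref{eqn:quantum affinization relations 10}) introduce no new generators, $\phi$ is surjective.

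The heart of the proof is injectivity, for which I would construct a homomorphism $\psi\colon\Uqaffs\to A$ sending each Drinfeld generator $x^{\pm}_{i,m},h_{i,r},q^{h},C^{\pm 1}$ to the element of $A$ of the same name (the higher-degree ones built by the recursions above). The task reduces to checking that every defining relation (\ref{eqn:quantum affinization relations 1})--(\ref{eqn:quantum affinization relations 10}) of $\Uqaffs$ holds among these elements of $A$, using only the finite relation set. The crucial point is that each such relation --- including the affine $q$-Serre relations, which are imposed only for distinct pairs $i\neq j$ --- involves at most two indices of $I$, so it suffices to work inside the subalgebra $A(i,j)\subseteq A$ generated by the $\lbrace i,j\rbrace$-generators (for a single index, inside $A(i)$). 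For one index, the relations of $A$ restrict to the Drinfeld--Jimbo presentation of $\Uaffsltwo$, so the rank-one Drinfeld relations hold by \cite{Beck94}; for $i\neq j$ with $a_{ij}a_{ji}\leq 3$ the finite presentation of $A(i,j)$ coincides with the one proved in \cite{Laurie24a}*{Prop. 4.8}, giving $A(i,j)\cong\U(i,j)$; and for $i\neq j$ with $a_{ij}=a_{ji}=-2$ the presentation of $A(i,j)$ --- now involving the extra generators (\ref{eqn:simplified generators 2}) and relations (\ref{eqn:simplified relations 10})--(\ref{eqn:simplified relations 12}) --- is equivalent to Miki's presentation of $\Utorsltwo$ \cite{Miki01}. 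Once $\psi$ is shown well defined, checking $\phi\circ\psi=\mathrm{id}$ on the Drinfeld generators and $\psi\circ\phi=\mathrm{id}$ on the finite generating set of $A$ is immediate, and $\phi$ is an isomorphism.

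I expect the main obstacle to lie in this last rank-two case: verifying that relations (\ref{eqn:simplified relations 10})--(\ref{eqn:simplified relations 12}) together with the common relations genuinely recover Miki's presentation of $\Utorsltwo$ after the change of generators from $x^{\pm}_{0,\mp 1}$ to $x^{\pm}_{0,\pm 1}$ flagged in the Remark preceding the theorem (and likewise reconciling conventions with \cite{Miki99}), and in checking that the two rank-two analyses glue consistently over the shared rank-one subalgebras $A(i)$ --- equivalently, that the recursive definitions of $x^{\pm}_{i,m}$ and $h_{i,r}$ in $A$ are independent of the chosen sequence of brackets, which once more localises to the rank-$\leq 2$ computations above.
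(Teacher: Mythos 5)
Your proposal is correct and follows essentially the same route as the paper: both reduce to the rank-two subalgebras, invoke \cite{Laurie24a}*{Prop. 4.8} when $a_{ij}a_{ji}\leq 3$ and \cite{Miki01} (together with the equivalence of Miki's and Hernandez's definitions of $\Utorsltwo$) when $a_{ij}=a_{ji}=-2$, and glue along the rank-one pieces. The convention issue you flag as the main obstacle is resolved in the paper simply by conjugating with the anti-involution $\eta$ (which sends $\xpm_{i,m}\mapsto\xpm_{i,-m}$), and the only additional wrinkle there is a short auxiliary argument for pairs with $a_{k\ell}a_{\ell k}\leq 3$ at which the extra generators (\ref{eqn:simplified generators 2}) happen to be present.
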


We would like to use the results of \cite{Miki01} in our proof of Theorem \ref{thm:finite Utor presentation}, as well as later on in this paper.
However, there are minor differences between our definition of $\Utorsltwo$ -- as the quantum affinization of $U_{q}(\widehat{\mathfrak{sl}}_{2})$ -- and that of Miki \cite{Miki01}, which includes relations (\ref{eqn:quantum affinization relations 9}) only for $i = j$ and the affine $q$-Serre relations (\ref{eqn:quantum affinization relations 10}) only with $m = m_{1} = \dots = m_{a'} = 0$.
The following lemma allows us to circumvent this issue.

\begin{lem} \label{lem:equivalent definitions of Utorsl2}
    The definition of
    $\Utorsltwo$
    presented in \cite{Miki01} is equivalent to that of Definition \ref{defn:quantum affinization}.
\end{lem}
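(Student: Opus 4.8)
The strategy is to show that Miki's two relations -- namely $(\ref{eqn:quantum affinization relations 9})$ with $i=j$ only and the affine $q$-Serre relations $(\ref{eqn:quantum affinization relations 10})$ only at $m=m_1=\dots=m_{a'}=0$ -- together with the remaining relations of Definition \ref{defn:quantum affinization}, imply the full versions of $(\ref{eqn:quantum affinization relations 9})$ and $(\ref{eqn:quantum affinization relations 10})$ for $\s=\widehat{\mathfrak{sl}}_2$. In type $A_1^{(1)}$ the relevant Cartan matrix is $a_{ij}=a_{ji}=-2$, so $a'=1-a_{ij}=3$ and the index set has two nodes; we must handle both the $i\neq j$ cases of $(\ref{eqn:quantum affinization relations 9})$ and the degree-shifted affine $q$-Serre relations. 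The key mechanism is the adjoint action of the $h_{i,r}$ generators: relation $(\ref{eqn:quantum affinization relations 7})$ shows that bracketing with $h_{i,1}$ (appropriately normalised) implements a degree shift $\xpm_{j,m}\mapsto\xpm_{j,m+1}$ up to scalars and powers of $C$, so one can generate the higher-degree relations from the degree-zero ones by repeatedly applying $\mathrm{ad}(h_{i,\pm1})$ and using the Jacobi identity.

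First I would record precisely which relations Miki assumes versus which appear in Definition \ref{defn:quantum affinization}, and note that the only discrepancies are in $(\ref{eqn:quantum affinization relations 9})$ and $(\ref{eqn:quantum affinization relations 10})$; all of $(\ref{eqn:quantum affinization relations 1})$--$(\ref{eqn:quantum affinization relations 8})$ are common to both. Next, I would derive the full $(\ref{eqn:quantum affinization relations 9})$ for all $m,\ell$: starting from Miki's $i=j$ case at appropriate base degrees, apply $\mathrm{ad}(h_{i,\pm1})$ to push up the degrees, using $(\ref{eqn:quantum affinization relations 5})$ and $(\ref{eqn:quantum affinization relations 7})$ to control the $h$-brackets and the central terms; this is the standard argument (as in Beck \cite{Beck94} or Damiani) that the ``diagonal'' Drinfeld relations at low degree propagate to all degrees. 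For the cross term $i\neq j$ in $(\ref{eqn:quantum affinization relations 9})$, one uses that the two simple roots of $\widehat{\mathfrak{sl}}_2$ are symmetric and that $(\ref{eqn:quantum affinization relations 8})$ forces $[\xp_{i,m},\xm_{j,\ell}]=0$ for $i\neq j$, so the mixed relation follows by a bracket computation against $\xm$-type generators combined with the already-established diagonal relations and the triangular decomposition.

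Then I would establish the full affine $q$-Serre relations $(\ref{eqn:quantum affinization relations 10})$: here the input is Miki's relation at all $m$'s equal to zero, and again the degree-raising operators $\mathrm{ad}(h_{i,\pm1})$ are applied to the symmetrised Serre expression. The point is that the Serre element $S(m;m_1,\dots,m_{a'})$ transforms in a controlled way under $\mathrm{ad}(h_{i,1})$ -- one gets a $\Zbb$-linear combination of Serre elements with shifted degrees plus lower-order correction terms that vanish by the already-verified relations -- so an induction on the total degree $\sum m_k + m$ reduces every instance to the base case. I expect this step to be the main obstacle: keeping track of the symmetrisation over $S_{a'}$, the $q$-binomial coefficients, and the central/$\phi^\pm$ correction terms that appear when commuting $h_{i,1}$ past a product of $\xpm$'s is genuinely delicate, and one must be careful that the correction terms really do lie in the ideal already generated by the established relations. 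An alternative, cleaner route -- which I would mention as a fallback -- is to invoke directly that both presentations give the quantum affinization of $U_q(\widehat{\mathfrak{sl}}_2)$ by comparing against the Drinfeld new realisation of $U_q(\mathfrak{sl}_{2,\mathrm{tor}})$ as treated in \cite{Miki99} and \cite{Miki01}, so that the equivalence is essentially a matter of matching generators and noting that the extra relations in Definition \ref{defn:quantum affinization} are consequences in the algebra Miki defines; this sidesteps the explicit degree-shifting computation at the cost of tracing through Miki's arguments. Either way, the conclusion is that the two algebras coincide, so we may freely apply the results of \cite{Miki01} to $\Utorsltwo$ as defined here.
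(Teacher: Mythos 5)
Your overall strategy -- show that Miki's restricted relations (relation (\ref{eqn:quantum affinization relations 9}) only for $i=j$ and the $q$-Serre relations (\ref{eqn:quantum affinization relations 10}) only at $m=m_1=\dots=m_{a'}=0$) already imply the full versions, the other inclusion being trivial -- is exactly the right reduction, and the degree-shifting mechanism via $\mathrm{ad}(h_{i,\pm1})$ is indeed how the full $q$-Serre relations are obtained. The paper, however, does not redo these computations: it simply cites Miki's own Lemma~3 of \cite{Miki01} for the statement that all of (\ref{eqn:quantum affinization relations 10}) follow from his relations, and Damiani (\cite{Damiani24}*{Rmk. 2.11}, with proofs in \cite{Damiani12}*{§9.10, §11.10}) for the fact that (\ref{eqn:quantum affinization relations 9}) is redundant outside the rank-$1$ case. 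Your ``fallback'' gestures in this direction but points at the wrong results (the Drinfeld new realisation of \cite{Miki99} rather than the redundancy statements themselves).

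As a self-contained argument your proposal has two genuine gaps. First, the propagation of the symmetrised Serre relations to arbitrary degrees is precisely the step you flag as ``genuinely delicate'' and do not carry out; since this is the entire content of the lemma, deferring it leaves nothing proved. Second, and more seriously, your mechanism for the $i\neq j$ case of (\ref{eqn:quantum affinization relations 9}) does not work as described: bracketing the candidate relation against $x^-$-type generators and invoking ``the triangular decomposition'' is circular here, because the loop triangular decomposition is established for the algebra with \emph{all} relations imposed, not for the intermediate algebra in which you are trying to verify the missing relation. The actual derivation (Damiani's) obtains the cross Drinfeld relation as a consequence of the Serre relations together with the $h$-action, so the logical order matters: one must first establish the full (\ref{eqn:quantum affinization relations 10}) and only then deduce (\ref{eqn:quantum affinization relations 9}) for $i\neq j$. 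Without that ordering and without the Serre relations as input, the $i\neq j$ case does not follow from the ingredients you list.
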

\begin{proof}
    Miki proved \cite{Miki01}*{Lem. 3} that all affine $q$-Serre relations (\ref{eqn:quantum affinization relations 10}) hold \emph{as a consequence} of the relations included in their definition of $\Utorsltwo$.
    Furthermore, Damiani mentions in \cite{Damiani24}*{Rmk. 2.11} that (\ref{eqn:quantum affinization relations 9}) is redundant outside the rank $1$ case $\widehat{U_{q}(\mathfrak{sl}_{2})}$, referencing her earlier work \cite{Damiani12} for the proof -- in particular, see Remarks §9.10 and §11.10 there.
\end{proof}

\begin{proof}[Proof of Theorem \ref{thm:finite Utor presentation}]
Define an algebra $\Acal$ with generators
(\ref{eqn:simplified generators 1})--(\ref{eqn:simplified generators 2})
and relations
(\ref{eqn:simplified relations 1})--(\ref{eqn:simplified relations 13}),
and pick some rank $2$ subalgebra
$\Acal(k,\ell) = \langle \text{(\ref{eqn:simplified generators 1})--(\ref{eqn:simplified generators 2})} \,|\, i = k,\ell \rangle$
where $k\not= \ell$.
We would like to check that sending
\begin{align} \label{eqn:rank 2 morphism}
    C^{\pm 1} \mapsto C^{\pm 1}, \qquad
    k_{i}^{\pm 1} \mapsto k_{i}^{\pm 1}, \qquad
    \xpm_{i,m} \mapsto \xpm_{i,m},
\end{align}
for all generators
(\ref{eqn:simplified generators 1})--(\ref{eqn:simplified generators 2})
with $i = k,\ell$ extends to a well-defined isomorphism $p_{k\ell} : \Acal(k,\ell) \xrightarrow{\sim} \U(k,\ell)$.
If all generators are of type (\ref{eqn:simplified generators 1}) then this follows by applying $\eta$ to \cite{Laurie24a}*{Prop. 4.8}, while the case $a_{k\ell} = a_{\ell k} = -2$ comes from \cite{Miki01}*{Prop. 5} and Lemma \ref{lem:equivalent definitions of Utorsl2}.
\\

If $a_{k\ell} a_{\ell k} \leq 3$ but generators of the form (\ref{eqn:simplified generators 2}) are present, we furthermore let $\Hcal(k,\ell)$ be the algebra with generators (\ref{eqn:simplified generators 1}) for $i = k,\ell$ and relations (\ref{eqn:simplified relations 1})--(\ref{eqn:simplified relations 9}).
Then (\ref{eqn:rank 2 morphism}) defines an isomorphism
$\Hcal(k,\ell) \xrightarrow{\sim} \U(k,\ell)$
by applying $\eta$ to \cite{Laurie24a}*{Prop. 4.8}, as well as well-defined morphisms
$\Hcal(k,\ell) \rightarrow \Acal(k,\ell)$ and
$p_{k\ell} : \Acal(k,\ell) \rightarrow \U(k,\ell)$
since we are only imposing more relations.
Any valid composition of all three maps is by definition the identity, and so $p_{k\ell}$ must be an isomorphism.
\\

Letting
$\Acal(k) = \langle \text{(\ref{eqn:simplified generators 1})--(\ref{eqn:simplified generators 2})} \,|\, i = k \rangle$
for each $k\in I$, it is clear that
$p_{k} = p_{k\ell}\vert_{\Acal(k)} = p_{\ell k}\vert_{\Acal(k)}$
is well-defined and independent of $\ell$, whereby (\ref{eqn:rank 2 morphism}) clearly extends to an isomorphism $\Acal \xrightarrow{\sim} \Uqaffs$.
\end{proof}

Of course, such finite presentations can be incredibly useful when defining morphisms to and from these algebras, as well as for verifying well-definedness, surjectivity, and so on.
Indeed, Theorem \ref{thm:finite Utor presentation} plays a key role in constructing our braid group action in Section \ref{subsubsection:affinized braid group action}, as well as our definition of $\psi$ and proof that it is an anti-involution in Section \ref{section:horizontal-vertical symmetries}.

\begin{rmk}
    \begin{itemize}
        \item This result gives a finite Drinfeld new style presentation for the quantum toroidal algebra $\Utor = \widehat{\Uaff}$ in all untwisted and twisted types except $A_{2}^{(2)}$, as well as for all untwisted quantum affine algebras $\Uaff \cong \Uqaffg$.
        \item Relations (\ref{eqn:simplified relations 1})--(\ref{eqn:simplified relations 13}) are a subset of those in the original definition for $\Uqaffs$ which only involve generators (\ref{eqn:simplified generators 1})--(\ref{eqn:simplified generators 2}).
        In particular, we do not see `shadows' of other relations appearing in our simplified presentation.
    \end{itemize}
\end{rmk}

Note that we do not propose that our presentation in Theorem \ref{thm:finite Utor presentation} is minimal -- indeed, it should be possible to remove certain relations and further strengthen this result.
However, it is enough for the purposes of this paper and so we leave such considerations for now.

\begin{Q}
    Does such a finite presentation exist for all $\Uqaffs$, without assuming condition (D)?
\end{Q}

The fact that existence holds in cases where not every $\U(i,j)$ is isomorphic to an (untwisted) quantum affine algebra indicates that the answer might be \emph{yes}.
The author hopes to return to this question in future work.

\subsubsection{Braid group action} \label{subsubsection:affinized braid group action}

Here we present an affinized version of the braid group action from Section \ref{subsubsection:braid group action}, which will play a fundamental role in our proof of Theorem \ref{thm:psi}.
In finite types, when $\Uqaffs$ is an untwisted quantum affine algebra (see Section \ref{subsubsection:quantum affine algebras}), this result originally appeared in work of Beck \cite{Beck94}.
However, this really comes as a consequence of Lusztig's braid group action on $\Uaff$ (Theorem \ref{thm:braid group action}) and the Bernstein presentation for $\Bd$, rather than being proven \emph{on the level of affinizations}.
\\

Moving beyond the finite case, Miki addressed the quantum toroidal algebras $\UtorA$ \cite{Miki99} and $\Utorsltwo$ \cite{Miki01}, ie. when $\s$ is of type $A_{n}^{(1)}$.
Subsequently, the author \cite{Laurie24a} treated all quantum affinizations with $a_{ij}a_{ji}\leq 3$ for every distinct $i,j\in I$.
The following combines and extends the work done there with that of \cite{Miki01}.
\\

For each $i\in I$, we wish to define an automorphism $\T_{i}$ of $\Uqaffs$ whose restriction to $\U(i) \cong \UdashA$ coincides with that of $\Tb_{1}$ from Section \ref{subsubsection:braid group action}, ie.
$\T_{i} \circ h_{i} = h_{i} \circ \Tb_{1}$.
To this end, note that
\begin{align*}
    -\Tb_{1}^{-1}(\xp_{0})
    &= -\frac{1}{[2]} [ [\xp_{0}, \xp_{1}]_{q^{-2}}, \xp_{1}]
    = -\frac{1}{[2]} C [ [k_{1}^{-1} \xm_{1,1}, \xp_{1,0}]_{q^{-2}}, \xp_{1,0}]
    \\
    &= -\frac{1}{[2]} C [ k_{1}^{-1} [\xm_{1,1}, \xp_{1,0}], \xp_{1,0}]
    = \frac{1}{[2]} [h_{1,1}, \xp_{1,0}]
    \\
    &= \xp_{1,1}
\end{align*}
and similarly $-\Tb_{1}^{-1}(\xm_{0}) = \xm_{1,-1}$, hence
$\Tb_{1}(\xp_{1,1}) = - C k_{1}^{-1} \xm_{1,1}$ and
$\Tb_{1}(\xm_{1,-1}) = - \xp_{1,-1} k_{1} C^{-1}$.
Furthermore, using the fact that
$\Tb_{i}^{-1} = \eta \Tb_{1}^{-1} \eta$ we can then prove that
\begin{align*}
    \Tb_{i}(\xp_{i,-1})
    &= k_{i}^{2} \sum_{s=0}^{2} (-1)^{s} q_{i}^{3s} (\xm_{i,0})^{(s)} \xp_{i,-1} (\xm_{i,0})^{(2-s)},
    \\
    \Tb_{i}(\xm_{i,1})
    &= \sum_{s=0}^{2} (-1)^{s} q_{i}^{-3s}(\xp_{i,0})^{(2-s)} \xm_{i,\pm 1} (\xp_{i,0})^{(s)} k_{i}^{-2}.
\end{align*}
We moreover want $\T_{i}$ to commute with $\X_{i}$ for all $j\not= i$, and its restriction to
$\langle q^{h},\, \xpm_{i,0} ~|~ h\in P^{\vee},\, i\in I \rangle$
to coincide with the action of $T_{i}$ on $\Uqs$ from Theorem \ref{thm:braid group action}.
Therefore, let $\T_{i}$ act on the generators (\ref{eqn:simplified generators 1})--(\ref{eqn:simplified generators 2}) of our finite presentation from Theorem \ref{thm:finite Utor presentation} as follows:
\begin{itemize}
    \item $\displaystyle
    \T_{i}(C^{\pm 1}) = C^{\pm 1}$,
    \item $\displaystyle
    \T_{i}(q^{h}) = q^{s_{i}(h)}$,
    \item $\displaystyle
    \T_{i}(\xp_{i,0}) = - \xm_{i,0} k_{i}$,
    \item $\displaystyle
    \T_{i}(\xm_{i,0}) = - k_{i}^{-1} \xp_{i,0}$,
    \item $\displaystyle
    \T_{i}(\xp_{i,1})
    = [2]_{i}^{-1} k_{i}^{-2} [ [\xp_{i,1}, \xm_{i,0}]_{q_{i}^{-2}}, \xm_{i,0}]
    = - C k_{i}^{-1} \xm_{i,1}$,
    \item $\displaystyle
    \T_{i}(\xm_{i,-1})
    = [2]_{i}^{-1} [\xp_{i,0}, [\xp_{i,0}, \xm_{i,-1}]_{q_{i}^{2}}] k_{i}^{2}
    = - \xp_{i,-1} k_{i} C^{-1}$,
    \item $\displaystyle
    \T_{i}(\xp_{i,-1})
    = [2]_{i}^{-1} k_{i}^{2} [ [\xp_{i,-1}, \xm_{i,0}]_{q_{i}^{4}}, \xm_{i,0}]_{q_{i}^{2}}
    = k_{i}^{2} \sum_{s=0}^{2} (-1)^{s} q_{i}^{3s} (\xm_{i,0})^{(s)} \xp_{i,-1} (\xm_{i,0})^{(2-s)}$,
    \item $\displaystyle
    \T_{i}(\xm_{i,1})
    = [2]_{i}^{-1} [\xp_{i,0}, [\xp_{i,0}, \xm_{i,1}]_{q_{i}^{-4}}]_{q_{i}^{-2}} k_{i}^{-2}
    = \sum_{s=0}^{2} (-1)^{s} q_{i}^{-3s}(\xp_{i,0})^{(2-s)} \xm_{i,\pm 1} (\xp_{i,0})^{(s)} k_{i}^{-2}$,
    \item $\displaystyle
    \T_{i}(\xp_{j,m}) = \sum_{s=0}^{-a_{ij}} (-1)^{s} q_{i}^{-s} (\xp_{i,0})^{(-a_{ij}-s)} \xp_{j,m} (\xp_{i,0})^{(s)}$
    if $i\not= j$,
    \item $\displaystyle
    \T_{i}(\xm_{j,m}) = \sum_{s=0}^{-a_{ij}} (-1)^{s} q_{i}^{s} (\xm_{i,0})^{(s)} \xm_{j,m} (\xm_{i,0})^{(-a_{ij}-s)}$
    if $i\not= j$.
\end{itemize}

\begin{prop} \label{prop:defining Ti}
    The above extends to a well-defined automorphism $\T_{i}$ of $\Uqaffs$ with inverse
    $\T_{i}^{-1} = \eta\T_{i}\eta$
    whenever condition (D) holds.
\end{prop}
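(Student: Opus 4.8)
The plan is to lean on the finite presentation from Theorem \ref{thm:finite Utor presentation}: since $\Uqaffs$ is generated by the elements (\ref{eqn:simplified generators 1})--(\ref{eqn:simplified generators 2}) subject only to the relations (\ref{eqn:simplified relations 1})--(\ref{eqn:simplified relations 13}), it suffices to check that each of these finitely many relations still holds in $\Uqaffs$ after applying the displayed assignment, which will make $\T_i$ a well-defined algebra endomorphism. I would organise the verification by the index support of each relation. Those involving only $C^{\pm 1}$, $k_j^{\pm 1}$, $q^h$ (or a single $q^h$ with one $\xpm_{j,m}$) reduce at once to $\T_i(q^h) = q^{s_i(h)}$, $\T_i(k_j) = k_j k_i^{-a_{ij}}$ and $\T_i(C) = C$, exactly as for Lusztig's operator $\Tb_i$ on the zero part. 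The rank-$1$ relations (\ref{eqn:simplified relations 5}), (\ref{eqn:simplified relations 6}), (\ref{eqn:simplified relations 10}) for the node $i$, and the relations (\ref{eqn:simplified relations 4}), (\ref{eqn:simplified relations 7})--(\ref{eqn:simplified relations 9}) and (when a double bond is present) (\ref{eqn:simplified relations 11})--(\ref{eqn:simplified relations 13}) supported on a pair $\{i,j\}$ with $j\neq i$, are mapped by $\T_i$ into $\U(i)$ resp. the rank-$\leq 2$ subalgebra $\U(i,j)$; there the required identities come from the rank-$2$ input, namely \cite{Laurie24a}*{Prop. 4.8} and the braid operators built there (after conjugating by $\eta$ to match our choice of the generators $\xpm_{0,\pm 1}$) when $a_{ij}a_{ji}\leq 3$, and \cite{Miki01}*{Prop. 5} together with Lemma \ref{lem:equivalent definitions of Utorsl2} when $a_{ij} = a_{ji} = -2$. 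In particular, the two displayed forms of $\T_i(\xpm_{i,\pm 1})$ and $\T_i(\xpm_{i,\mp 1})$ are consistent because $\T_i\vert_{\U(i)}$ coincides with $h_i \circ \Tb_1 \circ h_i^{-1}$ under (\ref{eqn:hi isomorphism}) -- this is the computation already performed in the paragraphs preceding the Proposition.

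The remaining relations -- the cross relations (\ref{eqn:simplified relations 7})--(\ref{eqn:simplified relations 8}) and Serre relations (\ref{eqn:simplified relations 9}), (\ref{eqn:simplified relations 13}) supported on a pair $\{j,k\}$ with $j,k\neq i$ -- are sent by $\T_i$ into a rank-$\leq 3$ subalgebra, since on the generators $\xpm_{j,m}$, $\xpm_{k,m}$ the operator $\T_i$ acts by precisely the same quantum adjoint formulas (divided powers of $\xpm_{i,0}$) that $\Tb_i$ uses on $\xpm_{j}$, $\xpm_{k}$. For these I would deduce the identities by the method of \cite{Laurie24a}, reducing through the rank-$2$ subalgebras $\U(i,j)$, $\U(i,k)$, $\U(j,k)$, on each of which $\T_i$ restricts to a braid operator already handled above; when all the bonds among $i,j,k$ are at most triple this is covered by \cite{Laurie24a}, and the double-bond case is brought in via Lemma \ref{lem:equivalent definitions of Utorsl2} and \cite{Miki01}. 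I expect that dovetailing the double-bond case with the rest, and in particular tracking the twisted commutators and the centrality conditions in (\ref{eqn:simplified relations 8})--(\ref{eqn:simplified relations 13}), will be the main technical obstacle.

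Finally, for invertibility with $\T_i^{-1} = \eta\T_i\eta$: since $\eta$ is an anti-involution of $\Uqaffs$ and $\T_i$ is now a genuine algebra endomorphism, $\eta\T_i\eta$ is again an algebra endomorphism of $\Uqaffs$, and its values on the generators (\ref{eqn:simplified generators 1})--(\ref{eqn:simplified generators 2}) are computed directly from $\eta(\xpm_{i,m}) = \xpm_{i,-m}$, $\eta(q^h) = q^{-h}$, $\eta(C) = C$ and the formulas for $\T_i$. It then remains to verify $\T_i \circ (\eta\T_i\eta) = \mathrm{id} = (\eta\T_i\eta) \circ \T_i$, which -- both sides being endomorphisms -- is a finite computation on those generators. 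This lifts the finite-type identity $\Tb_i^{-1} = \sigma\Tb_i\sigma$ from Section \ref{subsubsection:braid group action} under the principle (Remark \ref{rmk:affinizations of morphisms}) that $\eta$ is the affinization of $\sigma$, and on the affine generators $\xpm_{i,\pm 1}$ it reuses the explicit expressions for $\Tb_1^{\pm 1}$ on $\xp_{1,\pm 1}$, $\xm_{1,\mp 1}$ recorded above. The supplementary compatibilities $\T_i\X_j = \X_j\T_i$ for $j\neq i$ and $\T_i\vert_{\langle q^h,\,\xpm_{i,0}\rangle} = \Tb_i$ are then immediate from the defining formulas.
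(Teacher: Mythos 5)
Your overall architecture --- verify the finitely many relations of Theorem \ref{thm:finite Utor presentation}, organise them by index support, import the rank-$2$ cases from \cite{Laurie24a} and \cite{Miki01} via Lemma \ref{lem:equivalent definitions of Utorsl2}, and obtain invertibility from the endomorphism $\eta\T_{i}\eta$ --- matches the paper's proof, and what you say about relations supported on $\lbrace i\rbrace$ or $\lbrace i,j\rbrace$ and about the inverse is fine. The gap is in your treatment of relations supported on a pair $\lbrace j,k\rbrace$ with $j,k\neq i$, which is the only place where something genuinely beyond rank $2$ must be said. You propose to deduce these by ``reducing through the rank-$2$ subalgebras $\U(i,j)$, $\U(i,k)$, $\U(j,k)$, on each of which $\T_{i}$ restricts to a braid operator already handled above''. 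But $\T_{i}$ does not restrict to an operator on $\U(j,k)$ at all: it sends $\xpm_{j,m}$ and $\xpm_{k,m}$ to iterated $q$-commutators with $\xpm_{i,0}$, so the image of such a relation lives in $\U(i,j,k)$ and is not contained in any rank-$2$ subalgebra. No combination of the three rank-$2$ checks yields the identity you need, and neither \cite{Laurie24a} (which assumes all $a_{jk}a_{kj}\leq 3$) nor \cite{Miki01} (which is purely rank $2$) covers, say, a Serre relation between $j$ and $k$ after $\T_{i}$ has been applied.

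The mechanism the paper actually uses is different, and you already have its ingredients. The relations supported on $\lbrace j,k\rbrace$ that involve only $k_{j}^{\pm 1}$, $k_{k}^{\pm 1}$, $\xpm_{j,0}$, $\xpm_{k,0}$ are preserved because on $\langle q^{h},\,\xpm_{\ell,0}\rangle$ the operator $\T_{i}$ coincides with Lusztig's $\Tb_{i}$ on the Drinfeld--Jimbo quantum group $\Uqs$, and Theorem \ref{thm:braid group action} holds in arbitrary rank. The remaining relations on $\lbrace j,k\rbrace$, which involve $\xpm_{j,\pm 1}$ or $\xpm_{k,\pm 1}$, are obtained from these degree-zero ones by applying $\X_{j}^{\mp 1}$ and $\X_{k}^{\mp 1}$; since these automorphisms commute with $\T_{i}$ by construction of the defining formulas, preservation at degree zero propagates to every relation of the finite presentation. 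The commutation $\T_{i}\X_{j}=\X_{j}\T_{i}$ for $j\neq i$, which you relegate to a ``supplementary compatibility'' at the very end, is thus the engine of the rank-$3$ step rather than an afterthought.
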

\begin{proof}[Proof sketch.]
    Checking that $\T_{i}$ respects the relations of $\Uqaffs$ reduces to a check on each $\U(i,j,\ell)$.
    If $\#\lbrace i,j,\ell\rbrace < 3$ then $\U(i,j,\ell)$ is isomorphic to one of
    \begin{align*}
        \UdashA\times\UdashA, \qquad
        \UdashA, \qquad
        \UdashAA, \qquad
        \UdashC, \qquad
        \UdashG, \qquad
        \Utorsltwo,
    \end{align*}
    and this is covered by the affine case together with \cite{Miki01}*{Prop. 6}.
    Otherwise, all relations involving only
    $\lbrace k_{j}^{\pm 1}, k_{\ell}^{\pm 1}, \xpm_{j,0}, \xpm_{\ell,0} \rbrace$
    are preserved due to Theorem \ref{thm:braid group action}.
    The rest then follow by applying $\X_{j}$ and $\X_{\ell}$, which in particular commute with $\T_{i}$.
    Similarly, invertibility of $\T_{i}$ is verified on each $\U(i,j)$ and follows from the affine case and \cite{Miki01}.
    See the proof of \cite{Laurie24a}*{Prop. 4.10} for more details.
\end{proof}

\begin{rmk}
    There is a small error in the formulae for
    $\T_{i}(\xpm_{i,\mp 1}) = \Tb_{i}(\xpm_{i,\mp 1})$
    found on p.9 and p.18 of \cite{Laurie24a}, which should instead read as above.
    This does not impact any of the other work done there.
\end{rmk}

We now have all of the automorphisms required to define our `affinized braid group action' on $\Uqaffs$.

\begin{defn} \label{defn:affinized braid group}
    For any generalised Cartan matrix $(a_{ij})_{i,j\in I}$ we define the \emph{affinized braid group} $\widehat{\B}$ to be the group generated by $\lbrace T_{i},\, X_{i}~|~i\in I \rbrace$ and the automorphism group $\Omega$ of the associated Dynkin diagram, with relations
    \begin{itemize}
        \item $T_{i}T_{j}T_{i}\ldots = T_{j}T_{i}T_{j}\ldots$ whenever $a_{ij}a_{ji}\leq 3$, with $a_{ij}a_{ji} + 2$ factors on each side,
        \item $X_{i}X_{j} = X_{j}X_{i}$,
        \item $T_{i}X_{j} = X_{j}T_{i}$ whenever $i\not= j$,
        \item $T_{i}^{-1}X_{i}T_{i}^{-1} = X_{i} \prod_{j\in I}X_{j}^{-a_{ij}}$,
        \item $\pi T_{i} \pi^{-1} = T_{\pi(i)}$,
        \item $\pi X_{i} \pi^{-1} = X_{\pi(i)}$,
    \end{itemize}
    for all $i,j\in I$ and $\pi\in\Omega$.
\end{defn}

When the underlying Dynkin diagram satisfies condition (D) and moreover possesses a sign function $o$, we have the following.

\begin{thm} \label{thm:affinized braid group action}
    The group $\widehat{\B}$ acts on the quantum affinization $\Uqaffs$ via $T_{i} \mapsto \T_{i}$ and $X_{i} \mapsto \X_{i}$ for all $i\in I$, and $\pi \mapsto \Scal_{\pi}$ for all $\pi\in \Omega$.
\end{thm}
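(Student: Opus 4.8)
The plan is to exhibit $T_i\mapsto\T_i$, $X_i\mapsto\X_i$, $\pi\mapsto\Scal_\pi$ as a homomorphism $\widehat{\B}\to\mathrm{Aut}(\Uqaffs)$ by checking the defining relations of $\widehat{\B}$ from Definition \ref{defn:affinized braid group}. Each of these maps is already an automorphism of $\Uqaffs$ — for $\T_i$ this is Proposition \ref{prop:defining Ti}, and for $\X_i$, $\Scal_\pi$ see Section \ref{subsection:quantum affinizations} — and by Theorem \ref{thm:finite Utor presentation} the algebra $\Uqaffs$ is generated by (\ref{eqn:simplified generators 1})--(\ref{eqn:simplified generators 2}), so each relation need only be checked after applying both sides to these finitely many generators. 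First I would dispatch the relations not involving a word $T_iT_j\cdots$\,: the equalities $\pi X_i\pi^{-1}=X_{\pi(i)}$, $\pi T_i\pi^{-1}=T_{\pi(i)}$ and $X_iX_j=X_jX_i$ follow by direct comparison on generators, the only delicate point being the sign bookkeeping produced by the factors $o_{i,\pi(i)}$ in $\Scal_\pi$ (this is where a sign function is used) together with $\langle\Lambda_{\pi(i)},\pi(h)\rangle=\langle\Lambda_i,h\rangle$; the relation $T_iX_j=X_jT_i$ for $i\neq j$ was built into the construction of $\T_i$ in Section \ref{subsubsection:affinized braid group action} and is part of Proposition \ref{prop:defining Ti}; and $T_i^{-1}X_iT_i^{-1}=X_i\prod_{j\in I}X_j^{-a_{ij}}$ reduces, via $\T_i^{-1}=\eta\T_i\eta$ and the explicit action of the $\X_\bullet$ (a shift in $\deg_{\Zbb}$ on the $\xpm_{k,m}$ and a twist of $q^h$ by a power of $C$), to a routine computation inside the rank $\leq 2$ subalgebras $\U(i,j)$.

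The substantive step is the braid relation $T_iT_jT_i\cdots=T_jT_iT_j\cdots$ whenever $a_{ij}a_{ji}\leq 3$, which I would verify generator by generator. On $q^h$ both sides act by the Weyl group elements $s_is_j\cdots$ and $s_js_i\cdots$, which coincide since $W$ satisfies its braid relations, and on $C^{\pm 1}$ both act trivially. For a generator among $k_i^{\pm 1},k_j^{\pm 1}$ or of the form $\xpm_{i,m},\xpm_{j,m}$ occurring in (\ref{eqn:simplified generators 1})--(\ref{eqn:simplified generators 2}), the explicit formulas keep the entire braid word inside $\U(i,j)$, which by \cite{Hernandez05}*{Thm. 2, Cor. 3} is the quantum affinization of the rank $\leq 2$ subdiagram on $\{i,j\}$ — one of $\UdashAA$, $\UdashC$, $\UdashG$ when $i,j$ are adjacent, and essentially $\UdashA\times\UdashA$ (a central amalgam) otherwise. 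On $\U(i,j)$ the restrictions of $\T_i$ and $\T_j$ coincide with the corresponding Beck braid operators \cite{Beck94} (the same convention-matching as in the proof of Proposition \ref{prop:defining Ti}, using the isomorphism (\ref{eqn:hi isomorphism})), so the required braid relation follows from Beck's braid group action — equivalently, from Lusztig's action (Theorem \ref{thm:braid group action}) transported through the Bernstein presentation of the affine braid group; when $i,j$ are non-adjacent the two operators act on disjoint families of generators and simply commute.

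It remains to treat a generator among $k_k^{\pm 1}$ or of the form $\xpm_{k,m}$ with $k\notin\{i,j\}$ and arbitrary $m\in\Zbb$ (this includes the generators $\xpm_{k,\mp 1}$ of type (\ref{eqn:simplified generators 2})). Here I would use that $\X_k$ commutes with both $\T_i$ and $\T_j$ and that $\X_k^{\mp m}(\xpm_{k,0})$ is a scalar multiple of $\xpm_{k,m}$; consequently it is enough to verify the braid relation on $\xpm_{k,0}$ and $k_k^{\pm 1}$, which lie in the subalgebra $\langle q^h,\,\xpm_{l,0}\mid l\in I\rangle\cong\Uqs$ on which $\T_i$ and $\T_j$ restrict to Lusztig's operators $\Tb_i$ and $\Tb_j$, so this case is exactly Theorem \ref{thm:braid group action}. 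Assembling the three cases gives the braid relations on all generators, completing the verification. The hard part will be the rank $2$ step: making precise that $\T_i\vert_{\U(i,j)}$ genuinely coincides with Beck's braid operator so that \cite{Beck94} applies. This convention-matching, together with the well-definedness of each individual $\T_i$, was carried out in \cite{Laurie24a} for $a_{ij}a_{ji}\leq 3$ and extended using \cite{Miki01}; assembling it under condition (D) is the technical heart of the argument.
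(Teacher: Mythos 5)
Your proposal is correct and follows essentially the same route as the paper: reduce each defining relation of $\widehat{\B}$ to the rank $\leq 2$ subalgebras $\U(i,\ell)$ (handled by the affine case and \cite{Miki01}), and for the braid relation on generators at a third vertex $\ell\notin\{i,j\}$ use Theorem \ref{thm:braid group action} on $k_{\ell}^{\pm 1},\xpm_{\ell,0}$ together with the commutation of $\X_{\ell}$ with $\T_{i},\T_{j}$ to reach $\xpm_{\ell,\pm 1}$. The only cosmetic difference is that the paper phrases the case split as a check on each $\U(\ell)$ inside $\U(i,j,\ell)$, which is the same decomposition you use.
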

\begin{proof}[Proof sketch.]
    Commutativity of $\T_{i}$ and $\X_{j}$ for $i\not= j$ is clear from the definitions, while
    $\T_{i}^{-1}\X_{i}\T_{i}^{-1} = \X_{i}\prod_{j\in I}\X_{j}^{-a_{ij}}$
    is checked by restricting to each $\U(i,\ell)$.
    In particular, since $\U(i,\ell)$ is isomorphic to one of
    \begin{align*}
        \UdashA\times\UdashA, \qquad
        \UdashA, \qquad
        \UdashAA, \qquad
        \UdashC, \qquad
        \UdashG, \qquad
        \Utorsltwo,
    \end{align*}
    this is covered by the affine case and \cite{Miki01}*{Prop. 6}.
    The braid relation between $\T_{i}$ and $\T_{j}$ on elements of $\U(\ell)$ is checked on $\U(i,j,\ell)$.
    Similarly to our proof of Proposition \ref{prop:defining Ti}, if $\#\lbrace i,j,\ell\rbrace < 3$ then we are done by either the affine case or \cite{Miki01}.
    Otherwise, the braid relation clearly holds on $k_{\ell}^{\pm 1}$ and $\xpm_{\ell,0}$ by Theorem \ref{thm:braid group action}, and we reach the other generators of $\U(\ell)$ from Theorem \ref{thm:finite Utor presentation} by applying $\X_{\ell}^{-1}$.
    The remaining relations of Definition \ref{defn:affinized braid group} are checked without much difficulty.
    See \cite{Laurie24a}*{Thm. 4.11} for more details.
\end{proof}

If no such $o$ exists as the Dynkin diagram contains an odd length cycle, Theorem \ref{thm:affinized braid group action} should instead hold for a slightly modified version of $\widehat{\B}$.
Let us illustrate this in the case of the cyclic $A_{2n}^{(1)}$ quiver.
First, $\pi_{1}\in\Omega$ must have order $4n+2$ in $\widehat{\B}$ rather than $2n+1$.
This is because, as discussed in Section \ref{subsection:basic notations}, there is no sign function on the affine Dynkin diagram and so
$\Scal_{\pi_{1}}^{2n+1} = \sfrakZ_{-1}$
has order $2$, mapping
\begin{align*}
    \xpm_{i,m} \mapsto (-1)^{m}\xpm_{i,m}, \qquad
    h_{i,r} \mapsto (-1)^{r}h_{i,r}, \qquad
    k_{i} \mapsto k_{i}, \qquad
    C \mapsto C.
\end{align*}
The automorphism $\zeta_{j} = \sfrakj_{-1}$ maps each $\xpm_{j,m} \mapsto -\xpm_{j,m}$ and fixes the other generators, and we have that
\begin{align*}
    \Scal_{\pi_{1}} \zeta_{j} \Scal_{\pi_{1}}^{-1} = \zeta_{\pi_{1}(j)}, \qquad
    \Scal_{\pi_{1}} \X_{2n} \Scal_{\pi_{1}}^{-1} = \zeta_{0}\X_{0}, \qquad
    \T_{0}^{-1} \X_{0} \T_{0}^{-1} = \zeta_{0}\X_{2n}\X_{0}^{-1}\X_{1}.
\end{align*}
By adding $\zeta_{0}$ as a generator in $\widehat{\B}$ and adjusting the group relations with respect to the above discussion, we are able to extend Theorem \ref{thm:affinized braid group action} to include type $A_{2n}^{(1)}$ via essentially the same proof.
Similar methods allow us to further generalise to all $\s$ satisfying condition (D).

\begin{rmk}
    In the case of quantum toroidal algebras (when $\sfrak$ is an affine Lie algebra) we shall see in Section \ref{section:Quantum toroidal algebras} that this action restricts to the extended double affine braid group $\Bdd$.
    This is important as $\Bdd$ possesses an involution $\te$ which is essential for defining our horizontal--vertical symmetry $\psi$ of $\Utor$.
\end{rmk}

\begin{Q}
    Does such an affinized braid group action exist for all $\Uqaffs$, without assuming condition (D)?
\end{Q}

As in Section \ref{subsubsection:finite presentation} we expect our results to extend to all quantum affinizations, and leave such directions for future work.

\subsubsection{Quantum affine algebras} \label{subsubsection:quantum affine algebras}

In untwisted types, the quantum affine algebra has an alternative \textit{Drinfeld new presentation}, first stated by Drinfeld \cite{Drinfeld88}, as the quantum affinization of the corresponding finite quantum group.
The equivalence of the two realizations is precisely the commutativity of the following diagram, taken from \cite{Hernandez09}.
\[\begin{tikzcd}
	\g & & & & \gaff \\
	\Uq & & & & \Udash
	\arrow["\mathrm{Quantization}"', from=1-1, to=2-1]
	\arrow["\mathrm{Quantum~Affinization}"', from=2-1, to=2-5]
	\arrow["\mathrm{Affinization}", from=1-1, to=1-5]
	\arrow["\mathrm{Quantization}", from=1-5, to=2-5]
\end{tikzcd}\]

Furthermore, extending $\Uqaffg$ with the degree-style generators $D^{\pm 1}$ corresponding to $\sfrakZ_{q}$ (see Remark \ref{rmk:degree operators for quantum affinizations}) produces a similar presentation for $\Uaff$.
\\

The Drinfeld new realization quantizes the loop presentation for untwisted affine Lie algebras, and has been immensely useful for studying the representation theory of $\Uaff$ and $\Udash$.
In particular, it was implemented by Chari and Pressley in a systematic treatment of the finite dimensional modules and their $R$-matrices \cites{CP91,CP94,CP95,CP97}, as well as by Frenkel and Jing \cites{FJ88,Jing89} to construct vertex representations.
\\

The relationship between the two realizations was first studied by Beck \cite{Beck94}, who used the Bernstein presentation for $\Bd$ and its action on the quantum affine algebra to construct a morphism from $\Uqaffg$ to $\Udash$.
Jing \cite{Jing98} then defined an inverse morphism using $q$-commutators, while Damiani proved the surjectivity \cite{Damiani12} and injectivity \cite{Damiani15} of Beck's map.

\begin{rmk}
    A generalization of the Drinfeld new realization which includes all twisted types was also stated in \cites{Drinfeld88}.
    A morphism from the Drinfeld-Jimbo presentation was initially defined by Jing and Zhang \cites{JZ07,JZ10}, but the proof of an isomorphism between the two presentations was once again completed by Damiani in \cites{Damiani12,Damiani15}.
    (It is worth noting that the affine $q$-Serre relations in \cites{Damiani12,Damiani15} differ slightly from those in \cites{JZ07,JZ10}.)
    Furthermore, the construction of vertex representations was extended to twisted types in \cite{Jing90}.
    However, we omit the twisted case here as it is not required for our purposes.
\end{rmk}

Let us now present Jing's isomorphism.
For each $i_{1}\in I_{0}$ there exist sequences $\underline{i} = (i_{1},i_{2},\dots,i_{\hslash -1})$ in $I_{0}$ and $\underline{\epsilon} = (\epsilon_{1},\dots,\epsilon_{\hslash -2})$ in $\mathbb{Q}_{\leq 0}$ such that
\begin{align*}
    (\alpha_{i_{1}}+\dots +\alpha_{i_{s}}, \alpha_{i_{s+1}}) = \epsilon_{s} \mathrm{~for~} s=1,\dots,\hslash -2,
\end{align*}
where we recall that $\hslash = \sum_{i\in I}a_{i}$ is the Coxeter number of $\gaff$.
Then for any such sequences, the following extends to a $\kk$-algebra isomorphism from the Drinfeld-Jimbo realization of $\Udash$ to its Drinfeld new realization as the quantum affinization $\Uqaffg$:
\begin{itemize}
    \item $\xipm \mapsto \xpm_{i,0}$ and $k_{i} \mapsto k_{i}$ for each $i\in I_{0}$,
    \item $\xp_{0} \mapsto \left[\xm_{i_{\hslash -1},0},\dots,\xm_{i_{2},0},\xm_{i_{1},1}\right]_{q^{\epsilon_{1}}\dots q^{\epsilon_{\hslash -2}}} C k_{\theta}^{-1}$,
    \item $\xm_{0} \mapsto a(-q)^{-\epsilon} C^{-1} k_{\theta} \left[\xp_{i_{\hslash -1},0},\dots,\xp_{i_{2},0},\xp_{i_{1},-1}\right]_{q^{\epsilon_{1}}\dots q^{\epsilon_{\hslash -2}}}$,
    \item $k_{0} \mapsto C k_{\theta}^{-1}$,
\end{itemize}
where $k_{\theta} = k_{1}^{a_{1}}\dots k_{n}^{a_{n}}$, $\epsilon = \epsilon_{1}+\dots+\epsilon_{\hslash -2}$, and $a$ is a constant depending on type (in particular $a=1$ when $\gaff$ is simply laced). Example sequences in all types can be found in \cite{Jing98}*{Table 2.1}.
Furthermore, the above isomorphism extends to $\Uaff$ by sending $q^{d} \mapsto D$.
\\

It is clear in both presentations that $\Uaff$ and $\Udash$ contain a natural copy of the finite quantum group $\Uq$ -- it is the subalgebra generated by $\lbrace \xipm,\, k_{i}^{\pm 1}~|~i\in I_{0}\rbrace$ in the Drinfeld-Jimbo, and by $\lbrace \xpm_{i,0},\, k_{i}^{\pm 1}~|~i\in I_{0}\rbrace$ in the Drinfeld new.
\\

We shall now specialise some of the earlier results in this subsection to the particular case of untwisted quantum affine algebras.
First, it is important to note that none of the topological coproducts introduced in Section \ref{subsubsection:topological coproducts} coincide with any of the coproducts from Section \ref{subsubsection:coproducts}.
Instead, Damiani \cite{Damiani24} formulates in a precise way the notion of $\Delta_{u}$ as a ``$P$-equivariant deformation of $\Delta_{+}$'', where the actions of $\Delta_{u}$ and $\Delta_{+}$ on the Drinfeld new generators differ by some ``controllable terms''.
\\

As for the representation theory, it is clear that Sections \ref{subsubsection:highest weight theory} and \ref{subsubsection:l-highest weight theory} provide different definitions of integrability for representations of $\Uaff$ and $\Udash$.
Moreover, the notions of highest weight and $\ell$-highest weight modules are distinct.
In particular, $\Oaff$ is precisely the category of finite dimensional modules.
Since for any $\ell$-weight we have that $\Psi$ determines $\lambda = \sum_{i\in I} \langle \lambda,\alpha^{\vee}_{i} \rangle \Lambda_{i}$ uniquely, the irreducible finite dimensional representations are therefore parametrised by Drinfeld polynomials $\Pcal(z) = (P_{i}(z))_{i\in I_{0}}$.
See the works of Chari-Pressley \cites{CP91,CP94,CP95} for more details.

\begin{notation}
    To avoid confusion in later sections, we shall denote by $\Xb_{i}$ the automorphism $\X_{i}$ of $\Uaff$ or $\Udash$ for each $i\in I_{0}$, where $\upsilon$ is the restriction to $I_{0}$ of some affine sign function $o:I\rightarrow\lbrace\pm 1\rbrace$.
    Moreover, we shall write the anti-involution $\eta$ as $\eta'$ and note that $\Tb_{i}^{-1} = \eta' \Tb_{i} \eta'$ for all $i\in I_{0}$.
\end{notation}

The following then provides a loop-style analogue of Corollary \ref{cor:Bd action} with respect to the Bernstein and Drinfeld new presentations.

\begin{thm} \cite{Beck94} \label{thm:loop Bd action}
    The extended affine braid group $\Bd$ acts on the quantum affine algebras $\Uaff$ and $\Udash$ via $T_{i} \rightarrow \Tb_{i}$ and $X_{\omega_{i}^{\vee}} \rightarrow \Xb_{i}$ for each $i\in I_{0}$.
\end{thm}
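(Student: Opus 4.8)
The plan is to deduce this from the material already in place, by playing off the two presentations of $\Bd$ against the two presentations of the algebra. In untwisted type $\Bd$ is generated, in the Bernstein presentation, by the finite braid group $\langle T_i\mid i\in I_0\rangle$ together with the lattice $\{X_\beta\mid\beta\in\Pov\}$; since $\Pov=\bigoplus_{i\in I_0}\Zbb\omega_i^\vee$, it suffices to pin down the images of $T_i$ and $X_{\omega_i^\vee}$ for $i\in I_0$. The operators $T_i$ with $i\in I_0$ also occur among the Coxeter generators and $\Tb_i$ is the same automorphism in either picture, so Corollary~\ref{cor:Bd action} already handles them. For the translation generators I would use the correspondence recalled in Section~\ref{subsection:basic notations}: in untwisted type the identity $\pi_i=X_{\omega_i^\vee}T_{v_i}^{-1}$ holds in $\Bd$, hence $X_{\omega_i^\vee}=\pi_i T_{v_i}$, so by Corollary~\ref{cor:Bd action} the automorphism attached to $X_{\omega_i^\vee}$ is the composite $S_{\pi_i}\circ\Tb_{v_i}$. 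The whole theorem thus reduces to the single identity
\begin{align*}
    S_{\pi_i}\circ\Tb_{v_i}\;=\;\Xb_i\qquad(i\in I_0)
\end{align*}
of algebra automorphisms of $\Uaff$ (and, by restriction, of $\Udash$), both sides now written in the Drinfeld new realization.

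To prove this identity I would compare the two sides on generators, using the finite presentation of Theorem~\ref{thm:finite Utor presentation} to cut the check down to the generators $C^{\pm1}$, $k_j^{\pm1}$, $q^d$, $\xpm_{j,0}$, $\xpm_{j,\pm1}$ (and $\xpm_{j,\mp1}$ in the $\widehat{\mathfrak{sl}}_2$ case). On the central and toral generators this is pure bookkeeping: it amounts to comparing the action of $\pi_i v_i$ on $P^\vee$ with that of the translation $t_{\omega_i^\vee}$, and matching the resulting powers of $C$ with those built into $\Xb_i$. The substance lies in the Drinfeld currents $\xpm_{j,\pm1}$, where one must show that the composite $S_{\pi_i}\circ\Tb_{v_i}$ — a priori an opaque product of the Lusztig operators $\Tb_k$ ($k\in I_0$) together with the affine diagram automorphism $\pi_i$ — collapses to the elementary loop-degree shift $\Xb_i$. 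Here I would feed in the explicit formulae for $\Tb_k(\xpm_{k,\pm1})$ and $\Tb_k(\xpm_{l,m})$ ($k\neq l$) recorded in Section~\ref{subsubsection:affinized braid group action}, together with Jing's isomorphism (which renders $\pi_i$ explicit on the Drinfeld new generators), and unwind the composite step by step; this telescoping is the technical core of Beck's original argument~\cite{Beck94}, which I would follow. An essentially equivalent and arguably cleaner route is to invoke Theorem~\ref{thm:affinized braid group action} directly: for $\s=\g$ of finite type one has $\Uqaffg=\Uaff$ in the Drinfeld new realization, condition~(D) holds automatically and the finite Dynkin diagram carries a sign function, so $\widehat{\B}$ acts via $T_i\mapsto\T_i=\Tb_i$ and $X_i\mapsto\X_i=\Xb_i$; restricting to the subgroup generated by $\{T_i,X_{\omega_i^\vee}\mid i\in I_0\}$ and $\Omega$, whose defining relations coincide with those of $\Bd$ in the Bernstein presentation, yields the statement.

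The step I expect to be the main obstacle is precisely this identification of $S_{\pi_i}\Tb_{v_i}$ with $\Xb_i$ on the Drinfeld currents — equivalently, the assertion that the translation automorphism acts merely by shifting the loop degree of the $i$-th current. Since $\Xb_i$ is completely transparent while $S_{\pi_i}\Tb_{v_i}$ is not, the whole difficulty is concentrated in making this telescoping explicit, which is the technical heart both of this theorem and of~\cite{Beck94}. Once the identity holds on all of the generators listed above, every defining relation of $\Bd$ in the Bernstein presentation is automatically respected by $T_i\mapsto\Tb_i$ and $X_{\omega_i^\vee}\mapsto\Xb_i$, and the passage between $\Udash$ and $\Uaff$ (the latter additionally carrying the degree generator $q^d$) is routine bookkeeping that I would suppress.
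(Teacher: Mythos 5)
The paper gives no proof of this statement at all --- it is imported verbatim from Beck --- so the only question is whether your reconstruction would work. Your overall shape (combine Lusztig's action from Corollary \ref{cor:Bd action} with the Coxeter--Bernstein correspondence, then verify on the Drinfeld new generators that the resulting composite collapses to the loop-degree shift $\Xb_{i}$) is indeed the strategy of Beck's argument and matches how the paper frames the result in Section \ref{subsubsection:affinized braid group action}. But the key reduction as you state it is wrong: the identity $\pi_{i}=X_{\omega_{i}^{\vee}}T_{v_{i}}^{-1}$ holds only for $i\in\Imin$, i.e.\ for \emph{minuscule} nodes, not for all $i\in I_{0}$. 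For a non-minuscule fundamental coweight the translation $t_{\omega_{i}^{\vee}}$ factors as $\pi w$ with $\pi\in\Omega$ possibly trivial and $w$ an element of the \emph{affine} Weyl group whose reduced words involve $s_{0}$, so the automorphism attached to $X_{\omega_{i}^{\vee}}$ is $S_{\pi}\circ\Tb_{j_{1}}\cdots\Tb_{j_{l}}$ with some $j_{k}=0$, not $S_{\pi_{i}}\circ\Tb_{v_{i}}$ with $v_{i}\in\mathring{W}$. In types $E_{8}^{(1)}$, $F_{4}^{(1)}$ and $G_{2}^{(1)}$ the affine diagram has no non-trivial automorphisms, so $\Imin=\lbrace 0\rbrace$ and your reduction yields no information about any $X_{\omega_{i}^{\vee}}$ whatsoever; in the other types outside $A_{n}^{(1)}$ it covers only the minuscule nodes, which do not generate $\Pov$. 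The identity one must actually establish is $S_{\pi}\circ\Tb_{w}=\Xb_{i}$ for the full translation word (with the sign $o$ built into the definition of $\Xb_{i}$ being exactly what makes the telescoping close up), and this cannot be shortened to the minuscule case.

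Your ``cleaner route'' via Theorem \ref{thm:affinized braid group action} is circular within the paper's logic: the proof sketches of that theorem and of Proposition \ref{prop:defining Ti} dispose of the rank-$2$ subalgebras $\UdashA$, $\UdashAA$, $\UdashC$, $\UdashG$ by appealing to ``the affine case'', which is precisely the present theorem, so that route restates the result rather than proving it. The remaining ingredients of your plan --- checking the candidate identity on a finite generating set, the bookkeeping on $q^{h}$ and $C^{\pm 1}$, and passing between $\Udash$ and $\Uaff$ --- are unobjectionable once the reduction above is repaired.
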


\section{Quantum toroidal algebras} \label{section:Quantum toroidal algebras}

We have seen in Section \ref{subsubsection:quantum affine algebras} how untwisted quantum affine algebras arise as a special case of the quantum affinization procedure.
By taking the quantum affinization of their Drinfeld-Jimbo realizations, we obtain another important class of algebras: the \emph{quantum toroidal algebras} $\Utor$.
These can therefore be considered as the double affine objects within the quantum setting.
\\

Quantum toroidal algebras are the quantum deformations of universal central extensions $\g[s^{\pm 1},t^{\pm 1}] \oplus \mathbb{K}$ of the toroidal Lie algebras \cite{Enriquez03} of regular rational (polynomial) maps from a complex $2$-torus into the finite dimensional simple Lie algebra $\g$, as described in \cite{MRY90}.
\\

It should be noted that quantum toroidal algebras do not occur as the Drinfeld-Jimbo quantum groups associated to any Kac-Moody algebras, similar to how double affine braid groups are not the braid groups of any Coxeter diagram and toroidal Lie algebras are not Kac-Moody algebras.
It follows that they do not themselves possess quantum affinizations via Definition \ref{defn:quantum affinization}, and are thus in some sense extremal with respect to this process.
\\

The study of quantum toroidal algebras is an incredibly rich and fruitful area of research within mathematics and physics, with a diverse range of connections and applications including -- but far from limited to -- the following:
\begin{itemize}
    \item They were first introduced in the $ADE$ case by Ginzburg-Kapranov-Vasserot \cite{GKV95} in their study of Langlands reciprocity for algebraic surfaces.
    In particular, $\Utor$ is shown to act via Hecke operators on the $\Cbb$-valued functions of a certain moduli space of vector bundles on the surface.
    \item There is a toroidal Schur-Weyl duality between $\UtorA$ and the double affine Hecke algebra $\ddot{\Hcal}$ of type $\mathfrak{gl}_{\ell}$ due to Varagnolo-Vasserot \cite{VV96}, which establishes an equivalence between right $\ddot{\Hcal}$-modules and a particular category of integrable left $\UtorA$-modules.
    \item Nakajima \cites{Nakajima01,Nakajima02} realized simply laced $\Utor$ via a morphism to the equivariant K-theory of quiver varieties on the affine Dynkin diagram.
    This was recently extended to arbitrary types (and indeed to \emph{shifted} quantum loop groups) by Varagnolo-Vasserot \cites{VV23a,VV23b} using critical K-theory, and is a powerful geometric approach for their representation theory.
    \item The type $A$ quantum toroidal algebras and their Miki automorphisms provide a remarkable algebraic framework and set of tools for studying symmetric function theory, such as the (wreath) Macdonald polynomials -- see \cites{OS24,OSW22,Wen19} and references therein.
    \item Quantum toroidal algebras enjoy a wealth of applications into quantum integrable systems.
    Even just in the $\glone$ case, their representation theory and $R$-matrices are fundamental for solving $XXZ$ type models via Bethe ansatz techniques \cites{FJMM15,FJMM17,FJM19}.
\end{itemize}
However, despite these many varied directions, quantum toroidal algebras remain rather mysterious objects.
Further developing our understanding of their structure and representation theory is therefore of fundamental significance, and deserves continued attention.
\\

In this section we shall define the quantum toroidal algebras and some of their basic structures, before introducing the corresponding objects within the braid group setting -- the extended double affine braid groups $\Bdd$.
We will then deduce from our results in Section \ref{subsubsection:affinized braid group action} an action of $\Bdd$ on $\Utor$, as well as outline a Coxeter-style presentation for $\Bdd$ due to Ion-Sahi \cite{IS20}, each of which is essential for our work in later sections.

\begin{defn} \label{defn:quantum toroidal algebra}
    The quantum toroidal algebra $\Utor$ is the unital associative $\kk$-algebra with generators $\xpm_{i,m}$, $h_{i,r}$, $k_{i}^{\pm 1}$, $C^{\pm 1}$ ($i\in I$, $m\in\Zbb$, $r\in\Zbb^{*}$), subject to the following relations:
\begin{itemize}
    \item $C^{\pm 1}$ central,
    \item $\displaystyle C^{\pm 1}C^{\mp 1} = k_{i}^{\pm 1} k_{i}^{\mp 1} = 1$,
    \item $\displaystyle [k_{i},k_{j}] = [k_{i},h_{j,r}] = 0$,
    \item $\displaystyle [h_{i,r},h_{j,s}] = \delta_{r+s,0} \frac{[ra_{ij}]_{i}}{r} \frac{C^{r}-C^{-r}}{q_{j}-q_{j}^{-1}}$,
    \item $\displaystyle k_{i} \xpm_{j,m} k_{i}^{-1} = q_{i}^{\pm a_{ij}} \xpm_{j,m}$,
    \item $\displaystyle [h_{i,r},\xpm_{j,m}] = \pm \frac{[ra_{ij}]_{i}}{r} C^{\frac{r \mp \lvert r\rvert}{2}} \xpm_{j,r+m}$,
    \item $\displaystyle [\xp_{i,m},\xm_{j,l}] = \frac{\delta_{ij}}{q_{i}-q_{i}^{-1}} (C^{-l}\phi^{+}_{i,m+l} - C^{-m}\phi^{-}_{i,m+l})$,
    \item $[\xpm_{i,m+1},\xpm_{j,l}]_{q_{i}^{\pm a_{ij}}} + [\xpm_{j,l+1},\xpm_{i,m}]_{q_{i}^{\pm a_{ij}}} = 0$,
\end{itemize}
and whenever $i\not= j$, for any integers $m$ and $m_{1},\dots,m_{a'}$ where $a' = 1 - a_{ij}$,
\begin{itemize}
    \item $\displaystyle
    \sum_{\pi\in S_{a'}}
    \sum_{s=0}^{a'} (-1)^{s}
    {\begin{bmatrix}a'\\s\end{bmatrix}}_{i}
    \xpm_{i,m_{\pi(1)}}\dots\xpm_{i,m_{\pi(s)}}
    \xpm_{j,m}
    \xpm_{i,m_{\pi(s+1)}}\dots\xpm_{i,m_{\pi(a')}}
    = 0$.
\end{itemize}
Here, the $\phi^{\pm}_{i,\pm s}$ are given by the formula
$$ \sum_{s\geq 0} \phi^{\pm}_{i,\pm s} z^{\pm s} =
k_{i}^{\pm 1} \exp{\left( \pm (q_{i}-q_{i}^{-1})\sum_{s'>0}h_{i,\pm s'} z^{\pm s'} \right)}
$$
when $s\geq 0$, and are zero otherwise.
\end{defn}

By construction $\Utor$ possesses many of the structures introduced in Section \ref{subsection:quantum affinizations}, for example the gradings, scaling automorphisms, topological coproducts and $\ell$-highest weight theory.
Furthermore, our finite presentation and action of $\widehat{\B}$ exist in all types except $A_{2}^{(2)}$ for now, which fails condition (D).

\begin{rmk}
    \begin{itemize}
        \item Some sources -- for example \cites{Saito98,Miki00,Tsymbaliuk19} -- add horizontal or vertical degree-style generators to their definitions of $\Utor$.
        These correspond via Remark \ref{rmk:degree operators for quantum affinizations} to $\sfrak^{(0)}_{q}$ and $\sfrakZ_{q}$ respectively, with the former moreover equal to $q^{d}$.
        \item In type $A_{n}^{(1)}$ there is a two-parameter deformation $U_{q,\kappa}(\mathfrak{sl}_{n+1,\mathrm{tor}})$ where some of the relations in Definition \ref{defn:quantum toroidal algebra} are modified to involve additional central generators $\kappa^{\pm 1}$.
        The extra parameter $\kappa$ relates to the rotational symmetry of the Dynkin diagram, and specialising to $\kappa = 1$ recovers the above presentation.
        However, such a deformation is not known to exist in other types and thus will not be treated in this paper.
    \end{itemize}
\end{rmk}

So we see that the quantum toroidal algebra $\Utor$ of type $X_{n}^{(r)}$ can be obtained from the corresponding finite quantum group $\Uq$ by affinizing twice within the quantum setting.
In fact, $\Utor$ contains two natural quantum affine subalgebras.
There is a horizontal subalgebra $\Uh$ of type $X_{n}^{(r)}$, defined as the image of the homomorphism $h : U'_{q}(X_{n}^{(r)}) \rightarrow \Utor$ sending
\begin{align*}
    \xipm \mapsto \xpm_{i,0}, \qquad k_{i} \mapsto k_{i},
\end{align*}
for all $i\in I$.
Additionally, there is a vertical subalgebra $\Uv$ of untwisted type $Z_{n}^{(1)}$, where $Z_{n}$ is the finite Cartan type of the simple Lie algebra $\g$.
It is the image of the homomorphism $v : U'_{q}(Z_{n}^{(1)}) \rightarrow \Utor$ given by
\begin{align*}
    \xpm_{i,m} \mapsto \xpm_{i,m}, \qquad h_{i,r} \mapsto h_{i,r}, \qquad k_{i} \mapsto k_{i}, \qquad C \mapsto C,
\end{align*}
for all $i\in I_{0}$, $m\in\Zbb$ and $r\in\Zbb^{*}$.
Furthermore, we are able to deduce from the next proposition that $\Uh$ and $\Uv$ together generate the entire quantum toroidal algebra.
Figure \ref{Utor illustration} provides a simple illustration of $\Utor$ which highlights its generators and their $\degZ$ grading, as well as the horizontal and vertical subalgebras.
\\
\begin{figure}[H]
    \centering
\begin{tikzpicture}[scale=1]
    \node at (-0.9,0.35) {$\xpm_{0,0} ~~ k^{\pm 1}_{0}$};
    \node at (-0.9,1.5) {$\xpm_{0,1} ~~ h_{0,1}$};
    \node at (-0.9,-0.8) {$\xpm_{0,-1} ~~ h_{0,-1}$};
    \node at (-0.9,2.4) {$\vdots$};
    \node at (-0.9,-1.5) {$\vdots$};
    \node at (1.5,0.35) {$\xpm_{1,0} ~~ k^{\pm 1}_{1}$};
    \node at (1.5,1.5) {$\xpm_{1,1} ~~ h_{1,1}$};
    \node at (1.5,-0.8) {$\xpm_{1,-1} ~~ h_{1,-1}$};
    \node at (1.5,2.4) {$\vdots$};
    \node at (1.5,-1.5) {$\vdots$};
    \node at (3,-1.5) {$C^{\pm 1}$};
    \node at (3,0.35) {$\cdots$};
    \node at (3,1.5) {$\cdots$};
    \node at (3,-0.8) {$\cdots$};
    \node at (4.5,0.35) {$\xpm_{n,0} ~~ k^{\pm 1}_{n}$};
    \node at (4.5,1.5) {$\xpm_{n,1} ~~ h_{n,1}$};
    \node at (4.5,-0.8) {$\xpm_{n,-1} ~~ h_{n,-1}$};
    \node at (4.5,2.4) {$\vdots$};
    \node at (4.5,-1.5) {$\vdots$};
    \node at (-1.8,0.7) {$\color{blue} \Uh$};
    \node at (0.6,2.55) {$\color{red} \Uv$};
    \draw[draw=blue] (-2.1,-0.25) rectangle ++(7.8,1.2);
    \draw[draw=red] (0.3,-2.1) rectangle ++(5.5,4.9);
    \draw[draw=black] (-2.2,-2.2) rectangle ++(8.1,5.1);
\end{tikzpicture}
\caption[Illustration of $\Utor$]{\hspace{.5em}An illustration of $\Utor$ and its quantum affine subalgebras $\Uh$ and $\Uv$}
\label{Utor illustration}
\end{figure}

The following is obtained by applying $\eta$ to \cite{Laurie24a}*{Prop. 4.3}.

\begin{prop}
    For each $i\in I$, the quantum toroidal algebra is generated by $\Uh$, $\xpm_{i,\pm 1}$ and $C^{\pm 1}$.
\end{prop}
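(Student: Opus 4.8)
The plan is to show that the subalgebra $\Acal \subseteq \Utor$ generated by $\Uh$, $\xpm_{i,\pm 1}$ and $C^{\pm 1}$ is all of $\Utor$. Since $\Uh$ already contains every $\xpm_{j,0}$ and $k_{j}^{\pm 1}$ for $j \in I$, and $C^{\pm 1} \in \Acal$, it suffices to produce the remaining generators of $\Utor$, namely all $\xpm_{j,m}$ ($m \in \Zbb$) and all $h_{j,r}$ ($r \in \Zbb^{*}$), inside $\Acal$; moreover the $h_{j,r}$ will follow once all the $\xpm_{j,m}$ are available.

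First I would manufacture $h_{i,\pm 1}$. The relation $[\xp_{i,m},\xm_{j,l}] = \tfrac{\delta_{ij}}{q_{i}-q_{i}^{-1}}(C^{-l}\phi^{+}_{i,m+l} - C^{-m}\phi^{-}_{i,m+l})$, together with the vanishing $\phi^{-}_{i,s} = 0$ for $s > 0$ and $\phi^{+}_{i,s} = 0$ for $s < 0$, gives $h_{i,1} = k_{i}^{-1}[\xp_{i,1},\xm_{i,0}]$ and $h_{i,-1} = k_{i}[\xp_{i,0},\xm_{i,-1}]$, both of which lie in $\Acal$ because $\xpm_{i,\pm 1}$, $\xpm_{i,0}$ and $k_{i}^{\pm 1}$ do. Next, for any $j$ adjacent to $i$ in the affine Dynkin diagram, the relation $[h_{i,r},\xpm_{j,m}] = \pm\tfrac{[ra_{ij}]_{i}}{r}\, C^{(r\mp|r|)/2}\xpm_{j,r+m}$ with $r = 1$ and $r = -1$ respectively expresses $\xpm_{j,m+1}$ and $\xpm_{j,m-1}$ as an invertible scalar (the scalar $[\pm a_{ij}]_{i}$ is nonzero in $\kk = \Qbb(q)$ since $a_{ij}\neq 0$) times a power of $C$ times $[h_{i,\pm 1},\xpm_{j,m}]$; since $C^{\pm 1} \in \Acal$, starting from $\xpm_{j,0} \in \Uh$ we obtain all $\xpm_{j,m}$, $m \in \Zbb$.

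I would then iterate along $D(A)$. If $\Acal$ contains all $\xpm_{j,m}$ for some node $j$, then $h_{j,1} = k_{j}^{-1}[\xp_{j,1},\xm_{j,0}]$ and $h_{j,-1} = k_{j}[\xp_{j,0},\xm_{j,-1}]$ lie in $\Acal$, and the same bracketing argument produces every $\xpm_{k,m}$ for each neighbour $k$ of $j$. Since $\gaff$ is indecomposable, $D(A)$ is connected, so this propagation reaches all of $I$ and $\Acal$ contains every $\xpm_{j,m}$. Finally, $\phi^{+}_{j,r} = (q_{j}-q_{j}^{-1})[\xp_{j,r},\xm_{j,0}]$ for $r > 0$ and $\phi^{-}_{j,-r} = -(q_{j}-q_{j}^{-1})[\xp_{j,0},\xm_{j,-r}]$ for $r > 0$ lie in $\Acal$, and inverting the exponential relation defining the $\phi^{\pm}_{j,\pm s}$ recovers every $h_{j,r}$ recursively; hence $\Acal = \Utor$.

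None of this is genuinely hard: the only points requiring care are the bookkeeping of central powers of $C$ and the observation that $\Uh$ supplies the degree-zero generators and $k_{j}^{\pm 1}$ at \emph{all} nodes simultaneously. In fact a cleaner route avoids the computation entirely. The opposite statement — that $\Utor$ is generated by $\Uh$, $\xpm_{i,\mp 1}$ and $C^{\pm 1}$ — is \cite{Laurie24a}*{Prop. 4.3}, and applying the anti-involution $\eta$, which fixes $\Uh$ (it sends $\xpm_{i,0} \mapsto \xpm_{i,0}$ and $k_{i} \mapsto k_{i}^{-1}$) and $C$ while swapping $\xp_{i,-1} \leftrightarrow \xp_{i,1}$ and $\xm_{i,1} \leftrightarrow \xm_{i,-1}$, immediately yields the claim, since an algebra anti-automorphism of $\Utor$ carries a generating set to a generating set.
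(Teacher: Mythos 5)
Your proposal is correct, and your closing paragraph is precisely the paper's proof: the paper obtains this proposition in one line by applying the anti-involution $\eta$ to \cite{Laurie24a}*{Prop.~4.3}, exactly as you describe (noting $\eta$ preserves $\Uh$ and $C^{\pm 1}$ and swaps $\xpm_{i,\mp 1}$ with $\xpm_{i,\pm 1}$, and that an anti-automorphism carries generating sets to generating sets). The bulk of your write-up is a genuinely different, self-contained route that the paper does not spell out: you rebuild $h_{i,\pm 1}$ from the commutator relations, use $[h_{i,\pm 1},\xpm_{j,m}]$ with $a_{ij}\neq 0$ to shift loop degrees, propagate along the connected affine Dynkin diagram, and then recover all $h_{j,r}$ by inverting the exponential defining the $\phi^{\pm}_{j,\pm s}$. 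All of these steps check out against the defining relations (in particular your formulae $h_{i,1}=k_i^{-1}[\xp_{i,1},\xm_{i,0}]$ and $h_{i,-1}=k_i[\xp_{i,0},\xm_{i,-1}]$ agree with those in Theorem \ref{thm:finite Utor presentation}, and the scalars $[\pm a_{ij}]_i$ are indeed nonzero over $\Qbb(q)$). What the direct argument buys is independence from the external reference; what the $\eta$-twist buys is brevity and the reassurance that one is not silently reproving \cite{Laurie24a}*{Prop.~4.3}, whose own proof is where the real content lives.
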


\begin{cor} \label{toroidal generated by horizontal and vertical}
    The quantum toroidal algebra is generated by its horizontal and vertical subalgebras.
\end{cor}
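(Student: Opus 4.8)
The plan is to reduce the statement directly to the preceding Proposition. First I would fix any node $i \in I_{0}$; this is possible since $\g$ is a genuine (nonzero) finite dimensional simple Lie algebra, so $I_{0} = \lbrace 1,\dots,n\rbrace$ is nonempty. By the definition of the vertical subalgebra $\Uv$ as the image of the homomorphism $v : U'_{q}(Z_{n}^{(1)}) \rightarrow \Utor$ sending $\xpm_{j,m} \mapsto \xpm_{j,m}$, $h_{j,r}\mapsto h_{j,r}$, $k_{j}\mapsto k_{j}$ and $C \mapsto C$ for $j\in I_{0}$, $m\in\Zbb$, $r\in\Zbb^{*}$, the elements $\xpm_{i,\pm 1} = v(\xpm_{i,\pm 1})$ and $C^{\pm 1} = v(C^{\pm 1})$ all lie in $\Uv$. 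Hence the subalgebra of $\Utor$ generated by $\Uh$ and $\Uv$ contains $\Uh$ together with $\xpm_{i,\pm 1}$ and $C^{\pm 1}$.

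Next I would invoke the Proposition above, which asserts that $\Utor$ is generated by $\Uh$, $\xpm_{i,\pm 1}$ and $C^{\pm 1}$ (for this, indeed any, choice of $i\in I$). It follows at once that the subalgebra generated by $\Uh$ and $\Uv$ is the whole of $\Utor$, which is the claim.

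There is essentially no obstacle in this argument: the only points to verify are that $I_{0}$ is nonempty, and that for a node $i$ of finite type the vertical generators $\xpm_{i,\pm 1}$ and the central elements $C^{\pm 1}$ genuinely belong to $\Uv$ — both immediate from the definition of $v$. Let me note that one cannot instead take $i = 0$ in the Proposition and hope that $\xpm_{0,\pm 1}\in\Uv$, since the affine node $0$ does not lie in $I_{0}$; choosing a finite-type node $i$ is precisely what makes the two ingredients (membership in $\Uv$ and the Proposition) fit together.
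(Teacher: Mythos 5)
Your proposal is correct and is exactly the deduction the paper intends: the corollary is stated as an immediate consequence of the preceding proposition, obtained by choosing $i\in I_{0}$ so that $\xpm_{i,\pm 1}$ and $C^{\pm 1}$ lie in $\Uv$. Your remark about why one must avoid $i=0$ is a sensible sanity check but adds nothing beyond the paper's (implicit) argument.
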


Recall from Section \ref{subsection:quantum affinizations} the following standard automorphisms and anti-automorphisms of $\Utor$.
\begin{itemize}
    \item Every outer automorphism $\pi\in\Omega$ of the affine Dynkin diagram gives rise to an automorphism $\Scal_{\pi}$ which restricts to $S_{\pi}$ on $\Uh$.
    \item The anti-involution $\eta$ restricts to $\eta'$ on $\Uv$ and $\sigma$ on $\Uh$.
    \item For each $i\in I$ there exists an automorphism $\X_{i}$ defined using some affine sign function $o:I\rightarrow\lbrace\pm 1\rbrace$, which restricts to $\Xb_{i}$ on $\Uv$ if $i\in I_{0}$ and to the identity if $i=0$.
\end{itemize}

\subsection{Extended double affine braid groups} \label{subsection:Extended double affine braid groups}

Just as the quantum toroidal algebras $\Utor$ are in some sense formed by fusing together their horizontal and vertical quantum affine subalgebras in an appropriate way, we can similarly define the extended double affine braid groups $\Bdd$ by combining the Coxeter and Bernstein presentations for $\Bd$.
\\

Recall from Section \ref{subsection:basic notations} that $\Omega$ acts naturally on the affine braid group $\B = \langle T_{i}~|~i\in I\rangle$.
There is also a linear action of $\Omega$ on $P^{\vee}$ given by $\pi(\Lambda_{i}^{\vee}) = \Lambda_{\pi(i)}^{\vee}$, which preserves $\Pov \subset P^{\vee}$ and thus defines an action on $\lbrace X_{\beta}~|~\beta \in \Pov \rbrace$.
These actions are compatible with relations (\ref{first extended affine Bernstein}) and (\ref{second extended affine Bernstein}), extended to all $\beta \in \Pov$ and $i\in I$, hence the following is well-defined.

\begin{defn} \label{defn:Bdd}
    The extended double affine braid group $\Bdd$ is generated by the affine braid group $\B = \langle T_{i}~|~i\in I\rangle$, the lattice $\lbrace X_{\beta}~|~\beta \in \Pov \rbrace$ and the group $\Omega$, subject to the relations
    \begin{itemize}
        \item $T_{i}X_{\beta} = X_{\beta}T_{i} \mathrm{~if~} (\beta,\alpha_{i}) = 0$,
        \item $T_{i}^{-1}X_{\beta}T_{i}^{-1} = X_{s_{i}(\beta)} \mathrm{~if~} (\beta,\alpha_{i}) = 1$,
        \item $\pi T_{i} \pi^{-1} = T_{\pi(i)}$,
        \item $\pi X_{\beta} \pi^{-1} = X_{\pi(\beta)}$.
    \end{itemize}
\end{defn}

\begin{rmk}
    \begin{itemize}
        \item The action of $W$ on $\Pov$ in the definition above is with respect to the embedding $\Pov \hookrightarrow P^{\vee}$ of type $X_{n}^{(r)}$ rather than $Z_{n}^{(1)}$.
        \item Our group $\Bdd$ is the quotient of the $X,Y$-extended double affine Artin group of Ion and Sahi \cite{IS20}*{Ch. 9} by the subgroup generated by its central element $X_{\frac{1}{m}\delta}$.
    \end{itemize}
\end{rmk}

It is clear that $\Bdd$ contains two extended affine braid subgroups which together generate the entire group: a horizontal subgroup $\Bh$ of type $X_{n}^{(r)}$ generated by $\B$ and $\Omega$, and a vertical subgroup $\Bv$ of type $Z_{n}^{(1)}$ generated by $T_{1},\dots,T_{n}$ and $\lbrace X_{\beta}~|~\beta \in \Pov \rbrace$.
Figure \ref{Bdd illustration} illustrates how these subgroups fit together inside $\Bdd$, as well as indicating a natural vertical $\Zbb$--grading.
We remark that there only exists an isomorphism between $\Bh$ and $\Bv$ which acts by the identity on $\B_{0} \cong \Bh \cap \Bv$ in the untwisted case.
\\
\begin{figure}[H]
    \centering
\begin{tikzpicture}[scale=1]
    \node at (-0.8,0.35) {$\Omega ~~ T_{0}^{\pm 1}$};
    \node at (1.5,0.35) {$T_{1}^{\pm 1}$};
    \node at (1.5,1.5) {$X_{\omega_{1}^{\vee}}$};
    \node at (1.5,-0.8) {$X_{-\omega_{1}^{\vee}}$};
    \node at (1.5,2.4) {$\vdots$};
    \node at (1.5,-1.5) {$\vdots$};
    \node at (3,0.35) {$\cdots$};
    \node at (3,1.5) {$\cdots$};
    \node at (3,-0.8) {$\cdots$};
    \node at (4.5,0.35) {$T_{n}^{\pm 1}$};
    \node at (4.5,1.5) {$X_{\omega_{n}^{\vee}}$};
    \node at (4.5,-0.8) {$X_{-\omega_{n}^{\vee}}$};
    \node at (4.5,2.4) {$\vdots$};
    \node at (4.5,-1.5) {$\vdots$};
    \node at (-1.8,0.7) {$\color{blue} \Bh$};
    \node at (0.6,2.55) {$\color{red} \Bv$};
    \draw[draw=blue] (-2.1,-0.25) rectangle ++(7.8,1.2);
    \draw[draw=red] (0.3,-2.1) rectangle ++(5.5,4.9);
    \draw[draw=black] (-2.2,-2.2) rectangle ++(8.1,5.1);
\end{tikzpicture}
\caption[An illustration of $\Bdd$]{\hspace{.5em}An illustration of $\Bdd$ and its extended affine braid subgroups $\Bh$ and $\Bv$}
\label{Bdd illustration}
\end{figure}

From Section \ref{subsection:basic notations} we know that $\Bh$ and $\Bv$ each have both Coxeter and Bernstein presentations -- Table \ref{extended double affine braid group table} summarises our choice of notation.
In particular, for $\Bh$ we use the alternative Bernstein presentation of Remark \ref{rmk:alternative Bernstein presentation} so that while the $X_{\beta}$ satisfy relations (\ref{first extended affine Bernstein}) and (\ref{second extended affine Bernstein}) with $T_{0},\dots,T_{n}$, the $Y_{\mu}$ satisfy relations (\ref{first alternative extended affine Bernstein}) and (\ref{second alternative extended affine Bernstein}) with $T^{v}_{0},T_{1},\dots,T_{n}$.
Note that in all untwisted types, each $\pi_{i}$ and $\rho_{i}$ correspond to the same outer automorphism of the affine Dynkin diagram.
\\
\begin{table}[H]
\centering
\begin{tabular}{ |c||c|c| }
 \hline
  & Coxeter generators & Bernstein generators \\
 \hline
 & & \\[-12.5pt]
 \hline
 $\Bh$
 &
 \begin{tabular}{@{}c@{}}$T_{1},\dots,T_{n}$ \\[1pt] $T_{0} = \Theta^{-1} Y_{-\beta_{\theta}}$ \\[1pt] $\Omega = \lbrace \pi_{i} = Y_{\beta_{i}}T_{v_{i}^{-1}} : i\in\Imin \rbrace$\end{tabular}
 &
 \begin{tabular}{@{}c@{}}$T_{1},\dots,T_{n}$ \\[1pt] $\lbrace Y_{\mu} : \mu \in N \rbrace$\end{tabular}
 \\[18pt]
 \hline
 $\Bv$
 &
 \begin{tabular}{@{}c@{}}$T_{1},\dots,T_{n}$ \\[1pt] $T^{v}_{0} = X_{\theta^{\vee}} \Theta^{-1}$ \\[1pt] $\Omega^{v} = \lbrace \rho_{i} = X_{\omega_{i}^{\vee}}T_{v_{i}}^{-1} : i\in\Imin \rbrace$\end{tabular}
 &
 \begin{tabular}{@{}c@{}}$T_{1},\dots,T_{n}$ \\[1pt] $\lbrace X_{\beta} : \beta \in \Pov \rbrace$\end{tabular}
 \\[17pt]
 \hline
\end{tabular}
\caption[Coxeter and Bernstein generators for $\Bh$ and $\Bv$]{\hspace{.5em}Coxeter and Bernstein generators for $\Bh$ and $\Bv$}\label{extended double affine braid group table}
\end{table}

We conclude with several automorphisms of $\Bdd$ which will be important in Section \ref{section:horizontal-vertical symmetries}.
For ease of notation, we restrict to the untwisted case since this is all we shall require.

\begin{itemize}
    \item There is an involution $\te$ which inverts $T_{1},\dots,T_{n}$ and interchanges $X_{\beta}$ and $Y_{\beta}$ for all $\beta\in\Pov$.
    It follows that $\te$ exchanges each $\pi_{i}$ and $\rho_{i}$, as well as $T_{0}$ and $(T^{v}_{0})^{-1}$.
    It is equal to the composition of the anti-involution $\e$ of Ion and Sahi \cite{IS20}*{Ch. 9} with the anti-automorphism that inverts every element.
    When restricted to the natural copy of the (non-extended) double affine braid group inside $\Bdd$, which is generated by $\B = \langle T_{0},\dots,T_{n} \rangle$ and $\lbrace X_{\beta}~|~\beta \in \Qov \rbrace$, this is the involution of Ion \cite{Ion03}*{Thm. 2.2}.
    \item There exists an involution $\gv$ inverting $T_{0},\dots,T_{n}$ and all $X_{\beta}$, while fixing each element of $\Omega$.
    Similarly, there is an involution $\gh = \te\circ\gv\circ\te$ which inverts $T^{v}_{0},T_{1},\dots,T_{n}$ and all $Y_{\mu}$ but fixes each element of $\Omega^{v}$.
\end{itemize}

\subsubsection{Action on quantum toroidal algebras} \label{subsubsection:Bdd action}

In this subsection we consider all affine types except $A_{2}^{(2)}$, since it does not satisfy condition (D).

\begin{prop}
    The automorphisms $\T_{i}$ of $\Utor$ defined in Section \ref{subsubsection:affinized braid group action} satisfy
    \begin{itemize}
        \item $\T_{i} h = h \emph{\Tb}_{i}$ for all $i\in I$,
        \item $\T_{i} v = v \emph{\Tb}_{i}$ for all $i\in I_{0}$.
    \end{itemize}
\end{prop}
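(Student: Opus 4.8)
The plan is to verify both identities by checking them on generators. In each case the two sides are algebra homomorphisms with a common domain and codomain --- $\T_i\circ h$ and $h\circ\Tb_i$ map $U'_q(X_n^{(r)})$ to $\Utor$, while $\T_i\circ v$ and $v\circ\Tb_i$ map $U'_q(Z_n^{(1)})$ to $\Utor$ --- so it suffices to compare their values on a generating set.

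For the first bullet I would use the Drinfeld--Jimbo presentation of $U'_q(X_n^{(r)})$, with generators $x_j^{\pm}$ and $k_j^{\pm 1}$ for $j\in I$. Since $h(x_j^{\pm}) = x_{j,0}^{\pm}$ and $h(k_j) = k_j$, this reduces to comparing the action of $\T_i$ on $x_{j,0}^{\pm}$ and $k_j^{\pm 1}$ with Lusztig's formulas for $\Tb_i$ from Theorem~\ref{thm:braid group action}. But these agree essentially by the construction of $\T_i$ in Section~\ref{subsubsection:affinized braid group action}: on $k_j^{\pm 1}$ both operators act through $s_i$, sending $k_j\mapsto k_j k_i^{-a_{ij}}$; one has $\T_i(x_{i,0}^{+}) = -x_{i,0}^{-}k_i$ and $\T_i(x_{i,0}^{-}) = -k_i^{-1}x_{i,0}^{+}$, which are the $h$-images of $\Tb_i(x_i^{\pm})$; and for $j\neq i$ the divided-power sums defining $\T_i(x_{j,0}^{\pm})$ are precisely the $h$-images of the sums defining $\Tb_i(x_j^{\pm})$. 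Hence $\T_i h = h\Tb_i$.

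For the second bullet I would instead use the Drinfeld new presentation of $U'_q(Z_n^{(1)}) = \widehat{U_q(Z_n)}$, which by Theorem~\ref{thm:finite Utor presentation} is finitely generated by $C^{\pm 1}$, $k_j^{\pm 1}$, $x_{j,0}^{\pm}$ and $x_{j,\pm 1}^{\pm}$ for $j\in I_0$, together with $x_{j,\mp 1}^{\pm}$ whenever a double bond is present. As $v$ sends each of these to the element of $\Utor$ of the same name, and since for $i,j\in I_0$ the relevant Cartan data for $Z_n$ coincide with the restriction to $I_0$ of that for $X_n^{(r)}$ (up to the normalization of $q$ built into $v$), the checks on $C^{\pm 1}$, $k_j^{\pm 1}$ and $x_{j,0}^{\pm}$ are exactly as in the previous paragraph. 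The remaining checks, on the degree $\pm 1$ generators, are the substantive ones. Here I would use that for an untwisted quantum affine algebra the affinized braid operator of Section~\ref{subsubsection:affinized braid group action} coincides with Lusztig's operator --- this is Beck's theorem, cf.\ Theorem~\ref{thm:loop Bd action} and the discussion in Section~\ref{subsubsection:quantum affine algebras} --- so on the Drinfeld new realization of $U'_q(Z_n^{(1)})$ the operator $\Tb_i$ ($i\in I_0$) is given by the same explicit formulas used to define $\T_i$ in Section~\ref{subsubsection:affinized braid group action}, only with index set $I_0$ in place of $I$. These match the formulas for $\T_i$ on $\Utor$ restricted to indices in $I_0$ (for instance $\T_i(x_{i,1}^{+}) = -C k_i^{-1} x_{i,1}^{-}$ on either side), so that $\T_i(v(x_{j,\pm 1}^{\pm})) = v(\Tb_i(x_{j,\pm 1}^{\pm}))$ and likewise for $x_{j,\mp 1}^{\pm}$, completing the generator check. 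The main obstacle is precisely this last identification: pinning down the conventions (signs and scalings) so that Lusztig's $\Tb_i$ on the loop realization of $U'_q(Z_n^{(1)})$ really is given by the formulas defining the affinized $\T_i$; once that matching is in hand, the remaining verification is routine.
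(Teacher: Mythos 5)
Your proposal is correct and is essentially the argument the paper intends (the proposition is stated there without proof): the first identity holds by construction, since $\T_{i}$ is defined on the degree-zero generators $x^{\pm}_{j,0}$, $k_{j}^{\pm 1}$ precisely by Lusztig's formulas, and the second reduces, after checking on a Drinfeld new generating set, to Beck's theorem that the operators $\Tb_{i}$ ($i\in I_{0}$) act on the loop generators of an untwisted quantum affine algebra by exactly the affinized formulas used to define $\T_{i}$ (cf.\ Theorem \ref{thm:loop Bd action} and the opening discussion of Section \ref{subsubsection:affinized braid group action}). The only cosmetic slip is that the auxiliary generators $x^{\pm}_{j,\mp 1}$ are never needed for $U'_{q}(Z_{n}^{(1)})$, since a finite Cartan matrix has no pair with $a_{jk}=a_{kj}=-2$; including them in the generator check is harmless.
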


Similar to Section \ref{subsubsection:affinized braid group action}, in type $A_{2n}^{(1)}$ we must consider a slightly modified version of $\Bdd$ acting on $\Utor$.
In particular, $\zeta_{0}$ acts by $\s^{(0)}_{-1}$ and there is a minor change to Lemma \ref{lem:lemma for psi theorem}.
However, the involutions $\te$, $\gv$ and $\gh$ extend naturally to this case and our results are not otherwise impacted.
\\

It is clear that the extended double affine braid group $\Bdd$ embeds inside the corresponding $\widehat{\B}$ by sending $T_{i} \mapsto T_{i}$, $X_{\omega_{i}^{\vee}} \mapsto X_{i}X_{0}^{-a_{i}}$ and $\pi \mapsto \pi$ for each $i\in I$ and $\pi\in\Omega$, as well as $\zeta_{0} \mapsto \zeta_{0}$ in type $A_{2n}^{(1)}$.
The following result is then an immediate consequence of Theorem \ref{thm:affinized braid group action}.

\begin{thm} \label{thm:Bdd action on Utor}
    The extended double affine braid group $\Bdd$ acts on the quantum toroidal algebra $\Utor$ via
    $T_{i} \mapsto \T_{i}$ and
    $X_{\omega_{i}^{\vee}} \mapsto \Zcal_{\omega_{i}^{\vee}} = \X_{i}\X_{0}^{-a_{i}}$
    for all $i\in I$,
    $\pi \mapsto \Scal_{\pi}$ for all $\pi\in \Omega$,
    and $\zeta_{0} \mapsto \s^{(0)}_{-1}$ in type $A_{2n}^{(1)}$.
\end{thm}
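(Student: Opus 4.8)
The plan is to factor the claimed $\Bdd$-action through the affinized braid group action of Theorem~\ref{thm:affinized braid group action}. Since $\Utor$ is the quantum affinization $\Uqaffs$ with $\s = \gaff$ affine, and $\gaff$ satisfies condition (D) outside type $A_{2}^{(2)}$, that theorem (together with the type $A_{2n}^{(1)}$ modification described immediately after it) furnishes an action of $\widehat{\B}$ on $\Utor$ sending $T_{i}\mapsto\T_{i}$, $X_{i}\mapsto\X_{i}$ and $\pi\mapsto\Scal_{\pi}$ for $i\in I$, $\pi\in\Omega$, as well as $\zeta_{0}\mapsto\s^{(0)}_{-1}$ in type $A_{2n}^{(1)}$. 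It therefore suffices to exhibit a group homomorphism $\iota:\Bdd\to\widehat{\B}$ with $\iota(T_{i}) = T_{i}$, $\iota(X_{\omega_{i}^{\vee}}) = X_{i}X_{0}^{-a_{i}}$, $\iota(\pi) = \pi$ (and $\iota(\zeta_{0}) = \zeta_{0}$ in type $A_{2n}^{(1)}$) and then precompose: the composite sends $X_{\omega_{i}^{\vee}}\mapsto\X_{i}\X_{0}^{-a_{i}} = \Zcal_{\omega_{i}^{\vee}}$ and the remaining generators exactly as in the statement.

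To build $\iota$ I would check that the substitution above respects each defining relation of $\Bdd$ from Definition~\ref{defn:Bdd}. The braid relations among the $T_{i}$ and the relation $\pi T_{i}\pi^{-1} = T_{\pi(i)}$ are literally among the relations of $\widehat{\B}$ (Definition~\ref{defn:affinized braid group}), so nothing is needed there. For $\pi X_{\beta}\pi^{-1} = X_{\pi(\beta)}$ it suffices by multiplicativity to take $\beta$ ranging over a $\Zbb$-basis of $\Pov$, say the $\omega_{j}^{\vee}$; then $\pi(X_{j}X_{0}^{-a_{j}})\pi^{-1} = X_{\pi(j)}X_{\pi(0)}^{-a_{j}}$ agrees with the image of $X_{\pi(\omega_{j}^{\vee})}$ once $\pi(\omega_{j}^{\vee})$ is expanded in the $\omega^{\vee}$-basis using the $\Omega$-action on $\Pov\hookrightarrow P^{\vee}$, i.e.\ this is the $\Omega$-equivariance of the level-$0$ embedding combined with $\pi X_{i}\pi^{-1} = X_{\pi(i)}$ in $\widehat{\B}$. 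For the Bernstein-type relations $T_{i}X_{\beta} = X_{\beta}T_{i}$ when $(\beta,\alpha_{i}) = 0$ and $T_{i}^{-1}X_{\beta}T_{i}^{-1} = X_{s_{i}(\beta)}$ when $(\beta,\alpha_{i}) = 1$, again reduce to $\beta = \omega_{j}^{\vee}$ (the general case then following from these two cases by the usual manipulations): using only $X_{i}X_{\ell} = X_{\ell}X_{i}$, $T_{i}X_{\ell} = X_{\ell}T_{i}$ ($i\neq\ell$) and $T_{i}^{-1}X_{i}T_{i}^{-1} = X_{i}\prod_{\ell\in I}X_{\ell}^{-a_{i\ell}}$ in $\widehat{\B}$, one verifies directly that $T_{i}$, resp.\ the operation $z\mapsto T_{i}^{-1}zT_{i}^{-1}$, acts on $X_{j}X_{0}^{-a_{j}}$ exactly as the reflection $s_{i}$ acts on $\omega_{j}^{\vee}\in P^{\vee}$ under the correspondence $\Lambda_{\ell}^{\vee}\leftrightarrow X_{\ell}$ (via $s_{i}(x) = x - \langle\alpha_{i},x\rangle\alpha_{i}^{\vee}$), so the two sides coincide. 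Since the lattice map $\omega_{j}^{\vee}\mapsto X_{j}X_{0}^{-a_{j}}$ is moreover injective, this simultaneously shows that $\iota$ is a well-defined homomorphism and realises the embedding $\Bdd\hookrightarrow\widehat{\B}$ referenced before the statement.

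The step I expect to require the most care is the behaviour at the affine node $i = 0$: the level-$0$ embedding $\Pov\hookrightarrow P^{\vee}$ is stable under the finite Weyl group $\mathring{W}$ but not under all of the affine $W$, so one must confirm that the instances of the Bernstein relations involving $T_{0}$ that actually occur for $\beta\in\Pov$ do land back in $\Pov$ — equivalently, that $\iota$ is consistent even though $\widehat{\B}$ is presented using the full lattice $\bigoplus_{i\in I}\Zbb X_{i}$ rather than only its level-$0$ sublattice. One also has to carry along the type $A_{2n}^{(1)}$ bookkeeping (the extra generator $\zeta_{0}$, the altered order of $\pi_{1}$, and the modified relations $\Scal_{\pi_{1}}\X_{2n}\Scal_{\pi_{1}}^{-1} = \zeta_{0}\X_{0}$, $\T_{0}^{-1}\X_{0}\T_{0}^{-1} = \zeta_{0}\X_{2n}\X_{0}^{-1}\X_{1}$, etc.\ from Section~\ref{subsubsection:affinized braid group action}), but with these in hand the verification proceeds exactly as above. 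Apart from this the argument is routine, which is why the embedding is asserted to be ``clear'' in the text preceding the statement.
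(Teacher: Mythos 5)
Your proposal matches the paper's own argument: the paper likewise obtains the theorem by composing the (asserted) embedding $\Bdd\hookrightarrow\widehat{\B}$, $T_{i}\mapsto T_{i}$, $X_{\omega_{i}^{\vee}}\mapsto X_{i}X_{0}^{-a_{i}}$, $\pi\mapsto\pi$ (and $\zeta_{0}\mapsto\zeta_{0}$ in type $A_{2n}^{(1)}$), with the $\widehat{\B}$-action of Theorem \ref{thm:affinized braid group action}, and simply declares the relation-check for this embedding ``clear''. Your spelled-out verification of the Bernstein relations, including the care needed at the node $i=0$ where $s_{0}(\beta)$ must be interpreted modulo the central direction, is exactly the content the paper leaves implicit, so the two proofs are essentially identical.
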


\begin{rmk} \label{rmk:horizontal and vertical restricted actions}
    Our extended double affine braid group action restricts to both an action of $\Bh$ on $\Uh$ and an action of $\Bv$ on $\Uv$, each of which coincides with Lusztig and Beck's action of the extended affine braid group on the quantum affine algebra from Corollary \ref{cor:Bd action} and Theorem \ref{thm:loop Bd action} respectively.
\end{rmk}

\subsubsection{Coxeter-style presentation} \label{subsubsection:Coxeter-style presentation}

It has been shown by Ion-Sahi \cite{IS20} that while the double affine braid groups are not Coxeter braid groups themselves, they can be realized as \emph{quotients} of the braid groups associated to so-called `double affine Coxeter diagrams'.
This realization can be extended to $\Bdd$, and provides a finer understanding of its structure that is essential for extending our proof of Theorem \ref{thm:psi} from the simply laced case \cite{Laurie24a} to all untwisted types.
\\

We present the Coxeter-style presentation for $\Bdd$ in the untwisted case only, since this is all we shall require for this paper.
Here, the double affine Coxeter diagram $D(\Xtrip)$ of type $\Xtrip$ is formed as follows.
First take the affine Dynkin diagram of type $\Xun$, and consider the underlying, undirected Coxeter graph.
Then replace the $0$ vertex with three affine nodes, connected to one another by four edges and to each finite node $i \in I_{0}$ by $a_{0i}a_{i0}$ edges.
We illustrate this process with two examples in Figure \ref{fig:double_affine_Coxeter_diagrams} below.
\begin{figure}[H]
\centering
\begin{tabular}{r c}
$\overset{\ldots}{C}_{n}$ ($n\geq 2$):
&
\begin{tikzpicture}[baseline={([yshift=-0.1cm]current bounding box.east)}, double distance = 1.5pt, line width=0.65pt, every node/.style={outer sep=-2pt}, scale=1.6]
    \node (01) at (-2.866,-1) {\large $\circ$};
    \node (02) at (-2,-0.5) {\large $\circ$};
    \node (03) at (-2.866,0) {\large $\circ$};
    \node (1) at (-1,-0.5) {\large $\bullet$};
    \node (dots) at (0,-0.5) {\large $\cdots$};
    \node (n-1) at (1,-0.5) {\large $\bullet$};
    \node (n) at (2,-0.5) {\large $\bullet$};
    \draw (1) edge (dots) (dots) edge (n-1);
    \draw[double] (01.340) .. controls +(0.5,0) and +(-0.5,-0.5) .. (1);
    \draw[double] (02) -- (1);
    \draw[double] (03.20) .. controls +(0.5,0) and +(-0.5,0.5) .. (1);
    \draw[double] (n-1) -- (n);
    \draw[transform canvas={xshift=1pt}] (01) edge (03);
    \draw[transform canvas={xshift=-1pt}] (01) edge (03);
    \draw[transform canvas={xshift=3pt}] (01) edge (03);
    \draw[transform canvas={xshift=-3pt}] (01) edge (03);
    \draw[transform canvas={yshift=0.866pt,xshift=0.5pt}] (02) edge (03);
    \draw[transform canvas={yshift=2.598pt,xshift=1.5pt}] (02) edge (03);
    \draw[transform canvas={yshift=-0.866pt,xshift=-0.5pt}] (02) edge (03);
    \draw[transform canvas={yshift=-2.598pt,xshift=-1.5pt}] (02) edge (03);
    \draw[transform canvas={yshift=0.866pt,xshift=-0.5pt}] (02) edge (01);
    \draw[transform canvas={yshift=2.598pt,xshift=-1.5pt}] (02) edge (01);
    \draw[transform canvas={yshift=-0.866pt,xshift=0.5pt}] (02) edge (01);
    \draw[transform canvas={yshift=-2.598pt,xshift=1.5pt}] (02) edge (01);
\end{tikzpicture}
\\[1.5cm]
$\overset{\ldots}{D}_{n}$ ($n\geq 4$):
&
\begin{tikzpicture}[baseline={([yshift=0.0cm]current bounding box.east)}, double distance = 1.5pt, line width=0.65pt, every node/.style={outer sep=-2pt}, scale=1.6]
    \node (01) at (-3,0) {\large $\circ$};
    \node (02) at (-2,0) {\large $\circ$};
    \node (03) at (-2.5,0.866) {\large $\circ$};
    \node (1) at (-2,-1) {\large $\bullet$};
    \node (2) at (-1,-0.5) {\large $\bullet$};
    \node (dots) at (0,-0.5) {\large $\cdots$};
    \node (n-2) at (1,-0.5) {\large $\bullet$};
    \node (n-1) at (2,0) {\large $\bullet$};
    \node (n) at (2,-1) {\large $\bullet$};
    \draw (1) edge (2) (2) edge (dots) (dots) edge (n-2) (n-2) edge (n-1) (n-2) edge (n);
    \draw (01.310) edge[bend right=10] (2);
    \draw (02) -- (2);
    \draw (03.350) edge[bend left=10] (2);
    \draw[transform canvas={yshift=1pt}] (01) edge (02);
    \draw[transform canvas={yshift=-1pt}] (01) edge (02);
    \draw[transform canvas={yshift=3pt}] (01) edge (02);
    \draw[transform canvas={yshift=-3pt}] (01) edge (02);
    \draw[transform canvas={xshift=0.866pt,yshift=-0.5pt}] (01) edge (03);
    \draw[transform canvas={xshift=2.598pt,yshift=-1.5pt}] (01) edge (03);
    \draw[transform canvas={xshift=-0.866pt,yshift=0.5pt}] (01) edge (03);
    \draw[transform canvas={xshift=-2.598pt,yshift=1.5pt}] (01) edge (03);
    \draw[transform canvas={xshift=0.866pt,yshift=0.5pt}] (02) edge (03);
    \draw[transform canvas={xshift=2.598pt,yshift=1.5pt}] (02) edge (03);
    \draw[transform canvas={xshift=-0.866pt,yshift=-0.5pt}] (02) edge (03);
    \draw[transform canvas={xshift=-2.598pt,yshift=-1.5pt}] (02) edge (03);
\end{tikzpicture}
\\
\end{tabular}
\caption[Examples of double affine Coxeter diagrams]{\hspace{.5em}Examples of double affine Coxeter diagrams}
\label{fig:double_affine_Coxeter_diagrams}
\end{figure}

The braid group $B(\Xtrip)$ associated to this diagram has affine generators $\Th{1},\Th{2},\Th{3}$ and finite generators $T_{1},\dots,T_{n}$, with braid relations of type $\Xun$ on each
$\lbrace \Th{i},T_{1},\dots,T_{n} \rbrace$.
Letting $\Bb(\Xtrip)$ be its quotient by the relation
$\Th{1}\Th{2}\Th{3}\Theta = 1$,
as well as
$\Th{i}T_{1}^{-1}\Th{j}T_{1} = T_{1}^{-1}\Th{j}T_{1}\Th{i}$
for all $i<j$ if $X=C$,
the following comes from \cite{IS20}*{Thm. 5.19}.

\begin{thm}
    There is an isomorphism between $\Bb(\Xtrip)$ and the (non-extended) double affine braid group of type $\Xun$ sending $T_{i} \mapsto T_{i}$ for all $i\in I_{0}$ and
    \begin{align*}
        \Th{1} \mapsto T_{0}, \qquad
        \Th{2} \mapsto T_{0}^{-1} X_{-\theta^{\vee}}, \qquad
        \Th{3} \mapsto X_{\theta^{\vee}}\Theta^{-1}.
    \end{align*}
\end{thm}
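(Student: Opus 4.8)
The statement is \cite{IS20}*{Thm. 5.19}, so the shortest route is to quote it, having first checked that the conventions there match ours — in particular the normalisation $\Theta = T_{s_{\theta}}$, the identification of the three affine nodes of $D(\Xtrip)$, and the extra type $C_{n}$ braid relations. For a self-contained argument I would construct mutually inverse maps. First define $\phi$ onto the non-extended double affine braid group $\ddot{B}$ of type $\Xun$ by $T_{i} \mapsto T_{i}$ for $i \in I_{0}$ together with $\Th{1} \mapsto T_{0}$, $\Th{2} \mapsto T_{0}^{-1} X_{-\theta^{\vee}}$, $\Th{3} \mapsto X_{\theta^{\vee}} \Theta^{-1}$, and check that $\phi$ respects the defining relations. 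The cyclic relation is automatic, since $\phi(\Th{1}\Th{2}\Th{3}\Theta) = T_{0} \cdot T_{0}^{-1} X_{-\theta^{\vee}} \cdot X_{\theta^{\vee}}\Theta^{-1} \cdot \Theta = 1$, so $\phi$ descends to $\Bb(\Xtrip)$; and the braid relations of type $\Xun$ on $\lbrace \Th{1}, T_{1}, \dots, T_{n}\rbrace$ are sent to the affine braid relations on $\lbrace T_{0}, T_{1}, \dots, T_{n}\rbrace$, which hold by definition of $\ddot{B}$.

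The substantive checks are the braid relations on $\lbrace \Th{3}, T_{1}, \dots, T_{n}\rbrace$ and $\lbrace \Th{2}, T_{1}, \dots, T_{n}\rbrace$. For $\Th{3}$ the point is that $X_{\theta^{\vee}}\Theta^{-1}$ together with $T_{1},\dots,T_{n}$ generates a \emph{second} copy of the affine braid group of type $\Xun$ inside $\ddot{B}$ — the ``vertical'' or dual affine presentation, which is Bernstein's second realisation transported to the double affine setting (compare $T_{0}^{v} = X_{\theta^{\vee}}\Theta^{-1}$ in Table \ref{extended double affine braid group table} and Cherednik's double affine Hecke algebra). For $\Th{2}$ I would use that modulo the cyclic relation $\Th{2} = \Th{1}^{-1}\Theta^{-1}\Th{3}^{-1}$ with $\Theta \in \langle T_{1},\dots,T_{n}\rangle$: either present $\Bb(\Xtrip)$ directly on the generators $\Th{1},\Th{3},T_{1},\dots,T_{n}$ and verify that the rewritten $\Th{2}$-relations are redundant, or verify directly in $\ddot{B}$ that $T_{0}^{-1}X_{-\theta^{\vee}}$ satisfies the affine braid relations with $T_{1},\dots,T_{n}$, exploiting the evident symmetry among the three affine nodes of $D(\Xtrip)$. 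The additional relation in type $C_{n}$ must then be checked by a direct computation from the explicit images, using the pairings of $\theta^{\vee}$ with $\alpha_{1}$; this is the fussiest case.

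For the reverse direction I would build $\psi : \ddot{B} \rightarrow \Bb(\Xtrip)$ by $T_{0} \mapsto \Th{1}$, $T_{i} \mapsto T_{i}$ for $i \in I_{0}$, and $X_{\theta^{\vee}} \mapsto \Th{3}\Theta$, extending over the coroot lattice $\Qov$ via the $W$-action and the Bernstein relations; well-definedness of $\psi$ reduces to checking that the lattice relations and Bernstein cross-relations hold in $\Bb(\Xtrip)$, which follows from the $\Th{3}$-braid relations and the cyclic relation established above. One then verifies $\psi \circ \phi = \mathrm{id}$ and $\phi \circ \psi = \mathrm{id}$ on generators; in particular $\phi\psi(X_{\theta^{\vee}}) = \phi(\Th{3}\Theta) = X_{\theta^{\vee}}\Theta^{-1}\Theta = X_{\theta^{\vee}}$, and the rest propagates through the lattice. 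This simultaneously yields surjectivity and injectivity, so $\phi$ is the claimed isomorphism.

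The main obstacle is not any single relation but the \emph{absence of further relations}: one must know that $\Bb(\Xtrip)$ has no hidden identities beyond $\Th{1}\Th{2}\Th{3}\Theta = 1$ (and, in type $C_{n}$, the extra braid relations), equivalently that the dual/vertical affine subgroup generated by $\Th{3}, T_{1},\dots,T_{n}$ is genuinely free of type-$\Xun$-affine relations and interacts with $\Th{1}$ only as the double affine braid group dictates. This is precisely the delicate part of the Ion--Sahi analysis, and in the non-simply-laced types it requires their Coxeter-theoretic machinery. Rather than reconstruct that here, the plan is to cite \cite{IS20}*{Thm. 5.19} and merely record the dictionary between their setup and ours.
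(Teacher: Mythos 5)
Your proposal is correct and ultimately takes the same route as the paper: the result is simply imported from \cite{IS20}*{Thm.\ 5.19} (after matching conventions), with no independent proof given in the text. Your additional sketch of the mutually inverse maps and your identification of the real difficulty --- ruling out hidden relations in $\Bb(\Xtrip)$, which is exactly the Ion--Sahi Coxeter-theoretic analysis --- is a reasonable and accurate account of what a self-contained argument would require, but the paper, like you, defers that work to the cited reference.
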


In order to upgrade this to a Coxeter presentation for the \emph{extended} double affine braid group $\Bdd$, we must take the semidirect product of $\Bb(\Xtrip)$ with two copies of the outer automorphism group $\Omega$ of the affine Dynkin diagram.
The first, which we shall denote by
$\Omega_{1} = \lbrace \pi_{i} ~|~ i\in \Imin \rbrace$,
acts naturally by permuting
$\Th{1},T_{1},\dots,T_{n}$ and by
\begin{align*}
    \pi_{i}(\Th{2}) =
    T_{u_{i}} \Theta^{-1} \Th{3} \Theta T_{u_{i}}^{-1},
    \qquad
    \pi_{i}(\Th{3}) =
    T_{u_{i}} \Th{1} \Th{2} \Th{1}^{-1} T_{u_{i}}^{-1},
\end{align*}
for all $i\not= 0$, where $u_{i}$ is the minimal length element in the finite Weyl group such that
$\Theta = T_{u_{i}^{-1}} T_{i} T_{u_{i}}$.
In particular, $\pi_{i}(T_{u_{i^{*}}^{-1}}) = T_{u_{i}}$ where $i^{*}$ is defined by $\pi_{i^{*}} = \pi_{i}^{-1}$ and therefore
$\pi_{i}(\Theta) = T_{u_{i}} \Th{1} T_{u_{i}^{-1}}$.
The second copy
$\Omega_{3} = \lbrace \rho_{i} ~|~ i\in \Imin \rbrace$
permutes $\Th{3},T_{1},\dots,T_{n}$ instead, with
\begin{align*}
    \rho_{i}(\Th{1}) =
    T_{u_{i}^{-1}}^{-1} \Th{3}^{-1} \Th{2} \Th{3} T_{u_{i}^{-1}},
    \qquad
    \rho_{i}(\Th{2}) =
    T_{u_{i}^{-1}}^{-1} \Theta \Th{1} \Theta^{-1} T_{u_{i}^{-1}},
\end{align*}
for all $i\not= 0$, as well as
$\rho_{i}(T_{u_{i^{*}}^{-1}}) = T_{u_{i}}$
and hence
$\rho_{i}(\Theta) = T_{u_{i}} \Th{3} T_{u_{i}^{-1}}$.

\begin{thm} \label{thm:Coxeter Bdd}
    The previous theorem extends to an isomorphism between
    $\Omega_{1} \ltimes (\Omega_{3} \ltimes \Bb(\Xtrip))$
    and $\Bdd$ by sending $\pi_{i} \mapsto \pi_{i}$ and
    $\rho_{i} \mapsto X_{\beta_{i}} T_{v_{i}}^{-1}$
    for all $i\in\Imin$, such that
    \begin{itemize}
        \item $\Omega_{1} \ltimes \langle \Th{1},T_{1},\dots,T_{n} \rangle$ is identified with the horizontal subgroup $\Bh$,
        \item $\Omega_{3} \ltimes \langle \Th{3},T_{1},\dots,T_{n} \rangle$ is identified with the vertical subgroup $\Bv$.
    \end{itemize}
\end{thm}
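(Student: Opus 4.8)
The plan is to bootstrap the statement from the non-extended isomorphism $\phi\colon\Bb(\Xtrip)\xrightarrow{\sim}\B^{\mathrm{na}}$ of the preceding theorem, where $\B^{\mathrm{na}}$ denotes the non-extended double affine braid group of type $\Xun$, realised inside $\Bdd$ (as in Section~\ref{subsection:Extended double affine braid groups}) as $\langle T_{0},\dots,T_{n},\, X_{\beta}:\beta\in\Qov\rangle$. The first step is to exhibit $\Bdd$ as an iterated semidirect product over $\B^{\mathrm{na}}$, paralleling the evident structure of the source group $\Omega_{1}\ltimes(\Omega_{3}\ltimes\Bb(\Xtrip))$. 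Since $\Pov/\Qov$ is generated by the classes of the minuscule coweights $\omega_{i}^{\vee}$ ($i\in\Imin$), the elements $\rho_{i}:=X_{\omega_{i}^{\vee}}T_{v_{i}}^{-1}$ normalise $\B^{\mathrm{na}}$ in $\Bdd$ — a short computation with the Bernstein relations of Definition~\ref{defn:Bdd} and the fact that $T_{v_{i}}\in\B^{\mathrm{na}}$ — and, as in the ordinary case $\Bd=\Omega\ltimes\B$, the subgroup they generate together with $\B^{\mathrm{na}}$ is $\Omega_{3}\ltimes\B^{\mathrm{na}}$ with $\Omega_{3}=\langle\rho_{i}:i\in\Imin\rangle\cong\Omega$. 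The relations $\pi T_{j}\pi^{-1}=T_{\pi(j)}$ and $\pi X_{\beta}\pi^{-1}=X_{\pi(\beta)}$ then show that $\Omega_{1}:=\Omega$ normalises $\Omega_{3}\ltimes\B^{\mathrm{na}}$, and since $\Bdd$ is generated by $T_{0},\dots,T_{n}$, the lattice $\{X_{\beta}:\beta\in\Pov\}$ and $\Omega$, we obtain $\Bdd=\Omega_{1}\ltimes(\Omega_{3}\ltimes\B^{\mathrm{na}})$.

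Next I would extend $\phi$ to $\tilde\phi\colon\Omega_{1}\ltimes(\Omega_{3}\ltimes\Bb(\Xtrip))\to\Bdd$ via $\pi_{i}\mapsto\pi_{i}$ and $\rho_{i}\mapsto X_{\beta_{i}}T_{v_{i}}^{-1}$. To see that this is a well-defined homomorphism it suffices to check that the $\Omega_{1}$- and $\Omega_{3}$-actions on $\Bb(\Xtrip)$ recorded before the theorem are transported by $\phi$ onto the conjugation actions of $\langle\pi_{i}\rangle$ and $\langle\rho_{i}\rangle$ on $\B^{\mathrm{na}}$ inside $\Bdd$. Via $\phi(\Th{1})=T_{0}$, $\phi(\Th{2})=T_{0}^{-1}X_{-\theta^{\vee}}$ and $\phi(\Th{3})=X_{\theta^{\vee}}\Theta^{-1}$ this reduces to the identities $\pi_{i}(T_{0})=T_{i}$, $\pi_{i}T_{j}\pi_{i}^{-1}=T_{\pi_{i}(j)}$, and to recovering the prescribed formulas for $\pi_{i}(\Th{2})$, $\pi_{i}(\Th{3})$ and their $\rho_{i}$-analogues by conjugating $T_{0}^{-1}X_{-\theta^{\vee}}$ and $X_{\theta^{\vee}}\Theta^{-1}$; concretely these are manipulations with the Bernstein relations of Definition~\ref{defn:Bdd}, the defining property $\Theta=T_{u_{i}^{-1}}T_{i}T_{u_{i}}$, and the action of the outer automorphisms on $\theta^{\vee}$ and $\Pov$, together with the immediate fact that the $C$-type relation $\Th{i}T_{1}^{-1}\Th{j}T_{1}=T_{1}^{-1}\Th{j}T_{1}\Th{i}$ is respected. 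This is precisely the bookkeeping carried out for the $X,Y$-extended double affine Artin group in \cite{IS20}*{Ch. 9} (of which $\Bdd$ is a central quotient, by the remark after Definition~\ref{defn:Bdd}), which I would follow; I expect this verification to be the main obstacle, the remainder of the argument being formal.

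Granting this, $\tilde\phi$ is a homomorphism that restricts to the isomorphism $\phi$ on the normal subgroup $\Bb(\Xtrip)$ and induces the identity on the quotient $\Omega_{1}\ltimes\Omega_{3}$, which by the first paragraph is identified with $\Bdd/\B^{\mathrm{na}}$; the five lemma then gives that $\tilde\phi$ is an isomorphism. Finally, applying $\tilde\phi$ to the two distinguished subgroups of the source: $\Omega_{1}\ltimes\langle\Th{1},T_{1},\dots,T_{n}\rangle$ is sent onto $\langle\pi_{i},T_{0},T_{1},\dots,T_{n}:i\in\Imin\rangle=\Omega\ltimes\B=\Bh$, and $\Omega_{3}\ltimes\langle\Th{3},T_{1},\dots,T_{n}\rangle$ onto $\langle\rho_{i},X_{\theta^{\vee}}\Theta^{-1},T_{1},\dots,T_{n}:i\in\Imin\rangle=\Bv$ — in each case exactly the subgroup generated by the Coxeter data listed for $\Bh$, resp.\ $\Bv$, in Table~\ref{extended double affine braid group table}. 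This establishes the two claimed identifications and completes the proof.
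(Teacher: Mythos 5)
Your proposal is correct and matches the intended route: the paper itself gives no proof of this theorem, presenting it as an extension of \cite{IS20}*{Thm. 5.19} to the extended setting, and the bookkeeping you defer to \cite{IS20}*{Ch. 9} (transporting the prescribed $\Omega_{1}$- and $\Omega_{3}$-actions on $\Bb(\Xtrip)$ to conjugation by $\pi_{i}$ and $X_{\beta_{i}}T_{v_{i}}^{-1}$ via the Bernstein relations) is exactly the content being cited. The only step worth making explicit is that the iterated semidirect product decomposition $\Bdd = \Omega_{1}\ltimes(\Omega_{3}\ltimes\B^{\mathrm{na}})$ needs the intersections $\Omega_{3}\cap\B^{\mathrm{na}}$ and $\Omega_{1}\cap(\Omega_{3}\ltimes\B^{\mathrm{na}})$ to be trivial, which again comes from the structure theory in \cite{IS20}; granted that, your five-lemma conclusion and the identification of $\Bh$ and $\Bv$ are immediate.
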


\subsubsection{Diagonal subgroup} \label{subsubsection:diagonal subgroup}

The Coxeter presentation for $\Bdd$ as an (extended) quotient of $B(\Xtrip)$ suggests that we define a third extended affine braid subgroup, first introduced by the author in \cite{Laurie24b}, which will play an important role in our proof of Theorem \ref{thm:psi}.
For ease of notation, as in Section \ref{subsubsection:Coxeter-style presentation}, we restrict to the untwisted case since this is all we shall require.

\begin{defn}
    The diagonal subgroup $\Bdiag$ is the copy of $\Bd$ inside $\Bdd$ generated by $\Th{2},T_{1},\dots,T_{n}$ and
    $\Omega_{2} = \langle X_{\omega_{i}^{\vee}}\pi_{i} ~|~ i\in\Imin \rangle$.
\end{defn}

So $\Bh$, $\Bdiag$ and $\Bv$ come from the first, second and third affine nodes of $D(\Xtrip)$ respectively, together with vertices $1,\dots,n$.
The next result then says that $\te$ corresponds to the graph involution that swaps the first and third affine nodes.
Let $\mathfrak{j}$ be the involution of $\Bd$ which inverts $T_{0},\dots,T_{n}$ and fixes every $\pi\in\Omega$.

\begin{prop} \label{prop:t on Bh Bd Bv}
    The involution $\te$ of $\Bdd$ exchanges $\Bh$ and $\Bv$ via $\mathfrak{j}$, and moreover restricts to $\mathfrak{j}$ on $\Bdiag$.
\end{prop}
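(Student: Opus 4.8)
The plan is to verify Proposition~\ref{prop:t on Bh Bd Bv} directly from the Coxeter-style presentation of $\Bdd$ in Theorem~\ref{thm:Coxeter Bdd}, using the fact that $\te$ has a very simple description there. First I would recall the effect of $\te$ on generators: it inverts $T_{1},\dots,T_{n}$, swaps $X_{\beta}\leftrightarrow Y_{\beta}$ for $\beta\in\Pov$, and consequently swaps $\pi_{i}\leftrightarrow\rho_{i}$ and $T_{0}\leftrightarrow(T^{v}_{0})^{-1}$. Under the identifications of Table~\ref{extended double affine braid group table}, $\Bh$ is generated by $T_{1},\dots,T_{n}$, $T_{0}=\Theta^{-1}Y_{-\beta_{\theta}}$ and $\Omega_{1}=\{\pi_{i}\}$, while $\Bv$ is generated by $T_{1},\dots,T_{n}$, $T^{v}_{0}=X_{\theta^{\vee}}\Theta^{-1}$ and $\Omega_{3}=\{\rho_{i}\}$. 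So $\te$ visibly sends the generating set of $\Bh$ to a generating set contained in $\Bv$ and vice versa (using $\te(\Theta)=\te(T_{s_\theta})$, a product of $T_i^{-1}$, which lies in both subgroups), hence $\te(\Bh)=\Bv$ and $\te(\Bv)=\Bh$.

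Next I would pin down which isomorphism $\Bh\to\Bv$ is induced. On the common subgroup $\B_{0}=\langle T_{1},\dots,T_{n}\rangle$, $\te$ acts by $T_{i}\mapsto T_{i}^{-1}$, which is exactly how $\mathfrak{j}$ acts there. On the remaining Coxeter generators: identifying $\Bh$ with type $X_{n}^{(1)}$ via its Coxeter presentation with simple reflections $T_{0},T_{1},\dots,T_{n}$ and outer group $\Omega_{1}$, and $\Bv$ similarly with $T^{v}_{0},T_{1},\dots,T_{n}$ and $\Omega^{v}=\Omega_{3}$, the map $\mathfrak{j}$ is defined to invert all $T_{0},\dots,T_{n}$ and fix $\Omega$. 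We have $\te(T_{0})=(T^{v}_{0})^{-1}$ and $\te(\pi_{i})=\rho_{i}$; matching $\pi_{i}\in\Omega$ of $\Bh$ with $\rho_{i}\in\Omega^{v}$ of $\Bv$ (legitimate since, as noted after Definition~\ref{defn:Bdd}, $\pi_i$ and $\rho_i$ are the same outer automorphism in untwisted type), this is precisely $\mathfrak{j}$. For the claim that $\te$ restricts to $\mathfrak{j}$ on $\Bdiag$: $\Bdiag$ is generated by $\Th{2},T_{1},\dots,T_{n}$ and $\Omega_{2}=\langle X_{\omega_i^\vee}\pi_i\rangle$; under Theorem~\ref{thm:Coxeter Bdd} the involution of $D(\Xtrip)$ swapping the first and third affine nodes fixes the second node, and $\te$ realises this graph involution, so $\te(\Bdiag)=\Bdiag$. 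On $\Bdiag\cong\Bd$ with Coxeter generators $\Th{2},T_1,\dots,T_n$ one computes $\te(\Th{2})=\Th{2}^{-1}$ (from $\te(T_0)=(T_0^v)^{-1}$ together with $\Th{2}=T_0^{-1}X_{-\theta^\vee}$ and the $X\leftrightarrow Y$ swap, or directly from the graph involution fixing the middle node and inverting it by the overall anti-automorphism component of $\te$) and $\te$ fixes $\Omega_{2}$, which is the definition of $\mathfrak{j}$ on $\Bdiag$.

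The main obstacle I anticipate is the bookkeeping in the middle paragraph: carefully tracking how $\te$ acts on the mixed Coxeter/Bernstein generators $\Th{2}=T_{0}^{-1}X_{-\theta^{\vee}}$, and confirming that the identification of $\Omega_{1}$ inside $\Bh$ with $\Omega_{3}$ inside $\Bv$ (and of $\Omega_2$ with itself) is compatible with the respective Coxeter presentations, so that ``$\te|_{\Bh}$ swapped onto $\Bv$'' really is the canonical $\mathfrak{j}$ and not some other automorphism differing by an element of $\Omega$ or by conjugation. This amounts to checking the relations $\pi_i(\Theta)=T_{u_i}\Th{1}T_{u_i^{-1}}$, $\rho_i(\Theta)=T_{u_i}\Th{3}T_{u_i^{-1}}$ from Section~\ref{subsubsection:Coxeter-style presentation} are interchanged by $\te$, which follows since $\te$ swaps $\Th{1}\leftrightarrow\Th{3}$ up to inversion and the graph involution and $u_i$-data are symmetric. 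Once this identification is secured, everything else is a short verification on generators, and the proposition follows.

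\begin{proof}
Work in the Coxeter-style presentation of $\Bdd$ from Theorem~\ref{thm:Coxeter Bdd}. Recall that $\te$ inverts $T_{1},\dots,T_{n}$, interchanges $X_{\beta}\leftrightarrow Y_{\beta}$ for all $\beta\in\Pov$, and consequently swaps $\pi_{i}\leftrightarrow\rho_{i}$ and $T_{0}\leftrightarrow(T^{v}_{0})^{-1}$; it also fixes $\Th{2}$ up to inversion, $\te(\Th{2})=\Th{2}^{-1}$, and the graph involution of $D(\Xtrip)$ swapping the first and third affine nodes (which $\te$ realises) fixes the middle node and hence $\Bdiag$.

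By Table~\ref{extended double affine braid group table}, $\Bh$ is generated by $T_{1},\dots,T_{n}$, $T_{0}=\Theta^{-1}Y_{-\beta_{\theta}}$ and $\Omega_{1}=\{\pi_{i}:i\in\Imin\}$, while $\Bv$ is generated by $T_{1},\dots,T_{n}$, $T^{v}_{0}=X_{\theta^{\vee}}\Theta^{-1}$ and $\Omega_{3}=\{\rho_{i}:i\in\Imin\}$. Since $\Theta=T_{s_{\theta}}$ is a word in $T_{1},\dots,T_{n}$, the element $\te(\Theta)$ is a word in $T_{1}^{-1},\dots,T_{n}^{-1}$ and lies in both $\Bh$ and $\Bv$. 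Hence $\te$ carries the listed generators of $\Bh$ into $\Bv$ and those of $\Bv$ into $\Bh$, so $\te(\Bh)=\Bv$ and $\te(\Bv)=\Bh$; likewise $\te(\Bdiag)=\Bdiag$.

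It remains to identify the induced maps with $\mathfrak{j}$. On the common subgroup $\B_{0}=\langle T_{1},\dots,T_{n}\rangle$, $\te$ acts by $T_{i}\mapsto T_{i}^{-1}$, as does $\mathfrak{j}$. Presenting $\Bh$ in Coxeter form with simple reflections $T_{0},T_{1},\dots,T_{n}$ and outer group $\Omega_{1}$, and $\Bv$ similarly with $T^{v}_{0},T_{1},\dots,T_{n}$ and $\Omega^{v}=\Omega_{3}$, the isomorphism $\mathfrak{j}:\Bh\xrightarrow{\sim}\Bv$ is by definition the one inverting $T_{0}\leftrightarrow(T^{v}_{0})^{-1}$ and all $T_{i}$ and fixing the outer group $\pi_{i}\leftrightarrow\rho_{i}$ (the identification of $\Omega_{1}$ with $\Omega_{3}$ being the natural one, since $\pi_{i}$ and $\rho_{i}$ are the same outer automorphism of the affine Dynkin diagram in the untwisted case). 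The relations $\pi_{i}(\Theta)=T_{u_{i}}\Th{1}T_{u_{i}^{-1}}$ and $\rho_{i}(\Theta)=T_{u_{i}}\Th{3}T_{u_{i}^{-1}}$ of Section~\ref{subsubsection:Coxeter-style presentation} are interchanged by $\te$ because $\te$ swaps $\Th{1}\leftrightarrow\Th{3}$ up to inversion and the $u_{i}$-data are the same on both sides, so these presentations are genuinely matched. Comparing the action of $\te$ on generators with the defining formulas for $\mathfrak{j}$ shows $\te|_{\Bh}$, viewed as a map $\Bh\to\Bv$, equals $\mathfrak{j}$ (and, applying $\te$ again, $\te|_{\Bv}=\mathfrak{j}^{-1}$). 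Finally, on $\Bdiag\cong\Bd$ with Coxeter generators $\Th{2},T_{1},\dots,T_{n}$ and outer group $\Omega_{2}$, we have $\te(\Th{2})=\Th{2}^{-1}$, $\te(T_{i})=T_{i}^{-1}$, and $\te$ fixes $\Omega_{2}$ (the graph involution fixes the middle node pointwise), which is precisely the definition of $\mathfrak{j}$ on $\Bdiag$.
\end{proof}
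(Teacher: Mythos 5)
Your overall architecture matches the paper's, and two of the three ingredients are handled adequately: the exchange of $\Bh$ and $\Bv$ via $\mathfrak{j}$ (which the paper dismisses as ``easily checked'') and the computation $\te(\Th{2})=\Th{2}^{-1}$, which is exactly the paper's calculation $T_{0}^{v}Y_{-\theta^{\vee}}=X_{\theta^{\vee}}\Theta^{-1}\cdot\Theta T_{0}=X_{\theta^{\vee}}T_{0}=\Th{2}^{-1}$. The genuine gap is your treatment of $\Omega_{2}$. You assert that ``$\te$ fixes $\Omega_{2}$ (the graph involution fixes the middle node pointwise)'', but the claim that $\te$ realises the graph involution of $D(\Xtrip)$ is essentially the proposition being proved, so this is circular; and even granting it, fixing the middle \emph{node} says nothing about fixing the attached outer group $\Omega_{2}$ \emph{pointwise}. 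Concretely, $\te(X_{\omega_{i}^{\vee}}\pi_{i})=Y_{\omega_{i}^{\vee}}\rho_{i}$, and the required identity $Y_{\omega_{i}^{\vee}}\rho_{i}=X_{\omega_{i}^{\vee}}\pi_{i}$ is not formal: writing $\pi_{i}=Y_{\omega_{i}^{\vee}}T_{v_{i}^{-1}}$ and $\rho_{i}=X_{\omega_{i}^{\vee}}T_{v_{i}}^{-1}$, the two sides read $X_{\omega_{i}^{\vee}}Y_{\omega_{i}^{\vee}}T_{v_{i}^{-1}}$ and $Y_{\omega_{i}^{\vee}}X_{\omega_{i}^{\vee}}T_{v_{i}}^{-1}$, and $X_{\omega_{i}^{\vee}}$ and $Y_{\omega_{i}^{\vee}}$ do not commute in $\Bdd$, nor is $T_{v_{i}^{-1}}$ equal to $T_{v_{i}}^{-1}$ on the nose.

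The paper's proof is devoted almost entirely to this point: it establishes $Y_{\omega_{i}^{\vee}}\rho_{i}=X_{\omega_{i}^{\vee}}\pi_{i}$ by a chain of manipulations using $\rho_{i^{*}}=\rho_{i}^{-1}$, $\pi_{i^{*}}=\pi_{i}^{-1}$, the Bernstein expressions for $\pi_{i^{*}}$ and $\rho_{i}$, and crucially the identity $v_{i^{*}}^{-1}=v_{i}$ --- which itself requires the fact that conjugation by the longest element $w_{0}$ permutes the simple reflections according to $i\mapsto i^{*}$ (with extra care in type $D_{n}^{(1)}$) together with $w_{0}^{2}=e$. None of this appears in your argument, so the assertion that $\te$ restricts to $\mathfrak{j}$ on $\Bdiag$ --- the part of the proposition carrying real content --- is unproven as written. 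To repair the proof you need to supply this computation (or an equivalent verification that $\te$ fixes each $X_{\omega_{i}^{\vee}}\pi_{i}$) explicitly.
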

\begin{proof}
    First note that $\te$ fixes each $X_{\omega_{i}^{\vee}}\pi_{i}$ since
    \begin{align*}
        Y_{\omega_{i}^{\vee}} \rho_{i}
        = (\rho_{i^{*}} Y_{\omega_{i}^{\vee}}^{-1})^{-1}
        = (Y_{\omega_{i^{*}}^{\vee}} \rho_{i^{*}})^{-1}
        = \rho_{i} Y_{\omega_{i^{*}}^{\vee}}^{-1}
        = \rho_{i} (\pi_{i^{*}} T_{v_{i^{*}}^{-1}}^{-1})^{-1}
        = \rho_{i} T_{v_{i^{*}}^{-1}} \pi_{i}
        = \rho_{i} T_{v_{i}} \pi_{i}
        = X_{\omega_{i}^{\vee}}\pi_{i},
    \end{align*}
    where the penultimate equality holds provided that $v_{i^{*}}^{-1} = v_{i}$.
    Indeed, conjugating by the longest element $w_{0}$ of a finite Weyl group permutes the simple reflections according to the unique automorphism of the finite Dynkin diagram that maps $i \mapsto i^{*}$ for each $i\in\Imin$.
    (Extra care is required regarding the parity of $n$ in type $D_{n}^{(1)}$.)
    It follows that $w_{0i}$ is sent to $w_{0i^{*}}$, and hence
    $v_{i} = w_{0}w_{0i} = w_{0i^{*}}w_{0} = v_{i^{*}}^{-1}$
    since the longest element of any finite Weyl group is self-inverse \cite{Bourbaki68}*{p.171}.
    Furthermore, we have
    \begin{align*}
        \Th{2}
        = T_{0}^{-1} X_{-\theta^{\vee}}
        \xmapsto{~\te~}
        T_{0}^{v} Y_{-\theta^{\vee}}
        = X_{\theta^{\vee}} \Theta^{-1} \Theta T_{0}
        = X_{\theta^{\vee}} T_{0}
        = \Th{2}^{-1},
    \end{align*}
    and the rest of the proposition is easily checked.
\end{proof}

In Section \ref{subsection:Discussion and direct consequences of psi theorem} we will see that $\Bdiag$ corresponds to a diagonal quantum affine subalgebra $\Udiag$ of $\Utor$, first defined by the author in \cite{Laurie24b}.

\subsection{Quantum toroidal \texorpdfstring{$\glone$}{gl1}} \label{subsection:Quantum toroidal gl1}

Let us now introduce the related object quantum toroidal $\glone$.
This algebra has several alternative names, due to its appearance within different mathematical contexts.
For example, it is often called the...
\begin{itemize}
    \item Ding-Iohara-Miki (DIM) algebra \cites{DI97,Miki07},
    \item deformed $W_{1+\infty}$ algebra \cite{Miki07},
    \item elliptic Hall algebra \cites{BS12,Schiffmann12,SV13},
    \item spherical double affine Hecke algebra of $GL_{\infty}$ \cite{SV13},
    \item quantum continuous $\mathfrak{gl}_{\infty}$ algebra \cite{FFJMM11}.
\end{itemize}
Its representation theory is rich, with many wide-ranging connections across mathematics and physics, and is at this stage further developed than that of general quantum toroidal algebras $\Utor$.
\\

Loosely speaking, quantum toroidal $\glone$ may be viewed as the quantum affinization of the deformed Heisenberg algebra $U_{q}(\widehat{\mathfrak{gl}}_{1})$.
Alternatively, one can think of it as the quantum affinization associated to the Cartan matrix $(0)$.
However, it is important to note that neither interpretation is strictly speaking well-defined.
\\

Fix complex numbers $q_{1}$, $q_{2}$, $q_{3}$ such that $q_{1} q_{2} q_{3} = 1$, each not a root of unity, and consider all quantum integers $[r]$ with respect to $q_{1}$.

\begin{defn} \label{defn:quantum toroidal gl1}
    The quantum toroidal algebra $\Utorglone$ of type $\glone$ is the unital associative $\Qbb(q_{1},q_{3})$-algebra with generators $\xpm_{m}$, $h_{r}$, $k^{\pm 1}$, $C^{\pm 1}$ ($m\in\Zbb$, $r\in\Zbb^{*}$), subject to the following relations:
\begin{itemize}
    \item $C^{\pm 1}$, $k^{\pm 1}$ central,
    \item $\displaystyle C^{\pm 1}C^{\mp 1} = k^{\pm 1} k^{\mp 1} = 1$,
    \item $\displaystyle [h_{r},h_{s}] = \delta_{r+s,0} \frac{[r]}{r} \frac{q_{2}^{-r} - q_{2}^{r}}{q_{3}^{r} - q_{3}^{-r}} \frac{C^{r}-C^{-r}}{q_{1} - q_{1}^{-1}}$,
    \item $\displaystyle [h_{r},\xpm_{m}] = \pm \frac{[r]}{r} (q_{2}^{r} - q_{2}^{-r}) C^{\frac{r \mp \lvert r\rvert}{2}} \xpm_{r+m}$,
    \item $\displaystyle [\xp_{m},\xm_{l}] = \frac{q_{2}^{-1} - q_{2}}{(q_{1} - q_{1}^{-1})(q_{3} - q_{3}^{-1})} (C^{-l}\phi^{+}_{m+l} - C^{-m}\phi^{-}_{m+l})$,
    \item $[\xpm_{m},[\xpm_{m-1},\xpm_{m+1}]] = 0$,
\end{itemize}
where
$\sum_{s\in\Zbb} \phi^{\pm}_{\pm s} z^{\pm s} =
k^{\pm 1} \exp{( (q_{1} - q_{1}^{-1}) \sum_{s'>0} (q_{3}^{\pm s'} - q_{3}^{\mp s'}) h_{\pm s'} z^{\pm s'} )}$.
\end{defn}

\begin{rmk}
The above presentation resembles Definition \ref{defn:quantum toroidal algebra} for $\Utor$, with an extra deformation parameter $q_{3}$.
By scaling the generators
\begin{align*}
    \xpm_{m} \mapsto (q_{2}^{-1} - q_{2}) \xpm_{m}, \qquad
    h_{r} \mapsto (q_{1} - q_{1}^{-1})^{-1} (q_{3}^{r} - q_{3}^{-r})^{-1} h_{r},
\end{align*}
one obtains an alternative set of relations for $\Utorglone$ which highlights a symmetry with respect to permuting $q_{1}$, $q_{2}$ and $q_{3}$:
\begin{itemize}
    \item $C^{\pm 1}$, $k^{\pm 1}$ central,
    \item $\displaystyle C^{\pm 1}C^{\mp 1} = k^{\pm 1} k^{\mp 1} = 1$,
    \item $\displaystyle [h_{r},h_{s}] = \delta_{r+s,0} \frac{\kappa_{r}}{r} (C^{r}-C^{-r})$,
    \item $\displaystyle [h_{r},\xpm_{m}] = \pm \frac{\kappa_{r}}{r} C^{\frac{r \mp \lvert r\rvert}{2}} \xpm_{r+m}$,
    \item $\displaystyle [\xp_{m},\xm_{l}] = \frac{1}{\kappa_{1}} (C^{-l}\phi^{+}_{m+l} - C^{-m}\phi^{-}_{m+l})$,
    \item $[\xpm_{m},[\xpm_{m-1},\xpm_{m+1}]] = 0$,
\end{itemize}
where
$\kappa_{r} = (q_{1}^{r}-q_{1}^{-r})(q_{2}^{r}-q_{2}^{-r})(q_{3}^{r}-q_{3}^{-r})$
and
$\sum_{s\in\Zbb} \phi^{\pm}_{\pm s} z^{\pm s} =
k^{\pm 1} \exp{(\sum_{s'>0} h_{\pm s'} z^{\pm s'})}$.
\end{rmk}

Quantum toroidal $\glone$ possesses analogues of various properties already mentioned for $\Uqaffs$ or $\Utor$.
For example, there exists...
\begin{itemize}
    \item a $\Zbb^{2}$--grading given by
    $\deg(\xpm_{m}) = (\pm 1, m)$,
    $\deg(h_{r}) = (0,r)$ and
    $\deg(C^{\pm 1}) = \deg(k^{\pm 1}) = (0,0)$,
    \item a finite generating set
    $\lbrace \xpm_{0},\, h_{\pm 1},\, k^{\pm 1},\, C^{\pm 1} \rbrace$,
    \item a finite presentation -- see \cite{Miki07}*{Lem. 9.2},
    \item a topological coproduct $\Delta_{u}$ defined as in Theorem \ref{thm:Damiani topological coproduct}, without the $i$ indices,
    \item an automorphism $\X$ given by
    \begin{align*}
        &\X(\xpm_{m}) = \xpm_{m \mp 1}, \qquad
        \X(h_{r}) = h_{r}, \qquad
        \X(k) = C^{-1} k, \qquad
        \X(C) = C,
    \end{align*}
    \item an anti-involution $\eta$ given by
    \begin{align*}
        &\eta(\xpm_{m}) = \xpm_{-m}, \qquad
        \eta(h_{r}) = -C^{r} h_{-r}, \qquad
        \eta(k) = k^{-1}, \qquad
        \eta(C) = C,
    \end{align*}
    \item a $\Qbb$-algebra involution $\Wcal$ sending each $q_{i} \mapsto q_{i}^{-1}$ such that
    \begin{align*}
        &\Wcal(\xpm_{m}) = C^{m} x^{\mp}_{m}, \qquad
        \Wcal(h_{r}) = h_{r}, \qquad
        \Wcal(k) = k, \qquad
        \Wcal(C) = C^{-1}.
    \end{align*}
\end{itemize}

One can also develop an $\ell$-highest weight theory, similarly to Section \ref{subsubsection:l-highest weight theory}, since the algebra possesses a natural loop triangular decomposition
\begin{align*}
    \Utorglone \cong
    \langle \xm_{m} ~|~ m\in\Zbb \rangle
    \otimes
    \langle C^{\pm 1},\, k^{\pm 1},\, h_{r} ~|~ r\in\Zbb^{*} \rangle
    \otimes
    \langle \xp_{m} ~|~ m\in\Zbb \rangle.
\end{align*}
Here, $\ell$-weights $(\lambda,\Psi,c)$ must have $\lambda = (\Psi^{\pm}_{0})^{\pm 1}$ and we may without loss of generality assume that $c = 1$.
For our purposes, $\ell$-weights therefore correspond to pairs $(\Psi^{+}(z),\Psi^{-}(z))$ of power series in $\Cbb\llbracket z\rrbracket$.
\\

All representations $V = \bigoplus_{n\in\Zbb} V_{n}$ are $\Zbb$--graded with $\U_{a,b}\cdot V_{n} \subset V_{a+n}$, and said to be integrable if every $V_{n}$ is finite dimensional.
For $\ell$-highest weight modules, $\Psi^{\pm}(z)$ must be the expansions at $z^{\mp 1} = 0$ of some rational function $\Pcal(z)$ for which $\Pcal(0)\Pcal(\infty) = 1$.
Furthermore, the irreducible $\ell$-highest weight module $V(\Psi^{\pm}(z))$ is integrable precisely when this condition is satisfied, and may alternatively be denoted by $V(\Pcal(z))$.
The category $\Oaff$ consists of integrable modules with $V_{n} = 0$ for $n\gg 0$, and in particular contains all such representations.
See \cite{Miki07} and \cite{FJMM17} for more details.
\\

Burban-Schiffmann \cite{BS12}, working in the elliptic Hall algebra realization, showed that the natural action $\SL\curvearrowright\Zbb^{2}$ lifts to an action on $\Utorglone$.
In particular, the following order $4$ automorphism corresponds to \emph{clockwise rotation by 90 degrees}, and was later proven by Miki \cite{Miki07} via purely algebraic methods.

\begin{thm} \cites{BS12,Miki07}
    There is an automorphism $\Phi$ of quantum toroidal $\glone$ given by:
    \[\begin{tikzcd}[ampersand replacement=\&,column sep=small,sep=small]
	\& {h_{1}} \&\&\&\&\&\& k \\
	{\xm_{0}} \&\& {\xp_{0}} \&\& {\mathrm{and}} \&\& {C^{-1}} \&\& C \\
	\& {h_{-1}} \&\&\&\&\&\& {k^{-1}}
	\arrow[from=1-2, to=2-3]
	\arrow[from=1-8, to=2-9]
	\arrow[from=2-1, to=1-2]
	\arrow[from=2-3, to=3-2]
	\arrow[from=2-7, to=1-8]
	\arrow[from=2-9, to=3-8]
	\arrow[from=3-2, to=2-1]
	\arrow[from=3-8, to=2-7]
    \end{tikzcd}\]
\end{thm}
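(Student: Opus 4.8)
The plan is to verify that the assignment displayed in the statement extends to a well-defined $\kk$-algebra endomorphism of $\Utorglone$, and then to read off from the two $4$-cycles $\xm_{0} \to h_{1} \to \xp_{0} \to h_{-1} \to \xm_{0}$ and $k \to C \to k^{-1} \to C^{-1} \to k$ that this endomorphism satisfies $\Phi^{4} = \mathrm{id}$; hence $\Phi$ is invertible with inverse $\Phi^{3}$, so it is an automorphism, and of order exactly $4$ since $\Phi^{2}(k) = k^{-1} \neq k$. The well-definedness would be checked against the finite presentation of $\Utorglone$ from \cite{Miki07}*{Lem. 9.2}. Since the defining relations of that presentation involve derived elements such as $\xpm_{\pm 1}$, $\xpm_{\pm 2}$ and $h_{\pm 2}$ — each expressible through iterated brackets of the finite generators $\xpm_{0}$, $h_{\pm 1}$, $k^{\pm 1}$, $C^{\pm 1}$ — the first step is to compute once and for all the images of these derived elements under $\Phi$. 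For example, from $[h_{1},\xp_{0}] = (q_{2}-q_{2}^{-1})\xp_{1}$ one obtains $\Phi(\xp_{1})$ as a normalized bracket of $\Phi(h_{1}) = \xp_{0}$ with $\Phi(\xp_{0}) = h_{-1}$, and similarly for the remaining low-degree derived elements.

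With these formulas in hand, the relations divide into groups. The central and torus relations in $k^{\pm 1}, C^{\pm 1}$ are immediate from the $4$-cycle above. The Heisenberg-type relations $[h_{r},h_{s}] = \ldots$ and the mixed relations $[h_{r},\xpm_{m}] = \ldots$, restricted to the finite set, are handled by transporting both sides under $\Phi$ and reducing the resulting brackets — which now mix images of the $h$'s and $x$'s — back to the stated form using the very same relations. The pivotal relation is the $\xp$--$\xm$ bracket $[\xp_{m},\xm_{l}] = (\ast)\,(C^{-l}\phi^{+}_{m+l} - C^{-m}\phi^{-}_{m+l})$: because $\phi^{\pm}$ is an exponential series in the $h_{r}$, its image under $\Phi$ is completely determined by the $\Phi(h_{r})$ (themselves determined by the finite data), so the check reduces to comparing the constant mode, which matches after the $k \leftrightarrow C$ swap and the scalar $(\ast)$, together with consistency on the nonzero modes. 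Finally one must recover, after transport by $\Phi$, the cubic Serre-type relation $[\xpm_{m},[\xpm_{m-1},\xpm_{m+1}]] = 0$.

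I expect the main obstacle to be exactly this last point together with the scalar bookkeeping: the cubic relation maps under $\Phi$ to a nontrivial cubic identity among mixed brackets of $h$'s and $x$'s that must be deduced from the other relations, and throughout one must track the normalizing factors $q_{2}^{r} - q_{2}^{-r}$ and the $C$-power shifts $C^{(r\mp|r|)/2}$ without sign or exponent slips, since such a slip propagates through every subsequent bracket. To keep this manageable I would work in the rescaled, $q_{1},q_{2},q_{3}$-symmetric presentation of the Remark following Definition \ref{defn:quantum toroidal gl1}, where the Heisenberg and $\xp$--$\xm$ relations become symmetric in the $\kappa_{r}$ and the computation is lighter. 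As an alternative route to well-definedness one can invoke the lift of $\SL \curvearrowright \Zbb^{2}$ to the elliptic Hall algebra realization of Burban-Schiffmann \cite{BS12} and identify $\Phi$ with the image of $S = \begin{bsmallmatrix} 0 & 1 \\ -1 & 0 \end{bsmallmatrix}$; but the finite-presentation verification above is self-contained within the algebraic framework of this paper, and is the one I would carry out.
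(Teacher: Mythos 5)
This theorem is not proved in the paper at all: it is imported from Burban--Schiffmann \cite{BS12} and Miki \cite{Miki07}, which is why the statement carries those citations, so there is no internal proof to compare against. Your plan --- verify the assignment on the finite presentation of $\Utorglone$, compute the images of the derived elements $\xpm_{\pm 1}$, $\xpm_{\pm 2}$, $h_{\pm 2}$, transport the relations group by group, and conclude invertibility from the two $4$-cycles --- is precisely the algebraic route of \cite{Miki07}. As a proposal it is sound; as a proof it is incomplete exactly where you say it is, since essentially all of the content lies in the deferred computations (above all pushing the cubic Serre relation and the $[\xp_{m},\xm_{l}]$ relation through $\Phi$), and the observation that $\Phi^{4}=\mathrm{id}$ on the finite generating set only delivers invertibility once well-definedness has been fully established.
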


In Sections \ref{subsubsection:Miki automorphism} and \ref{subsubsection:Congruence group actions on quantum toroidal algebras} we obtain analogues of these results for the quantum toroidal algebras in all untwisted types.
In particular, we prove an action of the universal cover $\widetilde{\SL}$ on $\Utor$, as well as the existence of automorphisms which generalise $\Phi$.
\\

For various reasons, when investigating the representation theoretic applications of these symmetries, we prefer to work with a related anti-involution $\psi$ proved in Theorem \ref{thm:psi}.
The corresponding result for quantum toroidal $\glone$ comes by combining the Miki automorphism $\Phi$, the anti-automorphism $\eta$, and a scaling automorphism $\sfrak_{C}$ which maps
$\xpm_{m} \mapsto C^{m} \xpm_{m}$ and
$h_{r} \mapsto C^{r} h_{r}$
while fixing $k^{\pm 1}$ and $C^{\pm 1}$.

\begin{cor} \label{cor:glone psi}
    There is an anti-involution $\psi = \sfrak_{C}^{-1} \eta \Phi$ of quantum toroidal $\glone$ given by:
    \[\begin{tikzcd}[ampersand replacement=\&,column sep=small,sep=small]
	\& {h_{1}} \&\&\&\&\&\& k \\
	{\xm_{0}} \&\& {\xp_{0}} \&\& {\mathrm{and}} \&\& {C^{-1}} \&\& C \\
	\& {h_{-1}} \&\&\&\&\&\& {k^{-1}}
	\arrow[from=1-2, to=2-3]
	\arrow[from=1-8, to=2-9]
	\arrow[from=3-2, to=2-1]
	\arrow[from=3-8, to=2-7]
	\arrow[to=1-2, from=2-3]
	\arrow[to=1-8, from=2-9]
	\arrow[to=3-2, from=2-1]
	\arrow[to=3-8, from=2-7]
    \end{tikzcd}\]
\end{cor}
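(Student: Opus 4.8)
The plan is to exploit that $\psi = \sfrak_{C}^{-1}\eta\Phi$ is a composite of maps whose action on generators is already known. Since $\Phi$ (the Miki automorphism from the preceding theorem) and $\sfrak_{C}$ are algebra automorphisms of $\Utorglone$ while $\eta$ is an algebra anti-automorphism, and all three are $\Qbb(q_{1},q_{3})$-linear, $\psi$ is automatically a $\Qbb(q_{1},q_{3})$-algebra anti-automorphism. Thus the two things left to establish are the stated action on generators and the identity $\psi^{2} = \mathrm{id}$.

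First I would compute $\psi$ on the finite generating set $\lbrace \xpm_{0},\, h_{\pm 1},\, k^{\pm 1},\, C^{\pm 1}\rbrace$ of $\Utorglone$. Because $\eta$ fixes $\xpm_{0}$ and $\sfrak_{C}^{-1}$ fixes every homogeneous element of $\degZ$-degree $0$, the arrows $\Phi : h_{\pm 1}\mapsto\xpm_{0}$ give at once $\psi(h_{\pm 1}) = \sfrak_{C}^{-1}\eta(\xpm_{0}) = \xpm_{0}$; similarly $\Phi(k) = C$ and $\Phi(C) = k^{-1}$, combined with $\eta(C) = C$ and $\eta(k^{-1}) = k$, give $\psi(k^{\pm 1}) = C^{\pm 1}$ and $\psi(C^{\pm 1}) = k^{\pm 1}$. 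For $\psi(\xpm_{0})$ one substitutes the explicit (not merely schematic) formulas for $\Phi(\xpm_{0})$, which carry powers of $C$ and $q_{i}$-scalars on top of the arrows $\xp_{0}\mapsto h_{-1}$, $\xm_{0}\mapsto h_{1}$, then applies $\eta$ (which sends $h_{\pm 1}\mapsto -C^{\pm 1}h_{\mp 1}$) and finally $\sfrak_{C}^{-1}$; the central powers coming from $\Phi$ and from $\eta$ cancel exactly against $\sfrak_{C}^{-1}$, leaving $\psi(\xpm_{0}) = h_{\pm 1}$ and reproducing the displayed diagram.

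For involutivity, I would note that $\psi^{2}$ is an \emph{automorphism} of $\Utorglone$, so it suffices to check that it fixes the generating set above, and this is immediate once the formulas of the previous step show that $\psi$ interchanges $h_{\pm 1}\leftrightarrow\xpm_{0}$ and $k^{\pm 1}\leftrightarrow C^{\pm 1}$. A more conceptual route, which makes clear why this must hold, is to pass to the $\Zbb^{2}$-grading: there $\eta$ acts by the reflection $\mathrm{diag}(1,-1)$ and $\Phi$ by the rotation $\begin{bsmallmatrix} 0 & 1 \\ -1 & 0 \end{bsmallmatrix}$, so $\psi$ descends to reflection in the line $x = y$, an involution exchanging the horizontal and vertical directions; the twist by $\sfrak_{C}^{-1}$ is precisely the central correction needed to upgrade this to an honest involution of the algebra, playing the same role as the corresponding scaling automorphism in Theorem \ref{thm:psi} for $\Utor$.

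The main obstacle is the bookkeeping in the middle step: one must track precisely the powers of $C$ (and the $q_{i}$-coefficients) appearing in Miki's formulas for $\Phi$ on $\xpm_{0}$, together with those introduced by $\eta$ on $h_{\pm 1}$, and verify that $\sfrak_{C}^{-1}$ absorbs them with no residue — this cancellation is the entire reason $\sfrak_{C}$ is built into the definition of $\psi$.
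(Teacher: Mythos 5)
Your proposal is correct and takes essentially the same route as the paper, which derives the corollary simply by composing $\Phi$, $\eta$ and $\sfrak_{C}^{-1}$ and reading off the action on the finite generating set $\lbrace \xpm_{0}, h_{\pm 1}, k^{\pm 1}, C^{\pm 1}\rbrace$; your observations that a composite of automorphisms with one anti-automorphism is an anti-automorphism, and that $\psi^{2}$ is an automorphism so involutivity need only be checked on generators, are exactly the implicit content of the paper's one-line derivation. Your remark that the displayed arrows for $\Phi(\xpm_{0})$ are schematic and that the central powers introduced there and by $\eta$ on $h_{\pm 1}$ must be absorbed by $\sfrak_{C}^{-1}$ correctly identifies the only nontrivial bookkeeping, and indeed explains why the twist by $\sfrak_{C}$ appears in the definition of $\psi$.
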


In the case of untwisted $\Utor$, the anti-involution $\psi$ enlarges our $\widetilde{\SL}$ action to one of $\widetilde{\GL}$, and in particular corresponds to reflection
$\begin{bsmallmatrix} 0 & 1 \\ 1 & 0 \end{bsmallmatrix}$
in the line $x=y$.
Similarly, using Corollary \ref{cor:glone psi} we can extend the famous result of \cite{BS12} to a $\GL$ symmetry for $\Utorglone$, lifted from the lattice $\Zbb^{2}$.
\\

One may wonder whether $\Utorglone$ carries an action of some appropriate extended double affine braid group $\Bdd$, analogous to our work in Theorem \ref{thm:Bdd action on Utor}.
However, since the underlying Dynkin diagram is (morally) just a single affine node, we have $I_{0} = \emptyset$ and $\Pov = 0$ and hence $\Bdd$ should be trivial.
While by no means interesting in its own right, this does provide some intuition for the following.
\\

Our action $\widetilde{\SL} \curvearrowright \Utor$ from Theorem \ref{thm:congruence group actions on Utor} does not seem to factor through $\SL$, in contrast to quantum toroidal $\glone$.
On the braid group side, this corresponds to the fact that while $\widetilde{\SL}$ acts on $\Bdd$ by automorphisms, $\SL$ only acts by \emph{outer automorphisms}.
Indeed, the kernel of the natural projection
$\widetilde{\SL} \twoheadrightarrow \SL$
is generated by a single element, which acts as conjugation by $T_{w_{0}}^{2}$ in all types \cite{IS20}*{Thm. 6.4}.
The descent to $\SL$ in the $\glone$ case is then explained by the triviality of $\Bdd$, which removes this obstacle.

\section{Horizontal--vertical symmetries} \label{section:horizontal-vertical symmetries}

We now look to construct certain automorphisms and anti-involutions of $\Utor$ which exchange the horizontal and vertical subalgebras.
For classical toroidal Lie algebras
$\g[s^{\pm 1},t^{\pm 1}] \oplus \mathbb{K}$,
such symmetries are useful but trivial -- simply swap the loop parameters $s$ and $t$ up to inverse, perhaps inverting the Cartan elements of $\g$.
But within the quantum setting their existence is remarkable, in part due to the asymmetry of the definition for $\Utor$.
Namely, while horizontal affinization is in the Drinfeld-Jimbo style, vertical affinization occurs via the loop-style quantum affinization procedure.
\\

Our horizontal--vertical symmetries possess a range of applications in studying the structure and representation theory of $\Utor$.
Indeed, the celebrated Miki automorphisms of $\UtorA$ and $\Utorglone$ have already been used extensively in works by many other authors -- see Section \ref{subsubsection:Miki automorphism} -- and the previous lack of such results outside type $A$ has been one of the major obstacles for studying quantum toroidal algebras in general.
Within this paper, our anti-involution $\psi$ from Theorem \ref{thm:psi} plays a fundamental role in the construction of tensor products, $R$-matrices and transfer matrices for $\ell$-highest weight modules of quantum toroidal algebras in Sections \ref{section:tensor products} and \ref{section:R matrices}.
\\

In \cite{Laurie24a} we dealt with the simply laced case, in particular generalising the Miki automorphism of $\UtorA$ from \cites{Miki99,Miki01} as a corollary.
Here we extend our treatment to all untwisted types by employing a finer consideration of the extended double affine braid groups involving the Coxeter presentation from Theorem \ref{thm:Coxeter Bdd}.

\begin{notation}
    For simplicity, we will henceforth identify elements of $\Bdd$ with the corresponding automorphisms of $\Utor$ from Theorem \ref{thm:Bdd action on Utor}.
\end{notation}

\begin{notation}
    We shall also write $X_{i}$ for $X_{\omega_{i}^{\vee}}$ and $Y_{i}$ for $Y_{\omega_{i}^{\vee}}$ for each $i\in I_{0}$.
\end{notation}

Our approach is roughly as follows.
We can in some sense build $\Utor$ out of the copy of the finite quantum group $\Uq$ lying inside $\Uh\cap\Uv$ and the braid group action from Theorem \ref{thm:Bdd action on Utor}.
Twisting the action by certain automorphisms of $\Bdd$ (which in particular swap $\Bh$ and $\Bv$) produces different `twisted' sets of generators for $\Utor$.
Then mapping the standard generators to their twisted counterparts gives our desired (anti-)automorphisms.
\\

More specifically, each generator of our simplified presentation for $\Utor$ from Theorem \ref{thm:finite Utor presentation} (other than $C^{\pm 1}$) can easily be written as $b(z)$ for some $b\in\Bdd$ and $z\in\Uq$.
For all $\xpm_{i,0}$ and $k_{i}^{\pm 1}$ with $i\in I_{0}$ we may set $b=1$, and of course $\xpm_{i,\pm 1} = o(i) X_{i}^{-1}(\xpm_{i,0})$ for each $i\in I_{0}$.
For the other generators we have
\begin{itemize}
    \item $\xpm_{0,0} = T_{\ell}T_{0}(\xpm_{\ell,0}) = T_{\ell}^{-1}T_{0}^{-1}(\xpm_{\ell,0})$ for any $\ell\in\Tilde{I}$,
    \item $\xpm_{0,\pm 1} = o(0) T_{\ell}\Th{2}(\xpm_{\ell,0}) = o(0) T_{\ell}^{-1}\Th{2}^{-1}(\xpm_{\ell,0})$ for any $\ell\in\Tilde{I}, \hfill \refstepcounter{equation}(\theequation)\label{eqn:xpm0pm1 identities}$
    \item $k_{0}^{\pm 1} = T_{\ell}T_{0}(k_{\ell}^{\pm 1}) = T_{\ell}^{-1}T_{0}^{-1}(k_{\ell}^{\pm 1})$ for any $\ell\in\Tilde{I}$,
\end{itemize}
where $\Tilde{I}$ is the set of vertices adjacent to $0$ in the affine Dynkin diagram, except in types $A_{n=1}^{(1)}$ and $C_{n}^{(1)}$ where we instead have
\begin{itemize}
    \item $\xpm_{0,0} = \pi_{n} (\xpm_{n,0})$,
    \item $\xpm_{0,\pm 1}
    = o(0) \pi_{n} X_{n}^{-1} (\xpm_{n,0})
    = o(0) X_{n} \pi_{n} (\xpm_{n,0})$,
    \item $k_{0}^{\pm 1} = \pi_{n} (k_{n}^{\pm 1})$.
\end{itemize}
Finally, for $A_{1}^{(1)}$ we also require
\begin{itemize}
    \item $\xpm_{1,\mp 1}
    = o(1) X_{1} (\xpm_{1,0})$,
    \item $\xpm_{0,\mp 1}
    = o(0) \pi_{1} X_{1} (\xpm_{1,0})
    = o(0) X_{1}^{-1} \pi_{1} (\xpm_{1,0})$.
\end{itemize}

\begin{rmk}
    Types $A_{1}^{(1)}$ and $C_{n}^{(1)}$ are treated separately since $a_{0\ell}a_{\ell 0} \not= 1$ for all $\ell\in\Tilde{I}$, and so unlike in other types we cannot `drag' generators at vertex $\ell$ to vertex $0$ by applying $T_{\ell}^{\pm 1}T_{0}^{\pm 1}$ and $T_{\ell}^{\pm 1}\Th{2}^{\pm 1}$.
\end{rmk}

Recall the involution $\te$ of $\Bdd$ from Section \ref{subsection:Extended double affine braid groups}.
For each $\xpm_{i,m} = b(z)$ above define $\xbpm_{i,m} = \te(b)(z)$,
and for each $k_{i}^{\pm 1} = b(z)$ let $\kb_{i}^{\pm 1} = \te(b)(z^{-1})$.
In particular,
\begin{align*}
    \kb_{i}^{\pm 1} = k_{i}^{\mp 1}, \qquad
    \xbpm_{i,0} = \xpm_{i,0}, \qquad
    \xbpm_{i,\pm 1} = o(i) Y_{i}^{-1}(\xpm_{i,0}),
\end{align*}
for all $i\in I_{0}$, and outside types $A_{1}^{(1)}$ and $C_{n}^{(1)}$ we have
\begin{align*}
    &\kb_{0}^{\pm 1} = T_{\ell}^{-1}(T_{0}^{v})^{-1}(k_{\ell}^{\mp 1}) = T_{\ell}T_{0}^{v}(k_{\ell}^{\mp 1}),
    \\
    &\xbpm_{0,0} = T_{\ell}^{-1}(T_{0}^{v})^{-1}(\xpm_{\ell,0}) = T_{\ell}T_{0}^{v}(\xpm_{\ell,0}),
    \\
    &\xbpm_{0,\pm 1} = o(0) T_{\ell}^{-1}\Th{2}^{-1}(\xpm_{\ell,0}) = o(0) T_{\ell}\Th{2}(\xpm_{\ell,0}),
\end{align*}
for any $\ell\in\Tilde{I}$, from which we see that $\xbpm_{0,\pm 1} = \xpm_{0,\pm 1}$.
For $C_{n}^{(1)}$ these are replaced by
$\kb_{0}^{\pm 1} = \rho_{n} (k_{n}^{\mp 1})$,
$\xbpm_{0,0} = \rho_{n} (\xpm_{n,0})$
and
\begin{align*}
    \xbpm_{0,\pm 1}
    = o(0) \rho_{n} Y_{n}^{-1} (\xpm_{n,0})
    = o(0) X_{n} T_{v_{n}}^{-1} T_{v_{n}^{-1}} \pi_{n} (\xpm_{n,0})
    = o(0) X_{n} (\xpm_{0,0})
    = \xpm_{0,\pm 1},
\end{align*}
where for the penultimate equality we use the identity $v_{i^{*}}^{-1} = v_{i}$ from our proof of Proposition \ref{prop:t on Bh Bd Bv}.
In type $A_{1}^{(1)}$, since $\Bdd$ has a particularly simple structure, we may in fact easily compute the images of all simplified generators under $\psi$ explicitly in terms of the standard generators:
\begin{align} \label{eqn:A1(1) psi expressions}
\begin{split}
    &\kb_{1}^{\pm 1}
    = k_{1}^{\mp 1}
    \\
    &\xbpm_{1,0}
    = \xpm_{1,0}
    \\
    &\xbp_{1,1}
    = o(1) Y_{1}^{-1} (\xp_{1,0})
    = o(1) T_{1} \pi_{1} (\xp_{1,0})
    = o(1) [2]^{-1} [\xp_{1,0}, [\xp_{1,0}, \xp_{0,0}]_{q^{-2}} ]
    \\
    &\xbm_{1,-1}
    = o(1) Y_{1}^{-1} (\xm_{1,0})
    = o(1) T_{1} \pi_{1} (\xm_{1,0})
    = o(1) [2]^{-1} [ [\xm_{0,0}, \xm_{1,0}]_{q^{2}}, \xm_{1,0}]
    \\
    &\xbp_{1,-1}
    = o(1) Y_{1} (\xp_{1,0})
    = o(1) \pi_{1} T_{1}^{-1} (\xp_{1,0})
    = o(0) k_{0}^{-1} \xm_{0,0}
    \\
    &\xbm_{1,1}
    = o(1) Y_{1} (\xm_{1,0})
    = o(1) \pi_{1} T_{1}^{-1} (\xm_{1,0})
    = o(0) \xp_{0,0} k_{0}
    \\
    &\kb_{0}^{\pm 1}
    = \rho_{1} (k_{1}^{\mp 1})
    = X_{1} T_{1}^{-1} (k_{1}^{\mp 1})
    = C^{\mp 1} k_{1}^{\pm 1}
    \\
    &\xbp_{0,0}
    = \rho_{1} (\xp_{1,0})
    = X_{1} T_{1}^{-1} (\xp_{1,0})
    = o(0) C k_{1}^{-1} \xm_{1,1}
    \\
    &\xbm_{0,0}
    = \rho_{1} (\xm_{1,0})
    = X_{1} T_{1}^{-1} (\xm_{1,0})
    = o(0) \xp_{1,-1} C^{-1} k_{1}
    \\
    &\xbpm_{0,\pm 1}
    = o(0) \rho_{1} Y_{1}^{-1} (\xpm_{1,0})
    = o(0) X_{1} \pi_{1} (\xpm_{1,0})
    = \xpm_{0,\pm 1}
    \\
    &\xbp_{0,-1}
    = o(0) Y_{1}^{-1} \rho_{1} (\xp_{1,0})
    = o(0) T_{1} \pi_{1} X_{1} T_{1}^{-1} (\xp_{1,0})
    = o(0) [2]^{-1} C k_{0}^{-1} k_{1}^{-2}
    [ [\xm_{0,1}, \xm_{1,0}]_{q^{2}}, \xm_{1,0}]
    \\
    &\xbm_{0,1}
    = o(0) Y_{1}^{-1} \rho_{1} (\xm_{1,0})
    = o(1) T_{1} \pi_{1} X_{1} T_{1}^{-1} (\xm_{1,0})
    = o(1) [2]^{-1}
    [\xp_{1,0}, [\xp_{1,0}, \xp_{0,-1}]_{q^{-2}} ]
    C^{-1} k_{0} k_{1}^{2}
\end{split}
\end{align}
It is immediate that $\kb_{0}^{\pm 1} = C^{\mp 1}k_{\theta}^{\pm 1}$ in all types.
If we moreover define $\Cb^{\pm 1} = k_{\delta}^{\mp 1}$, then the following theorem shows that mapping each generator to its bold counterpart extends to an anti-involution of $\Utor$ which exchanges $\Uh$ and $\Uv$ (via a twist by $\sigma$).

\begin{thm} \label{thm:psi}
    There exists a unique anti-involution $\psi$ of $\Utor$ sending
    \begin{align*}
        \xpm_{i,m} \mapsto \xbpm_{i,m}, \qquad
        k_{i}^{\pm 1} \mapsto \kb_{i}^{\pm 1}, \qquad
        C^{\pm 1} \mapsto \Cb^{\pm 1},
    \end{align*}
    for all generators (\ref{eqn:simplified generators 1})--(\ref{eqn:simplified generators 2}), determined by the conditions $\psi v = h \sigma$ and $\psi h = v \sigma$.
\end{thm}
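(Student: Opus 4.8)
The plan is to define $\psi$ on the finite generating set of Theorem \ref{thm:finite Utor presentation} by the bold formulas, show that it descends to a well-defined anti-endomorphism of $\Utor$, and then deduce the intertwining identities $\psi v=h\sigma$, $\psi h=v\sigma$ and the involutivity $\psi^{2}=\mathrm{id}$.

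First I would extend the assignments $\xpm_{i,m}\mapsto\xbpm_{i,m}$, $k_{i}^{\pm1}\mapsto\kb_{i}^{\pm1}$, $C^{\pm1}\mapsto\Cb^{\pm1}$ to an anti-homomorphism from the free algebra on generators (\ref{eqn:simplified generators 1})--(\ref{eqn:simplified generators 2}) and check that it annihilates every defining relation (\ref{eqn:simplified relations 1})--(\ref{eqn:simplified relations 13}) read in the opposite order. The relations involving only $C^{\pm1}$ and the $k_{i}^{\pm1}$ are immediate once one records that $\Cb^{\pm1}=k_{\delta}^{\mp1}$ is central (as $(\delta,\alpha_{j})=0$) and that a one-line computation with the $\kb_{i}^{\pm1}$, using $a_{0}=1$ and $\kb_{0}^{\pm1}=C^{\mp1}k_{\theta}^{\pm1}$, returns $\psi(k_{\delta})=C^{-1}$. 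Moreover, on the copy of the finite quantum group $\Uq\subset\Uh\cap\Uv$ the map $\psi$ agrees with the Drinfeld--Jimbo anti-involution $\sigma$, since $\xbpm_{i,0}=\xpm_{i,0}$ and $\kb_{i}^{\pm1}=k_{i}^{\mp1}$ for $i\in I_{0}$; hence every relation whose generators all have index in $I_{0}$ and second subscript $0$ is automatically preserved.

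The substance is the remaining relations, each supported on a rank one subalgebra $\U(i)\cong\UdashA$ or a rank two subalgebra $\U(i,j)$ isomorphic to one of $\UdashA\times\UdashA$, $\UdashA$, $\UdashAA$, $\UdashC$, $\UdashG$ or $\Utorsltwo$. Here I would argue as in the proofs of the braid group action results: the bold generators attached to the indices $\{i,j\}$ are obtained from $\Uq$-elements by applying the $\te$-images of the braid words used to build the standard generators, and since $\te$ exchanges $\Bh$ and $\Bv$ via $\mathfrak{j}$ (Proposition \ref{prop:t on Bh Bd Bv}) while the restricted actions of $\Bh$ on $\Uh$ and of $\Bv$ on $\Uv$ are Lusztig's and Beck's (Remark \ref{rmk:horizontal and vertical restricted actions}), the subalgebra of $\Utor$ generated by these bold generators is again a copy of the same rank $\leq 2$ quantum affine (or toroidal $\mathfrak{sl}_{2}$) algebra, through an anti-isomorphism restricting to $\sigma$ on the finite part and matching affine generators appropriately. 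Consequently the bolded, order-reversed relation holds in $\Utor$ precisely because it holds in that rank $\leq 2$ algebra: this is the affine statement (Theorem \ref{thm:loop Bd action} together with $\sigma$) for the $\UdashA$-, $\UdashAA$-, $\UdashC$-, $\UdashG$-type subalgebras, is \cite{Miki01} via Lemma \ref{lem:equivalent definitions of Utorsl2} for $\Utorsltwo$, and is \cite{Laurie24a} for the simply laced configurations. The cases requiring genuinely new input are the doubly and triply laced rank two configurations, together with the types $A_{1}^{(1)}$ and $C_{n}^{(1)}$ where $a_{0\ell}a_{\ell0}\neq1$ forces the use of the outer automorphisms $\pi_{n}$, $\rho_{n}$; there I would invoke the Coxeter-style presentation of $\Bdd$ from Theorem \ref{thm:Coxeter Bdd} and the identity $v_{i^{\ast}}^{-1}=v_{i}$ from the proof of Proposition \ref{prop:t on Bh Bd Bv} to confirm the key facts $\xbpm_{0,\pm1}=\xpm_{0,\pm1}$ and the behaviour of $\te$ on $\pi_{n}$, $\rho_{n}$ (with the minor $\zeta_{0}$ modification of Section \ref{subsubsection:Bdd action} in type $A_{2n}^{(1)}$), the explicit list (\ref{eqn:A1(1) psi expressions}) disposing of $A_{1}^{(1)}$ outright. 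I expect this rank two bookkeeping in the non-simply-laced types to be the main obstacle.

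Once $\psi$ is a well-defined anti-endomorphism I would establish $\psi v=h\sigma$ and $\psi h=v\sigma$ by testing on the Drinfeld--Jimbo generators of $\Uv$ and $\Uh$: on $\xpm_{i,0}$, $k_{i}^{\pm1}$ with $i\in I_{0}$ both sides are the $\sigma$-on-$\Uq$ statement; on the horizontal affine generators $\xpm_{0,0}$, $k_{0}^{\pm1}$ one uses the braid-word presentations (\ref{eqn:xpm0pm1 identities}) and their $\kb$-analogues; and on the vertical affine generators $v(\xpm_{0})$, $v(k_{0}^{\pm1})$ one uses Jing's isomorphism together with Beck's braid-word description of the affine generators, which is matched exactly by applying $\te$. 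Since $\Utor$ is generated by $\Uh$ and $\Uv$ (Corollary \ref{toroidal generated by horizontal and vertical}), these identities give $\psi^{2}\circ v=v$ and $\psi^{2}\circ h=h$, hence $\psi^{2}=\mathrm{id}$, so $\psi$ is an anti-involution; and the same two identities pin $\psi$ down on a generating set, yielding uniqueness.
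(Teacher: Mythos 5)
There is a genuine gap at the central step. You reduce the remaining relations to rank $\leq 2$ index sets and then assert that ``the subalgebra of $\Utor$ generated by these bold generators is again a copy of the same rank $\leq 2$ quantum affine (or toroidal $\mathfrak{sl}_{2}$) algebra, through an anti-isomorphism restricting to $\sigma$ on the finite part.'' That assertion is essentially equivalent to the well-definedness you are trying to prove, and the braid group action does not deliver it: the action gives automorphisms of $\Utor$, so applying $\te(b)$ to a relation already known to hold produces a valid identity, but the bold generators attached to a pair $\lbrace 0,j\rbrace$ are images of $\Uq$-elements under \emph{different} braid words, and a single bolded, order-reversed relation (say one mixing $\xbpm_{0,0}=v\sigma(\xpm_{0})$, which is a long iterated $q$-commutator spread over the whole finite diagram, with $\xbpm_{j,\pm 1}=h\sigma(\xpm_{j,\pm 1})$ and $\xbpm_{0,\pm 1}=\xpm_{0,\pm 1}$) is not the image under any one $b\in\Bdd$ of a relation you already possess, nor is it supported on any rank $2$ subalgebra of $\Utor$. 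The paper's actual mechanism is different: it sorts the bolded relations according to whether they lie (after reversal) inside one of the three \emph{affine} subalgebras $\Uh$, $\Uv$ or $\Udiag$ --- where they follow from the Drinfeld new, Drinfeld--Jimbo and Drinfeld--Jimbo presentations respectively via Lemma \ref{lem:lemma for psi theorem} --- and then transports the leftover relations into this known set by applying suitable $Y_{\beta}$, using the explicit table of Lemma \ref{lem:Y on zero generators}; relation \textbf{(\ref{eqn:simplified relations 8})} for the index $0$ additionally needs a direct global computation with Jing's isomorphism and the centrality of $k_{\delta}$.

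You also misplace where the real difficulty sits. Types $A_{1}^{(1)}$ and $C_{n}^{(1)}$ are dispatched by the explicit list (\ref{eqn:A1(1) psi expressions}) and the outer automorphisms $\rho_{n}$, exactly as you suggest. The genuinely hard case is $G_{2}^{(1)}$, where the coweight lattice $\Pov$ is too small relative to the marks $a_{i}$ for a $Y_{\beta}$ with the required pairings $(\beta,\alpha_{0})$, $(\beta,\alpha_{j})$ to exist, so the transport argument fails for certain $j$ in \textbf{(\ref{eqn:simplified relations 7})} and \textbf{(\ref{eqn:simplified relations 9})}. The paper closes this by introducing the elements $\hb_{i,\pm 1}$ and proving Lemmas \ref{lem:hb2r on xbpm0m} and \ref{lem:hb1r on xbpm0m} by hand, then generating the missing relations with $\mathrm{ad}(h_{1,\pm 1})$. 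Nothing in your outline produces these relations, and appealing to the Coxeter presentation of $\Bdd$ or to $v_{i^{*}}^{-1}=v_{i}$ does not substitute for them. Your final paragraph (deducing $\psi v=h\sigma$, $\psi h=v\sigma$, uniqueness and $\psi^{2}=\mathrm{id}$ once well-definedness is in hand) is fine and matches the paper.
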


We postpone the proof to Section \ref{subsection:proof of psi theorem}, and first focus on some immediate consequences of this result.
\\

Figure \ref{illustrations} provides simple illustrations of the quantum toroidal algebra containing the two finite generating sets
$\lbrace \xpm_{i,0}, \xpm_{i,\pm 1}, k_{i}^{\pm 1}, C^{\pm 1}~|~i\in I \rbrace$ and
$\lbrace \xbpm_{i,0}, \xbpm_{i,\pm 1}, \kb_{i}^{\pm 1}, \Cb^{\pm 1}~|~i\in I \rbrace$.
In particular, in each case they highlight where the generators lie inside $\Utor$ with respect to the horizontal and vertical subalgebras, as well as their $\degZ$ grading (except for $C^{\pm 1}$ and $\kb^{\pm 1}_{0}$).
\\
\begin{figure}[H]
    \centering
\begin{tikzpicture}[scale=0.8, transform shape]
    \node at (-0.9,0.35) {$\xpm_{0,0} ~~ k^{\pm 1}_{0}$};
    \node at (-0.9,1.5) {$\xp_{0,1}$};
    \node at (-0.9,-0.8) {$\xm_{0,-1}$};
    \node at (1.5,0.35) {$\xpm_{1,0} ~~ k^{\pm 1}_{1}$};
    \node at (1.5,1.5) {$\xp_{1,1}$};
    \node at (1.5,-0.8) {$\xm_{1,-1}$};
    \node at (3,-1.5) {$C^{\pm 1}$};
    \node at (3,0.35) {$\cdots$};
    \node at (3,1.5) {$\cdots$};
    \node at (3,-0.8) {$\cdots$};
    \node at (4.5,0.35) {$\xpm_{n,0} ~~ k^{\pm 1}_{n}$};
    \node at (4.5,1.5) {$\xp_{n,1}$};
    \node at (4.5,-0.8) {$\xm_{n,-1}$};
    \node at (-1.8,0.7) {$\color{blue} \Uh$};
    \node at (0.6,2.55) {$\color{red} \Uv$};
    \draw[draw=blue] (-2.1,-0.25) rectangle ++(7.8,1.2);
    \draw[draw=red] (0.3,-2.1) rectangle ++(5.5,4.9);
    \draw[draw=black] (-2.2,-2.2) rectangle ++(8.1,5.1);
\end{tikzpicture}
    ~~
\begin{tikzpicture}[scale=0.8, transform shape]
    \node at (-1.5,0.35) {$\Cb^{\pm 1} ~ \xbpm_{1,\pm 1} \,\cdots\, \xbpm_{n,\pm 1}$};
    \node at (-1.1,1.5) {$\xbp_{0,1}$};
    \node at (-1.1,-0.8) {$\xbm_{0,-1}$};
    \node at (1.5,0.35) {$\xbpm_{1,0} ~~ \kb^{\pm 1}_{1}$};
    \node at (3,1.5) {$\xbm_{0,0}$};
    \node at (3,-0.8) {$\xbp_{0,0}$};
    \node at (3,-1.6) {$\kb^{\pm 1}_{0}$};
    \node at (3,0.35) {$\cdots$};
    \node at (4.5,0.35) {$\xbpm_{n,0} ~~ \kb^{\pm 1}_{n}$};
    \node at (-3.5,0.7) {$\color{blue} \Uh$};
    \node at (0.6,2.55) {$\color{red} \Uv$};
    \draw[draw=blue] (-3.8,-0.25) rectangle ++(9.5,1.2);
    \draw[draw=red] (0.3,-2.1) rectangle ++(5.5,4.9);
    \draw[draw=black] (-3.9,-2.2) rectangle ++(9.8,5.1);
\end{tikzpicture}
    \caption[Illustrations of $\Utor$ displaying two generating sets]{\hspace{.5em}Illustrations of $\Utor$ displaying the two finite generating sets}\label{illustrations}
\end{figure}
We remark that the bold generators in some sense give $\Utor$ as a quantum affinization of its vertical rather than horizontal subalgebra, with $\Uv$ in a Drinfeld-Jimbo presentation and $\Uh$ in a Drinfeld new presentation (although the multiplication is of course reversed).
\\

Expressing $\psi(z)$ in terms of the standard generators of $\Utor$ -- and thus understanding in precise detail how $\psi$ acts -- is a difficult task in general.
However, passing to the classical setting provides a useful perspective.
In the limit $q \rightarrow 1$, $\psi$ becomes the anti-involution of $\g[s^{\pm 1},t^{\pm 1}] \oplus \mathbb{K}$ (the universal central extension of the toroidal Lie algebra) which sends
\begin{align*}
    h_{i} \mapsto - h_{i}, \qquad
    e_{i} \mapsto e_{i}, \qquad
    f_{i} \mapsto f_{i},
\end{align*}
for each $i\in I_{0}$, swaps the loop parameters $s$ and $t$, and acts on
$\mathbb{K} = \Omega_{1}\Cbb[s^{\pm 1},t^{\pm 1}] / d\Cbb[s^{\pm 1},t^{\pm 1}]$
accordingly.

\subsection{Discussion and direct consequences of Theorem \ref{thm:psi}} \label{subsection:Discussion and direct consequences of psi theorem}

\subsubsection{Miki automorphism} \label{subsubsection:Miki automorphism}

By composing $\psi$ with the standard anti-involution $\eta$, we obtain an automorphism of $\Utor$ which in type $A_{n}^{(1)}$ is precisely the Miki automorphism from \cites{Miki99,Miki01} (with the extra deformation parameter $\kappa$ set to $1$).

\begin{cor}\label{Phi corollary}
    There exists a unique automorphism $\Phi = \eta \psi$ of $\Utor$ with inverse $\Phi^{-1} = \eta \Phi \eta = \psi \eta$, determined by the conditions $\Phi v = h$ and $\Phi h = v \eta' \sigma$.
\end{cor}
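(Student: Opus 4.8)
The plan is to obtain $\Phi$ as a formal consequence of Theorem \ref{thm:psi} together with the restriction properties of the standard anti-involution $\eta$ recorded at the end of Section \ref{section:Quantum toroidal algebras}. First I note that, since both $\psi$ and $\eta$ are anti-automorphisms of $\Utor$, the composite $\Phi = \eta\psi$ is an algebra automorphism; no further check of well-definedness is needed, as $\Phi$ is assembled from two maps already shown to exist. Uniqueness of an automorphism satisfying $\Phi v = h$ and $\Phi h = v\eta'\sigma$ is then immediate from Corollary \ref{toroidal generated by horizontal and vertical}: the subalgebras $\Uh$ and $\Uv$ generate $\Utor$, so these two conditions pin $\Phi$ down on a generating set.

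It remains to verify the two defining conditions. Recall that $\eta$ restricts to $\sigma$ on $\Uh$, i.e.\ $\eta h = h\sigma$, and to $\eta'$ on $\Uv$, i.e.\ $\eta v = v\eta'$. Combining this with the identities $\psi v = h\sigma$ and $\psi h = v\sigma$ from Theorem \ref{thm:psi}, and using $\sigma^{2} = \mathrm{id}$, I compute
\[
\Phi v = \eta\psi v = \eta(h\sigma) = (\eta h)\sigma = (h\sigma)\sigma = h,
\qquad
\Phi h = \eta\psi h = \eta(v\sigma) = (\eta v)\sigma = v\eta'\sigma .
\]
Some care is required with composition conventions here, since $\psi v$, $\eta h$ and $\eta v$ are anti-homomorphisms rather than homomorphisms, but the bookkeeping is entirely routine. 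For the inverse, $\psi^{2} = \eta^{2} = \mathrm{id}$ gives $\Phi^{-1} = (\eta\psi)^{-1} = \psi\eta$, and likewise $\eta\Phi\eta = \eta(\eta\psi)\eta = \psi\eta$, so $\Phi^{-1} = \eta\Phi\eta = \psi\eta$ as claimed.

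Finally, the identification with the Miki automorphism of $\UtorA$ in type $A_{n}^{(1)}$ will follow by evaluating $\Phi$ on the standard generators --- which can be read off from the bold generators described in Section \ref{section:horizontal-vertical symmetries} --- and matching the result with the formulas of \cites{Miki99,Miki01} (with the extra deformation parameter set to $1$). The only (very mild) obstacle in the whole argument is keeping track of the composition orders for anti-homomorphisms; there is no substantive mathematical content beyond Theorem \ref{thm:psi}.
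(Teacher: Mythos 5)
Your proposal is correct and matches the paper's (implicit) argument exactly: the corollary is stated as an immediate consequence of Theorem \ref{thm:psi}, obtained by composing $\psi$ with $\eta$ and using the restriction identities $\eta h = h\sigma$, $\eta v = v\eta'$, with uniqueness following from the fact that $\Uh$ and $\Uv$ generate $\Utor$. Your explicit verification of $\Phi v = h$, $\Phi h = v\eta'\sigma$ and $\Phi^{-1} = \psi\eta = \eta\Phi\eta$ fills in precisely the routine bookkeeping the paper leaves to the reader.
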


The importance of the Miki automorphisms for $\UtorA$ and $\Utorglone$ cannot be overstated.
They have been fundamental not only for studying the structure and representation theory of the algebras themselves
(eg. \cites{FJMM13,Miki00,Miki01,Miki07,Tsymbaliuk19}),
but also their connections to other fields such as symmetric function and Macdonald theory
(eg. \cites{OS24,OSW22,Wen19})
and mathematical physics
(see \cites{FJMM15,FJM19,MNNZ24} and references therein).
One therefore hopes that our results inspire the extension of such directions to more general settings.
\\

Within the context of our action of the universal cover of $\SL$ on $\Utor$ from Theorem \ref{thm:congruence group actions on Utor} below, the automorphism $\Phi$ coincides with the action of
$S = \begin{bsmallmatrix} 0 & 1 \\ -1 & 0 \end{bsmallmatrix}$.
In the case of quantum toroidal $\glone$ this correspondence is known (cf. Section \ref{subsection:Quantum toroidal gl1}), and moreover
\begin{itemize}
    \item relates to $S$-dualities in physics, which provide equivalences between different quantum field theories or string theories,
    \item exists as the limit of Cherednik's Fourier transform on the (spherical) double affine Hecke algebras from \cite{Cherednik05}.
\end{itemize}

In terms of central elements, $\psi$ exchanges $C$ and $(k_{0}^{a_{0}}\dots k_{n}^{a_{n}})^{-1}$ while $\Phi$ maps $C \mapsto k_{0}^{a_{0}}\dots k_{n}^{a_{n}}$
and $k_{0}^{a_{0}}\dots k_{n}^{a_{n}} \mapsto C^{-1}$.
Twisting level $(a,b)$ representations of $\Utor$ by $\Phi$ therefore produces level $(b,-a)$ representations, and in this way we can obtain many new modules for quantum toroidal algebras.

\begin{eg}
    \begin{itemize}
        \item In symmetric types, this should relate certain $\ell$-highest weight and (future) Fock space representations with vertex representations, since level $(0,b)$ modules become level $(b,0)$.
        \item To the author's knowledge, outside the symmetric case there do not yet exist representations of $\Utor$ with level $(a,0)$ for $a \not= 0$, such as vertex representations.
        The first examples then come from twisting modules with $\ell$-highest weight $(\lambda,\Psi)$ and thus level $(0,\langle\lambda,c\rangle)$ by $\Phi$.
    \end{itemize}
\end{eg}

Since $\psi$ fixes $\xpm_{0,\pm 1}$ by construction, it follows that $\Phi(\xpm_{0,\pm 1}) = \xpm_{0,\mp 1}$.
This was originally shown for $\UtorA$ in \cite{Tsymbaliuk19}*{Prop. 2.6(d)} using a type $A_{n}^{(1)}$ specific argument.

\begin{rmk}
    Computing the images under $\psi$ or $\Phi$ for arbitrary elements of $\Utor$ is a difficult problem in general.
    A useful tool in type $A_{n}^{(1)}$ has been the situation of $\UtorA$ within the framework of combinatorially defined \textit{shuffle algebras} through works of Negu\c{t} \cites{Negut20,Negut24} and Tsymbaliuk \cites{Tsymbaliuk19,Tsymbaliuk23}.
    We expect these directions to extend to all untwisted types and perhaps even beyond, providing new methods for approaching quantum toroidal algebras.
\end{rmk}

\subsubsection{Compatibility relations}

Our (anti-)automorphisms $\psi$ and $\Phi^{\pm 1}$ enjoy the following compatibilities with our braid group action $\Bdd \curvearrowright \Utor$, and may therefore be considered as quantum toroidal analogues of the corresponding automorphisms of $\Bdd$ from Section \ref{subsection:Extended double affine braid groups}.

\begin{prop}\label{prop:compatibilities}
    \begin{itemize}
        \item For all $b\in\Bdd$ we have $\psi \circ b = \te(b) \circ \psi$ as anti-automorphisms of $\Utor$.
        \item For all $b\in\Bdd$ we have $\Phi^{\pm 1} \circ b = (\gamma_{v}\te)^{\pm 1}(b) \circ \Phi^{\pm 1} = (\gamma_{h}\te)^{\mp 1}(b) \circ \Phi^{\pm 1}$ as automorphisms of $\Utor$.
    \end{itemize}
\end{prop}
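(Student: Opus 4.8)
The plan is to deduce both parts from the fact that $\psi$ and $\eta$ are anti-\emph{involutions}, so that conjugation by either one converts an action by automorphisms into another such action. Throughout I identify each $b\in\Bdd$ with the corresponding automorphism of $\Utor$ from Theorem \ref{thm:Bdd action on Utor}, as in the running convention. Since $\psi^{2}=\mathrm{id}$, the first claim is equivalent to saying that the automorphisms $\psi\circ b\circ\psi$ and $\te(b)$ of $\Utor$ coincide for every $b\in\Bdd$. Now $b\mapsto\psi\circ b\circ\psi$ is a group homomorphism $\Bdd\to\mathrm{Aut}(\Utor)$ (because $\psi^{2}=\mathrm{id}$), and so is $b\mapsto\te(b)$ (because $\te$ is an automorphism of $\Bdd$); hence it suffices to verify $\psi\circ b\circ\psi=\te(b)$ on a generating set of $\Bdd$. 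I would use the Coxeter-type generators $T_{1},\dots,T_{n},\Th{1},\Th{2},\Th{3},\pi_{i},\rho_{i}$ of Theorem \ref{thm:Coxeter Bdd} (with the extra $\zeta_{0}$ in type $A_{2n}^{(1)}$); equivalently, since $\Bdd=\langle\Bh,\Bv\rangle$, it is enough to treat $b$ in generating sets of $\Bh$ and of $\Bv$. For each such $b$ the required identity $\psi\circ b=\te(b)\circ\psi$ is an identity of anti-homomorphisms $\Utor\to\Utor$, so it need only be checked on the generators $C^{\pm1},k_{i}^{\pm1},\xpm_{i,0},\xpm_{i,\pm1}$ of the finite presentation of Theorem \ref{thm:finite Utor presentation}.

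For $b\in\Bh$ the verification is conceptual: by Remark \ref{rmk:horizontal and vertical restricted actions} the action of $\Bh$ restricts to Lusztig's braid action on $\Uh$ (and that of $\Bv$ to Beck's on $\Uv$), while $\psi$ restricts to the exchange of $\Uh$ and $\Uv$ through $\sigma$ and $\te|_{\Bh}=\mathfrak j\colon\Bh\xrightarrow{\sim}\Bv$ by Proposition \ref{prop:t on Bh Bd Bv}. On the horizontal generators the identity then reduces to $\sigma\Tb_{i}\sigma=\Tb_{i}^{-1}$ and $\sigma S_{\pi}\sigma=S_{\pi}$ from Section \ref{subsubsection:braid group action} — indeed this is precisely why $\te|_{\Bh}$ is the involution $\mathfrak j$, inverting the $T_{i}$ and fixing $\Omega$. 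For $C^{\pm1}$ one uses that $C$ is fixed by the whole $\Bdd$-action, that $\psi(C^{\pm1})=\Cb^{\pm1}=k_{\delta}^{\mp1}$, and that $k_{\delta}$ is fixed by $\te(b)$ (via $\Tb_{i}(k_{j})=k_{j}k_{i}^{-a_{ij}}$ together with $\langle\delta,\alpha_{i}^{\vee}\rangle=0$, and the action of the $\X_{i}$ on the $k_{j}$). The remaining generators $\xpm_{i,\pm1}$ — in particular $\xpm_{0,\pm1}$, which $\psi$ fixes by construction and for which $\te(\Th{2})=\Th{2}^{-1}$ does the work — are handled directly from the explicit bold-generator formulae recorded just before Theorem \ref{thm:psi}, using the defining relations of the $\Bdd$-action ($\T_{i}\X_{j}=\X_{j}\T_{i}$ for $i\neq j$, $\T_{i}^{-1}\X_{i}\T_{i}^{-1}=\X_{i}\prod_{j}\X_{j}^{-a_{ij}}$) and the identity $v_{i^{*}}^{-1}=v_{i}$ from the proof of Proposition \ref{prop:t on Bh Bd Bv}. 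The cases $b\in\Bv$ (i.e.\ $\Th{3}=T^{v}_{0}$ and the $\rho_{i}$) are treated in the same explicit fashion.

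The second claim then follows from the first. Writing $\Phi=\eta\psi$ and $\Phi^{-1}=\psi\eta$ (Corollary \ref{Phi corollary}), I first record that conjugation by $\eta$ realises $\gv$ on the $\Bdd$-action, i.e.\ $\eta\circ b\circ\eta=\gv(b)$ for all $b\in\Bdd$: this is again a homomorphism identity (using $\eta^{2}=\mathrm{id}$), and on the generators $T_{i},X_{\omega_{i}^{\vee}},\pi_{i}$ it is precisely $\eta\T_{i}\eta=\T_{i}^{-1}$ (Proposition \ref{prop:defining Ti}), $\eta\X_{i}\eta=\X_{i}^{-1}$ and $\eta\Scal_{\pi}\eta=\Scal_{\pi}$ (both immediate from the explicit formulae), matching the fact that $\gv$ inverts $T_{0},\dots,T_{n}$ and all $X_{\beta}$ while fixing $\Omega$. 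Combining this with the first part gives
\begin{align*}
\Phi\circ b
&=\eta\circ\psi\circ b
=\eta\circ\te(b)\circ\psi
=(\eta\circ\te(b)\circ\eta)\circ\eta\circ\psi
=\gv(\te(b))\circ\Phi
=(\gv\te)(b)\circ\Phi, \\
\Phi^{-1}\circ b
&=\psi\circ\eta\circ b
=\psi\circ(\eta\circ b\circ\eta)\circ\eta
=\psi\circ\gv(b)\circ\eta
=\te(\gv(b))\circ\Phi^{-1}
=(\gv\te)^{-1}(b)\circ\Phi^{-1},
\end{align*}
the last equalities since $\te$ and $\gv$ are involutions. Finally $(\gv\te)^{\mp1}=(\gh\te)^{\pm1}$ as automorphisms of $\Bdd$: from the definition $\gh=\te\gv\te$ one gets $\gh\te=\te\gv=(\gv\te)^{-1}$, which yields the alternative form in the statement.

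The main obstacle is the explicit generator-by-generator verification of the first claim for $b\in\Bv$ (equivalently for $T^{v}_{0}$ and the $\rho_{i}$): unlike $\Bh$, the vertical subgroup does not preserve $\Uh$ inside $\Utor$, so the clean ``transport along $\psi$'' argument is unavailable and one must compute directly with the affinized braid operators $\T_{i},\X_{i}$ and the bold generators. One can cut down the casework by using the anti-involution property of $\psi$ to replace $b\in\Bv$ by $\te(b)\in\Bh$, but the double-bond generators $\xpm_{i,\mp1}$ and the type $A_{2n}^{(1)}$ adjustments (involving $\zeta_{0}=\s^{(0)}_{-1}$) still require individual attention.
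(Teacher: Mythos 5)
Your overall strategy is sound, and it is almost certainly the intended one: the paper itself gives no argument here, deferring entirely to the author's thesis, so there is nothing line-by-line to compare against. Your reductions are all valid: conjugation by the anti-involution $\psi$ (resp.\ $\eta$) defines a group homomorphism $\Bdd\to\mathrm{Aut}(\Utor)$, so it suffices to check the identities on group generators of $\Bdd$ and, for each fixed generator, on the algebra generators of the finite presentation in Theorem \ref{thm:finite Utor presentation}. Your derivation of the second bullet from the first is complete and correct: the auxiliary identity $\eta\circ b\circ\eta=\gv(b)$ does hold on generators ($\eta\T_i\eta=\T_i^{-1}$ from Proposition \ref{prop:defining Ti}, $\eta\X_i\eta=\X_i^{-1}$ and $\eta\Scal_\pi\eta=\Scal_\pi$ by direct inspection, with the $\zeta_0$ adjustment in type $A_{2n}^{(1)}$ harmless since $\eta$ commutes with $\sfrak^{(0)}_{-1}$), and the bookkeeping $\gh\te=\te\gv=(\gv\te)^{-1}$ is right. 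The one place where your write-up stops short of a proof is exactly where you say it does: for the first bullet, the transport argument only settles $\psi\circ b=\te(b)\circ\psi$ on $\Uh$ for $b\in\Bh$ (and on $\Uv$ for $b\in\Bv$), and the remaining generators $\xpm_{i,\pm1}$, $\xpm_{0,\pm1}$ -- which leave the subalgebra preserved by $b$ -- genuinely require the explicit computations with the bold generators. It is worth noting that the paper has already done most of this work en route to Theorem \ref{thm:psi}: Lemma \ref{lem:lemma for psi theorem}(\ref{lemma (4)}) is precisely the identity $\psi\circ\pi_i=\rho_i\circ\psi$ evaluated on $\xpm_{j,m}$ for $m=0,\pm1$, and Lemma \ref{lem:Y on zero generators} records the actions $Y_\beta(\xbpm_{0,m})$ needed for the generators $X_\beta$ of $\Bv$; citing these (together with $\psi(\xpm_{0,\pm1})=\xpm_{0,\pm1}$ and $\te(\Th{2})=\Th{2}^{-1}$) would close the remaining cases and turn your sketch into a complete proof.
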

\begin{proof}
    See the author's thesis \cite{Laurie24b}*{§3.3}.
\end{proof}

Identities such as these often prove to be useful tools, for example allowing us to transfer computations for $\Utor$ over to $\Bdd$.
Indeed, working within the braid group setting is usually far easier than performing calculations inside quantum algebras.

\subsubsection{Congruence group actions on quantum toroidal algebras} \label{subsubsection:Congruence group actions on quantum toroidal algebras}

The Coxeter presentation for $\Bdd$ from Section \ref{subsubsection:Coxeter-style presentation} has numerous applications, including in all affine types $X_{n}^{(r)}$ an action of the corresponding congruence group $\Gam{r} \leq \SL$ on $\Bdd$ by outer automorphisms.
This moreover descends from an action by automorphisms of its universal cover $\TGam{r}$, which is isomorphic to the braid group of type $A_{2}$, $B_{2}$ or $G_{2}$ when $r=1,2$ or $3$ respectively.
For $r=1$ these results are originally due to Cherednik \cite{Cherednik95}, while the general case was proven by Ion-Sahi \cites{IS06,IS20}.
\\

In the author's thesis \cite{Laurie24b} we obtained for all untwisted types a quantum analogue of these results, in particular a congruence group action
$\TGam{1} \curvearrowright \Utor$.
The proof relies on the existence of our anti-involution $\psi$, together with compatibility relations such as those in Proposition \ref{prop:compatibilities}.
Since the congruence groups $\Gam{r}$ are defined by
\begin{align*}
    \Gam{r} =
    \left\lbrace
    \begin{bmatrix}
        a & b \\ c & d
    \end{bmatrix}
    \in \SL ~\middle\vert~
    \begin{bmatrix}
        a & b \\ c & d
    \end{bmatrix}
    =
    \begin{bmatrix}
        1 & * \\ 0 & 1
    \end{bmatrix}
    \mathrm{~mod~} r
    \right\rbrace
\end{align*}
for $r \in \lbrace 1,2,3 \rbrace$, in the untwisted case we are simply dealing with $\Gam{1} = \SL$ and its universal cover $\TGam{1} = \widetilde{\SL}$.

\begin{thm} \label{thm:congruence group actions on Utor}
\begin{itemize}
    \item There exists an action
    $\widetilde{\SL} \curvearrowright \Utor$
    given by
    $\begin{bsmallmatrix} 1 & -1 \\ 0 & 1 \end{bsmallmatrix}
    \mapsto \X_{0}^{-1}$
    and
    $\begin{bsmallmatrix} 1 & 0 \\ 1 & 1 \end{bsmallmatrix}
    \mapsto \psi\X_{0}\psi$,
    which fixes $\Uh\cap\Uv \cong \Uq$ pointwise.
    \item This is compatible with
    $\widetilde{\SL} \curvearrowright \Bdd$
    and our braid group action, namely
    $m\cdot(b\cdot z) = (m\cdot b)\cdot z$
    for all $m\in\widetilde{\SL}$, $b\in\Bdd$ and $z\in\Utor$.
    \item We can therefore combine our congruence and braid group actions to obtain
    $\widetilde{\SL} \ltimes \Bdd \curvearrowright \Utor$.
\end{itemize}
\end{thm}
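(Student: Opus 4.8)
The plan is to reduce all three parts to the braid group $\Bdd$ via the compatibility relations of Proposition \ref{prop:compatibilities}. In the untwisted case the relevant congruence group is $\Gam{1} = \SL$, whose universal cover $\TGam{1} = \widetilde{\SL}$ is — by the $r=1$ instance of the discussion preceding the theorem — the braid group of type $A_{2}$: it is generated by the images $L, R$ of $\begin{bsmallmatrix} 1 & -1 \\ 0 & 1 \end{bsmallmatrix}$ and $\begin{bsmallmatrix} 1 & 0 \\ 1 & 1 \end{bsmallmatrix}$, subject only to the braid relation $LRL = RLR$ (which one checks these matrices satisfy in $\SL$). So for Part 1 it is enough to verify that $\X_{0}^{-1}$ and $\psi\X_{0}\psi$ are algebra automorphisms of $\Utor$ — clear, since $\X_{0}$ is an automorphism and $\psi$ an anti-involution by Theorem \ref{thm:psi} — and that they satisfy $\X_{0}^{-1}(\psi\X_{0}\psi)\X_{0}^{-1} = (\psi\X_{0}\psi)\X_{0}^{-1}(\psi\X_{0}\psi)$. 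That the action fixes $\Uh \cap \Uv \cong \Uq$ pointwise is immediate: $\X_{0}$ fixes $\xpm_{i,0}$ and $k_{i}$ for $i \in I_{0}$ (as $\delta_{0i} = 0 = \langle \Lambda_{0}, \alpha_{i}^{\vee} \rangle$), hence so does $\X_{0}^{-1}$, while the formulas of Section \ref{section:horizontal-vertical symmetries} ($\xbpm_{i,0} = \xpm_{i,0}$, $\kb_{i}^{\pm 1} = k_{i}^{\mp 1}$) show $\psi$ restricts to $\sigma$ on $\Uq$, so $\psi\X_{0}\psi$ restricts to $\sigma^{2} = \mathrm{id}$; stability of $\Uq$ is then inherited by every element of $\widetilde{\SL}$.

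For the braid relation, and simultaneously Parts 2 and 3, I would use the structural fact (Section \ref{section:horizontal-vertical symmetries}) that $\Utor$ is generated as an algebra by the elements $b(z)$ with $b$ in the image of the $\Bdd$-action of Theorem \ref{thm:Bdd action on Utor} and $z \in \Uq$ — indeed every generator in the finite presentation of Theorem \ref{thm:finite Utor presentation} has this form, the central $C$ lying in the subalgebra generated by the $\xpm_{i,\pm 1}$ and $k_{i}$. Consequently two automorphisms $F, G$ of $\Utor$ agree as soon as $F|_{\Uq} = G|_{\Uq}$ and $F \circ b = \varphi(b) \circ F$, $G \circ b = \varphi(b) \circ G$ for all $b \in \Bdd$ and one fixed $\varphi \in \mathrm{Aut}(\Bdd)$. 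Letting $\mathfrak{l}, \mathfrak{r} \in \mathrm{Aut}(\Bdd)$ be the images of $L, R$ under the Ion--Sahi/Cherednik action $\widetilde{\SL} \curvearrowright \Bdd$ of \cites{Cherednik95,IS06,IS20} — so that $\mathfrak{l}\mathfrak{r}\mathfrak{l} = \mathfrak{r}\mathfrak{l}\mathfrak{r}$ — the entire theorem then reduces to the intertwining relations
\[
    \X_{0}^{-1} \circ b = \mathfrak{l}(b) \circ \X_{0}^{-1}, \qquad
    (\psi\X_{0}\psi) \circ b = \mathfrak{r}(b) \circ (\psi\X_{0}\psi) \qquad (b \in \Bdd).
\]
Granting them, both $\X_{0}^{-1}(\psi\X_{0}\psi)\X_{0}^{-1}$ and $(\psi\X_{0}\psi)\X_{0}^{-1}(\psi\X_{0}\psi)$ intertwine the $\Bdd$-action with $\mathfrak{l}\mathfrak{r}\mathfrak{l} = \mathfrak{r}\mathfrak{l}\mathfrak{r}$ and both restrict to $\mathrm{id}$ on $\Uq$, hence coincide — giving Part 1. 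The displayed relations are precisely the assertion that $\widetilde{\SL}$ normalises the $\Bdd$-action (i.e. $m \cdot (b \cdot z) = (m \cdot b) \cdot (m \cdot z)$ for $m = L, R$, hence for all $m$ by multiplicativity), which is Part 2; Part 3 is the formal assembly of the two compatible actions into one of $\widetilde{\SL} \ltimes \Bdd$.

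The main obstacle is thus the two intertwining relations. The second follows from the first by Proposition \ref{prop:compatibilities}: writing $\psi \circ c = \te(c) \circ \psi$ and using $\mathfrak{r} = \te \mathfrak{l}^{-1} \te$ — consistent with $\begin{bsmallmatrix} 1 & 0 \\ 1 & 1 \end{bsmallmatrix} = \begin{bsmallmatrix} 0 & 1 \\ 1 & 0 \end{bsmallmatrix} \begin{bsmallmatrix} 1 & 1 \\ 0 & 1 \end{bsmallmatrix} \begin{bsmallmatrix} 0 & 1 \\ 1 & 0 \end{bsmallmatrix}$ and with $\te$ corresponding to the Fourier transform $\begin{bsmallmatrix} 0 & 1 \\ 1 & 0 \end{bsmallmatrix}$ — one computes $(\psi\X_{0}\psi) \circ b = \psi \circ \X_{0} \circ \te(b) \circ \psi = \psi \circ \mathfrak{l}^{-1}(\te(b)) \circ \X_{0} \circ \psi = \te(\mathfrak{l}^{-1}(\te(b))) \circ (\psi\X_{0}\psi) = \mathfrak{r}(b) \circ (\psi\X_{0}\psi)$. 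The first relation is where the real work lies: $\X_{0}$ lies in the affinized braid group $\widehat{\B}$ of Definition \ref{defn:affinized braid group}, which acts on $\Utor$ by Theorem \ref{thm:affinized braid group action} and inside which $\Bdd$ embeds, and one checks on the generators $T_{i}, X_{\omega_{i}^{\vee}}, \pi$ of $\Bdd$ — using the affinized braid relations — that conjugation by $\X_{0}^{-1}$ realises precisely the Ion--Sahi shear $\mathfrak{l}$ (fixing $T_{1},\dots,T_{n}$ and the lattice, and shearing $T_{0}$ by the relevant coweight). This is a short verification per generator but must be carried out uniformly over all untwisted types, with the usual modification in type $A_{2n}^{(1)}$ where one works with the enlarged $\Bdd$ incorporating $\zeta_{0}$ (acting by $\s^{(0)}_{-1}$) as in Section \ref{subsubsection:Bdd action}. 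Full details are in \cite{Laurie24b}*{\S3.3}.
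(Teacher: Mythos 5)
Your proposal is correct and follows essentially the same route as the paper, which gives no argument in the text beyond stating that the proof ``relies on the existence of our anti-involution $\psi$, together with compatibility relations such as those in Proposition \ref{prop:compatibilities}'' and deferring all details to \cite{Laurie24b}*{\S 3.3}. Your reduction to the two intertwining relations --- via the rigidity of an automorphism of $\Utor$ determined by its restriction to $\Uh\cap\Uv$ together with its $\Bdd$-equivariance, plus the braid presentation of $\widetilde{\SL}$ --- is exactly that strategy (and correctly reads the compatibility in the second bullet as $m\cdot(b\cdot z) = (m\cdot b)\cdot(m\cdot z)$), with the same residual generator-by-generator verification that conjugation by $\X_{0}^{-1}$ realises the Ion--Sahi shear on $\Bdd$ deferred to the thesis, as the paper does.
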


As mentioned in Section \ref{subsection:Quantum toroidal gl1}, in the specific case of quantum toroidal $\glone$, an action of $\SL$ was realized geometrically by Burban-Schiffmann \cites{BS12,Schiffmann12} as Fourier-Mukai transforms of coherent sheaves on an elliptic curve over a finite field.
Our results therefore motivate the extension of such work to more general settings.

\begin{rmk}
    Our theorem can be extended to the universal cover of $\GL$ by letting its additional generator
    $\begin{bsmallmatrix} 0 & 1 \\ 1 & 0 \end{bsmallmatrix}$
    act on $\Utor$ via our anti-involution $\psi$.
\end{rmk}

See the author's thesis \cite{Laurie24b}*{§3.3} for further discussion and the proofs of these results.

\subsubsection{Diagonal subalgebras of quantum toroidal algebras}

Our anti-involution $\psi$ indicates the importance of a third quantum affine subalgebra $\Udiag$ which we shall call the \emph{diagonal subalgebra}, first introduced by the author in \cite{Laurie24b}.
This is defined as the image of the homomorphism
$U'_{q}(\Xun) \rightarrow \Utor$ sending
\begin{align*}
    \xipm \mapsto \xpm_{i,0}, \qquad
    k_{i}^{\pm 1} \mapsto k_{i}^{\pm 1}, \qquad
    \xpm_{0} \mapsto \xpm_{0,\pm 1}, \qquad
    k_{0}^{\pm 1} \mapsto (C k_{0})^{\pm 1},
\end{align*}
for each $i\in I_{0}$, with $C k_{\delta}$ as its canonical central element.
We immediately see that $\psi$ restricts to the anti-involution $\sigma$ on $\Udiag = \X_{0}^{-1}(\Uh)$, which therefore also equals
$\psi\X_{0}^{-1}\psi(\Uv)$.
\\

The diagonal subalgebra $\Udiag$ corresponds on the braid group side to the diagonal subgroup $\Bdiag$ of $\Bdd$ from Section \ref{subsubsection:diagonal subgroup}, just as $\Uh$ and $\Uv$ correspond to $\Bh$ and $\Bv$.
Indeed, $\Bdiag$ preserves $\Udiag$ under our braid group action from Proposition \ref{thm:Bdd action on Utor}, in particular acting via Lusztig and Beck's affine action (cf. Remark \ref{rmk:horizontal and vertical restricted actions}).

\begin{rmk}
    Consideration of $\Udiag$ is crucial to our proof of Theorem \ref{thm:psi} outside the simply laced case.
\end{rmk}

\subsubsection{Embeddings of quantum affine algebras}

While it is clear that $v$ is an embedding \cite{Hernandez05}*{Cor. 3} and hence $\Uv$ is a copy of the quantum affine algebra of type $Z_{n}^{(1)}$, the analogous horizontal statement is non-obvious.
Namely, it could be the case that relations of $\Utor$ involving generators not contained in $\Uh$ might have `shadows' inside the horizontal subalgebra.
However, using Theorem \ref{thm:psi} we may in fact deduce the injectivity of $h$ from that of $v$.

\begin{cor}
    The homomorphism $h : U'_{q}(X_{n}^{(1)}) \rightarrow \Utor$ is an embedding, and hence $\Uh$ is isomorphic to the quantum affine algebra of type $X_{n}^{(1)}$.
\end{cor}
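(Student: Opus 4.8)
\emph{Plan.} The strategy is to bootstrap injectivity of $h$ from the known injectivity of $v$ by conjugating through the anti-involution $\psi$ of Theorem \ref{thm:psi}. First recall that we are in the untwisted setting, so the finite Cartan type $Z_{n}$ of $\g$ coincides with $X_{n}$; hence both $h$ and $v$ are homomorphisms with the \emph{same} source $U'_{q}(X_{n}^{(1)})$, and Hernandez \cite{Hernandez05}*{Cor. 3} already gives that $v$ is an embedding. The key input from Theorem \ref{thm:psi} is the intertwining relation $\psi h = v\sigma$, where $\sigma$ is the standard anti-involution of $U'_{q}(X_{n}^{(1)})$ which fixes each $\xpm_{i}$ and sends $k_{i}\mapsto k_{i}^{-1}$.

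Next I would exploit that $\psi$ is an anti-involution of $\Utor$, so in particular a bijection with $\psi^{-1}=\psi$. Composing the relation $\psi h = v\sigma$ on the left with $\psi$ gives
\begin{align*}
    h = \psi \circ v \circ \sigma
\end{align*}
as maps $U'_{q}(X_{n}^{(1)}) \rightarrow \Utor$. (Equivalently one could start from $\psi v = h\sigma$ and use $\sigma^{2}=\mathrm{id}$ to reach the same identity.)

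Now each factor on the right is injective: $\sigma$ is a bijection of $U'_{q}(X_{n}^{(1)})$ since it is an anti-involution, $v$ is injective by \cite{Hernandez05}, and $\psi$ is a bijection of $\Utor$. A composite of injective maps is injective, so $h$ is an embedding; consequently $\Uh = \mathrm{im}(h)$ is isomorphic to the quantum affine algebra $U'_{q}(X_{n}^{(1)})$, as claimed.

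I do not anticipate any real obstacle: the argument is a one-line diagram chase once Theorem \ref{thm:psi} is in hand. The only point requiring mild care is bookkeeping of domains --- one must confirm that $\psi h$ and $v\sigma$ genuinely have matching source and target, which is exactly the statement that in untwisted type the vertical subalgebra has the same type $X_{n}^{(1)}$ as the horizontal one (because $Z_{n}=X_{n}$). All the substantive work is hidden in the construction and verification of $\psi$.
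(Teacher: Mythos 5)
Your argument is correct and is exactly the route the paper intends (the paper leaves the proof implicit, saying only that injectivity of $h$ is deduced from that of $v$ via Theorem \ref{thm:psi}): from $\psi h = v\sigma$ and $\psi^{2}=\mathrm{id}$ one gets $h=\psi v\sigma$, a composite of injections. No gaps.
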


Moreover, a corresponding diagonal result follows immediately by composing with $\X_{0}^{-1}$.

\begin{rmk}
    In the case of $\UtorA$, Tsymbaliuk \cite{Tsymbaliuk19}*{Rmk. 2.3} verified the injectivity of both $v$ and $h$ using Hopf pairings.
    These arguments should extend naturally to the general case.
\end{rmk}

\subsection{Proof of Theorem \ref{thm:psi}} \label{subsection:proof of psi theorem}

First we must verify the $\xpm_{0,\pm 1} = b(z)$ expressions given in (\ref{eqn:xpm0pm1 identities}) outside types $A_{1}^{(1)}$ and $C_{n}^{(1)}$, which imply that $\xbpm_{0,\pm 1} = \xpm_{0,\pm 1}$ since $\te$ inverts both $T_{\ell}$ and $\Th{2}$.
\begin{align*}
    T_{\ell}\Th{2}(\xpm_{\ell,0})
    &=
    T_{\ell}T_{0}^{-1}
    {\textstyle \prod_{i\in\Tilde{I}}X_{i}^{-1}}
    (\xpm_{\ell,0})
    =
    T_{\ell}T_{0}^{-1}
    {\textstyle \left(
    \prod_{i\in\Tilde{I}}\X_{i}^{-1}
    \right)}
    \X_{0}^{2}(\xpm_{\ell,0})
    =
    T_{\ell}\X_{\ell}^{-1}T_{0}^{-1}(\xpm_{\ell,0})
    \\
    &=
    \X_{\ell}^{-1}
    {\textstyle \left(
    \prod_{i\in I}\X_{i}^{a_{\ell i}}
    \right)}
    T_{\ell}^{-1}T_{0}^{-1}(\xpm_{\ell,0})
    =
    \X_{\ell}^{-1}
    {\textstyle \left(
    \prod_{i\in I}\X_{i}^{a_{\ell i}}
    \right)}
    (\xpm_{0,0})
    \\
    &= o(0)\xpm_{0,\pm 1}
    \\
    T_{\ell}^{-1}\Th{2}^{-1}(\xpm_{\ell,0})
    &=
    T_{\ell}^{-1}
    {\textstyle \left( \prod_{i\in\Tilde{I}}X_{i} \right)}
    T_{0}(\xpm_{\ell,0})
    =
    T_{\ell}^{-1}
    {\textstyle \left( \prod_{i\in\Tilde{I}}\X_{i} \right)}\X_{0}^{-2}
    T_{0}(\xpm_{\ell,0})
    \\
    &=
    {\textstyle \left( \prod_{i\in\Tilde{I}}\X_{i} \right)}
    {\textstyle \left( \prod_{j\in I}\X_{j}^{-a_{\ell j}} \right)}
    \X_{0}^{-2}
    T_{\ell}T_{0}(\xpm_{\ell,0})
    \\
    &=
    {\textstyle \left( \prod_{i\in\Tilde{I}}\X_{i} \right)}
    {\textstyle \left( \prod_{j\in I}\X_{j}^{-a_{\ell j}} \right)}
    \X_{0}^{-2}
    (\xpm_{0,0})
    \\
    &=
    o(0)\xpm_{0,\pm 1}
\end{align*}
In addition, the following alternative expressions for $\xbpm_{0,\pm 1}$ shall be useful in calculations.
\begin{align*}
    \xbpm_{0,\pm 1}
    &= o(0) \te(T_{\ell}\Th{2}) (\xpm_{\ell,0})
    = o(0) \te(T_{\ell}T_{0}^{-1}X_{\ell}^{-1}) (\xpm_{\ell,0})
    = o(0) T_{\ell}^{-1}T_{0}^{v}Y_{\ell}^{-1} (\xpm_{\ell,0})
    \\
    \xbpm_{0,\pm 1}
    &= o(0) \te(T_{\ell}^{-1}\Th{2}^{-1}) (\xpm_{\ell,0})
    = o(0) T_{\ell}Y_{\ell}(T_{0}^{v})^{-1} (\xpm_{\ell,0})
    \\
    &= o(0) Y_{s_{\ell}(\varpi_{\ell}^{\vee})} T_{\ell}^{-1} (T_{0}^{v})^{-1} (\xpm_{\ell,0})
    = o(0) Y_{s_{\ell}(\varpi_{\ell}^{\vee})} T_{\ell} T_{0}^{v} (\xpm_{\ell,0})
\end{align*}

A brief technical lemma provides an assortment of identities required for the proof of Theorem \ref{thm:psi}.
Note that in type $A_{2n}^{(1)}$ we restrict to $\rho = \rho_{1}$ for (\ref{lemma (4)}), while in type $A_{1}^{(1)}$ we can extend (\ref{lemma (2)}) and (\ref{lemma (4)}) to include $m = \mp 1$.

\begin{lem} \label{lem:lemma for psi theorem}
\begin{itemize}
        \item $Y_{i}(\xbpm_{j,0}) = \xbpm_{j,0}$ and $Y_{i}(\kb_{j}^{\pm 1}) = \kb_{j}^{\pm 1}$ for all distinct $i,j \in I_{0}, \hfill \refstepcounter{equation}(\theequation)\label{lemma (1)}$
        \item $\xbpm_{i,m} = h\sigma(\xpm_{i,m})$, $\kb_{i}^{\pm 1} = h\sigma(k_{i}^{\pm 1})$ and $\Cb^{\pm 1} = h\sigma(C^{\pm 1})$ for all $i \in I_{0}$ and $m=0,\pm 1, \hfill \refstepcounter{equation}(\theequation)\label{lemma (2)}$
        \item $\xbpm_{i,0} = v\sigma(\xpm_{i})$ and $\kb_{i}^{\pm 1} = v\sigma(k_{i}^{\pm 1})$ for all $i\in I, \hfill \refstepcounter{equation}(\theequation)\label{lemma (3)}$
        \item $\rho(\xbpm_{i,m}) = o_{i,\rho(i)}^{m}\xbpm_{\rho(i),m}$ and $\rho(\kb_{i}^{\pm 1}) = \kb_{\rho(i)}^{\pm 1}$ for all $i\in I$, $m=0,\pm 1$ and $\rho\in\Omega^{v}. \hfill \refstepcounter{equation}(\theequation)\label{lemma (4)}$
    \end{itemize}
\end{lem}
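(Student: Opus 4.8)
The plan is to reduce all four identities to computations inside the horizontal and vertical quantum affine subalgebras, using that by Remark~\ref{rmk:horizontal and vertical restricted actions} the action of $\Bdd$ on $\Utor$ from Theorem~\ref{thm:Bdd action on Utor} restricts to the Lusztig--Beck actions of $\Bh$ on $\Uh$ and of $\Bv$ on $\Uv$. Combined with Beck's isomorphism \cite{Beck94} between the Drinfeld--Jimbo and Drinfeld new presentations of an untwisted quantum affine algebra, Jing's isomorphism \cite{Jing98}, and the rank-one isomorphism \eqref{eqn:hi isomorphism}, this turns each statement into bookkeeping with the sign functions $o$.

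I would establish \eqref{lemma (3)} first, as the others build on it. For $i\in I_0$ both sides equal $\xpm_{i,0}$ respectively $k_i^{\mp 1}$, since $\sigma$ fixes $x_i^\pm$ and $v$ sends $x_i^\pm\mapsto\xpm_{i,0}$, $k_i\mapsto k_i$. For $i=0$, outside types $A_1^{(1)}$ and $C_n^{(1)}$ one picks a vertex $\ell$ adjacent to $0$ with $a_{0\ell}=a_{\ell 0}=-1$; then $\xbpm_{0,0}=T_\ell T_0^v(\xpm_{\ell,0})$ and $\kb_0^{\pm 1}=T_\ell T_0^v(k_\ell^{\mp 1})$ with $T_\ell,T_0^v$ acting on $\Uv$ as the Coxeter generators $\Tb_\ell,\Tb_0$, so the braid identities $\Tb_\ell\Tb_0(x_\ell^\pm)=x_0^\pm$ (recorded after Theorem~\ref{thm:braid group action}) and $\Tb_\ell\Tb_0(k_\ell)=k_0$ give $\xbpm_{0,0}=v(x_0^\pm)=v\sigma(x_0^\pm)$ and $\kb_0^{\pm 1}=v(k_0^{\mp 1})=v\sigma(k_0^{\pm 1})$. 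In types $A_1^{(1)}$ and $C_n^{(1)}$, where no vertex adjacent to $0$ is singly bonded, one instead has $\xbpm_{0,0}=\rho_n(\xpm_{n,0})$ (respectively $\rho_1(\xpm_{1,0})$), and since $\rho_n\in\Omega^v$ acts on $\Uv$ as the automorphism $S_{\rho_n}$ induced by the affine diagram automorphism $n\mapsto 0$, again $\rho_n(\xpm_{n,0})=v(x_0^\pm)$ and likewise for the $k$'s; for $A_1^{(1)}$ this is also cross-checked against \eqref{eqn:A1(1) psi expressions} and \eqref{eqn:hi isomorphism}, using that $o(0)=-o(1)$.

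Identities \eqref{lemma (1)} and \eqref{lemma (2)} rest on understanding the action of $Y_i=Y_{\omega_i^\vee}\in\Bh$ on $\Uh$. The cases $m=0$ of \eqref{lemma (2)} and the Cartan generators are immediate: $\xbpm_{i,0}=\xpm_{i,0}=h\sigma(\xpm_{i,0})$, $\kb_i^{\pm 1}=k_i^{\mp 1}=h\sigma(k_i^{\pm 1})$, and $\Cb^{\pm 1}=k_\delta^{\mp 1}=h\sigma(C^{\pm 1})$, the last since Jing's isomorphism sends the Drinfeld new central element to $k_\delta$. For $m=\pm 1$ one exploits that $\Bh$ carries the \emph{alternative} Bernstein presentation of Remark~\ref{rmk:alternative Bernstein presentation}, which differs from the standard one by inverting $T_0,\dots,T_n$; since $\Tb_j^{-1}=\sigma\Tb_j\sigma$, inverting the $\Tb_j$ amounts to conjugating the entire action by $\sigma$, so the lattice element $Y_{\omega_i^\vee}$ realises on $\Uh$ precisely the $\sigma$-conjugate of Beck's degree-shifting automorphism $\Xb_i$. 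As Beck's isomorphism identifies $\Xb_i^{\mp 1}(x_i^\pm)$, up to the factor $o(i)$, with the Drinfeld new generator $\xpm_{i,\pm 1}$ of $\Uh$, conjugating by $\sigma$ and cancelling $o(i)$ yields $\xbpm_{i,\pm 1}=h\sigma(\xpm_{i,\pm 1})$. The same analysis gives \eqref{lemma (1)}: in the alternative Bernstein presentation $T_k$ commutes with $Y_{\omega_i^\vee}$ for $k\in I_0\setminus\{i\}$ (since $(\omega_i^\vee,A_k^\vee)=0$), and Beck's formulas show the resulting automorphism fixes $x_j^\pm=\xbpm_{j,0}$ and $k_j^{\mp 1}=\kb_j^{\pm 1}$ for every $j\neq i$.

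Finally, \eqref{lemma (4)} uses that $\rho\in\Omega^v$ acts on $\Uv$ as the automorphism $S_\rho$ induced by the corresponding affine diagram automorphism. For $m=0$ and the $k_i$ this is immediate from \eqref{lemma (3)}, which gives $v(x_k^\pm)=\xbpm_{k,0}$ for all $k\in I$: thus $\rho(\xbpm_{j,0})=v(S_\rho(x_j^\pm))=v(x_{\rho(j)}^\pm)=\xbpm_{\rho(j),0}=o_{j,\rho(j)}^0\xbpm_{\rho(j),0}$, and similarly for the $k$'s. For $m=\pm 1$ one commutes $\rho$ past $Y_j^{-1}$ inside $\Bdd$ using the $\Omega^v$-conjugation relations on the $Y$-lattice, reduces to $\rho(\xpm_{j,0})$ via the previous case, and checks that the resulting lattice element acts on $\xpm_{\rho(j),0}$ as the relevant $Y_{\omega_{\rho(j)}^\vee}^{-1}$ (interpreting $\xbpm_{0,\pm 1}$ via \eqref{eqn:xpm0pm1 identities} when $\rho(j)=0$); the sign emerging from $o(j)o(\rho(j))^{-1}=o_{j,\rho(j)}$ matches $o_{j,\rho(j)}^{\pm 1}$. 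The modified conventions in types $A_{2n}^{(1)}$ (restricting to $\rho=\rho_1$, with $o_{i,j}=(-1)^{\overline{j-i}}$ in place of a global sign function) and $A_1^{(1)}$ (extending $m$ to $\mp 1$ via \eqref{eqn:A1(1) psi expressions}) are handled by the same arguments with the adjusted definitions. I expect the main obstacle to be exactly the sign bookkeeping --- propagating the $o(i)$ and $o_{i,j}$ through Beck's and Jing's isomorphisms, and correctly pinning down the $\sigma$-twist produced by the alternative Bernstein presentation of $\Bh$ --- compounded by the separate treatment of the affine node $0$ in the exceptional types $A_1^{(1)}$, $C_n^{(1)}$ and $A_{2n}^{(1)}$.
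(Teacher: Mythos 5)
Your proposal is correct and takes essentially the same route as the paper: both rest on the transport identity $Y_{\beta}h = h\sigma\Xb_{\beta}\sigma$ (coming from $T_{i}h = h\sigma\Tb_{i}^{-1}\sigma$ and $\pi h = h\sigma S_{\pi}\sigma$), the restriction of the $\Bdd$-action to the Lusztig--Beck actions on $\Uh$ and $\Uv$, and for (\ref{lemma (4)}) the conjugation $\rho Y_{i}^{-1}\rho^{-1} = Y_{\rho(i)}^{-1}Y_{\rho(0)}^{a_{i}}$ combined with (\ref{lemma (1)}). The only slip is an exponent: since $\X_{i}(\xpm_{j,m}) = \upsilon(j)^{\delta_{ij}}\xpm_{j,m\mp\delta_{ij}}$, one has $\xpm_{i,\pm 1} = o(i)\Xb_{i}^{-1}(\xpm_{i,0})$ with $\Xb_{i}^{-1}$ for \emph{both} signs, not $\Xb_{i}^{\mp 1}$ as written.
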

\begin{proof}
We know from Proposition \ref{prop:defining Ti} that $T_{i}h = h \Tb_{i} = h \sigma \Tb_{i}^{-1} \sigma$ for all $i\in I$, and it is immediate from the definitions that $\pi h = h S_{\pi} = h \sigma S_{\pi} \sigma$ for each $\pi\in\Omega$.
Each $Y_{\beta}$ can be written as $\pi T_{i_{1}}^{\pm 1}\dots T_{i_{s}}^{\pm 1}$ and so as $\sigma^{2}$ is the identity,
    \begin{align}\label{Y on Uh}
        Y_{\beta} h = h \sigma S_{\pi}\Tb_{i_{1}}^{\mp 1}\dots\Tb_{i_{s}}^{\mp 1} \sigma = h \sigma \Xb_{\beta} \sigma.
    \end{align}
Note that (\ref{lemma (2)}) is trivial for $\xbpm_{i,0}$, $\kb_{i}^{\pm 1}$ and $\Cb^{\pm 1}$, and using (\ref{Y on Uh}) we get
\begin{align*}
    \xbpm_{i,\pm 1}
    = o(i) Y_{i}^{-1} (\xpm_{i,0})
    = o(i) Y_{i}^{-1} h(\xpm_{i,0})
    = o(i) h\sigma\Xb_{i}^{-1}(\xpm_{i,0})
    = h\sigma(\xpm_{i,\pm 1}),
\end{align*}
and so our proof of (\ref{lemma (2)}) is complete.
Fixing distinct $i,j\in I_{0}$ we have from (\ref{Y on Uh}) that
\begin{align*}
    &Y_{i}(\xbpm_{j,0})
    = Y_{i}(\xpm_{j,0})
    = Y_{i} h(\xpm_{j,0})
    = h\sigma\Xb_{i}\sigma(\xpm_{j,0})
    = h(\xpm_{j,0})
    = \xpm_{j,0}
    = \xbpm_{j,0},
    \\
    &Y_{i}(\kb_{j}^{\pm 1})
    = Y_{i}(k_{j}^{\mp 1})
    = Y_{i} h(k_{j}^{\mp 1})
    = h\sigma\Xb_{i}\sigma(k_{j}^{\mp 1})
    = h(k_{j}^{\mp 1})
    = k_{j}^{\mp 1}
    = \kb_{j}^{\pm 1},
\end{align*}
which verifies (\ref{lemma (1)}).
Note that (\ref{lemma (3)}) is trivial when $i\in I_{0}$, and moreover since $\Bv$ acts on $\Uv$ via Lusztig and Beck's affine action, outside types $A_{1}^{(1)}$ and $C_{n}^{(1)}$ we have
\begin{align*}
    &\xbpm_{0,0}
    = T_{\ell}T_{0}(\xpm_{\ell,0})
    = T_{\ell}T_{0} v(\xpm_{\ell})
    = v\Tb_{\ell}\Tb_{0}(\xpm_{\ell})
    = v(\xpm_{0})
    = v\sigma(\xpm_{0}),
    \\
    &\kb_{0}^{\pm 1}
    = T_{\ell}T_{0}(k_{\ell}^{\mp 1})
    = T_{\ell}T_{0} v(k_{\ell}^{\mp 1})
    = v\Tb_{\ell}\Tb_{0}(k_{\ell}^{\mp 1})
    = v(k_{0}^{\mp 1})
    = v\sigma(k_{0}^{\pm 1}).
\end{align*}
In types $A_{n=1}^{(1)}$ and $C_{n}^{(1)}$ this is replaced with
\begin{align*}
    &\xbpm_{0,0}
    = \rho_{n}(\xpm_{n,0})
    = \rho_{n} v(\xpm_{n})
    = v S_{\rho_{n}}(\xpm_{n})
    = v(\xpm_{0})
    = v\sigma(\xpm_{0}),
    \\
    &\kb_{0}^{\pm 1}
    = \rho_{n}(k_{n}^{\mp 1})
    = \rho_{n} v(k_{n}^{\mp 1})
    = v S_{\rho_{n}}(k_{n}^{\mp 1})
    = v(k_{0}^{\mp 1})
    = v\sigma(k_{0}^{\pm 1}),
\end{align*}
completing the proof of (\ref{lemma (3)}).
For all $\rho \in \Omega^{v}$ we then have that
\begin{align*}
    & \rho(\xbpm_{i,0}) = \rho v(\xpm_{i}) = v S_{\rho} (\xpm_{i}) = v(\xpm_{\rho(i)}) = \xbpm_{\rho(i),0}, \\
    & \rho(\kb_{i}^{\pm 1}) = \rho v(k_{i}^{\mp 1}) = v S_{\rho} (k_{i}^{\mp 1}) = v(k_{\rho(i)}^{\mp 1}) = \kb_{\rho(i)}^{\pm 1},
\end{align*}
using (\ref{lemma (3)}).
The equality $\rho(\xbpm_{i,\pm 1}) = o_{i,\rho(i)}\xbpm_{\rho(i),\pm 1}$ is trivial if either $\rho = \mathrm{id}$ or we are in type $A_{1}^{(1)}$ or $C_{n}^{(1)}$, so we shall henceforth assume otherwise.
If $i,\rho(i)\not= 0$ then $\rho Y_{i}^{-1} \rho^{-1} = Y_{\rho(i)}^{-1} Y_{\rho(0)}^{a_{i}}$ and therefore
\begin{align*}
    \rho(\xbpm_{i,\pm 1})
    &= o(i) \rho Y_{i}^{-1}(\xpm_{i,0})
    = o(i) Y_{\rho(i)}^{-1} Y_{\rho(0)}^{a_{i}} \rho(\xpm_{i,0})
    = o(i) Y_{\rho(i)}^{-1} Y_{\rho(0)}^{a_{i}}(\xpm_{\rho(i),0})
    = o_{i,\rho(i)} \xbpm_{\rho(i),\pm 1}
\end{align*}
by (\ref{lemma (1)}) since $\rho(i),\rho(0)\in I_{0}$ are distinct.
If $i = 0$ then
$(\rho(s_{\ell}(\varpi_{\ell}^{\vee})),\alpha_{\rho(0)})
= (s_{\ell}(\varpi_{\ell}^{\vee}),\alpha_{0}) = -1$
and we have
\begin{align*}
    \rho(\xbpm_{0,\pm 1})
    &= o(0) \rho Y_{s_{\ell}(\varpi_{\ell}^{\vee})} T_{\ell} T_{0}^{v} (\xpm_{\ell,0})
    = o(0) Y_{\rho(s_{\ell}(\varpi_{\ell}^{\vee}))} T_{\rho(\ell)} T_{\rho(0)} \rho (\xpm_{\ell,0})
    \\
    &= o(0) Y_{\rho(s_{\ell}(\varpi_{\ell}^{\vee}))} T_{\rho(\ell)} T_{\rho(0)} (\xpm_{\rho(\ell),0})
    = o_{0,\rho(0)} o(\rho(0)) Y_{\rho(s_{\ell}(\varpi_{\ell}^{\vee}))} (\xpm_{\rho(0),0})
    \\
    &= o_{0,\rho(0)} \xbpm_{\rho(0),\pm 1}
\end{align*}
where we again make use of (\ref{lemma (1)}).
Outside type $A_{2n}^{(1)}$, the case $\rho(i) = 0$ then follows immediately since
\begin{align*}
    \rho(\xbpm_{i,\pm 1})
    = \rho(\xbpm_{\rho^{-1}(0),\pm 1})
    = \rho\left( o_{0,\rho^{-1}(0)}^{-1} \rho^{-1}(\xbpm_{0,\pm 1})\right)
    = o_{\rho^{-1}(0),0} \xbpm_{0,\pm 1}
    = o_{i,\rho(i)} \xbpm_{\rho(i),\pm 1}.
\end{align*}
Type $A_{2n}^{(1)}$ requires more care, and for space reasons we refer the reader to \cite{Laurie24a}*{Lem. 5.2}.
This completes our proof of (\ref{lemma (4)}).
\end{proof}

A second technical lemma gives information about how certain $Y_{\beta}\in\Bdd$ act on the twisted generators $\xbpm_{0,0}$ and $\xbpm_{0,\pm 1}$.

\begin{lem} \label{lem:Y on zero generators}
Our action of $\Bdd$ on $\Utor$ satisfies the following relations.
\\
\renewcommand{\arraystretch}{1.2}
\begin{table}[H]
    \centering
    \begin{tabular}{|c|c||c|c|}
        \hline
        $(\beta,\alpha_{0})$ & $(\beta,\alpha_{\ell})$ & $Y_{\beta}(\xbpm_{0,0})$ & $Y_{\beta}(\xbpm_{0,\pm 1})$ \\
        \hline
        & & & \\[-15pt]
        \hline
        $-1$ & $-2$ & $o(0)\xbpm_{0,\pm 1}$ & \\
        \hline
        $-1$ & $-1$ & $o(0)\xbpm_{0,\pm 1}$ & \\
        \hline
        $-1$ & $0$ & $o(0)\xbpm_{0,\pm 1}$ & \\
        \hline
        $-1$ & $1$ & $o(0)\xbpm_{0,\pm 1}$ & \\
        \hline
        $0$ & $-1$ & $\xbpm_{0,0}$ & $\xbpm_{0,\pm 1}$ \\
        \hline
        $0$ & $0$ & $\xbpm_{0,0}$ & $\xbpm_{0,\pm 1}$ \\
        \hline
        $0$ & $1$ & $\xbpm_{0,0}$ & $\xbpm_{0,\pm 1}$ \\
        \hline
        $1$ & $-1$ & & $o(0)\xbpm_{0,0}$ \\
        \hline
        $1$ & $0$ & & $o(0)\xbpm_{0,0}$ \\
        \hline
        $1$ & $1$ & & $o(0)\xbpm_{0,0}$ \\
        \hline
        $1$ & $2$ & & $o(0)\xbpm_{0,0}$ \\
        \hline
    \end{tabular}
    \caption[Actions of $Y_{\beta}$ on $\xbpm_{0,m}$]{\hspace{.5em}Actions of $Y_{\beta}$ on $\xbpm_{0,m}$}
    \label{table:Y on zero generators}
\end{table}
\renewcommand{\arraystretch}{1}
\end{lem}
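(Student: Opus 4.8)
The plan is to prove each entry of Table \ref{table:Y on zero generators} by reducing the computation to the braid group $\Bdd$ via Proposition \ref{prop:compatibilities}, and then exploiting the restricted action of $\Bv$ on $\Uv$ together with the already-established identities in Lemma \ref{lem:lemma for psi theorem}. The key observation is that $\psi$ intertwines the $\Bdd$-action with the $\te$-twisted action: since $\psi v = h\sigma$ and $\psi h = v\sigma$ (Theorem \ref{thm:psi}), and since $\te$ swaps $\Bh$ and $\Bv$ (Proposition \ref{prop:t on Bh Bd Bv}), applying $\psi$ transports a statement about $Y_\beta$ acting on the bold generators $\xbpm_{0,m} = \psi(\text{standard generator})$ into a statement about $X_\beta = \te(Y_\beta)$ acting on standard generators inside $\Uv$, where Beck's affine action (Remark \ref{rmk:horizontal and vertical restricted actions}) applies directly.

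First I would organize the eleven rows according to the value of $(\beta,\alpha_0)$, since $\alpha_0$ controls how $Y_\beta$ interacts with the vertex-$0$ generators. The three middle rows with $(\beta,\alpha_0) = 0$ follow almost immediately: when $(\beta,\alpha_0) = 0$ one expects $Y_\beta$ (or its conjugate across $\psi$, namely $X_\beta$) to commute with the relevant vertex-$0$ data up to the lattice relations, so both $\xbpm_{0,0}$ and $\xbpm_{0,\pm 1}$ are fixed. Concretely, using the alternative expressions $\xbpm_{0,\pm 1} = o(0) T_\ell^{-1}T_0^v Y_\ell^{-1}(\xpm_{\ell,0})$ and $\xbpm_{0,0} = T_\ell T_0^v(\xpm_{\ell,0})$ established just before the lemma, together with relations (\ref{first alternative extended affine Bernstein})--(\ref{second alternative extended affine Bernstein}) and the fact that $Y_\beta$ commutes with $Y_\ell$, one slides $Y_\beta$ past the braid word; the hypotheses $(\beta,\alpha_0)=0$, $(\beta,\alpha_\ell) \in \{-1,0,1\}$ are exactly what is needed for $Y_\beta$ to commute with $T_\ell^{\pm 1}$ and $T_0^v$ in the appropriate way (via the Bernstein relations, since $T_0^v$ comes from $\alpha_0$). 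For the rows with $(\beta,\alpha_0)=-1$, the point is that $Y_\beta$ should send $\xbpm_{0,0}$ to $\xbpm_{0,\pm 1}$, mimicking how $Y_i^{-1}$ sends $\xpm_{i,0}$ to $o(i)\xbpm_{i,\pm 1}$ — this is the "raising the vertical degree at vertex $0$" operation; dually, the rows with $(\beta,\alpha_0)=1$ perform the inverse, lowering the degree. These can be checked by writing $Y_\beta$ in terms of $T_0^v$ and using that conjugation/application of $T_0^v$ on the vertex-$0$ loop generators of $\Uv$ acts as $\Tb_0$ does on $\Udash$, which shifts the spectral parameter.

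The main obstacle I anticipate is \emph{bookkeeping of signs and of the exact braid-word representatives}. Each $\xbpm_{0,m}$ has several equivalent expressions (in terms of $T_\ell, T_0, \Th{2}$ on the $\Bh$ side, or $T_\ell, T_0^v, Y_\ell$ on the $\Bv$ side, and $\rho_n$ or $X_n$ in types $A_1^{(1)}, C_n^{(1)}$), and choosing the right one so that $Y_\beta$ visibly commutes or visibly shifts is the crux; getting $o(0)$ versus $1$ correct requires tracking the sign function through the $\Xb_i$ automorphisms and the isomorphism $h_i$ of (\ref{eqn:hi isomorphism}). I would handle this by fixing, for each row, a canonical representative of $\beta$ realizing the prescribed inner products (e.g.\ $\beta = -\omega_\ell^\vee$ or $\beta = \omega_\ell^\vee$ type elements, or sums thereof), then computing $Y_\beta(\xbpm_{0,m})$ using Lemma \ref{lem:lemma for psi theorem}(\ref{lemma (1)}) to kill the "spectator" directions $Y_i$ with $i \neq \ell$, leaving a rank-two computation inside some $\U(0,\ell)$; this rank-two case is governed by Beck's formulae for $\Bd$ acting on an untwisted quantum affine algebra (or by \cite{Miki01} when $a_{0\ell}=a_{\ell 0}=-2$), exactly as in the proof of Theorem \ref{thm:psi}. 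In types $A_1^{(1)}$ and $C_n^{(1)}$, where no $\ell \in \Tilde I$ with $a_{0\ell}a_{\ell 0}=1$ exists, the explicit expressions in (\ref{eqn:A1(1) psi expressions}) and the $\rho_n$-formulae before the lemma already give everything, and the extra rows $(\beta,\alpha_\ell) \in \{-2,2\}$ (present only in type $C$) are checked using the $\UdashC$ braid relations. Finally, I would note that the blank cells in the table are deliberate — those combinations of inner products with the particular generator do not arise in the subsequent use of the lemma — so no proof is needed for them.
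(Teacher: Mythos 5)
Your proposal follows essentially the same route as the paper's proof: express $\xbpm_{0,0}$ and $\xbpm_{0,\pm 1}$ via the braid words $T_{\ell}^{\pm 1}(T_{0}^{v})^{\pm 1}$ and $T_{\ell}^{\mp 1}T_{0}^{v}Y_{\ell}^{-1}$ (or $Y_{s_{\ell}(\varpi_{\ell}^{\vee})}T_{\ell}T_{0}^{v}$) applied to $\xpm_{\ell,0}$, slide $Y_{\beta}$ through using the Bernstein relations (\ref{first alternative extended affine Bernstein})--(\ref{second alternative extended affine Bernstein}), finish with Lemma \ref{lem:lemma for psi theorem}(\ref{lemma (1)}), and reduce types $A_{1}^{(1)}$ and $C_{n}^{(1)}$ to the $\rho_{n}$-conjugated statement. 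Two minor corrections: the rows with $(\beta,\alpha_{\ell})=\pm 2$ are not confined to type $C_{n}^{(1)}$ and are handled by the same general sliding computation, and the paper saves half the work by observing that the $(\beta,\alpha_{0})=-1$ rows follow formally from the $(\beta,\alpha_{0})=1$ rows.
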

\begin{proof}
We start by noting that the first five rows of the table follow immediately from the last five.
Moreover the proofs in types $A_{n=1}^{(1)}$ and $C_{n}^{(1)}$ are easily deduced from
\begin{align*}
    Y_{\beta}(\xbpm_{0,0})
    &= Y_{\beta} \rho_{n} (\xbpm_{n,0})
    = \rho_{n} Y_{\rho_{n}(\beta)} (\xbpm_{n,0}),
    \\
    Y_{\beta}(\xbpm_{0,\pm 1})
    &= o(0) Y_{\beta} \rho_{n} Y_{n}^{-1} (\xbpm_{n,0})
    = o(0) \rho_{n} Y_{\rho_{n}(\beta)} Y_{n}^{-1} (\xbpm_{n,0}),
\end{align*}
together with (\ref{lemma (1)}) and (\ref{lemma (4)}), and so we may restrict to the other types from now on.
In the following, we shall freely use without mention the various expressions for $\xbpm_{0,\pm 1}$ already presented, equation (\ref{lemma (1)}), and the relations of $\Bdd$.
\\

If $(\beta,\alpha_{0}) = 0$ and $(\beta,\alpha_{\ell}) = 0$ then
    \begin{align*}
    Y_{\beta}(\xbpm_{0,0})
    &= Y_{\beta}T_{\ell} T_{0} (\xpm_{\ell,0})
    = T_{\ell} T_{0} Y_{\beta} (\xpm_{\ell,0})
    = T_{\ell} T_{0} (\xpm_{\ell,0})
    \\
    &= \xbpm_{0,0},
    \\
    o(0) Y_{\beta}(\xbpm_{0,\pm 1})
    &= Y_{\beta} T_{\ell}^{-1}T_{0}^{v}Y_{\ell}^{-1} (\xpm_{\ell,0})
    = T_{\ell}^{-1}T_{0}^{v}Y_{\ell}^{-1} Y_{\beta} (\xpm_{\ell,0})
    = T_{\ell}^{-1}T_{0}^{v}Y_{\ell}^{-1} (\xpm_{\ell,0})
    \\
    &= o(0) \xbpm_{0,\pm 1}.
    \end{align*}
If $(\beta,\alpha_{0}) = 0$ and $(\beta,\alpha_{\ell}) = 1$ then
    \begin{align*}
    Y_{\beta}(\xbpm_{0,0})
    &= Y_{\beta}T_{\ell} T_{0}^{v} (\xpm_{\ell,0})
    = T_{\ell}^{-1} Y_{s_{\ell}(\beta)} T_{0}^{v} (\xpm_{\ell,0})
    \\
    &= T_{\ell}^{-1} (T_{0}^{v})^{-1} Y_{s_{0}s_{\ell}(\beta)} (\xpm_{\ell,0})
    = T_{\ell}^{-1} (T_{0}^{v})^{-1} (\xpm_{\ell,0})
    \\
    &= \xbpm_{0,0},
    \\
    o(0) Y_{\beta}(\xbpm_{0,\pm 1})
    &= Y_{\beta}Y_{s_{\ell}(\varpi_{\ell}^{\vee})} T_{\ell} T_{0}^{v} (\xpm_{\ell,0})
    = Y_{s_{\ell}(\varpi_{\ell}^{\vee})} T_{\ell}^{-1} Y_{s_{\ell}(\beta)} T_{0}^{v} (\xpm_{\ell,0})
    \\
    &= Y_{s_{\ell}(\varpi_{\ell}^{\vee})} T_{\ell}^{-1} (T_{0}^{v})^{-1} Y_{s_{0}s_{\ell}(\beta)} (\xpm_{\ell,0})
    = Y_{s_{\ell}(\varpi_{\ell}^{\vee})} T_{\ell}^{-1} (T_{0}^{v})^{-1} (\xpm_{\ell,0})
    \\
    &= o(0) \xbpm_{0,\pm 1}.
    \end{align*}
If $(\beta,\alpha_{0}) = 1$ and $(\beta,\alpha_{\ell}) = -1$ then
    \begin{align*}
    o(0) Y_{\beta}(\xbpm_{0,\pm 1})
    &= Y_{\beta}T_{\ell}^{-1}T_{0}^{v}Y_{\ell}^{-1} (\xpm_{\ell,0})
    = T_{\ell}
    Y_{s_{\ell}(\beta)}
    T_{0}^{v}Y_{\ell}^{-1} (\xpm_{\ell,0})
    = T_{\ell} T_{0}^{v}
    Y_{s_{\ell}(\beta) - \varpi_{\ell}^{\vee}} (\xpm_{\ell,0})
    \\
    &= \xbpm_{0,0}.
    \end{align*}
If $(\beta,\alpha_{0}) = 1$ and $(\beta,\alpha_{\ell}) = 0$ then
    \begin{align*}
    o(0) Y_{\beta}(\xbpm_{0,\pm 1})
    &= Y_{\beta}T_{\ell}^{-1}T_{0}^{v}Y_{\ell}^{-1} (\xpm_{\ell,0})
    = T_{\ell}^{-1}Y_{\beta}T_{0}^{v}Y_{\ell}^{-1} (\xpm_{\ell,0})
    = T_{\ell}^{-1}(T_{0}^{v})^{-1}
    Y_{s_{0}(\beta) - \varpi_{\ell}^{\vee}} (\xpm_{\ell,0})
    \\
    &= \xbpm_{0,0}.
    \end{align*}
If $(\beta,\alpha_{0}) = 1$ and $(\beta,\alpha_{\ell}) = 1$ then
    \begin{align*}
    o(0) Y_{\beta}(\xbpm_{0,\pm 1})
    &= Y_{\beta + s_{\ell}(\varpi_{\ell}^{\vee})} T_{\ell} T_{0}^{v} (\xpm_{\ell,0})
    = T_{\ell} T_{0}^{v}
    Y_{\beta + s_{\ell}(\varpi_{\ell}^{\vee})} (\xpm_{\ell,0})
    = T_{\ell} T_{0}^{v} (\xpm_{\ell,0})
    \\
    &= \xbpm_{0,0}.
    \end{align*}
If $(\beta,\alpha_{0}) = 1$ and $(\beta,\alpha_{\ell}) = 2$ then
    \begin{align*}
    o(0) Y_{\beta}(\xbpm_{0,\pm 1})
    &= Y_{\beta + s_{\ell}(\varpi_{\ell}^{\vee})} T_{\ell} T_{0}^{v} (\xpm_{\ell,0})
    = T_{\ell}^{-1}
    Y_{s_{\ell}(\beta + s_{\ell}(\varpi_{\ell}^{\vee}))}
    T_{0}^{v} (\xpm_{\ell,0})
    \\
    &= T_{\ell}^{-1} (T_{0}^{v})^{-1}
    Y_{s_{0}s_{\ell}(\beta + s_{\ell}(\varpi_{\ell}^{\vee}))}
    (\xpm_{\ell,0})
    = T_{\ell}^{-1} (T_{0}^{v})^{-1}
    (\xpm_{\ell,0})
    \\
    &= \xbpm_{0,0}. \qedhere
    \end{align*}
\end{proof}

We are now ready to prove Theorem \ref{thm:psi} in all untwisted types other than $G_{2}^{(1)}$, which shall require some additional consideration -- see Lemmas \ref{lem:hb2r on xbpm0m} and \ref{lem:hb1r on xbpm0m}.
This stems from $\Pov$ being `too small' within $P^{\vee}$ due to the particular $a_{i}$ labels, and so $\Bdd$ does not quite reach every relation of $\Utor$ so directly from those lying inside $\Uh$, $\Uv$ or $\Udiag$.
We shall therefore make clear precisely which relations are not covered by our initial methods, and then deal with these separately afterwards.

\begin{proof}[Proof of Theorem~{\upshape\ref{thm:psi}}]
To show that $\psi$ is an anti-homomorphism, we must check that the relations of Theorem \ref{thm:finite Utor presentation} still hold if we reverse the order of multiplication and replace each generator with its image under $\psi$.
Denote these modified relations by \textbf{(\ref{eqn:simplified relations 1})}--\textbf{(\ref{eqn:simplified relations 13})}.
\\

Every relation with indices in $I_{0}$ follows immediately from the Drinfeld new presentation of $\Uh$ using (\ref{lemma (2)}).
Moreover, relations involving only $\xbpm_{i,0}$ and $\kb_{i}^{\pm 1}$ terms follow from the Drinfeld-Jimbo presentation for $\Uv$ by (\ref{lemma (3)}).
Furthermore, all of the relations containing only $\xbpm_{0,\pm 1}$, $\xbpm_{i,0}$ and $\kb_{i}^{\pm 1}$ with $i\in I_{0}$ are verified with the Drinfeld-Jimbo presentation for $\Udiag$ since $\psi$ acts by $\sigma$ on these generators.
We shall now address the remaining relations not already covered by these arguments.
\\

\textbf{(\ref{eqn:simplified relations 4})}
For $A_{1}^{(1)}$ everything is easily checked using (\ref{eqn:A1(1) psi expressions}).
In other types, only the $i = 0$, $m = \pm 1$ cases remain, which are verified as follows with $j\not= 0$.
\begin{align*}
    \kb_{0} \xbpm_{0,\pm 1} \kb_{0}^{-1}
    &= C k_{\theta}^{-1} \xpm_{0,\pm 1} k_{\theta} C^{-1}
    = k_{\delta} k_{\theta}^{-1} \xpm_{0,\pm 1} k_{\theta} k_{\delta}^{-1}
    = k_{0} \xpm_{0,\pm 1} k_{0}^{-1}
    = q_{0}^{\pm a_{00}} \xpm_{0,\pm 1}
    \\
    &= q_{0}^{\pm a_{00}} \xbpm_{0,\pm 1}
    \\
    \kb_{0} \xbpm_{j,\pm 1} \kb_{0}^{-1}
    &= C k_{\theta}^{-1} \xbpm_{j,\pm 1} k_{\theta} C^{-1}
    = k_{\delta} k_{\theta}^{-1} \xbpm_{j,\pm 1} k_{\theta} k_{\delta}^{-1}
    = k_{0} \xbpm_{j,\pm 1} k_{0}^{-1}
    \\
    &= h\sigma(k_{0}\xpm_{j,\pm 1}k_{0}^{-1})
    = h\sigma(C k_{\theta}^{-1} \xpm_{j,\pm 1} k_{\theta} C^{-1})
    = h\sigma\left(\textstyle \prod_{i\in I_{0}} (q_{i}^{\mp a_{ij}})^{a_{i}} \xpm_{j,\pm 1}\right)
    \\
    &= q^{\mp \sum_{i\in I_{0}} a_{i}d_{i}a_{ij}} h\sigma(\xpm_{j,\pm 1})
    = q^{\pm a_{0}d_{0}a_{0j}} h\sigma(\xpm_{j,\pm 1})
    \\
    &= q_{0}^{\pm a_{0j}} \xbpm_{j,\pm 1}
\end{align*}

\textbf{(\ref{eqn:simplified relations 5})}
The only case left to check is $i = 0$, $m = -1$ in type $A_{1}^{(1)}$, which by (\ref{lemma (4)}) comes from applying $\rho_{1}$ to the $i = 1$, $m = -1$ relation.
\\

\textbf{(\ref{eqn:simplified relations 6})}
These are only present in type $A_{1}^{(1)}$, where applying $\rho_{1}$ to the $i = 1$ relation gives the $i = 0$ one.
\\

\textbf{(\ref{eqn:simplified relations 7})}
In type $A_{1}^{(1)}$ we can check everything directly using (\ref{eqn:A1(1) psi expressions}), so assume otherwise.
By Lemma \ref{lem:Y on zero generators}, all
$[\xbm_{j,-1},\xbp_{0,1}] = 0$ and
$[\xbm_{0,-1},\xbp_{j,1}] = 0$
with $j\in I_{0}$ are obtained by applying some $Y_{\beta}$ with
$(\beta,\alpha_{0}) = (\beta,\alpha_{j}) = -1$ and $-2 \leq (\beta,\alpha_{\ell}) \leq 1$
to the corresponding relations
$[\xbm_{j,0},\xbp_{0,0}] = 0$ and
$[\xbm_{0,0},\xbp_{j,0}] = 0$.
\textit{In type $G_{2}^{(1)}$ this argument fails for $j = 1$.}
\\

Using (\ref{lemma (1)}) and Lemma \ref{lem:Y on zero generators}, every
$[\xbm_{0,0},\xbp_{j,1}] = 0$ and
$[\xbm_{j,-1},\xbp_{0,0}] = 0$
with $j\in I_{0}$ can be reached via one of the following:
\begin{itemize}
    \item Apply $Y_{\beta}$ with
    $(\beta,\alpha_{0}) = 1$,
    $(\beta,\alpha_{j}) = -1$ and
    $-1 \leq (\beta,\alpha_{\ell}) \leq 2$
    to $[\xbm_{0,-1},\xbp_{j,0}] = 0$ and
    $[\xbm_{j,0},\xbp_{0,1}] = 0$
    respectively.
    \item Apply $Y_{\beta}$ with
    $(\beta,\alpha_{0}) = 0$,
    $(\beta,\alpha_{j}) = -1$ and
    $-1 \leq (\beta,\alpha_{\ell}) \leq 1$
    to $[\xbm_{0,0},\xbp_{j,0}] = 0$ and
    $[\xbm_{j,0},\xbp_{0,0}] = 0$
    respectively.
\end{itemize}
\textit{In type $G_{2}^{(1)}$ this argument fails for $j = 2$.}
\\

\textbf{(\ref{eqn:simplified relations 8})}
Again, the $A_{1}^{(1)}$ case may be checked with (\ref{eqn:A1(1) psi expressions}).
In all other types, combining (\ref{lemma (3)}) with Jing's isomorphism between the presentations of $\Udash$ gives
\begin{align*}
    \xbp_{0,0}
    = v(\xp_{0})
    = [\xm_{i_{h-1},0},\dots,\xm_{i_{2},0},\xm_{i_{1},1}]_{q^{\epsilon_{1}}\dots q^{\epsilon_{h-2}}} C k_{\theta}^{-1},
\end{align*}
so by centrality of $k_{\delta}$ and relation $7$ of Definition \ref{defn:quantum toroidal algebra} we have
\begin{align*}
    \xbp_{0,0}\xbp_{0,1}
    &= [\xm_{i_{h-1},0},\dots,\xm_{i_{2},0},\xm_{i_{1},1}]_{q^{\epsilon_{1}}\dots q^{\epsilon_{h-2}}} C k_{\theta}^{-1}
    \xp_{0,1}
    \\
    &= [\xm_{i_{h-1},0},\dots,\xm_{i_{2},0},\xm_{i_{1},1}]_{q^{\epsilon_{1}}\dots q^{\epsilon_{h-2}}} C k_{0}
    \xp_{0,1} k_{0}^{-1} k_{\theta}^{-1}
    \\
    &= [\xm_{i_{h-1},0},\dots,\xm_{i_{2},0},\xm_{i_{1},1}]_{q^{\epsilon_{1}}\dots q^{\epsilon_{h-2}}}
    q_{0}^{2} \xp_{0,1} C k_{\delta}^{-1}
    \\
    &= q_{0}^{2} \xp_{0,1} [\xm_{i_{h-1},0},\dots,\xm_{i_{2},0},\xm_{i_{1},1}]_{q^{\epsilon_{1}}\dots q^{\epsilon_{h-2}}} C k_{\delta}^{-1}
    \\
    &= q_{0}^{2} \xbp_{0,1}\xbp_{0,0}
\end{align*}
and thus $[\xbp_{0,0},\xbp_{0,1}]_{q_{0}^{2}} = 0$.
The relation $[\xbm_{0,-1},\xbm_{0,0}]_{q_{0}^{-2}} = 0$ is proved similarly.
\\

When $j\not\sim 0$ we obtain
$[\xbp_{j,0},\xbp_{0,1}]_{q_{0}^{a_{0j}}} + [\xbp_{0,0},\xbp_{j,1}]_{q_{0}^{a_{0j}}} = 0$
and
$[\xbm_{j,-1},\xbm_{0,0}]_{q_{0}^{-a_{0j}}} + [\xbm_{0,-1},\xbm_{j,0}]_{q_{0}^{-a_{0j}}} = 0$
as an immediate consequence of \textbf{(\ref{eqn:simplified relations 9})}, so assume otherwise.
Outside type $C_{n}^{(1)}$ we can apply both sides of
$\T_{2}^{-1} T_{2} \Th{2} = T_{0}^{-1} X_{-\theta^{\vee}}$
to $\xbp_{j,0}$ as follows, noting that
$\langle \theta^{\vee}, \alpha_{j} \rangle = 1$ and $o(0) = -o(j)$.
\begin{gather*}
    \xbp_{j,0}
    \xmapsto{T_{j}\Th{2}}
    o(0) \xbp_{0,1}
    \xmapsto{T_{j}^{-1}}
    o(0) [\xbp_{j,0},\xbp_{0,1}]_{q_{0}^{-1}}
    \\
    \xbp_{j,0}
    \xmapsto{X_{-\theta^{\vee}}}
    o(j) \xbp_{j,1}
    \xmapsto{T_{0}^{-1}}
    o(j) [\xbp_{0,0},\xbp_{j,1}]_{q_{0}^{-1}}
\end{gather*}
Furthermore, we prove $[\xbm_{j,-1},\xbm_{0,0}]_{q_{0}} + [\xbm_{0,-1},\xbm_{j,0}]_{q_{0}} = 0$
in the same manner, except with $\xbp_{j,0}$ replaced by $\xbm_{j,0}$.
For $C_{n}^{(1)}$ we instead apply $\rho_{n}$ to the corresponding relations with indices $n-1$ and $n$.
\\

\textbf{(\ref{eqn:simplified relations 9})}
Only the affine $q$-Serre relations with
$(y_{i},y_{j}) = (\xbpm_{0,0},\xbpm_{r,\pm 1}),(\xbpm_{r,\pm 1},\xbpm_{0,0})$ for each $r\in I_{0}$ remain,
which by (\ref{lemma (1)}) and Lemma \ref{lem:Y on zero generators} can be verified via one of the following.
\begin{itemize}
    \item Apply $Y_{\beta}$ with
    $(\beta,\alpha_{0}) = 1$,
    $(\beta,\alpha_{r}) = -1$ and
    $-1 \leq (\beta,\alpha_{\ell}) \leq 2$
    to the affine $q$-Serre relations with
    $(y_{i},y_{j}) = (\xbpm_{0,\pm 1},\xbpm_{r,0}),(\xbpm_{r,0},\xbpm_{0,\pm 1})$.
    \item Apply $Y_{\beta}$ with
    $(\beta,\alpha_{0}) = 0$,
    $(\beta,\alpha_{r}) = -1$ and
    $-1 \leq (\beta,\alpha_{\ell}) \leq 1$
    to the affine $q$-Serre relations with
    $(y_{i},y_{j}) = (\xbpm_{0,0},\xbpm_{r,0}), (\xbpm_{r,0},\xbpm_{0,0})$.
\end{itemize}
\textit{In type $G_{2}^{(1)}$ this argument fails for $j = 2$.}
\\

\textbf{(\ref{eqn:simplified relations 10})}
The $i = 0$ relations follow by applying $\rho_{1}$ to those with $i = 1$.
\\

\textbf{(\ref{eqn:simplified relations 11})}--\textbf{(\ref{eqn:simplified relations 12})}
These are checked directly using (\ref{eqn:A1(1) psi expressions}).
\\

We have therefore verified that $\psi$ is an anti-homomorphism.
The conditions $\psi v = h\sigma$ and $\psi h = v\sigma$ are then immediate from (\ref{lemma (2)}) and (\ref{lemma (3)}), and moreover determine $\psi$ uniquely since $\Uh$ and $\Uv$ generate $\Utor$.
Furthermore, it also follows that $\psi^{2} = \mathrm{id}$ on both $\Uh$ and $\Uv$ and so $\psi$ is in fact an anti-involution.
\end{proof}

\begin{notation}
    We shall write $\Rh$, $\Rv$ and $\Rdiag$ for the sets of relations in $\Utor$ involving only elements contained in $\Uh$, $\Uv$ and $\Udiag$ respectively.
\end{notation}

We are left to deduce the remaining relations
\textbf{(\ref{eqn:simplified relations 7})} and \textbf{(\ref{eqn:simplified relations 9})}
in type $G_{2}^{(1)}$ from those we already have.
To this end, define elements
$\hb_{i,1} = [\xbm_{i,0},\xbp_{i,1}] \kb_{i}^{-1}$
and
$\hb_{i,-1} = [\xbm_{i,-1},\xbp_{i,0}] \kb_{i}$
of $\Utor$ for each $i\in I$.
It follows from $\Rh$ that
\begin{align} \label{eqn:hbir on xbpmjm}
    \begin{split}
    [\xbp_{j,0},\hb_{i,1}] &= [a_{ij}]_{i} \, \xbp_{j,1},
    \\
    [\xbm_{j,-1},\hb_{i,1}] &= - [a_{ij}]_{i} \Cb \xbm_{j,0},
    \end{split}
    \begin{split}
    [\xbp_{j,1},\hb_{i,-1}] &= [a_{ij}]_{i} \Cb^{-1} \xbp_{j,0},
    \\
    [\xbm_{j,0},\hb_{i,-1}] &= - [a_{ij}]_{i} \xbm_{j,-1},
    \end{split}
\end{align}
whenever $i,j\in I_{0}$, as well as $[\hb_{1,r_{1}},\hb_{2,r_{2}}] = 0$ for all $r_{1},r_{2}\in\lbrace\pm 1\rbrace$.
The next two lemmas extend some of these identities to the $j=0$ case.

\begin{lem} \label{lem:hb2r on xbpm0m}
    In type $G_{2}^{(1)}$ we have $[\xbpm_{0,0},\hb_{2,\pm 1}] = \mp \xbpm_{0,\pm 1}$.
\end{lem}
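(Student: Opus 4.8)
The plan is to strip off the outer layers of $\hb_{2,\pm 1}$ using only relations that are already available, reducing the identity to a single cross-commutator, and then to pin that commutator down by an explicit calculation routed through the adjacent node $\ell$ of $0$ (here $a_{0\ell}a_{\ell 0}=1$).

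First I would carry out the reduction. Recall that $\hb_{2,1}=[\xbm_{2,0},\xbp_{2,1}]\kb_{2}^{-1}$ with $\xbm_{2,0}=\xm_{2,0}$, and that $\xbp_{0,0}=v\sigma(\xp_{0})=v(\xp_{0})\in\Uv$ by (\ref{lemma (2)})--(\ref{lemma (1)}). In type $G_{2}^{(1)}$ the nodes $0$ and $2$ are non-adjacent, so $a_{02}=0$, and applying $v$ to the relations $[\xp_{0},\xm_{2}]=0$ and $[\xp_{0},k_{2}]=0$ of $\Udash$ yields $[\xbp_{0,0},\xbm_{2,0}]=0$ and $[\xbp_{0,0},\kb_{2}]=0$, both of which lie in $\Rv$. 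Combining these two vanishings with the Leibniz and Jacobi identities collapses the left-hand side to
\[
    [\xbp_{0,0},\hb_{2,1}]=\big[\xbm_{2,0},[\xbp_{0,0},\xbp_{2,1}]\big]\kb_{2}^{-1},
\]
so the problem is reduced to computing the cross-commutator $[\xbp_{0,0},\xbp_{2,1}]$, with $\xbp_{0,0}\in\Uv$ and $\xbp_{2,1}\in\Uh$. The $\hb_{2,-1}$ case is entirely analogous (or is obtained by conjugating with $\Wcal$, which interchanges the two signs).

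To evaluate $[\xbp_{0,0},\xbp_{2,1}]$ I would expand $\xbp_{0,0}$ via Jing's isomorphism exactly as in the proof of \textbf{(\ref{eqn:simplified relations 8})}, writing it as a nested $q$-commutator in the $\Uv$-generators $\xm_{j,0}$ $(j\in I_{0})$ and $\xm_{i_{1},1}$ times $Ck_{\theta}^{-1}$, and then commute term by term with $\xbp_{2,1}$. The factors $\xm_{j,0}$ with $j\ne 2$ and the factors $C,k_{\theta}^{-1}$ contribute nothing by relations living inside $\Uh$ or $\Uv$, leaving the contributions of $\xm_{2,0}$ and of $\xm_{i_{1},1}$, which are governed by the instances of Theorem \ref{thm:finite Utor presentation} already verified for $G_{2}^{(1)}$ together with (\ref{eqn:hbir on xbpmjm}). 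Equivalently, one may drag $\xbp_{0,0}=T_{\ell}T_{0}^{v}(\xpm_{\ell,0})$ back to node $\ell$, use the node-$\ell$ identities (\ref{eqn:hbir on xbpmjm}) and (\ref{lemma (1)}), and transport the result forward again; either way one is left with a single surviving term, which one matches against $\mp\xbpm_{0,\pm 1}=\mp\xpm_{0,\pm 1}$ using the braid-group expression $\xpm_{0,\pm 1}=o(0)T_{\ell}\Th{2}(\xpm_{\ell,0})$ from (\ref{eqn:xpm0pm1 identities}).

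The hard part is this last step. The commutator $[\xbp_{0,0},\xbp_{2,1}]$ is genuinely a cross-relation, not covered by $\Rh$, $\Rv$ or $\Rdiag$, and --- this being the ``$\Pov$ too small'' phenomenon peculiar to $G_{2}^{(1)}$ --- it cannot be produced by transporting a single known relation through one element $Y_{\beta}$, since the required $\beta$ does not lie in $\Pov$ (the relevant pairings fall outside the range of Table \ref{table:Y on zero generators}). The calculation therefore has to be threaded through several of the already-established relations, and the cancellations that yield the clean coefficient $\mp 1$ --- in place of the $[a_{2j}]_{2}$ appearing in (\ref{eqn:hbir on xbpmjm}) --- are the delicate point of the bookkeeping.
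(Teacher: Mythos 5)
Your reduction has the right shape, but it rests on a false premise and stops exactly where the actual work begins. First, in the conventions of this paper the affine node of $G_{2}^{(1)}$ is adjacent to node $2$, not node $1$: $a_{02}=a_{20}=-1$ and $a_{01}=0$ (this is already visible in the statement of the lemma, whose coefficient $\mp 1=[a_{20}]_{2}$ matches (\ref{eqn:hbir on xbpmjm}), and in the companion Lemma \ref{lem:hb1r on xbpm0m}, which asserts that $\hb_{1,\pm 1}$ \emph{commutes} with $\xbpm_{0,m}$). Consequently $[\xp_{0},k_{2}]=0$ is false, $\kb_{2}^{\pm 1}$ does not commute with $\xbp_{0,0}$, and your untwisted reduction $[\xbp_{0,0},\hb_{2,1}]=[\xbm_{2,0},[\xbp_{0,0},\xbp_{2,1}]]\kb_{2}^{-1}$ should read $[\xbm_{2,0},[\xbp_{0,0},\xbp_{2,1}]_{q_{0}^{-1}}]\kb_{2}^{-1}$; the vanishing $[\xbp_{0,0},\xbm_{2,0}]=0$ you use survives (it is relation (\ref{eqn:simplified relations 7}) inside $\Rv$, independent of adjacency), so this part is repairable.

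The genuine gap is the evaluation of the cross-commutator itself. Your proposed route — expand $\xbp_{0,0}$ by Jing's isomorphism and commute term by term with $\xbp_{2,1}$ — runs straight into commutators such as $[\xm_{i_{1},1},\xbp_{2,1}]$ between a genuine $\Uv$-generator and a genuine $\Uh$-generator; these are precisely the mixed relations (the missing instances of \textbf{(\ref{eqn:simplified relations 7})} and \textbf{(\ref{eqn:simplified relations 9})} with indices $0$ and $2$) that are \emph{not yet available} in type $G_{2}^{(1)}$ and which this lemma is being proved in order to reach. You acknowledge the bookkeeping is "delicate" and leave it unresolved, so as written the argument is circular or incomplete. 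The missing idea is a single application of the already-established relation \textbf{(\ref{eqn:simplified relations 8})} for the adjacent pair $(0,2)$ (proved earlier by the braid-group drag through $T_{\ell}\Th{2}$ and $T_{0}^{-1}X_{-\theta^{\vee}}$, with no $G_{2}^{(1)}$ caveat): it gives $[\xbp_{0,0},\xbp_{2,1}]_{q_{0}^{-1}}=-[\xbp_{2,0},\xbp_{0,1}]_{q_{0}^{-1}}$. After this swap every remaining factor — $\xbm_{2,0}=\xm_{2,0}$, $\xbp_{2,0}=\xp_{2,0}$, $\xbp_{0,1}=\xp_{0,1}$, $\kb_{2}^{\pm 1}$ — lies in the diagonal subalgebra $\Udiag$, where $[\xbm_{2,0},\xbp_{0,1}]=0$ and $[\xbm_{2,0},\xbp_{2,0}]=(q_{2}-q_{2}^{-1})^{-1}(\kb_{2}-\kb_{2}^{-1})$ close the computation in two lines, with no Jing expansion and no delicate cancellations.
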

\begin{proof}
    Both of these relations may be checked directly as follows.
    \begin{align*}
        [\xbp_{0,0},\hb_{2,1}]
        &= [\xbp_{0,0},[\xbm_{2,0},\xbp_{2,1}] \kb_{2}^{-1}]
        && \\
        &= [\xbp_{0,0},[\xbm_{2,0},\xbp_{2,1}]]_{q_{0}^{-1}} \kb_{2}^{-1}
        &&\text{by $\Rv$}
        \\
        &= [\xbm_{2,0},[\xbp_{0,0},\xbp_{2,1}]_{q_{0}^{-1}}] \kb_{2}^{-1}
        &&\text{by $\Rv$}
        \\
        &= - [\xbm_{2,0},[\xbp_{2,0},\xbp_{0,1}]_{q_{0}^{-1}}] \kb_{2}^{-1}
        &&\text{by \textbf{(\ref{eqn:simplified relations 8})}}
        \\
        &= - [[\xbm_{2,0},\xbp_{2,0}],\xbp_{0,1}]_{q_{0}^{-1}} \kb_{2}^{-1}
        &&\text{by $\Rdiag$}
        \\
        &= - (q_{2} - q_{2}^{-1})^{-1} [\kb_{2} - \kb_{2}^{-1},\xbp_{0,1}]_{q_{0}^{-1}} \kb_{2}^{-1}
        &&\text{by $\Rdiag$}
        \\
        &= - \xbp_{0,1}
        &&\text{by $\Rdiag$}
    \end{align*}
    \begin{align*}
        [\xbm_{0,0},\hb_{2,-1}]
        &= [\xbm_{0,0},[\xbm_{2,-1},\xbp_{2,0}] k_{2}]
        && \\
        &= [\xbm_{0,0},[\xbm_{2,-1},\xbp_{2,0}]]_{q_{0}^{-1}} \kb_{2}
        &&\text{by $\Rv$}
        \\
        &= [[\xbm_{0,0},\xbm_{2,-1}]_{q_{0}^{-1}},\xbp_{2,0}] \kb_{2}
        &&\text{by $\Rv$}
        \\
        &= - q_{0}^{-1} [[\xbm_{2,-1},\xbm_{0,0}]_{q_{0}},\xbp_{2,0}] \kb_{2}
        && \\
        &= q_{0}^{-1} [[\xbm_{0,-1},\xbm_{2,0}]_{q_{0}},\xbp_{2,0}] \kb_{2}
        &&\text{by \textbf{(\ref{eqn:simplified relations 8})}}
        \\
        &= q_{0}^{-1} [\xbm_{0,-1}, [\xbm_{2,0},\xbp_{2,0}] ]_{q_{0}} \kb_{2}
        &&\text{by $\Rdiag$}
        \\
        &= q_{0}^{-1} (q_{2} - q_{2}^{-1})^{-1} [\xbm_{0,-1},\kb_{2} - \kb_{2}^{-1}] ]_{q_{0}} \kb_{2}
        &&\text{by $\Rdiag$}
        \\
        &= \xbm_{0,-1}
        &&\text{by $\Rdiag$}
        \qedhere
    \end{align*}
\end{proof}

\begin{lem} \label{lem:hb1r on xbpm0m}
    In type $G_{2}^{(1)}$ we have $[\xbpm_{0,m},\hb_{1,r}] = 0$ for all $m = 0,\pm 1$ and $r\in\lbrace \pm 1\rbrace$.
\end{lem}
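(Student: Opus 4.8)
The plan is to mirror the computations of Lemma \ref{lem:hb2r on xbpm0m}, working with the twisted generators and repeatedly invoking the relation sets $\Rh$, $\Rv$, $\Rdiag$ together with the already-verified relations \textbf{(\ref{eqn:simplified relations 8})}. Concretely, I would unpack $\hb_{1,1} = [\xbm_{1,0},\xbp_{1,1}]\kb_{1}^{-1}$ and compute $[\xbpm_{0,m},\hb_{1,1}]$ by first commuting $\xbpm_{0,m}$ past $\kb_{1}^{\pm 1}$ using \textbf{(\ref{eqn:simplified relations 4})} (which produces a power of $q$ controlled by $a_{01} = 0$ in type $G_{2}^{(1)}$, so in fact $\xbpm_{0,m}$ commutes with $\kb_{1}^{\pm 1}$), then pushing $\xbpm_{0,m}$ through the inner bracket $[\xbm_{1,0},\xbp_{1,1}]$ via the Jacobi-style identities available in $\Rv$ and $\Rdiag$. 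At each stage the resulting inner commutators of the form $[\xbpm_{0,m},\xbpm_{1,n}]$ should be rewritten using \textbf{(\ref{eqn:simplified relations 8})} (the $\deg_{j}$-graded Serre-type relations for $j\not\sim 0$) to trade a generator at vertex $0$ with degree shifted by $\pm 1$ for one at vertex $1$ with the opposite shift, and then the $1$-local relations $\Rh$ (or equivalently $\Rv$, $\Rdiag$) collapse everything.

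The key observation making this work in type $G_{2}^{(1)}$ is that the vertices $0$ and $1$ are \emph{not adjacent} in the affine $G_{2}^{(1)}$ Dynkin diagram, so $a_{01} = a_{10} = 0$. This means $\xbpm_{0,m}$ and $\kb_{1}^{\pm 1}$ commute, the relevant twisted commutators $[\xbp_{0,0},\xbp_{1,1}]_{q_{0}^{0}} = [\xbp_{0,0},\xbp_{1,1}]$ are governed by \textbf{(\ref{eqn:simplified relations 8})} with trivial $q$-twist, and — crucially — the ordinary commutator $[\xbpm_{0,m},\xbpm_{1,n}]$ together with its partner vanishes. So after moving $\xbpm_{0,m}$ inside the bracket and applying \textbf{(\ref{eqn:simplified relations 8})}, every term reorganises into a commutator of $\xbm_{1,0}$ or $\xbp_{1,0}$ with something at vertex $0$, which then vanishes since $[\xbm_{1,0},\xbp_{0,\bullet}] = 0$ and $[\xbp_{1,0},\xbm_{0,\bullet}] = 0$ by the $\Rdiag$ relations (these mixed commutators at vertices $0,1$ with both shifts $0$ are exactly relation $7$ of Definition \ref{defn:quantum toroidal algebra} restricted to $\Udiag$, already available). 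I would run this for each of the four sign/degree choices $(\pm, m = 0), (\pm, m = \pm 1)$; in every case the chain of substitutions terminates at $0$.

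The anticipated main obstacle is bookkeeping: in type $G_{2}^{(1)}$ the node $1$ is the triple-bond vertex, so $q_{1} = q^{3}$ (or $q$, depending on the labelling convention), and one must be careful that the various $q$-twists appearing when commuting $\xbpm_{1,n}$ with itself do not obstruct the cancellation. However, since $\xbpm_{0,m}$ only ever interacts with vertex $1$ generators through $a_{01} = 0$, all of these twists are either trivial or internal to the vertex-$1$ sub-data and hence already handled by $\Rh$. A secondary subtlety is ensuring that the identities \eqref{eqn:hbir on xbpmjm}, which were derived from $\Rh$ for $i,j\in I_{0}$, are used only in that range — here I genuinely need the $j = 0$ extension, and that is precisely what this lemma supplies, so I must not accidentally invoke it circularly; the argument above does not, since it only uses \eqref{eqn:hbir on xbpmjm} implicitly through the defining relations of $\Uh$ for the $i=j=1$ instance, plus \textbf{(\ref{eqn:simplified relations 8})} and $\Rdiag$. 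Once Lemmas \ref{lem:hb2r on xbpm0m} and \ref{lem:hb1r on xbpm0m} are in hand, the leftover relations \textbf{(\ref{eqn:simplified relations 7})} and \textbf{(\ref{eqn:simplified relations 9})} for $j = 1$ in type $G_{2}^{(1)}$ follow by writing $\xbpm_{0,\pm 1} = \mp[\xbpm_{0,0},\hb_{2,\pm 1}]$ from Lemma \ref{lem:hb2r on xbpm0m}, substituting into the degree-$0$ versions of those relations (which hold by $\Rdiag$), and commuting $\hb_{2,\pm 1}$ through using \eqref{eqn:hbir on xbpmjm} and Lemma \ref{lem:hb1r on xbpm0m}.
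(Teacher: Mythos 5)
Your structural observation is right: in the paper's labelling of $G_{2}^{(1)}$ the affine node is attached to node $2$, not node $1$, so $a_{01}=a_{10}=0$, and your Jacobi argument does dispose of the degree-zero cases. Indeed, $[\xbpm_{0,0},\hb_{1,\pm 1}]$ expands into $[\xbpm_{0,0},\xb^{\mp}_{1,0}]$ (zero by $\Rv$) and $[\xbpm_{0,0},\xbpm_{1,\pm 1}]$ (zero by the instances of \textbf{(\ref{eqn:simplified relations 9})} with $r=1$, which the $Y_{\beta}$ argument has already verified in $G_{2}^{(1)}$ since only $r=2$ fails there); this is arguably more elementary than the paper's route through the braid group action for those two identities.

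The gap is in the $m=\pm 1$ cases, and it is genuine. For $[\xbp_{0,1},\hb_{1,-1}]$ the Jacobi expansion produces the mixed commutator $[\xbp_{0,1},\xbm_{1,-1}]$, and for $[\xbm_{0,-1},\hb_{1,1}]$ it produces $[\xbm_{0,-1},\xbp_{1,1}]$. These are \emph{precisely} the relations of \textbf{(\ref{eqn:simplified relations 7})} whose $Y_{\beta}$ argument fails for $j=1$ in $G_{2}^{(1)}$ — the very relations this lemma is being proved in order to establish — so asserting they vanish "by $\Rdiag$" is circular ($\Rdiag$ only covers the pairs where the vertex-$1$ generator has degree $0$). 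Your proposed degree-trading via \textbf{(\ref{eqn:simplified relations 8})} cannot rescue this: that relation only relates $\xp$--$\xp$ and $\xm$--$\xm$ pairs, never the mixed $\xp$--$\xm$ commutators. Similarly, for $[\xbp_{0,1},\hb_{1,1}]$ you are led to $[\xbp_{0,1},\xbp_{1,1}]$ with \emph{both} degrees shifted, which is covered by no relation of the finite presentation (the Serre relations \textbf{(\ref{eqn:simplified relations 9})} only allow one factor at degree $\pm 1$, and \textbf{(\ref{eqn:simplified relations 8})} would require the absent generator $\xbp_{1,2}$). The missing idea is to import Lemma \ref{lem:hb2r on xbpm0m} \emph{inside} this proof: writing $\xbpm_{0,\pm 1}=\mp[\xbpm_{0,0},\hb_{2,\pm 1}]$ reduces $[\xbpm_{0,\pm 1},\hb_{1,\pm 1}]$ to the $m=0$ case via $[\hb_{1,r_{1}},\hb_{2,r_{2}}]=0$, and for the hard mixed cases one substitutes this expression for $\xbpm_{0,\pm 1}$, uses the already-known degree-zero instances of \textbf{(\ref{eqn:simplified relations 7})} to move $\hb_{2,\pm 1}$ next to $\xb^{\mp}_{1,\mp 1}$, and applies (\ref{eqn:hbir on xbpmjm}) to convert that factor back to degree zero, after which $\Rv$ finishes the computation.
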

\begin{proof}
    First note that $\xbpm_{0,0}$ commutes with $\kb_{1}^{\pm 1}$ and $\xbpm_{1,0}$ by $\Rv$, and with $\xb^{\mp}_{1,\mp 1}$ due to already known relations from \textbf{(\ref{eqn:simplified relations 7})}, and therefore
    \begin{align*}
        [\xbp_{0,0},\hb_{1,-1}]
        = [[\xbm_{1,-1},\xbp_{1,0}] \kb_{1},\xbp_{0,0}]
        = 0,
        \qquad
        [\xbm_{0,0},\hb_{1,1}]
        = [\xbm_{0,0},[\xbm_{1,0},\xbp_{1,1}] \kb_{1}^{-1}]
        = 0.
    \end{align*}
    Furthermore, we have:
    \begin{align*}
        [\xbp_{0,0},\hb_{1,1}]
        &= [\xbp_{0,0},[\xbm_{1,0},\xbp_{1,1}] \kb_{1}^{-1}]
        && \\
        &= [\xbp_{0,0},[\xbm_{1,0},\xbp_{1,1} \kb_{1}^{-1}]_{q_{1}^{2}}]
        &&\text{by $\Rv$}
        \\
        &= [\xbm_{1,0},[\xbp_{0,0},\xbp_{1,1} \kb_{1}^{-1}] ]_{q_{1}^{2}}
        &&\text{by $\Rv$}
        \\
        &= -\Cb [\xbm_{1,0}, Y_{1}^{-1}Y_{2}T_{1} ([\xbp_{0,1},\xbm_{1,0}]) ]_{q_{1}^{2}}
        &&\text{by (\ref{lemma (1)}) and Lemma \ref{lem:Y on zero generators}}
        \\
        &= -\Cb [\xbm_{1,0}, Y_{1}^{-1}Y_{2}T_{1}(0) ]_{q_{1}^{2}}
        &&\text{by $\Rdiag$}
        \\
        &= 0
        &&
    \end{align*}
    \begin{align*}
        [\xbm_{0,0},\hb_{1,-1}]
        &= [\xbm_{0,0},[\xbm_{1,-1},\xbp_{1,0}] \kb_{1}]
        && \\
        &= [\xbm_{0,0},[\kb_{1} \xbm_{1,-1},\xbp_{1,0}]_{q_{1}^{-2}}]
        &&\text{by $\Rh$}
        \\
        &= [[\xbm_{0,0},\kb_{1} \xbm_{1,-1}],\xbp_{1,0} ]_{q_{1}^{-2}}
        &&\text{by $\Rv$}
        \\
        &= -\Cb^{-1} [Y_{1}^{-1}Y_{2}T_{1} ([\xbm_{0,-1},\xbp_{1,0}]),\xbp_{1,0}]_{q_{1}^{-2}}
        &&\text{by (\ref{lemma (1)}) and Lemma \ref{lem:Y on zero generators}}
        \\
        &= -\Cb^{-1} [Y_{1}^{-1}Y_{2}T_{1} (0),\xbp_{1,0}]_{q_{1}^{-2}}
        &&\text{by $\Rdiag$}
        \\
        &= 0
        &&
    \end{align*}
    We then swiftly deduce that
    $[\xbpm_{0,\pm 1},\hb_{1,\pm 1}]
    = \mp [[\xbpm_{0,0},\hb_{2,\pm 1}],\hb_{1,\pm 1}]
    = \mp [[\xbpm_{0,0},\hb_{1,\pm 1}],\hb_{2,\pm 1}]
    = 0$
    using the commutativity of each $\hb_{1,r_{1}}$ with $\hb_{2,r_{2}}$.
    The remaining identities require our results from the previous lemma:
    \begin{align*}
        [\xbp_{0,1},\hb_{1,-1}]
        &= [\xbp_{0,1},[\xbm_{1,-1},\xbp_{1,0}] \kb_{1}]
        && \\
        &= [\xbp_{0,1},[\xbm_{1,-1},\xbp_{1,0}]] \kb_{1}
        &&\text{by $\Rdiag$}
        \\
        &= [[\xbp_{0,1},\xbm_{1,-1}],\xbp_{1,0}] \kb_{1}
        &&\text{by $\Rdiag$}
        \\
        &= [[[\hb_{2,1},\xbp_{0,0}],\xbm_{1,-1}],\xbp_{1,0}] \kb_{1}
        &&\text{by Lemma \ref{lem:hb2r on xbpm0m}}
        \\
        &= [[\xbp_{0,0},[\xbm_{1,-1},\hb_{2,1}]],\xbp_{1,0}] \kb_{1}
        &&\text{by known relations in \textbf{(\ref{eqn:simplified relations 7})}}
        \\
        &= [\xbp_{0,0},[[\xbm_{1,-1},\hb_{2,1}],\xbp_{1,0}]] \kb_{1}
        &&\text{by $\Rv$}
        \\
        &= \Cb [\xbp_{0,0},[\xbm_{1,0},\xbp_{1,0}]] \kb_{1}
        &&\text{by (\ref{eqn:hbir on xbpmjm})}
        \\
        &= \Cb [\xbp_{0,0},0] \kb_{1}
        &&\text{by $\Rv$}
        \\
        &= 0
        &&
    \end{align*}
    \begin{align*}
        [\xbm_{0,-1},\hb_{1,1}]
        &= [\xbm_{0,-1},[\xbm_{1,0},\xbp_{1,1}] \kb_{1}^{-1}]
        && \\
        &= [\xbm_{0,-1},[\xbm_{1,0},\xbp_{1,1}\kb_{1}^{-1}]_{q_{1}^{2}} ]
        &&\text{by $\Rh$}
        \\
        &= [\xbm_{1,0},[\xbm_{0,-1},\xbp_{1,1}\kb_{1}^{-1}] ]_{q_{1}^{2}}
        &&\text{by $\Rdiag$}
        \\
        &= [\xbm_{1,0},[\xbm_{0,-1},\xbp_{1,1}]] \kb_{1}^{-1}
        &&\text{by $\Rdiag$}
        \\
        &= [\xbm_{1,0},[[\xbm_{0,0},\hb_{2,-1}],\xbp_{1,1}]] \kb_{1}^{-1}
        &&\text{by Lemma \ref{lem:hb2r on xbpm0m}}
        \\
        &= [\xbm_{1,0},[\xbm_{0,0},[\hb_{2,-1},\xbp_{1,1}]]] \kb_{1}^{-1}
        &&\text{by known relations in \textbf{(\ref{eqn:simplified relations 7})}}
        \\
        &= [\xbm_{0,0},[\xbm_{1,0},[\hb_{2,-1},\xbp_{1,1}]]] \kb_{1}^{-1}
        &&\text{by $\Rv$}
        \\
        &= \Cb^{-1} [\xbm_{0,0},[\xbm_{1,0},\xbp_{1,0}]] \kb_{1}^{-1}
        &&\text{by (\ref{eqn:hbir on xbpmjm})}
        \\
        &= \Cb^{-1} [\xbm_{0,0},0] \kb_{1}^{-1}
        &&\text{by $\Rv$}
        \\
        &= 0
        && \qedhere
    \end{align*}
\end{proof}

At long last, completing the proof of Theorem \ref{thm:psi} in type $G_{2}^{(1)}$ is now a manageable task.
In particular, the rest of \textbf{(\ref{eqn:simplified relations 7})} is obtained by applying
\begin{itemize}
    \item $\mathrm{ad}(h_{1,1})$ to
    $[\xbm_{0,-1},\xbp_{1,0}] = 0$ and
    $[\xbm_{0,0},\xbp_{2,0}] = 0$,
    \item $\mathrm{ad}(h_{1,-1})$ to
    $[\xbm_{1,0},\xbp_{0,1}] = 0$ and
    $[\xbm_{2,0},\xbp_{0,0}] = 0$,
\end{itemize}
using Lemma \ref{lem:hb1r on xbpm0m} and the identities (\ref{eqn:hbir on xbpmjm}).
Furthermore, the remaining affine $q$-Serre relations \textbf{(\ref{eqn:simplified relations 9})} come from applying $\mathrm{ad}(h_{1,\pm 1})$ to those with
$(y_{i},y_{j}) = (\xbpm_{0,0},\xbpm_{2,0}),(\xbpm_{2,0},\xbpm_{0,0})$, and we are done. \hfill $\qed$

\begin{rmk}
    In many types, our proof can be streamlined using (\ref{lemma (4)}).
    In particular, when $\lvert\Omega^{v}\rvert > 2$ all relations are obtained applying non-trivial $\rho_{i}$ to those with indices in $I_{0}$.
    Moreover if $\lvert\Omega^{v}\rvert = 2$ then applying these elements to relations either lying inside $\Udiag$ or with indices in $I_{0}$ reaches almost all other relations.
    Nevertheless, we have opted to detail the arguments above since they are effective in a more general situation.
\end{rmk}

\section{Tensor product representations} \label{section:tensor products}

Recall from Section \ref{subsection:quantum affinizations} the topological coproduct $\Delta_{u}$ and $\ell$-highest weight theory for quantum affinizations $\Uqaffs$.
It is easy to see that in general, $\Delta_{u}$ fails to produce a well-defined tensor product on modules in $\Oaff$.
Roughly speaking, this is because both $\Delta_{u}$ and the loop triangular decomposition for $\Uqaffs$ are \emph{infinite with respect to the vertical direction}.
As a consequence, $\mathrm{im}(\Delta_{u})$ contains infinite sums whose actions on various elements of a tensor product may not converge after specialising $u$.
\\

Let us provide some more details.
Suppose that $V$ is a $\Uqaffs$-module on which
$\langle q^{h} ~|~ h\in P^{\vee} \rangle$
acts semisimply, with finite dimensional weight spaces.
Then it is known -- see \cite{Hernandez07}*{Prop. 3.8} and \cite{GTL16}*{Prop. 3.6(ii)} -- that for all $i\in I$,
\begin{align*}
    \xp_{i}(z)^{\pm} = \pm \sum_{\pm m\geq 0} \xp_{i,m} z^{-m}, \qquad
    \xm_{i}(z)^{\pm} = \pm \sum_{\pm m\geq 0} \xm_{i,m} z^{-m}, \qquad
    \phi^{\pm}_{i}(z) = \pm \sum_{\pm r\geq 0} \phi^{\pm}_{i,r} z^{-r},
\end{align*}
each act on any $V_{\mu}$ by the expansions at $z^{\mp 1} = 0$ of certain rational functions.
Defining currents
\begin{align*}
    \xp_{i}(z) = \xp_{i}(z)^{+} - \xp_{i}(z)^{-}, \qquad
    \xm_{i}(z) = \xm_{i}(z)^{+} - \xm_{i}(z)^{-}, \qquad
    \phi_{i}(z) = \phi^{+}_{i}(z) - \phi^{-}_{i}(z),
\end{align*}
it is clear that $\Delta_{u}$ can be written as
\begin{align*}
    \xp_{i}(z) &\mapsto
    \xp_{i}(z) \otimes 1 +
    \phi^{+}_{i}(z) \otimes \xp_{i}(uz), \\
    \xm_{i}(z) &\mapsto
    1 \otimes \xm_{i}(uz) +
    \xm_{i}(z) \otimes \phi^{-}_{i}(uz), \\
    \phi^{\pm}_{i}(z) &\mapsto
    \phi^{\pm}_{i}(z) \otimes \phi^{\pm}_{i}(uz),
\end{align*}
working modulo $C^{\pm 1}$ for ease of notation.
Issues therefore arise when either $u$ or $1$ is a pole for one of the rational functions.
In particular, whereas for fixed representations $V\one$ and $V\two$ in $\Oaff$ we may pick some $u$ such that $\Delta_{u}$ defines a $\Uqaffs$-module structure on $V\one \otimes V\two$, it is not possible to produce in this way a well-defined tensor product on the category as a whole.
\\

However, in the special case of untwisted quantum toroidal algebras, we can overcome this problem by exploiting the horizontal--vertical symmetry afforded by our anti-involution $\psi$ from Theorem \ref{thm:psi}.
In particular, conjugating $\Delta_{u}$ by $\psi$ produces a topological coproduct which is instead \emph{infinite in the horizontal direction}, and gives rise to a well-defined tensor product on $\Oaff$.
In this way, we are able to endow the module category with a monoidal structure, and its Grothendieck group with the structure of a ring.
\\

Our tensor product is shown to satisfy a series of results that may be viewed as toroidal analogues of the highly influential works by Chari-Pressley for quantum affine algebras.
For example, there exists a compatibility with Drinfeld polynomials, the tensor product of irreducibles is generically irreducible, and all irreducibles are in some sense generated by a finite number of \emph{fundamental} modules.
\\

Furthermore, in Section \ref{section:R matrices} we prove the existence of $R$-matrices -- solutions to the Yang-Baxter equation in physics -- that act as intertwiners, exchanging the factors in tensor products of modules.
These $R$-matrices depend on a spectral parameter and are generically isomorphisms, thus equipping such products with a meromorphic braiding.

\begin{rmk}
    Let us briefly mention some of the existing works related to these directions.
    \begin{itemize}
        \item Hernandez \cites{Hernandez05,Hernandez07} takes a very different approach in order to define his \emph{fusion product}, constructing a much larger category in which the Drinfeld coproduct $\Delta_{u}$ does produce a tensor structure and then specializing back to $\Oaff$.
        \item Some work has been done for the particular case of $\UtorA$ by Miki \cites{Miki00,Miki01}, but conjugating with $\X_{0}^{-1} \Phi$ instead.
        We have chosen to use $\psi$ here since it acts more symmetrically with respect to the fine grading $\deg$ of $\Utor$ from Section \ref{subsubsection:Gradings and scaling automorphisms}.
    \end{itemize}
\end{rmk}

\begin{rmk}
    Our results extend naturally to quantum toroidal $\glone$, where they are in fact equivalent to \cite{Miki07}.
    We mention the connection here simply to frame this situation as a particular case of our more general programme.
\end{rmk}

Recall the $(Q\oplus\Zbb\delta')$--grading $\deg$ and associated decomposition (\ref{eqn:decomposition of Uqaffs}) of $\Utor$ from Section \ref{subsubsection:Gradings and scaling automorphisms}.
Just as $\delta \in Q$ is associated to the horizontal subalgebra $\Uh$, one can think of $\delta'$ as an imaginary root $\sum_{i\in I} a_{i} \alpha'_{i}$ for $\Uv$ where we identify $\alpha'_{i} = \alpha_{i}$ for each $i\in I_{0}$.
Then by considering the generating set
$\lbrace \xpm_{0,\pm 1},\, \xpm_{i,0},\, k_{i}^{\pm 1},\, C^{\pm 1} ~|~ i\in I \rbrace$
for $\Utor$, it is clear that
\begin{align} \label{eqn:psi on graded pieces}
    \psi : \U_{\beta + k\delta, \ell\delta'} \rightarrow \U_{\beta + \ell\delta, k\delta'}
\end{align}
for any $\beta\in\mathring{Q}$ and $k,\ell\in\Zbb$.
By conjugating $\Delta_{u}$ with $\psi$, we obtain a new (horizontally infinite) topological coproduct
\begin{align*}
    \Dpsi = (\psi \otimes \psi) \circ \Delta_{u} \circ \psi
\end{align*}
for $\Utor$.
Where does $\Dpsi$ send each graded piece $\U_{\beta + k\delta, \ell\delta'}$?
From (\ref{eqn:psi on graded pieces}) we have that $\psi$ sends elements of $\U_{\beta + k\delta, \ell\delta'}$ to elements of $\U_{\beta + \ell\delta, k\delta'}$, which can of course be expressed as polynomials in the $\xpm_{i,m}$, $h_{i,r}$, $k_{i}^{\pm 1}$ and $C^{\pm 1}$ generators.
Then using the formulae in Theorem \ref{thm:Damiani topological coproduct}, any such expression is mapped by $\Delta_{u}$ into
\begin{align*}
    \sum_{\substack{\mu\in\mathring{Q} \\ n\in\Zbb}}
    \sum_{r\in\Zbb}
    \left(
    \U_{\beta - \mu + (\ell-n)\delta,(k-r)\delta'}
    \otimes
    \U_{\mu + n\delta,r\delta'}
    \right)
    u^{-r}
\end{align*}
where the sum over $\mu$ and $n$ is finite, but the sum over $r$ may be infinite.
Finally, applying $\psi\otimes\psi$ gives
\begin{align} \label{eqn:image of new topological coproduct}
    \Dpsi : \U_{\beta + k\delta, \ell\delta'} \rightarrow
    \sum_{\substack{\mu\in\mathring{Q} \\ n\in\Zbb}}
    \sum_{r\in\Zbb}
    \left(
    \U_{\beta - \mu + (k-r)\delta,(\ell-n)\delta'}
    \otimes
    \U_{\mu + r\delta,n\delta'}
    \right)
    u^{-r}.
\end{align}
In particular, a quick check verifies that
\begin{align} \label{eqn:Dpsi on C and k}
    \Dpsi(C^{\pm 1}) = C^{\pm 1}\otimes C^{\pm 1},
    \qquad
    \Dpsi(k_{i}^{\pm 1}) = k_{i}^{\pm 1}\otimes k_{i}^{\pm 1}
    \quad(i\in I).
\end{align}

\subsection{Main results}

Let us now specialise the coproduct parameter $u$ to any non-zero complex number.
Our first result then shows that $\Dpsi$ gives rise to a well-defined tensor product on the category $\Oaff$.
Throughout this section, we shall therefore assume that $V\one$ and $V\two$ are representations of $\Utor$ lying inside $\Oaff$.

\begin{thm} \label{thm:integrability of tensor products}
    Our topological coproduct $\Dpsi$ endows the tensor product $V\one \otimes V\two$ with a well-defined, integrable $\Utor$-module structure such that $V\one \otimes V\two \in \Oaff$.
\end{thm}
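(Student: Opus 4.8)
The plan is to verify three things in turn: first, that $\Dpsi$ is a genuine algebra morphism $\Utor \to \Utor \otimes \Utor$ once we act on $V\one \otimes V\two$ (i.e. that all the infinite sums appearing in $\mathrm{im}(\Dpsi)$ truncate to finite sums on any given vector); second, that the resulting $\Utor$-action is integrable; and third, that $V\one\otimes V\two$ actually lies in $\Oaff$ (weights in a finite union of cones, finite-dimensional weight spaces). The first point is the crux, and the key is the grading bookkeeping already set up in (\ref{eqn:image of new topological coproduct}). By hypothesis, $V\one$ and $V\two$ restrict to objects of $\Oint$, so their weights lie in finite unions of cones $\bigcup_j(\mu_j - Q^+)$ and each weight space is finite dimensional. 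Given homogeneous $v\one\otimes v\two \in V\one\otimes V\two$ of total weight $\lambda$, a summand $\U_{\beta-\mu+(k-r)\delta,(\ell-n)\delta'}\otimes\U_{\mu+r\delta,n\delta'}$ of $\Dpsi(z)$ for $z\in\U_{\beta+k\delta,\ell\delta'}$ acts by sending $v\two$ into $V\two_{\lambda\two + \mu + r\delta + \cdots}$ in the notation of (\ref{eqn:action on weight spaces}); since the weights of $V\two$ are bounded above inside a finite union of cones, only finitely many values of $\mu$ and of the \emph{horizontal} shift $r\delta$ can give a nonzero result. This is exactly the point emphasised in the introduction: $\Delta_u$ is vertically infinite and kills nothing, but $\Dpsi$ is horizontally infinite, and category $\Oaff$ representations \emph{are} bounded in the horizontal $Q$-direction. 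So for each fixed $v\one\otimes v\two$, all but finitely many summands of $\Dpsi(z)$ act by zero, the specialisation of $u$ causes no convergence problem, and $\Dpsi(z)$ acts as a well-defined operator. One must also check that the finitely many surviving operators genuinely compose correctly, i.e. that $\Dpsi$ respects the relations of $\Utor$ as an honest (non-completed) action on $V\one\otimes V\two$; this follows because $\Delta_u$ satisfies these relations in the completion $\Utor\widehat\otimes\Utor$ (Theorem \ref{thm:Damiani topological coproduct}), conjugation by the anti\nobreakdash-... — rather, by the algebra morphism arising from $\psi$ — preserves relations, and the relation identities, being finite expressions, descend from the completion to the actual operators once everything truncates.

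Next, integrability. We need each $\xpm_{i,m}$ to act locally nilpotently and the $\langle q^h\mid h\in P^\vee\rangle$-weight spaces of $V\one\otimes V\two$ to be finite dimensional. From (\ref{eqn:Dpsi on C and k}) we have $\Dpsi(k_i^{\pm1}) = k_i^{\pm1}\otimes k_i^{\pm1}$, so the weight of $v\one\otimes v\two$ is $\lambda\one + \lambda\two$ and the weight space $(V\one\otimes V\two)_\lambda = \bigoplus_{\lambda\one + \lambda\two = \lambda} V\one_{\lambda\one}\otimes V\two_{\lambda\two}$, which is a finite sum of finite-dimensional spaces since the $\lambda\one$ with $V\one_{\lambda\one}\neq 0$ are bounded above and similarly for $V\two$; hence it is finite dimensional, and the set of nonzero weights lies in a finite union of cones (a Minkowski-type sum of the two cone sets). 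For local nilpotency of $\xpm_{i,m}$: restricting to the $U(i)\cong\UdashA$-type considerations is not quite available since $\Dpsi$ mixes degrees, so instead I would argue directly. Fix $v\one\otimes v\two$; by the truncation above, $\xpm_{i,m}$ acts on it through a fixed finite collection of terms, each of the form (element of $\Utor$ acting on $V\one$) $\otimes$ (element acting on $V\two$), and repeated application pushes the $V\one$ and $V\two$ weights strictly in the $\mp\alpha_i$ direction (up to the bounded horizontal/vertical shifts) while staying in the cone structure; since $\xpm_{i,m}$ raises/lowers the weight by $\pm\alpha_i$ and the weights of $V\one\otimes V\two$ in any fixed coset of $\Zbb\alpha_i$ are bounded (cone condition plus finite-dimensionality of each weight space), some power annihilates $v\one\otimes v\two$.

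Finally, assembling: with $\Dpsi$ a well-defined algebra action, the $k_i$-semisimplicity, finite-dimensional weight spaces, cone condition, and local nilpotency of the $\xpm_{i,m}$ together say precisely that $\mathrm{res}(V\one\otimes V\two)\in\Oint$, i.e. $V\one\otimes V\two\in\Oaff$. The main obstacle is the first step — making rigorous that $\Dpsi$ descends from the completed tensor square to an \emph{actual} operator algebra action on $V\one\otimes V\two$, i.e. that the truncation is uniform enough (for each vector, only finitely many summands survive, and the truncated operators satisfy the defining relations of $\Utor$). Everything hinges on the interplay between equation (\ref{eqn:image of new topological coproduct}) and the cone/finite-dimensionality hypotheses defining $\Oaff$; once that is pinned down, integrability and membership in $\Oaff$ are comparatively routine bookkeeping with weights.
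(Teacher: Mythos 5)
Your overall strategy matches the paper's: use the grading formula (\ref{eqn:image of new topological coproduct}) together with the cone conditions on the factors to truncate the sum over $r$, and use $\Dpsi(k_{i}^{\pm 1}) = k_{i}^{\pm 1}\otimes k_{i}^{\pm 1}$ to obtain finite-dimensional weight spaces and the cone condition for $V\one\otimes V\two$. However, your justification of the central truncation step is only half of what is required. You argue that the summand indexed by $r$ kills a given vector for all but finitely many $r$ because the second tensor slot lands in $V\two_{\tau+\mu+r\delta}$ and the weights of $V\two$ are bounded above. Since $\delta\in Q^{+}$, this rules out $r\gg 0$ only; for $r\ll 0$ the weight $\tau+\mu+r\delta$ moves \emph{down} and remains inside $V\two$'s cones indefinitely. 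The vanishing for $r\ll 0$ comes instead from the \emph{first} factor: that slot lands in $V\one_{\gamma+\beta-\mu+(k-r)\delta}$, whose $\delta$-component grows as $r\to-\infty$ and so leaves $V\one$'s cones. The paper's proof is exactly this two-sided argument (second factor dies for $r\gg 0$, first factor dies for $r\ll 0$). The fix is one line, but since this is precisely the mechanism by which $\Dpsi$ succeeds where $\Delta_{u}$ fails, the one-sided version as written does not establish the truncation.

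Two smaller points. Your treatment of local nilpotency goes beyond what the paper records, which is reasonable, but the parenthetical justification ``cone condition plus finite-dimensionality of each weight space'' does not by itself bound the weight support of $V\one\otimes V\two$ in a coset of $\Zbb\alpha_{i}$ from below -- a Verma module satisfies both conditions and has infinite $\alpha_{i}$-strings. What you actually need is the integrability of $V\one$ and $V\two$ themselves, which forces their $\alpha_{i}$-strings to be finite; combined with the boundedness of the $\delta$-shifts this gives the finiteness you assert. Finally, your observation that the truncated operators inherit the defining relations of $\Utor$ from the fact that $\Delta_{u}$ (hence $\Dpsi$) is an algebra morphism into the completion is correct, and is implicitly how the paper handles this point.
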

\begin{proof}
Each $V\alphapower$ decomposes as a direct sum
$\bigoplus_{j=1}^{N\alphapower} \bigoplus_{\gamma\leq\lambda\alphapower_{j}} V\alphapower_{\gamma}$
of finite dimensional weight spaces for some $N\alphapower\in\Nbb$ and $\lambda\alphapower_{j} \in P$, so it follows from (\ref{eqn:action on weight spaces}) that
\begin{align*}
    \Big(
    \U_{\beta - \mu + (k-r)\delta,(\ell-n)\delta'}
    \otimes
    \U_{\mu + r\delta,n\delta'}
    \Big)
    \cdot
    \Big(
    V\one_{\gamma} \otimes V\two_{\tau}
    \Big)
    &\subset
    V\one_{\gamma + \beta - \mu + (k-r)\delta}
    \otimes
    V\two_{\tau + \mu + r\delta}
    \\[6pt]
    &=
    \begin{cases}
        V\one_{\gamma + \beta - \mu + (k-r)\delta}
        \otimes
        \lbrace 0 \rbrace
        &\mathrm{~for~} r \gg 0, \\[6pt]
        \lbrace 0 \rbrace
        \otimes
        V\two_{\tau + \mu + r\delta}
        &\mathrm{~for~} r \ll 0,
    \end{cases}
\end{align*}
is zero for $\lvert r\rvert \gg 0$.
Hence by (\ref{eqn:image of new topological coproduct}) every element of $\mathrm{im}(\Dpsi)$ has a well-defined action on $V\one\otimes V\two$.
Furthermore, as
$\Dpsi(k_{i}^{\pm 1}) = k_{i}^{\pm 1}\otimes k_{i}^{\pm 1}$
for all $i\in I$, each weight space
\begin{align*}
    (V\one\otimes V\two)_{\mu}
    =
    \sum_{\substack{1\leq j\leq N\one \\ 1\leq \ell\leq N\two}}
    \sum_{\substack{\gamma + \tau = \mu \\ \gamma\leq\lambda\one_{j} \\ \tau\leq\lambda\two_{\ell}}}
    V\one_{\gamma} \otimes V\two_{\tau}
\end{align*}
has only finitely many non-zero summands and is thus finite dimensional.
In particular, $(V\one\otimes V\two)_{\mu}$ is non-zero only if $\mu$ lies in
$\bigcup_{j=1}^{N\one} \bigcup_{\ell=1}^{N\two} (\lambda\one_{j} + \lambda\two_{\ell} - Q^{+})$
and our proof is complete.
\end{proof}

\begin{rmk}
    If $V\one$ and $V\two$ are moreover type $1$ representations, then so is $V\one\otimes V\two$ by (\ref{eqn:Dpsi on C and k}).
\end{rmk}

The following lemma shows how to factorise certain vector subspaces of these tensor modules, and is fundamental to later proofs.
As in the proof above, suppose that the weights of each $V\alphapower$ are contained in some
$\bigcup_{j=1}^{N\alphapower} (\lambda\alphapower_{j} - Q^{+})$.

\begin{lem} \label{lem:decomposing submodules of tensor products}
    As vector spaces, $(V\one \otimes V\two)(J) = V\one(J) \otimes V\two(J)$ for any $J\subset I$.
\end{lem}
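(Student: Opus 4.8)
The plan is to unwind the definitions on both sides and reduce the claimed equality of vector subspaces of $V\one \otimes V\two$ to the weight‑space identity established in the proof of Theorem~\ref{thm:integrability of tensor products}. Recall that for a module $V \in \Oaff$ whose weights lie in $\bigcup_{j=1}^{N}(\lambda_j - Q^+)$, the subspace $V(J)$ is by definition $\bigoplus_{j=1}^{N}\bigoplus_{\mu \in Q(J)^+} V_{\lambda_j - \mu}$, where $Q(J)^+ = \bigoplus_{j \in J}\Nbb\alpha_j$. So the right‑hand side $V\one(J) \otimes V\two(J)$ is the span of all $V\one_{\lambda\one_j - \mu} \otimes V\two_{\lambda\two_\ell - \tau}$ with $\mu \in Q(J)^+$ and $\tau \in Q(J)^+$, while the left‑hand side $(V\one \otimes V\two)(J)$ is the span of the weight spaces $(V\one \otimes V\two)_{\varrho}$ with $\varrho$ ranging over $(\lambda\one_j + \lambda\two_\ell) - Q(J)^+$ for the various $j,\ell$.

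First I would invoke the description of the weight spaces of the tensor product from the proof of Theorem~\ref{thm:integrability of tensor products}: since $\Dpsi(k_i^{\pm 1}) = k_i^{\pm 1}\otimes k_i^{\pm 1}$ by~(\ref{eqn:Dpsi on C and k}), we have $(V\one \otimes V\two)_{\varrho} = \bigoplus_{\gamma + \tau = \varrho} V\one_{\gamma} \otimes V\two_{\tau}$, a finite sum. Next I would fix the (finite) sets of ``top'' weights: $V\alphapower$ has weights in $\bigcup_j (\lambda\alphapower_j - Q^+)$, and the tensor product $V\one \otimes V\two$ has weights in $\bigcup_{j,\ell}(\lambda\one_j + \lambda\two_\ell - Q^+)$, as noted at the end of that proof. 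With these in hand, the problem becomes purely combinatorial.

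The key step is the following lattice observation. Suppose $\gamma$ is a weight of $V\one$ and $\tau$ a weight of $V\two$ with $\gamma + \tau \in (\lambda\one_j + \lambda\two_\ell) - Q(J)^+$ for some $j,\ell$. I claim this forces $\gamma \in \lambda\one_j - Q(J)^+$ and $\tau \in \lambda\two_\ell - Q(J)^+$ (after possibly relabelling which top weight we use; more precisely, one can always choose indices so that this holds). Indeed, write $\gamma = \lambda\one_{j'} - \mu$ and $\tau = \lambda\two_{\ell'} - \tau'$ with $\mu, \tau' \in Q^+$, and choose these so that the total ``drop'' $\mu + \tau'$ is as small as possible; then $\gamma + \tau = (\lambda\one_{j'} + \lambda\two_{\ell'}) - (\mu + \tau')$, and comparing with $(\lambda\one_j + \lambda\two_\ell) - Q(J)^+$ using that the $\lambda\alphapower$ differ by elements of $Q$ and that $Q(J)^+$ is a sub‑monoid of $Q^+$ saturated in the relevant sense, one deduces $\mu, \tau' \in Q(J)^+$. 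I expect this monoid‑theoretic verification — showing that membership of a sum in a ``coordinate sub‑cone'' descends to the summands once one works with minimal drops, and handling the finitely many choices of top weights carefully — to be the main (though still routine) obstacle; everything else is bookkeeping.

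Finally, I would assemble the two inclusions. For $\supseteq$: if $\gamma \in \lambda\one_j - Q(J)^+$ and $\tau \in \lambda\two_\ell - Q(J)^+$ then $\gamma + \tau \in (\lambda\one_j + \lambda\two_\ell) - Q(J)^+$ since $Q(J)^+$ is closed under addition, so $V\one_\gamma \otimes V\two_\tau \subset (V\one \otimes V\two)(J)$, and summing gives $V\one(J) \otimes V\two(J) \subseteq (V\one \otimes V\two)(J)$. For $\subseteq$: by the weight‑space formula $(V\one \otimes V\two)(J)$ is spanned by the $V\one_\gamma \otimes V\two_\tau$ with $\gamma + \tau$ in the appropriate cone, and the lattice claim above places each such $\gamma$ (resp. $\tau$) in $\lambda\one_j - Q(J)^+$ (resp. $\lambda\two_\ell - Q(J)^+$), hence $V\one_\gamma \otimes V\two_\tau \subset V\one(J) \otimes V\two(J)$. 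Since all sums involved are finite by Theorem~\ref{thm:integrability of tensor products}, no completion subtleties arise and the two spaces coincide.
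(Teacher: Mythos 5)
Your proposal is correct and follows essentially the same route as the paper: both reduce the claim to the weight-space decomposition $(V\one\otimes V\two)_{\varrho} = \bigoplus_{\gamma+\tau=\varrho} V\one_{\gamma}\otimes V\two_{\tau}$ coming from $\Dpsi(k_i^{\pm 1}) = k_i^{\pm 1}\otimes k_i^{\pm 1}$, together with the face property of the sub-cone, namely that $\mu\one,\mu\two\in Q^{+}$ with $\mu\one+\mu\two\in Q(J)^{+}$ forces $\mu\one,\mu\two\in Q(J)^{+}$. The paper simply carries out this computation for each fixed pair $(j,\ell)$ of top weights and each fixed $\mu\in Q(J)^{+}$, so the ``relabelling''/minimal-drop bookkeeping you flag as the main obstacle does not explicitly appear there.
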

\begin{proof}
    For each $\mu \in Q(J)^{+}$ we have that
    \begin{align*}
        \bigoplus_{\substack{1\leq j\leq N\one \\ 1\leq \ell\leq N\two}}
        (V\one \otimes V\two)_{\lambda\one_{j} + \lambda\two_{\ell} - \mu}
        &=
        \bigoplus_{\substack{1\leq j\leq N\one \\ 1\leq \ell\leq N\two}}
        \bigoplus_{\substack{\mu\one,\,\mu\two\in Q^{+} \\ \mu\one + \mu\two = \mu}}
        V\one_{\lambda\one_{j} - \mu\one} \otimes V\two_{\lambda\two_{\ell} - \mu\two}
        \\
        &=
        \bigoplus_{\substack{1\leq j\leq N\one \\ 1\leq \ell\leq N\two}}
        \bigoplus_{\substack{\mu\one,\,\mu\two\in Q(J)^{+} \\ \mu\one + \mu\two = \mu}}
        V\one_{\lambda\one_{j} - \mu\one} \otimes V\two_{\lambda\two_{\ell} - \mu\two}
    \end{align*}
    where the first equality comes from (\ref{eqn:Dpsi on C and k}).
    Then by summing over all $\mu$ we are done.
\end{proof}

Our next result demonstrates that the tensor product of $\ell$-highest weight vectors is again an $\ell$-highest weight vector, with Drinfeld polynomials equal to the product of those for its factors.

\begin{thm} \label{thm:l-highest weight vector in tensor products}
    Suppose that $v\one\in V\one$ and $v\two\in V\two$ are $\ell$-highest weight vectors with Drinfeld polynomials $\Pcal\one(z)$ and $\Pcal\two(z)$ respectively.
    Then $v\one \otimes v\two$ is $\ell$-highest weight inside $V\one \otimes V\two$ with Drinfeld polynomials $\Pcal\one(z)\Pcal\two(z)$.
\end{thm}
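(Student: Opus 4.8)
The plan is to reduce everything to an explicit computation with the formula for $\Dpsi = (\psi\otimes\psi)\circ\Delta_u\circ\psi$, using the fact that $\psi$ fixes $\xpm_{0,\pm 1}$ and swaps $\Uh$ with $\Uv$ (twisted by $\sigma$). Concretely, by Corollary \ref{toroidal generated by horizontal and vertical} it suffices to check that $v\one\otimes v\two$ is killed by a generating set of $\Utor^+$ and to compute the eigenvalues of $\phi^\pm_{i,\pm s}$ on it. I would take as generators the ``bold'' vertical-style positive generators together with enough horizontal ones; more usefully, I would observe that $\xp_{i,m}\in\Utor^+$ for all $i\in I$, $m\in\Zbb$, and that $\Dpsi(\xp_{i,m})$ lies in the subspace described by \eqref{eqn:image of new topological coproduct}. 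The key structural point is that $\psi(\xp_{i,0})=\xp_{i,0}$ and $\psi(k_i^{\pm1})=k_i^{\mp1}$ for $i\in I_0$, while $\psi$ maps the vertical Drinfeld generators $\xp_{i,m},\phi^\pm_{i,\pm s}$ ($i\in I_0$) into the horizontal subalgebra and vice versa. So I would split the check into two families: generators of $\Uv^+$ (where $\psi$ carries them to $\Uh^+$, i.e.\ Drinfeld–Jimbo positive generators of the horizontal quantum affine algebra, whose coproduct under $\Delta_u$ is controlled by the classical Hopf structure of $\Uh$), and the remaining ``new'' generators at vertex $0$.

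First I would establish the annihilation. For $z$ a positive generator of $\Uv$ (so $\psi(z)\in\Uh^+$), one has $\Delta_u(\psi(z))$ lying in $\Uh^+\widehat\otimes\Uh^0 + \Uh^0\widehat\otimes\Uh^+$ modulo terms in $(\Utor^- \text{-part})\otimes(\cdots)$; after applying $\psi\otimes\psi$ and acting on $v\one\otimes v\two$, every summand either has a vertical raising operator hitting $v\one$ or $v\two$ (giving $0$ since both are $\ell$-highest weight), or has a vertical Cartan-type operator acting diagonally on one factor — so the result vanishes. The point is that $\psi$ sends the loop-triangular decomposition $\Utor^\pm$ for the vertical structure to the loop-triangular decomposition $\Uh^\pm$ for the horizontal structure (this is \eqref{eqn:psi on graded pieces} refined by which generators land where), and $\Delta_u$ on $\Uh$ is triangular in Hernandez' sense (Remark \ref{rmk:affinizations of coproducts}, Proposition \ref{prop:l-weights in QPl+}'s coproduct identities). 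For the vertex-$0$ generators $\xp_{0,m}$: using Jing's isomorphism as in the proof of Theorem \ref{thm:psi}, write $\xbp_{0,0}=\psi(\xm_0)$ etc., and deduce the annihilation from the cases already handled plus the bracket relations \eqref{eqn:hbir on xbpmjm} and \textbf{(\ref{eqn:simplified relations 8})}; alternatively appeal to \eqref{eqn:image of new topological coproduct} directly to see $\Dpsi(\xp_{0,m})$ is a (locally finite) sum of tensors each with a vertical raising operator in one leg. I expect the cleanest route is to note it suffices to kill $\xp_{0,\pm1}$ and $\xp_{i,0}$ ($i\in I_0$) since these, together with $\Uv$, generate $\Utor^+$.

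Next I would compute the $\ell$-weight. By \eqref{eqn:Dpsi on C and k}, $C$ and each $k_i$ act on $v\one\otimes v\two$ with eigenvalues the products of those on the factors, so the $\lambda$-part adds: $\lambda=\lambda\one+\lambda\two$. For the $\phi^\pm_{i,\pm s}$ with $i\in I_0$: these lie in $\Uv^0$, hence $\psi(\phi^\pm_{i,\pm s})\in\Uh$, and I would use the comultiplication identity for $\phi^\pm$ on Drinfeld new generators modulo $\Uh_{\mp}\otimes\Uh_\pm$ (the identity invoked in the proof of Proposition \ref{prop:l-weights in QPl+}), apply $\psi\otimes\psi$, and evaluate: the error terms act by $0$ or diagonally on $v\one\otimes v\two$, giving $\phi^\pm_{i,\pm s}\cdot(v\one\otimes v\two)=\sum_{k+\ell=s}\Psi\one{}^\pm_{i,\pm k}\Psi\two{}^\pm_{i,\pm\ell}\,(v\one\otimes v\two)$, which is exactly the statement that the generating series multiply; translating to Drinfeld polynomials this says $\Pcal\one_i(z)\Pcal\two_i(z)$. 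One must be a little careful with the spectral parameter $u$, which enters via the $\sfrakZ$-twist and rescales the second factor's series by $u^{-\ell}$ — but since both $v\one,v\two$ are $\ell$-highest weight vectors, a twist by $\sfrakZ_{u^{-1}}$ on $V\two$ only rescales $\Pcal\two(z)\mapsto\Pcal\two(u^{-1}z)$ or similar, which the statement can absorb (or one fixes $u=1$, cf.\ the remark after Theorem \ref{thm:Damiani topological coproduct}). The main obstacle I anticipate is bookkeeping the vertex-$0$ generators precisely — i.e.\ verifying $\xp_{0,m}\cdot(v\one\otimes v\two)=0$ for all $m\in\Zbb$ rather than just $m=0,\pm1$ — since $\Dpsi(\xp_{0,m})$ is genuinely an infinite sum, so one must argue local finiteness of its action (which follows from integrability and \eqref{eqn:image of new topological coproduct}, as in Theorem \ref{thm:integrability of tensor products}) and then that each surviving summand annihilates the vector; this is where the detailed analysis promised for Sections \ref{subsection:action of Uv} and \ref{subsection:action of U0} is really needed.
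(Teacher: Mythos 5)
Your overall architecture does match the paper's: restrict to the vertical quantum affine subalgebra, identify the restricted action of $\Dpsi$ with the Hopf coproduct $\Dbarplus$ on $\Udash$ modulo correction terms killed by the grading (this is Proposition \ref{prop:actions of Udash coincide}), quote Chari--Pressley, and treat the affine node separately. However, two of your intermediate claims are false. First, $\psi$ does \emph{not} send the positive vertical generators into $\Uh^{+}$: already in type $A_{1}^{(1)}$ one has $\psi(\xp_{1,-1}) = o(0)\,k_{0}^{-1}\xm_{0,0}$, which lies in $\Uh^{0}\Uh^{-}$. Only the exchange of graded pieces $\U_{\beta+k\delta,\ell\delta'} \leftrightarrow \U_{\beta+\ell\delta,k\delta'}$ is available, so the vanishing of the correction terms must be argued through the weight grading on $(V\one\otimes V\two)(J)$ rather than through any triangularity of $\Delta_{u}$ on $\Uh^{+}$. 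Second, $\Uv^{+}$ together with $\xp_{0,\pm 1}$ and the $\xp_{i,0}$ does \emph{not} generate $\Utor^{+}$: the graded piece of $\Utor^{+}$ in degree $(\alpha_{0},m\delta')$ is spanned by $\xp_{0,m}$ alone, since $\alpha_{0}$ cannot be written as a sum of two or more positive affine roots, so $\xp_{0,m}$ with $\lvert m\rvert\geq 2$ is not reachable from your proposed generators inside $\Utor^{+}$. The correct propagation mechanism is the one in Corollary \ref{cor:xp0m annihilate v1xv2}: first show that $h_{0,\pm 1}$ acts on $v\one\otimes v\two$ by $h_{0,\pm 1}\otimes 1 + 1\otimes h_{0,\pm 1}$ (Proposition \ref{prop:action of hpm0pm1}), hence by a scalar, and then use $\xp_{0,m\pm 1}\propto [h_{0,\pm 1},\xp_{0,m}]$.

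The genuine gap is the Drinfeld polynomial at the affine node. Your eigenvalue computation only treats $\phi^{\pm}_{i,\pm s}$ with $i\in I_{0}$, whereas the theorem requires the factor $P^{(1)}_{0}(z)P^{(2)}_{0}(z)$ as well; the element $\phi^{\pm}_{0,\pm s}$ is not covered by the vertical subalgebra argument, nor by any $\U(I_{j})$ with $j\in\Imin\setminus\lbrace 0\rbrace$ in types $E_{8}^{(1)}$, $F_{4}^{(1)}$ and $G_{2}^{(1)}$ where $\Imin = \lbrace 0\rbrace$. This is where the bulk of the paper's proof lives: one expands $\Delta_{1}$ applied to Jing's iterated $q$-commutator expression for $\psi(\xpm_{0,0}) = v(\xpm_{0})$, shows by a degree bookkeeping that all but two families of summands cancel on $(V\one\otimes V\two)(0)$ (Proposition \ref{prop:action of xpm00}), deduces the action of $h_{0,\pm 1}$, and then runs a nontrivial induction to prove that $\phi^{+}_{0,s}$ acts on $v\one\otimes v\two$ by $\sum_{k+\ell=s}\phi^{+}_{0,k}\otimes\phi^{+}_{0,\ell}$ (Proposition \ref{prop:action of phi+0m}, via Lemma \ref{lem:actions on v1xv2}). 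None of this follows from the affine comultiplication identity you cite, and labelling it ``bookkeeping'' understates that it is the substantive core of the proof rather than a routine verification.
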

\begin{proof}
Our strategy is as follows:
\begin{enumerate}
    \item Consider the action
    $\Udash \cong \Uv \curvearrowright (V\one \otimes V\two)(I_{0})$
    obtained by restricting
    $\Utor \curvearrowright V\one \otimes V\two$.
    \item Show that this coincides with the action
    $\Udash \cong \Uv \curvearrowright V\one(I_{0}) \otimes V\two(I_{0})$
    defined using the coproduct $\Dbarplus$ (Proposition \ref{prop:actions of Udash coincide}).
    \item Deduce from results of Chari-Pressley for quantum affine algebras that $v\one \otimes v\two$ is an $\ell$-highest weight vector inside this module, with Drinfeld polynomials
    $\big(P\one_{i}(z) P\two_{i}(z)\big)_{i\in I_{0}}$.
    \item Prove via direct computations that $v\one \otimes v\two$ is an $\ell$-highest weight vector of the representation
    $\U(0) \curvearrowright (V\one \otimes V\two)(0)$,
    with Drinfeld polynomials $P\one_{0}(z)P\two_{0}(z)$
    (Corollaries \ref{cor:xp0m annihilate v1xv2} and \ref{cor:0 Drinfeld polynomials}).
    \item Combine these results to complete the proof.
    \qedhere
\end{enumerate}
\end{proof}

\begin{cor} \label{cor:irreducible switch factors}
    If $V\one \otimes V\two$ is irreducible, then it is isomorphic to $V\two \otimes V\one$.
\end{cor}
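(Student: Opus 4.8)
The plan is to deduce this corollary from Theorem \ref{thm:l-highest weight vector in tensor products} together with the fact that an irreducible object of $\Oaff$ is determined (up to isomorphism) by its Drinfeld polynomials. First I would observe that if $V\one$ and $V\two$ are irreducible in $\Oaff$, then each contains an $\ell$-highest weight vector $v\one$, $v\two$ with associated Drinfeld polynomials $\Pcal\one(z)$, $\Pcal\two(z)$, since the irreducible objects of $\Oaff$ are precisely the irreducible integrable $\ell$-highest weight modules (by Theorems \ref{thm:l-highest weight classification} and the discussion following it). By Theorem \ref{thm:l-highest weight vector in tensor products} applied in both orders, $v\one \otimes v\two$ is an $\ell$-highest weight vector of $V\one \otimes V\two$ with Drinfeld polynomials $\Pcal\one(z)\Pcal\two(z)$, and likewise $v\two \otimes v\one$ is an $\ell$-highest weight vector of $V\two \otimes V\one$ with Drinfeld polynomials $\Pcal\two(z)\Pcal\one(z)$; these products agree since polynomial multiplication is commutative.

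Next I would use the hypothesis that $V\one \otimes V\two$ is irreducible. Being irreducible and generated (as it must be, being nonzero and a quotient of the $\ell$-highest weight module generated by $v\one\otimes v\two$) by an $\ell$-highest weight vector, $V\one \otimes V\two$ is the irreducible $\ell$-highest weight module $V(\Pcal\one(z)\Pcal\two(z))$ up to isomorphism. On the other hand, $V\two \otimes V\one \in \Oaff$ by Theorem \ref{thm:integrability of tensor products}, and it contains the $\ell$-highest weight vector $v\two\otimes v\one$ with the same Drinfeld polynomials, so the submodule $\Utor\cdot(v\two\otimes v\one)$ surjects onto $V(\Pcal\one(z)\Pcal\two(z)) \cong V\one\otimes V\two$. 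To conclude an isomorphism rather than merely a surjection, I would compare dimensions of weight spaces: by Lemma \ref{lem:decomposing submodules of tensor products} (or directly from (\ref{eqn:Dpsi on C and k}), which forces the weight-space decomposition of a tensor product to be the tensor product of weight-space decompositions), $V\one\otimes V\two$ and $V\two\otimes V\one$ have the same character, hence the surjection $\Utor\cdot(v\two\otimes v\one) \twoheadrightarrow V\one\otimes V\two$ must be injective and the submodule must be all of $V\two\otimes V\one$; thus $V\two \otimes V\one \cong V\one \otimes V\two$.

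The main obstacle here is really bundled into Theorem \ref{thm:l-highest weight vector in tensor products}, whose proof (step 4 in particular, the analysis of the $\U(0)$-action) is the technical heart; given that theorem, the corollary is essentially the observation that Drinfeld polynomials are multiplicative and symmetric, combined with a character-counting argument to upgrade a surjection to an isomorphism. One subtlety worth handling carefully is ensuring $v\two\otimes v\one$ is genuinely nonzero and that $\Utor\cdot(v\two\otimes v\one)$ is all of $V\two\otimes V\one$ — the former is immediate since $v\one, v\two$ are nonzero, and the latter follows because any proper submodule would have strictly smaller character than $V\one\otimes V\two$, contradicting the surjection. An alternative, cleaner route avoids invoking irreducibility of $V\one$ and $V\two$ separately: once $V\one\otimes V\two$ is assumed irreducible, it is automatically $\ell$-highest weight (generated by any nonzero $\ell$-highest weight vector it contains), and one only needs that it contains such a vector with symmetric Drinfeld data, which Theorem \ref{thm:l-highest weight vector in tensor products} supplies as soon as $V\one$ and $V\two$ each contain an $\ell$-highest weight vector — a property forced by $V\one\otimes V\two$ being $\ell$-highest weight. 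I would phrase the final write-up along whichever of these is shortest once the surrounding lemmas are pinned down.
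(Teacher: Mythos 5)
Your argument is correct and is essentially the paper's own proof in slightly different packaging: both rest on Theorem \ref{thm:l-highest weight vector in tensor products} applied in each order, the weight-space factorisation forced by $\Dpsi(k_i^{\pm 1}) = k_i^{\pm 1}\otimes k_i^{\pm 1}$ to equate the characters of $V\one\otimes V\two$ and $V\two\otimes V\one$, and a character-counting step (the paper phrases it as strict minimality of the irreducible's weight-space dimensions, you phrase it as upgrading the surjection $\Utor\cdot(v\two\otimes v\one)\twoheadrightarrow V(\Pcal\one(z)\Pcal\two(z))$ to an isomorphism). No gaps.
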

\begin{proof}
    The irreducibility assumption ensures that
    $\big(\!\dim_{\Cbb}(V\one \otimes V\two)_{\nu} \, | \, \nu \in P\big)$
    is \emph{strictly minimal} over all $\Utor$-modules $V$ containing an $\ell$-highest weight vector with Drinfeld polynomials $\Pcal\one(z)\Pcal\two(z)$.
    Namely, such $V$ have
    $\dim_{\Cbb}V_{\nu} \geq \dim_{\Cbb}(V\one \otimes V\two)_{\nu}$
    for each $\nu \in P$, and at least one inequality is strict whenever $V$ is reducible -- this is because $V$ must contain a subquotient isomorphic to $V\one \otimes V\two$.
    (Note that Theorem \ref{thm:integrability of tensor products} implies that all
    $\dim_{\Cbb}(V\one \otimes V\two)_{\nu}$
    are finite.)
    But
    \begin{align*}
        (V\alphapower \otimes V\betapower)_{\nu}
        = \bigoplus_{\nu\alphapower + \nu\betapower = \nu}
        V\alphapower_{\nu\alphapower} \otimes V\betapower_{\nu\betapower}
    \end{align*}
    by (\ref{eqn:Dpsi on C and k}), so every
    $\dim_{\Cbb}(V\one \otimes V\two)_{\nu}
    = \dim_{\Cbb}(V\two \otimes V\one)_{\nu}$.
    Since $V\two \otimes V\one$ moreover contains an $\ell$-highest weight vector with Drinfeld polynomials $\Pcal\one(z)\Pcal\two(z)$, it must also be irreducible and thus isomorphic to $V\one \otimes V\two$.
\end{proof}

The next theorem demonstrates that \emph{generically}, a tensor product of irreducible representations is itself irreducible.

\begin{notation}
    For any $a\in\Cbb^{\times}$ and $\Utor$-module $V$, we shall write $V_{a}$ for the twist of $V$ by the scaling automorphism $\sfrakv_{a}$ from Section \ref{subsubsection:Gradings and scaling automorphisms}.
\end{notation}

Recall from Lemma \ref{lem:twisting by scaling automorphisms} that twisting with $\sfrakv_{a}$ acts on Drinfeld polynomials via $z \mapsto a^{\hslash} z$.

\begin{thm} \label{thm:irreducible except countable}
    If $V\one$ and $V\two$ are irreducible, then the tensor product $V\one_{a} \otimes V\two_{b}$ is irreducible for all but countably many $\frac{b}{a}\in\Cbb^{\times}$.
\end{thm}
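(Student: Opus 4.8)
The plan is to follow the classical Chari--Pressley strategy for quantum affine algebras, adapted to the toroidal setting via the tensor product coming from $\Dpsi$. Write $W(a,b) = V\one_{a} \otimes V\two_{b}$ for the tensor module. By Theorem \ref{thm:l-highest weight vector in tensor products}, $W(a,b)$ contains an $\ell$-highest weight vector $v\one\otimes v\two$ whose Drinfeld polynomials are the product $\Pcal\one(a^{\hslash}z)\Pcal\two(b^{\hslash}z)$ (using Lemma \ref{lem:twisting by scaling automorphisms} to track the effect of the twists). Let $W'(a,b) = \Utor\cdot(v\one\otimes v\two)$ be the $\ell$-highest weight submodule it generates. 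The key observation is that $W'(a,b)$ is irreducible if and only if its unique simple quotient $V(\Pcal\one(a^{\hslash}z)\Pcal\two(b^{\hslash}z))$ has the same graded dimensions as $W'(a,b)$ itself. So I would first establish that $W'(a,b) = W(a,b)$ for all but countably many $b/a$ — i.e.\ that the $\ell$-highest weight vector generically generates the whole tensor product — and then separately show that $W'(a,b)$ is generically irreducible.

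For the second part, the argument I would use is the standard one based on specialisation and upper semicontinuity. Pick bases of the weight spaces of $V\one$ and $V\two$, so that $W(a,b)$ has a fixed underlying graded vector space independent of $(a,b)$. Inspecting the formulae in Theorem \ref{thm:Damiani topological coproduct} for $\Delta_{u}$ and the definition $\Dpsi = (\psi\otimes\psi)\circ\Delta_u\circ\psi$, together with (\ref{eqn:image of new topological coproduct}) which guarantees that only finitely many summands act nontrivially on any given vector, the matrix coefficients of the action of each generator $\xpm_{i,m}$, $h_{i,r}$, $k_i^{\pm1}$, $C^{\pm1}$ on $W(a,b)$ are Laurent polynomials (or at worst rational functions with controlled poles) in $a^{1/\hslash}$ and $b^{1/\hslash}$. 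Now for each fixed weight $\nu$, consider the matrix whose rows are indexed by all ``descendant'' monomials $X\cdot(v\one\otimes v\two)$ (finitely many modulo a chosen degree bound, since $W(a,b)\in\Oaff$ and weights lie in a finite union of cones) written in the fixed basis of $W(a,b)_\nu$: the rank of this matrix is $\dim_{\Cbb} W'(a,b)_\nu$, and its entries are rational in $a^{1/\hslash},b^{1/\hslash}$. The rank is maximal on the complement of a proper Zariski-closed subset, hence on the complement of a countable set of values of $b/a$; at such generic points $\dim W'(a,b)_\nu$ achieves its maximum $d_\nu$ over all $(a,b)$. The crucial input is that $d_\nu = \dim_{\Cbb} W(a,b)_\nu$ for all $\nu$ — this is where I would invoke Lemma \ref{lem:kernels and images generically} (referenced in the acknowledgements) to upgrade ``generically full rank'' into an honest dimension count, showing the submodule generically fills up each weight space. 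Intersecting the (countably many) bad sets over all relevant $\nu$ still leaves a cocountable set, and on it $W'(a,b) = W(a,b)$.

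To then show $W(a,b)$ is generically simple, I would argue that any proper submodule $U\subset W(a,b)$ not containing $v\one\otimes v\two$ would, by integrability and the loop triangular decomposition, contain an $\ell$-highest weight vector of some weight $\nu < \lambda\one+\lambda\two$; equivalently $W(a,b)$ is reducible iff $\dim_{\Cbb} W(a,b)_\nu > \dim_{\Cbb} V(\Pcal\one\Pcal\two)_\nu$ for some $\nu$, i.e.\ iff the unique simple quotient is strictly smaller than $W(a,b)$ in some weight space. But the graded dimensions of $W(a,b)$ are constant in $(a,b)$ (by (\ref{eqn:Dpsi on C and k})), while $\dim_{\Cbb} V(\Pcal\one(a^\hslash z)\Pcal\two(b^\hslash z))_\nu$ is upper semicontinuous in $(a,b)$ and at ``generic position'' — where the roots of $\Pcal\one(a^\hslash z)$ and $\Pcal\two(b^\hslash z)$ are in general position — attains its maximum. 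The standard fact (which I would deduce by factoring through the quantum affine situation along $\Uv = \U(I_0)$ and $\U(0)$, as in the proof of Theorem \ref{thm:l-highest weight vector in tensor products}, where $V(\Pcal\one)\otimes V(\Pcal\two)$ is generically simple over each affine subalgebra by Chari--Pressley \cite{CP94}) is that generically this maximum equals $\dim_{\Cbb}W(a,b)_\nu$. Removing the countably many exceptional values in each of the finitely-many-modulo-symmetry relevant weights $\nu$ gives the result.

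The main obstacle I anticipate is the rationality/upper-semicontinuity bookkeeping: unlike the quantum affine case where $\Dbarplus$ is a genuine coproduct landing in an honest tensor square, here one works with $\Dpsi$ whose image contains infinite sums, so one must first pin down carefully — using (\ref{eqn:image of new topological coproduct}) and the weight-space finiteness from Theorem \ref{thm:integrability of tensor products} — that on any fixed finite-dimensional graded piece of $W(a,b)$ only finitely many terms contribute and the resulting matrix entries are genuinely Laurent-polynomial (after adjoining $a^{1/\hslash}, b^{1/\hslash}$) in the parameters, with no spurious poles. Once that is in place, the generic-rank and generic-simplicity arguments are the usual ones. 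A secondary subtlety is ensuring the exceptional set is genuinely countable rather than merely proper Zariski-closed: since we only vary the single ratio $b/a \in \Cbb^\times$, a proper closed subset of the line is finite, but the $1/\hslash$ roots force us onto a cover, so the honest statement is that the bad set is contained in the preimage of a finite set, hence countable — this matches the phrasing of the theorem.
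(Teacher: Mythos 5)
Your proposal has the right overall architecture, and it matches the paper's skeleton: reduce irreducibility to two conditions on each weight space $\mu\lneq\lambda\one+\lambda\two$ (the $\ell$-highest weight vector generates, and there is no lower singular vector), show each condition fails for only finitely many values of $b/a$ by exploiting the Laurent-polynomial dependence of the matrix coefficients on $b$, and then take a countable union over the weights $\mu$. You also correctly identify Lemma \ref{lem:kernels and images generically} as the mechanism for passing from a single good specialisation to a cofinite set. However, there are two genuine gaps.

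First, and most importantly, you never supply the base case that Lemma \ref{lem:kernels and images generically} requires. That lemma only propagates a statement known at $b=0$ to all but finitely many $b$; it cannot ``upgrade generically full rank into an honest dimension count'' on its own, which is how you invoke it. The actual content of the paper's proof is the degeneration $b\to 0$: Lemma \ref{lem:images of Utor pm} shows the relevant generators lie in $\psi(\Utor^{\pm})$, so after the rescalings $\Tilde{x}^{-}_{i,k}$ their images under $(1\otimes\sfrakv_{b})\circ\Delta^{\psi}$ have the form $\xi\otimes 1 + O(b)$ or $1\otimes\xi + O(b)$. At $b=0$ both irreducibility conditions therefore factor into the corresponding conditions for $V\one$ and $V\two$ separately, and these hold by Lemma \ref{lem:annihilating and spanning weight spaces} precisely because each factor is irreducible (that lemma itself needs the $\X_{i}$-twist trick, since for fixed $\mu$ the spanning monomials $\xm_{i_{1},k_{1}}\cdots\xm_{i_{s},k_{s}}$ form an \emph{infinite} family --- the loop indices $k_{j}$ are not constrained by the weight, so your ``finitely many modulo a degree bound'' is not correct as stated).

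Second, your route to the ``no lower singular vector'' condition --- upper semicontinuity of $\dim V(\Pcal\one\Pcal\two)_{\nu}$ in the parameter together with Chari--Pressley generic simplicity over the affine subalgebras --- does not go through. The restrictions $V^{(\alpha)}(\Ii)$ are generally \emph{not} irreducible $\Udash$-modules even when $V^{(\alpha)}$ is irreducible over $\Utor$, so \cite{CP94} does not apply to them; and the semicontinuity and generic maximality of the simple quotient's graded dimensions is not established anywhere and would essentially presuppose the theorem. The paper avoids all of this by running the same $b\to 0$ degeneration on the joint kernel of the (rescaled) raising operators and again appealing to Lemma \ref{lem:annihilating and spanning weight spaces}. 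Finally, a small point: the exceptional set is countable because there are countably many weights $\mu$, each contributing finitely many bad $b$ --- it has nothing to do with $\hslash$-th roots, since $V_{a}$ is defined by twisting with $\sfrakv_{a}$ and all matrix coefficients are Laurent polynomial in $b$ itself.
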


Since the proof of this result is rather technical, we defer it to Section \ref{subsection:Proof of generic irreducibility}.
\\

For each $j\in I$ and $a\in \Cbb^{\times}$, define the associated \emph{fundamental representation} $V(\lambda_{j},a)$ of $\Utor$ to be the irreducible integrable $\ell$-highest weight module with Drinfeld polynomials $( (1-u/a)^{\delta_{ij}})_{i\in I}$.

\begin{rmk}
    In type $A$ these fundamental modules are precisely the Fock space representations, as constructed in \cites{FJMM13,STU98,Tsymbaliuk19,VV98}.
\end{rmk}

\begin{cor} \label{cor:subquotient of tensor product}
    Every irreducible integrable $\ell$-highest weight representation is isomorphic to a subquotient of a tensor product of fundamental representations.
\end{cor}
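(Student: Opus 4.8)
The plan is to argue by induction on a suitable partial order on Drinfeld polynomials, using Theorem \ref{thm:l-highest weight vector in tensor products} to build any irreducible $V(\Pcal(z))$ inside a tensor product of fundamental modules and then extract it as a subquotient.

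First I would recall that every $\ell$-dominant weight $(\lambda,\Psi)\in P_{\ell}^{+}$ corresponds to a tuple of Drinfeld polynomials $\Pcal(z) = (P_{i}(z))_{i\in I}$ with $P_{i}(0) = 1$, and that each $P_{i}(z)$ factorises over $\Cbb$ as a product $\prod_{a} (1 - z/a)^{m_{i,a}}$. Hence there is a multiset of pairs $(i_{1},a_{1}),\dots,(i_{n},a_{n})$, with $n = \sum_{i\in I}\deg(P_{i})$, such that $\Pcal(z) = \prod_{k=1}^{n} \big( (1-z/a_{k})^{\delta_{i i_{k}}}\big)_{i\in I}$ — in other words $\Pcal(z)$ is precisely the product of the Drinfeld polynomials of the fundamental modules $V(\lambda_{i_{1}},a_{1}),\dots,V(\lambda_{i_{n}},a_{n})$. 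Each $V(\lambda_{i_{k}},a_{k})$ lies in $\Oaff$ and contains an $\ell$-highest weight vector $v^{(k)}$ with those polynomials, and by Theorem \ref{thm:integrability of tensor products} the tensor product $W := V(\lambda_{i_{1}},a_{1}) \otimes\dots\otimes V(\lambda_{i_{n}},a_{n})$ (formed using $\Dpsi$, iterated via coassociativity) again lies in $\Oaff$. Applying Theorem \ref{thm:l-highest weight vector in tensor products} inductively — the $n=2$ case gives the general case by associativity of the construction and of polynomial multiplication — shows that $v^{(1)}\otimes\dots\otimes v^{(n)}$ is an $\ell$-highest weight vector of $W$ with Drinfeld polynomials $\prod_{k} \Pcal^{(k)}(z) = \Pcal(z)$.

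Next I would pass to the cyclic submodule $W' = \Utor\cdot(v^{(1)}\otimes\dots\otimes v^{(n)}) \subseteq W$, which is then an $\ell$-highest weight module with $\ell$-highest weight $(\lambda,\Psi)$; since $W\in\Oaff$ it is integrable, so $(\lambda,\Psi)\in P_{\ell}^{+}$ by Theorem \ref{thm:l-highest weight classification}, and $W'$ is generated by an $\ell$-highest weight vector of $\ell$-weight $(\lambda,\Psi)$. By the universal property of the Verma module $M(\lambda,\Psi)$, the module $W'$ is a quotient of $M(\lambda,\Psi)$, and $M(\lambda,\Psi)$ has a unique maximal submodule with irreducible quotient $V(\lambda,\Psi) = V(\Pcal(z))$. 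Therefore $V(\Pcal(z))$ is a quotient of $W'$, hence a subquotient of $W$ — a tensor product of fundamental modules. This establishes the corollary for every irreducible integrable $\ell$-highest weight representation.

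The main obstacle is making sure the iterated tensor product is genuinely well-behaved: one needs that $\Dpsi$ extends to an $n$-fold coproduct via the coassociativity inherited from Theorem \ref{thm:Damiani topological coproduct} (conjugated by $\psi$) and that the $n$-fold product still lies in $\Oaff$, which follows by iterating Theorem \ref{thm:integrability of tensor products}; and one must check that Theorem \ref{thm:l-highest weight vector in tensor products}, stated for two factors, propagates to $n$ factors — this is routine since both the tensor construction and polynomial multiplication are associative, but it should be spelled out. A secondary point is that the factorisation of Drinfeld polynomials over $\Cbb$ is used implicitly to reduce to linear factors $(1-z/a)$; this is immediate since $\Cbb$ is algebraically closed and $P_{i}(0)=1$. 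Everything else is a direct application of the universal property of Verma modules already recorded in Section \ref{subsubsection:l-highest weight theory}.
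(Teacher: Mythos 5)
Your proposal is correct and follows essentially the same route as the paper: factor the Drinfeld polynomials into linear factors, take the tensor product of the corresponding fundamental modules, apply Theorem \ref{thm:l-highest weight vector in tensor products} to identify the $\ell$-highest weight vector, and realise $V(\Pcal(z))$ as a quotient of the cyclic submodule it generates. The extra details you supply (the $n$-fold iteration via coassociativity and the Verma-module argument) are exactly the steps the paper leaves implicit.
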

\begin{proof}
    Take such a module $V(\Pcal(z))$ where $\Pcal(z) = (P_{j}(z))_{j\in I}$, and denote by 
    $a_{j,1},\dots,a_{j,\deg(P_{j})}$
    the roots of each $P_{j}(z)$ including multiplicities.
    Consider the tensor product
    $\bigotimes_{j\in I} \bigotimes_{k=1}^{\deg(P_{j})} V(\lambda_{j},a_{j,k})$ with respect to our coproduct $\Dpsi$.
    By Theorem \ref{thm:l-highest weight vector in tensor products}, this contains an $\ell$-highest weight vector with Drinfeld polynomials $\Pcal(z)$.
    Then $V(\Pcal(z))$ is isomorphic to a quotient of the submodule generated by this vector.
\end{proof}

See Corollaries 12.1.13 and 12.2.8 of \cite{CP94} for the corresponding Yangian and quantum affine results.
Let us also remark that while Hernandez' fusion product is constructed in an entirely different way to our tensor product on $\Oaff$, it nevertheless enjoys a similar property \cite{Hernandez07}*{Prop. 6.1}.

\begin{notation}
    Throughout the rest of this paper we may assume without loss of generality that the coproduct parameter $u$ is specialised to $1$.
    We shall write $\Delta^{\psi}$ as shorthand for $\Delta^{\psi}_{1}$ in this case.
\end{notation}
\begin{proof}
First note that since $\sfrak^{(0)}_{u}$ fixes $q^{h}$ and $h_{i,r}$, and moreover scales every $\xp_{i,m}$, twisting a $\Utor$-module $V$ by $\sfrak^{(0)}_{u}$ preserves
\begin{itemize}
    \item a vector $v\in V$ being $\ell$-highest weight,
    \item the $\ell$-weight and Drinfeld polynomials of such $v$,
    \item the irreducibility of $V$,
\end{itemize}
and thus the assumptions of each result in this section.
Then as
$\Delta^{\psi}_{1} = (1\otimes \sfrak^{(0)}_{u}) \circ \Dpsi$
by equation (\ref{eqn:image of new topological coproduct}), we are done.
\end{proof}

Perhaps it is worth indicating why our results in Sections \ref{section:tensor products} and \ref{section:R matrices} relate $\Dpsi$ with $\Dbarplus$, even though Remark \ref{rmk:affinizations of coproducts} presents $\Delta_{u}$ as the `affinization' of $\Delta_{+}$ instead.
This is explained by the commutativity of the following diagram for $\Uv$, and similarly for the other subalgebras of $\Utor$ considered in Sections \ref{subsection:action of Uv} and \ref{subsection:action of U0},
together with the fact that
$\Dbarplus = (\sigma\otimes\sigma) \circ \Delta_{+} \circ \sigma$.
\[\begin{tikzcd}
	{\U_{\beta,\ell\delta'}} & {\U_{\beta+\ell\delta,0}} & {\displaystyle \sum_{\mu,n,r} \U_{\beta-\mu+(\ell-n)\delta,-r\delta'} \otimes \U_{\mu+n\delta,r\delta'}} && {\displaystyle \sum_{\mu,n,r} \U_{\beta-\mu-r\delta,(\ell-n)\delta'} \otimes \U_{\mu+r\delta,n\delta'}} \\
	\\
	{\U_{\beta,\ell\delta'}} & {\U_{\beta+\ell\delta,0}} & {\displaystyle \sum_{\mu,n} \U_{\beta-\mu+(\ell-n)\delta,0} \otimes \U_{\mu+n\delta,0}} && {\displaystyle \sum_{\mu,n} \U_{\beta-\mu,(\ell-n)\delta'} \otimes \U_{\mu,n\delta'}}
	\arrow["\psi", from=1-1, to=1-2]
	\arrow["{\rotatebox{90}{$=$}}", from=1-1, to=3-1]
	\arrow["{\Delta_{1}}", from=1-2, to=1-3]
	\arrow["{\rotatebox{90}{$=$}}", from=1-2, to=3-2]
	\arrow["{\psi\otimes\psi}", from=1-3, to=1-5]
	\arrow["\begin{array}{c} \substack{\text{project to} \\ \Uh\otimes\Uh} \end{array}", from=1-3, to=3-3]
	\arrow["\begin{array}{c} \substack{\text{project to} \\ \Uv\otimes\Uv} \end{array}", from=1-5, to=3-5]
	\arrow["{h \sigma v^{-1}}", from=3-1, to=3-2]
	\arrow["{h \Delta_{+} h^{-1}}", from=3-2, to=3-3]
	\arrow["{(v \sigma h^{-1}) \otimes (v \sigma h^{-1})}", from=3-3, to=3-5]
\end{tikzcd}\]

\begin{rmk}
    \begin{itemize}
        \item Of course, analogous results involving the other topological coproducts for $\Utor$ mentioned in Remark \ref{rmk:affinizations of coproducts} are obtained by conjugating with $\Wcal$, $\eta$ and $\Wcal\eta$.
        \item Furthermore, since vertex representations \cite{Jing98(2)} can be obtained from elements of $\Oaff$ by twisting with a horizontal--vertical symmetry such as $\Phi$, our work implies that $\Delta_{u}$ leads to a well-defined tensor product on these modules.
    \end{itemize}
\end{rmk}

\subsection{Action of vertical subalgebras on tensor products} \label{subsection:action of Uv}

Here we consider the action on $V\one \otimes V\two$ of each `vertical' quantum affine subalgebra $\U(I_{i}) \cong \Udash$ with $i\in\Imin$, noting in particular that $\Uv$ occurs as a special case.
To this end, let us fix some $i\in\Imin$ and define $f : \Udash \rightarrow \Utor$ to be the composition $(X_{i}\pi_{i}) \circ v$.
\\

On the one hand, we can pull back the action of $\Utor$ along $f$ to define an action of $\Udash$ on each $V\alphapower$, restrict to the submodules $V\alphapower(I_{i})$, and then take the tensor product with respect to $\Dbarplus$.
On the other hand, the pullback of
$\Utor \curvearrowright V\one \otimes V\two$
to $\Udash$ along $f$ contains $(V\one \otimes V\two)(\Ii)$ as a submodule.

\begin{prop} \label{prop:actions of Udash coincide}
    The representations of $\Udash$ on $(V\one \otimes V\two)(\Ii) = V\one(\Ii) \otimes V\two(\Ii)$ defined by $\Delta^{\psi} \circ f$ and $(f \otimes f) \circ \Dbarplus$ are isomorphic, via the identity map from Lemma \ref{lem:decomposing submodules of tensor products}.
\end{prop}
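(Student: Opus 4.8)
The plan is to verify that the two $\Udash$-module structures on the vector space $V\one(\Ii) \otimes V\two(\Ii)$ agree generator-by-generator, using the Drinfeld new presentation of $\Udash$. The strategy is to first reduce from the full coproduct formulae to a computation on graded pieces, then transport everything through the identification $f = (X_i \pi_i) \circ v$ and the commutativity square presented immediately after Corollary \ref{cor:subquotient of tensor product}.

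First I would set up notation: write $\varphi = X_i\pi_i \in \Bdd$, so that $f = \varphi \circ v$, and recall that $\varphi$ is an automorphism of $\Utor$ permuting the graded pieces according to its action on the lattice $\mathring{Q}\oplus\Zbb\delta\oplus\Zbb\delta'$. The key structural fact is that $\varphi$ carries $\Uv = \U(I_0)$ isomorphically onto $\U(\Ii)$, with the restricted action coinciding with Lusztig--Beck's action of $\Bd$ on the quantum affine algebra (Remark \ref{rmk:horizontal and vertical restricted actions}). Hence for each Drinfeld new generator $g \in \Udash$, the element $f(g) = \varphi(v(g))$ lies in $\U(\Ii)$, and I need to compute $\Delta^{\psi}(f(g))$ modulo the annihilator of $V\one(\Ii)\otimes V\two(\Ii)$ and compare with $(f\otimes f)(\Dbarplus(g))$.

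The heart of the argument is the commutative diagram after Corollary \ref{cor:subquotient of tensor product}: it says precisely that after projecting $\Dpsi$ restricted to $\Uv$-graded elements onto $\Uv\otimes\Uv$, one recovers $(v\sigma h^{-1})\otimes(v\sigma h^{-1})$ applied to $h\Delta_+ h^{-1}$ applied to $h\sigma v^{-1}$ — that is, conjugating $\Delta_+$ by $\sigma\otimes\sigma$ (using $\Dbarplus = (\sigma\otimes\sigma)\circ\Delta_+\circ\sigma$ and $\psi v = h\sigma$, $\psi h = v\sigma$ from Theorem \ref{thm:psi}). So on $\Uv$ the relevant projection of $\Dpsi$ is exactly the pullback of $\Dbarplus$ along $v$. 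Three points need care here: (i) the projection onto $\Uv\otimes\Uv$ is legitimate because the \emph{other} summands in \eqref{eqn:image of new topological coproduct} — those with a factor outside $\Uv$ — act by zero on $V\one(\Ii)\otimes V\two(\Ii)$, which follows from Lemma \ref{lem:decomposing submodules of tensor products} together with \eqref{eqn:action on weight spaces}, since $V\alphapower(\Ii)$ is a $\U(\Ii)$-submodule and is annihilated by graded pieces moving weights outside $Q(\Ii)^+$; (ii) one must conjugate the whole diagram by the automorphism $\varphi$ to pass from $\Uv$ and the coproduct $\Dbarplus$ on $V\alphapower(I_0)$ to $\U(\Ii)$ and $\Dbarplus$ on $V\alphapower(\Ii)$ — this works because $\varphi$ is an algebra automorphism and intertwines the two loop triangular decompositions, and because $f = \varphi\circ v$ by definition; (iii) the specialisation $u = 1$ must be harmless, which is exactly the content of the \textbf{Notation} reduction made just before this proposition ($\Delta^\psi_1 = (1\otimes\sfrak^{(0)}_u)\circ\Dpsi$, and twisting by $\sfrak^{(0)}_u$ preserves the relevant structure).

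I expect the main obstacle to be the bookkeeping in step (i)–(ii): one must check that every summand of $\Dpsi(f(g))$ whose right tensor factor does \emph{not} lie in $\U(\Ii)$ genuinely annihilates $V\one(\Ii)\otimes V\two(\Ii)$, and symmetrically for the left factor, so that the projection used in the diagram is faithful on this submodule. The cleanest way is to argue at the level of weights: by Lemma \ref{lem:decomposing submodules of tensor products} the space $V\one(\Ii)\otimes V\two(\Ii)$ is spanned by $v\one_\gamma\otimes v\two_\tau$ with $\gamma, \tau$ in the appropriate cones spanned by $\{\alpha_j : j\in\Ii\}$ below the highest weights, and a graded piece $\U_{\beta+k\delta,\ell\delta'}$ with $\beta\notin Q(\Ii)$-span moves such a vector out of $V\alphapower(\Ii)$, hence (since $V\alphapower(\Ii)$ is $\U(\Ii)$-stable and $V\alphapower$ decomposes accordingly) contributes zero after the tensor-factor projection is accounted for. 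Once this is established, the proposition follows formally from the displayed diagram by reading it off for each Drinfeld new generator $\xpm_{j,m}$, $h_{j,r}$, $k_j^{\pm1}$, $C^{\pm1}$ of $\Udash$, and the identity map of Lemma \ref{lem:decomposing submodules of tensor products} is the required isomorphism since both module structures are defined on the same underlying vector space $V\one(\Ii)\otimes V\two(\Ii)$ and agree on all generators.
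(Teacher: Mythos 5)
Your overall route is the paper's: reduce to generators, use $f = \psi\circ(X_{i}\pi_{i})\circ h\sigma$ (from Proposition \ref{prop:compatibilities}), and kill the extra summands of $\Delta^{\psi}\circ f$ by weight considerations. But two steps need repair. First, your reason for why the extra summands vanish is not correct as stated. You argue that a graded piece moving a vector \emph{out of} $V\alphapower(\Ii)$ "contributes zero after the tensor-factor projection is accounted for" --- but there is no projection: the two module structures are being compared on the honest subspace $V\one(\Ii)\otimes V\two(\Ii)$ of $V\one\otimes V\two$, and a summand that merely lands outside this subspace would \emph{not} act by zero and would break the equality. The correct argument, and the one the paper uses, is that each offending summand has a tensor factor lying in some $\U_{\mu + \ell\delta,\,*}$ with $\ell\neq 0$, so by (\ref{eqn:action on weight spaces}) it sends $V\alphapower_{\gamma}$ into $V\alphapower_{\gamma+\mu+\ell\delta}$; since the weights of $V\alphapower(\Ii)$ lie in $\bigcup_{j}(\lambda\alphapower_{j}-Q(\Ii)^{+})$ and $\delta$ has a strictly positive $\alpha_{i}$-coefficient while elements of $Q(\Ii)^{+}$ have none, the target weight space is actually \emph{zero}. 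That is what makes the summand annihilate $V\one(\Ii)\otimes V\two(\Ii)$, not stability of the subspace.

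Second, you lean on the commutative diagram displayed after Corollary \ref{cor:subquotient of tensor product} as "the heart" of the argument, but in the paper that diagram is an after-the-fact explanation whose nontrivial square --- that $\Delta_{1}$ on elements of $\Uh$, projected to $\Uh\otimes\Uh$, recovers $h\Delta_{+}h^{-1}$ --- is precisely what this proposition's computation establishes; taking it as given is circular. The honest content is the explicit evaluation of $\Delta_{1}$ on $(X_{i}\pi_{i})\circ h\sigma(z)$, which is tractable exactly because one takes $z$ among the Drinfeld--Jimbo generators $\xjpm$, $k_{j}^{\pm 1}$ of $\Udash$ (so that $(X_{i}\pi_{i})h\sigma(z)$ is, up to sign, a single Drinfeld new generator $\xpm_{\pi_{i}(j),m}$ with $m\in\lbrace 0,\pm 1\rbrace$, to which Theorem \ref{thm:Damiani topological coproduct} applies directly) and compares with $\Dbarplus(z)$, which is only explicit on Chevalley generators. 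Your plan to check on the Drinfeld new generators of $\Udash$ instead makes the $(f\otimes f)\circ\Dbarplus$ side intractable. Relatedly, the "conjugate the whole diagram by $\varphi = X_{i}\pi_{i}$" step is not free: $\Delta_{u}$ does not commute with $\X_{i}\X_{0}^{-a_{i}}\otimes\X_{i}\X_{0}^{-a_{i}}$ on the nose (the discrepancy is by powers of $u$ and $C$), and one must note that this discrepancy disappears on type $1$ modules at $u=1$ --- or, as the paper does, simply avoid the conjugation by computing both sides directly.
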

\begin{proof}
It suffices to show that $\Delta^{\psi} \circ f(z)$ and $(f \otimes f) \circ \Dbarplus(z)$ act on $V\one(\Ii) \otimes V\two(\Ii)$ in the same way whenever $z \in \lbrace \xjpm,\, k_{j}^{\pm 1} \, | \, j\in I\rbrace$.
To this end, we first calculate the images of $(f \otimes f) \circ \Dbarplus$ on each generator of $\Udash$.
For any $i\in \Imin\setminus\lbrace 0\rbrace$ we have
\begin{alignat*}{5}
    \xp_{j} &\mapsto \xp_{\pi_{i}(j),0} \otimes 1 + k_{\pi_{i}(j)}^{-1} \otimes \xp_{\pi_{i}(j),0}
    \quad
    &&\xm_{j} &&\mapsto 1 \otimes \xm_{\pi_{i}(j),0} + \xm_{\pi_{i}(j),0} \otimes k_{\pi_{i}(j)}
    \quad
    &&k_{j}^{\pm 1} &&\mapsto k_{\pi_{i}(j)}^{\pm 1} \otimes k_{\pi_{i}(j)}^{\pm 1}
    \\
    \xp_{0} &\mapsto o(i) (\xbp_{i,-1} \otimes 1 + k_{\delta}k_{i}^{-1} \otimes \xbp_{i,-1})
    \quad
    &&\xm_{0} &&\mapsto o(i) (1 \otimes \xbm_{i,1} + \xbm_{i,1} \otimes k_{\delta}^{-1}k_{i})
    \quad
    &&k_{0}^{\pm 1} &&\mapsto (k_{\delta}^{-1}k_{i})^{\pm 1} \otimes (k_{\delta}^{-1}k_{i})^{\pm 1}
    \\
    \xp_{i^{*}} &\mapsto o(0) (\xp_{0,1} \otimes 1 + (C k_{0})^{-1} \otimes \xp_{0,1})
    \quad
    &&\xm_{i^{*}} &&\mapsto o(0) (1 \otimes \xm_{0,-1} + \xm_{0,-1} \otimes C k_{0})
    \quad
    &&k_{i^{*}}^{\pm 1} &&\mapsto (C k_{0})^{\pm 1} \otimes (C k_{0})^{\pm 1}
\end{alignat*}
where $j \not\in \lbrace i^{*},0\rbrace$, whereas if $i = 0$ these are replaced by
\begin{alignat*}{5}
    \xp_{j} &\mapsto \xp_{j,0} \otimes 1 + k_{j}^{-1} \otimes \xp_{j,0}
    \quad
    &&\xm_{j} &&\mapsto 1 \otimes \xm_{j,0} + \xm_{j,0} \otimes k_{j}
    \quad
    &&k_{j}^{\pm 1} &&\mapsto k_{j}^{\pm 1} \otimes k_{j}^{\pm 1}
    \\
    \xp_{0} &\mapsto \xbp_{0,0} \otimes 1 + C^{-1}k_{\theta} \otimes \xbp_{0,0}
    \quad
    &&\xm_{0} &&\mapsto 1 \otimes \xbm_{0,0} + \xbm_{0,0} \otimes C k_{\theta}^{-1}
    \quad
    &&k_{0}^{\pm 1} &&\mapsto (C k_{\theta}^{-1})^{\pm 1} \otimes (C k_{\theta}^{-1})^{\pm 1}
\end{alignat*}
for each $j\not= 0$, using the fact that
$f = \psi \circ (X_{i}\pi_{i}) \circ h\sigma$
by Proposition \ref{prop:compatibilities} since $\te$ fixes all $X_{i}\pi_{i}$.
On the other hand, it is clear that $\Delta^{\psi} \circ f$ maps
$\lbrace k_{j}^{\pm 1} \, | \, j\in I \rbrace$
exactly as above in each case.
Moreover if $i\not= 0$ then
\begin{alignat*}{1}
    \xp_{j} &\mapsto \xp_{\pi_{i}(j),0} \otimes 1 + k_{\pi_{i}(j)}^{-1} \otimes \xp_{\pi_{i}(j),0} + \sum_{\ell>0} k_{\delta}^{\ell} \boldsymbol{\phi}^{+}_{\pi_{i}(j),\ell} \otimes \xbp_{\pi_{i}(j),-\ell}
    \\
    \xm_{j} &\mapsto 1 \otimes \xm_{\pi_{i}(j),0} + \xm_{\pi_{i}(j),0} \otimes k_{\pi_{i}(j)} + \sum_{\ell<0} \xbm_{\pi_{i}(j),-\ell} \otimes k_{\delta}^{\ell} \boldsymbol{\phi}^{-}_{\pi_{i}(j),\ell}
    \\
    \xp_{0} &\mapsto o(i) \left(\xbp_{i,-1} \otimes 1 + k_{\delta}k_{i}^{-1} \otimes \xbp_{i,-1} + \sum_{\ell>0} k_{\delta}^{\ell+1} \boldsymbol{\phi}^{+}_{i,\ell} \otimes \xbp_{i,-\ell-1} \right)
    \\
    \xm_{0} &\mapsto o(i) \left( 1 \otimes \xbm_{i,1} + \xbm_{i,1} \otimes k_{\delta}^{-1}k_{i} + \sum_{\ell<0} \xbm_{i,1-\ell} \otimes k_{\delta}^{\ell-1} \boldsymbol{\phi}^{-}_{i,\ell} \right)
    \\
    \xp_{i^{*}} &\mapsto o(0) \left(\xp_{0,1} \otimes 1 + (C k_{0})^{-1} \otimes \xp_{0,1} + \sum_{\ell>0} k_{\delta}^{\ell-1} \boldsymbol{\phi}^{+}_{0,\ell} \otimes \xbp_{0,1-\ell} \right)
    \\
    \xm_{i^{*}} &\mapsto o(0) \left( 1 \otimes \xm_{0,-1} + \xm_{0,-1} \otimes C k_{0} + \sum_{\ell<0} \xbm_{0,-\ell-1} \otimes k_{\delta}^{\ell+1} \boldsymbol{\phi}^{-}_{0,\ell} \right)
\end{alignat*}
where $j \not\in \lbrace i^{*},0\rbrace$, while for $i = 0$ we instead have
\begin{alignat*}{1}
    \xp_{j} &\mapsto \xp_{j,0} \otimes 1 + k_{j}^{-1} \otimes \xp_{j,0} + \sum_{\ell>0} k_{\delta}^{\ell} \boldsymbol{\phi}^{+}_{j,\ell} \otimes \xbp_{j,-\ell}
    \\
    \xm_{j} &\mapsto 1 \otimes \xm_{j,0} + \xm_{j,0} \otimes k_{j} + \sum_{\ell<0} \xbm_{j,-\ell} \otimes k_{\delta}^{\ell} \boldsymbol{\phi}^{-}_{j,\ell}
    \\
    \xp_{0} &\mapsto \xbp_{0,0} \otimes 1 + C^{-1}k_{\theta} \otimes \xbp_{0,0} + \sum_{\ell>0} k_{\delta}^{\ell} \boldsymbol{\phi}^{+}_{0,\ell} \otimes \xbp_{0,-\ell}
    \\
    \xm_{0} &\mapsto 1 \otimes \xbm_{0,0} + \xbm_{0,0} \otimes C k_{\theta}^{-1} + \sum_{\ell<0} \xbm_{0,-\ell} \otimes k_{\delta}^{\ell} \boldsymbol{\phi}^{-}_{0,\ell}
\end{alignat*}
where $j\not= 0$, again using the identity
$f = \psi \circ (X_{i}\pi_{i}) \circ h\sigma$
for $\xpm_{0}$.
Since $\boldsymbol{\phi}^{\pm}_{j,\ell} \in \U_{\ell\delta,0}$
and $\xbpm_{j,\ell} \in \U_{\pm \alpha_{j} + (\ell \mp \delta_{j0}) \delta, \pm \delta_{j0} \delta'}$
by (\ref{eqn:psi on graded pieces}), we can deduce from
$\U_{\beta+k\delta,\ell\delta'} \cdot V\alphapower_{\mu} \subset V\alphapower_{\mu+\beta+k\delta}$
that each of the sums above must act by zero on $V\one(\Ii) \otimes V\two(\Ii)$ for any $i\in\Imin$, whereby our proof is complete.
\end{proof}

What can we say about the action of $\Udash$ on each $V\alphapower(\Ii)$?
First note that Table \ref{tab:sign functions and outer automorphisms} contains the values of $o_{j,\pi_{i}(j)}$ for all $j\in I$, and since these are independent of $j$ we may denote the common value by $o(\pi_{i})$.
\renewcommand{\arraystretch}{1.4}
\begin{table}[H]
\centering
\begin{tabular}{|c||c|c|c|c|c|c|c|c|c|}
    \hline
    Type & $A_{n}^{(1)}$ & $B_{n}^{(1)}$ & $C_{n}^{(1)}$ & $D_{n}^{(1)}$ & $E_{6}^{(1)}$ & $E_{7}^{(1)}$ & $E_{8}^{(1)}$ & $F_{4}^{(1)}$ & $G_{2}^{(1)}$ \\
    \hline
     & & & & & & & & & \\[-17pt]
    \hline
    $o_{j,\pi_{i}(j)}$ & $(-1)^{i}$ & $1$ & $(-1)^{ni}$ & $(-1)^{n \cdot \mathds{1}_{i>1}}$ & $1$ & $1$ & $1$ & $1$ & $1$ \\
    \hline
\end{tabular}
\caption{\hspace{.5em}Values of $o_{j,\pi_{i}(j)}$ for each $j\in I$ and $i\in\Imin$ in untwisted types}
\label{tab:sign functions and outer automorphisms}
\end{table}
\renewcommand{\arraystretch}{1}
The images under $f = (X_{i}\pi_{i}) \circ v$ of the Drinfeld new generators for $\Udash$ are therefore as follows:
\begin{align} \label{eqn:image of f}
    \xpm_{j,m} \mapsto o(0)^{\delta_{\pi_{i}(j),0}} o(\pi_{i})^{m} \xpm_{\pi_{i}(j),m \pm \delta_{\pi_{i}(j),0}}
    \qquad
    h_{j,r} \mapsto o(\pi_{i})^{r} h_{\pi_{i}(j),r}
    \qquad
    k_{j} \mapsto C^{\delta_{\pi_{i}(j),0}} k_{\pi_{i}(j)}
\end{align}
So from the definition of $V\alphapower$ we see that $\Udash \curvearrowright V\alphapower(\Ii)$ contains $v\alphapower$ as an $\ell$-highest weight vector with Drinfeld polynomials
$\big(P\alphapower_{\pi_{i}(j)}(o(\pi_{i})z)\big)_{j\in I_{0}}$.
\\

It then follows from results of Chari-Pressley \cite{CP94}*{Thm. 12.2.6} on the affine level\footnote{Technically, Chari-Pressley \cite{CP94} consider the alternative coproduct $\overline{\Delta}_{-}$.
However one obtains a corresponding result for $\Dbarplus$ via essentially the same proof.}
that $v\one \otimes v\two$ is an $\ell$-highest weight vector for the representation
$\Udash \curvearrowright V\one(\Ii) \otimes V\two(\Ii)$,
with Drinfeld polynomials
$\big(P\one_{\pi_{i}(j)}(o(\pi_{i})z)P\two_{\pi_{i}(j)}(o(\pi_{i})z)\big)_{j\in I_{0}}$.
Hence by Proposition \ref{prop:actions of Udash coincide} the same is true for the action of $\Udash$ on $(V\one \otimes V\two)(\Ii)$ defined via $\Delta^{\psi} \circ f$.
\\

Using (\ref{eqn:image of f}) we can deduce that
$\U(\Ii) \curvearrowright (V\one \otimes V\two)(\Ii)$
also contains $v\one \otimes v\two$ as an $\ell$-highest weight vector, but with Drinfeld polynomials
$\big(P\one_{j}(z)P\two_{j}(z)\big)_{j\in \Ii}$
instead.

\begin{rmk}
    In all cases with $\lvert\Imin\rvert>1$, by tying these results together for different $i\in\Imin$ it immediately follows that $v\one \otimes v\two$ is an $\ell$-highest weight vector of
    $\Utor \curvearrowright V\one \otimes V\two$
    with Drinfeld polynomials
    $\Pcal\one(z)\Pcal\two(z)$.
    This completes our proof of Theorem \ref{thm:l-highest weight vector in tensor products} in types $A_{n}^{(1)}$, $B_{n}^{(1)}$, $C_{n}^{(1)}$, $D_{n}^{(1)}$, $E_{6}^{(1)}$ and $E_{7}^{(1)}$.
    Extending to $E_{8}^{(1)}$ and $F_{4}^{(1)}$ -- and indeed, providing a uniform proof -- requires a more detailed consideration of the action of $\U(0)$ as in Section \ref{subsection:action of U0}.
\end{rmk}

\subsection{Action of remaining generators on tensor products} \label{subsection:action of U0}

\begin{notation}
    Throughout this subsection we shall write $\xi_{\beta+k\delta,\ell\delta'}$ for an arbitrary element of $\U_{\beta+k\delta,\ell\delta'}$.
\end{notation}

\begin{prop} \label{prop:action of xpm00}
    $\xpm_{0,0}$ acts on $(V\one \otimes V\two)(0)$ by
    $\xp_{0,0}\otimes k_{0}^{-2} + k_{0}^{-1}\otimes\xp_{0,0}$
    and
    $\xm_{0,0}\otimes k_{0} + k_{0}^{2}\otimes\xm_{0,0}$
    respectively.
\end{prop}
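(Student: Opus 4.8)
The plan is to compute $\Dpsi(\xpm_{0,0})$ on Drinfeld new generators and then show that, when restricted to act on $(V\one\otimes V\two)(0)=V\one(0)\otimes V\two(0)$ (Lemma~\ref{lem:decomposing submodules of tensor products}), all but two of its (generally infinitely many) summands are killed. First I would record, using the defining relation $\psi h=v\sigma$ of Theorem~\ref{thm:psi} together with Jing's isomorphism from Section~\ref{subsubsection:quantum affine algebras} and the fact that $\sigma$ fixes $\xpm_{0}$, that
\[
\psi(\xp_{0,0})=v(\xp_{0})=\xbp_{0,0}=\left[\xm_{i_{\hslash-1},0},\dots,\xm_{i_{2},0},\xm_{i_{1},1}\right]_{q^{\epsilon_{1}}\cdots q^{\epsilon_{\hslash-2}}}Ck_{\theta}^{-1},
\]
and symmetrically $\psi(\xm_{0,0})=v(\xm_{0})=\xbm_{0,0}$, a scalar times $C^{-1}k_{\theta}\left[\xp_{i_{\hslash-1},0},\dots,\xp_{i_{1},-1}\right]_{q^{\epsilon_{1}}\cdots q^{\epsilon_{\hslash-2}}}$. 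Here I note for later that $\alpha_{i_{1}}+\dots+\alpha_{i_{\hslash-1}}=\theta$, since $\xbpm_{0,0}$ has weight $\mp\theta$, and that $\xbpm_{0,0}\in\U_{\mp\theta,\pm\delta'}$ by (\ref{eqn:psi on graded pieces}). Since the coproduct parameter is specialised to $1$, we then have $\Dpsi(\xpm_{0,0})=(\psi\otimes\psi)\circ\Delta_{1}(\xbpm_{0,0})$, which I would expand using Theorem~\ref{thm:Damiani topological coproduct}.

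\textbf{Truncation.} I would argue on a weight vector $v\one\otimes v\two\in V\one(0)\otimes V\two(0)$, with $v\alphapower$ of weight $\lambda\alphapower_{j}-p_{\alpha}\alpha_{0}$; decomposing each $V\alphapower\in\Oaff$ into highest weight modules over its affine subalgebra (semisimplicity of $\Oint$, Theorem~\ref{thm:category O semisimple}) we may assume that all weights of $V\alphapower$ lie below $\lambda\alphapower_{j}$. The crucial point is that every generator occurring in $\xbp_{0,0}$ is a lowering generator $\xm_{i,m}$ with $i\in I_{0}$ or a Cartan element, and in $\Delta_{1}(\xm_{i,m})=1\otimes\xm_{i,m}+\sum_{\ell\leq 0}\xm_{i,m-\ell}\otimes C^{m-\ell}\phi^{-}_{i,\ell}$ both tensor factors have weight in $\{0,-\alpha_{i}\}$ with the first factor of $\degZ\geq 0$. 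Hence every summand $\zeta^{(1)}\otimes\zeta^{(2)}$ of $\Delta_{1}(\xbp_{0,0})$ has $\zeta^{(\alpha)}$ of weight $-\sum_{i\in I_{0}}m^{(\alpha)}_{i}\alpha_{i}$ with $m^{(\alpha)}_{i}\geq 0$, and $\degZ(\zeta^{(1)})=:d_{1}\geq 0$, $\degZ(\zeta^{(2)})=1-d_{1}=:d_{2}$, with $m^{(1)}_{i}+m^{(2)}_{i}=a_{i}$. Passing to $\Dpsi(\xp_{0,0})$ via (\ref{eqn:psi on graded pieces}) and (\ref{eqn:image of new topological coproduct}), the summand $\psi(\zeta^{(1)})\otimes\psi(\zeta^{(2)})$ sends $v\one\otimes v\two$ into $V\one_{\lambda\one_{j}+(d_{1}-p_{1})\alpha_{0}+\sum_{i}(d_{1}a_{i}-m^{(1)}_{i})\alpha_{i}}\otimes V\two_{\lambda\two_{j'}+(d_{2}-p_{2})\alpha_{0}+\sum_{i}(d_{2}a_{i}-m^{(2)}_{i})\alpha_{i}}$, which is nonzero only if $d_{\alpha}\leq p_{\alpha}$ and $m^{(\alpha)}_{i}\geq d_{\alpha}a_{i}$ for all $i\in I_{0}$. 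Combined with $d_{1}+d_{2}=1$, $d_{\alpha}\geq 0$ and $m^{(1)}_{i}+m^{(2)}_{i}=a_{i}$, this forces $(d_{1},d_{2})\in\{(1,0),(0,1)\}$ and $m^{(\alpha)}_{i}=d_{\alpha}a_{i}$, so that only the two summands $\xbp_{0,0}\otimes(\text{Cartan})$ and $(\text{Cartan})\otimes\xbp_{0,0}$ of $\Delta_{1}(\xbp_{0,0})$ contribute on $(V\one\otimes V\two)(0)$.

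\textbf{Identification.} It then remains to pin down those two Cartan factors and apply $\psi\otimes\psi$. Tracking the $k_{i}$-twists in Theorem~\ref{thm:Damiani topological coproduct} and using $\alpha_{i_{1}}+\dots+\alpha_{i_{\hslash-1}}=\theta$ together with $\Delta_{1}(Ck_{\theta}^{-1})=Ck_{\theta}^{-1}\otimes Ck_{\theta}^{-1}$, the first is $\xbp_{0,0}\otimes C^{2}k_{\theta}^{-2}$ and the second $Ck_{\theta}^{-1}\otimes\xbp_{0,0}$. Applying $\psi\otimes\psi$, using $\psi(\xbp_{0,0})=\xp_{0,0}$, $\psi(k_{i})=k_{i}^{-1}$, $\psi(C)=k_{\delta}^{-1}$ and $k_{0}=k_{\delta}k_{\theta}^{-1}$, these become $\xp_{0,0}\otimes k_{0}^{-2}$ and $k_{0}^{-1}\otimes\xp_{0,0}$, which is the stated formula for $\xp_{0,0}$. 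The case of $\xm_{0,0}$ is entirely parallel, working with the raising generators in $\xbm_{0,0}=v(\xm_{0})$ and $\Delta_{1}(\xp_{i,m})=\xp_{i,m}\otimes 1+\sum_{\ell\geq 0}C^{m-\ell}\phi^{+}_{i,\ell}\otimes\xp_{i,m-\ell}$; the two surviving terms are $\xbm_{0,0}\otimes C^{-1}k_{\theta}$ and $C^{-2}k_{\theta}^{2}\otimes\xbm_{0,0}$, mapping under $\psi\otimes\psi$ to $\xm_{0,0}\otimes k_{0}$ and $k_{0}^{2}\otimes\xm_{0,0}$.

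\textbf{Main obstacle.} The delicate step is the bookkeeping of the second paragraph: making precise that the genuinely infinite sum $\Delta_{1}(\xbpm_{0,0})$ collapses to exactly two summands on $(V\one\otimes V\two)(0)$. This rests on combining the sign pattern of the Drinfeld coproduct on Drinfeld new generators with the $\delta'$-versus-$\delta$ exchange effected by $\psi$ via (\ref{eqn:psi on graded pieces}) and the shape $\bigoplus_{p\geq 0}V\alphapower_{\lambda\alphapower_{j}-p\alpha_{0}}$ of the subspaces $V\alphapower(0)$; a subsidiary point is the reduction of a general $V\alphapower\in\Oaff$ to its highest weight constituents over the affine subalgebra, so that "weights $\leq\lambda\alphapower_{j}$" is legitimately available.
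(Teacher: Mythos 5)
Your proposal is, in all essentials, the paper's own proof: the same reduction $\psi(\xpm_{0,0})=v(\xpm_{0})$ via Jing's isomorphism and $\psi h=v\sigma$, the same factor-by-factor expansion of $\Delta_{1}$ of the iterated bracket, the same classification of summands by the bidegree of their two tensor factors (your constraints $d_{1}+d_{2}=1$, $m^{(1)}_{i}+m^{(2)}_{i}=a_{i}$, $m^{(\alpha)}_{i}\geq d_{\alpha}a_{i}$ are precisely the paper's observation that only the graded pieces $\U_{\alpha_{0},0}\otimes\U_{0,0}$ and $\U_{0,0}\otimes\U_{\alpha_{0},0}$ can carry $(V\one\otimes V\two)(0)$ back into itself), and the same identification of the two surviving terms and their Cartan factors, which you compute correctly.

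The one step I would not let stand as written is your justification that the remaining summands contribute nothing. You reduce to a single cone ``all weights of $V\alphapower$ lie below $\lambda\alphapower_{j}$'' by decomposing $V\alphapower$ over its affine subalgebra using semisimplicity of $\Oint$; but that is a decomposition into submodules for the \emph{horizontal} Drinfeld--Jimbo subalgebra only, and the operators $\psi(\zeta^{(1)})$, $\psi(\zeta^{(2)})$ you then apply are general elements of $\Utor$ which need not preserve those summands, so you cannot conclude that the image weight space vanishes merely because the target weight fails to be $\leq\lambda\alphapower_{j}$ for the \emph{same} apex. The paper argues one step less: the offending summands send $V\one(0)\otimes V\two(0)$ into components $V\one_{\gamma'}\otimes V\two_{\tau'}$ that are not components of $(V\one\otimes V\two)(0)$ at all, and since the full operator $\Delta^{\psi}(\xpm_{0,0})$ does preserve $(V\one\otimes V\two)(0)$ (it lies in $\U(0)$ and shifts weights by $\pm\alpha_{0}$), those contributions must cancel among themselves. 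Substituting this cancellation observation for your vanishing claim repairs the step without touching any of your computations.
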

\begin{proof}
From Jing's isomorphism we have
\begin{align*}
    \psi(\xp_{0,0})
    = v\sigma(\xp_{0})
    = v(\xp_{0})
    = \big[\xm_{i_{h-1},0},\dots,\xm_{i_{2},0},\xm_{i_{1},1}\big]_{q^{\epsilon_{1}}\dots q^{\epsilon_{h-2}}} C k_{\theta}^{-1},
\end{align*}
which is in turn sent by
$(\psi\otimes\psi)\circ\Delta_{1}$
to
\begin{align*}
    \textstyle
    (k_{0}^{-1} \otimes k_{0}^{-1})
    \Big[ \,
    1\otimes\xbm_{i_{1},1} + \xbm_{i_{1},1}\otimes k_{\delta}^{-1} k_{i_{1}}
    + \sum_{\ell<0} \xi_{-\alpha_{i_{1}}+(1-\ell)\delta,0}\otimes \xi_{\ell\delta,0}& \, ,
    \\
    \textstyle
    1\otimes\xm_{i_{2},0} + \xm_{i_{2},0}\otimes k_{i_{2}}
    + \sum_{\ell<0} \xi_{-\alpha_{i_{2}}-\ell\delta,0}\otimes \xi_{\ell\delta,0}& \, ,
    \\
    \textstyle
    \dots,
    1\otimes\xm_{i_{h-1},0} + \xm_{i_{h-1},0}\otimes k_{i_{h-1}}
    + \sum_{\ell<0} \xi_{-\alpha_{i_{h-1}}-\ell\delta,0}\otimes \xi_{\ell\delta,0}&
    \, \Big]'_{q^{\epsilon_{h-2}}\dots q^{\epsilon_{1}}}.
\end{align*}
Expanding out all sums and brackets, each summand lies inside
\begin{align*}
    \U_{-\sum_{j\in J} \alpha_{i_{j}}
    + (\mathds{1}_{1\in J} - \sum_{j\in J} \ell_{j}) \delta,
    0}
    \otimes
    \U_{-\sum_{j\not\in J} \alpha_{i_{j}}
    + (\mathds{1}_{1\not\in J} + \sum_{j\in J} \ell_{j}) \delta,
    0}
\end{align*}
where $J\subset [h-1]$ is the set of $j$ for which $1\otimes \xbm_{i_{j},\delta_{j1}}$ is \emph{not} a factor, and in this case $l_{j}\leq 0$ is the index of the factor chosen instead.
Since $\sum_{j=1}^{h-1} \alpha_{i_{j}} = \theta$, all summands except those with
\begin{itemize}
    \item $J = [h-1]$ and all $l_{j} = 0$, which lie in
    $\U_{\alpha_{0},0} \otimes \U_{0,0}$,
    \item $J = \emptyset$, which lie in
    $\U_{0,0} \otimes \U_{\alpha_{0},0}$,
\end{itemize}
map non-zero vectors in $(V\one \otimes V\two)(0)$ outside $(V\one \otimes V\two)(0)$, and hence their actions on $(V\one \otimes V\two)(0)$ must cancel.
Moreover, the summands in these two cases add up respectively to
\begin{itemize}
    \item 
    $(k_{0}^{-1} \otimes k_{0}^{-1}) \cdot
        \big(\big[\xbm_{i_{1},1}, \xm_{i_{2},0}, \dots, \xm_{i_{h-1},0}\big]'_{q^{\epsilon_{h-2}}\dots q^{\epsilon_{1}}}
        \otimes k_{\delta}^{-1} k_{\theta} \big)
        \\
        =
        h\sigma \big(\big[\xm_{i_{h-1},0},\dots,\xm_{i_{2},0},\xm_{i_{1},1}\big]_{q^{\epsilon_{1}}\dots q^{\epsilon_{h-2}}} C k_{\theta}^{-1} \big)
        \otimes k_{0}^{-2}
        \\
        = h\sigma(\xp_{0}) \otimes k_{0}^{-2}
        \\
        = \xp_{0,0} \otimes k_{0}^{-2}$,
    \item 
    $(k_{0}^{-1} \otimes k_{0}^{-1}) \cdot
        \big( 1 \otimes \big[\xbm_{i_{1},1}, \xm_{i_{2},0}, \dots, \xm_{i_{h-1},0}\big]'_{q^{\epsilon_{h-2}}\dots q^{\epsilon_{1}}} \big)
        \\
        =
        k_{0}^{-1} \otimes
        h\sigma \big(\big[\xm_{i_{h-1},0},\dots,\xm_{i_{2},0},\xm_{i_{1},1}\big]_{q^{\epsilon_{1}}\dots q^{\epsilon_{h-2}}} C k_{\theta}^{-1} \big)
        \\
        = k_{0}^{-1} \otimes h\sigma(\xp_{0})
        \\
        = k_{0}^{-1} \otimes \xp_{0,0}$,
\end{itemize}
and hence $\xp_{0,0}$ acts on $(V\one \otimes V\two)(0)$ by
$\xp_{0,0}\otimes k_{0}^{-2} + k_{0}^{-1}\otimes\xp_{0,0}$.
Similarly, we that have
\begin{align*}
    \psi(\xm_{0,0})
    = v\sigma(\xm_{0})
    = v(\xm_{0})
    = a (-q)^{-\epsilon} C^{-1} k_{\theta}
    \big[\xp_{i_{h-1},0},\dots,\xp_{i_{2},0},\xp_{i_{1},-1}\big]_{q^{\epsilon_{1}}\dots q^{\epsilon_{h-2}}}
\end{align*}
is mapped by
$(\psi\otimes\psi)\circ\Delta_{1}$
to
\begin{align*}
    \textstyle
    a (-q)^{-\epsilon}
    \Big[ \,
    \xbp_{i_{1},-1}\otimes 1 + k_{\delta} k_{i_{1}}^{-1} \otimes \xbp_{i_{1},-1}
    + \sum_{\ell>0} \xi_{\ell\delta,0} \otimes \xi_{\alpha_{i_{1}}-(1+\ell)\delta,0}& \, ,
    \\
    \textstyle
    \xp_{i_{2},0}\otimes 1 + k_{i_{2}}^{-1} \otimes \xp_{i_{2},0}
    + \sum_{\ell>0} \xi_{\ell\delta,0} \otimes \xi_{\alpha_{i_{2}}-\ell\delta,0}& \, ,
    \\
    \textstyle
    \dots,
    \xp_{i_{h-1},0}\otimes 1 + k_{i_{h-1}}^{-1} \otimes \xp_{i_{h-1},0}
    + \sum_{\ell>0} \xi_{\ell\delta,0} \otimes \xi_{\alpha_{i_{h-1}}-\ell\delta,0}&
    \, \Big]'_{q^{\epsilon_{h-2}}\dots q^{\epsilon_{1}}}
    (k_{0} \otimes k_{0}).
\end{align*}
Summands of the above lie inside
\begin{align*}
    \U_{\sum_{j\not\in J} \alpha_{i_{j}}
    - (\mathds{1}_{1\not\in J} - \sum_{j\in J} \ell_{j}) \delta,
    0}
    \otimes
    \U_{\sum_{j\in J} \alpha_{i_{j}}
    - (\mathds{1}_{1\in J} + \sum_{j\in J} \ell_{j}) \delta,
    0}
\end{align*}
where $J\subset [h-1]$ is the set of $j$ for which
$\xbp_{i_{j},-\delta_{j1}} \otimes 1$
is \emph{not} a factor, in which case $l_{j}\geq 0$ is the index of the factor chosen instead.
Again, the actions of all summands except those with
\begin{itemize}
    \item $J = [h-1]$ and all $l_{j} = 0$, which lie in
    $\U_{0,0} \otimes \U_{-\alpha_{0},0}$,
    \item $J = \emptyset$, which lie in
    $\U_{-\alpha_{0},0} \otimes \U_{0,0}$,
\end{itemize}
cancel on $(V\one \otimes V\two)(0)$, while the summands in these two cases add up to
\begin{itemize}
    \item 
    $\big(k_{\delta} k_{\theta}^{-1} \otimes
    a (-q)^{-\epsilon}
    \big[\xbp_{i_{1},-1}, \xp_{i_{2},0}, \dots, \xp_{i_{h-1},0}\big]'_{q^{\epsilon_{h-2}}\dots q^{\epsilon_{1}}} \big)
        \cdot (k_{0} \otimes k_{0})
        \\
        =
        k_{0}^{2} \otimes
        h\sigma \big(
        a (-q)^{-\epsilon} C^{-1} k_{\theta}
        \big[\xp_{i_{h-1},0},\dots,\xp_{i_{2},0},\xp_{i_{1},-1}\big]_{q^{\epsilon_{1}}\dots q^{\epsilon_{h-2}}} \big)
        \\
        = k_{0}^{2} \otimes h\sigma(\xm_{0})
        \\
        = k_{0}^{2} \otimes \xm_{0,0}$,
    \item 
    $\big( a (-q)^{-\epsilon}
    \big[\xbp_{i_{1},-1}, \xp_{i_{2},0}, \dots, \xp_{i_{h-1},0}\big]'_{q^{\epsilon_{h-2}}\dots q^{\epsilon_{1}}} \otimes 1 \big)
    \cdot (k_{0} \otimes k_{0})
        \\
        =
        h\sigma \big(
        a (-q)^{-\epsilon} C^{-1} k_{\theta}
        \big[\xp_{i_{h-1},0},\dots,\xp_{i_{2},0},\xp_{i_{1},-1}\big]_{q^{\epsilon_{1}}\dots q^{\epsilon_{h-2}}} \big)
        \otimes k_{0}
        \\
        = h\sigma(\xm_{0}) \otimes k_{0}
        \\
        = \xm_{0,0} \otimes k_{0}$,
\end{itemize}
and therefore $\xm_{0,0}$ acts on $(V\one \otimes V\two)(0)$ by
$\xm_{0,0}\otimes k_{0} + k_{0}^{2}\otimes\xm_{0,0}$.
\end{proof}

\begin{prop} \label{prop:action of xpm0pm1}
    $\xpm_{0,\pm 1}$ acts on $(V\one \otimes V\two)(0)$ by
    $\xp_{0,1}\otimes 1 + (C k_{0})^{-1}\otimes\xp_{0,1}$
    and
    $\xm_{0,-1}\otimes (C k_{0}) + 1\otimes\xm_{0,-1}$
    respectively.
\end{prop}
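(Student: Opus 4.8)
The plan is to compute $\Dpsi$ on the generators $\xp_{0,1}$ and $\xm_{0,-1}$ explicitly and then, exactly as in the proof of Proposition~\ref{prop:action of xpm00}, discard the summands whose action on $(V\one \otimes V\two)(0)$ is forced to cancel. The key simplification is that $\psi$ fixes $\xpm_{0,\pm 1}$ by construction (in the notation of Theorem~\ref{thm:psi}, $\xbpm_{0,\pm 1} = \xpm_{0,\pm 1}$), so that $\Dpsi(\xp_{0,1}) = (\psi \otimes \psi) \circ \Delta_1(\xp_{0,1})$ and similarly for $\xm_{0,-1}$. Since $\xp_{0,1}$ and $\xm_{0,-1}$ are themselves among the Drinfeld new generators of $\Utor$, Theorem~\ref{thm:Damiani topological coproduct} applies directly and gives
\begin{align*}
    \Delta_1(\xp_{0,1}) &= \xp_{0,1} \otimes 1 + \sum_{\ell \geq 0} C^{1-\ell} \phi^{+}_{0,\ell} \otimes \xp_{0,1-\ell}, \\
    \Delta_1(\xm_{0,-1}) &= 1 \otimes \xm_{0,-1} + \sum_{\ell \leq 0} \xm_{0,-1-\ell} \otimes C^{-1-\ell} \phi^{-}_{0,\ell}.
\end{align*}

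Next I would apply $\psi \otimes \psi$ termwise. In each sum the $\ell = 0$ contribution involves only $\phi^{\pm}_{0,0} = k_0^{\pm 1}$, and using $\psi(C) = \Cb = k_\delta^{-1}$, $\psi(k_0) = \kb_0 = C^{-1}k_\theta$ together with $k_\delta = k_0 k_\theta$ (which holds in all untwisted types since $a_0 = 1$) one finds $\psi\big((Ck_0)^{\pm 1}\big) = (Ck_0)^{\mp 1}$; combined with the fact that $\psi$ fixes $\xp_{0,1}$, $\xm_{0,-1}$ and $1$, this produces precisely the asserted leading terms $\xp_{0,1}\otimes 1 + (Ck_0)^{-1}\otimes\xp_{0,1}$ and $1\otimes\xm_{0,-1} + \xm_{0,-1}\otimes(Ck_0)$. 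For the remaining summands one does not need the explicit (and generally complicated) images of $\psi$ on $\xpm_{0,1-\ell}$ or $\phi^{\pm}_{0,\ell}$: by~(\ref{eqn:psi on graded pieces}) one has $\psi(\phi^{\pm}_{0,\ell}) \in \U_{\ell\delta,0} \subset \Uh$, while $\psi$ sends the other factor into a definite fine-graded piece of $\Uv$.

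The main step, and the principal obstacle, is then to show these remaining summands ($\ell \geq 1$, resp. $\ell \leq -1$) act by zero on $(V\one \otimes V\two)(0) = V\one(0) \otimes V\two(0)$ — the factorisation being Lemma~\ref{lem:decomposing submodules of tensor products}. Since $\xpm_{0,\pm 1} \in \U(0)$ and $(V\one \otimes V\two)(0)$ is a $\U(0)$-submodule, $\Dpsi(\xpm_{0,\pm 1})$ preserves it; the leading terms already do, so the sum of the remaining ones must too. For $\ell \neq 0$, however, the factor $\psi(\phi^{\pm}_{0,\ell})$, being of fine degree $\ell\delta$, shifts the $V\alphapower$-weight of any vector in $V\alphapower(0)$ into a weight space lying outside $V\alphapower(0)$ (weights of $V\alphapower(0)$ obey a linear constraint between their $\mathring{Q}$- and $\delta$-components that $\ell\delta$ violates), and distinct values of $\ell$ feed distinct bigraded components of $V\one \otimes V\two$. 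Hence the sum of the remaining summands maps $(V\one \otimes V\two)(0)$ into a subspace intersecting $(V\one \otimes V\two)(0)$ trivially, forcing it to vanish there. The $\xm_{0,-1}$ case is entirely parallel (and can also be obtained from the $\xp_{0,1}$ case, up to a central factor, via the symmetry $\Wcal\eta$). The delicate bookkeeping is the fine-degree tracking and the verification that the ``bad'' bigraded components are genuinely disjoint from those of $(V\one \otimes V\two)(0)$.
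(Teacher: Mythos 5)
Your proposal is correct and follows essentially the same route as the paper: exploit $\psi(\xpm_{0,\pm 1}) = \xpm_{0,\pm 1}$ to reduce $\Dpsi(\xpm_{0,\pm 1})$ to $(\psi\otimes\psi)\circ\Delta_{1}(\xpm_{0,\pm 1})$, read off the $\ell=0$ terms (including the computation $\psi\big((Ck_{0})^{\pm 1}\big) = (Ck_{0})^{\mp 1}$ via $\Cb = k_{\delta}^{-1}$ and $\kb_{0} = C^{-1}k_{\theta}$), and kill the $\ell\neq 0$ tail by tracking the fine $(Q\oplus\Zbb\delta')$-grading of its factors against the weight constraint defining $(V\one\otimes V\two)(0)$, exactly as in (\ref{eqn:psi on graded pieces}) and (\ref{eqn:action on weight spaces}). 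The paper's proof is just a terser version of the same argument, so no further comment is needed.
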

\begin{proof}
Using the identity
$\psi(\xpm_{0,\pm 1}) = \xpm_{0,\pm 1}$
one quickly verifies that
\begin{align*}
    \Delta^{\psi}(\xp_{0,1})
    &=
    \xp_{0,1} \otimes 1 + (C k_{0})^{-1} \otimes \xp_{0,1}
    + \sum_{\ell>0} \xi_{\ell\delta,0} \otimes \xi_{-\theta + (1-\ell)\delta, \delta'} \, ,
    \\
    \Delta^{\psi}(\xm_{0,-1})
    &=
    1 \otimes \xm_{0,-1} + \xm_{0,-1} \otimes C k_{0}
    + \sum_{\ell<0} \xi_{\theta - (\ell+1)\delta, -\delta'} \otimes \xi_{\ell\delta,0} \, ,
\end{align*}
where each sum must act by zero on $(V\one \otimes V\two)(0)$ by (\ref{eqn:action on weight spaces}).
\end{proof}

\begin{prop} \label{prop:action of hpm0pm1}
    $h_{0,\pm 1}$ acts on $(V\one \otimes V\two)(0)$ by
    $h_{0,1}\otimes 1 + C^{-1}\otimes h_{0,1}
    + (q_{0}^{-4} - 1)(k_{0}\xp_{0,1}\otimes k_{0}^{-1}\xm_{0,0})$
    and
    $h_{0,-1}\otimes C + 1\otimes h_{0,-1}
    - (q_{0}^{-4} - 1)(k_{0}\xp_{0,0}\otimes k_{0}^{-1}\xm_{0,-1})$
    respectively.
\end{prop}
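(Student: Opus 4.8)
The plan is to deduce both formulas from the already-computed actions of $\xpm_{0,0}$ and $\xpm_{0,\pm 1}$ on $(V\one\otimes V\two)(0)$ in Propositions \ref{prop:action of xpm00} and \ref{prop:action of xpm0pm1}, using only the defining identities $h_{0,1} = k_0^{-1}[\xp_{0,1},\xm_{0,0}]$ and $h_{0,-1} = k_0[\xp_{0,0},\xm_{0,-1}]$ from Theorem \ref{thm:finite Utor presentation} and the fact that $\Delta^{\psi}$ is an algebra morphism. First I would write $\Delta^{\psi}(h_{0,1}) = \Delta^{\psi}(k_0^{-1})\bigl(\Delta^{\psi}(\xp_{0,1})\Delta^{\psi}(\xm_{0,0}) - \Delta^{\psi}(\xm_{0,0})\Delta^{\psi}(\xp_{0,1})\bigr)$ and then, on $(V\one\otimes V\two)(0)$, replace $k_0^{-1}$ by $k_0^{-1}\otimes k_0^{-1}$ (equation (\ref{eqn:Dpsi on C and k})), $\xp_{0,1}$ by $\xp_{0,1}\otimes 1 + (Ck_0)^{-1}\otimes\xp_{0,1}$, and $\xm_{0,0}$ by $\xm_{0,0}\otimes k_0 + k_0^2\otimes\xm_{0,0}$.

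Expanding the bracket of the two two-term tensors into its four summands, the two ``diagonal'' summands give $[\xp_{0,1},\xm_{0,0}]\otimes k_0$ and $C^{-1}k_0\otimes[\xp_{0,1},\xm_{0,0}]$ (the latter after commuting the central $C$ and the $k_0$'s past one another), which — using $[\xp_{0,1},\xm_{0,0}] = k_0 h_{0,1}$, $[k_0,h_{0,r}]=0$, and the prefactor $k_0^{-1}\otimes k_0^{-1}$ — collapse to $h_{0,1}\otimes 1 + C^{-1}\otimes h_{0,1}$. Of the two cross summands, one vanishes outright after pushing $k_0^{\pm 1}$ through $\xpm_{0,m}$ via $k_0\xpm_{0,m}k_0^{-1} = q_0^{\pm 2}\xpm_{0,m}$, while the other, of shape $(1-q_0^4)\,\xp_{0,1}k_0^2\otimes\xm_{0,0}$, becomes $(q_0^{-4}-1)(k_0\xp_{0,1}\otimes k_0^{-1}\xm_{0,0})$ once the prefactor is applied and $k_0^{-1}\xp_{0,1}k_0^2$ is rewritten as $q_0^{-4}k_0\xp_{0,1}$. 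Summing the contributions gives the stated formula for $h_{0,1}$; the $h_{0,-1}$ case is completely parallel, starting from $h_{0,-1} = k_0[\xp_{0,0},\xm_{0,-1}]$, using $[\xp_{0,0},\xm_{0,-1}] = k_0^{-1}h_{0,-1}$ together with the actions of $\xp_{0,0}$ and $\xm_{0,-1}$, and differing only in that the surviving cross term picks up the opposite sign $-(q_0^{-4}-1)$.

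The one point needing genuine care — and the reason Propositions \ref{prop:action of xpm00} and \ref{prop:action of xpm0pm1} were stated in the form they were — is that those results give operator identities valid \emph{on} $(V\one\otimes V\two)(0)$, whereas a single application of $\Delta^{\psi}(\xm_{0,0})$ or $\Delta^{\psi}(\xp_{0,1})$ can carry a vector slightly outside that subspace, so one cannot naively compose the formulas. I would handle this exactly as in the proofs of those propositions: work with the full expansion of $\Delta^{\psi}(h_{0,\pm 1})$ into graded components $\U_{\beta+k\delta,\ell\delta'}\otimes\U_{\mu+r\delta,n\delta'}$ furnished by (\ref{eqn:image of new topological coproduct}) — a \emph{finite} sum of operators on $V\one\otimes V\two$ by integrability — and invoke (\ref{eqn:action on weight spaces}) to see that every component whose image on $(V\one\otimes V\two)(0)$ would land outside $\bigcup_j(\lambda_j - \Nbb\alpha_0)$ either annihilates that subspace or cancels against a partner component, leaving precisely the terms extracted above. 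This bookkeeping is where most of the effort lies; the underlying algebra is a short manipulation with the relations $k_0\xpm_{0,m}k_0^{-1} = q_0^{\pm 2}\xpm_{0,m}$, $[k_0,h_{0,r}] = 0$, and the centrality of $C$, entirely in the style of the computations preceding this proposition.
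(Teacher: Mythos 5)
Your proposal is correct and follows essentially the same route as the paper: the paper likewise writes $k_{0}h_{0,1}=[\xp_{0,1},\xm_{0,0}]$ and $k_{0}^{-1}h_{0,-1}=[\xp_{0,0},\xm_{0,-1}]$, substitutes the two-term action formulas from Propositions \ref{prop:action of xpm00} and \ref{prop:action of xpm0pm1}, expands the commutator into four summands, and observes that one cross term carries the vanishing coefficient $q_{0}^{-2}-q_{0}^{-2}$ while the other yields the $(q_{0}^{-4}-1)$ correction. The extra graded-component bookkeeping you flag is not needed here, since $(V\one\otimes V\two)(0)$ is a $\U(0)$-submodule (as noted after the definition of $V(J)$), so the intermediate vectors never leave the subspace and the operator identities compose directly.
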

\begin{proof}
From the relations of $\Utor$ we have that
$k_{0} h_{0,1} = [\xp_{0,1},\xm_{0,0}]$,
which acts on $(V\one \otimes V\two)(0)$ via
\begin{align*}
    &[ \xp_{0,1} \otimes 1 + (C k_{0})^{-1} \otimes \xp_{0,1} ,
    \xm_{0,0} \otimes k_{0} + k_{0}^{2} \otimes \xm_{0,0} ]
    \\
    &=
    [ \xp_{0,1} \otimes 1 , \xm_{0,0} \otimes k_{0} ]
    +
    [ (C k_{0})^{-1} \otimes \xp_{0,1} , \xm_{0,0} \otimes k_{0} ]
    +
    [ \xp_{0,1} \otimes 1 , k_{0}^{2} \otimes \xm_{0,0} ]
    +
    [ (C k_{0})^{-1} \otimes \xp_{0,1} , k_{0}^{2} \otimes \xm_{0,0} ]
    \\
    &=
    [ \xp_{0,1} , \xm_{0,0} ] \otimes k_{0}
    +
    (C k_{0})^{-1} \xm_{0,0} \otimes \xp_{0,1} k_{0}
    -
    \xm_{0,0} (C k_{0})^{-1} \otimes k_{0} \xp_{0,1}
    +
    [ \xp_{0,1} , k_{0}^{2} ] \otimes \xm_{0,0}
    +
    C^{-1} k_{0} \otimes [ \xp_{0,1} , \xm_{0,0} ]
    \\
    &=
    k_{0} h_{0,1} \otimes k_{0}
    +
    (q_{0}^{-2} - q_{0}^{-2})
    \big( (C k_{0})^{-1} \xm_{0,0} \otimes k_{0} \xp_{0,1} \big)
    +
    (q_{0}^{-4} - 1)
    (k_{0}^{2} \xp_{0,1} \otimes \xm_{0,0})
    +
    C^{-1} k_{0} \otimes k_{0} h_{0,1}
    \\
    &=
    k_{0} h_{0,1} \otimes k_{0}
    +
    C^{-1} k_{0} \otimes k_{0} h_{0,1}
    +
    (q_{0}^{-4} - 1)
    (k_{0}^{2} \xp_{0,1} \otimes \xm_{0,0})
\end{align*}
by Propositions \ref{prop:action of xpm00} and \ref{prop:action of xpm0pm1}.
Similarly, $k_{0}^{-1} h_{0,-1} = [\xp_{0,0},\xm_{0,-1}]$ acts on $(V\one \otimes V\two)(0)$ by
\begin{align*}
    &[ \xp_{0,0} \otimes k_{0}^{-2} + k_{0}^{-1} \otimes \xp_{0,0} ,
    \xm_{0,-1} \otimes C k_{0} + 1 \otimes \xm_{0,-1} ]
    \\
    &=
    [ \xp_{0,0} \otimes k_{0}^{-2} , \xm_{0,-1} \otimes C k_{0} ]
    +
    [ k_{0}^{-1} \otimes \xp_{0,0} , \xm_{0,-1} \otimes C k_{0} ]
    +
    [ \xp_{0,0} \otimes k_{0}^{-2} , 1 \otimes \xm_{0,-1} ]
    +
    [ k_{0}^{-1} \otimes \xp_{0,0} , 1 \otimes \xm_{0,-1} ]
    \\
    &=
    [ \xp_{0,0} , \xm_{0,-1} ] \otimes C k_{0}^{-1}
    +
    k_{0}^{-1} \xm_{0,-1} \otimes \xp_{0,0} C k_{0}
    -
    \xm_{0,-1} k_{0}^{-1} \otimes C k_{0} \xp_{0,0}
    +
    \xp_{0,0} \otimes [ k_{0}^{-2} , \xm_{0,-1} ]
    +
    k_{0}^{-1} \otimes [ \xp_{0,0} , \xm_{0,-1} ]
    \\
    &=
    k_{0}^{-1} h_{0,-1} \otimes C k_{0}^{-1}
    +
    (q_{0}^{-2} - q_{0}^{-2})
    \big( k_{0}^{-1} \xm_{0,-1} \otimes C k_{0} \xp_{0,0} \big)
    -
    (q_{0}^{-4} - 1)
    (\xp_{0,0} \otimes k_{0}^{-2} \xm_{0,-1})
    +
    k_{0}^{-1} \otimes k_{0}^{-1} h_{0,-1}
    \\
    &=
    k_{0}^{-1} h_{0,-1} \otimes C k_{0}^{-1}
    +
    k_{0}^{-1} \otimes k_{0}^{-1} h_{0,-1}
    -
    (q_{0}^{-4} - 1)
    (\xp_{0,0} \otimes k_{0}^{-2} \xm_{0,-1})
\end{align*}
and our proof is complete.
\end{proof}

\begin{cor} \label{cor:xp0m annihilate v1xv2}
    Every $\xp_{0,m}$ annihilates $v\one \otimes v\two$.
\end{cor}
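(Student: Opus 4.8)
The plan is to establish the stronger statement that $\xp_{0,m}\cdot(v\one\otimes v\two)=0$ for \emph{every} $m\in\Zbb$, bootstrapping from the explicit formulae in Propositions~\ref{prop:action of xpm00}, \ref{prop:action of xpm0pm1} and \ref{prop:action of hpm0pm1} together with the $\ell$-highest weight hypothesis on $v\one$ and $v\two$. First I would observe that each $v\alphapower$ may be taken to lie in $V\alphapower(0)$: its weight $\lambda\alphapower$ is among the maximal weights in the chosen covering, so $v\alphapower\in V\alphapower_{\lambda\alphapower}\subset V\alphapower(0)$, and therefore $w:=v\one\otimes v\two$ lies in $(V\one\otimes V\two)(0)=V\one(0)\otimes V\two(0)$ by Lemma~\ref{lem:decomposing submodules of tensor products}. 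This is precisely what is needed in order to compute the action of $\xpm_{0,0}$, $\xpm_{0,\pm 1}$ and $h_{0,\pm 1}$ on $w$ via the three propositions above.

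Next I would dispatch the base cases. By Proposition~\ref{prop:action of xpm00}, the element $\xp_{0,0}\cdot w$ equals $\xp_{0,0}v\one\otimes k_{0}^{-2}v\two+k_{0}^{-1}v\one\otimes\xp_{0,0}v\two$, which vanishes since $\xp_{0,0}$ annihilates both $\ell$-highest weight vectors; and Proposition~\ref{prop:action of xpm0pm1} similarly gives $\xp_{0,1}\cdot w=0$. For the Cartan generators, Proposition~\ref{prop:action of hpm0pm1} writes $h_{0,1}$ (resp. $h_{0,-1}$) on $(V\one\otimes V\two)(0)$ as $h_{0,1}\otimes 1+C^{-1}\otimes h_{0,1}$ (resp. $h_{0,-1}\otimes C+1\otimes h_{0,-1}$) plus a ``cross term'' proportional to $k_{0}\xp_{0,1}\otimes k_{0}^{-1}\xm_{0,0}$ (resp. $k_{0}\xp_{0,0}\otimes k_{0}^{-1}\xm_{0,-1}$). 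The cross terms annihilate $w$ because $\xp_{0,1}v\one=0=\xp_{0,0}v\one$, while each $h_{0,\pm 1}$ --- being expressible through the $\phi^{\pm}_{0,\pm 1}$ and $k_{0}^{\pm 1}$ --- acts on the $\ell$-weight vector $v\alphapower$ by a scalar. Hence $h_{0,\pm 1}\cdot w=c_{\pm}w$ for constants $c_{\pm}\in\Cbb$, and $C^{\pm 1}\cdot w=w$ as the modules are type $1$.

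The induction is then purely formal. The relation $[h_{0,r},\xp_{0,m}]=\tfrac{[2r]_{0}}{r}C^{(r-\lvert r\rvert)/2}\xp_{0,m+r}$ of $\Utor$, taken at $r=\pm 1$, reads $[2]_{0}\,\xp_{0,m+1}=[h_{0,1},\xp_{0,m}]$ and $[2]_{0}C^{-1}\xp_{0,m-1}=[h_{0,-1},\xp_{0,m}]$. If $\xp_{0,m}\cdot w=0$ then $h_{0,\pm 1}\cdot(\xp_{0,m}w)=0$ and $\xp_{0,m}\cdot(h_{0,\pm 1}w)=c_{\pm}\,\xp_{0,m}w=0$, so $\xp_{0,m\pm 1}\cdot w=0$. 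Starting from $m=0$ (or $m=1$) this yields $\xp_{0,m}\cdot w=0$ for all $m\in\Zbb$. I do not expect any real obstacle here: the only points meriting care are the justification that $w\in(V\one\otimes V\two)(0)$ so that the three propositions apply, and the vanishing of the cross terms in Proposition~\ref{prop:action of hpm0pm1}, both immediate from the $\ell$-highest weight property. Since the argument is uniform in type, it in particular settles the cases $E_{8}^{(1)}$ and $F_{4}^{(1)}$ of Theorem~\ref{thm:l-highest weight vector in tensor products} left open at the end of Section~\ref{subsection:action of Uv}.
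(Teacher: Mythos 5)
Your argument is correct and is essentially the paper's own proof: establish that $h_{0,\pm 1}$ scales $v\one\otimes v\two$ (the cross terms in Proposition \ref{prop:action of hpm0pm1} vanish on the $\ell$-highest weight vectors), then induct on $m$ via $\xp_{0,m\pm 1}=[2]_{0}^{-1}C^{(1\mp 1)/2}[h_{0,\pm 1},\xp_{0,m}]$ starting from the base cases given by Propositions \ref{prop:action of xpm00} and \ref{prop:action of xpm0pm1}. Only your closing sentence overreaches slightly: this corollary alone does not finish Theorem \ref{thm:l-highest weight vector in tensor products} in types $E_{8}^{(1)}$ and $F_{4}^{(1)}$, since one still needs the computation of the Drinfeld polynomial at the node $0$ (Proposition \ref{prop:action of phi+0m} and Corollary \ref{cor:0 Drinfeld polynomials}).
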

\begin{proof}
From Proposition \ref{prop:action of hpm0pm1} we see that $h_{0,\pm 1}$ acts by $h_{0,\pm 1} \otimes 1 + 1 \otimes h_{0,\pm 1}$ on, and thus scales, $v\one \otimes v\two$.
Hence if some $\xp_{0,m}$ annihilates $v\one \otimes v\two$ then so does
$\xp_{0,m\pm 1} = [2]_{0}^{-1} C^{\frac{1\mp 1}{2}} [ h_{0,\pm 1} , \xp_{0,m} ]$.
By Proposition \ref{prop:action of xpm00} or \ref{prop:action of xpm0pm1} we are done.
\end{proof}

\begin{prop} \label{prop:action of phi+0m}
    Each $\phi^{+}_{0,m}$ acts on $v\one \otimes v\two$ by
    $\sum_{k+\ell=m} \phi^{+}_{i,k} \otimes \phi^{+}_{i,\ell}$.
\end{prop}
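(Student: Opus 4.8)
The plan is to reduce the statement to the assertion that each $h_{0,s}$ with $s\geq 1$ acts on $v\one\otimes v\two$ by $h_{0,s}\otimes 1+1\otimes h_{0,s}$, and then to feed this into the defining series relation $\sum_{m\geq 0}\phi^{+}_{0,m}z^{m}=k_{0}\exp((q_{0}-q_{0}^{-1})\sum_{s>0}h_{0,s}z^{s})$. Indeed, $k_{0},h_{0,1},h_{0,2},\dots$ pairwise commute in $\Utor$, and once the diagonal action is known all of them act on the line $\kk(v\one\otimes v\two)$ by scalars; writing $h_{0,s}\cdot v\alphapower=\mu\alphapower_{s}v\alphapower$ and $\phi^{+}_{0,s}\cdot v\alphapower=\Psi^{(\alpha),+}_{0,s}v\alphapower$, the same relation applied to $v\one$ and to $v\two$ gives $\sum_{k}\Psi^{(\alpha),+}_{0,k}z^{k}=\Psi^{(\alpha),+}_{0,0}\exp((q_{0}-q_{0}^{-1})\sum_{s>0}\mu\alphapower_{s}z^{s})$. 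Since the exponential of a sum of commuting scalars factorises, extracting the $z^{m}$--coefficient of $k_{0}\exp((q_{0}-q_{0}^{-1})\sum_{s>0}(\mu\one_{s}+\mu\two_{s})z^{s})$ yields precisely $\sum_{k+\ell=m}\Psi^{(1),+}_{0,k}\Psi^{(2),+}_{0,\ell}$, which is the claimed action of $\phi^{+}_{0,m}$ on $v\one\otimes v\two$. (The $m=0$ case is just $\phi^{+}_{0,0}=k_{0}$, already recorded in (\ref{eqn:Dpsi on C and k}).) So this first reduction is formal.

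It remains to prove that $h_{0,s}$ acts diagonally on $v\one\otimes v\two$ for every $s\geq 1$. The case $s=1$ is immediate from Proposition \ref{prop:action of hpm0pm1}: among the three summands describing the action of $h_{0,1}$ on $(V\one\otimes V\two)(0)$, the correction $(q_{0}^{-4}-1)(k_{0}\xp_{0,1}\otimes k_{0}^{-1}\xm_{0,0})$ annihilates $v\one\otimes v\two$ because $\xp_{0,1}\cdot v\one=0$, leaving $h_{0,1}\otimes 1+1\otimes h_{0,1}$. For $s\geq 2$ I would argue by induction on $s$, the key structural point being that $v\one\otimes v\two$ \emph{spans} the top weight space of $(V\one\otimes V\two)(0)$: by Lemma \ref{lem:decomposing submodules of tensor products} and the weight constraints one has $(V\one\otimes V\two)(0)_{\lambda}=V\one_{\lambda\one}\otimes V\two_{\lambda\two}$ for $\lambda=\lambda\one+\lambda\two$, which is one--dimensional since each $v\alphapower$ spans the top weight space of the $\ell$--highest weight module $V\alphapower$. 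Hence it is enough to pin down the scalar. Using $[\xp_{0,s},\xm_{0,0}]=\tfrac{1}{q_{0}-q_{0}^{-1}}\phi^{+}_{0,s}$ (valid since $\phi^{-}_{0,s}=0$ for $s>0$) together with $\xp_{0,s}\cdot(v\one\otimes v\two)=0$ from Corollary \ref{cor:xp0m annihilate v1xv2}, we get $\phi^{+}_{0,s}\cdot(v\one\otimes v\two)=(q_{0}-q_{0}^{-1})\,\xp_{0,s}\cdot(\xm_{0,0}\cdot(v\one\otimes v\two))$, where $\xm_{0,0}\cdot(v\one\otimes v\two)$ is given explicitly by Proposition \ref{prop:action of xpm00} and lies in $(V\one\otimes V\two)(0)$. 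The action of $\xp_{0,s}$ on $(V\one\otimes V\two)(0)$ is then determined inductively from $\xp_{0,s}=[2]_{0}^{-1}[h_{0,1},\xp_{0,s-1}]$ and the explicit actions of $h_{0,1}$ (Proposition \ref{prop:action of hpm0pm1}) and $\xp_{0,1}$ (Proposition \ref{prop:action of xpm0pm1}); it takes a ``coproduct on Drinfeld currents'' shape whose extreme terms combine, modulo contributions that either vanish on the $v\alphapower$ or involve only $\phi^{+}_{0,<s}$ (handled by the induction hypothesis), to the required $\sum_{k+\ell=s}\Psi^{(1),+}_{0,k}\Psi^{(2),+}_{0,\ell}$. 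Equivalently, and more cleanly, one proves a $\U(0)$--analogue of Proposition \ref{prop:actions of Udash coincide}: the representation of $\U(0)\cong\Uaffsltwo$ on $(V\one\otimes V\two)(0)$ afforded by $\Delta^{\psi}$ coincides, under the identification of $\U(0)$ with $\Uaffsltwo$ sending the Drinfeld--Jimbo generators to suitable normalisations of $\xpm_{0,0}$ and $\xpm_{0,1}$, with the $\Dbarplus$--tensor product of the $\U(0)$--modules $V\one(0)$ and $V\two(0)$ — which is exactly what Propositions \ref{prop:action of xpm00}, \ref{prop:action of xpm0pm1}, \ref{prop:action of hpm0pm1} (plus $\Delta^{\psi}(k_{0})=k_{0}\otimes k_{0}$) are designed to check on generators. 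Granting this, the proposition follows from the standard identity $\Delta(\phi^{+}_{0,s})\equiv\sum_{r}\phi^{+}_{0,r}\otimes\phi^{+}_{0,s-r}$ modulo $\widehat{U_{q}(\mathfrak{sl}_{2})}_{-}\otimes\widehat{U_{q}(\mathfrak{sl}_{2})}_{+}$ for the coproduct on Drinfeld new generators (already used in the proof of Proposition \ref{prop:l-weights in QPl+}), since the $\widehat{U_{q}(\mathfrak{sl}_{2})}_{+}$--part annihilates the $\ell$--highest weight vector $v\two$.

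The main obstacle is the inductive computation of the action of the currents $\xp_{0,s}$ on $(V\one\otimes V\two)(0)$ — equivalently, establishing the $\U(0)$--coincidence above — because Propositions \ref{prop:action of xpm00}--\ref{prop:action of hpm0pm1} only supply closed formulas for the ``small'' generators $\xpm_{0,0}$, $\xpm_{0,\pm 1}$ and $h_{0,\pm 1}$, so one must carefully track the cross terms produced at each step by the non--cocommutative correction $(q_{0}^{-4}-1)(k_{0}\xp_{0,1}\otimes k_{0}^{-1}\xm_{0,0})$ in the $h_{0,1}$--formula. Everything beyond that is routine: the one--dimensionality of the top weight space turns the problem into the determination of a single scalar, and the exponential reorganisation of the $h_{0,s}$ into the $\phi^{+}_{0,m}$ is automatic, as explained above.
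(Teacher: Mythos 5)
Your outline is close to the paper's actual strategy, but there is a genuine gap: the step you yourself flag as ``the main obstacle'' --- determining how the currents $\xp_{0,s}$, or equivalently the scalars by which $h_{0,s}$ and $\phi^{+}_{0,s}$ act, behave on $(V\one\otimes V\two)(0)$ --- is the entire content of the proposition, and neither of your routes around it closes it. The reformulation via the $h_{0,s}$ is formally correct but content-free as a reduction: since each $h_{0,s}$ is a polynomial in $k_{0}^{-1}\phi^{+}_{0,1},\dots,k_{0}^{-1}\phi^{+}_{0,s}$, asserting that $h_{0,s}$ acts by $\mu\one_{s}+\mu\two_{s}$ is equivalent to the statement being proved, so a scalar still has to be computed for every $s\geq 2$. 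Your description of that computation (``extreme terms combine, modulo contributions that either vanish on the $v\alphapower$ or involve only $\phi^{+}_{0,<s}$'') is not how it actually goes: the corrections generated by the term $(q_{0}^{-4}-1)(k_{0}\xp_{0,1}\otimes k_{0}^{-1}\xm_{0,0})$ in the $h_{0,1}$-formula neither vanish nor reduce to lower cases --- they are precisely what produces the mixed terms $\phi^{+}_{0,m-\ell}\otimes\phi^{+}_{0,\ell}$ with $0<\ell<m$ in the final answer, and tracking them through the iterated commutators (the paper's Lemma \ref{lem:actions on v1xv2} together with a careful enumeration of which summands survive on $v\one\otimes v\two$) is where all the work lies.

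The proposed ``cleaner'' alternative is moreover false as stated. The $\U(0)$-action on $(V\one\otimes V\two)(0)$ afforded by $\Delta^{\psi}$ does \emph{not} coincide with the $\Dbarplus$-tensor product of $V\one(0)$ and $V\two(0)$ under any renormalisation of generators: Proposition \ref{prop:action of xpm00} gives $\xp_{0,0}\mapsto\xp_{0,0}\otimes k_{0}^{-2}+k_{0}^{-1}\otimes\xp_{0,0}$ and $\xm_{0,0}\mapsto\xm_{0,0}\otimes k_{0}+k_{0}^{2}\otimes\xm_{0,0}$, which after the substitution $\xpm_{0,0}\mapsto\xpm_{0,0}k_{0}^{\pm 1}$ is of $\Delta_{-}$ rather than $\Dbarplus$ shape, whereas $\xpm_{0,\pm 1}$ act in $\Dbarplus$ shape by Proposition \ref{prop:action of xpm0pm1}. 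The two halves of a putative copy of $\UdashA$ therefore act through \emph{different} classical coproducts, there is no single identification under which the restriction of $\Dpsi$ becomes a standard tensor product, and the identity $\Delta(\phi^{+}_{0,s})\equiv\sum_{r}\phi^{+}_{0,r}\otimes\phi^{+}_{0,s-r}$ cannot simply be imported from the quantum affine setting. This is exactly why the paper handles $\U(0)$ by the direct computation culminating in Proposition \ref{prop:action of phi+0m}, instead of reducing to Chari--Pressley as it does for the subalgebras $\U(\Ii)$ in Proposition \ref{prop:actions of Udash coincide}.
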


In order to prove this result we first require a brief technical lemma, for which we employ the following shorthand notations.
\begin{itemize}
    \item $\alpha = h_{0,1} \otimes 1 + 1 \otimes h_{0,1}$
    \item $\beta = (q_{0}^{-4} - 1)(k_{0} \xp_{0,1} \otimes k_{0}^{-1} \xm_{0,0})$
    \item $\gamma_{\ell} = \xp_{0,\ell} \otimes 1 + k_{0}^{-1} \otimes \xp_{0,\ell}$ for all $\ell\in\Zbb$
    \item $\eta^{(k,\ell)} = \xp_{0,k} \otimes k_{0}^{-1} \phi^{+}_{0,\ell}$ for all $k\in\Zbb_{>0}$ and $\ell\in\Zbb$
\end{itemize}
We shall use without comment that the actions of all $k_{0}^{\pm 1}$, $h_{0,r}$ and $\phi^{\pm}_{0,\ell}$ commute since $C^{\pm 1}$ acts trivially.

\begin{lem} \label{lem:actions on v1xv2}
    \begin{itemize}
        \item $[\alpha,\gamma_{\ell}]$ acts on $v\one \otimes v\two$ by $[2]_{0} \gamma_{\ell+1}$.
        \item $[\alpha,\eta^{(k,\ell)}]$ acts on $v\one \otimes v\two$ by $[2]_{0} \eta^{(k+1,\ell)}$.
        \item $\gamma_{\ell} \beta$ acts on $v\one \otimes v\two$ by $-[2]_{0}\eta^{(1,\ell)}$.
    \end{itemize}
\end{lem}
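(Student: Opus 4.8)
The plan is to prove all three identities by direct computation inside $\Utor\otimes\Utor$, using the defining relations of Definition~\ref{defn:quantum toroidal algebra} together with the facts that $v\one$ and $v\two$ are $\ell$-highest weight (so every $\xp_{i,m}$ annihilates them) and that $C$ acts trivially throughout.

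For (1) and (2) I would expand each commutator bilinearly into four pieces. In $[\alpha,\gamma_{\ell}]$ two of the four vanish at once: $h_{0,1}\otimes 1$ commutes with $k_{0}^{-1}\otimes\xp_{0,\ell}$ because $[k_{i},h_{j,r}]=0$, and $1\otimes h_{0,1}$ commutes with $\xp_{0,\ell}\otimes 1$ trivially; the remaining two each reduce to $[h_{0,1},\xp_{0,\ell}]$ in one tensor slot, which by the relation $[h_{i,r},\xpm_{j,m}]=\pm\tfrac{[ra_{ij}]_{i}}{r}C^{\ldots}\xpm_{j,r+m}$ with $a_{00}=2$ equals $[2]_{0}\xp_{0,\ell+1}$, giving $[2]_{0}\gamma_{\ell+1}$. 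Part (2) is the same computation, except that in the second tensor slot $h_{0,1}$ now meets $k_{0}^{-1}\phi^{+}_{0,\ell}$; this bracket is $0$ since $h_{0,1}$ commutes with $k_{0}$ and with each $h_{0,s}$ for $s\geq 1$ (as $[h_{i,r},h_{j,s}]\propto\delta_{r+s,0}$), hence with $\phi^{+}_{0,\ell}$, so the only survivor is $[h_{0,1},\xp_{0,k}]\otimes k_{0}^{-1}\phi^{+}_{0,\ell}=[2]_{0}\eta^{(k+1,\ell)}$. In fact (1) and (2) are honest identities of operators on $V\one\otimes V\two$, not merely on $v\one\otimes v\two$, and I would record them as such since that is how they are used.

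For (3) I would multiply out $\gamma_{\ell}\beta=(q_{0}^{-4}-1)(\xp_{0,\ell}\otimes 1+k_{0}^{-1}\otimes\xp_{0,\ell})(k_{0}\xp_{0,1}\otimes k_{0}^{-1}\xm_{0,0})$ via $(a_{1}\otimes a_{2})(b_{1}\otimes b_{2})=a_{1}b_{1}\otimes a_{2}b_{2}$, and slide the $k_{0}^{\pm 1}$ factors past the root vectors $\xpm_{0,m}$ using $k_{0}\xpm_{0,m}k_{0}^{-1}=q_{0}^{\pm 2}\xpm_{0,m}$; this leaves two summands, one with a leading $\xp_{0,\ell}k_{0}\xp_{0,1}$ and one with a leading $\xp_{0,1}$ in the first slot. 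Applying this in the context in which the lemma is used in the proof of Proposition~\ref{prop:action of phi+0m} — i.e. after the $\xm_{0,0}$-lowering that produces the vectors on which $\phi^{+}_{0,m}$ is evaluated — and using that $v\one$ is $\ell$-highest weight: commuting $\xp_{0,1}$ to the right against $\xm_{0,0}$ via $[\xp_{0,1},\xm_{0,0}]=\tfrac{1}{q_{0}-q_{0}^{-1}}\phi^{+}_{0,1}$ (the $\phi^{-}_{0,1}$ term vanishing) replaces it by a scalar multiple of $\phi^{+}_{0,1}v\one$, after which any further root vector in the first slot acts on a vector proportional to $v\one$ and dies, killing the first summand; on the surviving one, $\xp_{0,\ell}\xm_{0,0}v\two=[\xp_{0,\ell},\xm_{0,0}]v\two=\tfrac{1}{q_{0}-q_{0}^{-1}}\phi^{+}_{0,\ell}v\two$ (again $\phi^{-}$ drops out), which is exactly the shape of $\eta^{(1,\ell)}$. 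Collecting the overall scalar gives $(q_{0}^{-4}-1)q_{0}^{2}=q_{0}^{-2}-q_{0}^{2}=-(q_{0}^{2}-q_{0}^{-2})=-[2]_{0}(q_{0}-q_{0}^{-1})$, matching the coefficient of $-[2]_{0}\eta^{(1,\ell)}$.

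The genuinely delicate step is (3): the algebra is elementary but the bookkeeping is not, since one must pin down precisely which terms die by $\ell$-highest weight-ness and track the several $q_{0}$-powers generated by sliding the $k_{0}$'s past the root vectors — a single slip there would spoil the collapse into $-[2]_{0}$. I would also be careful to make explicit the sense in which these operator expressions ``act on $v\one\otimes v\two$'', namely through their appearance in the computation of the $\phi^{+}_{0,m}$-action, using Corollary~\ref{cor:xp0m annihilate v1xv2} and Propositions~\ref{prop:action of xpm00} and~\ref{prop:action of xpm0pm1}; parts (1) and (2), being honest identities, require no such care.
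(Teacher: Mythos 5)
Your proof is correct and follows essentially the same route as the paper's: parts (1) and (2) via $[h_{0,1},\xp_{0,\ell}]=[2]_{0}\,\xp_{0,\ell+1}$ together with the vanishing of $[h_{0,1},k_{0}^{-1}\phi^{+}_{0,\ell}]$, and part (3) by discarding the $(\xp_{0,\ell}\otimes 1)\beta$ term via $\ell$-highest-weightness and reducing $\xp_{0,\ell}\xm_{0,0}$ to $\tfrac{1}{q_{0}-q_{0}^{-1}}\phi^{+}_{0,\ell}$, with the same factor $q_{0}^{2}$ from commuting past $k_{0}^{-1}$ producing $(q_{0}^{-4}-1)q_{0}^{2}=-[2]_{0}(q_{0}-q_{0}^{-1})$. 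Your extra care about the precise sense in which these are identities ``on $v\one\otimes v\two$'' (namely as they are used after composition with $\xm_{0,0}\otimes k_{0}$ in Proposition \ref{prop:action of phi+0m}) is a reasonable sharpening of the paper's looser phrasing rather than a different argument.
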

\begin{proof}
    The first two parts are trivially checked using the relation
    $[h_{0,1},\xp_{0,\ell}] = [2]_{0} \xp_{0,\ell+1}$, while from Corollary \ref{cor:xp0m annihilate v1xv2} we see that $\gamma_{\ell} \beta$ acts as
    \begin{align*}
        (k_{0}^{-1} \otimes \xp_{0,\ell}) \beta
        =
        (q_{0}^{-4} - 1) (\xp_{0,1} \otimes \xp_{0,\ell} k_{0}^{-1} \xm_{0,0})
        =
        (q_{0}^{-2} - q_{0}^{2}) (\xp_{0,1} \otimes k_{0}^{-1} \xp_{0,\ell} \xm_{0,0}),
    \end{align*}
    which in turn acts by
    $(q_{0}^{-2} - q_{0}^{2}) (\xp_{0,1} \otimes k_{0}^{-1} [\xp_{0,\ell},\xm_{0,0}])
    =
    -[2]_{0} (\xp_{0,1} \otimes k_{0}^{-1} \phi^{+}_{0,\ell})$.
\end{proof}

\begin{proof}[Proof of Proposition \ref{prop:action of phi+0m}]
From the relations
$\xp_{0,m+1} = [2]_{0}^{-1} [h_{0,1},\xp_{0,m}]$
and
$\phi^{+}_{0,m} = (q_{0} - q_{0}^{-1}) [\xp_{0,m},\xm_{0,0}]$
up to their actions on $v\one \otimes v\two$,
together with Propositions \ref{prop:action of xpm00}, \ref{prop:action of xpm0pm1} and \ref{prop:action of hpm0pm1},
we have that $\phi^{+}_{0,m}$ acts on $v\one \otimes v\two$ via
\begin{align*}
    (q_{0} - q_{0}^{-1}) [2]_{0}^{1-m}
    \big[ \big[\underbrace{\alpha+\beta,\dots,\alpha+\beta}_{m-1},
    \xp_{0,1}\otimes 1 + k_{0}^{-1}\otimes\xp_{0,1} \big],
    \xm_{0,0}\otimes k_{0} + k_{0}^{2}\otimes\xm_{0,0} \big].
\end{align*}
Expand out all pluses, and note that every $\alpha$ factor in a summand must act by a scalar no matter its position.
Each summand moreover contains one of the following pairs of factors.
\begin{enumerate}
    \item $\xp_{0,1}\otimes 1$ and $\xm_{0,0}\otimes k_{0}$
    \item $\xp_{0,1}\otimes 1$ and $k_{0}^{2}\otimes\xm_{0,0}$
    \item $k_{0}^{-1}\otimes\xp_{0,1}$ and $\xm_{0,0}\otimes k_{0}$
    \item $k_{0}^{-1}\otimes\xp_{0,1}$ and $k_{0}^{2}\otimes\xm_{0,0}$
\end{enumerate}
It is clear that summands with more than one $\beta$ factor annihilate (the first entry of) $v\one \otimes v\two$ by (\ref{eqn:action on weight spaces}), as do those in cases 1, 2 and 4 above that contain a single $\beta$ factor.
Furthermore, a summand in case 3 with exactly one $\beta$ factor, which in addition occurs either before $k_{0}^{-1}\otimes\xp_{0,1}$ or after $\xm_{0,0}\otimes k_{0}$, must also annihilate $v\one \otimes v\two$.
Therefore only the following may contribute to the action on $v\one \otimes v\two$:
\begin{enumerate}
    \item[1'.] Summands without any $\beta$ factors.
    \item[2'.] Summands in case 3 with a single $\beta$ factor, ordered as
    $\dots k_{0}^{-1}\otimes\xp_{0,1} \dots \beta \dots \xm_{0,0}\otimes k_{0}$.
\end{enumerate}
The first set add up to
\begin{align*}
    &(q_{0} - q_{0}^{-1}) [2]_{0}^{1-m}
    \big[ \big[\underbrace{\alpha,\dots,\alpha}_{m-1},
    \gamma_{1} \big],
    \xm_{0,0}\otimes k_{0} + k_{0}^{2}\otimes\xm_{0,0} \big]
    \\
    &=
    (q_{0} - q_{0}^{-1})
    \big[ \gamma_{m}, \xm_{0,0}\otimes k_{0} + k_{0}^{2}\otimes\xm_{0,0} \big]
    \\
    &=
    (q_{0} - q_{0}^{-1})
    \big( [\xp_{0,m},\xm_{0,0}] \otimes k_{0}
    +
    [\xp_{0,m},k_{0}^{2}] \otimes \xm_{0,0}
    +
    k_{0}^{-1} \xm_{0,0} \otimes \xp_{0,m} k_{0}
    -
    \xm_{0,0} k_{0}^{-1} \otimes k_{0} \xp_{0,m}
    +
    k_{0} \otimes [\xp_{0,m},\xm_{0,0}] \big)
    \\
    &=
    \phi^{+}_{0,m} \otimes k_{0}
    +
    k_{0} \otimes \phi^{+}_{0,m}
    +
    (q_{0} - q_{0}^{-1})
    \big( [\xp_{0,m},k_{0}^{2}] \otimes \xm_{0,0}
    +
    k_{0}^{-1} \xm_{0,0} \otimes \xp_{0,m} k_{0}
    -
    \xm_{0,0} k_{0}^{-1} \otimes k_{0} \xp_{0,m} \big),
\end{align*}
which simply acts by
$\phi^{+}_{0,m} \otimes k_{0} + k_{0} \otimes \phi^{+}_{0,m}$.
The second set sum to
\begin{align*}
    &- (q_{0} - q_{0}^{-1}) [2]_{0}^{1-m}
    \sum_{\ell = 1}^{m-1}
    \big[\underbrace{\alpha,\dots,\alpha}_{m-1-\ell},
    \big[\underbrace{\alpha,\dots,\alpha}_{\ell-1},\gamma_{1}\big]
    \beta \big]
    \cdot
    (\xm_{0,0} \otimes k_{0})
    \\
    &=
    - (q_{0} - q_{0}^{-1}) [2]_{0}^{1-m}
    \sum_{\ell = 1}^{m-1} [2]_{0}^{\ell-1}
    \big[\underbrace{\alpha,\dots,\alpha}_{m-1-\ell},
    \gamma_{\ell}
    \beta \big]
    \cdot
    (\xm_{0,0} \otimes k_{0})
    \\
    &=
    (q_{0} - q_{0}^{-1}) [2]_{0}^{1-m}
    \sum_{\ell = 1}^{m-1} [2]_{0}^{\ell}
    \big[\underbrace{\alpha,\dots,\alpha}_{m-1-\ell},
    \eta^{(1,\ell)} \big]
    \cdot
    (\xm_{0,0} \otimes k_{0})
    \\
    &=
    (q_{0} - q_{0}^{-1}) [2]_{0}^{1-m}
    \sum_{\ell = 1}^{m-1} [2]_{0}^{m-1}
    \eta^{(m-\ell,\ell)}
    (\xm_{0,0} \otimes k_{0})
    \\
    &=
    (q_{0} - q_{0}^{-1}) \sum_{\ell = 1}^{m-1}
    \xp_{0,m-\ell} \xm_{0,0} \otimes k_{0}^{-1} \phi^{+}_{0,\ell} k_{0}
    \\
    &=
    (q_{0} - q_{0}^{-1}) \sum_{\ell = 1}^{m-1}
    [\xp_{0,m-\ell},\xm_{0,0}] \otimes \phi^{+}_{0,\ell}
    \\
    &=
    \sum_{\ell = 1}^{m-1}
    \phi^{+}_{0,m-\ell} \otimes \phi^{+}_{0,\ell}
\end{align*}
by Lemma \ref{lem:actions on v1xv2}, where each equality is up to the action on $v\one \otimes v\two$.
This completes our proof.
\end{proof}

\begin{cor} \label{cor:0 Drinfeld polynomials}
    $\U(0)$ acts on $v\one \otimes v\two$ with Drinfeld polynomials $P\one_{0}(z)P\two_{0}(z)$.
\end{cor}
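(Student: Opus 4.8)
The plan is to combine Proposition \ref{prop:action of xpm00}, Proposition \ref{prop:action of xpm0pm1}, Proposition \ref{prop:action of hpm0pm1}, Corollary \ref{cor:xp0m annihilate v1xv2} and Proposition \ref{prop:action of phi+0m} to read off the $\U(0)$-action on $v\one \otimes v\two$ directly. Recall that $\U(0) \cong \UdashA$ via the isomorphism $h_{0}$ of (\ref{eqn:hi isomorphism}), so an $\ell$-highest weight vector for $\U(0)$ with Drinfeld polynomial $P_{0}(z)$ is a vector annihilated by all $\xp_{0,m}$ on which the $\phi^{+}_{0,m}$ act by the scalars appearing in the expansion of $q_{0}^{\deg P_{0}} P_{0}(z q_{0}^{-1})/P_{0}(z q_{0})$.

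First I would invoke Corollary \ref{cor:xp0m annihilate v1xv2}, which already establishes that every $\xp_{0,m}$ annihilates $v\one \otimes v\two$, so the vector is $\ell$-highest weight for $\U(0)$. Next, by definition, $v\alphapower$ has $\U(0)$-Drinfeld polynomial $P\alphapower_{0}(z)$, meaning $\phi^{\pm}_{0,\pm s}\cdot v\alphapower = \Psi\alphapower{}^{\pm}_{0,\pm s} v\alphapower$ where $\sum_{s\geq 0}\Psi\alphapower{}^{\pm}_{0,\pm s}z^{\pm s} = q_{0}^{\deg P\alphapower_{0}} P\alphapower_{0}(zq_{0}^{-1})/P\alphapower_{0}(zq_{0})$ as expansions in $\Cbb\llbracket z\rrbracket$ or $\Cbb\llbracket z^{-1}\rrbracket$. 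Then I would apply Proposition \ref{prop:action of phi+0m}: since $\phi^{+}_{0,m}$ acts on $v\one\otimes v\two$ by $\sum_{k+\ell=m}\phi^{+}_{0,k}\otimes\phi^{+}_{0,\ell}$, it acts by the scalar $\sum_{k+\ell=m}\Psi\one{}^{+}_{0,k}\Psi\two{}^{+}_{0,\ell}$, and hence $\sum_{m\geq 0}(\phi^{+}_{0,m}\cdot)z^{m}$ acts by the product $\big(\sum_{k}\Psi\one{}^{+}_{0,k}z^{k}\big)\big(\sum_{\ell}\Psi\two{}^{+}_{0,\ell}z^{\ell}\big)$, which is exactly $q_{0}^{\deg(P\one_{0}P\two_{0})}(P\one_{0}P\two_{0})(zq_{0}^{-1})/(P\one_{0}P\two_{0})(zq_{0})$. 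For the $\phi^{-}_{0,-s}$ generators I would note that the same argument applies: one either proves the mirror statement to Proposition \ref{prop:action of phi+0m} for $\phi^{-}_{0,-m}$ by an entirely analogous computation (using Propositions \ref{prop:action of xpm00}--\ref{prop:action of hpm0pm1} with the roles of raising and lowering operators swapped), or one observes that $\phi^{-}_{0,0} = k_{0}^{-1}$, whose action on $v\one\otimes v\two$ is $k_{0}^{-1}\otimes k_{0}^{-1}$ by (\ref{eqn:Dpsi on C and k}), and that the compatibility condition for $\ell$-weights then forces the $\phi^{-}_{0,-s}$ eigenvalues to match the expansion at $z^{-1}=0$ of the same rational function $q_{0}^{\deg(P\one_{0}P\two_{0})}(P\one_{0}P\two_{0})(zq_{0}^{-1})/(P\one_{0}P\two_{0})(zq_{0})$, since within category $\Oaff$ the $\phi^{-}_{0,-s}$ eigenvalues on an $\ell$-highest weight vector are determined by the rational function. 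Assembling these facts gives that $\U(0)$ acts on $v\one\otimes v\two$ with Drinfeld polynomial $P\one_{0}(z)P\two_{0}(z)$, as claimed.

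The step requiring the most care is the treatment of the $\phi^{-}_{0,-s}$ part, since Proposition \ref{prop:action of phi+0m} is stated only for $\phi^{+}_{0,m}$; the cleanest route is probably to remark explicitly that $\U(0)\cdot(v\one\otimes v\two)$ lies in $\Oaff$ (being a submodule of $V\one\otimes V\two\in\Oaff$ by Theorem \ref{thm:integrability of tensor products}), so its $\ell$-highest weight vector has $\ell$-weight lying in $P_{\ell}^{+}$ by Theorem \ref{thm:l-highest weight classification}, whence the pair $(\Psi^{+}_{0,s},\Psi^{-}_{0,-s})$ is the two-sided expansion of a single rational function determined by $\Psi^{+}$ alone. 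Everything else is a direct substitution of eigenvalues into the generating-function identities, so no genuinely new computation is needed beyond what the preceding propositions supply.
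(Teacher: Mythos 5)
Your proposal is correct and takes essentially the same route as the paper, whose entire proof is the one-liner ``this follows immediately from Proposition \ref{prop:action of phi+0m}''. Your extra care over the $\phi^{-}_{0,-s}$ eigenvalues (via integrability and Theorem \ref{thm:l-highest weight classification}, or the mirror computation) addresses a point the paper's proof silently elides, and your resolution is sound.
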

\begin{proof}
    This follows immediately from Proposition \ref{prop:action of phi+0m}.
\end{proof}

\subsection{Proof of Theorem \ref{thm:irreducible except countable}} \label{subsection:Proof of generic irreducibility}

The overall structure of our proof is as follows.
\begin{enumerate}
    \item Without loss of generality we can take $a = 1$.
    \item If conditions (\ref{eqn:first irreducibility condition}) and (\ref{eqn:second irreducibility condition}) hold on all $\mu \lneq \lambda\one + \lambda\two$ weight spaces, then $V\one \otimes V\two_{b}$ is irreducible.
    \item Since $Q$ is countable, it is therefore enough to show that for any such $\mu$, conditions (\ref{eqn:first irreducibility condition}) and (\ref{eqn:second irreducibility condition}) each fail for finitely many $b\in\Cbb^{\times}$.
    \item The elements of $\Utor$ that are involved in conditions (\ref{eqn:first irreducibility condition}) and (\ref{eqn:second irreducibility condition}) all lie inside $\psi(\Utor^{\pm})$ (Lemma \ref{lem:images of Utor pm}).
    \item This allows us to write their images under $\Delta^{\psi}$ as polynomials in $b$ for which the constant term is an elementary tensor.
    \item So their actions on $V\one \otimes V\two_{b}$, and thus conditions (\ref{eqn:first irreducibility condition}) and (\ref{eqn:second irreducibility condition}) themselves, may also be expressed in terms of polynomials in $b$ with simple constant terms.
    \item It then suffices to consider conditions (\ref{eqn:first irreducibility condition}) and (\ref{eqn:second irreducibility condition}) only in the limit $b\rightarrow 0$ (Lemma \ref{lem:kernels and images generically}).
    \item Lemma \ref{lem:annihilating and spanning weight spaces} completes the proof in this case.
\end{enumerate}

\begin{lem} \label{lem:images of Utor pm}
    The subalgebras
    $\mathcal{A}^{\pm}
    = \langle \xpm_{i,m},\, x^{\mp}_{i,k} C^{k} k_{i}^{\mp 1},\, h_{i,r} ~|~ {i \in I},\, \pm {m \geq \delta_{i0}},\, {\pm k > -\delta_{i0}},\, {\pm r > 0} \rangle$
    are contained in $\psi(\Utor^{\pm})$ respectively.
\end{lem}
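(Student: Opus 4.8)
The plan is to exhibit, for each sign, a generating set of $\mathcal{A}^{\pm}$ inside $\psi(\Utor^{\pm})$; this suffices because $\psi$ is an involutive anti-automorphism by Theorem~\ref{thm:psi}, so $\psi(\Utor^{\pm})$ is a subalgebra of $\Utor$. I would carry out the $+$ case in detail, the $-$ case being entirely analogous (replacing $\xp_{0,1}$, $\xm_{0,0}k_{0}^{-1}$ and Jing's expression for $v(\xm_{0})$ by $\xm_{0,-1}$, $\xp_{0,0}k_{0}$ and Jing's expression for $v(\xp_{0})$). The generators of $\mathcal{A}^{+}$ split into those indexed by $i\in I_{0}$ and the three indexed by the affine node $0$, and I would handle these two families separately.

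First, the generators indexed by $i\in I_{0}$, namely $\{\xp_{i,m},\ x^{-}_{i,k}C^{k}k_{i}^{-1},\ h_{i,r}: i\in I_{0},\ m\geq 0,\ k\geq 1,\ r\geq 1\}$. By Beck's PBW theorem \cite{Beck94} for the quantum affine algebra $\Uv$, written in its Drinfeld realisation (with $C$ central), these generate precisely the image under $v$ of the Drinfeld--Jimbo positive subalgebra of $\Uv$; denote this subalgebra by $\Uv^{\flat}$. On the other hand $h$ identifies the Drinfeld--Jimbo positive subalgebra of $\Uh$ with $\langle \xp_{i,0}:i\in I\rangle\subseteq\Utor^{+}$, and since $\psi h = v\sigma$ with $\sigma$ preserving the subalgebra generated by the $\xp_{i}$, we get $\psi(\langle\xp_{i,0}:i\in I\rangle) = v\sigma(\langle\xp_{i}:i\in I\rangle) = \Uv^{\flat}$. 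Hence $\Uv^{\flat} = \psi(\langle\xp_{i,0}:i\in I\rangle)\subseteq\psi(\Utor^{+})$, which covers all the $I_{0}$-indexed generators.

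Next, the node-$0$ generators $\xp_{0,m}$ ($m\geq 1$), $x^{-}_{0,k}C^{k}k_{0}^{-1}$ ($k\geq 0$), $h_{0,r}$ ($r\geq 1$). Working inside the rank-one subalgebra $\U(0)\cong\Uaffsltwo$, one checks using the formulae for $\T_{0}$ that this collection generates the subalgebra $\langle\xp_{0,1},\ \xm_{0,0}k_{0}^{-1}\rangle$ — this is Beck's PBW theorem \cite{Beck94} applied to $\Uaffsltwo$ with its opposite triangular decomposition, i.e.\ to $\T_{0}^{-1}(\U(0)^{+,\mathrm{DJ}})$. Now $\psi$ fixes $\xp_{0,1}$, which lies in $\Utor^{+}$, so $\xp_{0,1}\in\psi(\Utor^{+})$. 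For the other generator, using $\psi h = v\sigma$ together with equation~(\ref{eqn:hi isomorphism}) and Jing's isomorphism one computes
\[
\psi(\xm_{0,0}k_{0}^{-1}) = \psi(k_{0}^{-1})\,\psi(\xm_{0,0}) = v(k_{0})\,v(\xm_{0}) = v(k_{0}\xm_{0}) = a(-q)^{-\epsilon}\,[\xp_{i_{\hslash-1},0},\dots,\xp_{i_{2},0},\xp_{i_{1},-1}]_{q^{\epsilon_{1}}\cdots q^{\epsilon_{\hslash-2}}},
\]
the Cartan prefactor $v(k_{0}) = Ck_{\theta}^{-1}$ cancelling the factor $C^{-1}k_{\theta}$ appearing in Jing's formula for $v(\xm_{0})$ by centrality of $C$. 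The right-hand side is an iterated twisted commutator of positive loop generators and hence lies in $\Utor^{+}$, so $\xm_{0,0}k_{0}^{-1}\in\psi(\Utor^{+})$. Therefore $\langle\xp_{0,1},\xm_{0,0}k_{0}^{-1}\rangle\subseteq\psi(\Utor^{+})$, the node-$0$ generators lie in $\psi(\Utor^{+})$, and combining the two families gives $\mathcal{A}^{+}\subseteq\psi(\Utor^{+})$.

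The main obstacle I anticipate is the precise bookkeeping in the two invocations of Beck's PBW theorem: one must match the exact normalisations $x^{\mp}_{i,k}C^{k}k_{i}^{\mp 1}$ of the ``opposite-loop'' root vectors in $\mathcal{A}^{\pm}$ with Beck's root vectors in the Drinfeld realisations of $\Uv$ and of $\U(0)$, while tracking the sign function $o$ and the constants in Jing's isomorphism. This choice of Cartan normalisation is exactly what produces the cancellation of central/Cartan prefactors in the displayed identity, and so is precisely what allows $\xm_{0,0}k_{0}^{-1}$ (and its $\mathcal{A}^{-}$-counterpart $\xp_{0,0}k_{0}$) to land in $\psi(\Utor^{\mp})$.
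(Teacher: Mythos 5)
Your proposal is correct and takes essentially the same route as the paper: both arguments rest on $\psi h = v\sigma$ (giving $v(\Udash^{\pm})\subseteq\psi(\Utor^{\pm})$) combined with Beck's description of $\Udash^{\pm}$ in terms of Drinfeld generators, the fact that $\psi$ fixes $\xpm_{0,\pm 1}$, and Jing's isomorphism to handle $x^{\mp}_{0,0}k_{0}^{\mp 1}$. The only real difference is presentational: the paper generates the remaining elements of $\mathcal{A}^{\pm}$ from these seeds by hand, using the defining relations of $\Utor$ and the recursion expressing $h_{i,\pm r}$ through the $\phi^{\pm}_{i,\pm s}$, rather than invoking Beck's PBW theorem wholesale.
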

\begin{proof}
In the following we shall work only up to multiplication by non-zero scalars, since this is all we require.
For each $i\in I_{0}$ we have
\begin{align*}
    &\xm_{i,1} C k_{i}^{-1}
    = \X_{i}(\xm_{i,0} k_{i}^{-1})
    = \X_{i}(k_{i}^{-1} \xm_{i,0})
    = \X_{i} \T_{i}^{-1} (\xp_{i,0})
    = v(\Xb_{i} \Tb_{i}^{-1} (\xp_{i}) ), \\
    &\xp_{i,-1} C^{-1} k_{i}
    = \X_{i}(\xp_{i,0} k_{i})
    = \X_{i} \T_{i}^{-1} (\xm_{i,0})
    = v(\Xb_{i} \Tb_{i}^{-1} (\xm_{i}) ).
\end{align*}
Then by \cite{Beck94}*{Defn. 3.1},
$\Xb_{i} \Tb_{i}^{-1} (\xipm) \in \Udash^{\pm}$
and thus
\begin{align*}
    x^{\mp}_{i,\pm 1} C^{\pm 1} k_{i}^{\mp 1}
    \in v(\Udash^{\pm})
    = \psi h (\Udash^{\pm})
    \subset \psi(\Utor^{\pm}).
\end{align*}
Furthermore, it is clear that $\psi(\Utor^{\pm})$ contains
$\xpm_{i,0} = \psi(\xpm_{i,0})$
and so by relation 7 of our definition for $\Utor$ we see that $h_{i,\pm 1} \in \psi(\Utor^{\pm})$ as well.
From relation 6 we then obtain
$\xpm_{i,m},\, x^{\mp}_{i,k} C^{k} k_{i}^{\mp 1} \in \psi(\Utor^{\pm})$
for all $\pm m \geq 0$ and $\pm k > 0$,
whereby relation 7 gives $k_{i}^{\mp 1} \phi^{\pm}_{i,r} \in \psi(\Utor^{\pm})$ for each $\pm r > 0$.
Using the identities
\begin{align*}
    h_{i,\pm r}
    =
    \frac{\pm 1}{q_{i} - q_{i}^{-1}} k_{i}^{\mp 1} \phi^{\pm}_{i,\pm r}
    -
    \sum_{\ell = 1}^{r-1} \frac{\ell}{r} k_{i}^{\mp 1} \phi^{\pm}_{i,\pm r \mp \ell} h_{i,\pm \ell}
\end{align*}
for all $r>0$ -- for example from \cite{Beck94}*{p.10--11} -- we are done by induction.
The case $i = 0$ is similar.
Combining Jing's isomorphism with $h = \psi v\sigma$ immediately gives
$x^{\mp}_{0,0} k_{0}^{\mp 1} \in \psi(\Utor^{\pm})$.
In addition, $\psi(\Utor^{\pm})$ clearly contains
$\xpm_{0,\pm 1} = \psi(\xpm_{0,\pm 1})$,
and the remaining identities are then obtained exactly as for $i\in I_{0}$.
\end{proof}

\begin{lem} \label{lem:annihilating and spanning weight spaces}
    Let $V = V(\lambda,\Psi)$ be an irreducible integrable $\Utor$-module with $\ell$-highest vector $v_{\lambda}$, and fix some weight $\mu < \lambda$.
    Then for every $m\in\Zbb$ and $\epsilon = \pm 1$,
    \begin{enumerate}
        \item $\lbrace v\in V_{\mu} ~|~ \xp_{i,k}\cdot v = 0 \mathrm{~for~all~} i\in I \mathrm{~and~} k\in\Zbb \mathrm{~with~} \epsilon k > m \rbrace = 0$,
        \item $V_{\mu} = \mathrm{Sp}_{\Cbb}
        \lbrace \xm_{i_{1},k_{1}}\cdots\xm_{i_{s},k_{s}}\cdot v_{\lambda} ~|~ s\in\Nbb,\, \mathrm{all~} \epsilon k_{j} > m,\, \sum_{j=1}^{s} \alpha_{i_{j}} = \lambda - \mu \rbrace$.
    \end{enumerate}
\end{lem}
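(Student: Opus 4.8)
The plan is to establish part (2) for both signs $\epsilon=\pm1$ directly, and then to deduce part (1) from part (2) by duality. For the duality: composing the $\Utor$-action on $V$ with the anti-involution $\eta$ and passing to the restricted dual $V^{\circledast}=\bigoplus_{\nu}V_{\nu}^{*}$ gives an irreducible integrable module, which is $\ell$-highest weight after an additional twist by $\Wcal$ (since $C$ acts trivially, $\eta$ and $\Wcal$ together send $\xm_{i,k}$ to a nonzero scalar multiple of $\xp_{i,-k}$ and vice versa, with $\Wcal$ preserving $\degZ$ while $\eta$ reverses it). Under the resulting nondegenerate pairing $\langle\,\cdot\,,\,\cdot\,\rangle\colon V\times V^{\circledast}\to\Cbb$, the image of a ``good'' lowering monomial for $V^{\circledast}$ is a nonzero scalar multiple of a ``good'' raising monomial for $V$ (with $\epsilon$ reversed and the bound $m$ suitably relabelled); so if part (2) holds in $V^{\circledast}$, a vector of $V_{\mu}$ killed by all good raising operators is orthogonal to the whole of the paired weight space of $V^{\circledast}$, hence zero. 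This reduces everything to part (2).

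For part (2) I would induct on $\mathrm{ht}(\lambda-\mu)$; fix $\epsilon=+1$ (the case $\epsilon=-1$ is identical with the roles of the two ends of a rational function interchanged). Write $V_{\mu}^{\mathrm{good}}$ for the span of the good monomials applied to $v_{\lambda}$. Since $V=\Utor^{-}\cdot v_{\lambda}$ one has $V_{\mu}=\sum_{i\in I,\,k\in\Zbb}\xm_{i,k}\cdot V_{\mu+\alpha_{i}}$ for $\mu<\lambda$, so by the inductive hypothesis it suffices to show $\xm_{i,k}\cdot w\in V_{\mu}^{\mathrm{good}}$ whenever $k\leq m$ and $w\in V_{\mu+\alpha_{i}}$ is a good monomial applied to $v_{\lambda}$ (or $w=v_{\lambda}$). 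Here the subalgebra $\U(i)\cong\Uaffsltwo$ takes over: as recorded in Section \ref{section:tensor products}, following \cite{Hernandez07} and \cite{GTL16}, the current $\xm_{i}(z)$ acts on the finite-dimensional weight space $V_{\mu+\alpha_{i}}$ by the expansions at $z^{\mp1}=0$ of a $\Hom_{\Cbb}(V_{\mu+\alpha_{i}},V_{\mu})$-valued rational function whose poles lie in $\Cbb^{\times}$. Consequently $\xm_{i,k}\cdot w=\sum_{\alpha}c_{\alpha}\,b_{\alpha}^{\,k}\,u_{\alpha}$ for fixed $b_{\alpha}\in\Cbb^{\times}$ and $u_{\alpha}\in V_{\mu}$, and a Vandermonde argument gives $\mathrm{Sp}_{\Cbb}\{\xm_{i,k}\cdot w:k>m\}=\mathrm{Sp}_{\Cbb}\{\xm_{i,k}\cdot w:k\in\Zbb\}$; since every $\xm_{i,l}\cdot w$ with $l>m$ is again a good monomial applied to $v_{\lambda}$, this places $\xm_{i,k}\cdot w$ in $V_{\mu}^{\mathrm{good}}$ and closes the induction.

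The step I expect to be the main obstacle is justifying this rank-one input on the restriction $V|_{\U(i)}$, which need not be semisimple: one must show that the rational function describing $\xm_{i}(z)$ on any weight space of a module in $\Oaff$ is regular at both $z=0$ and $z=\infty$ --- equivalently, that no evaluation parameter equal to $0$ occurs --- so that $\{\xm_{i,k}\cdot w : k>m\}$ indeed spans the same space as $\{\xm_{i,k}\cdot w : k\in\Zbb\}$. This is exactly where irreducibility of the ambient $\Utor$-module $V$ enters, through the Chari--Pressley classification of finite-dimensional $\Uaffsltwo$-modules as subquotients of tensor products of evaluation representations, whose parameters lie in $\Cbb^{\times}$. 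A secondary point to check is that the induction on $\mathrm{ht}(\lambda-\mu)$ genuinely closes --- in particular that any monomial rewriting used along the way (for instance commuting $h_{j,r}$, $r>0$, past a monomial via $[h_{j,r},\xm_{i,k}]=-\tfrac{[ra_{ji}]_{j}}{r}\,\xm_{i,r+k}$, using that $h_{j,r}\cdot v_{\lambda}$ is a scalar) does not reintroduce bad indices; should this not close directly, one can instead deduce everything from the rational-function description together with the finite-dimensionality of $V_{\mu}$ guaranteed by $V\in\Oaff$.
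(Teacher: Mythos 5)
Your proposal is essentially correct, but it takes a genuinely different route from the paper. The paper first proves a sublemma: since $\X_{i}$ preserves $\Utor^{+}$ and the $\ell$-weight of $v_{\lambda}$, the twist $V^{\X_{i}}$ is again irreducible with the same $\ell$-highest weight, so by Theorem \ref{thm:l-highest weight classification} there is a linear automorphism $f_{i}$ of $V$ fixing $v_{\lambda}$ with $f_{i}(z\cdot v)=\X_{i}(z)\cdot f_{i}(v)$. Conjugating by $(f_{0}\cdots f_{n})^{\epsilon}$ shifts every loop index by $\epsilon$, so the nested finite-dimensional subspaces $V[\ell]$ (kernels) and $W[\ell]$ (spans) indexed by the cutoff all have the same dimension and hence coincide; this identifies the truncated kernel and span with the full ones, and both parts follow at once. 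You instead use the rationality of the currents $\xm_{i}(z)$ on weight spaces together with a confluent-Vandermonde argument -- a heavier input, but one the paper itself quotes at the start of Section \ref{section:tensor products} from \cite{Hernandez07} and \cite{GTL16} -- and your induction on $\mathrm{ht}(\lambda-\mu)$ for part (2) does close as written: no monomial rewriting is needed, since the Vandermonde step expresses a bad $\xm_{i,k}\cdot w$ directly as a combination of good ones. Two corrections of emphasis. First, the regularity at $z=0,\infty$ that you flag as the main obstacle is not where irreducibility enters: it is built into the cited statement that the two half-currents are the expansions at $z^{\mp 1}=0$ of a single rational function, and it holds for every module in $\Oaff$; irreducibility is needed only to exclude singular vectors. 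Second, your duality reduction of (1) to (2) is both the shakiest step (the weight bookkeeping of the restricted dual -- choosing a twist among $\eta$, $\Wcal$, $\sigma$ that yields an $\ell$-highest rather than $\ell$-lowest weight module over the same algebra, not its $q\mapsto q^{-1}$ twist -- needs real care) and the most dispensable: applying your Vandermonde argument directly to $\xp_{i}(z)$ on $V_{\mu}$ shows that a vector killed by all good raising operators is killed by all of them, hence is an $\ell$-highest weight vector of weight $\mu<\lambda$, contradicting irreducibility -- which is exactly how the paper concludes part (1).
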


Our proof requires the following brief result.

\begin{sublem}
    For each $i\in I$ there exists some $f_{i} \in \mathrm{Aut}_{\Cbb} V$ such that $f_{i}(v_{\lambda}) = v_{\lambda}$, and $f_{i}(z\cdot v) = \X_{i}(z)\cdot v$ for all $z\in\Utor$ and $v\in V$.
\end{sublem}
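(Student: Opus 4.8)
The plan is to realise $f_i$ as an isomorphism from $V$ onto its twist by $\X_i$. Write $\X_i^{\ast}V$ for the $\Utor$-module with underlying vector space $V$ and action $z\ast w=\X_i(z)\cdot w$. I will show that $\X_i^{\ast}V$ is isomorphic to $V(\lambda,\Psi)=V$ and take $f_i$ to be any such isomorphism, suitably normalised; unwinding the definitions, a module isomorphism $f_i\colon V\to\X_i^{\ast}V$ is precisely a linear automorphism of $V$ with $f_i(z\cdot v)=z\ast f_i(v)=\X_i(z)\cdot f_i(v)$ for all $z\in\Utor$ and $v\in V$, which is the asserted intertwining property (and $f_i\in\mathrm{Aut}_{\Cbb}V$ since it is invertible). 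Note also that submodules of $\X_i^{\ast}V$ coincide with submodules of $V$ because $\X_i$ is surjective, and likewise $\Utor\ast v_\lambda=\Utor\cdot v_\lambda=V$, so $\X_i^{\ast}V$ is again irreducible and generated by $v_\lambda$.

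First I would record the relevant behaviour of $\X_i$ from Section \ref{subsection:quantum affinizations}: it maps $\xpm_{j,m}\mapsto\upsilon(j)^{\delta_{ij}}\xpm_{j,m\mp\delta_{ij}}$, $h_{j,r}\mapsto h_{j,r}$, $q^{h}\mapsto C^{-\langle\Lambda_i,h\rangle}q^{h}$ and $C\mapsto C$. Hence $\X_i$ preserves each of $\Utor^{+}$, $\Utor^{0}$, $\Utor^{-}$, it fixes every $h_{j,r}$ and, on a type $1$ module (which we may assume $V$ to be, by the Notation following Theorem \ref{thm:l-highest weight classification}), it acts as the identity on all $k_j^{\pm 1}$ and $C^{\pm 1}$. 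Consequently, in $\X_i^{\ast}V$: the vector $v_\lambda$ is still annihilated by $\Utor^{+}$, so $\X_i^{\ast}V$ is $\ell$-highest weight; since $q^{h}\ast v=\X_i(q^{h})\cdot v=q^{h}\cdot v$, the weight spaces of $\X_i^{\ast}V$ coincide with those of $V$, and as $\X_i$ merely permutes the $\xpm_{j,m}$ up to signs these still act locally nilpotently, so $\X_i^{\ast}V\in\Oaff$; finally $\phi^{\pm}_{j,s}\ast v_\lambda=\X_i(\phi^{\pm}_{j,s})\cdot v_\lambda=\phi^{\pm}_{j,s}\cdot v_\lambda=\Psi^{\pm}_{j,s}\,v_\lambda$ because $\X_i$ fixes the $h_{j,r}$ and acts trivially on $k_j^{\pm 1}$, so $v_\lambda$ has the same $\ell$-weight $(\lambda,\Psi)$.

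Therefore $\X_i^{\ast}V$ is an irreducible, integrable $\ell$-highest weight module of $\ell$-highest weight $(\lambda,\Psi)$, so by the uniqueness of $V(\lambda,\Psi)$ recalled in Section \ref{subsubsection:l-highest weight theory} it is isomorphic to $V$. Any such isomorphism carries $v_\lambda$ into the $\ell$-highest $\ell$-weight space of $\X_i^{\ast}V$, which is one-dimensional and spanned by $v_\lambda$; after rescaling we obtain $f_i$ with $f_i(v_\lambda)=v_\lambda$, completing the argument. The only step requiring genuine care is the middle one — verifying that twisting by $\X_i$ keeps us inside $\Oaff$ and leaves the $\ell$-highest weight datum unchanged — but this reduces entirely to the triviality of the $C$-action on type $1$ modules together with the explicit formulae for $\X_i$ (the case $i=0$ being covered by the same formulae), so no substantive obstacle arises.
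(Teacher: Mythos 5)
Your proposal is correct and follows essentially the same route as the paper: twist $V$ by $\X_i$, observe that irreducibility, the $\ell$-highest weight property and the $\ell$-weight $(\lambda,\Psi)$ are all preserved (the latter because $C$ acts trivially on a type $1$ module, so $\X_i$ acts as the identity on $\Utor^{0}$ in effect), and invoke uniqueness of $V(\lambda,\Psi)$ to get an isomorphism fixing $v_\lambda$. Your reading of the intertwining identity as $f_i(z\cdot v)=\X_i(z)\cdot f_i(v)$ is the intended one, and the extra checks you supply (integrability, one-dimensionality of the top $\ell$-weight space) are consistent with the paper's argument.
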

\begin{proof}
    The representation $V^{\X_{i}}$ is irreducible, as a twist of the irreducible module $V$, with $v_{\lambda}$ still an $\ell$-highest weight vector since $\X_{i}(\Utor^{+}) = \Utor^{+}$.
    Moreover, the action of each $z\in\Utor^{0}$ on $v_{\lambda}$ is the same as in $V$ because $C^{\pm 1}$ acts by $1$.
    Hence by Theorem \ref{thm:l-highest weight classification} we have an isomorphism $V^{\X_{i}} \cong V$ which fixes $v_{\lambda}$, which defines an automorphism $f_{i} \in \mathrm{Aut}_{\Cbb} V$ with the desired properties.
\end{proof}

\begin{proof}[Proof of Lemma \ref{lem:annihilating and spanning weight spaces}]
For each $\ell\in\Zbb$ let
\begin{align*}
    V[\ell] = \lbrace v\in V_{\mu} ~|~ \xp_{i,k}\cdot v = 0 \mathrm{~for~all~} i\in I \mathrm{~and~} k\in\Zbb \mathrm{~with~} \epsilon k > \ell \rbrace \leq V_{\mu},
\end{align*}
which is finite dimensional since $V$ is integrable.
Clearly every $V[\ell-\epsilon] \leq V[\ell]$, but also
$(f_{0}\cdots f_{n})^{\epsilon} \in \mathrm{Aut}_{\Cbb} V$
sends $V[\ell]$ inside $V[\ell-\epsilon]$ and therefore
$\dim_{\Cbb} V[\ell] \leq \dim_{\Cbb} V[\ell-\epsilon]$,
forcing $V[\ell-\epsilon] = V[\ell]$.
\\

It follows that
$V[m] = \bigcap_{\ell\in\Zbb} V[\ell]
= \lbrace v\in V_{\mu} ~|~ \xp_{i,k}\cdot v = 0 ~\forall\, i\in I,\, k\in\Zbb \rbrace$.
Any non-zero $v\in V[m]$ is then an $\ell$-highest weight vector of weight $\mu<\lambda$ inside $V$ by relation 7 of our definition for $\Utor$.
But this contradicts the irreducibility of $V$, and thus $V[m] = 0$ as desired.
\\

Similarly, define
$W[\ell] = \mathrm{Sp}_{\Cbb}
\lbrace \xm_{i_{1},k_{1}}\cdots\xm_{i_{s},k_{s}}\cdot v_{\lambda} \, | \, s\in\Nbb,\, \mathrm{all~} \epsilon k_{j} > \ell,\, \sum_{j=1}^{s} \alpha_{i_{j}} = \lambda - \mu \rbrace$
for each $\ell\in\Zbb$, which are finite dimensional subspaces of $V_{\mu}$.
Every $W[\ell] \leq W[\ell-\epsilon]$ by construction, while
$\dim_{\Cbb} W[\ell-\epsilon] \leq \dim_{\Cbb} W[\ell]$
since
$(f_{0}\cdots f_{n})^{\epsilon}$
maps $W[\ell-\epsilon]$ into $W[\ell]$,
hence we have $W[\ell] = W[\ell-\epsilon]$.
\\

Therefore
$W[m] = \bigcap_{\ell\in\Zbb} W[\ell] = \mathrm{Sp}_{\Cbb}
\lbrace \xm_{i_{1},k_{1}}\cdots\xm_{i_{s},k_{s}}\cdot v_{\lambda} \, | \, s\in\Nbb,\, \sum_{j=1}^{s} \alpha_{i_{j}} = \lambda - \mu \rbrace$,
and this must in turn equal $V_{\mu}$ since $V$ is spanned by vectors of the form
$\xm_{i_{1},k_{1}}\cdots\xm_{i_{s},k_{s}} \cdot v_{\lambda}$
which have weight
$\lambda - \sum_{j=1}^{s} \alpha_{i_{j}}$.
\end{proof}

\begin{lem} \label{lem:kernels and images generically}
    Let
    $\big\lbrace f^{(k)}_{b} : A \rightarrow B \big\rbrace_{k\in\Nbb}$
    be a collection of morphisms between free $\Cbb[b]$-modules of countable rank.
    \begin{enumerate}
        \item If $\bigcap_{k\in\Nbb} \ker f^{(k)}_{0} = 0$ and $\dim_{\Cbb[b]} A < \infty$, then $\bigcap_{k\in\Nbb} \ker f^{(k)}_{\beta} = 0$ for all but finitely many $\beta \in \Cbb$.
        \item If $\sum_{k\in\Nbb} \mathrm{im} f^{(k)}_{0} = B$ and $\dim_{\Cbb[b]} B < \infty$, then $\sum_{k\in\Nbb} \mathrm{im} f^{(k)}_{\beta} = B$ for all but finitely many $\beta \in \Cbb$.
    \end{enumerate}
\end{lem}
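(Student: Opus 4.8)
\textbf{Proof proposal for Lemma \ref{lem:kernels and images generically}.}

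The plan is to reduce both parts to statements about ranks of finite matrices over the polynomial ring $\Cbb[b]$, and then use that the rank of such a matrix can only drop on a finite set of specialisations of $b$. First I would treat part (1). Since $A$ has finite rank $d = \dim_{\Cbb[b]} A$, the condition $\bigcap_{k} \ker f^{(0)}_{0} = 0$ says that the morphisms $f^{(k)}_{0}$ jointly have trivial kernel; because $A_{0} := A \otimes_{\Cbb[b]} \Cbb$ is $d$-dimensional, there exist finitely many indices $k_{1},\dots,k_{N}$ such that already $\bigcap_{j=1}^{N} \ker f^{(k_{j})}_{0} = 0$ (each successive map that does not contain the current intersection strictly decreases its dimension, and the dimension starts at $d$). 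Now assemble the block morphism $F_{b} = (f^{(k_{1})}_{b},\dots,f^{(k_{N})}_{b}) : A \rightarrow B^{\oplus N}$, which is represented by a matrix $M(b)$ with entries in $\Cbb[b]$ and exactly $d$ columns. The hypothesis is that $M(0)$ has rank $d$, hence some $d\times d$ minor $p(b)$ of $M(b)$ satisfies $p(0) \neq 0$; in particular $p(b)$ is a nonzero polynomial. For any $\beta \in \Cbb$ with $p(\beta) \neq 0$ we get $\operatorname{rank} M(\beta) = d$, so $\ker F_{\beta} = 0$, and a fortiori $\bigcap_{k\in\Nbb} \ker f^{(k)}_{\beta} = 0$. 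Since $p$ has finitely many roots, this proves part (1).

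For part (2) the argument is dual. Let $e = \dim_{\Cbb[b]} B$. The hypothesis $\sum_{k} \operatorname{im} f^{(k)}_{0} = B$ means the images jointly span the $e$-dimensional space $B_{0} = B\otimes_{\Cbb[b]}\Cbb$, so finitely many of them, say indexed by $k_{1},\dots,k_{N}$, already satisfy $\sum_{j=1}^{N} \operatorname{im} f^{(k_{j})}_{0} = B_{0}$ (again because the dimension of the partial sum can only increase up to the bound $e$). Form the block morphism $G_{b} = (f^{(k_{1})}_{b},\dots,f^{(k_{N})}_{b}) : A^{\oplus N} \rightarrow B$, represented by a matrix $M'(b)$ over $\Cbb[b]$ with exactly $e$ rows. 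The hypothesis says $\operatorname{rank} M'(0) = e$, so some $e\times e$ minor $p'(b)$ has $p'(0)\neq 0$, hence $p'$ is a nonzero polynomial; for every $\beta$ with $p'(\beta)\neq 0$ we get $\operatorname{rank} M'(\beta) = e$, so $G_{\beta}$ is surjective and $\sum_{k\in\Nbb} \operatorname{im} f^{(k)}_{\beta} = B$. This fails only on the finite zero set of $p'$.

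The one point needing a little care, and the mild obstacle, is justifying the passage ``the $f^{(k)}_{0}$ jointly having trivial kernel (resp.\ spanning) implies finitely many of them do so already'': this uses precisely the finiteness hypotheses $\dim_{\Cbb[b]} A < \infty$ (resp.\ $\dim_{\Cbb[b]} B < \infty$), since the modules $B$ and $A$ themselves may have countably infinite rank. Concretely, for part (1) one picks indices greedily: start with $k_{1}$ arbitrary, and having chosen $k_{1},\dots,k_{j}$ with $K_{j} := \bigcap_{i\le j}\ker f^{(k_{i})}_{0}$ nonzero, by hypothesis there is some $k_{j+1}$ with $K_{j}\not\subseteq \ker f^{(k_{j+1})}_{0}$, so $\dim_{\Cbb} K_{j+1} < \dim_{\Cbb} K_{j}$; since $\dim_{\Cbb} K_{0}\le d$, after at most $d$ steps $K_{N}=0$. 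Part (2) is symmetric, increasing $\dim_{\Cbb}\big(\sum_{i\le j}\operatorname{im} f^{(k_{i})}_{0}\big)$ until it reaches $e$. Once this is in place, everything else is the standard fact that a polynomial matrix of full rank at one point has full rank away from the vanishing locus of a suitable nonzero minor, which is elementary. (This is the lemma attributed to Arun Soor in the acknowledgements, used in the proof of Theorem \ref{thm:irreducible except countable}.)
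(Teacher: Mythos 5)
Your proposal is correct and follows essentially the same route as the paper's proof: reduce to finitely many indices using the finite rank of $A$ (resp. $B$), assemble a block matrix over $\Cbb[b]$ with finitely many columns (resp. rows), and observe that full rank at $b=0$ forces a maximal minor that is a nonzero polynomial, hence full rank away from its finitely many roots. The only cosmetic difference is that you work with a single nonvanishing minor where the paper invokes the principal ideal generated by all maximal minors; both are equivalent here.
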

\begin{proof}
Write $d_{A}$ and $d_{B}$ as shorthand for the ranks of $A$ and $B$ as $\Cbb[b]$-modules.
We shall start with the first implication.
Since $A$ is finite dimensional, we must have
$\bigcap_{k=0}^{N} \ker f^{(k)}_{0} = 0$
for some $N\in\Nbb$.
With respect to fixed bases for $A$ and $B$, the linear map
$\bigoplus_{k=0}^{N} f^{(k)}_{b} : A \rightarrow \bigoplus_{k=0}^{N} B$
corresponds to some matrix
$M \in \mathrm{Mat}_{N d_{B}\times d_{A}}(\Cbb[b])$.
Since $\Cbb[b]$ is principal, the ideal generated by all $d_{A}\times d_{A}$ minors of $M$ is equal to some $\langle f(b) \rangle$.
\\

The rank of $M$ is the size of its largest non-zero minor, and moreover equals $d_{A} - \dim(\ker M)$ by rank-nullity, so it must be the case that $f(0) \not= 0$.
As a non-zero polynomial, $f$ therefore has finitely many roots.
For all other $\beta\in\Cbb$ we then have
$I = \langle f(\beta) \rangle \not= 0$
and hence
$\bigcap_{k=0}^{N} \ker f^{(k)}_{\beta} = 0$.
\\

Let us now move to the second implication, where the finite-dimensionality of $B$ forces
$\sum_{k=0}^{N} \mathrm{im} f^{(k)}_{0} = B$
for some $N\in\Nbb$.
After fixing bases for $A$ and $B$, we can view
$\bigoplus_{k=0}^{N} f^{(k)}_{b} : \bigoplus_{k=0}^{N} A \rightarrow B$
as a matrix
$M \in \mathrm{Mat}_{d_{B}\times N d_{A}}(\Cbb[b])$.
The ideal of $\Cbb[b]$ generated by its $d_{B}\times d_{B}$ minors is some $\langle f(b) \rangle$, in particular with $f(0) \not= 0$ since $M$ is surjective at $b = 0$.
Hence $f(\beta)$ is non-zero and thus $\mathrm{rk}(M) = d_{B}$ for all but finitely many $\beta\in\Cbb$.
\end{proof}

\begin{proof}[Proof of Theorem \ref{thm:irreducible except countable}]
Irreducibility is preserved under twisting by automorphisms of $\Utor$, so as
(\ref{eqn:image of new topological coproduct})
implies that
$(\sfrakv_{a} \otimes \sfrakv_{b}) \circ \Delta^{\psi} = (\sfrakv_{1} \otimes \sfrakv_{b/a}) \circ \Delta^{\psi} \circ \sfrakv_{a}$
and thus
$V\one_{a}\otimes V\two_{b}
\cong
(V\one\otimes V\two_{b/a})_{a}$,
we may without loss of generality take $a = 1$.
If $V\one \otimes V\two_{b}$ is reducible, then at least one of the following holds.
\begin{itemize}
    \item $V\one \otimes V\two_{b}$ is not generated by $v\one \otimes v\two$
    \item $V\one \otimes V\two_{b}$ contains an $\ell$-highest weight vector of weight $\mu \lneq \lambda\one + \lambda\two$
\end{itemize}
Neither of these occurs -- and hence $V\one \otimes V\two_{b}$ is irreducible -- provided that both of the following hold for all $\mu \lneq \lambda\one + \lambda\two$.
\begin{itemize}
    \item $(V\one \otimes V\two_{b})_{\mu}
    = \mathrm{Sp}_{\Cbb}
    \lbrace \xm_{i_{1},k_{1}}\cdots\xm_{i_{s},k_{s}}\cdot (v\one \otimes v\two) ~|~ \sum_{j=1}^{s} \alpha_{i_{j}} = \lambda - \mu \rbrace
    \hfill \refstepcounter{equation}(\theequation)\label{eqn:first irreducibility condition}$
    \item $\big\lbrace v\in (V\one \otimes V\two_{b})_{\mu} ~|~ b^{-(1+h(m-\delta_{i0})) \cdot \mathds{1}_{k<\delta_{i0}}} \xp_{i,k}\cdot v = 0 ~\forall\, {i\in I}, {k\in\Zbb} \big\rbrace = 0
    \hfill \refstepcounter{equation}(\theequation)\label{eqn:second irreducibility condition}$
\end{itemize}
As $Q$ is countable, it is enough to prove that for every $\mu \lneq \lambda\one + \lambda\two$ these conditions each hold for all but finitely many $b\in\Cbb^{\times}$.
From (\ref{eqn:image of new topological coproduct}) we see that
\begin{align*}
    \psi(\xp_{i,m}) \xmapsto{\Delta^{\psi}}
    \psi(\xp_{i,m}) \otimes 1
    + \sum_{\ell\geq 0} x_{\ell} \otimes y_{\ell}
    \xmapsto{1\otimes\sfrakv_{b}}
    \psi(\xp_{i,m}) \otimes 1
    + \sum_{\ell\geq 0} (x_{\ell} \otimes y_{\ell}) b
\end{align*}
and thus by Lemma \ref{lem:images of Utor pm},
$(1\otimes\sfrakv_{b}) \circ \Delta^{\psi}$
sends
$\xp_{i,k} \mapsto \xp_{i,k} \otimes 1 + O(b)$
whenever $i\in I$ and $k \geq \delta_{i0}$, while
$\xm_{i,k} C^{k} k_{i}^{-1} \mapsto \xm_{i,k} C^{k} k_{i}^{-1} \otimes 1 + O(b)$
for all $i\in I$ and $k > -\delta_{i0}$.
Similarly,
\begin{align*}
    b^{-\degv(\psi(\xm_{i,m}))} \psi(\xm_{i,m})
    \xmapsto{(1\otimes\sfrakv_{b})\circ\Delta^{\psi}}
    \psi(\xm_{i,m}) \otimes 1
    + \sum_{\ell\leq 0} (x_{\ell} \otimes y_{\ell})
    b^{-\degv(x_{\ell})}
\end{align*}
where we note that all
$\degv(x_{\ell}) = \degv(\psi(\xm_{i,m})) = 1$.
It follows from Lemma \ref{lem:images of Utor pm} that
$(1\otimes\sfrakv_{b}) \circ \Delta^{\psi}$
sends
$b^{1-h(k+\delta_{i0})} \xm_{i,k} \mapsto
1 \otimes \xm_{i,k} + O(b)$
whenever $i\in I$ and $k \leq -\delta_{i0}$.
\\

In particular, the action of $\xp_{i,k}$ then defines a morphism
$f^{(k)}_{b} : (V\one \otimes V\two)_{\mu} \rightarrow (V\one \otimes V\two)_{\mu + \alpha_{i}}$
of free $\Cbb[b]$-modules for each $k\geq\delta_{i0}$.
Here we use the well-definedness afforded by Theorem \ref{thm:integrability of tensor products}, and the fact that $(V\one \otimes V\two_{b})_{\mu}$ is independent of $b\in\Cbb^{\times}$ as a vector space.
Due to Lemma \ref{lem:annihilating and spanning weight spaces}~(1), $\big\lbrace f^{(k)}_{b} \big\rbrace$ satisfies the assumptions of Lemma \ref{lem:kernels and images generically}~(1), whereby
\begin{align*}
    &\big\lbrace w\in (V\one \otimes V\two_{b})_{\mu} ~|~ \xp_{i,k}\cdot w = 0 ~\forall\, {i\in I}, {k\in\Zbb} \big\rbrace
    \\
    &\subset
    \big\lbrace w\in (V\one \otimes V\two_{b})_{\mu} ~|~ \xp_{i,k}\cdot w = 0 ~\forall\, {i\in I}, {k\geq\delta_{i0}} \big\rbrace
    \\
    &= \bigcap_{k\geq\delta_{i0}} \ker f^{(k)}_{b}
    \\
    &= 0
\end{align*}
for all but finitely many $b\in\Cbb$, verifying condition (\ref{eqn:first irreducibility condition}).
\\

In order to prove condition (\ref{eqn:second irreducibility condition}), define
$\Tilde{x}^{-}_{i,k} = b^{1-h(k+\delta_{i0})} \xm_{i,k}$
whenever $k \leq -\delta_{i0}$, and
$\Tilde{x}^{-}_{i,k} = \xm_{i,k} C^{k} k_{i}^{-1}$
otherwise.
Let $\lbrace f^{(k)}_{b} \rbrace$ be the morphisms of free $\Cbb[b]$-modules
$(V\one \otimes V\two)_{\lambda\one + \lambda\two} \rightarrow (V\one \otimes V\two)_{\mu}$
given by the actions of all
$\Tilde{x}^{-}_{i_{1},k_{1}}\cdots\Tilde{x}^{-}_{i_{s},k_{s}}$
with
$\sum \alpha_{i_{j}} = \lambda\one + \lambda\two - \mu$,
in some order.
Again, we use Theorem \ref{thm:integrability of tensor products} and the independence of $(V\one \otimes V\two_{b})_{\mu}$ from $b$ to define these.
In this case, we have
\begin{align*}
    \sum_{k\geq 0} \mathrm{im} f^{(k)}_{b}
    &=
    \mathrm{Sp}_{\Cbb}
    \lbrace
    \Tilde{x}^{-}_{i_{1},k_{1}} \cdots \Tilde{x}^{-}_{i_{s},k_{s}}
    \cdot (v\one \otimes v\two)
    ~|~
    {\textstyle \sum} \, \alpha_{i_{j}} = \lambda\one + \lambda\two - \mu \rbrace
    \\
    &=
    \mathrm{Sp}_{\Cbb}
    \lbrace
    (\xm_{i_{1},k_{1}}\cdots\xm_{i_{s},k_{s}} \cdot v\one)
    \otimes
    (\xm_{i'_{1},k'_{1}}\cdots\xm_{i'_{r},k'_{r}} \cdot v\two)
    + O(b)
    \\
    &\qquad\qquad
    ~|~
    k_{j} \leq -\delta_{i0},\,
    k'_{j} > -\delta_{i0},\,
    {\textstyle \sum} \, \alpha_{i_{j}} + {\textstyle \sum} \, \alpha_{i'_{j}} = \lambda\one + \lambda\two - \mu \rbrace
    \\
    &=
    \bigoplus_{\mu\one+\mu\two=\mu}
    \mathrm{Sp}_{\Cbb}
    \lbrace
    \xm_{i_{1},k_{1}}\cdots\xm_{i_{s},k_{s}} \cdot v\one
    + O(b)
    ~|~
    k_{j} \leq -\delta_{i0},\,
    {\textstyle \sum} \, \alpha_{i_{j}} = \lambda\one + \lambda\two - \mu\one \rbrace
    \\[-9pt]
    &\qquad\qquad\qquad\quad\otimes
    \mathrm{Sp}_{\Cbb}
    \lbrace
    \xm_{i'_{1},k'_{1}}\cdots\xm_{i'_{r},k'_{r}} \cdot v\two
    + O(b)
    ~|~
    k'_{j} > -\delta_{i0},\,
    {\textstyle \sum} \, \alpha_{i'_{j}} = \lambda\one + \lambda\two - \mu\two \rbrace
\end{align*}
where the second equality holds because
\begin{itemize}
    \item $\xm_{i,k} C^{k} k_{i}^{-1} \mapsto \xm_{i,k} C^{k} k_{i}^{-1} \otimes 1 + O(b)$ for $k > -\delta_{i0}$,
    $\hfill \refstepcounter{equation}(\theequation)\label{eqn:technical fact 1}$
    \item $b^{1-h(k+\delta_{i0})} \xm_{i,k} \mapsto 1 \otimes \xm_{i,k} + O(b)$ for $k \leq -\delta_{i0}$,
    $\hfill \refstepcounter{equation}(\theequation)\label{eqn:technical fact 2}$
    \item $C$ and all $k_{i}^{-1}$ commute with every $\xm_{j,k}$ up to non-zero scalar factors,
    $\hfill \refstepcounter{equation}(\theequation)\label{eqn:technical fact 3}$
    \item $C$ and all $k_{i}^{-1}$ act by non-zero scalars on both $v\one$ and $v\two$.
    $\hfill \refstepcounter{equation}(\theequation)\label{eqn:technical fact 4}$
\end{itemize}
By Lemma \ref{lem:annihilating and spanning weight spaces}~(2) we therefore have
$\sum_{k\geq 0} \mathrm{im} f^{(k)}_{0}
= \bigoplus_{\mu\one+\mu\two=\mu}
V\one_{\mu\one} \otimes V\two_{\mu\two}
= (V\one \otimes V\two)_{\mu}$,
whereby $\lbrace f^{(k)}_{b} \rbrace$ satisfies the assumptions of Lemma \ref{lem:kernels and images generically}~(2) and thus
\begin{align*}
    &\mathrm{Sp}_{\Cbb}
    \lbrace \xm_{i_{1},k_{1}}\cdots\xm_{i_{s},k_{s}}\cdot (v\one \otimes v\two) ~|~
    {\textstyle \sum} \, \alpha_{i_{j}} = \lambda\one + \lambda\two - \mu \rbrace
    \\
    &=
    \mathrm{Sp}_{\Cbb}
    \lbrace \Tilde{x}^{-}_{i_{1},k_{1}}\cdots\Tilde{x}^{-}_{i_{s},k_{s}}\cdot (v\one \otimes v\two) ~|~
    {\textstyle \sum} \, \alpha_{i_{j}} = \lambda\one + \lambda\two - \mu \rbrace
    \\
    &= \sum \mathrm{im} f^{(k)}_{b}
    \\
    &= (V\one \otimes V\two_{b})_{\mu}
\end{align*}
for all but finitely many $b\in\Cbb$.
Note that the first equality here follows from (\ref{eqn:technical fact 1})--(\ref{eqn:technical fact 4}) above.
\end{proof}

\section{\texorpdfstring{$q$}{q}-characters} \label{section:q-characters}

The character morphism
$\ch : V \mapsto \sum_{\lambda\in\mathfrak{h}^{*}} \dim(V_{\lambda}) e_{\lambda}$
is a fundamental mechanism for approaching the representation theory of both Kac-Moody Lie algebras and their Drinfeld-Jimbo quantum groups, where for example it takes different values on each simple module inside $\Ocal_{\mathrm{int}}$.
\\

For quantum affine algebras, a finer $q$-character morphism -- introduced by Frenkel and Reshetikhin \cite{FR99} -- is required to distinguish the finite dimensional modules.
Furthermore, explicit formulas for the $q$-characters of various classes of representations can be computed via the iterative Frenkel-Mukhin algorithm \cite{FM01}.
These constructions provide a powerful combinatorial tool for studying the category $\Oaff$, are related to the cluster algebra structure on its Grothendieck ring \cite{HL16}, and may even be used in the computation of $R$-matrices \cites{DM25a,DM25b}.
\\

Hernandez \cite{Hernandez05} later generalised the $q$-character morphism to all quantum affinizations, in particular as a group homomorphism
$\chi_{q} : K(\Oaff) \rightarrow \Ycal$
to some commutative ring (see Section \ref{subsubsection:q-characters}).
An extension of Frenkel-Mukhin's algorithm -- first introduced in \cite{Hernandez04} -- is proved to be well-defined whenever $a_{ij}a_{ji} \leq 3$ for all $i\not= j$, as well as for the remaining quantum toroidal algebras of types $A_{1}^{(1)}$ (with $d_{1} = d_{2} = 2$) and $A_{2}^{(2)}$.
\\

Of course, unlike the case of quantum affine algebras, $\Oaff$ does not in general come naturally equipped with a tensor product, and thus $K(\Oaff)$ the structure of a ring.
However, $\chi_{q}$ is proved to be injective, with image equal to the intersection of kernels of certain screening operators.
Moreover, $\mathrm{im}(\chi_{q})$ is shown to be a \textit{subring} of $\Ycal$, and hence we may pull back its natural multiplication to a \textit{fusion product} on $K(\Oaff)$ -- namely, the product of module classes is again a module class.
In a later work, Hernandez \cite{Hernandez07} proved that this may in fact be induced from a fusion product $\ast_{f}$ of representations, defined via a deformation renormalisation process using a large category of modules for $\Uqaffs \otimes \Cbb(u)$.
\\

Our primary goal in this section is to prove that our tensor product on $\Oaff$ is compatible with $q$-characters, in particular
$\chi_{q}(V\one \otimes V\two) = \chi_{q}(V\one) \cdot \chi_{q}(V\two)$
for all modules $V\one,V\two\in\Oaff$.
Since $\chi_{q}$ is injective and
$\chi_{q}(V\one \ast_{f} V\two) = \chi_{q}(V\one) \cdot \chi_{q}(V\two)$,
we may deduce that $\otimes$ and $\ast_{f}$ give rise to the same product on the level of the Grothendieck ring $K(\Oaff)$.
\\

This is surprising and fascinating, since while Hernandez' work uses the \textit{vertically infinite} Drinfeld topological coproduct $\Delta_{u}$, our approach goes via the \textit{horizontally infinite} topological coproduct $\Dpsi$.
Our results therefore indicate that there perhaps exists a \textit{true} coproduct (and even Hopf algebra structure) for quantum toroidal algebras underlying everything, as has been found in the base cases $A_{1}^{(1)}$ and $A_{2}^{(1)}$ \cite{JZ22}. The author will investigate these directions in future work.
\\

Consider representations $V\one,V\two\in\Oaff$ whose weights are contained in finite unions of cones $D\one$ and $D\two$ respectively.
Recall from Section \ref{subsubsection:q-characters} that the $\ell$-weights of each $V\alphapower$ lie inside $QP_{\ell}^{+}$, and associated to every such $(\lambda\alphapower,\Psi\alphapower)$ are polynomials
$Q\alphapower_{i}(z) = \prod_{a\in\Cbb^{\times}} (1-az)^{\beta\alphapower_{i,a}}$
and
$R\alphapower_{i}(z) = \prod_{a\in\Cbb^{\times}} (1-az)^{\gamma\alphapower_{i,a}}$
with
\begin{align*}
    \sum_{s\geq 0} \Psi^{(\alpha),\pm}_{i,\pm s} z^{\pm s}
    =
    q_{i}^{\deg(Q\alphapower_{i})-\deg(R\alphapower_{i})} \frac{Q\alphapower_{i}(zq_{i}^{-1}) R\alphapower_{i}(zq_{i})}{Q\alphapower_{i}(zq_{i}) R\alphapower_{i}(zq_{i}^{-1})}
\end{align*}
and $Q_{i}(0) = R_{i}(0) = 1$ for all $i\in I$.
If we moreover define
$(\Psi\one\cdot\Psi\two)^{\pm}_{i,\pm s}
=
\sum_{r=0}^{s} \Psi^{(1),\pm}_{i,\pm r} \cdot \Psi^{(2),\pm}_{i,\pm (s-r)}$ so that
\begin{align*}
    Y_{\lambda\one,\Psi\one} Y_{\lambda\two,\Psi\two}
    =
    k_{\nu(\lambda\one + \lambda\two)}
    \prod_{\substack{i\in I \\ a\in\Cbb^{\times}}}
    Y_{i,a}^{(\beta\one_{i,a} + \beta\two_{i,a}) - (\gamma\one_{i,a} + \gamma\two_{i,a})}
    =
    Y_{\lambda\one + \lambda\two,\Psi\one\cdot\Psi\two},
\end{align*}
then the product of $q$-characters can be written as
\begin{align*}
    \chi_{q}(V\one) \cdot \chi_{q}(V\two)
    =
    \sum_{(\lambda\alphapower,\Psi\alphapower) \in QP_{\ell}^{+}}
    \dim\Big(V\one_{\lambda\one,\Psi\one} \otimes V\two_{\lambda\two,\Psi\two}\Big)
    Y_{\lambda\one + \lambda\two,\Psi\one\cdot\Psi\two}.
\end{align*}
On the other hand, using equation (\ref{eqn:Dpsi on C and k}) we may decompose any weight space into finite direct sums
\begin{align} \label{eqn:tensor l-weight space decomposition of weight space}
    (V\one \otimes V\two)_{\lambda}
    =
    \bigoplus V\one_{\lambda\one} \otimes V\two_{\lambda\two}
    =
    \bigoplus V\one_{\lambda\one,\Psi\one} \otimes V\two_{\lambda\two,\Psi\two}
\end{align}
over
$(\lambda\alphapower,\Psi\alphapower) \in QP_{\ell}^{+}$
with $\lambda\alphapower \in D\alphapower$ and $\lambda\one + \lambda\two = \lambda$.
Therefore, in order to prove the desired compatibility
$\chi_{q}(V\one \otimes V\two) = \chi_{q}(V\one) \cdot \chi_{q}(V\two)$
between our tensor product and the $q$-character morphism,
it suffices to verify the following.

\begin{prop} \label{prop:eigenvalues of phipmipms}
    For all $i\in I$ and $s\geq 0$, action of $\phi^{\pm}_{i,\pm s}$ on $(V\one \otimes V\two)_{\lambda}$ has eigenvalue
    $(\Psi\one\cdot\Psi\two)^{\pm}_{i,\pm s}$
    with multiplicity
    $\dim\Big(V\one_{\lambda\one,\Psi\one} \otimes V\two_{\lambda\two,\Psi\two}\Big)$.
\end{prop}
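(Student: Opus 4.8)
The plan is to establish Proposition \ref{prop:eigenvalues of phipmipms} by computing the action of each $\phi^{\pm}_{i,\pm s}$ on $V\one \otimes V\two$ with respect to the topological coproduct $\Dpsi$, and showing that it is block upper-triangular (in a suitable sense) with respect to the $\ell$-weight space decomposition (\ref{eqn:tensor l-weight space decomposition of weight space}), with diagonal blocks acting by the expected eigenvalue $(\Psi\one\cdot\Psi\two)^{\pm}_{i,\pm s}$. First I would pin down where $\Dpsi(\phi^{\pm}_{i,\pm s})$ lands. Since $\phi^{\pm}_{i,\pm s} \in \U_{0,\pm s\delta'}$ lies in $\Uv^{0}$, applying $\psi$ sends it into $\U_{0,0}$ --- in fact into $\Uh^{0}$ --- where it can be expressed via Jing's isomorphism in terms of the $\phi^{\pm}$ currents of the Drinfeld new realization of $\Uh$. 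Using the identity $\Delta_{1}(\phi^{\pm}_{i,r}) = \sum_{k+\ell=r} \phi^{\pm}_{i,k} \otimes \phi^{\pm}_{i,\ell} \mod \widehat{U}_{\mp} \otimes \widehat{U}_{\pm}$ quoted in the proof of Proposition \ref{prop:l-weights in QPl+}, together with $\psi \otimes \psi$, I would show that modulo terms landing in $\U_{\text{(strictly negative root part)}} \otimes \U_{\text{(strictly positive root part)}}$ (which raise the weight in the second factor and lower it in the first), $\Dpsi(\phi^{\pm}_{i,\pm s})$ acts on $V\one \otimes V\two$ as $\sum_{r=0}^{s} \phi^{\pm}_{i,\pm r} \otimes \phi^{\pm}_{i,\pm(s-r)}$ plus genuinely nilpotent correction terms.

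Next I would organise the bookkeeping. Fix a weight $\lambda$ and the finite decomposition (\ref{eqn:tensor l-weight space decomposition of weight space}) into blocks $V\one_{\lambda\one,\Psi\one} \otimes V\two_{\lambda\two,\Psi\two}$. Order the pairs $(\lambda\one,\lambda\two)$ by the partial order on $\lambda\one$ (equivalently, reverse order on $\lambda\two$). The correction terms in $\Dpsi(\phi^{\pm}_{i,\pm s})$ strictly increase the second-factor weight and strictly decrease the first-factor weight, so they move a block $V\one_{\lambda\one,\Psi\one} \otimes V\two_{\lambda\two,\Psi\two}$ strictly up this filtration; hence the action of $\phi^{\pm}_{i,\pm s}$ is block upper-triangular. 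On the diagonal block, $\phi^{\pm}_{i,\pm s}$ acts as the restriction of $\sum_{r=0}^{s} \phi^{\pm}_{i,\pm r} \otimes \phi^{\pm}_{i,\pm(s-r)}$; on $v \otimes w$ with $v \in V\one_{\lambda\one,\Psi\one}$, $w \in V\two_{\lambda\two,\Psi\two}$ the operators $\phi^{\pm}_{i,\pm r}$ are simultaneously generalised-eigen (they commute, and $C$ acts trivially so there is no $C$-twisting), with generalised eigenvalues $\Psi^{(1),\pm}_{i,\pm r}$ and $\Psi^{(2),\pm}_{i,\pm(s-r)}$ respectively. Summing over $r$ gives generalised eigenvalue $\sum_{r=0}^{s} \Psi^{(1),\pm}_{i,\pm r} \Psi^{(2),\pm}_{i,\pm(s-r)} = (\Psi\one\cdot\Psi\two)^{\pm}_{i,\pm s}$ on that block, whose dimension is $\dim(V\one_{\lambda\one,\Psi\one} \otimes V\two_{\lambda\two,\Psi\two})$. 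Since the full operator is block upper-triangular, its spectrum with multiplicities is the union over blocks, giving the claim. One should also check that the whole family $\{\phi^{\pm}_{i,\pm s}\}$ is simultaneously triangularised by one common refinement of this block filtration, so that the statement about joint $\ell$-weight spaces (rather than eigenvalue-by-eigenvalue) is legitimate --- but this follows since all corrections uniformly raise the same filtration.

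The main obstacle I anticipate is controlling the correction terms precisely enough: $\psi$ applied to $\phi^{\pm}_{i,\pm s}$ is a complicated element of $\Uh^{0}$ when written in standard generators, and pushing it through $\Delta_{1}$ and then $\psi \otimes \psi$ requires knowing that every summand other than the ``Cartan $\otimes$ Cartan'' part has strictly positive root-degree in one tensor slot (equivalently, strictly negative in the other), so that it genuinely moves weights and hence is strictly filtration-raising on modules in $\Oaff$. This is essentially the toroidal analogue of the Drinfeld-new-coproduct triangularity lemma used by Frenkel--Reshetikhin and invoked in Proposition \ref{prop:l-weights in QPl+}; I would either cite that structural fact for $\Uh$ (which is a genuine quantum affine algebra by the corollary to Theorem \ref{thm:psi}) and transport it through $\psi$, or reprove the needed triangularity directly from the graded-piece bound (\ref{eqn:image of new topological coproduct}) combined with (\ref{eqn:action on weight spaces}): any graded piece $\U_{\beta - \mu + \cdots} \otimes \U_{\mu + \cdots}$ with $\mu \neq 0$ shifts weights in both factors, and the $\mu = 0$ part with nonzero $\mathring{Q}$-degree in $\beta$ does too, leaving only $\U_{k\delta,\cdot} \otimes \U_{k'\delta,\cdot}$-type terms which are built from the $\phi^{\pm}$ and $h_{i,r}$; among those the $h$-contributions are handled exactly as in Proposition \ref{prop:action of phi+0m}. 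The second, more technical point will be verifying the diagonal-block eigenvalue computation is consistent with the definition of $(\Psi\one\cdot\Psi\two)^{\pm}$ at the level of generating series, i.e. that $\sum_{s} (\Psi\one\cdot\Psi\two)^{\pm}_{i,\pm s} z^{\pm s}$ is the product of the two series --- but this is the elementary Cauchy-product identity already recorded just above the proposition, so it is bookkeeping rather than a real difficulty.
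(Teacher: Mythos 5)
Your high-level strategy --- block upper-triangularity of $\phi^{\pm}_{i,\pm s}$ with respect to a filtration refining the $\ell$-weight block decomposition, diagonal blocks acting by $\sum_{r}\phi^{\pm}_{i,\pm r}\otimes\phi^{\pm}_{i,\pm(s-r)}$, then the Cauchy product --- is indeed the paper's. But two of your key steps fail as stated. First, the uniform treatment of all $i\in I$ breaks down at $i=0$: the element $\phi^{\pm}_{0,\pm s}$ lies in neither $\Uv$ (whose Drinfeld generators are indexed by $I_{0}$ only) nor $\Uh$ (it has nonzero vertical degree), so neither $\psi v=h\sigma$ nor $\psi h=v\sigma$ lets you compute $\psi(\phi^{\pm}_{0,\pm s})$ by transporting a known affine coproduct identity through Jing's isomorphism. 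This is exactly why the paper spends Section \ref{subsection:action of U0} and most of Section \ref{subsection:q-characters proof} on the node $0$, building $\Delta^{\psi}(h_{0,\pm1})$ and then $\Delta^{\psi}(\phi^{\pm}_{0,\pm s})$ by hand from $\Delta^{\psi}(\xpm_{0,0})$ and $\Delta^{\psi}(\xpm_{0,\pm1})$ via iterated commutators, and tracking genuinely new cross terms such as $(q_{0}^{-4}-1)(k_{0}\xp_{0,1}\otimes k_{0}^{-1}\xm_{0,0})$ that your description does not see. Deferring this to ``handled exactly as in Proposition \ref{prop:action of phi+0m}'' is circular, since that proposition is the hard part of the argument.

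Second, your filtration does not triangularize the corrections. (A small slip first: $\psi(\phi^{\pm}_{i,\pm s})\in\U_{\pm s\delta,0}$, not $\U_{0,0}$.) Writing $\psi(\phi^{+}_{i,s})$ for $i\in I_{0}$ as a combination of products $\xp_{i_{1},0}\cdots\xp_{i_{k},0}k_{i}^{-1}$ with $\sum\alpha_{i_{j}}=s\delta$ and expanding $(\psi\otimes\psi)\circ\Delta_{1}$, two distinct families of off-diagonal terms appear: affine-type corrections with strictly negative $\mathring{Q}$-degree in the first slot and strictly positive in the second (the $\widehat{U}_{-}\otimes\widehat{U}_{+}$ terms you describe), and toroidal corrections coming from the $\ell>0$ summands of $\Delta_{1}(\xp_{i_{j},0})$, which after $\psi\otimes\psi$ shift the first factor by $+\ell\delta$ and the second by $-\ell\delta$ --- that is, they \emph{raise} $\lambda\one$ and \emph{lower} $\lambda\two$, the opposite of your claimed direction. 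The two families move in opposite directions for the partial order on $\lambda\one$, so no single filtration by that order makes all corrections strictly triangular, and the assertion that ``all corrections uniformly raise the same filtration'' is false. The paper's decomposition (\ref{eqn:block decomposition}), ordered by $\langle\lambda\two,\Lambda^{\vee}_{0}\rangle$ and then $(\lambda\two,\theta)$, is chosen precisely to separate the two effects: the pairing with $\Lambda^{\vee}_{0}$ is blind to $\mathring{Q}$ and strictly drops under $-\ell\delta$, which removes the toroidal corrections, after which the $\ell=0$ part is identified with $(v\otimes v)\circ\Delta(\phi^{+}_{i,s})$ and the remaining affine corrections are absorbed by the Frenkel--Reshetikhin result. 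You would need to adopt some such two-step filtration for your argument to close.
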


Our proof strategy is to first decompose $(V\one \otimes V\two)_{\lambda}$ into blocks
\begin{align} \label{eqn:block decomposition}
    \bigoplus_{\langle \lambda\two,\Lambda^{\vee}_{0}\rangle = N_{0}}
    \bigoplus_{(\lambda\two,\theta) = N_{\theta}}
    V\one_{\lambda\one,\Psi\one} \otimes V\two_{\lambda\two,\Psi\two}
\end{align}
ordered increasingly in $N_{0}$ and $N_{\theta}$.
If we are able to show that every $\phi^{\pm}_{i,\pm s}$ acts via a block upper triangular matrix, with the diagonal blocks moreover describing the action of
$\sum_{r=0}^{s} \phi^{\pm}_{i,\pm r} \otimes \phi^{\pm}_{i,\pm (s-r)}$,
then Proposition \ref{prop:eigenvalues of phipmipms} follows.
The proof of this result is rather involved, and therefore deferred to Section \ref{subsection:q-characters proof}.
Nevertheless, it has the following consequences as outlined above.

\begin{thm}
    Our tensor product on $\Oaff$ is compatible with the $q$-character morphism, in particular
    $\chi_{q}(V\one \otimes V\two) = \chi_{q}(V\one) \cdot \chi_{q}(V\two)$
    for all representations $V\one,V\two\in\Oaff$.
\end{thm}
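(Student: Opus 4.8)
The plan is to reduce the statement to the purely combinatorial identity contained in Proposition~\ref{prop:eigenvalues of phipmipms}, and then apply the bookkeeping already set up in the paragraph preceding it. Concretely, I would proceed as follows. First, recall that the $q$-character of a module in $\Oaff$ is defined as $\chi_{q}(V) = \sum_{(\lambda,\Psi)\in QP_{\ell}^{+}} \dim(V_{\lambda,\Psi}) Y_{\lambda,\Psi}$, where $V_{\lambda,\Psi}$ is the generalised simultaneous eigenspace for the commuting operators $\phi^{\pm}_{i,\pm s}$ inside the finite-dimensional weight space $V_{\lambda}$. The key structural input is equation (\ref{eqn:Dpsi on C and k}), which gives $\Delta^{\psi}(k_{i}^{\pm 1}) = k_{i}^{\pm 1}\otimes k_{i}^{\pm 1}$ and hence the honest direct sum decomposition (\ref{eqn:tensor l-weight space decomposition of weight space}) of each weight space $(V\one\otimes V\two)_{\lambda}$ as $\bigoplus V\one_{\lambda\one,\Psi\one}\otimes V\two_{\lambda\two,\Psi\two}$ over pairs with $\lambda\one+\lambda\two = \lambda$. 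Because Theorem~\ref{thm:integrability of tensor products} guarantees $V\one\otimes V\two\in\Oaff$ with finite-dimensional weight spaces, this sum is finite and every term is finite-dimensional, so taking $q$-characters term by term is legitimate.

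The second step is to invoke Proposition~\ref{prop:eigenvalues of phipmipms}: on $(V\one\otimes V\two)_{\lambda}$, the operator $\phi^{\pm}_{i,\pm s}$ has $(\Psi\one\cdot\Psi\two)^{\pm}_{i,\pm s}$ as an eigenvalue with multiplicity exactly $\dim(V\one_{\lambda\one,\Psi\one}\otimes V\two_{\lambda\two,\Psi\two})$. Summing over all $i\in I$ and $s\geq 0$, this says precisely that the $\ell$-weight space of $V\one\otimes V\two$ attached to the $\ell$-weight $(\lambda\one+\lambda\two,\Psi\one\cdot\Psi\two)$ has dimension $\dim(V\one_{\lambda\one,\Psi\one}\otimes V\two_{\lambda\two,\Psi\two}) = \dim(V\one_{\lambda\one,\Psi\one})\cdot\dim(V\two_{\lambda\two,\Psi\two})$; one must check that distinct pairs $((\lambda\one,\Psi\one),(\lambda\two,\Psi\two))$ giving the same product $\ell$-weight have their contributions added, which is exactly what the block-triangular form in (\ref{eqn:block decomposition}) and the identification of the diagonal blocks with the action of $\sum_{r=0}^{s}\phi^{\pm}_{i,\pm r}\otimes\phi^{\pm}_{i,\pm(s-r)}$ provide. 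Combined with the already-recorded monomial identity $Y_{\lambda\one,\Psi\one}Y_{\lambda\two,\Psi\two} = Y_{\lambda\one+\lambda\two,\Psi\one\cdot\Psi\two}$, we obtain
\begin{align*}
    \chi_{q}(V\one\otimes V\two)
    &=
    \sum_{(\lambda\alphapower,\Psi\alphapower)\in QP_{\ell}^{+}}
    \dim\!\Big(V\one_{\lambda\one,\Psi\one}\otimes V\two_{\lambda\two,\Psi\two}\Big)
    Y_{\lambda\one+\lambda\two,\Psi\one\cdot\Psi\two}
    \\
    &=
    \Bigg(\sum_{(\lambda\one,\Psi\one)} \dim(V\one_{\lambda\one,\Psi\one}) Y_{\lambda\one,\Psi\one}\Bigg)
    \cdot
    \Bigg(\sum_{(\lambda\two,\Psi\two)} \dim(V\two_{\lambda\two,\Psi\two}) Y_{\lambda\two,\Psi\two}\Bigg)
    \\
    &=
    \chi_{q}(V\one)\cdot\chi_{q}(V\two),
\end{align*}
where the middle step is just distributing the product of the two sums and is valid as an identity in $\Ycal$ because each factor is supported on a finite union of cones (by membership in $\Oaff$), so the product is well-defined in $\mathcal{M}^{\Zbb}$ and its support again lies in such a union.

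The main obstacle, and the only genuine content, is Proposition~\ref{prop:eigenvalues of phipmipms} itself, deferred to Section~\ref{subsection:q-characters proof}: showing that each $\phi^{\pm}_{i,\pm s}$ acts on $(V\one\otimes V\two)_{\lambda}$ by a matrix that is block upper triangular with respect to the ordering of (\ref{eqn:block decomposition}) by the pairs $(N_{0},N_{\theta}) = (\langle\lambda\two,\Lambda^{\vee}_{0}\rangle, (\lambda\two,\theta))$, \emph{and} that the diagonal blocks realise the ``naive'' coproduct $\phi^{\pm}_{i,\pm s}\mapsto \sum_{r} \phi^{\pm}_{i,\pm r}\otimes\phi^{\pm}_{i,\pm(s-r)}$. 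This requires unravelling the action of $\Delta^{\psi}(\phi^{\pm}_{i,\pm s})$: one uses $\Delta^{\psi} = (\psi\otimes\psi)\circ\Delta_{u}\circ\psi$ together with the grading transformation (\ref{eqn:psi on graded pieces}) and the image formula (\ref{eqn:image of new topological coproduct}) to argue that all but finitely many summands vanish on a given weight space, the top graded piece reproduces $\phi^{\pm}_{i,\pm k}\otimes\phi^{\pm}_{i,\pm\ell}$, and every correction term strictly increases one of $N_{0},N_{\theta}$ on the second tensor factor (equivalently, lands in a strictly higher block). Establishing this degree/triangularity control is where the detailed work of Sections~\ref{subsection:action of Uv} and \ref{subsection:action of U0} --- in particular Proposition~\ref{prop:action of phi+0m} and the explicit coproduct formulae there --- is needed, and it is the step I expect to be technically delicate rather than the assembly above.
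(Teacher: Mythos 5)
Your proposal is correct and follows essentially the same route as the paper: reduce to the eigenvalue/multiplicity statement of Proposition \ref{prop:eigenvalues of phipmipms} via the weight-space factorisation (\ref{eqn:tensor l-weight space decomposition of weight space}) coming from $\Delta^{\psi}(k_{i}^{\pm 1}) = k_{i}^{\pm 1}\otimes k_{i}^{\pm 1}$, then assemble the $q$-characters using $Y_{\lambda\one,\Psi\one}Y_{\lambda\two,\Psi\two} = Y_{\lambda\one+\lambda\two,\Psi\one\cdot\Psi\two}$, with the genuine work being the block-triangularity of $\phi^{\pm}_{i,\pm s}$ relative to (\ref{eqn:block decomposition}) and the identification of the diagonal blocks with $\sum_{r}\phi^{\pm}_{i,\pm r}\otimes\phi^{\pm}_{i,\pm(s-r)}$. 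The only discrepancy is that you say the correction terms strictly \emph{increase} $N_{0}$ or $N_{\theta}$ on the second factor whereas the paper's computation has them strictly \emph{decrease} these quantities; this is an immaterial orientation convention since either way one gets a strict triangular filtration from which the generalised eigenvalues are read off the diagonal blocks.
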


Next, consider the Grothendieck group $K(\Oaff)$ as a ring, with multiplication extended linearly from our tensor product.

\begin{cor}
    \begin{itemize}
        \item The $q$-character morphism
        $\chi_{q} : K(\Oaff) \rightarrow \Ycal$
        is a ring homomorphism.
        \item Our tensor product $\otimes$ and Hernandez' fusion product $\ast_{f}$ give rise to the same product on $K(\Oaff)$.
        \item Our tensor product is commutative on the level of $K(\Oaff)$.
    \end{itemize}
\end{cor}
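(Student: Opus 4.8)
The plan is to deduce the corollary almost entirely from the theorem just stated together with the standard properties of the $q$-character morphism recalled in Section~\ref{subsubsection:q-characters}. The three bullet points are of decreasing depth, so I would dispatch them in turn.

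For the first bullet, recall that $\chi_q$ already depends only on isomorphism classes and is additive on short exact sequences, hence extends to a group homomorphism $K(\Oaff) \to \Ycal$; this was stated before the theorem. With $K(\Oaff)$ now given its ring structure via $\otimes$, what remains is exactly multiplicativity on classes of objects, and that is the content of the displayed identity $\chi_q(V\one \otimes V\two) = \chi_q(V\one)\cdot\chi_q(V\two)$ in the theorem. Since products of classes of objects span $K(\Oaff)$ as an abelian group and both sides are bilinear, multiplicativity on general elements follows by linearity. I would also note in passing that Theorem~\ref{thm:integrability of tensor products} guarantees $V\one\otimes V\two\in\Oaff$, so the product is well-defined on $K(\Oaff)$ in the first place; and that associativity of the ring multiplication comes from the coassociativity of $\Delta^\psi$ inherited from Theorem~\ref{thm:Damiani topological coproduct}, while the unit is the class of the trivial module with $\varepsilon$ as counit.

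For the second bullet, Hernandez' fusion product $\ast_f$ is already known to satisfy $\chi_q(V\one \ast_f V\two) = \chi_q(V\one)\cdot\chi_q(V\two)$ (recalled in Section~\ref{section:q-characters}), so by the theorem $\chi_q(V\one \otimes V\two) = \chi_q(V\one \ast_f V\two)$ for all $V\one,V\two\in\Oaff$. Since $\chi_q : K(\Oaff)\to\Ycal$ is injective (Hernandez' proposition quoted in Section~\ref{subsubsection:q-characters}), the classes $[V\one\otimes V\two]$ and $[V\one\ast_f V\two]$ coincide in $K(\Oaff)$; extending bilinearly, the two induced products on $K(\Oaff)$ agree. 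The third bullet is then immediate: $\Ycal$ is a commutative ring, so $\chi_q([V\one\otimes V\two]) = \chi_q([V\one])\chi_q([V\two]) = \chi_q([V\two])\chi_q([V\one]) = \chi_q([V\two\otimes V\one])$, and injectivity of $\chi_q$ forces $[V\one\otimes V\two] = [V\two\otimes V\one]$ in $K(\Oaff)$. (One may also invoke Corollary~\ref{cor:irreducible switch factors} in the irreducible case, but the $q$-character argument handles all of $\Oaff$ uniformly.)

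The only genuine input is the theorem, whose proof is deferred to Section~\ref{subsection:q-characters proof}; everything in this corollary is formal bookkeeping once that is in hand. Consequently I do not anticipate a real obstacle here — the subtle point, if any, is simply making sure that the ring axioms for $(K(\Oaff),\otimes)$ are verified (well-definedness, associativity, unitality), which follow respectively from Theorem~\ref{thm:integrability of tensor products}, the coassociativity in Theorem~\ref{thm:Damiani topological coproduct}, and the counit property, none of which requires new work.
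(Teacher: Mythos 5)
Your proposal is correct and follows essentially the same route as the paper: the first two bullets are exactly the argument sketched in the paragraph preceding the corollary (multiplicativity from the main theorem, then injectivity of $\chi_{q}$ together with Hernandez' formula $\chi_{q}(V\one \ast_{f} V\two) = \chi_{q}(V\one)\cdot\chi_{q}(V\two)$ to identify the two products), and the third bullet via commutativity of $\Ycal$ plus injectivity is the paper's primary argument, with the $R$-matrix alternative you mention relegated to a remark there as well.
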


\begin{rmk}
    In fact, the commutativity of our tensor product on direct sums of tensor products of simple modules in $\Oaff$ may alternatively be deduced from the existence of meromorphic $R$-matrices -- see Corollary \ref{cor:quantum toroidal R-matrices}.
\end{rmk}

\subsection{Proof of Proposition \ref{prop:eigenvalues of phipmipms}} \label{subsection:q-characters proof}

Our work here has a similar progression to that of Sections \ref{subsection:action of Uv} and \ref{subsection:action of U0}, however certain steps are slightly more delicate since the weight spaces treated are arbitrary rather than simply highest weight.
Recall that we consider each $(V\one \otimes V\two)_{\lambda}$ with respect to the block decomposition (\ref{eqn:block decomposition}), and let us begin with the case $i\in I_{0}$.

\begin{proof}[Proof of Proposition \ref{prop:eigenvalues of phipmipms} for $\phi^{+}_{i,s}$ with $i\in I_{0}$]
From our proof of Lemma \ref{lem:images of Utor pm} we know that any $\psi(\phi^{+}_{i,s}) \in \U_{s\delta,0}$ with $i\in I_{0}$ equals a linear combination
$\sum \xi \xp_{i_{1},0} \dots \xp_{i_{k},0} k_{i}^{-1}$
with every $\sum_{j=1}^{k} \alpha_{i_{j}} = s\delta$, which is then sent by $\Delta_{1}$ to
\begin{align*}
    \sum \xi
    \left(
    \xp_{i_{1},0} \otimes 1 + \sum_{\ell_{1}\geq 0} C^{-\ell_{1}} \phi^{+}_{i_{1},\ell_{1}} \otimes \xp_{i_{1},-\ell_{1}}
    \right)
    \dots
    \left(
    \xp_{i_{k},0} \otimes 1 + \sum_{\ell_{k}\geq 0} C^{-\ell_{k}} \phi^{+}_{i_{k},\ell_{k}} \otimes \xp_{i_{k},-\ell_{k}}
    \right)
    (k_{i}^{-1} \otimes k_{i}^{-1}).
\end{align*}
Expanding out the brackets, each summand is equal to a product of terms
$\xp_{i_{j},0} \otimes 1$ for $j\in J_{1}$
and
$C^{-\ell_{j}} \phi^{+}_{i_{j},\ell_{j}} \otimes \xp_{i_{j},-\ell_{j}}$ for $j\in J_{2}$,
together with $\xi\in\Cbb^{\times}$ and $k_{i}^{-1} \otimes k_{i}^{-1}$.
If we furthermore set
$J_{0} = \lbrace j\in [k] ~|~ i_{j} = 0 \rbrace$
and $\ell = \sum_{j=1}^{k} \ell_{j}$, then applying $\psi\otimes\psi$ maps this into
\begin{align*}
    \U_{\sum_{j\in J_{1}\setminus J_{0}} \alpha_{i_{j}}
    - \lvert J_{0}\cap J_{1} \rvert \theta + \ell\delta,
    \lvert J_{0}\cap J_{1} \rvert \delta'}
    \otimes
    \U_{\sum_{j\in J_{2}\setminus J_{0}} \alpha_{i_{j}}
    - \lvert J_{0}\cap J_{2} \rvert \theta - \ell\delta,
    \lvert J_{0}\cap J_{2} \rvert \delta'}
\end{align*}
by equation (\ref{eqn:psi on graded pieces}).
An element of the above sends any
$V\one_{\lambda\one,\Psi\one} \otimes V\two_{\lambda\two,\Psi\two}$
to
\begin{align} \label{eqn:l-weight space image of phi+is}
    V\one_{\lambda\one
    + \sum_{j\in J_{1}\setminus J_{0}} \alpha_{i_{j}}
    - \lvert J_{0}\cap J_{1} \rvert \theta + \ell\delta}
    \otimes
    V\two_{\lambda\two
    + \sum_{j\in J_{2}\setminus J_{0}} \alpha_{i_{j}}
    - \lvert J_{0}\cap J_{2} \rvert \theta - \ell\delta}
\end{align}
using (\ref{eqn:action on weight spaces}), and thus acts via a block upper triangular matrix on $(V\one \otimes V\two)_{\lambda}$.
This is moreover strictly block upper triangular if $\ell > 0$ since
$\langle \delta,\Lambda^{\vee}_{0}\rangle = 1$.
Therefore, the associated block diagonal matrix describes the action of
\begin{align*}
    &(\psi\otimes\psi)
    \left( \sum \xi
    (\xp_{i_{1},0} \otimes 1 + k_{i_{1}} \otimes \xp_{i_{1},0})
    \dots
    (\xp_{i_{k},0} \otimes 1 + k_{i_{k}} \otimes \xp_{i_{k},0})
    (k_{i}^{-1} \otimes k_{i}^{-1})
    \right)
    \\
    &=
    (\psi\otimes\psi) \circ (h \otimes h) \circ \Delta_{+}
    \left( \sum \xi \xp_{i_{1}} \dots \xp_{i_{k}} k_{i}^{-1} \right)
    \\
    &=
    (v \otimes v) \circ (\sigma\otimes\sigma) \circ \Delta_{+} \circ \sigma (\phi^{+}_{i,s})
    \\
    &=
    (v \otimes v) \circ \Delta (\phi^{+}_{i,s})
\end{align*}
where the penultimate equality is due to Theorem \ref{thm:psi}.
The corresponding result for quantum affine algebras -- see for example \cite{FR99}*{Rmk. 2.6} -- then implies that $\phi^{+}_{i,s}$ acts on
$V\one_{\lambda\one,\Psi\one} \otimes V\two_{\lambda\two,\Psi\two}$
with eigenvalue $(\Psi\one\cdot\Psi\two)^{+}_{i,s}$ for all $i\in I_{0}$ and $s\geq 0$ as desired.
\end{proof}

\begin{proof}[Proof of Proposition \ref{prop:eigenvalues of phipmipms} for $\phi^{-}_{i,-s}$ with $i\in I_{0}$]
Similarly, we have
$\psi(\phi^{-}_{i,-s}) = \sum \xi \xm_{i_{1},0} \dots \xm_{i_{k},0} k_{i}$
with every $\sum_{j=1}^{k} \alpha_{i_{j}} = s\delta$ by Lemma \ref{lem:images of Utor pm}.
It follows that summands of $\Delta^{\psi}(\phi^{-}_{i,-s})$ are ordered products of factors
$1 \otimes \xbm_{i_{j},0}$ for $j\in J_{1}$
and
$\xbm_{i_{j},-\ell_{j}} \otimes \Cb^{-\ell_{j}} \boldsymbol{\phi}^{+}_{i_{j},\ell_{j}}$ ($\ell_{j}\leq 0$) for $j\in J_{2}$,
together with $\xi\in\Cbb^{\times}$ and $k_{i}^{-1} \otimes k_{i}^{-1}$.
These lie inside
\begin{align*}
    \U_{- \sum_{j\in J_{2}\setminus J_{0}} \alpha_{i_{j}}
    + \lvert J_{0}\cap J_{2} \rvert \theta - \ell\delta,
    - \lvert J_{0}\cap J_{2} \rvert \delta'}
    \otimes
    \U_{- \sum_{j\in J_{1}\setminus J_{0}} \alpha_{i_{j}}
    + \lvert J_{0}\cap J_{1} \rvert \theta + \ell\delta,
    - \lvert J_{0}\cap J_{1} \rvert \delta'}
\end{align*}
where again
$J_{0} = \lbrace j\in [k] ~|~ i_{j} = 0 \rbrace$
and $\ell = \sum_{j=1}^{k} \ell_{j}$.
The rest of the proof is then essentially as above.
\end{proof}

Dealing with $\phi^{\pm}_{0,\pm s}$ requires extra care.
Recall from the proofs of Propositions \ref{prop:action of xpm00} and \ref{prop:action of xpm0pm1} that:
\begin{align}
    \Delta^{\psi}(\xp_{0,1})
    &=
    \xp_{0,1} \otimes 1 + (C k_{0})^{-1} \otimes \xp_{0,1}
    + \sum_{\ell>0} \xi_{\ell\delta,0} \otimes \xi_{-\theta + (1-\ell)\delta, \delta'}
    \label{eqn:Dpsi(xp01)}
    \\
    \Delta^{\psi}(\xm_{0,-1})
    &=
    1 \otimes \xm_{0,-1} + \xm_{0,-1} \otimes C k_{0}
    + \sum_{\ell>0} \xi_{\theta - (1-\ell)\delta, -\delta'} \otimes \xi_{-\ell\delta,0}
    \label{eqn:Dpsi(xm0-1)}
    \\
    \begin{split}
    \Delta^{\psi}(\xp_{0,0})
    &=
    \textstyle
    (k_{0}^{-1} \otimes k_{0}^{-1})
    \Big[ \,
    1\otimes\xbm_{i_{1},1} + \xbm_{i_{1},1}\otimes k_{\delta}^{-1} k_{i_{1}}
    + \sum_{\ell>0} \xi_{-\alpha_{i_{1}}+(\ell+1)\delta,0}\otimes \xi_{-\ell\delta,0} \, ,
    \\
    &
    \hspace{\widthof{$)=(k_{0}^{-1} \otimes k_{0}^{-1})
    \Big[ \,
    1\otimes\xbm_{i_{1},1} + \xbm_{i_{1},1}\otimes k_{\delta}^{-1} k_{i_{1}}
    + \sum_{\ell>0} \xi_{-\alpha_{i_{1}}+(\ell+1)\delta,0}\otimes \xi_{-\ell\delta,0}$}
    -
    \widthof{$1\otimes\xm_{i_{2},0} + \xm_{i_{2},0}\otimes k_{i_{2}}
    + \sum_{\ell>0} \xi_{-\alpha_{i_{2}}+\ell\delta,0}\otimes \xi_{-\ell\delta,0}$}}
    \textstyle
    1\otimes\xm_{i_{2},0} + \xm_{i_{2},0}\otimes k_{i_{2}}
    + \sum_{\ell>0} \xi_{-\alpha_{i_{2}}+\ell\delta,0}\otimes \xi_{-\ell\delta,0} \, ,
    \\
    &
    \hspace{\widthof{$)=(k_{0}^{-1} \otimes k_{0}^{-1})
    \Big[ \,
    1\otimes\xbm_{i_{1},1} + \xbm_{i_{1},1}\otimes k_{\delta}^{-1} k_{i_{1}}
    + \sum_{\ell>0} \xi_{-\alpha_{i_{1}}+(\ell+1)\delta,0}\otimes \xi_{-\ell\delta,0}$}
    -
    \widthof{$\dots,
    1\otimes\xm_{i_{h-1},0} + \xm_{i_{h-1},0}\otimes k_{i_{h-1}}
    + \sum_{\ell>0} \xi_{-\alpha_{i_{h-1}}+\ell\delta,0}\otimes \xi_{-\ell\delta,0}$}}
    \textstyle
    \dots,
    1\otimes\xm_{i_{h-1},0} + \xm_{i_{h-1},0}\otimes k_{i_{h-1}}
    + \sum_{\ell>0} \xi_{-\alpha_{i_{h-1}}+\ell\delta,0}\otimes \xi_{-\ell\delta,0}
    \, \Big]'_{q^{\epsilon_{h-2}}\dots q^{\epsilon_{1}}}
    \label{eqn:Dpsi(xp00)}
    \end{split}
    \\
    \begin{split}
    \Delta^{\psi}(\xm_{0,0})
    &=
    \textstyle
    a (-q)^{-\epsilon}
    \Big[ \,
    \xbp_{i_{1},-1}\otimes 1 + k_{\delta} k_{i_{1}}^{-1} \otimes \xbp_{i_{1},-1}
    + \sum_{\ell>0} \xi_{\ell\delta,0} \otimes \xi_{\alpha_{i_{1}}-(1+\ell)\delta,0} \, ,
    \\
    &
    \hspace{\widthof{$) = \textstyle
    a (-q)^{-\epsilon}
    \Big[ \,
    \xbp_{i_{1},-1}\otimes 1 + k_{\delta} k_{i_{1}}^{-1} \otimes \xbp_{i_{1},-1}
    + \sum_{\ell>0} \xi_{\ell\delta,0} \otimes \xi_{\alpha_{i_{1}}-(1+\ell)\delta,0}$}-\widthof{$) \textstyle
    \xp_{i_{2},0}\otimes 1 + k_{i_{2}}^{-1} \otimes \xp_{i_{2},0}
    + \sum_{\ell>0} \xi_{\ell\delta,0} \otimes \xi_{\alpha_{i_{2}}-\ell\delta,0}$}}
    \textstyle
    \xp_{i_{2},0}\otimes 1 + k_{i_{2}}^{-1} \otimes \xp_{i_{2},0}
    + \sum_{\ell>0} \xi_{\ell\delta,0} \otimes \xi_{\alpha_{i_{2}}-\ell\delta,0} \, ,
    \\
    &
    \hspace{\widthof{$) = \textstyle
    a (-q)^{-\epsilon}
    \Big[ \,
    \xbp_{i_{1},-1}\otimes 1 + k_{\delta} k_{i_{1}}^{-1} \otimes \xbp_{i_{1},-1}
    + \sum_{\ell>0} \xi_{\ell\delta,0} \otimes \xi_{\alpha_{i_{1}}-(1+\ell)\delta,0}$}-\widthof{$) \textstyle
    \dots,
    \xp_{i_{\hslash-1},0}\otimes 1 + k_{i_{\hslash-1}}^{-1} \otimes \xp_{i_{\hslash-1},0}
    + \sum_{\ell>0} \xi_{\ell\delta,0} \otimes \xi_{\alpha_{i_{\hslash-1}}-\ell\delta,0}$}}
    \textstyle
    \dots,
    \xp_{i_{\hslash-1},0}\otimes 1 + k_{i_{\hslash-1}}^{-1} \otimes \xp_{i_{\hslash-1},0}
    + \sum_{\ell>0} \xi_{\ell\delta,0} \otimes \xi_{\alpha_{i_{\hslash-1}}-\ell\delta,0}
    \, \Big]'_{q^{\epsilon_{\hslash-2}}\dots q^{\epsilon_{1}}}
    (k_{0} \otimes k_{0})
    \label{eqn:Dpsi(xm00)}
    \end{split}
\end{align}
where we employ the $\xi_{\beta+k\delta,\ell\delta'}$ notation introduced at the start of Section \ref{subsection:action of U0}.
Inputting (\ref{eqn:Dpsi(xp01)}) and (\ref{eqn:Dpsi(xm00)}) into
$\Delta^{\psi}(h_{0,1}) = (k_{0}^{-1} \otimes k_{0}^{-1})
[\Delta^{\psi}(\xp_{0,1}),\Delta^{\psi}(\xm_{0,0})]$ and expanding everything out, let us consider the action of each summand containing an $\ell>0$ factor on some arbitrary
$V\one_{\lambda\one,\Psi\one} \otimes V\two_{\lambda\two,\Psi\two}$
in
$(V\one \otimes V\two)_{\lambda}$.
From (\ref{eqn:action on weight spaces}), either
\begin{itemize}
    \item it strictly decreases
    $\langle \lambda\two,\Lambda^{\vee}_{0}\rangle$,
    \item its only $\ell>0$ factor is
    $\xi_{\delta,0} \otimes \xi_{-\theta, \delta'}$
    from $\Delta^{\psi}(\xp_{0,1})$, and it moreover contains $\xbp_{i_{1},-1}\otimes 1$, hence it fixes
    $\langle \lambda\two,\Lambda^{\vee}_{0}\rangle$
    but decreases $(\lambda\two,\theta)$,
    \item its only $\ell>0$ factor is some
    $\xi_{\delta,0} \otimes \xi_{\alpha_{i_{j}}-\delta,0}$
    from $\Delta^{\psi}(\xm_{0,0})$, and it moreover contains $\xbp_{i_{1},-1}\otimes 1$ and $(C k_{0})^{-1} \otimes \xp_{0,1}$, hence it fixes
    $\langle \lambda\two,\Lambda^{\vee}_{0}\rangle$
    but decreases $(\lambda\two,\theta)$,
\end{itemize}
and therefore the summand must act via a strictly block upper triangular matrix with respect to (\ref{eqn:block decomposition}).
The remainder of $\Delta^{\psi}(h_{0,1})$ is then given by
\begin{align*}
    \mathcal{D}
    =
    (k_{0}^{-1} \otimes k_{0}^{-1})
    [\xp_{0,1} \otimes 1 + (C k_{0})^{-1} \otimes \xp_{0,1},
    a (-q)^{-\epsilon} \mathcal{D}' (k_{0} \otimes k_{0})]
\end{align*}
where
\begin{align*}
    \mathcal{D}'
    &=
    [\xbp_{i_{1},-1}\otimes 1 + k_{\delta} k_{i_{1}}^{-1} \otimes \xbp_{i_{1},-1},
    \xp_{i_{2},0}\otimes 1 + k_{i_{2}}^{-1} \otimes \xp_{i_{2},0},
    \dots,
    \xp_{i_{\hslash-1},0}\otimes 1 + k_{i_{\hslash-1}}^{-1} \otimes \xp_{i_{\hslash-1},0}]'_{q^{\epsilon_{\hslash-2}}\dots q^{\epsilon_{1}}}
    \\
    &=
    (h\sigma)^{\otimes 2}
    \underbrace{[\xp_{i_{\hslash-1},0}\otimes 1 + k_{i_{\hslash-1}} \otimes \xp_{i_{\hslash-1},0},
    \dots,
    \xp_{i_{2},0}\otimes 1 + k_{i_{2}} \otimes \xp_{i_{2},0},
    \xp_{i_{1},-1}\otimes 1 + C^{-1} k_{i_{1}} \otimes \xp_{i_{1},-1}]_{q^{\epsilon_{1}}\dots q^{\epsilon_{\hslash-2}}}}_{\mathcal{D}''}.
\end{align*}
Since
$\epsilon_{j} = (\alpha_{i_{1}}+\dots +\alpha_{i_{j}}, \alpha_{i_{j+1}})$
and
$[a\otimes b,c\otimes d]_{u} = [a,b]_{u} \otimes cd$
whenever $[c,d] = 0$, we can recursively show that $\mathcal{D}''$ is equal to
\begin{align*}
    [\xp_{i_{\hslash -1},0},\dots,\xp_{i_{2},0},\xp_{i_{1},-1}]_{q^{\epsilon_{1}}\dots q^{\epsilon_{\hslash -2}}}
    \otimes 1
    +
    C^{-1} k_{\theta} \otimes
    [\xp_{i_{\hslash -1},0},\dots,\xp_{i_{2},0},\xp_{i_{1},-1}]_{q^{\epsilon_{1}}\dots q^{\epsilon_{\hslash -2}}}
\end{align*}
plus a sum of elementary tensors with $\xp_{i_{1},-1}$ in the right factor and some $\xp_{i_{j},0}$ in the left factor.
We include the first step to give the idea:
\begin{align*}
    &[\xp_{i_{2},0}\otimes 1 + k_{i_{2}} \otimes \xp_{i_{2},0},
    \xp_{i_{1},-1}\otimes 1 + C^{-1} k_{i_{1}} \otimes \xp_{i_{1},-1}]_{q^{\epsilon_{1}}}
    \\
    &=
    [\xp_{i_{2},0}, \xp_{i_{1},-1}]_{q^{\epsilon_{1}}} \otimes 1
    +
    C^{-1} k_{i_{1}} k_{i_{2}} \otimes
    [\xp_{i_{2},0}, \xp_{i_{1},-1}]_{q^{\epsilon_{1}}}
    +
    [k_{i_{2}}, \xp_{i_{1},-1}]_{q^{\epsilon_{1}}}
    \otimes \xp_{i_{2},0}
    +
    C^{-1} [\xp_{i_{2},0}, k_{i_{1}}]_{q^{\epsilon_{1}}}
    \otimes \xp_{i_{1},-1}
    \\
    &=
    [\xp_{i_{2},0}, \xp_{i_{1},-1}]_{q^{\epsilon_{1}}} \otimes 1
    +
    C^{-1} k_{i_{1}} k_{i_{2}} \otimes
    [\xp_{i_{2},0}, \xp_{i_{1},-1}]_{q^{\epsilon_{1}}}
    +
    (q^{-\epsilon_{1}} - q^{\epsilon_{1}}) C^{-1} k_{i_{1}} \xp_{i_{2},0}
    \otimes \xp_{i_{1},-1}
\end{align*}
It then follows that
$a (-q)^{-\epsilon} \mathcal{D}' (k_{0} \otimes k_{0})$
equals
$\xm_{0,0} \otimes k_{0} + k_{0}^{2} \otimes \xm_{0,0}$
plus a sum of elementary tensors with $\xbp_{i_{1},-1}$ in the right factor and some $\xp_{i_{j},0}$ in the left factor.
In turn, from our proof of Proposition \ref{prop:action of hpm0pm1}, we have that
\begin{align} \label{eqn:h01 block action}
    \mathcal{D}
    =
    h_{0,1}\otimes 1 + C^{-1}\otimes h_{0,1}
    + (q_{0}^{-4} - 1)(k_{0}\xp_{0,1}\otimes k_{0}^{-1}\xm_{0,0})
\end{align}
plus a sum of elementary tensors that act via strictly block upper triangular matrices with respect to our decomposition of
$(V\one \otimes V\two)_{\lambda}$.
Note that this result generalises Proposition \ref{prop:action of hpm0pm1}.

\begin{cor}
    The action of $h_{0,1}$ on any $(V\one \otimes V\two)_{\lambda}$ is block upper triangular with respect to the decomposition (\ref{eqn:block decomposition}), and its diagonal blocks describe the action of
    $h_{0,1} \otimes 1 + C^{-1} \otimes h_{0,1}$.
\end{cor}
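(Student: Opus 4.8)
The plan is to read off this corollary directly from the displayed computation that immediately precedes it, namely the identity for $\mathcal{D} = \Delta^{\psi}(h_{0,1})$ obtained by expanding $\Delta^{\psi}(h_{0,1}) = (k_{0}^{-1} \otimes k_{0}^{-1})[\Delta^{\psi}(\xp_{0,1}),\Delta^{\psi}(\xm_{0,0})]$. First I would recall that we have already established two facts: (i) every summand of $\Delta^{\psi}(h_{0,1})$ containing an $\ell > 0$ factor acts via a strictly block upper triangular matrix with respect to the decomposition (\ref{eqn:block decomposition}), by the case analysis using (\ref{eqn:action on weight spaces}) (it either strictly decreases $\langle \lambda\two,\Lambda^{\vee}_{0}\rangle$ or fixes it while strictly decreasing $(\lambda\two,\theta)$); and (ii) the remaining part $\mathcal{D}$ equals $h_{0,1}\otimes 1 + C^{-1}\otimes h_{0,1} + (q_{0}^{-4} - 1)(k_{0}\xp_{0,1}\otimes k_{0}^{-1}\xm_{0,0})$ plus a further sum of elementary tensors that are themselves strictly block upper triangular. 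Combining these two observations, $h_{0,1}$ acts on $(V\one \otimes V\two)_{\lambda}$ by $h_{0,1}\otimes 1 + C^{-1}\otimes h_{0,1} + (q_{0}^{-4} - 1)(k_{0}\xp_{0,1}\otimes k_{0}^{-1}\xm_{0,0})$ plus a strictly block upper triangular matrix.

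It then remains to check that the extra term $(q_{0}^{-4} - 1)(k_{0}\xp_{0,1}\otimes k_{0}^{-1}\xm_{0,0})$ is itself strictly block upper triangular, so that it contributes nothing to the diagonal blocks. This is immediate from (\ref{eqn:action on weight spaces}): the factor $k_{0}^{-1}\xm_{0,0}$ lies in $\U_{-\alpha_{0},0}$, so it sends $V\two_{\lambda\two,\Psi\two}$ into $V\two_{\lambda\two - \alpha_{0}}$, which strictly decreases $\langle \lambda\two,\Lambda^{\vee}_{0}\rangle$ since $\langle \alpha_{0},\Lambda^{\vee}_{0}\rangle = 1 > 0$. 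Hence this term lives strictly below the block diagonal with respect to the ordering of (\ref{eqn:block decomposition}) by increasing $N_{0}$. (Alternatively one observes that $\xp_{0,1}$ raises $\langle \lambda\one,\Lambda^{\vee}_{0}\rangle$ but the relevant bookkeeping is on the second tensor factor, where $N_{0}$ strictly drops.)

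Putting everything together: the action of $h_{0,1}$ on $(V\one \otimes V\two)_{\lambda}$ differs from $h_{0,1} \otimes 1 + C^{-1} \otimes h_{0,1}$ by a strictly block upper triangular matrix with respect to (\ref{eqn:block decomposition}). In particular $h_{0,1}$ acts by a block upper triangular matrix, and its diagonal blocks coincide with those of $h_{0,1} \otimes 1 + C^{-1} \otimes h_{0,1}$, which is exactly the assertion of the corollary. I do not expect any genuine obstacle here — the substantive work (the expansion of $\mathcal{D}$ and the identification of its principal part via Proposition \ref{prop:action of hpm0pm1}) has already been carried out in the paragraph preceding the statement; the only thing left is to package it, together with the trivial grading check on the cross term $k_{0}\xp_{0,1}\otimes k_{0}^{-1}\xm_{0,0}$, into the block-triangularity language. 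If anything is mildly delicate, it is simply being careful that "strictly block upper triangular'' is used consistently with the chosen ordering (increasing in $N_{0}$, then in $N_{\theta}$), but this matches exactly the ordering fixed when (\ref{eqn:block decomposition}) was introduced.
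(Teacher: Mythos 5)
Your argument is correct and is essentially the paper's own: the corollary is read off directly from the identity for $\mathcal{D}$ established just before it, together with the observation that the cross term $(q_{0}^{-4}-1)(k_{0}\xp_{0,1}\otimes k_{0}^{-1}\xm_{0,0})$ strictly lowers $N_{0}=\langle\lambda\two,\Lambda_{0}^{\vee}\rangle$ on the second factor and hence contributes nothing to the diagonal blocks. One small wording slip: having ordered the blocks increasingly in $N_{0}$, a map that strictly decreases $N_{0}$ sits strictly \emph{above} the block diagonal (which is why the paper calls it strictly block upper triangular), not ``strictly below'' as you write mid-proof — your final conclusion is nonetheless stated correctly.
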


\begin{proof}[Proof of Proposition \ref{prop:eigenvalues of phipmipms} for $\phi^{+}_{0,s}$]
The case $s = 0$ is trivial so assume otherwise.
Consider the action of
\begin{align*}
    \Delta^{\psi}(\phi^{+}_{0,s})
    =
    (q_{0} - q_{0}^{-1}) [2]_{0}^{-s} (C^{-1} \otimes C^{-1})
    [[\underbrace{\Delta^{\psi}(h_{0,1}),\dots,\Delta^{\psi}(h_{0,1})}_{s},\Delta^{\psi}(\xp_{0,1})],\Delta^{\psi}(\xm_{0,-1})]
\end{align*}
on some arbitrary
$V\one_{\lambda\one,\Psi\one} \otimes V\two_{\lambda\two,\Psi\two}$
in
$(V\one \otimes V\two)_{\lambda}$.
Inputting (\ref{eqn:Dpsi(xp01)}) and (\ref{eqn:Dpsi(xm0-1)}) into this expression and expanding everything out, by equation (\ref{eqn:action on weight spaces}) the following summands all act via strictly block upper triangular matrices with respect to our decomposition (\ref{eqn:block decomposition}):
\begin{itemize}
    \item those containing a factor
    $\xi_{\ell\delta,0} \otimes \xi_{-\theta + (1-\ell)\delta, \delta'}$
    with $\ell>0$ from (\ref{eqn:Dpsi(xp01)}),
    \item those containing a factor
    $\xi_{\theta - (1-\ell)\delta, -\delta'} \otimes \xi_{-\ell\delta,0}$
    with $\ell>0$ from (\ref{eqn:Dpsi(xm0-1)}),
    \item those containing factors $\xp_{0,1} \otimes 1$ from (\ref{eqn:Dpsi(xp01)}) and $1 \otimes \xm_{0,-1}$ from (\ref{eqn:Dpsi(xm0-1)}).
\end{itemize}
We are therefore left to consider the following three cases:
\begin{enumerate}
    \myitem[(LL)]\label{case ll} summands containing a factor $\xp_{0,1} \otimes 1$ from (\ref{eqn:Dpsi(xp01)}) and a factor $\xm_{0,-1} \otimes C k_{0}$ from (\ref{eqn:Dpsi(xm0-1)}),
    \myitem[(RR)]\label{case rr} summands containing a factor $(C k_{0})^{-1} \otimes \xp_{0,1}$ from (\ref{eqn:Dpsi(xp01)}) and a factor $1 \otimes \xm_{0,-1}$ from (\ref{eqn:Dpsi(xm0-1)}),
    \myitem[(RL)]\label{case rl} summands containing a factor $(C k_{0})^{-1} \otimes \xp_{0,1}$ from (\ref{eqn:Dpsi(xp01)}) and a factor $\xm_{0,-1} \otimes C k_{0}$ from (\ref{eqn:Dpsi(xm0-1)}).
\end{enumerate}
Inputting the identity (\ref{eqn:h01 block action}), the total of all summands in case \ref{case ll} acts via a block upper triangular matrix, with the diagonal blocks giving the action of
\begin{align*}
    &(q_{0} - q_{0}^{-1}) [2]_{0}^{-s}
    [[\underbrace{\alpha,\dots,\alpha}_{s},\xp_{0,1} \otimes 1],\xm_{0,-1} \otimes k_{0}]
    \\
    &=
    (q_{0} - q_{0}^{-1}) [2]_{0}^{-s}
    [[h_{0,1},\dots,h_{0,1},\xp_{0,1}] \otimes 1,\xm_{0,-1} \otimes k_{0}]
    \\
    &=
    (q_{0} - q_{0}^{-1}) [2]_{0}^{-s}
    [[h_{0,1},\dots,h_{0,1}, \xp_{0,1}], \xm_{0,-1}] \otimes k_{0}
    \\
    &=
    \phi^{+}_{0,s} \otimes \phi^{+}_{0,0}
\end{align*}
where we employ the shorthand notations
$\alpha = h_{0,1} \otimes 1 + 1 \otimes h_{0,1}$
and
$\beta = (q_{0}^{-4} - 1)(k_{0} \xp_{0,1} \otimes k_{0}^{-1} \xm_{0,0})$
as in Section \ref{subsection:action of U0}.
Similarly, the total of case \ref{case rr} acts by $k_{0}^{-1} \otimes \phi^{+}_{0,s}$ and so it remains to treat case \ref{case rl}, whose total acts by
\begin{align*}
    (q_{0} - q_{0}^{-1}) [2]_{0}^{-s}
    [[\underbrace{\alpha+\beta,\dots,\alpha+\beta}_{s},
    (C k_{0})^{-1} \otimes \xp_{0,1}, \xm_{0,-1} \otimes C k_{0}].
\end{align*}
Expanding out all pluses, summands with more than one $\beta$ factor are clearly strictly block upper triangular by (\ref{eqn:action on weight spaces}).
Summands with no $\beta$ factors contribute
\begin{align*}
    &(q_{0} - q_{0}^{-1}) [2]_{0}^{-s}
    [[\alpha,\dots,\alpha, (C k_{0})^{-1} \otimes \xp_{0,1}], \xm_{0,-1} \otimes C k_{0}]
    \\
    &=
    (q_{0} - q_{0}^{-1}) [2]_{0}^{-s}
    [(C k_{0})^{-1} \otimes [\alpha,\dots,\alpha,\xp_{0,1}], \xm_{0,-1} \otimes C k_{0}]
    \\
    &=
    (q_{0} - q_{0}^{-1})
    [(C k_{0})^{-1} \otimes \xp_{0,s}, \xm_{0,-1} \otimes C k_{0}]
    \\
    &=
    0,
\end{align*}
while summands with a single $\beta$ factor contribute
\begin{align*}
    &\sum_{r=1}^{s}
    (q_{0} - q_{0}^{-1}) [2]_{0}^{-s}
    [[\underbrace{\alpha,\dots,\alpha}_{s-r}, \beta, \underbrace{\alpha,\dots,\alpha}_{r-1}, k_{0}^{-1} \otimes \xp_{0,1}],\xm_{0,-1} \otimes k_{0}]
    \\
    &=
    \sum_{r=1}^{s}
    (q_{0} - q_{0}^{-1}) [2]_{0}^{r-s-1}
    [[\alpha,\dots,\alpha, \beta, k_{0}^{-1} \otimes \xp_{0,r}],\xm_{0,-1} \otimes k_{0}]
    \\
    &=
    \sum_{r=1}^{s}
    (q_{0}^{2} - q_{0}^{-2}) [2]_{0}^{r-s-1}
    [[\alpha,\dots,\alpha, \xp_{0,1} \otimes k_{0}^{-1} \phi^{+}_{0,r}], \xm_{0,-1} \otimes k_{0}]
    \\
    &=
    \sum_{r=1}^{s}
    (q_{0}^{2} - q_{0}^{-2}) [2]_{0}^{-1}
    [\xp_{0,s-r+1} \otimes k_{0}^{-1} \phi^{+}_{0,r}, \xm_{0,-1} \otimes k_{0}]
    \\
    &=
    \sum_{r=1}^{s}
    (q_{0}^{2} - q_{0}^{-2}) [2]_{0}^{-1}
    [\xp_{0,s-r+1},\xm_{0,-1}] \otimes \phi^{+}_{0,r}
    \\
    &=
    (k_{0} - k_{0}^{-1}) \otimes \phi^{+}_{0,s}
    +
    \sum_{r=1}^{s}
    \phi^{+}_{0,s-r} \otimes \phi^{+}_{0,r}.
\end{align*}
We have therefore verified that $\phi^{+}_{0,s}$ acts on $(V\one \otimes V\two)_{\lambda}$ via a block upper triangular matrix with respect to our decomposition (\ref{eqn:block decomposition}), and moreover the diagonal blocks give the action of
$\sum_{r=0}^{s} \phi^{+}_{i,r} \otimes \phi^{+}_{i,s-r}$
as desired.
\end{proof}

Treating the $\phi^{-}_{0,-s}$ case is similar, so we include slightly fewer details.
Inputting (\ref{eqn:Dpsi(xm0-1)}) and (\ref{eqn:Dpsi(xp00)}) into
$\Delta^{\psi}(h_{0,-1}) = (k_{0} \otimes k_{0})
[\Delta^{\psi}(\xp_{0,0}),\Delta^{\psi}(\xm_{0,-1})]$ and expanding everything out, the action of any summand containing some $\ell>0$ factor on
$(V\one \otimes V\two)_{\lambda}$ is easily shown to be strictly block upper triangular.
The rest of $\Delta^{\psi}(h_{0,-1})$ is given by:
\begin{align*}
    \mathcal{D}
    &=
    (k_{0} \otimes k_{0})
    [(k_{0}^{-1} \otimes k_{0}^{-1}) (h\sigma)^{\otimes 2}(\mathcal{D}''),
    1 \otimes \xm_{0,-1} + \xm_{0,-1} \otimes C k_{0}]
    \\
    \mathcal{D}''
    &=
    [1 \otimes \xm_{i_{\hslash-1},0} + \xm_{i_{\hslash-1},0} \otimes k_{i_{\hslash-1}}^{-1},
    \dots,
    1 \otimes \xm_{i_{2},0} + \xm_{i_{2},0} \otimes k_{i_{2}}^{-1},
    1 \otimes \xm_{i_{1},1} + \xm_{i_{1},1} \otimes C k_{i_{1}}^{-1} ]_{q^{\epsilon_{1}}\dots q^{\epsilon_{\hslash-2}}}
\end{align*}
Moreover $\mathcal{D}''$ is recursively shown to equal
\begin{align*}
    1 \otimes
    [\xm_{i_{\hslash -1},0},\dots,\xm_{i_{2},0}, \xm_{i_{1},1}]_{q^{\epsilon_{1}}\dots q^{\epsilon_{\hslash -2}}}
    +
    [\xm_{i_{\hslash -1},0},\dots,\xm_{i_{2},0}, \xm_{i_{1},1}]_{q^{\epsilon_{1}}\dots q^{\epsilon_{\hslash -2}}}
    \otimes C k_{\theta}^{-1}
\end{align*}
plus a sum of elementary tensors with $\xm_{i_{1},1}$ in the left factor and some $\xm_{i_{j},0}$ in the right factor.
Therefore
$(k_{0}^{-1} \otimes k_{0}^{-1}) (h\sigma)^{\otimes 2}(\mathcal{D}'')$
equals
$k_{0}^{-1} \otimes \xp_{0,0} + \xp_{0,0} \otimes k_{0}^{-2}$
plus a sum of elementary tensors with $\xbm_{i_{1},1}$ in the left factor and some $\xm_{i_{j},0}$ in the right factor.
Our proof of Proposition \ref{prop:action of hpm0pm1} then gives
\begin{align} \label{eqn:h0-1 block action}
    \mathcal{D}
    =
    h_{0,-1} \otimes C
    +
    1 \otimes h_{0,-1}
    -
    (q_{0}^{-4} - 1)
    (k_{0} \xp_{0,0} \otimes k_{0}^{-1} \xm_{0,-1})
\end{align}
plus a sum of elementary tensors which act on
$(V\one \otimes V\two)_{\lambda}$
via strictly block upper triangular matrices.

\begin{cor}
    The action of $h_{0,-1}$ on any $(V\one \otimes V\two)_{\lambda}$ is block upper triangular with respect to the decomposition (\ref{eqn:block decomposition}), and its diagonal blocks describe the action of
    $h_{0,-1} \otimes C + 1 \otimes h_{0,-1}$.
\end{cor}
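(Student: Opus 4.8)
The proof of this corollary is the direct analogue of the paragraph immediately preceding it, which established \eqref{eqn:h01 block action} and the corresponding statement for $h_{0,1}$. The plan is to carry out the same computation with the roles of the positive and negative Drinfeld currents at vertex $0$ suitably exchanged, using the already-established formulae \eqref{eqn:Dpsi(xm0-1)} and \eqref{eqn:Dpsi(xp00)} for $\Delta^{\psi}(\xm_{0,-1})$ and $\Delta^{\psi}(\xp_{0,0})$ together with the relation $k_{0}h_{0,-1} = [\xp_{0,0},\xm_{0,-1}]$ (up to the central factor $k_{0}^{\mp 1}$ handled as in \eqref{eqn:h01 block action}).

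First I would substitute \eqref{eqn:Dpsi(xm0-1)} and \eqref{eqn:Dpsi(xp00)} into $\Delta^{\psi}(h_{0,-1}) = (k_{0}\otimes k_{0})[\Delta^{\psi}(\xp_{0,0}),\Delta^{\psi}(\xm_{0,-1})]$ and expand. By \eqref{eqn:action on weight spaces}, any summand carrying one of the $\xi_{\beta+k\delta,\ell\delta'}$ factors with $\ell>0$ coming from the tails of \eqref{eqn:Dpsi(xm0-1)} or \eqref{eqn:Dpsi(xp00)} either strictly lowers $\langle\lambda\two,\Lambda^{\vee}_{0}\rangle$ or, in the borderline cases where its only such factor is $\xi_{\delta,0}\otimes\xi_{-\theta,\delta'}$ type, strictly lowers $(\lambda\two,\theta)$; hence each of these acts by a strictly block upper triangular matrix with respect to \eqref{eqn:block decomposition}. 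What remains is the nested $q$-commutator $\mathcal{D}''$ built from the leading terms $1\otimes\xm_{i_{j},0}+\xm_{i_{j},0}\otimes k_{i_{j}}^{-1}$ (and $1\otimes\xm_{i_{1},1}+\xm_{i_{1},1}\otimes Ck_{i_{1}}^{-1}$). Using $\epsilon_{j} = (\alpha_{i_{1}}+\dots+\alpha_{i_{j}},\alpha_{i_{j+1}})$ and the identity $[a\otimes b,c\otimes d]_{u} = [a,b]_{u}\otimes cd$ when $[c,d]=0$, a recursion identical in shape to the one displayed for $h_{0,1}$ shows that $\mathcal{D}''$ equals $1\otimes[\xm_{i_{\hslash-1},0},\dots,\xm_{i_{2},0},\xm_{i_{1},1}]_{q^{\epsilon_{1}}\dots q^{\epsilon_{\hslash-2}}} + [\xm_{i_{\hslash-1},0},\dots,\xm_{i_{2},0},\xm_{i_{1},1}]_{q^{\epsilon_{1}}\dots q^{\epsilon_{\hslash-2}}}\otimes Ck_{\theta}^{-1}$ plus elementary tensors with $\xm_{i_{1},1}$ in the left factor and some $\xm_{i_{j},0}$ in the right factor. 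Applying $(h\sigma)^{\otimes 2}$ and the central prefactors then identifies $(k_{0}^{-1}\otimes k_{0}^{-1})(h\sigma)^{\otimes 2}(\mathcal{D}'')$ with $k_{0}^{-1}\otimes\xp_{0,0} + \xp_{0,0}\otimes k_{0}^{-2}$ plus correction terms of the same shape; feeding this into the outer commutator with $1\otimes\xm_{0,-1}+\xm_{0,-1}\otimes Ck_{0}$ and invoking the computation already carried out in the proof of Proposition \ref{prop:action of hpm0pm1} yields exactly \eqref{eqn:h0-1 block action}, namely $\mathcal{D} = h_{0,-1}\otimes C + 1\otimes h_{0,-1} - (q_{0}^{-4}-1)(k_{0}\xp_{0,0}\otimes k_{0}^{-1}\xm_{0,-1})$, plus strictly block upper triangular summands.

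Finally, to read off the corollary I would note that the diagonal blocks of $\mathcal{D}$ come only from the first two terms $h_{0,-1}\otimes C + 1\otimes h_{0,-1}$: the term $-(q_{0}^{-4}-1)(k_{0}\xp_{0,0}\otimes k_{0}^{-1}\xm_{0,-1})$ strictly increases the $\langle\lambda\two,\Lambda^{\vee}_{0}\rangle$ grading of the right tensor factor by $\langle\alpha_{0},\Lambda^{\vee}_{0}\rangle=1$ and so is strictly block upper triangular, exactly as in the $h_{0,1}$ case. The main obstacle, as in the companion computation, is bookkeeping: one must be careful tracking the fine $(Q\oplus\Zbb\delta')$-grading of every factor produced by expanding $\mathcal{D}''$ and by expanding the outer commutator, in order to be sure that all the ``correction'' elementary tensors really are strictly block upper triangular and that nothing of block-diagonal weight has been dropped. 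Once the grading is tracked carefully, the identity \eqref{eqn:h0-1 block action} and hence the corollary follow, and this result in turn feeds the treatment of $\phi^{-}_{0,-s}$ exactly as \eqref{eqn:h01 block action} fed the treatment of $\phi^{+}_{0,s}$.
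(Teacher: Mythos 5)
Your proposal is correct and follows essentially the same route as the paper: substitute (\ref{eqn:Dpsi(xm0-1)}) and (\ref{eqn:Dpsi(xp00)}) into $(k_{0}\otimes k_{0})[\Delta^{\psi}(\xp_{0,0}),\Delta^{\psi}(\xm_{0,-1})]$, discard the $\ell>0$ tails as strictly block upper triangular, recursively resolve $\mathcal{D}''$ into the two leading tensors plus corrections with $\xm_{i_{1},1}$ on the left, and feed the result into the computation from Proposition \ref{prop:action of hpm0pm1} to obtain (\ref{eqn:h0-1 block action}). One small slip in your final paragraph: the right factor $k_{0}^{-1}\xm_{0,-1}$ of the correction term \emph{lowers} $\langle\lambda\two,\Lambda^{\vee}_{0}\rangle$ by $1$ rather than raising it, but this is exactly what makes the term strictly block upper triangular under the paper's increasing ordering, so the conclusion stands.
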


\begin{proof}[Proof of Proposition \ref{prop:eigenvalues of phipmipms} for $\phi^{-}_{0,-s}$]
Again, the case $s = 0$ is trivial so assume otherwise.
Inputting (\ref{eqn:Dpsi(xp01)}) and (\ref{eqn:Dpsi(xm0-1)}) into
\begin{align*}
    \Delta^{\psi}(\phi^{-}_{0,-s})
    =
    (-1)^{s+1} (q_{0} - q_{0}^{-1}) [2]_{0}^{-s}
    (C^{-1} \otimes C^{-1})
    [\Delta^{\psi}(\xp_{0,1}),
    \underbrace{\Delta^{\psi}(h_{0,-1}),\dots,\Delta^{\psi}(h_{0,-1})}_{s},
    \Delta^{\psi}(\xm_{0,-1})]
\end{align*}
and expanding everything out, all summands except those in classes \ref{case ll}, \ref{case rr} and \ref{case rl} clearly act by strictly block upper triangular matrices on
$(V\one \otimes V\two)_{\lambda}$.
Inputting (\ref{eqn:h0-1 block action}), the totals of all \ref{case ll} and \ref{case rr} summands are block upper triangular, with the diagonal blocks giving the actions of
$\phi^{-}_{0,-s} \otimes k_{0}$
and
$\phi^{-}_{0,0} \otimes \phi^{-}_{0,-s}$
respectively.
The rest of the proof, in particular dealing with the \ref{case rl} case, is similar to before.
\end{proof}

\section{\texorpdfstring{$R$}{R}-matrices and transfer matrices} \label{section:R matrices}

A fundamental result in the representation theory of quantum groups $\Uq$ is the existence of $R$-matrices, intertwining morphisms which exchange the factors in a tensor product of modules.
Crucially, these $R$-matrices satisfy the quantum Yang-Baxter equation
\begin{align} \label{eqn:qYBE}
    \Rcal_{12} \Rcal_{13} \Rcal_{23} = \Rcal_{23} \Rcal_{13} \Rcal_{12}
\end{align}
and thus endow the category of finite dimensional modules with a natural braiding structure.
In this way, quantum groups are connected to low-dimensional topology as various knot, link and $3$-manifold invariants (such as the celebrated Jones polynomial) may be constructed using $R$-matrices on certain modules \cites{RT90,RT91}.
\\

For quantum affine algebras, such $R$-matrices were obtained by Chari-Pressley \cite{CP94} and are directly linked with quantum mechanics and integrable systems.
The quantum Yang-Baxter equation ensures that various operations (such as the scattering of particles) are consistent, and the existence of $R$-matrix solutions dictates the solvability of the model.
In particular, they are used to construct large families of commuting \emph{transfer matrices}, which can be diagonalised via Bethe ansatz techniques to study the integrable system \cites{FR99,FH15,FH18}.
\\

In another direction, quantum affine $R$-matrices are an essential tool in the monoidal categorification of cluster algebras \cites{HL10,HL13,Nakajima11,Qin17}, which can in turn be used to aid with the calculation of $q$-characters \cites{HL16,Nakajima11}.
In particular, (normalised) $R$-matrices give rise to exact sequences in certain categories of finite dimensional representations that categorify the mutation relations.
Related works provide connections to KLR algebras \cite{KKKO18} and establish generalised Schur-Weyl dualities between module categories \cites{KKK15,KKK18,Fujita20,Fujita22}, with $R$-matrices playing a fundamental role.
\\

Needless to say, the importance of $R$-matrices within mathematics and (quantum) physics cannot be overstated.
Our aim in this section is to lay the foundation for such directions on the quantum toroidal level.
In particular, we consider direct sums of tensor products of irreducible integrable $\ell$-highest weight $\Utor$-modules with respect to our topological coproduct $\Dpsi$.
Our results then prove the existence and uniqueness of $R$-matrices which satisfy the quantum Yang-Baxter equation, are generically isomorphisms, and thus equip $\Oaff$ with a meromorphic braiding on these objects.
Moreover, we are able to relate our toroidal $R$-matrices to those which already exist on the affine level, as well as define families of transfer matrices and show that they commute.
Our expectation is that these constructions should extend to the entire category, and in this way equip $\Oaff$ with a meromorphic braiding in the sense of \cites{GTL16,Soibelman99,FR92}.
We plan to address this in future work.
\\

It is worth mentioning that in the finite and affine cases, the $R$-matrices mentioned thus far can be realised as the images inside $\mathrm{End}(V_{1} \otimes V_{2})$ of a \emph{universal $R$-matrix} -- a solution $\widetilde{\Rcal}$ of (\ref{eqn:qYBE}) lying inside a completion of the tensor square of the quantum group.
Formulae for these universal $R$-matrices have moreover been obtained in \cites{KR90,KT91,LS91} and \cites{KT92,KT93}.
However, even with such explicit expressions, it is very difficult to compute the action of $\widetilde{\Rcal}$ on tensor representations in all but the simplest cases.
\\

A series of works by Negu\c{t} considers the universal $R$-matrices of quantum toroidal $\glone$ \cite{Negut23}, $\mathfrak{gl}_{n}$ \cite{Negut20} and $\mathfrak{sl}_{n}$ \cite{Negut15}.
Each is shown to factor as an infinite tensor product of $R$-matrices associated to certain quantum affine subalgebras, using shuffle techniques such as slope subalgebras.
It would be interesting to explore the connections between those results and ours -- namely the anti-involution $\psi$, $R$-matrices, and extended double affine braid group action -- in and beyond type $A$.
We leave this for future work.

\subsection{Main results}

For each $\alpha\in\lbrace 1,2,3\rbrace$, consider an irreducible integrable $\Utor$-module
$V\alphapower = V(\lambda\alphapower,\Psi\alphapower)$
with $\ell$-highest weight vector $v\alphapower$.
As in Section \ref{section:tensor products}, we may without loss of generality specialise the coproduct parameter $u$ to $1$.
For vector spaces $V$ and $W$, call $f(x)$ a $\Hom_{\Cbb}(V,W)$-valued rational function if
\begin{itemize}
    \item $f(a) \in \Hom_{\Cbb}(V,W)$ for all $a\in\Cbb$,
    \item $\langle v,f(x)w \rangle$ is a rational function in $\Cbb(x)$ for each $v\in V$ and $w\in W$.
\end{itemize}

Tensor products of any such modules with respect to our topological coproduct $\Dpsi$ possess unique $R$-matrices that depend on a spectral parameter, and satisfy the Yang-Baxter equation as desired.

\begin{thm} \label{thm:quantum toroidal R-matrices}
    There exist unique
    $\Hom_{\Cbb}(V\alphapower \otimes V\betapower,V\betapower \otimes V\alphapower)$-valued rational functions $\Rcal\alphabetapower(x)$ such that
    \begin{itemize}
        \item $\Rcal\alphabetapower(b/a)$
        is a $\Utor$-module homomorphism
        $V\alphapower_{a} \otimes V\betapower_{b} \rightarrow
        V\betapower_{b} \otimes V\alphapower_{a}$
        sending
        $v\alphapower \otimes v\betapower \mapsto
        v\betapower \otimes v\alphapower$
        whenever $\Rcal\alphabetapower(x)$ does not have a pole at $b/a$,
        \item $\Rcal\alphabetapower(b/a)$ is moreover an isomorphism if
        $V\alphapower_{a} \otimes V\betapower_{b}$
        is irreducible,
        \item the (trigonometric, quantum) Yang-Baxter equation
        \begin{align} \label{eqn:proper qYBE}
            \begin{split}
                &\big(\mathrm{Id}_{V\three} \otimes \Rcal\onetwo(b/a) \big) \circ
                \big(\Rcal\onethree(c/a) \otimes \mathrm{Id}_{V\two} \big) \circ
                \big(\mathrm{Id}_{V\one} \otimes \Rcal\twothree(c/b) \big)
                \\
                &=
                \big(\Rcal\twothree(c/b) \otimes \mathrm{Id}_{V\one} \big) \circ
                \big(\mathrm{Id}_{V\two} \otimes \Rcal\onethree(c/a) \big) \circ
                \big(\Rcal\onetwo(b/a) \otimes \mathrm{Id}_{V\three} \big)
            \end{split}
        \end{align}
        as $\Utor$-module homomorphisms
        $V\one_{a} \otimes V\two_{b} \otimes V\three_{c}
        \rightarrow
        V\three_{c} \otimes V\two_{b} \otimes V\one_{a}$
        is satisfied for all $a,b,c\in\Cbb^{\times}$ for which both maps are well-defined.
    \end{itemize}
\end{thm}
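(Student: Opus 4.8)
The plan is to follow the classical Chari-Pressley strategy for constructing $R$-matrices on tensor products of $\ell$-highest weight modules, adapted to our topological coproduct $\Dpsi$. First I would work over the field of rational functions in the spectral parameter. Consider the generic tensor product $V\one_{a} \otimes V\two_{b}$ as a module over $\Utor$ with coefficients extended to $\Cbb(a,b)$; since $(a\otimes b) \mapsto (1 \otimes b/a)$ up to twist by $\sfrakv_{a}$ (as already exploited in the proof of Theorem \ref{thm:irreducible except countable}), it suffices to work with a single variable $x = b/a$. By Theorem \ref{thm:l-highest weight vector in tensor products}, the vector $v\one \otimes v\two$ is $\ell$-highest weight with Drinfeld polynomials $\Pcal\one(z)\Pcal\two(z)$ in $V\one \otimes V\two_{x}$, and the same polynomials occur for $v\two \otimes v\one$ in $V\two_{x} \otimes V\one$ (after the appropriate spectral reindexing). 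By Theorem \ref{thm:irreducible except countable}, the generic tensor product is irreducible over $\Cbb(x)$, hence the $\ell$-highest weight vector generates everything and the universal property of the Verma module $M(\lambda\one + \lambda\two, \Psi\one\cdot\Psi\two)$ produces a unique $\Utor$-homomorphism $\overline{\Rcal}\onetwo(x) : V\one \otimes V\two_{x} \to V\two_{x} \otimes V\one$ sending $v\one \otimes v\two \mapsto v\two \otimes v\one$; uniqueness follows from irreducibility (any two such agree on a generating vector). Applying the same argument in the reverse direction gives an inverse up to scalar, so $\overline{\Rcal}\onetwo(x)$ is an isomorphism over $\Cbb(x)$.

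Next I would descend from the generic picture to an honest rational function. Fix a basis of $V\one \otimes V\two$ adapted to the weight grading (recall from \eqref{eqn:Dpsi on C and k} that weight spaces decompose as finite direct sums $\bigoplus V\one_{\mu\one} \otimes V\two_{\mu\two}$, so they are finite dimensional and independent of $x$); the matrix entries of $\overline{\Rcal}\onetwo(x)$ in this basis lie in $\Cbb(x)$, so $\overline{\Rcal}\onetwo(x)$ is a $\Hom_{\Cbb}(V\one \otimes V\two, V\two \otimes V\one)$-valued rational function. Write $\Rcal\onetwo(x)$ for this function; it has only finitely many poles. Away from these poles, $\Rcal\onetwo(b/a)$ is a well-defined linear map, and the intertwining property — which is a collection of polynomial identities in the matrix entries, valid generically — persists by continuity wherever $\Rcal\onetwo(b/a)$ is defined, using that $V\one_{a} \otimes V\two_{b}$ is an honest $\Utor$-module by Theorem \ref{thm:integrability of tensor products} for every specialization. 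That $\Rcal\onetwo(b/a)$ is an isomorphism whenever $V\one_{a} \otimes V\two_{b}$ is irreducible follows because a nonzero $\Utor$-homomorphism out of an irreducible module is injective, and a dimension count (equality of graded dimensions, again from \eqref{eqn:Dpsi on C and k}) forces surjectivity; alternatively one invokes Corollary \ref{cor:irreducible switch factors}.

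For the Yang-Baxter equation, I would again argue generically first. Over $\Cbb(a,b,c)$ the triple tensor product $V\one_{a} \otimes V\two_{b} \otimes V\three_{c}$ is irreducible — this follows from the binary generic irreducibility of Theorem \ref{thm:irreducible except countable} applied iteratively, together with the permutation isomorphisms of Corollary \ref{cor:irreducible switch factors} (the $n$-fold statement in the introduction's Theorem is the version to cite). Both sides of \eqref{eqn:proper qYBE} are then $\Utor$-homomorphisms $V\one_{a} \otimes V\two_{b} \otimes V\three_{c} \to V\three_{c} \otimes V\two_{b} \otimes V\one_{a}$, and both send $v\one \otimes v\two \otimes v\three \mapsto v\three \otimes v\two \otimes v\one$ (each elementary factor $\Rcal$ does so on the $\ell$-highest weight vectors, and coassociativity of $\Dpsi$ — Theorem \ref{thm:Damiani topological coproduct} conjugated by $\psi$ — guarantees the two bracketings of the triple coproduct agree). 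By irreducibility and the generating property, the two sides coincide over $\Cbb(a,b,c)$; since both are rational in $(a,b,c)$, they agree as rational functions, hence agree at every specialization where both are well-defined. The main obstacle I anticipate is the bookkeeping around normalization and poles: one must check that the generically-defined intertwiner has rational (not merely meromorphic-in-several-variables) matrix entries reducible to a single variable, that the natural normalization $v\one \otimes v\two \mapsto v\two \otimes v\one$ is consistent with coassociativity across all three pairings (so that no stray scalar functions appear in \eqref{eqn:proper qYBE}), and that the poles do not conspire to make both sides of the Yang-Baxter equation simultaneously ill-defined on an open set — this last point needs the genericity of irreducibility for \emph{triple} products, which is why invoking the full $n=3$ case of the introduction's irreducibility theorem rather than just the binary Theorem \ref{thm:irreducible except countable} is essential.
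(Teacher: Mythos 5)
Your overall strategy---generic irreducibility plus Schur's lemma, rationality of the matrix entries, and Zariski density to propagate the intertwining and Yang--Baxter identities to every non-pole specialization---is the same as the paper's, and your treatment of the Yang--Baxter equation and of the isomorphism criterion matches the paper's essentially verbatim (the paper likewise only checks that both sides of (\ref{eqn:proper qYBE}) agree on the $\ell$-highest weight vector, invokes Theorem \ref{thm:irreducible except countable} to get irreducibility of the triple products for all but countably many pairs $(b,c)$, and then uses Zariski density of the complement of a countable set in $\Cbb^{2}$). Where you diverge is the existence step. You construct the intertwiner over the function field $\Cbb(x)$ via the universal property of the Verma module, which requires $V\one\otimes V\two_{x}$ to be irreducible \emph{over} $\Cbb(x)$; this does not follow formally from Theorem \ref{thm:irreducible except countable}, which is a statement about complex specializations, so you should supply the (standard) bridging argument: a proper nonzero $\Cbb(x)$-submodule would, after clearing denominators in a pair of weight spaces witnessing properness and nonvanishing, specialize to a proper nonzero submodule for all but countably many $b$, contradicting the theorem. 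The paper avoids the function field entirely: it fixes one good specialization $b_{0}\notin S$, uses condition (\ref{eqn:first irreducibility condition}) to express a fixed basis $w_{1},\dots,w_{m}$ of each weight space as combinations, with coefficients in $\Cbb(x)$ regular at $b_{0}$, of the spanning vectors $\xm_{i_{1},k_{1}}\cdots\xm_{i_{s},k_{s}}\cdot(v\alphapower\otimes v\betapower)$, and defines $\Rcal\alphabetapower(x)$ by replacing these with their counterparts in $V\betapower_{b}\otimes V\alphapower$, checking independence of $b_{0}$ by Schur's lemma at a common good point. That construction delivers single-variable rationality of the matrix entries directly, whereas your route obtains it by working over $\Cbb(x)$ from the outset---which is fine once the function-field irreducibility is in place. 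So the proposal is sound, with the $\Cbb(x)$-irreducibility step being the one genuine hole to patch.
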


The proof is a little technical, so we postpone it to Section \ref{subsection:proof of R-matrices}.
Diagrammatically, we can view the Yang-Baxter equation (\ref{eqn:proper qYBE}) as the equality of braids in Figure \ref{fig:YBE}, where strands are coloured according to the spectral parameter and morphisms are applied from top to bottom.
\begin{figure}[H]
    \centering
\begin{tikzpicture}
\pic[
braid/anchor=east,
braid/strand 1/.style={olive},
braid/strand 2/.style={orange},
braid/strand 3/.style={purple},
] at (1,1) {braid={s_2 s_1 s_2}};
\node[font=\large] at (2,1) {\(=\)};
\pic[
braid/anchor=west,
braid/strand 1/.style={olive},
braid/strand 2/.style={orange},
braid/strand 3/.style={purple},
] at (3,1) {braid={s_1 s_2 s_1}};
\end{tikzpicture}
\caption[An illustration of the Yang-Baxter equation]{\hspace{.5em}An illustration of the Yang-Baxter equation}
\label{fig:YBE}
\end{figure}

Let us now generalise the above to direct sums
$W\alphapower = \bigoplus_{k=1}^{K}
V^{(\alpha_{k1})} \otimes\dots\otimes V^{(\alpha_{kL})}$
of tensor products of irreducible representations $V^{(\alpha_{k\ell})} \in \Oaff$ with $\ell$-highest weight vectors $v^{(\alpha_{k\ell})}$.
In this case, we shall use the following notations:
\begin{itemize}
    \item $W^{(\alpha_{k})} = V^{(\alpha_{k1})} \otimes \dots \otimes V^{(\alpha_{kL})}$
    \item $w^{(\alpha_{k})} = v^{(\alpha_{k1})} \otimes \dots \otimes v^{(\alpha_{kL})}$
    \item $\lambda^{(\alpha_{k})} = \lambda^{(\alpha_{k1})} + \dots + \lambda^{(\alpha_{kL})}$
    \item $w\alphapower = \bigoplus_{k=1}^{K}
    v^{(\alpha_{k1})} \otimes\dots\otimes v^{(\alpha_{kL})}$
    \item $\lambda\betapower = \sum_{k,\ell=1}^{K,L} \lambda^{(\alpha_{k\ell})}$
\end{itemize}
Note that
$W\alphapower_{a} = \bigoplus_{k=1}^{K}
V^{(\alpha_{k1})}_{a} \otimes\dots\otimes V^{(\alpha_{kL})}_{a}$
for all $a\in\Cbb^{\times}$ by equation (\ref{eqn:image of new topological coproduct}).

\begin{cor} \label{cor:quantum toroidal R-matrices}
    There exist unique
    $\Hom_{\Cbb}(W\alphapower \otimes W\betapower,W\betapower \otimes W\alphapower)$-valued rational functions
    \begin{align*}
        \Rcal\alphabetapower(x)
        =
        \bigoplus_{k,r=1}^{K,R}
        \Big(\Rcal^{(\alpha_{k1},\beta_{rS})}(x)
        \dots
        \Rcal^{(\alpha_{k1},\beta_{r1})}(x)\Big)
        \dots
        \Big(\Rcal^{(\alpha_{kL},\beta_{rS})}(x)
        \dots
        \Rcal^{(\alpha_{kL},\beta_{r1})}(x)\Big)
    \end{align*}
    such that
    \begin{itemize}
        \item $\Rcal\alphabetapower(b/a)$
        is a $\Utor$-module homomorphism sending
        $w\alphapower \otimes w\betapower \mapsto
        w\betapower \otimes w\alphapower$
        whenever no
        $\Rcal^{(\alpha_{k\ell},\beta_{rs})}(x)$
        has a pole at $b/a$,
        \item $\Rcal\alphabetapower(b/a)$ is moreover an isomorphism if every
        $V^{(\alpha_{k\ell})}_{a} \otimes V^{(\beta_{rs})}_{b}$
        is irreducible,
        \item the Yang-Baxter equation (\ref{eqn:proper qYBE}) holds whenever both sides are well-defined.
    \end{itemize}
\end{cor}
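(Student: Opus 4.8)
The plan is to bootstrap Corollary \ref{cor:quantum toroidal R-matrices} directly from Theorem \ref{thm:quantum toroidal R-matrices} by composing the irreducible-module $R$-matrices in a prescribed order, exactly as written in the statement. First I would fix the combinatorial setup: each $W\alphapower = \bigoplus_{k=1}^{K} W^{(\alpha_k)}$ with $W^{(\alpha_k)} = V^{(\alpha_{k1})} \otimes \dots \otimes V^{(\alpha_{kL})}$, and by equation (\ref{eqn:image of new topological coproduct}) the scaling automorphism $\sfrakv_a$ is compatible with $\Dpsi$, so $W\alphapower_a \otimes W\betapower_b$ decomposes as a direct sum over $(k,r)$ of the tensor products $W^{(\alpha_k)}_a \otimes W^{(\beta_r)}_b$. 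Since $\Hom_{\Utor}$ respects direct sums, it suffices to construct a $\Utor$-module homomorphism $W^{(\alpha_k)}_a \otimes W^{(\beta_r)}_b \to W^{(\beta_r)}_b \otimes W^{(\alpha_k)}_a$ for each block, and then take the direct sum; the rational-function claim then follows blockwise.

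The key step is to build each block map by iterating the two-factor $R$-matrices. Working modulo the coassociativity of $\Dpsi$ (Theorem \ref{thm:Damiani topological coproduct}, inherited by $\Dpsi$ via conjugation by $\psi$), a tensor product of many irreducible factors is an honest $\Utor$-module, and the braiding isomorphism that moves one factor past another is built from the $\Rcal^{(\alpha,\beta)}(x)$ of Theorem \ref{thm:quantum toroidal R-matrices}. Concretely, to send $W^{(\alpha_k)}_a \otimes W^{(\beta_r)}_b$ to $W^{(\beta_r)}_b \otimes W^{(\alpha_k)}_a$ I would push $V^{(\beta_{r1})}_b$ leftward past all the $V^{(\alpha_{k\ell})}_a$ (applying $\Rcal^{(\alpha_{kL},\beta_{r1})}(b/a) \dots \Rcal^{(\alpha_{k1},\beta_{r1})}(b/a)$, reading factors right to left), then $V^{(\beta_{r2})}_b$, and so on. This produces precisely the ordered product written in the statement. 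The first bullet — that $w^{(\alpha_k)} \otimes w^{(\beta_r)} \mapsto w^{(\beta_r)} \otimes w^{(\alpha_k)}$ — is then immediate since each $\Rcal\alphabetapower(x)$ sends $\ell$-highest weight vector to $\ell$-highest weight vector by Theorem \ref{thm:quantum toroidal R-matrices}, and the poles of the composite are contained in the union of poles of the individual factors, so the hypothesis ``no $\Rcal^{(\alpha_{k\ell},\beta_{rs})}(x)$ has a pole at $b/a$'' guarantees well-definedness. The second bullet follows because a composition of isomorphisms is an isomorphism, and Theorem \ref{thm:irreducible except countable} together with Theorem \ref{thm:quantum toroidal R-matrices} makes each $\Rcal^{(\alpha_{k\ell},\beta_{rs})}(b/a)$ an isomorphism precisely when each $V^{(\alpha_{k\ell})}_a \otimes V^{(\beta_{rs})}_b$ is irreducible.

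For the Yang-Baxter equation (\ref{eqn:proper qYBE}) for the $W\alphapower$, I would reduce to the irreducible case by a ``bead-sliding'' argument: on each block the three composites appearing in (\ref{eqn:proper qYBE}) are words in the elementary two-factor $R$-matrices acting on $W^{(\alpha_k)}_a \otimes W^{(\beta_r)}_b \otimes W^{(\gamma_t)}_c$, and both sides of (\ref{eqn:proper qYBE}) express the same permutation of the (ordered) list of irreducible tensor factors. Using the Yang-Baxter relation of Theorem \ref{thm:quantum toroidal R-matrices} for each adjacent triple of irreducible factors — which is exactly the braid relation $s_i s_{i+1} s_i = s_{i+1} s_i s_{i+1}$ in the symmetric group — plus the trivial far-commutativity (disjoint $R$-matrices commute), any two reduced words representing the same permutation give the same morphism. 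Hence the two sides of (\ref{eqn:proper qYBE}) agree wherever both are defined. The main obstacle I anticipate is the careful bookkeeping of which spectral parameters appear in each elementary $R$-matrix during the slides: when $V^{(\beta_{rs})}_b$ moves past $V^{(\alpha_{k\ell})}_a$ the relevant argument is $b/a$, but in the triple product one must track ratios $b/a$, $c/a$, $c/b$ consistently, and confirm that the braid relation of Theorem \ref{thm:quantum toroidal R-matrices} is invoked with matching arguments at each application — this is routine but needs to be spelled out to make the reduction to the symmetric group rigorous. Uniqueness follows as in Theorem \ref{thm:quantum toroidal R-matrices}: the $\ell$-highest weight vector generates each tensor-product module, so a $\Utor$-homomorphism sending $w\alphapower \otimes w\betapower$ to $w\betapower \otimes w\alphapower$ is determined, and since rational functions agreeing on a dense set coincide, the $\Rcal\alphabetapower(x)$ are unique.
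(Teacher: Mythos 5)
Your proposal is correct and follows essentially the same route as the paper: define $\Rcal\alphabetapower(x)$ blockwise as the stated ordered composite of elementary $R$-matrices, deduce the first two bullets directly from Theorem \ref{thm:quantum toroidal R-matrices}, obtain the Yang--Baxter equation from the elementary YBE together with far-commutativity of $R$-matrices acting on disjoint factors (the paper phrases your reduced-word argument via explicit braid diagrams), and get uniqueness from generic irreducibility, Schur's lemma and rationality. The only nit is that ``the $\ell$-highest weight vector generates each tensor-product module'' holds only for generic spectral parameters (Theorem \ref{thm:irreducible except countable}), which is exactly where Schur's lemma is applied before rationality extends the identity everywhere.
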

\begin{proof}
    That $\Rcal\alphabetapower(b/a)$ satisfies the first two conditions is trivial, while the third follows easily from the Yang-Baxter equations for all
    $\Rcal^{(\alpha_{k\ell},\beta_{rs})}(x)$.
    We illustrate this through braid diagrams in the case
    \begin{align*}
        W\one = V\one\otimes V\two, \qquad
        W\two = V\three, \qquad
        W\three = V^{(4)}.
    \end{align*}
    The first and last equalities below come from swapping the order of morphisms which act on entirely different factors, while the second and third are due to (\ref{eqn:proper qYBE}) within the context of Theorem \ref{thm:quantum toroidal R-matrices}.
    \smallskip
    \begin{center}
\resizebox{\textwidth}{!}{
\begin{tikzpicture}
\pic[
braid/anchor=east,
braid/strand 1/.style={olive},
braid/strand 2/.style={olive},
braid/strand 3/.style={orange},
braid/strand 4/.style={purple},
] at (1,1) {braid={s_3 s_2 s_1 s_3 s_2}};
\node[font=\huge] at (2,1) {\(=\)};
\pic[
braid/anchor=west,
braid/strand 1/.style={olive},
braid/strand 2/.style={olive},
braid/strand 3/.style={orange},
braid/strand 4/.style={purple},
] at (3,1) {braid={s_3 s_2 s_3 s_1 s_2}};
\node[font=\huge] at (7,1) {\(=\)};
\pic[
braid/anchor=west,
braid/strand 1/.style={olive},
braid/strand 2/.style={olive},
braid/strand 3/.style={orange},
braid/strand 4/.style={purple},
] at (8,1) {braid={s_2 s_3 s_2 s_1 s_2}};
\node[font=\huge] at (12,1) {\(=\)};
\pic[
braid/anchor=west,
braid/strand 1/.style={olive},
braid/strand 2/.style={olive},
braid/strand 3/.style={orange},
braid/strand 4/.style={purple},
] at (13,1) {braid={s_2 s_3 s_1 s_2 s_1}};
\node[font=\huge] at (17,1) {\(=\)};
\pic[
braid/anchor=west,
braid/strand 1/.style={olive},
braid/strand 2/.style={olive},
braid/strand 3/.style={orange},
braid/strand 4/.style={purple},
] at (18,1) {braid={s_2 s_1 s_3 s_2 s_1}};
\end{tikzpicture}
}
\end{center}
    \smallskip
    The same argument shows that we may keep adding extra tensor factors to $W\one$.
    Proving this for $W\two$ and $W\three$ is similar, and compatibility with direct sums is clear.
    As in Section \ref{subsection:proof of R-matrices}, uniqueness follows from Theorem \ref{thm:irreducible except countable}, Corollary \ref{cor:irreducible switch factors}, Schur's lemma, and the rationality of $\Rcal\alphabetapower(b/a)$.
\end{proof}

The next result relates our quantum toroidal $R$-matrices with those obtained by Chari-Pressley on the affine level.
In particular, the toroidal $R$-matrices can in some sense be formed by \emph{gluing together} infinitely many affine $R$-matrices in an appropriate way.

\begin{defn}
    For any proper subset $J\subset I$, let $\Rcal\alphabetapower_{J}(x)$ be the restriction of $\Rcal\alphabetapower(x)$ to
    $(W\alphapower \otimes W\betapower)(J) = W\alphapower(J) \otimes W\betapower(J)$.
\end{defn}

\begin{prop}
    Whenever it is well-defined, $\Rcal\alphabetapower_{J}(b/a)$ is a morphism
    $(W\alphapower_{a})(J) \otimes (W\betapower_{b})(J) \rightarrow (W\betapower_{b})(J) \otimes (W\alphapower_{a})(J)$
    of $\U(J)$-modules which coincides with the quantum affine $R$-matrix obtained from \cite{CP94}*{Thm. 12.5.5}.
\end{prop}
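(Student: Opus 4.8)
The plan is to reduce the statement to the two facts we already have in hand: first, that restricting the action of $\Utor$ on $W\alphapower \otimes W\betapower$ along the embedding $f$ (built from $X_i\pi_i$ and $v$, as in Section~\ref{subsection:action of Uv}) recovers the quantum affine $R$-matrix of \cite{CP94}, and second, that $\Rcal\alphabetapower(x)$ is the unique rational intertwiner normalised on $\ell$-highest weight vectors. Concretely, I would fix a proper subset $J\subset I$. Up to applying an outer automorphism $\Scal_{\pi_i}$ (which permutes the $\xpm_{i,m}$ and hence relabels $J$) we may assume without loss of generality that $0\notin J$, so that $J\subset I_0$ and $\U(J)$ is a vertical quantum affine subalgebra of type given by the full Dynkin subdiagram on $J$. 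The key structural input is Lemma~\ref{lem:decomposing submodules of tensor products}: as vector spaces $(W\alphapower \otimes W\betapower)(J) = W\alphapower(J) \otimes W\betapower(J)$, and moreover by Proposition~\ref{prop:actions of Udash coincide} (applied to each tensor slot) the action of $\U(J)\cong \Udash_{J}$ on this subspace via $\Delta^{\psi}$ coincides with the action via the standard coproduct $\Dbarplus$. Thus $(W\alphapower_a \otimes W\betapower_b)(J)$, as a $\U(J)$-module, is exactly the $\Dbarplus$-tensor product of the restrictions $(W\alphapower_a)(J)$ and $(W\betapower_b)(J)$ of quantum affine modules.

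Next I would show that $\Rcal\alphabetapower_J(b/a)$ restricted to this subspace is a $\U(J)$-module homomorphism. This is immediate: $\Rcal\alphabetapower(b/a)$ is a $\Utor$-module homomorphism by Theorem~\ref{thm:quantum toroidal R-matrices}, so in particular it commutes with the $\U(J)$-action; and by equation~(\ref{eqn:Dpsi on C and k}) it preserves weight spaces, hence sends $(W\alphapower\otimes W\betapower)(J)$ into $(W\betapower\otimes W\alphapower)(J)$, since $V(J)$ is defined purely in terms of weights lying in cones spanned by $\{\alpha_j : j\in J\}$ below the tops $\lambda\alphapower_\ell$. Therefore $\Rcal\alphabetapower_J(b/a)$ is a well-defined $\U(J)$-intertwiner $(W\alphapower_a)(J)\otimes(W\betapower_b)(J)\to(W\betapower_b)(J)\otimes(W\alphapower_a)(J)$ whenever the ambient toroidal $R$-matrix is well-defined. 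It also sends the $\ell$-highest weight vector $w\alphapower\otimes w\betapower$ to $w\betapower\otimes w\alphapower$, and these vectors are $\ell$-highest weight for the $\U(J)$-action as well (by the analysis in Section~\ref{subsection:action of Uv}, the image under $f$ of the affine Drinfeld generators makes $w^{(\gamma)}$ an $\ell$-highest weight vector with Drinfeld polynomials the appropriate shifts of $P^{(\gamma)}_j$, $j\in J$).

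Finally I would invoke uniqueness of the quantum affine $R$-matrix from \cite{CP94}*{Thm.~12.5.5}: for quantum affine algebras, there is at most one rational intertwiner $(W\alphapower_a)(J)\otimes(W\betapower_b)(J)\to(W\betapower_b)(J)\otimes(W\alphapower_a)(J)$ taking the distinguished product of $\ell$-highest weight vectors to its reverse, and this is the Chari--Pressley $R$-matrix. Since $\Rcal\alphabetapower_J(b/a)$ is such an intertwiner, and since $\langle v, \Rcal\alphabetapower(x)w\rangle$ is rational in $x$ for all $v,w$ so the restriction is rational as well, it must equal the quantum affine $R$-matrix. Here one should be slightly careful about normalisation conventions --- \cite{CP94} uses the coproduct $\overline{\Delta}_-$ whereas we have landed on $\Dbarplus$ --- but as noted in the footnote in Section~\ref{subsection:action of Uv} the corresponding statement for $\Dbarplus$ follows by the same argument with $\sigma$-twists, so the two normalised $R$-matrices agree after the evident identification.

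I expect the main obstacle to be the bookkeeping in the case $0\in J$: one cannot simply appeal to Proposition~\ref{prop:actions of Udash coincide} directly, because $(W\alphapower\otimes W\betapower)(J)$ then involves the $\alpha_0$-direction, and one must instead choose a minuscule node $i\in\Imin$ with $\pi_i$ sending $0$ out of $J$, use the compatibility $\psi\circ b = \te(b)\circ\psi$ from Proposition~\ref{prop:compatibilities} to transport the $\Delta^{\psi}$-action along $\Scal_{\pi_i}$, and check that the Chari--Pressley $R$-matrix is likewise permuted by the corresponding Dynkin diagram automorphism. Provided $\lvert\Imin\rvert$ is large enough this covers every proper $J$; otherwise (e.g.\ in type $E_8^{(1)}$, $F_4^{(1)}$, $G_2^{(1)}$ where $\Imin$ may be trivial) one needs the more delicate analysis of the action of $\U(0)$ from Section~\ref{subsection:action of U0}, together with the explicit formulae in Propositions~\ref{prop:action of xpm00}--\ref{prop:action of phi+0m}, to verify directly that $\Rcal\alphabetapower_J(b/a)$ intertwines the $\U(J)$-action when $0\in J$. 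The rest is routine.
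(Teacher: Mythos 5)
Your proposal is correct in outline and shares the skeleton of the paper's argument: weight preservation (from $\Delta^{\psi}(k_{i}^{\pm 1}) = k_{i}^{\pm 1}\otimes k_{i}^{\pm 1}$) together with Lemma \ref{lem:decomposing submodules of tensor products} shows the restriction is a well-defined $\U(J)$-intertwiner sending $w\alphapower\otimes w\betapower$ to $w\betapower\otimes w\alphapower$, and one then matches it with the Chari--Pressley $R$-matrix and spreads the identity to all parameters by rationality of the matrix coefficients. Where you diverge is in the identification step. You front-load the verification that the $\U(J)$-action induced by $\Delta^{\psi}$ agrees with the quantum affine coproduct action --- reducing to $0\notin J$ by diagram automorphisms and falling back on the Section \ref{subsection:action of U0} computations when $\Imin$ is too small --- and then invoke uniqueness of the affine intertwiner pointwise. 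The paper instead leans on Theorem \ref{thm:irreducible except countable}: for all but countably many $b$ the toroidal module $W\alphapower\otimes W\betapower_{b}$ is a sum of irreducibles, hence so is its restriction to $\U(J)$, Schur's lemma forces the two intertwiners to agree at those $b$, and rationality upgrades this to equality of rational functions everywhere. That mechanism buys two things your route must work harder for: it needs no case analysis on whether $0\in J$ or on the size of $\Imin$, and it avoids asserting uniqueness of normalised intertwiners on the possibly reducible modules $(W\alphapower)(J)\otimes(W\betapower_{b})(J)$ --- your ``at most one rational intertwiner normalised on the highest weight vector'' is not automatic when an irreducible constituent occurs with multiplicity, whereas generic irreducibility plus rationality handles this cleanly. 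On the other hand, your explicit appeal to Proposition \ref{prop:actions of Udash coincide} addresses a point the paper leaves implicit: for Schur's lemma to compare the two maps at all, the Chari--Pressley map must be an intertwiner for the \emph{same} $\U(J)$-module structure as the restricted $\Delta^{\psi}$ one, so the care you take there is not wasted, even if the reduction machinery you build for it is heavier than what the paper ultimately needs.
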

\begin{proof}
    Without loss of generality take $a=1$, as in the proof of Theorem \ref{thm:irreducible except countable}.
    It is clear that $\Rcal\alphabetapower(b)$ preserves the weight of any vector and therefore sends
    $(W\alphapower \otimes W\betapower_{b})(J) \rightarrow (W\betapower_{b} \otimes W\alphapower)(J)$,
    since it intertwines the actions of all
    $\Delta^{\psi}(k_{i}^{\pm 1}) = k_{i}^{\pm 1} \otimes k_{i}^{\pm 1}$.
    Thus by Lemma \ref{lem:decomposing submodules of tensor products} it restricts to a morphism of $\U(J)$-modules
    $\Rcal\alphabetapower_{J}(b) :
    (W\alphapower)(J) \otimes (W\betapower_{b})(J)
    \rightarrow
    (W\betapower_{b})(J) \otimes (W\alphapower)(J)$
    which maps
    $w\alphapower \otimes w\betapower \mapsto w\betapower \otimes w\alphapower$.
    \\

    If $W\alphapower \otimes W\betapower_{b}$ is an irreducible representation of $\Utor$, then
    $(W\alphapower)(J) \otimes (W\betapower_{b})(J)
    =
    (W\alphapower \otimes W\betapower_{b})(J)$
    must be an irreducible $\U(J)$-module.
    In this case, $\Rcal\alphabetapower_{J}(b)$ must equal the $R$-matrix from \cite{CP94}*{Thm. 12.5.5} by Schur's lemma.
    It follows that when $W\alphapower \otimes W\betapower_{b}$ is a \emph{sum} of irreducibles, $\Rcal\alphabetapower_{J}(b)$ also coincides with the quantum affine $R$-matrix.
    With respect to fixed bases, each morphism has matrix coefficients which are rational functions in $b$.
    Then since they take the same values at all but countably many $b\in\Cbb^{\times}$, the functions themselves must be equal and so we are done.
\end{proof}

Using our quantum toroidal $R$-matrices, we can now define a family of \emph{transfer matrices} acting on each of the representations above.
Furthermore, the commutativity of these families comes as a direct consequence of the Yang-Baxter equation (\ref{eqn:proper qYBE}).
\\

On the affine level, such constructions have been used to establish the integrability of the corresponding quantum system via Bethe ansatz techniques.
Transfer matrices and their spectra are also important for understanding (Grothendieck rings of) the underlying module categories for $\Uaff$ \cites{FH15,FH18}.
We plan to explore these directions within the quantum toroidal setting in future work.
\\

For any $V\alphapower$ and $V\betapower$, define the associated transfer matrix $\Tcal\alphabetapower(x)$ to be the $\mathrm{End}_{\Cbb}(V\alphapower)$-valued rational function given by
\begin{align} \label{eqn:transfer matrix definition}
    \Rcal\alphabetapower(b/a) (u\otimes v\betapower)
    =
    v\betapower \otimes \Tcal\alphabetapower(b/a)(u)
    \mod{\sum_{\mu\lneq\lambda\betapower} V\betapower_{\mu} \otimes V\alphapower}
\end{align}
for all $a,b\in\Cbb^{\times}$ and $u\in V\alphapower$, whenever $\Rcal\alphabetapower(x)$ does not have a pole at $b/a$.
Note in particular that every $\Tcal\alphabetapower(x)$ fixes the $\ell$-highest weight vector $v\alphapower$.
The next theorem ensures that these form sets of commuting $\Cbb$-linear operators on each irreducible representation in $\Oaff$.

\begin{thm} \label{thm:transfer matrices commute}
    We have
    $[\Tcal\onetwo(b/a),\Tcal\onethree(c/a)] = 0$
    for all $V\one$, $V\two$, $V\three$ and $a,b,c\in\Cbb^{\times}$ such that both transfer matrices are well-defined.
\end{thm}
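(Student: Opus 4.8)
The statement is the standard consequence of the Yang--Baxter equation, so the plan is to apply the Yang--Baxter equation (\ref{eqn:proper qYBE}) from Theorem \ref{thm:quantum toroidal R-matrices} to the triple tensor product $V\one_{a} \otimes V\two_{b} \otimes V\three_{c}$ and then compare the output with the definition (\ref{eqn:transfer matrix definition}) of the transfer matrices. First I would fix $a,b,c\in\Cbb^{\times}$ such that both $\Tcal\onetwo(b/a)$ and $\Tcal\onethree(c/a)$ are well-defined; since (\ref{eqn:transfer matrix definition}) only requires that $\Rcal\onetwo(x)$ and $\Rcal\onethree(x)$ have no pole at $b/a$ and $c/a$ respectively, and since the matrix coefficients of everything in sight are rational functions of $a,b,c$, it suffices to prove the identity generically and then invoke rationality -- so I may assume in addition that $\Rcal\twothree(c/b)$ has no pole at $c/b$, so that both sides of (\ref{eqn:proper qYBE}) are well-defined.

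The key computation is to evaluate both sides of (\ref{eqn:proper qYBE}), applied to a vector of the form $u \otimes v\two \otimes v\three$ with $u\in V\one$ arbitrary, and to read off the coefficient of $v\three \otimes v\two \otimes (-)$ modulo the ``lower weight'' subspace. On the right-hand side, the first map $\Rcal\onetwo(b/a) \otimes \mathrm{Id}_{V\three}$ sends $u\otimes v\two \otimes v\three$ to $v\two \otimes \Tcal\onetwo(b/a)(u) \otimes v\three$ plus terms in $\big(\sum_{\mu\lneq\lambda\two} V\two_{\mu}\otimes V\one\big)\otimes V\three$; applying $\mathrm{Id}_{V\two}\otimes\Rcal\onethree(c/a)$ and then $\Rcal\twothree(c/b)\otimes\mathrm{Id}_{V\one}$, and tracking carefully how the lower-weight error terms propagate (they stay in the appropriate lower-weight subspaces because every $\Rcal\alphabetapower(x)$ preserves weights and $\Tcal\alphabetapower(x)$ sends $V\alphapower_{\nu}$ into $\bigoplus_{\nu'\leq\nu}V\alphapower_{\nu'}$), one obtains $v\three \otimes v\two \otimes \big(\Tcal\onethree(c/a)\,\Tcal\onetwo(b/a)\big)(u)$ modulo lower-weight terms. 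Running the same bookkeeping through the left-hand side of (\ref{eqn:proper qYBE}) -- $\mathrm{Id}_{V\one}\otimes\Rcal\twothree(c/b)$ acts trivially on $u\otimes v\two\otimes v\three$ by the normalisation $v\two\otimes v\three\mapsto v\three\otimes v\two$, then $\Rcal\onethree(c/a)\otimes\mathrm{Id}_{V\two}$, then $\mathrm{Id}_{V\three}\otimes\Rcal\onetwo(b/a)$ -- yields $v\three\otimes v\two\otimes\big(\Tcal\onetwo(b/a)\,\Tcal\onethree(c/a)\big)(u)$ modulo lower-weight terms. Since the two sides of (\ref{eqn:proper qYBE}) agree as operators, comparing the $v\three\otimes v\two\otimes(-)$ components in the quotient $\big(V\three\otimes V\two\otimes V\one\big)\big/\big(\sum_{\mu\lneq\lambda\two+\lambda\three}(V\three\otimes V\two)_{\mu}\otimes V\one\big)$ gives $\Tcal\onethree(c/a)\,\Tcal\onetwo(b/a)=\Tcal\onetwo(b/a)\,\Tcal\onethree(c/a)$ on all of $V\one$, which is the claim.

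The main obstacle is the careful tracking of the lower-weight ``error'' terms through the three-fold composition, and in particular setting up the correct filtration on $V\three\otimes V\two\otimes V\one$ (indexed by the weight on the $V\three\otimes V\two$ factor) so that the quotient in which one reads off $\Tcal$ is the right one and every intermediate map is seen to be filtration-compatible. This is where one must use that each $\Rcal\alphabetapower(x)$ is weight-preserving (it intertwines $\Dpsi(k_i^{\pm 1})=k_i^{\pm1}\otimes k_i^{\pm1}$) together with the fact that $\Tcal\alphabetapower(b/a)$ fixes $v\alphapower$ and lowers weights -- both of which follow from (\ref{eqn:transfer matrix definition}) and Theorem \ref{thm:quantum toroidal R-matrices}. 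Once the filtration is in place the argument is the classical RTT-type computation; the rationality reduction at the start then removes the apparent dependence on the well-definedness of $\Rcal\twothree(c/b)$, completing the proof.
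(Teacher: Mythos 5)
Your proposal is correct and follows essentially the same route as the paper: apply the Yang--Baxter equation (\ref{eqn:proper qYBE}) to $u\otimes v\two\otimes v\three$ and track the lower-weight error terms through the two three-fold compositions, using that the $R$-matrices are weight-preserving, then read off the $v\three\otimes v\two\otimes(-)$ component. Your opening rationality reduction -- disposing of a possible pole of $\Rcal\twothree(x)$ at $c/b$, which the theorem's hypotheses do not exclude but which the Yang--Baxter equation requires -- is a point the paper's proof passes over in silence, and is a worthwhile refinement.
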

\begin{proof}
    This follows simply by applying the Yang-Baxter equation (\ref{eqn:proper qYBE}) to
    $u \otimes v\two \otimes v\three$
    for any $u\in V\one$.
    In particular, our $R$-matrices are $\Utor$-module homomorphisms and thus weight-preserving by (\ref{eqn:Dpsi on C and k}), whereby:
    \begin{align*}
        u \otimes v\two \otimes v\three
        &\xrightarrow{\mathrm{Id}_{V\one} \otimes \Rcal\twothree(c/b)}
        u \otimes v\three \otimes v\two
        \\
        &\xrightarrow{\Rcal\onethree(c/a) \otimes \mathrm{Id}_{V\two}}
        v\three \otimes \Tcal\onethree(c/a) (u) \otimes v\two
        \mod{L\three}
        \\
        &\xrightarrow{\mathrm{Id}_{V\three} \otimes \Rcal\onetwo(b/a)}
        v\three \otimes v\two \otimes \Tcal\onetwo(b/a) \Tcal\onethree(c/a) (u)
        \mod{L^{(3,2)}}
        \\
        u \otimes v\two \otimes v\three
        &\xrightarrow{\Rcal\onetwo(b/a) \otimes \mathrm{Id}_{V\three}}
        v\two \otimes \Tcal\onetwo(b/a) (u) \otimes v\three
        \mod{L\two}
        \\
        &\xrightarrow{\mathrm{Id}_{V\two} \otimes \Rcal\onethree(c/a)}
        v\two \otimes v\three \otimes \Tcal\onethree(c/a) \Tcal\onetwo(b/a) (u)
        \mod{L^{(2,3)}}
        \\
        &\xrightarrow{\Rcal\twothree(c/b) \otimes \mathrm{Id}_{V\one}}
        v\three \otimes v\two \otimes \Tcal\onethree(c/a) \Tcal\onetwo(b/a) (u)
        \mod{L^{(3,2)}}
    \end{align*}
    where we let
    \begin{align*}
        L\two
        &=
        \sum_{\mu\lneq\lambda\two}
        V\two_{\mu} \otimes V\one \otimes V\three,
        \\
        L\three
        &=
        \sum_{\mu\lneq\lambda\three}
        V\three_{\mu} \otimes V\one \otimes V\two,
        \\
        L^{(2,3)}
        &=
        \sum_{\mu\lneq\lambda\two}
        V\two_{\mu} \otimes V\three \otimes V\one
        +
        \sum_{\mu\lneq\lambda\three}
        V\two \otimes V\three_{\mu} \otimes V\one,
        \\
        L^{(3,2)}
        &=
        \sum_{\mu\lneq\lambda\three}
        V\three_{\mu} \otimes V\two \otimes V\one
        +
        \sum_{\mu\lneq\lambda\two}
        V\three \otimes V\two_{\mu} \otimes V\one.
        \qedhere
    \end{align*}
\end{proof}

As with our $R$-matrices above, we can extend the transfer matrix construction to all direct sums $W\alphapower$ and $W\betapower$ of tensor products of simple objects in $\Oaff$.
Indeed, if we introduce some further notations
\begin{itemize}
    \item $\Tcal^{(\alpha_{k\ell},\beta_{r})}(x) = \Tcal^{(\alpha_{k\ell},\beta_{rS})}(x) \dots \Tcal^{(\alpha_{k\ell},\beta_{r1})}(x)$
    \item $\Tcal^{(\alpha_{k},\beta_{r})}(x) =
    \Tcal^{(\alpha_{k1},\beta_{rS})}(x) \dots \Tcal^{(\alpha_{k1},\beta_{r1})}(x)
    \otimes \dots \otimes
    \Tcal^{(\alpha_{kL},\beta_{rS})}(x) \dots \Tcal^{(\alpha_{kL},\beta_{r1})}(x)$
    \item $\Tcal^{(\alpha,\beta_{r})}(x)
    = \bigoplus_{k=1}^{K}
    \Tcal^{(\alpha_{k1},\beta_{rS})}(x) \dots \Tcal^{(\alpha_{k1},\beta_{r1})}(x)
    \otimes \dots \otimes
    \Tcal^{(\alpha_{kL},\beta_{rS})}(x) \dots \Tcal^{(\alpha_{kL},\beta_{r1})}(x)$
\end{itemize}
then it is relatively easy to show that equation (\ref{eqn:transfer matrix definition}) generalises to
\begin{align*}
    \Rcal\alphabetapower(b/a) (u \otimes w\betapower)
    =
    \bigoplus_{r=1}^{R}
    w^{(\beta_{r})} \otimes
    \Tcal^{(\alpha,\beta_{r})}(b/a) (u)
    \mod{\bigoplus_{r=1}^{R}
    \left(\sum_{\mu\leq\lambda^{(\beta_{r})}} W^{(\beta_{r})}_{\mu} \right)
    \otimes W\alphapower}
\end{align*}
for any $a,b\in\Cbb^{\times}$ and
$u = \bigoplus_{k=1}^{K} u_{k1} \otimes \dots \otimes u_{kL}$
in $W\alphapower$,
whenever $\Rcal\alphabetapower(b/a)$ is well-defined.
With this in mind, we can define a transfer matrix
\begin{align*}
    \Tcal\alphabetapower(x)
    =
    \bigoplus_{k=1}^{K}
    \Big(\sum_{r=1}^{R} \Tcal^{(\alpha_{k1},\beta_{rS})}(x)
    \dots
    \Tcal^{(\alpha_{k1},\beta_{r1})}(x)\Big)
    \otimes \dots \otimes
    \Big(\sum_{r=1}^{R} \Tcal^{(\alpha_{k\ell},\beta_{rS})}(x)
    \dots
    \Tcal^{(\alpha_{k\ell},\beta_{r1})}(x)\Big)
\end{align*}
associated to $\Rcal\alphabetapower(x)$, which scales the direct sum $w\alphapower$ of highest weight vectors by $R$.
Furthermore, the commutativity of all such endomorphisms extends to this broader setting as desired.

\begin{cor} \label{cor:transfer matrices commute}
    We have
    $[\Tcal\onetwo(b/a),\Tcal\onethree(c/a)] = 0$
    for all $W\one$, $W\two$, $W\three$ and $a,b,c\in\Cbb^{\times}$ such that both transfer matrices are well-defined.
\end{cor}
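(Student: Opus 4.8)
The plan is to run the argument of the proof of Theorem \ref{thm:transfer matrices commute} essentially verbatim, now feeding in the $R$-matrices $\Rcal\alphabetapower(x)$ for direct sums of tensor products supplied by Corollary \ref{cor:quantum toroidal R-matrices} together with the generalised transfer matrix identity displayed immediately before the statement of the corollary (which extends equation (\ref{eqn:transfer matrix definition}) to the setting of $W\alphapower$, $W\betapower$). Concretely, I would fix an arbitrary $u = \bigoplus_{k=1}^{K} u_{k1}\otimes\dots\otimes u_{kL}$ in $W\one$ and apply the Yang--Baxter equation (\ref{eqn:proper qYBE}), valid for these $R$-matrices by Corollary \ref{cor:quantum toroidal R-matrices}, to the vector $u \otimes w\two \otimes w\three$ inside $W\one_{a}\otimes W\two_{b}\otimes W\three_{c}$, mirroring the computation in Theorem \ref{thm:transfer matrices commute}.

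Next I would evaluate the two sides of (\ref{eqn:proper qYBE}) step by step, using at each application of an $R$-matrix the generalised relation $\Rcal\alphabetapower(b/a)(u\otimes w\betapower) = \bigoplus_{r} w^{(\beta_{r})}\otimes\Tcal^{(\alpha,\beta_{r})}(b/a)(u)$ modulo a direct sum of terms whose $W^{(\beta_{r})}$-factor lies in a proper weight subspace. Tracking the right-hand side of (\ref{eqn:proper qYBE}), the first $R$-matrix produces a copy of $w\three$ together with $\Tcal\onethree(c/a)(u)$ modulo such lower-weight terms, the second layers on $\Tcal\onetwo(b/a)$, and the third merely reorders tensor factors; tracking the left-hand side produces $\Tcal\onethree(c/a)\Tcal\onetwo(b/a)(u)$ instead, in the same positions and modulo the analogous subspace. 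The step that makes these modular bookkeeping manipulations legitimate is that every $\Rcal^{(\alpha_{k\ell},\beta_{rs})}(x)$, and hence $\Rcal\alphabetapower(x)$, is a $\Utor$-module homomorphism and therefore weight preserving by equation (\ref{eqn:Dpsi on C and k}): this guarantees that the subspaces spanned by vectors having a proper weight in one of the factors are stable under all the maps occurring, so that the congruences propagate through the whole computation just as in the irreducible case, with the role of $L\two$, $L\three$, $L^{(2,3)}$, $L^{(3,2)}$ from the proof of Theorem \ref{thm:transfer matrices commute} now played by their direct-sum versions over the index set $r$.

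Comparing the two resulting expressions modulo the appropriate ``lower'' subspace and projecting onto the relevant $w\three\otimes w\two\otimes W\one$ summand then yields $\Tcal\onetwo(b/a)\Tcal\onethree(c/a)(u) = \Tcal\onethree(c/a)\Tcal\onetwo(b/a)(u)$ for all $u\in W\one$, which is the asserted identity $[\Tcal\onetwo(b/a),\Tcal\onethree(c/a)] = 0$. I expect the only real obstacle to be organisational rather than conceptual: one must verify that iterating the $R$-matrix relation really does compose to the $\Tcal\alphabetapower(x)$ as defined — in particular that the ordering of the elementary operators $\Tcal^{(\alpha_{k\ell},\beta_{rs})}(x)$ matches the definition, that the direct-sum indices over the factors of $W\two$ and of $W\three$ do not interact, and that the various lower-weight subspaces are assembled correctly over the direct sum. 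Once this set-up is in place, each individual move is exactly one already performed in the proof of Theorem \ref{thm:transfer matrices commute}, and compatibility with direct sums is automatic since the $R$-matrices act blockwise.
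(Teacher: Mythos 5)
There is a genuine gap, and it sits exactly at the point you dismiss as ``organisational rather than conceptual'': iterating the $R$-matrix relation does \emph{not} compose to $\Tcal\alphabetapower(x)$ as defined. The generalised identity displayed before the corollary reads
$\Rcal\alphabetapower(b/a)(u\otimes w\betapower)=\bigoplus_{r}w^{(\beta_{r})}\otimes\Tcal^{(\alpha,\beta_{r})}(b/a)(u)$
modulo lower-weight terms, so the operator applied to $u$ \emph{depends on which direct summand} $W^{(\beta_{r})}\subset W\betapower$ the first tensor factor lands in; the output is not of the form $w\betapower\otimes(\text{fixed operator})(u)$, and your claim that the first $R$-matrix ``produces a copy of $w\three$ together with $\Tcal\onethree(c/a)(u)$'' is false whenever $W\three$ has more than one summand. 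Carrying the computation through correctly, comparing the two sides of the Yang--Baxter equation summand by summand only yields $[\Tcal^{(\alpha,\beta_{r})}(b/a),\Tcal^{(\alpha,\gamma_{t})}(c/a)]=0$ for each pair of summands of $W\two$ and $W\three$. That is strictly weaker than the corollary: $\Tcal\onetwo(x)$ is a direct sum over $k$ of \emph{tensor products of sums} over $r$, so expanding $\Tcal\onetwo\Tcal\onethree$ produces mixed terms in which different tensor factors of $W^{(\alpha_{k})}$ carry different indices $r$ and $t$; these never arise from a single application of the Yang--Baxter equation at the level of the $W$'s, and identities of the shape $\bigotimes_{\ell}A_{\ell}B_{\ell}=\bigotimes_{\ell}B_{\ell}A_{\ell}$ for the diagonal multi-indices do not imply the factor-wise commutativity $A_{\ell}B_{\ell}=B_{\ell}A_{\ell}$ that the mixed terms require.

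The intended proof is the reduction you skip over: both $\Tcal\onetwo(b/a)$ and $\Tcal\onethree(c/a)$ are assembled block-diagonally over $k$ and factor-wise over $\ell$ from the elementary transfer matrices $\Tcal^{(\alpha_{k\ell},\beta_{rs})}(b/a)$ and $\Tcal^{(\alpha_{k\ell},\gamma_{ts})}(c/a)$ acting on the single irreducible module $V^{(\alpha_{k\ell})}$. These pairwise commute by Theorem \ref{thm:transfer matrices commute}, operators supported on distinct tensor factors commute trivially, and direct sums commute blockwise; hence so do the sums and tensor products built from them, with no further appeal to the Yang--Baxter equation for the composite $R$-matrices. (Your argument does close in the special case where $W\two$ and $W\three$ are single tensor products of irreducibles, since then $\Tcal\alphabetapower=\Tcal^{(\alpha,\beta_{1})}$; the gap appears precisely for genuine direct sums.)
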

\begin{proof}
    This follows immediately from Theorem \ref{thm:transfer matrices commute}.
\end{proof}

Let $\widehat{\mathcal{O}}_{\mathrm{irr}}^{\oplus,\otimes}$ be the full subcategory of $\Oaff$ on direct sums of tensor products of irreducible modules.
Then by construction, $W\betapower \mapsto \Tcal\alphabetapower(x)$ defines a ring homomorphism
\begin{align*}
    K(\widehat{\mathcal{O}}_{\mathrm{irr}}^{\oplus,\otimes})
    \rightarrow
    \mathrm{End}_{\Cbb}(W\alphapower)(x)
\end{align*}
from its Grothendieck group to the algebra of $\mathrm{End}_{\Cbb}(W\alphapower)$-valued rational functions, which should in fact extend to all of $K(\Oaff)$.
Of course, the image forms a commutative subring inside $\mathrm{End}_{\Cbb}(W\alphapower)(x)$ by Corollary \ref{cor:transfer matrices commute}.
It is worth noting that just as we expect our $R$-matrices to be the images in
$\mathrm{End}(W\alphapower\otimes W\betapower)$
of a universal $R$-matrix, the transfer matrices should similarly come from an element in some completion of $\Utor$.

\begin{rmk}
    As in Section \ref{section:tensor products}, our results here carry over to $\Utorglone$ in an appropriate way.
    There they match those of \cite{Miki07}*{§7}, after accounting for the difference between our coproduct $\Dpsi$ and the one used by Miki.
    Applications in this case to quantum integrable systems have moreover been considered in \cites{FJMM15,FJMM17,FJM19}.
\end{rmk}

\subsection{Proof of Theorem \ref{thm:quantum toroidal R-matrices}} \label{subsection:proof of R-matrices}

As in our proof of Theorem \ref{thm:irreducible except countable}, we may without loss of generality take $a=1$.
Note that once existence is verified, uniqueness follows easily by Theorem \ref{thm:irreducible except countable}, Corollary \ref{cor:irreducible switch factors}, and Schur's lemma.
Fix a basis $\lbrace v_{1},\dots,v_{m}\rbrace$ for each non-zero weight space $(V\alphapower\otimes V\betapower)_{\mu}$, and define elements
\begin{align*}
    v\alphabetapower_{\ivector,\kvector}(b)
    &=
    \xm_{i_{1},k_{1}}\cdots\xm_{i_{s},k_{s}}\cdot (v\alphapower \otimes v\betapower)
    \in (V\alphapower\otimes V\betapower_{b})_{\mu}
    \\
    v\betaalphapower_{\ivector,\kvector}(b)
    &=
    \xm_{i_{1},k_{1}}\cdots\xm_{i_{s},k_{s}}\cdot (v\betapower \otimes v\alphapower)
    \in (V\betapower_{b}\otimes V\alphapower)_{\mu}
\end{align*}
for each $\ivector = (i_{1},\dots,i_{s})$ and $\kvector = (k_{1},\dots,k_{s})$ such that $\sum \alpha_{i_{j}} = \lambda\alphapower + \lambda\betapower - \mu$.
Each $v\alphabetapower_{\ivector,\kvector}(b)$ can be written as a linear combination of $v_{1},\dots,v_{m}$ with coefficients in $\Cbb[b^{\pm 1}]$.
\\

Fixing some $b_{0}$ lying outside the countable subset $S \subset \Cbb^{\times}$ for which $V\alphapower\otimes V\betapower_{b_{0}}$ is reducible, condition (\ref{eqn:first irreducibility condition}) holds and thus the $v\alphabetapower_{\ivector,\kvector}(b_{0})$ span $(V\alphapower\otimes V\betapower_{b_{0}})_{\mu}$.
Then for all $1\leq i \leq m$ we can conversely write
$w_{i} = \sum_{j=1}^{m} r^{(b_{0})}_{ij}(b_{0}) v\alphabetapower_{\ivector,\kvector}(b_{0})$
for some sequences $\ivector_{j}$, $\kvector_{j}$ and rational functions $r^{(b_{0})}_{ij}(x) \in \Cbb(x)$ which are regular at $x=b_{0}$.
\\

Whenever
$b\not\in P_{b_{0},\mu} = \bigcup_{i,j} \lbrace \mathrm{poles~of~} r^{(b_{0})}_{ij}(x) \rbrace$
we still have
$w_{i} = \sum_{j=1}^{m} r^{(b_{0})}_{ij}(b) v\alphabetapower_{\ivector,\kvector}(b)$,
and can therefore define a
$\Hom_{\Cbb}((V\alphapower \otimes V\betapower)_{\mu},(V\betapower \otimes V\alphapower)_{\mu})$-valued rational function by
\begin{align*}
    w_{i} = \sum_{j=1}^{m} r^{(b_{0})}_{ij}(b) v\alphabetapower_{\ivector,\kvector}(b)
    \mapsto
    \sum_{j=1}^{m} r^{(b_{0})}_{ij}(b) v\betaalphapower_{\ivector,\kvector}(b)
\end{align*}
for all $1\leq i \leq m$ and $b\not\in P_{b_{0},\mu}$.
Then summing over all $\mu$ produces a
$\Hom_{\Cbb}(V\alphapower \otimes V\betapower,V\betapower \otimes V\alphapower)$-valued rational function $\Rcal\alphabetapower_{b_{0}}(x)$ whose poles are contained in $P_{b_{0}} = \bigcup_{\mu} P_{b_{0},\mu}$.
\\

In order to verify that $\Rcal\alphabetapower_{b_{0}}(x)$ is independent of $b_{0}$, fix some other $b_{1}\not\in S$ and take any $b$ outside the countable set $S\cup P_{b_{0}}\cup P_{b_{1}}$.
Then
$\Rcal\alphabetapower_{b_{0}}(x), \Rcal\alphabetapower_{b_{1}}(x) :
V\alphapower \otimes V\betapower_{b} \rightarrow V\betapower_{b} \otimes V\alphapower$
each map $v\alphapower \otimes v\betapower \mapsto v\betapower \otimes v\alphapower$ by definition, so are both non-zero isomorphisms and thus equal due to Corollary \ref{cor:irreducible switch factors} and Schur's lemma.
Hence $\Rcal\alphabetapower_{b_{0}}(x) = \Rcal\alphabetapower_{b_{1}}(x)$ and we can drop the subscript from now on.
Moreover, the poles of $\Rcal\alphabetapower(x)$ are contained in $S$ since each $b_{0}\not\in P_{b_{0}}$, and the second part of the statement is proved.
\\

When $b$ lies outside the countable set $S\cup P_{b_{0}}$ we know that
$\Rcal\alphabetapower(b) : V\alphapower \otimes V\betapower_{b} \rightarrow V\betapower_{b} \otimes V\alphapower$
intertwines the action of $\Utor$ on each side.
With respect to fixed bases, both actions have coefficients in $\Cbb[b^{\pm 1}]$ by (\ref{eqn:technical fact 1}), (\ref{eqn:technical fact 2}) and the surrounding discussion.
Since $\Rcal\alphabetapower(b)$ has matrix coefficients in $\Cbb(b)$, the intertwining property must extend to all $b\in\Cbb^{\times}$ which are not poles of $\Rcal\alphabetapower(b)$ and thus our proof of the first part of the Theorem \ref{thm:quantum toroidal R-matrices} is complete.
\\

In order to verify that our $R$-matrices do indeed satisfy the trigonometric quantum Yang-Baxter equation, first note that each side of (\ref{eqn:proper qYBE}) maps
$v\one \otimes v\two \otimes v\three \mapsto v\three \otimes v\two \otimes v\one$
and is therefore a non-zero homomorphism.
By Theorem \ref{thm:irreducible except countable} both
$V\one \otimes V\two_{b} \otimes V\three_{c}$ and
$V\three_{c} \otimes V\two_{b} \otimes V\one$
are irreducible for all but countably many pairs $(b,c)$, in which case equation (\ref{eqn:proper qYBE}) holds by Schur's lemma.
But as the complement of a countable set is Zariski dense in $\Cbb^{2}$, the matrix coefficients for each side of (\ref{eqn:proper qYBE}) -- which are rational functions in $b$ and $c$ -- must in fact be equal.
$\hfill \qed$

\pagebreak

\addcontentsline{toc}{section}{References}

\begin{bibsection}
\begin{biblist}*{labels={shortalphabetic}}

\bib{AGW23}{article}{
    label={AGW23},
    title={The $R$-matrix of the affine Yangian},
    author={A. Appel},
    author={S. Gautam},
    author={C. Wendlandt},
    journal={arXiv preprint},
    year={2023},
    note={\url{https://doi.org/10.48550/arXiv.2309.02377}},
}

\bib{Beck94}{article}{
    label={Be94},
    title={Braid group action and quantum affine algebras},
    author={J. Beck},
    journal={Commun. Math. Phys.},
    volume={165},
    number={3},
    pages={555--568},
    year={1994},
    note={\url{https://doi.org/10.1007/BF02099423}},
}

\bib{BKOP14}{article}{
    label={BKOP14},
    title={Construction of Irreducible Representations over Khovanov-Lauda-Rouquier Algebras of Finite Classical Type},
    author={G. Benkart},
    author={S.-J. Kang},
    author={S.-J. Oh},
    author={E. Park},
    journal={Int. Math. Res. Not.},
    volume={2014},
    number={5},
    pages={1312--1366},
    year={2014},
    note={\url{https://doi.org/10.1093/imrn/rns244}},
}

\bib{Bourbaki68}{book}{
    label={Bo68},
    title={El\'{e}ments de math\'{e}matique},
    subtitle={Groupes et alg\`{e}bres de Lie, Chapitres 4, 5 et 6},
    year={1968},
    author={N. Bourbaki},
    publisher={Hermann},
    address={Paris},
    note={\url{https://doi.org/10.1007/978-3-540-34491-9}},
}

\bib{BS12}{article}{
    label={BS12},
    title={On the Hall algebra of an elliptic curve, I},
    author={I. Burban},
    author={O. Schiffmann},
    journal={Duke Math. J.},
    volume={161},
    number={7},
    pages={1171--1231},
    year={2012},
    note={\url{https://doi.org/10.1215/00127094-1593263}},
}

\bib{CP91}{article}{
    label={CP91},
    title={Quantum affine algebras},
    author={V. Chari},
    author={A. Pressley},
    journal={Commun. Math. Phys.},
    volume={142},
    pages={261--283},
    year={1991},
    note={\url{https://doi.org/10.1007/BF02102063}},
}

\bib{CP94}{book}{
    label={CP94},
    title={A Guide to Quantum Groups},
    author={V. Chari},
    author={A. Pressley},
    year={1994},
    publisher={Cambridge University Press},
    series={Graduate Studies in Mathematics},
    volume={42},
    note={\url{https://doi-org.ezproxy-prd.bodleian.ox.ac.uk/10.1090/gsm/042}},
}

\bib{CP95}{article}{
    label={CP95},
    title={Quantum affine algebras and their representations},
    author={V. Chari},
    author={A. Pressley},
    conference={
    title={``Representations of Groups'',  CMS Conf. Proc.},
    date={1994},
    address={Banff, AB},
    },
    book={
    address={Providence, RI},
    volume={16},
    publisher={Amer. Math. Soc.},
    date={1995},
    },
    pages={59--78},
    note={\url{https://doi.org/10.48550/arXiv.hep-th/9411145}},
}

\bib{CP97}{article}{
    label={CP97},
    title={Quantum affine algebras at roots of unity},
    author={V. Chari},
    author={A. Pressley},
    journal={Represent. Theory},
    volume={1},
    pages={280--328},
    year={1997},
    note={\url{https://doi.org/10.1090/S1088-4165-97-00030-7}},
}

\bib{Cherednik95}{article}{
    label={C95},
    title={Macdonald's evaluation conjectures and difference Fourier transform},
    author={I. Cherednik},
    journal={Invent. Math.},
    volume={122},
    year={1995},
    pages={119--145},
    note={\url{https://doi.org/10.1007/BF01231441}},
}

\bib{Cherednik05}{book}{
    label={C05},
    title={Double Affine Hecke Algebras},
    author={I. Cherednik},
    year={2005},
    publisher={Cambridge University Press},
    volume={319},
    series={London Mathematical Society Lecture Note Series},
    note={\url{https://doi.org/10.1017/CBO9780511546501}},
}

\bib{CGGS21a}{article}{
    label={CGGS21a},
    title={Punctual Hilbert schemes for Kleinian singularities as quiver varieties},
    author={A. Craw},
    author={S. Gammelgaard},
    author={A. Gyenge},
    author={B. Szendr\H{o}i},
    journal={Algebraic Geometry},
    volume={8},
    number={6},
    pages={680--704},
    year={2021},
    note={\url{https://doi.org/10.14231/AG-2021-021}},
}

\bib{CGGS21b}{article}{
    label={CGGS21b},
    title={Quot schemes for Kleinian orbifolds},
    author={A. Craw},
    author={S. Gammelgaard},
    author={A. Gyenge},
    author={B. Szendr\H{o}i},
    journal={Symmetry, Integrability and Geometry: Methods and Applications (SIGMA)},
    volume={17},
    pages={099},
    year={2021},
    note={\url{https://doi.org/10.3842/SIGMA.2021.099}},
}

\bib{DM25a}{article}{
    label={DM25a},
    title={Intertwiners of representations of untwisted quantum affine algebras and Yangians revisited},
    author={K. Dahiya},
    author={E. Mukhin},
    journal={arXiv preprint},
    year={2025},
    note={\url{https://doi.org/10.48550/arXiv.2503.09845}},
}

\bib{DM25b}{article}{
    label={DM25b},
    title={Intertwiners of representations of twisted quantum affine algebras},
    author={K. Dahiya},
    author={E. Mukhin},
    journal={arXiv preprint},
    year={2025},
    note={\url{https://doi.org/10.48550/arXiv.2503.12716}},
}

\bib{Damiani12}{article}{
    label={Da12},
    title={Drinfeld Realization of Affine Quantum Algebras: The Relations},
    author={I. Damiani},
    journal={Publ. Res. Inst. Math. Sci.},
    volume={48},
    number={3},
    pages={661--733},
    year={2012},
    note={\url{https://doi.org/10.2977/prims/86}},
}

\bib{Damiani15}{article}{
    label={Da15},
    title={From the Drinfeld realization to the Drinfeld-Jimbo presentation of affine quantum algebras: injectivity},
    author={I. Damiani},
    journal={Publ. Res. Inst. Math. Sci.},
    volume={51},
    number={1},
    pages={131--171},
    year={2015},
    note={\url{https://doi.org/10.4171/prims/150}},
}

\bib{Damiani24}{article}{
    label={Da24},
    title={On the Drinfeld coproduct},
    author={I. Damiani},
    journal={Pure Appl. Math. Q.},
    volume={20},
    number={1},
    pages={171--232},
    year={2024},
    note={\url{https://dx.doi.org/10.4310/PAMQ.2024.v20.n1.a6}},
}

\bib{DF93}{article}{
    label={DF93},
    title={Isomorphism of two realizations of quantum affine algebra $U_{q}(\widehat{\mathfrak{gl}(n)})$},
    author={J. Ding},
    author={I. Frenkel},
    journal={Commun. Math. Phys.},
    volume={156},
    pages={277--300},
    year={1993},
    note={\url{https://doi.org/10.1007/BF02098484}},
}

\bib{DI97}{article}{
    label={DI97},
    title={Generalization of Drinfeld Quantum Affine Algebras},
    author={J. Ding},
    author={K. Iohara},
    journal={Lett. Math. Phys.},
    volume={41},
    pages={181--193},
    year={1997},
    note={\url{https://doi.org/10.1023/A:1007341410987}},
}

\bib{Drinfeld88}{article}{
    label={Dr88},
    title={A new realization of Yangians and quantized affine algebras},
    author={V. G. Drinfeld},
    journal={Sov. Math. Dokl.},
    volume={36},
    pages={212--216},
    year={1988},
    note={\url{https://doi.org/10.1016/0393-0440(93)90070-U}},
}

\bib{Enriquez00}{article}{
    label={E00},
    title={On correlation functions of drinfeld currents and shuffle algebras},
    author={B. Enriquez},
    journal={Transform. Groups},
    volume={5},
    number={2},
    pages={111--120},
    year={2000},
    note={\url{https://doi.org/10.1007/BF01236465}},
}

\bib{Enriquez03}{article}{
    label={E03},
    title={PBW and duality theorems for quantum groups and quantum current algebras},
    author={B. Enriquez},
    journal={J. Lie Theory},
    volume={13},
    number={1},
    pages={21--64},
    year={2003},
    note={\url{https://eudml.org/doc/127692}},
}

\bib{FHKS24}{article}{
    label={FHKS24},
    title={Young wall construction of level-1 highest weight crystals over $U_{q}(D_{4}^{(3)})$ and $U_{q}(G_{2}^{(1)})$},
    author={Z. Fan},
    author={S. Han},
    author={S.-J. Kang},
    author={Y.-S. Shin},
    journal={J. Algebra},
    volume={655},
    pages={376--404},
    year={2024},
    note={\url{https://doi.org/10.1016/j.jalgebra.2023.08.001}},
}

\bib{FFJMM11}{article}{
    label={FFJMM11},
    title={Quantum continuous $\mathfrak{gl}_{\infty}$: Semiinfinite construction of representations},
    author={B. Feigin},
    author={E. Feigin},
    author={M. Jimbo},
    author={T. Miwa},
    author={E. Mukhin},
    journal={Kyoto J. Math.},
    volume={51},
    number={2},
    pages={337--364},
    year={2011},
    note={\url{https://doi.org/10.1215/21562261-1214375}},
}

\bib{FJMM13}{article}{
    label={FJMM13},
    title={Representations of quantum toroidal $\mathfrak{gl}_{n}$},
    author={B. Feigin},
    author={M. Jimbo},
    author={T. Miwa},
    author={E. Mukhin},
    journal={J. Algebra},
    volume={380},
    pages={78--108},
    year={2013},
    note={\url{https://doi.org/10.1016/j.jalgebra.2012.12.029}},
}

\bib{FJMM15}{article}{
    label={FJMM15},
    title={Quantum toroidal $\mathfrak{gl}_{1}$ and Bethe ansatz},
    author={B. Feigin},
    author={M. Jimbo},
    author={T. Miwa},
    author={E. Mukhin},
    journal={J. Phys. A: Math. Theor.},
    volume={48},
    number={24},
    year={2015},
    note={\url{https://doi.org/10.1088/1751-8113/48/24/244001}},
}

\bib{FJMM17}{article}{
    label={FJMM17},
    title={Finite type modules and Bethe ansatz for quantum toroidal $\mathfrak{gl}_{1}$},
    author={B. Feigin},
    author={M. Jimbo},
    author={T. Miwa},
    author={E. Mukhin},
    journal={Commun. Math. Phys.},
    volume={356},
    pages={285--327},
    year={2017},
    note={\url{https://doi.org/10.1007/s00220-017-2984-9}},
}

\bib{FJM19}{article}{
    label={FJM19},
    title={The $(\mathfrak{gl}_{m},\mathfrak{gl}_{n})$ duality in the quantum toroidal setting},
    author={B. Feigin},
    author={M. Jimbo},
    author={E. Mukhin},
    journal={Commun. Math. Phys.},
    volume={367},
    pages={455--481},
    year={2019},
    note={\url{https://doi.org/10.1007/s00220-019-03405-8}},
}

\bib{FH15}{article}{
    label={FH15},
    title={Baxter's relations and spectra of quantum integrable models},
    author={E. Frenkel},
    author={D. Hernandez},
    journal={Duke Math. J.},
    volume={164},
    number={12},
    pages={2407--2460},
    year={2015},
    note={\url{https://doi.org/10.1215/00127094-3146282}},
}

\bib{FH18}{article}{
    label={FH18},
    title={Spectra of Quantum KdV Hamiltonians, Langlands Duality, and Affine Opers},
    author={E. Frenkel},
    author={D. Hernandez},
    journal={Commun. Math. Phys.},
    volume={362},
    pages={361--414},
    year={2018},
    note={\url{https://doi.org/10.1007/s00220-018-3194-9}},
}

\bib{FJ88}{article}{
    label={FJ88},
    title={Vertex representations of quantum affine algebras},
    author={I. Frenkel},
    author={N. Jing},
    journal={Proc. Natl. Acad. Sci. U.S.A.},
    volume={85},
    number={24},
    pages={9373--9377},
    year={1988},
    note={\url{https://doi.org/10.1073/pnas.85.24.9373}},
}

\bib{FM01}{article}{
    label={FM01},
    title={Combinatorics of $q$-characters of finite-dimensional representations of quantum affine algebras},
    author={E. Frenkel},
    author={E. Mukhin},
    journal={Commun. Math. Phys.},
    volume={216},
    pages={23--57},
    date={2001},
    note={\url{https://doi.org/10.1007/s002200000323}},
}

\bib{FR92}{article}{
    label={FR92},
    title={Quantum affine algebras and holonomic difference equations},
    author={I. Frenkel},
    author={N. Reshetikhin},
    journal={Commun. Math. Phys.},
    volume={146},
    pages={1--60},
    date={1992},
    note={\url{https://doi.org/10.1007/BF02099206}},
}

\bib{FR99}{article}{
    label={FR99},
    title={The $q$-characters of representations of quantum affine algebras and deformations of $\mathcal{W}$-algebras},
    author={E. Frenkel},
    author={N. Reshetikhin},
    pages={163--205},
    book={
    title={Recent Developments in Quantum Affine Algebras and Related Topics},
    series={Contemp. Math.},
    volume={248},
    publisher={Amer. Math. Soc.},
    address={Providence, RI},
    date={1999},
    },
    note={\url{https://doi.org/10.1090/conm/248} (updated version available at \url{https://doi.org/10.48550/arXiv.math/9810055})},
}

\bib{Fujita20}{article}{
    label={F20},
    title={Geometric realization of Dynkin quiver type quantum affine Schur-Weyl duality},
    author={R. Fujita},
    journal={Int. Math. Res. Not.},
    volume={2020},
    number={22},
    pages={8353--8386},
    year={2020},
    note={\url{https://doi.org/10.1093/imrn/rny226}},
}

\bib{Fujita22}{article}{
    label={F22},
    title={Affine highest weight categories and quantum affine Schur-Weyl duality of Dynkin quiver types},
    author={R. Fujita},
    journal={Represent. Theory},
    volume={26},
    pages={211--263},
    year={2022},
    note={\url{https://doi.org/10.1090/ert/601}},
}

\bib{GTL16}{article}{
    label={GTL16},
    title={Yangians, quantum loop algebras, and abelian difference equations},
    author={S. Gautam},
    author={V. Toledano Laredo},
    journal={J. Amer. Math. Soc.},
    volume={29},
    number={3},
    pages={775--824},
    year={2016},
    note={\url{https://doi.org/10.1090/jams/851}},
}

\bib{GTL17}{article}{
    label={GTL17},
    title={Meromorphic tensor equivalence for Yangians and quantum loop algebras},
    author={S. Gautam},
    author={V. Toledano Laredo},
    journal={Publ. math. IH\'{e}S},
    volume={125},
    pages={267--337},
    year={2017},
    note={\url{https://doi.org/10.1007/s10240-017-0089-9}},
}

\bib{GKV95}{article}{
    label={GKV95},
    title={Langlands reciprocity for algebraic surfaces},
    author={V. Ginzburg},
    author={M. Kapranov},
    author={E. Vasserot},
    journal={Math. Res. Lett.},
    volume={2},
    number={2},
    pages={147--160},
    year={1995},
    note={\url{https://doi.org/10.4310/MRL.1995.v2.n2.a4}},
}

\bib{Grosse07}{article}{
    label={G07},
    title={On quantum shuffle and quantum affine algebras},
    author={P. Gross\'{e}},
    journal={J. Algebra},
    volume={318},
    number={2},
    pages={495--519},
    year={2007},
    note={\url{https://doi.org/10.1016/S0021-8693(03)00307-7}},
}

\bib{GNW18}{article}{
    label={GNW18},
    title={Coproduct for Yangians of affine Kac-Moody algebras},
    author={N. Guay},
    author={H. Nakajima},
    author={C. Wendlandt},
    journal={Adv. Math.},
    volume={338},
    pages={865--991},
    year={2018},
    note={\url{https://doi.org/10.1016/j.aim.2018.09.013}},
}

\bib{HJKL24}{article}{
    label={HJKL24},
    title={Young wall models for the level $1$ highest weight and Fock space crystals of $U_q(E_6^{(2)})$ and $U_q(F_4^{(1)})$},
    author={S. Han},
    author={Y. Jin},
    author={S.-J. Kang},
    author={D. Laurie},
    journal={Lett. Math. Phys.},
    volume={114},
    number={117},
    year={2024},
    note={\url{https://doi.org/10.1007/s11005-024-01845-5}},
}

\bib{Hernandez04}{article}{
    label={H04},
    title={Algebraic approach to $q,t$-characters},
    author={D. Hernandez},
    journal={Adv. Math.},
    volume={187},
    number={1},
    pages={1--52},
    year={2004},
    note={\url{https://doi.org/10.1016/j.aim.2003.07.016}},
}

\bib{Hernandez05}{article}{
    label={H05},
    title={Representations of quantum affinizations and fusion product},
    author={D. Hernandez},
    journal={Transform. Groups},
    volume={10},
    number={2},
    pages={163--200},
    year={2005},
    note={\url{https://doi.org/10.1007/s00031-005-1005-9}},
}

\bib{Hernandez07}{article}{
    label={H07},
    title={Drinfeld coproduct, quantum fusion tensor category and applications},
    author={D. Hernandez},
    journal={Proc. Lond. Math. Soc.},
    volume={95},
    number={3},
    pages={567--608},
    year={2007},
    note={\url{https://doi.org/10.1112/plms/pdm017}},
}

\bib{Hernandez09}{article}{
    label={H09},
    title={Quantum toroidal algebras and their representations},
    author={D. Hernandez},
    journal={Sel. Math. New Ser.},
    volume={14},
    number={3},
    pages={701--725},
    year={2009},
    note={\url{https://doi.org/10.1007/s00029-009-0502-4}},
}

\bib{HL10}{article}{
    label={HL10},
    title={Cluster algebras and quantum affine algebras},
    author={D. Hernandez},
    author={B. Leclerc},
    journal={Duke Math. J.},
    volume={154},
    number={2},
    pages={265--341},
    year={2010},
    note={\url{https://doi.org/10.1215/00127094-2010-040}},
}

\bib{HL13}{article}{
    label={HL13},
    title={Monoidal categorifications of cluster algebras of type $A$ and $D$},
    author={D. Hernandez},
    author={B. Leclerc},
    pages={175--193},
    book={
    title={Symmetries, Integrable Systems and Representations},
    series={Springer Proc. Math. Stat.},
    volume={40},
    publisher={Springer},
    address={London},
    date={2013},
    },
    note={\url{https://doi.org/10.1007/978-1-4471-4863-0_8}},
}

\bib{HL16}{article}{
    label={HL16},
    title={A cluster algebra approach to $q$-characters of Kirillov-Reshetikhin modules},
    author={D. Hernandez},
    author={B. Leclerc},
    journal={J. Eur. Math. Soc.},
    volume={18},
    number={5},
    pages={1113--1159},
    year={2016},
    note={\url{https://doi.org/10.4171/jems/609}},
}

\bib{HK02}{book}{
    label={HK02},
    title={Introduction to Quantum Groups and Crystal Bases},
    author={J. Hong},
    author={S.-J. Kang},
    year={2002},
    publisher={Amer. Math. Soc.},
    address={Providence, RI},
    series={Graduate Studies in Mathematics},
    volume={42},
    note={\url{https://doi-org.ezproxy-prd.bodleian.ox.ac.uk/10.1090/gsm/042}},
}

\bib{Ion03}{article}{
    label={I03},
    title={Involutions of double affine Hecke algebras},
    author={B. Ion},
    journal={Compos. Math.},
    volume={139},
    number={1},
    pages={67--84},
    year={2003},
    note={\url{https://doi.org/10.1023/B:COMP.0000005078.39268.8d}},
}

\bib{IS06}{article}{
    label={IS06},
    title={Triple groups and Cherednik algebras},
    author={B. Ion},
    author={S. Sahi},
    pages={183--206},
    book={
    title={Jack, Hall-Littlewood and Macdonald Polynomials},
    series={Contemp. Math.},
    volume={417},
    publisher={Amer. Math. Soc.},
    address={Providence, RI},
    date={2006},
    },
    note={\url{https://doi.org/10.1090/conm/417}},
}

\bib{IS20}{book}{
    label={IS20},
    title={Double affine Hecke algebras and congruence groups},
	author={B. Ion},
        author={S. Sahi},
	year={2020},
	publisher={Amer. Math. Soc.},
        address={Providence, RI},
        volume={268},
        number={1305},
        series={Memoirs},
        note={\url{https://doi.org/10.1090/memo/1305}},
}

\bib{Jimbo86}{article}{
    label={Jim86},
    title={A $q$-analogue of $U(\mathfrak{gl}(N+1))$, Hecke algebra, and the Yang-Baxter equation},
    author={M. Jimbo},
    journal={Lett. Math. Phys.},
    volume={11},
    pages={247--252},
    year={1986},
    note={\url{https://doi.org/10.1007/BF00400222}},
}

\bib{JM24}{article}{
    label={JM24},
    title={Combinatorial bases in quantum toroidal $\mathfrak{gl}_{2}$ modules},
    author={M. Jimbo},
    author={E. Mukhin},
    journal={arXiv preprint},
    year={2024},
    note={\url{https://doi.org/10.48550/arXiv.2403.16705}},
}

\bib{Jing89}{thesis}{
    label={Jin89},
    title={Vertex operators, symmetric functions and their $q$-deformations},
    author={N. Jing},
    organization={Yale University},
    type={Ph.D. Thesis},
    year={1989},
}

\bib{Jing90}{article}{
    label={Jin90},
    title={Twisted vertex representations of quantum affine algebras},
    author={N. Jing},
    journal={Invent. Math.},
    volume={102},
    number={3},
    pages={663--690},
    year={1990},
    note={\url{https://doi.org/10.1007/BF01233443}},
}

\bib{Jing98}{article}{
    label={Jin98a},
    title={On Drinfeld realization of quantum affine algebras},
    author={N. Jing},
    conference={
    title={The Monster and Lie Algebras: Proceedings of a Special Research Quarter at the Ohio State University},
    date={1996},
    },
    book={
    address={Berlin, New York},
    volume={7},
    publisher={De Gruyter},
    date={1998},
    },
    pages={195--206},
    note={\url{https://doi.org/10.1515/9783110801897.195}},
}

\bib{Jing98(2)}{article}{
    label={Jin98b},
    title={Quantum Kac-Moody Algebras and Vertex Representations},
    author={N. Jing},
    journal={Lett. Math. Phys.},
    address={Berlin, New York: De Gruyter},
    volume={44},
    number={4},
    pages={261--271},
    year={1998},
    note={\url{https://doi.org/10.1023/A:1007493921464}},
}

\bib{JZ07}{article}{
    label={JZ07},
    title={Drinfeld Realization of Twisted Quantum Affine Algebras},
    author={N. Jing},
    author={H. Zhang},
    journal={Commun. Algebra},
    volume={35},
    number={11},
    pages={3683--3698},
    year={2007},
    note={\url{https://doi.org/10.1080/00927870701404713}},
}

\bib{JZ10}{article}{
    label={JZ10},
    title={Addendum to ``Drinfeld Realization of Twisted Quantum Affine Algebras''},
    author={N. Jing},
    author={H. Zhang},
    journal={Commun. Algebra},
    volume={38},
    number={9},
    pages={3484--3488},
    year={2010},
    note={\url{https://doi.org/10.1080/00927870902933213}},
}

\bib{JZ22}{article}{
    label={JZ22},
    title={On Hopf algebraic structures of quantum toroidal algebras},
    author={N. Jing},
    author={H. Zhang},
    journal={Commun. Algebra},
    volume={51},
    number={3},
    pages={1135--1157},
    year={2022},
    note={\url{https://doi.org/10.1080/00927872.2022.2127604}},
}

\bib{Kac90}{book}{
    label={Kac90},
    title={Infinite-dimensional Lie algebras},
    author={V. Kac},
    year={1990},
    publisher={Cambridge University Press},
    note={\url{https://doi.org/10.1017/CBO9780511626234}},
}

\bib{KKK15}{article}{
    label={KKK15},
    title={Symmetric quiver Hecke algebras and $R$-matrices of quantum affine algebras, II},
    author={S.-J. Kang},
    author={M. Kashiwara},
    author={M. Kim},
    journal={Duke Math. J.},
    volume={164},
    number={8},
    pages={1549--1602},
    year={2015},
    note={\url{https://doi.org/10.1215/00127094-3119632}},
}

\bib{KKK18}{article}{
    label={KKK18},
    title={Symmetric quiver Hecke algebras and $R$-matrices of quantum affine algebras},
    author={S.-J. Kang},
    author={M. Kashiwara},
    author={M. Kim},
    journal={Invent. Math.},
    volume={211},
    pages={591--685},
    year={2018},
    note={\url{https://doi.org/10.1007/s00222-017-0754-0}},
}

\bib{KKKO18}{article}{
    label={KKKO18},
    title={Monoidal categorification of cluster algebras},
    author={S.-J. Kang},
    author={M. Kashiwara},
    author={M. Kim},
    author={S.-j. Oh},
    journal={J. Amer. Math. Soc.},
    volume={31},
    number={2},
    pages={349--426},
    year={2018},
    note={\url{https://doi.org/10.1090/jams/895}},
}

\bib{KK08}{article}{
    label={KK08},
    title={Fock space representations of quantum affine algebras and generalized Lascoux-Leclerc-Thibon algorithm},
    author={S.-J. Kang},
    author={J.-H. Kwon},
    journal={J. Korean Math. Soc.},
    volume={45},
    number={4},
    pages={1135--1202},
    year={2008},
    note={\url{https://doi.org/10.4134/JKMS.2008.45.4.1135}},
}

\bib{Kashiwara94}{article}{
    label={Kas94},
    title={Crystal bases of modified quantized enveloping algebra},
    author={M. Kashiwara},
    journal={Duke Math. J.},
    volume={73},
    number={2},
    pages={383--413},
    year={1994},
    note={\url{https://doi.org/10.1215/S0012-7094-94-07317-1}},
}

\bib{KMPY96}{article}{
    label={KMPY96},
    title={Perfect crystals and $q$-deformed Fock spaces},
    author={M. Kashiwara},
    author={T. Miwa},
    author={J.-U. H. Petersen},
    author={C. M. Yung},
    journal={Sel. Math. New Ser.},
    volume={2},
    number={3},
    pages={415--499},
    year={1996},
    note={\url{https://doi.org/10.1007/BF01587950}},
}

\bib{KT91}{article}{
    label={KT91},
    title={Universal $R$-matrix for quantized (super)algebras},
    author={S. M. Khoroshkin},
    author={V. N. Tolstoy},
    journal={Commun. Math. Phys.},
    volume={141},
    pages={599--617},
    year={1991},
    note={\url{https://doi.org/10.1007/BF02102819}},
}

\bib{KT92}{article}{
    label={KT92},
    title={Universal $R$-matrix for quantum untwisted affine Lie algebras},
    author={S. M. Khoroshkin},
    author={V. N. Tolstoy},
    journal={Funct. Anal. Appl.},
    volume={26},
    pages={69--71},
    year={1992},
    note={\url{https://doi.org/10.1007/BF01077085}},
}

\bib{KT93}{article}{
    label={KT93},
    title={The Cartan-Weyl basis and the universal $R$-matrix for quantum Kac-Moody algebras},
    author={S. M. Khoroshkin},
    author={V. N. Tolstoy},
    pages={69--71},
    book={
    title={Quantum Symmetries},
    series={Proc. Int. Workshop Math. Phys.},
    publisher={World Sci. Publ. Co.},
    address={Singapore},
    date={1993},
    },
    note={\url{https://doi.org/10.1142/9789814535502}},
}

\bib{KR90}{article}{
    label={KR90},
    title={$q$-Weyl Group and a multiplicative formula for universal $R$-matrices},
    author={A. N. Kirillov},
    author={N. Reshetikhin},
    journal={Commun. Math. Phys.},
    volume={134},
    pages={421--431},
    year={1990},
    note={\url{https://doi.org/10.1007/BF02097710}},
}

\bib{Laurie24a}{article}{
    label={La24a},
    title={Automorphisms of quantum toroidal algebras from an action of the extended double affine braid group},
    author={D. Laurie},
    journal={Algebr. Represent. Theory},
    volume={27},
    pages={2067--2097},
    year={2024},
    note={\url{https://doi.org/10.1007/s10468-024-10291-9}},
}

\bib{Laurie24b}{thesis}{
    label={La24b},
    title={Quantum toroidal algebras, quantum affine algebras, and their representation theory},
    author={D. Laurie},
    organization={University of Oxford},
    type={Ph.D. Thesis},
    year={2024},
    note={\url{http://dx.doi.org/10.5287/ora-9ovpag5bo}},
}

\bib{Laurie25}{article}{
    label={La25},
    title={Young wall realizations of level $1$ irreducible highest weight and Fock space crystals of quantum affine algebras in type $E$},
    author={D. Laurie},
    journal={J. Algebra},
    volume={661},
    pages={430--478},
    year={2025},
    note={\url{https://doi.org/10.1016/j.jalgebra.2024.07.047}},
}

\bib{LS91}{article}{
    label={LS91},
    title={Quantum Weyl group and multiplicative formula for the $R$-matrix of a simple Lie algebra},
    author={S. Z. Levendorskii},
    author={Y. S. Soibel'man},
    journal={Funct. Anal. Appl.},
    volume={25},
    pages={143--145},
    year={1991},
    note={\url{https://doi.org/10.1007/BF01079599}},
}

\bib{Lusztig93}{book}{
    label={Lu93},
    title={Introduction to quantum groups},
    author={G. Lusztig},
    year={1993},
    publisher={Birkh\"{a}user Boston},
    series={Progress in Mathematics no. 110},
    note={\url{https://doi.org/10.1007/978-0-8176-4717-9}},
}

\bib{MNNZ24}{book}{
    label={MNNZ24},
    title={Quantum toroidal algebras and solvable structures in gauge/string theory},
    author={Y. Matsuo},
    author={S. Nawata},
    author={G. Noshita},
    author={R.-D. Zhu},
    year={2024},
    publisher={Elsevier},
    volume={1055},
    series={Physics Reports},
    note={\url{https://doi.org/10.1016/j.physrep.2023.12.003}},
}

\bib{Miki99}{article}{
    label={M99},
    title={Toroidal braid group action and an automorphism of toroidal algebra $U_{q}(\mathfrak{sl}_{n+1,tor})$ $(n\geq 2)$},
    author={K. Miki},
    journal={Lett. Math. Phys.},
    volume={47},
    number={4},
    pages={365--378},
    year={1999},
    note={\url{https://doi.org/10.1023/A:1007556926350}},
}

\bib{Miki00}{article}{
    label={M00},
    title={Representations of quantum toroidal algebra $U_{q}(\mathfrak{sl}_{n+1,tor})$ $(n\geq 2)$},
    author={K. Miki},
    journal={J. Math. Phys.},
    volume={41},
    number={10},
    pages={7079--7098},
    year={2000},
    note={\url{https://doi.org/10.1063/1.1287436}},
}

\bib{Miki01}{article}{
    label={M01},
    title={Quantum toroidal algebra $U_{q}(\mathfrak{sl}_{2,tor})$ and $R$-matrices},
    author={K. Miki},
    journal={J. Math. Phys.},
    volume={42},
    number={5},
    pages={2293--2308},
    year={2001},
    note={\url{https://doi.org/10.1063/1.1357198}},
}

\bib{Miki07}{article}{
    label={M07},
    title={A $(q,\gamma)$ analog of the $W_{1+\infty}$ algebra},
    author={K. Miki},
    journal={J. Math. Phys.},
    volume={48},
    number={12},
    pages={2293--2308},
    year={2007},
    note={\url{https://doi.org/10.1063/1.2823979}},
}

\bib{MRY90}{article}{
    label={MRY90},
    title={Toroidal Lie algebras and vertex representations},
    author={R. V. Moody},
    author={S. E. Rao},
    author={T. Yokonuma},
    journal={Geom. Dedicata.},
    volume={35},
    pages={283--307},
    year={1990},
    note={\url{https://doi.org/10.1007/BF00147350}},
}

\bib{Nakajima01}{article}{
    label={Na01},
    title={Quiver varieties and finite-dimensional representations of quantum affine algebras},
    author={H. Nakajima},
    journal={J. Am. Math. Soc.},
    volume={14},
    number={1},
    pages={145--238},
    year={2001},
    note={\url{https://doi.org/10.1090/S0894-0347-00-00353-2}},
}

\bib{Nakajima02}{article}{
    label={Na02},
    title={Geometric construction of representations of affine algebras},
    author={H. Nakajima},
    pages={423--438},
    conference={
    title={Proceedings of the International Congress of Mathematicians},
    date={2002},
    address={Beijing},
    },
    book={
    address={Beijing},
    volume={1},
    publisher={Higher Ed. Press},
    date={2002},
    },
    note={\url{https://doi.org/10.48550/arXiv.math/0212401}},
}

\bib{Nakajima11}{article}{
    label={Na11},
    title={Quiver varieties and cluster algebras},
    author={H. Nakajima},
    journal={Kyoto J. Math.},
    volume={51},
    number={1},
    pages={71--126},
    year={2011},
    note={\url{https://doi.org/10.1215/0023608X-2010-021}},
}

\bib{Negut15}{thesis}{
    label={Ne15},
    title={Quantum algebras and cyclic quiver varieties},
    author={A. Negu\c{t}},
    organization={Columbia University},
    type={Ph.D. Thesis},
    year={2015},
    note={\url{https://doi.org/10.7916/D8J38RGF}},
}

\bib{Negut20}{article}{
    label={Ne20},
    title={Quantum toroidal and shuffle algebras},
    author={A. Negu\c{t}},
    journal={Adv. Math.},
    volume={372},
    number={107288},
    year={2020},
    note={\url{https://doi.org/10.1016/j.aim.2020.107288}},
}

\bib{Negut23}{article}{
    label={Ne23},
    title={The $R$-matrix of the quantum toroidal algebra},
    author={A. Negu\c{t}},
    journal={Kyoto J. Math.},
    volume={63},
    number={1},
    pages={23--49},
    year={2023},
    note={\url{https://doi.org/10.1215/21562261-2022-0030}},
}

\bib{Negut24}{article}{
    label={Ne24},
    title={A tale of two shuffle algebras},
    author={A. Negu\c{t}},
    journal={Sel. Math. New Ser.},
    volume={30},
    number={62},
    year={2024},
    note={\url{https://doi.org/10.1007/s00029-024-00941-7}},
}

\bib{OS24}{article}{
    label={OS24},
    title={Wreath Macdonald polynomials, a survey},
    author={D. Orr},
    author={M. Shimozono},
    pages={123--170},
    book={
    title={A Glimpse into Geometric Representation Theory},
    series={Contemp. Math.},
    volume={804},
    publisher={Amer. Math. Soc.},
    address={Providence, RI},
    date={2024},
    },
    note={\url{https://doi.org/10.1090/conm/804}},
}

\bib{OSW22}{article}{
    label={OSW22},
    title={Wreath Macdonald operators},
    author={D. Orr},
    author={M. Shimozono},
    author={J. J. Wen},
    journal={arXiv preprint},
    year={2022},
    note={\url{https://doi.org/10.48550/arXiv.2211.03851}},
}

\bib{Premat04}{article}{
    label={P04},
    title={Fock space representations and crystal bases for  $C_n^{(1)}$},
    author={A. Premat},
    journal={J. Algebra},
    volume={278},
    number={1},
    pages={227--241},
    year={2004},
    note={\url{https://doi.org/10.1016/j.jalgebra.2004.01.028}},
}

\bib{Qin17}{article}{
    label={Q17},
    title={Triangular bases in quantum cluster algebras and monoidal categorification conjectures},
    author={F. Qin},
    journal={Duke Math. J.},
    volume={166},
    number={12},
    pages={2337--2442},
    year={2017},
    note={\url{https://doi.org/10.1215/00127094-2017-0006}},
}

\bib{RT90}{article}{
    label={RT90},
    title={Ribbon graphs and their invariants derived from quantum groups},
    author={N. Y. Reshetikhin},
    author={V. G. Turaev},
    journal={Commun. Math. Phys.},
    volume={127},
    pages={1--26},
    year={1990},
    note={\url{https://doi.org/10.1007/BF02096491}},
}

\bib{RT91}{article}{
    label={RT91},
    title={Invariants of 3-manifolds via link polynomials and quantum groups},
    author={N. Y. Reshetikhin},
    author={V. G. Turaev},
    journal={Invent. Math.},
    volume={103},
    pages={547--597},
    year={1991},
    note={\url{https://doi.org/10.1007/BF01239527}},
}

\bib{Saito98}{article}{
    label={Sa98},
    title={Quantum toroidal algebras and their vertex representations},
    author={Y. Saito},
    journal={Publ. Res. Inst. Math. Sci.},
    volume={34},
    number={2},
    pages={155--177},
    year={1998},
    note={\url{https://doi.org/10.2977/prims/1195144759}},
}

\bib{STU98}{article}{
    label={STU98},
    title={Toroidal actions on level 1 modules of $U_q(\hat{\mathfrak{sl}}_{n})$},
    author={Y. Saito},
    author={K. Takemura},
    author={D. Uglov},
    journal={Transform. Groups},
    volume={3},
    number={1},
    pages={75--102},
    year={1998},
    note={\url{https://doi.org/10.1007/BF01237841}},
}

\bib{Schiffmann12}{article}{
    label={Sc12},
    title={Drinfeld realization of the elliptic Hall algebra},
    author={O. Schiffmann},
    journal={J. Algebr. Comb.},
    volume={35},
    pages={237--262},
    year={2012},
    note={\url{https://doi.org/10.1007/s10801-011-0302-8}},
}

\bib{SV13}{article}{
    label={SV13},
    title={The elliptic Hall algebra and the $K$-theory of the Hilbert scheme of $\mathbb{A}^{2}$},
    author={O. Schiffmann},
    author={E. Vasserot},
    journal={Duke Math. J.},
    volume={162},
    number={2},
    pages={279--366},
    year={2013},
    note={\url{https://doi.org/10.1215/00127094-1961849}},
}

\bib{Soibelman99}{article}{
    label={So99},
    title={The meromorphic braided category arising in quantum affine algebras},
    author={Y. Soibelman},
    journal={Int. Math. Res. Not.},
    volume={1999},
    number={19},
    pages={1067--1079},
    year={1999},
    note={\url{https://doi.org/10.1155/S1073792899000574}},
}

\bib{Tsymbaliuk19}{article}{
    label={T19},
    title={Several realizations of Fock modules for toroidal $\Ddot{U}_{q,d}(\mathfrak{sl}_{n})$},
    author={A. Tsymbaliuk},
    journal={Algebr. Represent. Theory},
    volume={22},
    pages={177--209},
    year={2019},
    note={\url{https://doi.org/10.1007/s10468-017-9761-5}},
}

\bib{Tsymbaliuk23}{book}{
    label={T23},
    title={Shuffle Approach Towards Quantum Affine and Toroidal Algebras},
    author={A. Tsymbaliuk},
    year={2023},
    publisher={Springer Singapore},
    series={SpringerBriefs in Mathematical Physics},
    volume={49},
    note={\url{https://doi.org/10.1007/978-981-99-3150-7}},
}

\bib{Ueda20}{article}{
    label={U20},
    title={Coproduct for the Yangian of type $A_{2}^{(2)}$},
    author={M. Ueda},
    journal={RIMS Kokyuroku},
    volume={2161},
    pages={181--194},
    year={2020},
    note={\url{https://www.kurims.kyoto-u.ac.jp/~kyodo/kokyuroku/contents/pdf/2161-17.pdf}},
}

\bib{VV96}{article}{
    label={VV96},
    title={Schur duality in the toroidal setting},
    author={M. Varagnolo},
    author={E. Vasserot},
    journal={Commun. Math. Phys.},
    volume={182},
    number={2},
    pages={469--483},
    year={1996},
    note={\url{https://doi.org/10.1007/BF02517898}},
}

\bib{VV98}{article}{
    label={VV98},
    title={Double-loop Algebras and the Fock Space},
    author={M. Varagnolo},
    author={E. Vasserot},
    journal={Invent. Math.},
    volume={133},
    number={1},
    pages={133--159},
    year={1998},
    note={\url{https://doi.org/10.1007/s002220050242}},
}

\bib{VV02}{article}{
    label={VV02},
    title={Standard modules of quantum affine algebras},
    author={M. Varagnolo},
    author={E. Vasserot},
    journal={Duke Math. J.},
    volume={111},
    number={3},
    pages={509--533},
    year={2002},
    note={\url{https://doi.org/10.1215/S0012-7094-02-11135-1}},
}

\bib{VV23a}{article}{
    label={VV23a},
    title={Quantum loop groups and critical convolution algebras},
    author={M. Varagnolo},
    author={E. Vasserot},
    journal={arXiv preprint},
    year={2023},
    note={\url{https://doi.org/10.48550/arXiv.2302.01418}},
}

\bib{VV23b}{article}{
    label={VV23b},
    title={Non symmetric quantum loop groups and K-theory},
    author={M. Varagnolo},
    author={E. Vasserot},
    journal={arXiv preprint},
    year={2023},
    note={\url{https://doi.org/10.48550/arXiv.2308.01809}},
}

\bib{Wen19}{article}{
    label={W19},
    title={Wreath Macdonald polynomials as eigenstates},
    author={J. J. Wen},
    journal={arXiv preprint},
    year={2019},
    note={\url{https://doi.org/10.48550/arXiv.1904.05015}},
}

\end{biblist}
\end{bibsection}

\end{document}